\DeclareSymbolFont{bbold}{U}{bbold}{m}{n}
\DeclareSymbolFontAlphabet{\mathbbm}{bbold}
\title{Category forcing and generic absoluteness}
\theoremstyle{plain}
	\newtheorem{theorem}{Theorem}[section]
	\newtheorem{proposition}[theorem]{Proposition}
	\newtheorem{lemma}[theorem]{Lemma}
	\newtheorem{corollary}[theorem]{Corollary}
	\newtheorem{fact}[theorem]{Fact}
	\newtheorem{claim}{Claim}
	\newtheorem{subclaim}{Subclaim}
\theoremstyle{definition}
	\newtheorem{definition}[theorem]{Definition}
	\newtheorem{notation}[theorem]{Notation}
	\newtheorem{notation*}{Notation}
\theoremstyle{remark}
	\newtheorem{remark}[theorem]{Remark}
\newcommand{\id}{\ensuremath{\mathrm{Id}}}
\newcommand{\Ord}{\ensuremath{\mathrm{Ord}}}
\newcommand{\ZFC}{\ensuremath{\mathsf{ZFC}}}
\newcommand{\ZF}{\ensuremath{\mathsf{ZF}}}
\newcommand{\MK}{\ensuremath{\mathsf{MK}}}
\DeclareMathOperator{\dom}{dom}
\DeclareMathOperator{\cof}{cof}
\DeclareMathOperator{\coker}{coker}
\DeclareMathOperator{\supp}{supp}
\DeclareMathOperator{\trcl}{trcl}
\DeclareMathOperator{\Coll}{Coll}
\DeclareMathOperator{\RO}{\mathsf{RO}}
\newcommand{\NS}{\ensuremath{\mathbf{NS}}} 
\newcommand{\bool}[1]{\mathsf{#1}}
\newcommand{\FFF}{\mathcal{F}}
\newcommand{\pow}[1]{\mathcal{P}\left(#1\right)}
\newcommand{\Reg}[1]{\text{Reg}\left(#1\right)}
\newcommand{\rest}[1]{\upharpoonright_{#1}}
\newcommand{\qp}[1]{\left[ #1 \right]}
\newcommand{\Qp}[1]{\left\llbracket #1 \right\rrbracket}
\newcommand{\ap}[1]{\langle #1 \rangle}
\newcommand{\bp}[1]{\left\lbrace #1 \right\rbrace}
\newcommand{\AC}{\ensuremath{\text{{\sf AC}}}} 
\newcommand{\SP}{\ensuremath{\text{{\sf SP}}}}
\newcommand{\SSP}{\ensuremath{\text{{\sf SSP}}}}
\newcommand{\PR}{\ensuremath{\text{{\sf PR}}}}
\newcommand{\SPFA}{\ensuremath{\text{{\sf SPFA}}}}
\newcommand{\FA}{\ensuremath{\text{{\sf FA}}}}
\newcommand{\CFA}{{\sf CFA}}
\newcommand{\BCFA}{{\sf BCFA}}
\newcommand{\1}{\mathsf{1}}
\newcommand{\2}{\mathsf{2}}
\theoremstyle{definition}
	\newtheorem{question}[theorem]{Question}
\newcommand{\UnderTilde}[1]{{\setbox1=\hbox{$#1$}\baselineskip=0pt\vtop{\hbox{$#1$}\hbox to\wd1{\hfil$\sim$\hfil}}}{}}
\newcommand{\MA}{\ensuremath{\text{{\sf MA}}}}
\newcommand{\BFA}{\ensuremath{\text{{\sf BFA}}}}
\DeclareMathOperator{\Add}{Add}
\newcommand{\dirlim}{\varinjlim}
\newcommand{\invlim}{\varprojlim}
\newcommand{\rcslim}{\lim\limits_{\sf rcs}}
\newcommand{\SRP}{\ensuremath{\text{{\sf SRP}}}}
\let \restr = \upharpoonright
\let \into = \longrightarrow
\let \sub = \subseteq
\let \elsub = \preccurlyeq
\let \av = \arrowvert
\let \a = \alpha
\let \b = \beta
\let \g = \gamma
\let \d = \delta
\let \l = \lambda
\let \k = \kappa
\let \m = \mu
\let \n = \nu
\let \p = \pi
\let \t = \theta
\let \s = \sigma
\let \x = \xi
\let \o = \omega
\let \P = \Pi
\let \S = \Sigma
\let \al = \aleph
\let \la = \langle
\let \ra = \rangle
\let \mtcl = \mathcal
\let \mtbb = \mathbb
\let \it = \item
\DeclareMathOperator{\range}{range}
\DeclareMathOperator{\cf}{cf}
\DeclareMathOperator{\ot}{ot}
\DeclareMathOperator{\Lim}{Lim}
\DeclareMathOperator{\V}{\mathbf V}
\DeclareMathOperator{\MRP}{\textsf{MRP}}
\DeclareMathOperator{\CH}{\textsf{CH}}
\DeclareMathOperator{\BPFA}{\textsf{BPFA}}
\DeclareMathOperator{\STP}{\textsf{STP}}
\DeclareMathOperator{\TWCG}{\textsf{TWCG}}
\DeclareMathOperator{\TCG}{\textsf{TCG}}
\title{Incompatible bounded category forcing axioms}
\author[D.\ Asper\'o]{David Asper\'o}
\address{David Asper\'{o}, School of Mathematics, University of East Anglia, Norwich NR4 7TJ, UK}
\email{d.aspero@uea.ac.uk}
\author[M.\ Viale]{Matteo Viale}
\address{Matteo Viale, Department of Mathematics ``Giuseppe Peano'', Univ.\ Torino, via Carlo Alberto 10, 10125, Torino, Italy}
\email{matteo.viale@unito.it}
\date{}
\begin{document}

\thanks{The first author acknowledges support of EPSRC Grant EP/N032160/1.
The second author acknowledges support of GNSAGA. 
Parts of this research was partially done whilst the authors were visiting fellows at the Isaac Newton Institute for Mathematical Sciences in the programme `Mathematical, Foundational and Computational Aspects of the Higher Infinite' (HIF)}

\subjclass[2010]{03E57, 03E35, 03E47, 03E55}

\maketitle
\pagestyle{myheadings}\markright{Incompatible bounded category forcing axioms}

\begin{abstract} 
We introduce bounded category forcing axioms for well-behaved class\-es $\Gamma$. These are strong forms of bounded forcing axioms which completely decide the theory of some initial segment of the universe $H_{\lambda_\Gamma^+}$ modulo forcing in $\Gamma$, for some cardinal $\lambda_\Gamma$ naturally associated to $\Gamma$. These axioms naturally extend projective absoluteness for arbitrary set-forcing---in this situation $\lambda_\Gamma=\omega$---to classes $\Gamma$ with $\lambda_\Gamma>\omega$. Unlike projective absoluteness, these higher bounded category forcing axioms do not follow from large cardinal axioms, but can be forced under mild large cardinal assumptions on $V$. We also show the existence of many classes $\Gamma$ with $\lambda_\Gamma=\omega_1$, and giving rise to pairwise incompatible theories for $H_{\omega_2}$. 
\end{abstract}

	
	\tableofcontents

	

\section{Introduction}
Forcing axioms are principles asserting the existence of sufficiently generic filters for all forcing notions in some reasonable class. In a general form they state, for a given class $\Gamma$ of forcing notions and a given cardinal $\kappa$, that for every $\mtcl P\in\Gamma$ and every collection $\mtcl D$ of ${\leq}\kappa$-many dense subsets of $\mtcl P$ there is a filter $G$ of $\mtcl P$ such that $G\cap D\neq\emptyset$ for each $D\in\mtcl D$.\footnote{This is equivalent to the axiom obtained by letting $\mtcl D$ consist of maximal antichains of $\mtcl P$.} We refer to this statement as $\FA(\Gamma)_\kappa$. 
These axioms are successful in settling a wide range of problems undecidable on the basis of the commonly accepted axioms for set theory. This is not surprising given that forcing axioms can be often characterized, for reasonable classes $\Gamma$, as \emph{maximality principles} with respect to generic extensions via forcing notions coming from $\Gamma$; specifically, the assertions that all statements in some reasonable class $\Sigma$ that can be forced over the universe by some forcing in $\Gamma$ are in fact true.  

For certain classes $\Sigma$ of statements, such a maximality principle amounts to asserting \emph{generic absoluteness} of the universe with respect to the relevant forcing extensions, i.e., the assertion that for every $\mtcl P\in\Gamma$, every $\mtcl P$-generic filter $G$ over $V$, and every $\sigma\in \Sigma$, $V\models\sigma$ if and only if $V[G]\models\sigma$.\footnote{There are of course interesting classes $\Sigma$ for which the relevant notion of maximality does not imply the corresponding notion of generic absoluteness. Take for example the class $\Sigma$ of all sentences of the form $H_{\omega_2}\models\sigma$, where $\sigma$ is a $\Pi_2$ sentence. Maximality for this class $\Sigma$ relative to all set-forcing extensions is consistent, as it follows from Woodin's $\mathbb P_{\mbox{max}}$ axiom $(\ast)$. On the other hand, generic absoluteness for $\Sigma$ is simply false as $\CH$ can be expressed by a sentence in $\Sigma$ and both $\CH$ and $\lnot\CH$ are forcible.}  This is the case, for example, if $\Sigma$ is the class of all $\Sigma_1$ sentences with parameters in $H_\lambda$, for a fixed cardinal $\lambda$, and all forcing notions in $\Gamma$ preserve the cardinality of $\lambda$.\footnote{When $\lambda$ is an infinite cardinal and $\Gamma$ is a class of forcing axioms preserving the cardinality of all cardinals $\mu\leq \lambda$, the \emph{bounded forcing axiom} $\BFA_\kappa(\Gamma)$ can often be in fact characterized as precisely this form of generic absoluteness for $\kappa=\lambda^+$. $\BFA_\kappa(\Gamma)$ is the axiom saying that for every $\mtcl P\in\Gamma$ and every collection $\mtcl A$ of maximal antichains of $\mtcl P$, if $|\mtcl A|\leq\kappa$ and $|A|\leq\kappa$ for every $A\in\mtcl A$, then there is a filter $G\sub\mtcl P$ such that $G\cap A\neq\emptyset$ for each $A\in\mtcl A$.} Another example is given by the class $\Sigma$ of all projective sentences, i.e., all sentences of the form $H_{\omega_1}\models\sigma$, where $\sigma$ is any sentence with parameters in $H_{\omega_1}$. It is a remarkable fact, due to Woodin, that the mere presence in $V$ of sufficiently strong large cardinals---for example a proper class of Woodin cardinals, or a supercompact cardinal---outright implies this principle of generic absoluteness, known as \emph{Projective Absoluteness}. Generic absoluteness is a naturally attractive possible feature of the universe, at least for the set-theorist with realist inclinations, in that it manages to neutralize, to some extent, the effects of forcing on the universe (forcing being our prime method for proving independence over our base set theory). 

It turns out that if we avail ourselves of the expressive power provided by second order set theory, we can make sense of hypotheses which in terms of consistency strength lie way below the existence of Woodin cardinals and which nevertheless suffice to produce models of Projective Absoluteness. To be specific, if we work in the extension of Morse-Kelley set theory ($\MK$) with the axiom saying that the class $\Ord$ of all ordinals is Mahlo (i.e., every class of ordinals which is closed and unbounded contains an inaccessible cardinal), then we may find a set-forcing extension of $V$ in which Projective Absoluteness holds. In fact, for every set-generic extension $V[G]$ of $V$ there is a set-generic extension of $V[G]$ satisfying Projective Absoluteness. Of course now we do not obtain Projective Absoluteness in $V$ but only prove that it is forcible. In order to even be able to state our starting hypothesis we need to transcend first order logic---which of course was enough to express the existence of a proper class of Woodin cardinals---and make use of the additional expressive power of second order logic. This is arguably a drawback of the present situation. Nevertheless, we will make free use of second order logic in this paper as doing so will allow us to make sense of situations for which this would not be possible otherwise. 

The construction referred to above originates in the classical argument for deriving Projective Absoluteness from large cardinals. The main observation is that if $\Ord$ is Mahlo, then there are unboundedly many inaccessible cardinals $\delta$ with the property that if $G$ is $\Coll(\omega,  {<}\delta)$-generic over $V$, then $H_{\omega_1}^{V[G]}$ ($=V_\delta[G]$) is an elementary substructure of $V[H]$ for some generic filter $H$ over $V$ for the class-forcing $\Coll(\omega, {<}\Ord)$ (which of course is the same as $\Coll(\omega, {<}\Ord)$ as computed in $V[G]$). Using the fact that every set-forcing extension can be absorbed in a forcing extension via $\Coll(\omega,\,\kappa)$, for some high enough cardinal $\kappa$, it follows that Projective Absoluteness holds in $V[G]$ in the above situation. This analysis shows in fact the following: If $\Ord$ is Mahlo, then Projective Absoluteness holds if and only if the identity on $H_{\omega_1}$ is an elementary embedding from this structure into $V^{\Coll(\omega, {<}\delta)}$. This justifies defining Projective Absoluteness to be precisely this axiom (which we do). We also write $\bool{PA}$ for Projective Absoluteness.

This paper can be naturally split in two parts. The main goal of the first part is to extend the above observation concerning $\bool{PA}$ and how to obtain it to fragments $H_\kappa$ of the universe beyond $H_{\omega_1}$. Let us assume that $\kappa$ is a successor cardinal, $\kappa=\lambda^+$.\footnote{As we will see, our methods naturally pertain to the theory of structures $H_\kappa$ for $\kappa$ being a successor cardinal.} Our guiding idea for obtaining analogues of $\bool{PA}$ applying to $H_\kappa$ is to focus on some class $\Gamma$ of forcing notions preserving all cardinals $\mu\leq\lambda$ and try to extend the methods in the argument for the forcibility of Projective Absoluteness in a suitable way to apply to forcings in $\Gamma$. 
We shall be dealing with fairly big classes $\Gamma$ for which there is an iterability theorem, for example the class of all proper forcings, or the class of all semiproper forcings. For these classes one cannot possibly hope to obtain generic absoluteness at the level of $H_{\omega_2}$ relative to \emph{all} extensions by members of $\Gamma$. For example there is always a proper poset forcing $\CH$ and there is always one  forcing $\lnot\CH$ and, as already mentioned, $\CH$ is expressible over $H_{\omega_2}$. We will instead aim at obtaining generic absoluteness relative to all forcing notions in a suitable class $\Gamma$ which, moreover, force the second order axiom corresponding to the axiom, in the $\bool{PA}$ situation, asserting that $H_{\omega_1}$ is an elementary substructure of the $\Coll(\omega, {<}\Ord)$-extension of $V$. 

For technical reasons it will be convenient to deal with classes consisting of complete Boolean algebras.\footnote{See for example ~\cite{VIAUSAX} for more details. There will be no real loss of generality in restricting the discussion to classes of complete Boolean algebras thanks to the fact that all classes we will be naturally interested in will be closed under taking regular open completions.} The right analogue, in this context, of the collapse $\Coll(\omega, {<}\Ord)$ in the $\bool{PA}$ argument turns out to be the class-forcing  whose conditions are all algebras in $\Gamma$, and where $\bool{C}\in\Gamma$ is stronger than $\bool{B}\in\Gamma$, which we will denote by $\bool{C}\leq_\Gamma\bool{B}$, if and only if there is a complete Boolean algebra homomorphism $i:\bool{B}\longrightarrow \bool{C}$ such that $\bool{B}$ forces the quotient algebra $\bool{C}/i[\dot G_{\bool{B}}]$ to be in the class $\Gamma$ as interpreted in $V^{\bool{B}}$.\footnote{We will in fact be working with definable classes $\Gamma$. In a statement like the one above we are of course really referring to some official definition of $\Gamma$.} Thus we are naturally seeing the category whose objects are all algebras in $\Gamma$, and whose arrows are homomorphisms $i:\bool{B}\longrightarrow\bool{C}$ of the above form, as a class-sized forcing notion.\footnote{As we will soon mention, we will call our axioms corresponding to these categories $\Gamma$ bounded \emph{category} forcing axioms.} 

Given a class $\Gamma$ of complete Boolean algebras, we will associate to $\Gamma$ a certain cardinal $\lambda_\Gamma$. This will be the supremum of  the class of all cardinals preserved by all members of $\Gamma$.\footnote{$\lambda_\Gamma$ could sometimes be all of $\Ord$ (for example if $\Gamma$ is the class of forcings with the countable chain condition), but in all classes we will consider $\lambda_\Gamma$ will be an actual cardinal (in this case it is of course the maximum cardinal preserved by all members of $\Gamma$).} In all classes $\Gamma$ we will consider in the second part of the paper, $\lambda_\Gamma=\omega_1$ (and so $H_{\lambda_\Gamma^+}$ will be $H_{\omega_2}$).  If $\Gamma$ has suitable nice properties in all extensions by members of $\Gamma$, then forcing with $(\Gamma, \leq_\Gamma)$ preserves all cardinals $\mu\leq\lambda_\Gamma$ and makes $\Ord$ equal to $\lambda_\Gamma^+$ (i.e., it forces $V=H_{\lambda_\Gamma^+}$), and every set in the extension $V[H]$ is already in $V[H\cap V_\delta]$ for some ordinal $\delta$ such that (the regular open completion of) $\Gamma\cap V_\delta$ is in $\Gamma$ and $H\cap V_\delta$ is $V$-generic for $\Gamma\cap V_\delta$. Furthermore, under suitable mild large cardinal assumptions---typically the same hypothesis we had for $\bool{PA}$, namely that $\Ord$ is Mahlo, suffices---we have that for every $\bool{B}\in\Gamma$ there is a $\bool{B}$-name $\dot{\bool{Q}}$ for an algebra in $V^{\bool{B}}$'s version of $\Gamma$ such that $\bool{C}=\bool{B}\ast\dot{\bool{Q}}$ is in $\Gamma$ and is such that if $G$ is $\bool{C}$-generic over $V$, then $H_{\lambda_\Gamma^+}^{V[G]}$ is an elementary substructure of $V[H]$ for every generic filter $H$ over $V$ for the category forcing $(\Gamma, \leq_\Gamma)$ as computed in $V[G]$. We will call classes $\Gamma$ satisfying all the relevant nice properties in all generic extensions by members of $\Gamma$ \emph{absolutely well-behaved}. By extending the $\bool{PA}$ argument we will prove, in addition, that if $\Ord$ is Mahlo, $\Gamma$ is absolutely well-behaved, and $H_{\lambda_\Gamma^+}$ is an elementary substructure of  $V[H]$ for every generic filter $H$ over $V$ for $(\Gamma, \leq_\Gamma)$, then the following is the case.

\begin{enumerate}
\item A strong form of the bounded forcing axiom $\BFA_{\lambda_\Gamma}(\Gamma)$ holds.\footnote{This part is easy.} 
\item If $G$ is a $V$-generic filter for some algebra in $\Gamma$ and $H_{\lambda_\Gamma^+}^{V[G]}$ is an elementary submodel of $V[H]$ for some generic filter $H$ over $V[G]$ for the category forcing $(\Gamma, \leq_\Gamma)$ as computed in $V[G]$, then $H_{\lambda_\Gamma^+}^V$ and $H_{\lambda_\Gamma^+}^{V[G]}$ have the same theory.\footnote{This is considerably more involved.} 
\end{enumerate}

It follows, from (1) and (2) above, together with the discussion before (1), that the second order axiom saying that $H_{\lambda_\Gamma^+}$ is an elementary substructure of $V[H]$ for some $V$-generic filter $H$ over $(\Gamma, \leq_\Gamma)$ is a strong form of the bounded forcing axiom $\BFA_{\lambda_\Gamma}(\Gamma)$ which achieves our goal (and which can actually be forced). We therefore call this second order axiom the \emph{bounded category forcing axiom for $\Gamma$}, and denote it by $\BCFA(\Gamma)$.\footnote{There is no need to specify $\lambda_\Gamma$ as this cardinal can be read off from $\Gamma$.} 

Our results in the first part of the present paper turn forcing from
a tool useful to prove undecidability results into a tool useful to prove theorems: In order to show that $\BCFA(\lambda_\Gamma)$, together with the ambient set theory, implies $H_{\lambda_\Gamma^+}\models\sigma$ for a given sentence $\sigma$, it suffices to show that $\BCFA(\Gamma)$, together with the ambient set theory, implies the existence of a forcing notion $\bool{B}$ in $\Gamma$ which forces both $\BCFA(\Gamma)$ and $H_{\lambda_\Gamma^+}\models\sigma$.

In the second part of the paper we isolate $\aleph_1$-many absolutely well-behaved classes $\Gamma$ of complete Boolean algebras, all of them with $\lambda_\Gamma=\omega_1$. Among these we have for example the class of all complete Boolean algebras which are proper forcing notions, the class of all complete Boolean algebras which are semiproper forcing notions, the class of all complete Boolean algebras which are proper forcing notions preserving Suslin trees, etc. The main point in this second part is to prove that all the corresponding bounded category forcing axioms are pairwise provably incompatible, sometimes in the presence of  extra mild large cardinal assumptions, regardless of the fact that we have $\Gamma_0\sub\Gamma_1$ for many choices of $\Gamma_0$ and $\Gamma_1$. For example, if $\Gamma_0$ and $\Gamma_1$ are, respectively,  the class of semiproper forcing notions and the class of proper forcing notions, we have that $\BCFA(\Gamma_0)$ and $\BCFA(\Gamma_1)$ are incompatible---assuming there is, for example, a measurable cardinal and an inaccessible cardinal $\delta$ such that $V_\delta\prec V$---despite the fact that $\Gamma_1\sub\Gamma_0$ and therefore $\BFA_{\aleph_1}(\Gamma_0)$ (which is implied by $\BCFA(\Gamma_0)$) implies $\BFA_{\aleph_1}(\Gamma_1)$ (which is implied by $\BCFA(\Gamma_1)$). Indeed, for this choice of $\Gamma_0$ and $\Gamma_1$ we have that if $\BCFA(\Gamma_0)$ holds and there is a measurable cardinal, then Club Bounding holds,\footnote{Club Bounding is the statement that for every function $f:\omega_1\longrightarrow\omega_1$ there is some $\alpha<\omega_2$ such that every canonical function for $\alpha$ bounds $f$ on a club.} whereas if $\BCFA(\Gamma_1)$ holds and there is an inaccessible cardinal $\delta$ such that $V_\delta\prec V$, then Club Bounding fails. 

Bounded category forcing axioms are to be seen as strong forms of bounded forcing axioms providing a picture of the universe as being saturated by \emph{only} forcing coming from the relevant class $\Gamma$. For classes $\Gamma_0\sub \Gamma_1$, even if it is of course true that $\BFA_{\lambda_\Gamma}(\Gamma_1)$ implies $\BFA_{\lambda_\Gamma}(\Gamma_0)$, there will typically\footnote{Our results in the second part of the paper provide some evidence that this will be the case for all choices of $\Gamma_0$ and $\Gamma_1$.} be statements $\sigma$ about $H_{\lambda_\Gamma^+}$ such that $\BFA_{\lambda_\Gamma}(\Gamma_1)$ implies $\sigma$, whereas $\lnot\sigma$ can be forced by a forcing in $\Gamma_0$ and, once forced, will be preserved by subsequent forcing in $\Gamma_0$. This explains why $\sigma$ will follow from $\BCFA(\Gamma_1)$ whereas it will fail in the $\BCFA(\Gamma_0)$ model.

One lesson to be learned from these incompatibility results is that natural forms of (bounded) forcing axioms do not, by themselves, favour a universist conception of set theory. There is unavoidable branching at the level of these axioms; in particular, the set-theorist with a universist mindset will need additional criteria---beyond the `naive' view on maximality provided by looking at the containment relation between the classes $\Gamma$ under consideration---to favour one of these axioms over the others. This should be compared with the fact that, as an empirical fact, consistent large cardinal axioms seem to be orderable under implication.\footnote{At least upwards directed, in the sense that for any two large cardinal axioms $A_1$ and $A_2$ there always seems to be a large cardinal axiom $A_3$ subsuming both $A_1$ and $A_2$. This seems to apply both to large cardinal axioms over $\ZFC$ and to the family of large cardinal axioms in the wider $\ZF$ context.} A reasonable additional criterion available to the universist when assessing bounded category forcing axiom---consistent with the view that these axioms are indeed strong forms of bounded forcing axioms---could be to focus on maximizing the class of $\Pi_2$ sentences over $H_{\lambda_\Gamma^+}$ implied by the axiom.\footnote{And, after all, the mathematical applicability of bounded forcing axioms is correlated to their $\Pi_2$ consequences for the theory of $H_{\lambda_\Gamma^+}$.} In this respect, when looking at classes $\Gamma$ with $\lambda_\Gamma=\omega_1$, the class that fares the best is of course the class of all forcing notions preserving stationary subsets of $\omega_1$ (we will denote this class by $\SSP$). This move of course amounts to ignoring the completeness granted by the axioms and instead focusing on the forcing axiom side of the axiom. When making it, we are probably taking the $\Pi_2$ maximality for $H_{\lambda_\Gamma^+}$ secured by the axiom as the main object of interest of strong (bounded) forcing axioms, and regard any possible completeness for the theory of $H_{\lambda_\Gamma^+}$ modulo forcing as a welcome extra feature, of foundational interest, that these axioms may have if they are strong enough.

\subsection{Model companionship versus bounded category forcing axioms}
We now point out the relevance of our results to the notions of model completeness and model companionship introduced by Robinson. There is a growing body of evidence relating forcing axioms, generic absoluteness results, and the generic 
multiverse to these model-theoretic notions. 
These notions describe, 
in a model-theoretic terminology 
applicable to an arbitrary first order theory $T$, 
the closure properties 
of algebraically closed fields and the way the 
elementary class given by such fields 
sits inside the elementary class given by arbitrary fields. 
Applied to the set-theoretic realm, the basic idea is that $H_{\omega_1}$ plays with respect to the generic multiverse the role $\mathbb{C}$ does for arbitrary fields, while (assuming forcing axioms)
$H_{\omega_2}$ plays this same role with respect to the generic multiverse given by forcings in an appropriate class
(e.g. proper, $\SSP$, etc).
 
 A first key notion in this context is that of existentially closed structures in a signature $\tau$.
A $\tau$-structure $\mathcal{M}$ is existentially closed in a superstructure $\mathcal{N}$ if 
$\mathcal{M}\prec_1 \mathcal{N}$. 
For a $\tau$-structure $\mathcal{M}$ with domain $M$, 
let us write $(M,\tau^M)$ as a shorthand for $(M,R^{\mathcal{M}}:R\in\tau)$.

Consider a signature $\tau_{\bool{ST}}$ in which one adds a predicate symbol $R_\phi$ of arity $n$ 
for any $\Delta_0$ formula $\phi(x_1,\dots,x_n)$ and interprets in the models of set theory these predicate symbols $R_\phi$ as the extension of the formula $\phi$.
In this set-up, one of the key consequences of Shoenfield's absoluteness is that 
$(H_{\omega_1}^V,\tau_{\bool{ST}}^V)\prec_1 (H_{\kappa}^{V[G]},\tau_{\bool{ST}}^{V[G]})$ whenever $G$ is a forcing extension of $V$
and $\kappa$ is an uncountable cardinal in $V[G]$; actually Shoenfield's absoluteness states that
$H_{\omega_1}^V$ is existentially closed in all of its well-founded 
$\tau_{\bool{ST}}$-superstructures which model $\ZFC^-$ (i.e. $\ZFC$ minus the power-set axiom).

Consider now bounded forcing axioms; these axioms state that 
$(H_{\omega_2}^V,\tau_{\bool{ST}}^V)\prec _1 (H_\kappa^{V[G]},\tau_{\bool{ST}}^{V[G]})$ whenever $G$ is $V$-generic for a forcing notion $P$ in a given class of forcings $\Gamma$ (stationary set preserving, proper, semiproper, etc) and 
$\kappa\geq\omega_2^{V[G]}$; once again, these axioms assert that $H_{\omega_2}^V$ is existentially closed in  its 
$\tau_{\bool{ST}}$-superstructures which model $\ZFC^-$ 
 \emph{and} are obtained by 
\emph{certain types of} forcings.

Let us explore briefly the notion of $T$-existentially closed structure and show how such structures are produced in model theory. 
Given a first order theory $T$ for a signature $\tau$, a $\tau$-structure 
$\mathcal{M}$ is $T$-e.c.\ if
$\mathcal{M}\prec_1\mathcal{N}$ whenever $\mathcal{N}$ is a superstructure of $\mathcal{M}$
which realizes $T_\forall$ (the universal fragment of $T$). 

Note that neither $\mathcal{M}$ nor $\mathcal{N}$ may be models of $T$, the only sure thing is that they model\footnote{For example $T$ is the theory of fields elementarily equivalent to 
$\mathbb{Q}$ in signature $\bp{+,\cdot,0,1}$, $K$ is algebraically closed, $L\supseteq K$ is a ring which is not a field.
Then $K$ is $T$-e.c., $K\prec_1 L$, both are models of $T_\forall$, but neither of them models $T$.} $T_\forall$.
A key (and not so trivial) fact which will play an essential role in our arguments is that whenever $\mathcal{M}$ is
$T$-e.c., so is $\mathcal{N}$ if $\mathcal{N}\prec_1\mathcal{M}$.
The standard example of a $T$-e.c.\ structure is an algebraically closed field, where $T$ is the theory of fields in signature 
$\bp{+,\cdot,0,1}$: if $K$ is algebraically closed, for any $L\supseteq K$
which models\footnote{Note that $L$ may not even be a ring, it is just a structure with no-zero divisors, where $+,\cdot$ satisfy the commutativity, associatitvity, and distributivity laws, and $0$ and $1$ are the neutral elements of $+,\cdot$. $L$ is a ring if it satisfies the $\Pi_2$-sentence stating that $(L,+,0)$ is a group.} $T_\forall$, any $\Sigma_1$ formula with parameters in $K$ (i.e. statements of the form
$\exists \vec{y} \qp{\bigwedge_{i=1}^n p_i(\vec{y})=0\wedge\bigwedge_{j=1}^m q_j(\vec{y})\neq 0}$ with $p_i,q_j$ polynomials with coefficients in $K$) realized in $L$ is already true in $K$.

How does one construct a $T$-e.c.\ structure? The simplest way is to start with a model $\mathcal{M}_0$ 
of $T_\forall$ 
and construct using some book-keeping device a chain
$(\mathcal{M}_\alpha:\alpha<\kappa)$ of models of $T_\forall$ in which at each stage $\alpha$ one tries to make true in  
$\mathcal{M}_{\alpha+1}$ some existential formula with parameters 
in\footnote{Much in the same way one builds the algebraic closure of a field $L$ by passing
from $L=L_0$ to $L_1$ the field of fractions of $L_0[x]/p_0(x)$ (with $p_0$ an irreducible polynomial with coefficients in $L$), 
and then inductively from $L_n$ to the field of fractions of $L_{n}[x]/p_{n}(x)$ with the $p_n$s given by some book-keeping device.}
$\mathcal{M}_\alpha$. Note that there is tension between the constraint given by $\mathcal{M}_\alpha\models T_\forall$ and the requirement that $\mathcal{M}_{\alpha+1}$ realizes some $\Sigma_1$ formula with parameters in 
$\mathcal{M}_\alpha$ (for example this formula cannot be the negation of some universal axiom of $T$).

A key point is that an increasing chain of models of $T_\forall$ is still a model of $T_\forall$, hence the construction does not stop at limit levels; now if $\kappa$ is large enough, at stage $\kappa$ all existential 
formulae with parameters in some $\mathcal{M}_\alpha$ for $\alpha<\kappa$ have been realized (if possible) in some 
$\mathcal{M}_\beta$ with $\beta<\kappa$, hence $\mathcal{M}_\kappa$ is $T$-e.c.

Compare this procedure with the standard proof of the consistency of bounded forcing axioms: for example 
to establish the consistency of $\BPFA$ 
one does exactly the same, but now one starts from $\mathcal{M}_0=(V,\in)$ and in passing from 
$\mathcal{M}_\alpha=(V[G_\alpha],\in)$ to
$\mathcal{M}_{\alpha+1}=(V[G_{\alpha+1}],\in)$, one can use only forcings which are proper in $V[G_\alpha]$; moreover,
to catch one's tail one must continue the iteration for a $\kappa$ which is a reflecting cardinal. 
Also, in this case 
there is tension 
between realizing some new $\Sigma_1$ formula with parameters in $H_{\omega_2}^{V[G_\alpha]}$, and doing so by 
using a proper forcing (which preserves the $\Pi_1$ formula, for 
$\tau_{\bool{ST}}$ in parameter $S\subseteq\omega_1$,
\emph{$S$ is stationary}).

Now one of the guiding ideas of this paper is: what happens if we continue this iterated construction along all the ordinals?
If the ordinals are long enough, one should end up with a structure where all existential formulae which can be 
consistently made true by a proper forcing have been made true.
Note, in addition, 
that $V[G]$ should also be a structure 
which resembles $H_{\omega_2}$: on the one hand, every ordinal above $\omega_1^V$ will have been collapsed to $\omega_1^V$ at some point in the iteration;  
on the other hand,
we should also expect that 
the regularity of $\Ord$ is preserved as all iterands are set-sized and, moreover, $\omega_1$ is preserved as the iteration is proper. Hence, the $\Ord$-length iteration is ${<}\Ord$-CC, $\omega_1$-preserving, and collapsing all uncountable ordinals to size $\aleph_1$, if things are properly organized.

In the above situation, we can look at whether there are stages $\alpha$ for which $H_{\omega_2}^{V[G_\alpha]}$ is an actual elementary substructure of $V[G]$. And if the ordinals are long enough (e.g., $\Ord$ is Mahlo), this will in fact be the case. 
We can pursue this approach not only for proper forcings but for a variety of 
classes $\Gamma$ including the class 
 of proper forcing notions, semiproper forcing notions, $\SSP$, etc. 

It turns out that if $\Gamma$ is a sufficiently well-behaved class,\footnote{All of the above class are.} then the corresponding $\Ord$-length iteration is equivalent to the category forcing $(\Gamma, \leq_\Gamma)$. We can then define the bounded category forcing axiom for $\Gamma$, $\BCFA(\Gamma)$, as the assertion that $H_{\omega_2}^V$ is an elementary substructure of $V[H]$, for some generic extension $H$ of $V$ by $(\Gamma, \leq_\Gamma)$.  Working in G\"odel-Bernays or in Morse-Kelley set theory, this is a perfectly meaningful statement.
 Using again the fact that $\Gamma$ is well-behaved, we can then prove, in the presence of our (mild) large cardinal assumption, that for every forcing extension $V[G]$ via a member from $\Gamma$ there is a further forcing extension $V[G][g]$ via a member from $\Gamma^{V[G]}$ satisfying $\BCFA(\Gamma)$;\footnote{We are of course identifying $\Gamma$ with some reasonable definition of it.} and furthermore, if $V\models\BCFA(\Gamma)$ and $G$ and $g$ are as above, then $H_{\omega_2}^V$ and $H_{\omega_2}^{V[G][g]}$ have the same theory.    
 
 These bounded category for axiom for $\Gamma$ not only imply the corresponding
bounded forcing axioms but also strengthen the 
resurrection axioms introduced by Johnstone and Hamkins \cite{HAMJOH} and their iterated 
version introduced by Audrito and the second author \cite{AUDVIA}. We are now in a position to briefly outline how these results compare to the notion of
model completeness and model companionship.

A first order theory $T$ is \emph{model complete} if and only if $\mathcal{M}\prec \mathcal{N}$ whenever $\mathcal{N}$ is a superstructure of $\mathcal{M}$ and they are both models of $T$; equivalently if and 
only if any model of $T$ is $T_\forall$-e.c.. 
$T$ is \emph{the model companion of $S$} if
$T$ is model complete and if every model of $T$ embeds into a model of $S$ and vice versa; equivalently,
if $T_\forall=S_\forall$ and any model of $T$ is $S$-e.c.

A standard example is obtained by taking $S$ to be the theory of rings with no zero divisors in signature $\bp{+,\cdot,0,1}$ and $T$ the theory of algebraically closed fields in the same signature.

Now let us look at the $\Gamma$-generic multiverse (for a sufficiently well-behaved  class $\Gamma$):
\[
\bp{H_\kappa^{V[G]}: \, \kappa\geq\omega_2^{V[G]},\, G\, \text{ is $V$-generic for a forcing in $\Gamma$}}.
\]

The above considerations show that the class
\[
\bp{H_{\omega_2}^{V[G]}: \, V[G]\models\BCFA(\Gamma),\, G\, \text{ is $V$-generic for a forcing in $\Gamma$}}
\]
sits inside the $\Gamma$-generic multiverse much in the same way the elementary class of models of $T$ 
sits inside the elementary class of models of $S$, whenever $T$ is the model companion of $S$.
%

These considerations do \emph{not} establish that the theory of $H_{\omega_2}$ is model complete if $V\models\BCFA(\Gamma)$. However, much stronger connections between generic absoluteness, forcing axioms, and model companionship can be obtained if one works with richer signatures. For instance, 
\cite{VIAVENMODCOMP} shows that the theory of $H_{\omega_1}^V$ is the model companion of the theory of 
$V$ in the signature $\tau_\bool{UB}=\tau_{\bool{ST}}\cup\bool{UB}^V$ where 
$\bool{UB}^V$
is the class of universally Baire sets, and each $B\in\bool{UB}^V$ defines a predicate symbol for $\tau_\bool{UB}$.
Using the 
result of the first author and Schindler establishing that Woodin's axiom (*) follows from 
$\bool{MM}^{++}$ \cite{ASPSCH(*)}, 
the second author \cite{VIATAMSTII} has shown that in models of $\bool{MM}^{++}$ where $\bool{UB}^\sharp$ is invariant across forcing extensions, the theory of $H_{\omega_2}$ is the model companion of the theory of $V$
in signature $\tau_{\NS_{\omega_1},\bool{UB}}=\tau_\bool{UB}\cup\bp{\NS_{\omega_1}}$ (where $\NS_{\omega_1}$ is the non-stationary ideal on $\omega_1$, and is the canonical interpretation of its associated unary predicate symbol in 
$\tau_{\NS_{\omega_1},\bool{UB}}$). Note that assuming large cardinals, the $\tau_\bool{UB}$-theory of $H_{\omega_1}$ is generically invariant, while in \cite{VIATAMSTII} it is also shown that the universal fragment of the 
$\tau_{\NS_{\omega_1},\bool{UB}}$-theory of $V$ is invariant across the generic multiverse. Also,
the model completeness results of \cite{VIATAMSTII}  entail that the $\tau_{\NS_{\omega_1},\bool{UB}}$-theory of 
$H_{\omega_2}$ is invariant
across forcing extension of $V$ which model $\bool{MM}^{++}$. 
In particular, the results of the present paper are weaker than those obtained in \cite{VIATAMSTII} when predicated about the class $\Gamma$ of SSP-forcings, but as we will see below, they can also be asserted for a variety of classes $\Gamma$ other than $\SSP$ (for example proper, semiproper, etc);
finally the present paper generalizes and improves results appearing in 
\cite{HAMJOH,VIAMM+++,VIAMMREV,VIAAUD}, where appropriate strengthenings of forcing axioms are introduced with the aim of achieving similar goals.

\subsection{Unbounded category forcing axioms}

It is worth pointing out that bounded category forcing axioms are natural bounded forms of much stronger axioms, which we call \emph{category forcing axioms}. Given an absolutely well-behaved class $\Gamma$ of complete Boolean algebras, the category forcing axiom for $\Gamma$, $\CFA(\Gamma)$, asserts that the class of certain pre-saturated towers of ideals whose regular open completion is in $\Gamma$ and which have certain `rigidity' properties is dense in the category forcing $(\Gamma, \leq_\Gamma)$.\footnote{By `tower of ideals' we mean the forcing notion resulting from naturally relativizing the definition of the stationary tower forcing to some given family of ideals of sets with suitable properties. Forcing with such a forcing gives rise to an elementary embedding $j:V\longrightarrow M$, for a certain subclass $M$ of the generic extension. In the current situation, $j$ has critical point $\lambda_\Gamma^+$.} The theory of category forcing axioms, developed by the authors of the present paper, generalizes the theory of the strong form of Martin's Maximum know as $\bool{MM}^{+++}$. The axiom $\bool{MM}^{+++}$, isolated by the second author, is in fact $\CFA(\SSP)$. 

In the presence of sufficiently strong large cardinals,\footnote{The relevant large cardinals---typically in the region of the existence of a proper class of supercompact cardinals together and one almost super-huge cardinal---are now much stronger than the ones we need for our analysis of \emph{bounded} category forcing axioms.} we obtain the following.
\begin{enumerate}
\item $\CFA(\Gamma)$ can always be forced by a forcing in $\Gamma$, and in fact by the intersection of $(\Gamma, \leq_\Gamma)$ with $V_\delta$ for some supercompact cardinal $\delta$.
\item $\CFA(\Gamma)$ implies a strong form of the forcing axiom $\FA_{\lambda_\Gamma}(\Gamma)$.
\item If $\bool{B}\in\Gamma$, $G$ is $V$-generic for $\bool{B}$, and $V[G]\models\CFA(\Gamma)$, then $\mtcl C_{\lambda_\Gamma^+}^V$, the $\lambda_\Gamma^+$-Chang model as computed in $V$, is an elementary substructure of $\mtcl C_{\lambda_\Gamma^+}^{V[G]}$.
\end{enumerate}



In (3) above, given an infinite cardinal $\lambda$, \emph{the $\lambda$-Chang model}, denoted by $\mathcal C_\lambda$, is the $\subseteq$-minimal transitive model of $\ZF$ containing all ordinals and closed under $\lambda$-sequences. It can be construed as $\mathcal C_\lambda=L(\Ord^\lambda)$.\footnote{Where $L(\Ord^\lambda)=\bigcup_{\alpha\in\Ord}L(\Ord^\lambda\cap V_\a)=\bigcup_{\a\in \Ord}L[\Ord^{\lambda}\cap V_\alpha]$.}  In particular, $L(\mathcal P(\lambda))$ is a definable inner model of $\mathcal C_\lambda$, and $H_{\lambda^+}\subseteq \mathcal C_\lambda$ is definable in $\mathcal C_\lambda$ from $\lambda$. As is well-known, $\mathcal C_\lambda$ need not satisfy $\AC$. For instance, by a result of Kunen \cite[Thm. 1.1.6 and Rmk. 1.1.28]{LARSONBOOK}, if there are $\lambda^+$-many measurable cardinals, then $\mathcal C_\lambda\models\lnot\AC$. 

It is not difficult to see that if $\Gamma$ is an absolutely well-behaved class, then $\CFA(\Gamma)$ implies $\BCFA(\Gamma)$ (again, assuming sufficiently strong large cardinals). In particular, all axioms $\CFA(\Gamma)$ obtained from considering the classes $\Gamma$ from the second part of the present paper are provably pairwise incompatible.

We should point out that, unlike the bounded category forcing axioms we will be studying in the present paper, the stronger category forcing axioms we are referring to here admit a first order definition. In fact, given that the relevant classes $\Gamma$ admit a $\Delta_2$ definition, possibly with some parameter, it is easy to see that $\CFA(\Gamma)$ can be expressed as a $\P_3$ sentence (in the same parameter).   

We should clarify that, despite the more attractive aspects of category forcing axioms, compared to their bounded forms, we have opted to present in this paper only the theory of the latter. The reason for this is that the theory of their unbounded counterparts is much more involved and would require a longer article. This work will appear elsewhere.

\section{Forcing with forcings} \label{sec:forcingwithforcings}

The aim of this section is to develop a general theory of category forcings i.e.\ class forcings $(\Gamma,\leq_\Gamma)$ with $\Gamma$ a 
definable class of forcings and $P\leq_\Gamma Q$ if whenever $G$ is $V$-generic for $P$, in $V[G]$ there is $H$ $V$-generic for $Q$ such that
$V[G]$ is a generic extension of $V[H]$ by a forcing in $\Gamma^{V[H]}$. It will become apparent
that such an analysis can be systematically carried out
if one focuses on the subclass of $\Gamma$ given by the complete boolean algebras in $\Gamma$. 
First of all this is no loss of information since our analysis will identify properties of elements of $\Gamma$ which are invariant with respect to 
boolean completions.
For example ``whenever $G$ is $V$-generic for $P$, in $V[G]$ there is $H$ $V$-generic for $Q$'' can be formulated in algebraic terms as
``there is a (possibly non-injective) complete homomorphism $i:\RO(Q)\to\RO(P)$''.
We hope that this example makes transparent that, by focusing on cbas rather than posets, 
we will be able to leverage the algebraic structure of complete boolean algebra to 
greatly simplify the formulation of certain concepts, as well as many proofs. 
However most of the forcings in $\Gamma$ we will consider are not naturally presented as cbas; for example one of our main result will be to show that $(\Gamma\cap V_\delta,\leq_\Gamma\cap V_\delta)$, which is not even separative, is in $\Gamma$ for most inaccessible cardinals $\delta$.
So we will feel free to decide depending on the issue under examination whether
to focus on $\Gamma$ as a class of cbas or rather as a class of partial orders, keeping in mind that our results applie equally well in both set-ups. 

\begin{notation}
Given a cba $\bool{B}\in V$ and a family $A$ of elements of $V^{\bool{B}}$,
\[
A^\circ=\bp{\ap{\tau,1_\bool{B}}: \tau\in A}.\footnote{This very convenient notational trick is due to Asaf Karagila.}
\]

For a partial order $P$ and $p\in P$, $P\restriction p=\bp{q\in P:q\leq p}$.
\end{notation}

\begin{definition}
Let $\bool{B}$ be an infinite complete boolean algebra and $\dot{\kappa}\in V^{\bool{B}}$ be 
such that 
\[
\Qp{\dot{\kappa} \text{ is a regular cardinal}}_{\bool{B}}=1_\bool{B}.
\]
We define
\[
H_{\dot{\kappa}}^{\bool{B}}=\bp{\tau: \,\tau\in V^{\bool{B}},\, 
\Qp{\trcl(\tau)\text{ is hereditarily of size less than }\dot{\kappa}}_{\bool{B}}=1_{\bool{B}}}
\]
\end{definition}

$(H_{\dot{\kappa}}^{\bool{B}})^\circ\in \mathcal{V}^{\bool{B}}$ is a canonical name for 
$H_{\dot{\kappa}_G}^{V[G]}$ whenever $G$ is $V$-generic for $\bool{B}$.
More precisely:

\begin{quote}{}
Assume  $G$ is $V$-generic for $\bool{B}$. Then
\[
(H_{\dot{\kappa}}^{\bool{B}})^\circ_G=\bp{\tau_G: \, 
\tau\in H_{\dot{\kappa}}^{\bool{B}}}=H_{\dot{\kappa}_G}^{V[G]}.
\]
\end{quote}

On the face of its definition $H_{\dot{\kappa}}^{\bool{B}}$ is a proper class. 
To avoid unnecessary 
complications we can use Scott's trick and consider  just its elements whose rank is bounded 
by some sufficiently large fixed  $\alpha$. As we will see, in many cases of interest it suffices to take
$\alpha=|\bool{B}|$.

\begin{definition} Given a complete boolean algebra $\bool{B}$, we denote by $\dot{\omega_1}$ a canonically chosen member of $V^{\bool{B}}$ such that  \[
\Qp{\dot{\omega}_1 \text{ is the first uncountable cardinal}}_{\bool{B}}=1_\bool{B}.
\]
\end{definition}

There is of course a dependence on $\bool{B}$ in the above definition of $\dot{\omega_1}$. However, we will not need to make this dependence explicit in the notation. 

\subsection{Projective absoluteness} \label{subsec:genabsforax}

We want to 
sketch the reason why 
Solovay's model for set theory obtained by forcing with $\Coll(\omega,{<}\delta)$
gives the ``correct'' theory of projective sets. While doing so 
we outline how these results 
could be generalized to larger fragment
of the universe sets.
%
%
The following is a weakening of Woodin's generic absoluteness results for the $\omega$-Chang model:

\begin{theorem}\cite[Cor. 3.1.7]{LARSONBOOK} \textbf{(Woodin)}.
Assume $V$ is a $\ZFC$ model in which there are class-many Woodin cardinals. 
Let
$\phi(x_1,\dots,x_n)$ be  \emph{any} formula for the signature $\bp{\in}$ and $a_1,\dots,a_n\in H_{\omega_1}$.
TFAE:
\begin{enumerate}
\item $H_{\omega_1}^V\models\phi(a_1,\dots,a_n)$.
\item $V\models\Qp{H_{\dot{\omega}_1}\models\phi(\check{a}_1,\dots,\check{a}_n)}_{\bool{B}}=1_{\bool{B}}$ 
for some cba $\bool{B}$. 
\item $V\models\Qp{H_{\dot{\omega}_1}\models\phi(\check{a}_1,\dots,\check{a}_n)}_{\bool{B}}=1_{\bool{B}}$ 
for all cbas $\bool{B}$.
\end{enumerate} 
\end{theorem}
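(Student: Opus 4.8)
The plan is to prove the equivalences by a cycle (3)$\Rightarrow$(2)$\Rightarrow$(1)$\Rightarrow$(3), exploiting the fact that under class-many Woodin cardinals the theory of $H_{\omega_1}$ is completely insulated from set-forcing. The implication (3)$\Rightarrow$(2) is trivial, since the trivial boolean algebra $\bool{B}=\2$ witnesses the existential statement (or, more plainly, any cba does). The implication (2)$\Rightarrow$(1) is the heart of the matter on the ``upward'' side: if some cba $\bool{B}$ forces $H_{\dot\omega_1}\models\phi(\check a_1,\dots,\check a_n)$, then passing to a $V$-generic $G$ for $\bool{B}$ we have $H_{\omega_1}^{V[G]}\models\phi(a_1,\dots,a_n)$; now I would invoke Woodin's theorem that a proper class of Woodin cardinals is preserved by set-forcing, so $V[G]$ also has class-many Woodins, and then apply two-step generic absoluteness — comparing $V$ and $V[G]$ through a common further extension — to conclude $H_{\omega_1}^V\models\phi(a_1,\dots,a_n)$. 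Concretely, force over $V[G]$ with $\Coll(\omega,|\bool B|)$ to absorb $\bool B$; the resulting model is a set-forcing extension of both $V$ and $V[G]$, and by Woodin's generic absoluteness for projective-in-the-codes statements (i.e., arbitrary first-order statements over $H_{\omega_1}$, which is what $\Sigma^2_1$-absoluteness with Woodins buys us; see \cite[Cor.\ 3.1.7]{LARSONBOOK} itself applied in $V$ and in $V[G]$) the truth value of $\phi(a_1,\dots,a_n)$ over $H_{\omega_1}$ is the same in $V$, in $V[G]$, and in the common extension.

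For (1)$\Rightarrow$(3), I would again use that every cba $\bool B$ can be absorbed, after passing to $V^{\bool B}$, into a further collapse: if $G$ is $V$-generic for $\bool B$ then there is $g$ with $V[G][g]$ a $\Coll(\omega,\gamma)$-extension of $V$ for suitable $\gamma\geq|\bool B|$. By the preservation of class-many Woodin cardinals and generic absoluteness applied in $V$, $H_{\omega_1}^{V[G][g]}\models\phi(a_1,\dots,a_n)$; then since $H_{\omega_1}^{V[G]}\prec_{\Sigma_\omega}H_{\omega_1}^{V[G][g]}$ — more carefully, by applying the same generic absoluteness theorem inside $V[G]$, which also has class-many Woodins — we get $H_{\omega_1}^{V[G]}\models\phi(a_1,\dots,a_n)$. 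As $\bool B$ and $G$ were arbitrary, this is exactly (3). (One can also organize the whole argument as: (1)$\Leftrightarrow$ [$H_{\omega_1}\models\phi$ holds in every set-generic extension] by the cited corollary applied in each relevant model, and then observe that (2) and (3) are just reformulations of ``some extension'' and ``every extension''.)

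The only genuinely non-routine ingredient is the black box that a proper class of Woodin cardinals is preserved by arbitrary set-forcing, which licenses applying \cite[Cor.\ 3.1.7]{LARSONBOOK} not just in $V$ but in every set-generic extension of $V$; with that in hand, all three statements reduce to the single assertion ``$H_{\omega_1}\models\phi(\vec a)$ is invariant across the generic multiverse of $V$,'' from which the equivalences are immediate. I expect the main bookkeeping obstacle — not a real obstacle, but the thing to be careful about — to be the uniformity of parameters: one must check that $a_1,\dots,a_n\in H_{\omega_1}$ are genuinely elements of $H_{\omega_1}$ in all the intermediate models (they are, since $H_{\omega_1}$ is upward absolute for the reals coding them and the collapses add no new countable sets of ordinals below the ones already present — more precisely, $a_i\in H_{\omega_1}^V\subseteq H_{\omega_1}^{V[G]}$ and the $\check a_i$ are interpreted correctly), and that the formula $\phi$ is allowed to be arbitrary, so that no complexity restriction sneaks in. Everything else is the standard two-step generic absoluteness argument.
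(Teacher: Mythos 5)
The paper does not prove this statement at all: it is quoted verbatim from Larson's book as a known theorem of Woodin (the text even flags it as ``a weakening of Woodin's generic absoluteness results for the $\omega$-Chang model''), so there is no in-paper argument to compare yours against. Judged on its own terms, your outline is the standard and correct \emph{reduction} of the TFAE to the single underlying fact that, in the presence of class-many Woodin cardinals, $H_{\omega_1}^V\prec H_{\omega_1}^{V[G]}$ for every set-generic $G$: the absorption of $\bool{B}$ into $\Coll(\omega,|\bool{B}|)$ and the elementarity diagram comparing $V$ and $V[G]$ through the common collapse extension is exactly the device the paper itself uses later (the ``basic model-theoretic observation'' in the corollary on $\bool{PA}$), and your handling of the parameters $a_1,\dots,a_n$ and of the preservation of class-many Woodins is fine.

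The one thing to be honest about is that, as a self-contained proof, your argument is circular: the engine you invoke --- ``apply \cite[Cor.\ 3.1.7]{LARSONBOOK} in $V$ and in $V[G]$'' --- is the very statement being established, and the genuinely hard content (deriving full first-order elementarity of $H_{\omega_1}^V$ into $H_{\omega_1}^{V[G]}$ from Woodin cardinals, via the stationary tower or genericity iterations) is nowhere addressed. Since the paper also treats that content as a black box, this is acceptable here, but you should present the reduction as such rather than as a proof. One small inaccuracy: arbitrary first-order statements over $(H_{\omega_1},\in)$ with parameters in $H_{\omega_1}$ correspond to \emph{projective} statements in the codes, so what is needed is two-step projective generic absoluteness; $\Sigma^2_1$-absoluteness is a strictly stronger principle (about $\mathcal P(\mathbb R)$-level statements) requiring stronger hypotheses, and invoking it here is both overkill and mislabelled.
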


In particular, the second order theory of the natural numbers is provably invariant under set-sized forcing.

We will prove a weak form of the above theorem 
(considerably weakening the large cardinal assumptions);
this will 
be a first approximation to
the type of axiom we will introduce to get 
generic absoluteness for the theory of $H_{\lambda^+}$
with respect to arbitrary infinite cardinals $\lambda$.

\begin{notation}
For $\delta$ an ordinal
\begin{itemize}
\item
$p\in \Coll(\omega,{<}\delta)$ if and only if
$p:\omega\times\delta\to\delta$ is such that
$\dom(p)$ is finite and $p(n,\alpha)\in \alpha$ for all $\alpha\in\dom(p)$.

\item
$\bool{B}_\delta$ denotes the boolean completion of the forcing notion
$\Coll(\omega,{<}\delta)$.

\item
In models $(V,\mathcal{V})$ of $\MK$, 
$\Coll(\omega,{<}\Ord),\bool{B}_\Ord\in\mathcal{V}$ are the class forcings 
obtained replacing $\delta$ with $\Ord$.
\end{itemize}
\end{notation}

$\bool{B}_\Ord$ is a well defined element of $\mathcal{V}$ because
$\Coll(\omega,{<}\Ord)$ is ${<}\Ord$-CC, hence the suprema needed to define $\bool{B}_\Ord$
are suprema of set-sized antichains, and therefore $\bool{B}_\Ord$ is a proper class.

Generically $\Coll(\omega,{<}\delta)$ adds bijections 
between $\omega$ 
and all ordinals less than $\delta$,
while making
$\delta$ the first uncountable cardinal; $\Coll(\omega,{<}\Ord)$ therefore makes all sets existing in the generic extension hereditarily countable.

Solovay realized that these forcings produce natural models of the 
theory of projective sets (equivalently of the first order theory of $(H_{\omega_1},\in)$).

The following are well-known properties of $\bool{B}_\delta$ (which hold also for $\delta=\Ord$ 
by a straightforward generalization of the proofs).
\begin{theorem}
Let $(V,\mathcal{V})$ be a model of $\MK$. 
Assume $\delta$ is an inaccessible cardinal or $\delta=\Ord$.
Then: 
\begin{description}
\item[$\bool{B}_\delta$ is ${<}\delta$-CC]\cite[Theorem 15.17(iii)]{JECH}
 Hence $H_{\dot{\omega}_1}^{\bool{B}_\delta}=V_\delta\cap V^{\bool{B}_\delta}$.

\item[Universality]  \cite[Lemma 26.9]{JECH}
%
$\bool{B}_\delta$ contains as a complete subalgebra an isomorphic copy of any
cba of size smaller than $\delta$.

\item[Factor Lemma]\cite[Cor. 26.11]{JECH} For all cbas $\bool{B}\in V_\delta$,
\[
\bool{B}\ast\dot{\Coll}(\check{\omega},{<}\check{\delta})\cong 
\Coll(\omega,{<}\delta).
\]


\item[Logical Completeness]  $\bool{B}_\delta$ is homogeneous \cite[Thm 26.12]{JECH}. 
%
%

%
Hence 
\begin{align*}
T_\delta=&\{\phi(a_1,\dots,a_n):\, a_1,\dots,a_n\in H_{\omega_1},\\ 
&\Qp{H_{\dot{\omega}_1}\models\phi(\check{a}_1,\dots,\check{a}_n)}_{\bool{B}_\delta}=
1_{\bool{B}_\delta}\}
\end{align*}
is a complete first order theory.
%
%
%
\end{description}
\end{theorem}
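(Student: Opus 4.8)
The plan is to treat the four clauses in turn. For $\delta$ inaccessible each of them is precisely one of the classical facts already cited in the statement, so in each case I would recall the classical argument and then check that it survives verbatim when $\delta=\Ord$, working throughout in $\MK$. The single feature that makes the class case tractable is the ${<}\Ord$-chain condition: once one knows that every antichain of $\Coll(\omega,{<}\Ord)$ is a set, the suprema, name-rank and quotient arguments used in the inaccessible case all become legitimate at the class level, and (as the excerpt has already observed) $\bool{B}_\Ord$ is then a well-defined proper class.

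So I would first establish the chain condition. For $\delta$ inaccessible this is the $\Delta$-system argument of \cite[Theorem 15.17(iii)]{JECH}. For $\delta=\Ord$ I would rerun it: partition a putative antichain $A$ into the classes $A_n=\bp{p\in A:\vp{\dom(p)}=n}$; fixing one of the finitely many possible order-types of the finite set of ordinals occurring in $\dom(p)\cup\ran(p)$ lets one view the corresponding subclass as a class of finite tuples of ordinals, and the canonical class well-ordering of $\Ord^{<\omega}$ then allows one to extract from it a proper subclass forming a $\Delta$-system with a fixed finite root on which all members agree; any two members of such a subclass have a common extension, contradicting that $A$ is an antichain. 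Hence every antichain of $\Coll(\omega,{<}\Ord)$ is a set. The ``hence'' in the first clause is then the standard remark: $\bool{B}_\delta$ is ${<}\delta$-CC and, for $\delta$ inaccessible, regularity of $\delta$ lets one replace every $\bool{B}_\delta$-name by one of rank $<\delta$, giving $H_{\dot{\omega}_1}^{\bool{B}_\delta}=V_\delta\cap V^{\bool{B}_\delta}$; for $\delta=\Ord$ both sides equal all of $V^{\bool{B}_\Ord}$, since $\Coll(\omega,{<}\Ord)$ makes every set hereditarily countable.

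Next, Universality and the Factor Lemma. For Universality, a cba of size ${<}\delta$ (any set-sized cba, when $\delta=\Ord$) completely embeds into $\RO(\Coll(\omega,\kappa))$ for a suitable $\kappa<\delta$ by the standard absorption property of the finite-support collapse, and $\Coll(\omega,\kappa)$ in turn completely embeds into $\Coll(\omega,{<}\delta)$ via a block of coordinates; neither step is sensitive to whether $\delta$ is inaccessible or $\Ord$. For the Factor Lemma I would invoke the characterization of $\RO(\Coll(\omega,{<}\delta))$ up to isomorphism underlying \cite[Cor. 26.11]{JECH} --- roughly, that a cba which is ${<}\delta$-CC, of size $\leq\delta$, and collapses every cardinal in $[\omega_1,\delta)$ to $\omega$ below each of its conditions is isomorphic to $\RO(\Coll(\omega,{<}\delta))$ --- after checking that $\bool{B}\ast\dot{\Coll}(\check{\omega},{<}\check{\delta})$ (equivalently $\bool{B}_\delta/i[\dot G_{\bool{B}}]$ for a complete embedding $i:\bool{B}\to\bool{B}_\delta$ supplied by Universality) has these properties: it does, since $\vp{\bool{B}}<\delta$ keeps the first factor ${<}\delta$-CC and small (and keeps $\delta$ inaccessible in $V^{\bool{B}}$), while the tail collapse still collapses everything ${<}\delta$ below every condition. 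For $\delta=\Ord$ one translates ``size $\leq\delta$'' into ``proper class with set-sized antichains'' and ``collapses $[\omega_1,\delta)$ to $\omega$'' into ``makes every set hereditarily countable below every condition''; the proof of the characterization then becomes a class-length recursion building a Boolean isomorphism, each stage of which is set-sized thanks to the ${<}\Ord$-CC. Verifying that this recursion is admissible in $\MK$ is where I expect to spend most of the effort, and this is the main obstacle.

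Finally, homogeneity and completeness of $T_\delta$. For each $\alpha<\delta$ and each permutation of $\omega$, $\Coll(\omega,{<}\delta)$ carries an automorphism permuting the domain of its $\alpha$-th coordinate; given two conditions only finitely many coordinates are nontrivial, so one can move all relevant domains simultaneously to arrange that the image of the first condition has, coordinatewise, domain disjoint from that of the second, hence is compatible with it. Thus $\Coll(\omega,{<}\delta)$ is weakly homogeneous, and for $\delta=\Ord$ the same (now class-sized but set-like) automorphisms work. Since every parameter $a\in H_{\omega_1}$ is named by $\check{a}$, which is fixed by every automorphism, the usual weak-homogeneity argument yields $\Qp{H_{\dot{\omega}_1}\models\phi(\check{a}_1,\dots,\check{a}_n)}_{\bool{B}_\delta}\in\bp{0_{\bool{B}_\delta},1_{\bool{B}_\delta}}$ for all $\phi$ and all $a_1,\dots,a_n\in H_{\omega_1}$; in particular exactly one of $\phi,\lnot\phi$ lies in $T_\delta$. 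Since moreover $\Qp{H_{\dot{\omega}_1}\models\cdot}_{\bool{B}_\delta}$ respects logical consequence and sends unsatisfiable sentences to $0_{\bool{B}_\delta}\neq 1_{\bool{B}_\delta}$, the set $T_\delta$ is consistent, so it is a complete first order theory.
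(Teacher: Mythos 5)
Your proposal is correct and matches the paper's treatment: the paper offers no proof beyond citing the four classical results in Jech and remarking that they "hold also for $\delta=\Ord$ by a straightforward generalization of the proofs," and your write-up reconstructs precisely those standard arguments ($\Delta$-system for the chain condition, name-rank bounding, absorption into $\Coll(\omega,\kappa)$, the characterization of the Levy collapse, and weak homogeneity) together with the class-level adaptations in $\MK$ that the ${<}\Ord$-CC makes possible. You also correctly identify the factor lemma's class-length back-and-forth as the only step requiring genuine care.
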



%
%
%
%
%
%

%


\begin{definition}
$\bool{PA}$ (\textrm{Projective Absoluteness}) 
\[
H_{\omega_1}\models T_{\Ord}.
\]
\end{definition}

$\bool{PA}$ states that 
for all formulae $\phi(x_1,\dots,x_n)$ and $a_1,\dots,a_n\in H_{\omega_1}$ we have that
\[
H_{\omega_1}\models\phi(a_1,\dots,a_n)\text{ if and only if }
\Qp{\phi(\check{a}_1,\dots,\check{a}_n)}_{\bool{B}_\Ord}=1_{\bool{B}_\Ord}.
\]

  A complete boolean algebra $\bool{C}\in V$ satisfies $\bool{PA}$ if and only if 
(viewing it as a complete subalgebra of $\bool{B}_\Ord$) for all 
$\tau_1,\dots,\tau_n\in H_{\dot{\omega}_1}^{\bool{C}}$ and formulae $\phi(x_1,\dots,x_n)$
without class quantifiers, we have that
\[
\Qp{\phi(\tau_1,\dots,\tau_n)}_{\bool{B}_\Ord}=\Qp{(H_{\dot{\omega}_1}^{\bool{C}})^\circ\models
\phi(\tau_1,\dots,\tau_n)}_{\bool{C}}.
\]

\begin{theorem}
Assume 
\[
(V,\mathcal{V})\models\MK+\Ord\textrm{ is Mahlo}.
\]
 \emph{(i.e. every club subclass of the ordinals in $\mathcal{V}$ contains a regular cardinal)}.

Let $(\Omega,\to^\Omega)$ be the category given by the class of cbas as objects and 
the class of complete homomorphisms between them as arrows.\footnote{Note that the homomorphism may not be injective. The key point is that the preimage $i^{-1}[G]$ by a complete (possibly non-injective) homomorphism $i:\bool{B}\to \bool{C}$ of a $V$-generic filter $G$ for $\bool{C}$ is a $V$-generic filter for $\bool{B}$.}

Set $\bool{C}\leq_\Omega\bool{B}$ if there is $i:\bool{B}\to\bool{C}$ in $\to^\Omega$.

Then 
\[
D_{\bool{PA}}=\bp{\bool{B}\in\Omega: \bool{B} \text{ satisfies }\bool{PA} }
\]
is dense in the class partial order $(\Omega,\leq_\Omega)$.
\end{theorem}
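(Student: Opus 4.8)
The plan is to show that given an arbitrary cba $\bool{B}_0 \in \Omega$, there is a cba $\bool{C} \leq_\Omega \bool{B}_0$ satisfying $\bool{PA}$; that is, we must find a complete homomorphism $i:\bool{B}_0 \to \bool{C}$ with $\bool{C}$ realizing Projective Absoluteness. The natural candidate is $\bool{C} = \bool{B}_0 \ast \dot{\Coll}(\check\omega, {<}\check\delta)$ for a suitably chosen inaccessible $\delta$, which by the Factor Lemma is isomorphic to $\bool{B}_\delta = \RO(\Coll(\omega,{<}\delta))$ (after absorbing $\bool{B}_0$, which requires $|\bool{B}_0| < \delta$ and Universality). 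So the real content is: \emph{for a club class of inaccessibles $\delta$, $\bool{B}_\delta$ satisfies $\bool{PA}$} — i.e., for all formulas $\phi$ (without class quantifiers) and all $\bool{B}_\delta$-names $\tau_1,\dots,\tau_n$ for elements of $H_{\dot\omega_1}$, the truth value $\Qp{H_{\dot\omega_1}\models\phi(\vec\tau)}_{\bool{B}_\delta}$ agrees with $\Qp{(H_{\dot\omega_1}^{\bool{B}_\delta})^\circ \models \phi(\vec\tau)}_{\bool{B}_\Ord}$, viewing $\bool{B}_\delta$ as a complete subalgebra of $\bool{B}_\Ord$.

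The key step is a reflection argument using that $\Ord$ is Mahlo. First, since $\Coll(\omega,{<}\Ord)$ is ${<}\Ord$-CC, for any $\bool{B}_\Ord$-generic $H$ and any $\bool{B}_\Ord$-name $\sigma$ for a hereditarily countable set, there is some ordinal $\alpha$ such that $\sigma$ is essentially a $\bool{B}_\alpha$-name; combined with the Factor Lemma, $H\cap\bool{B}_\alpha$ is $V$-generic for $\bool{B}_\alpha$ and $\bool{B}_\Ord/(H\cap\bool{B}_\alpha)$ is (forced to be) $\Coll(\omega,{<}\Ord)$ as computed in $V[H\cap\bool{B}_\alpha]$. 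Now consider the class $C$ of inaccessible $\delta$ such that $V_\delta \prec_{\Sigma_1}$-style captures the relevant parameters — more precisely, such that $H_{\omega_1}^{V[H\cap\bool{B}_\delta]} = V_\delta[H\cap\bool{B}_\delta]$ correctly computes the truth values, which one gets from a Löwenheim–Skolem / reflection argument applied inside $\mathcal{V}$ together with $\bool{B}_\delta$ being ${<}\delta$-CC. The point is that $C$ is a club subclass of $\Ord$, so by Mahloness it contains an inaccessible $\delta$; and for such $\delta$ the two-way implication "$H_{\dot\omega_1}^{\bool{B}_\delta}\models\phi$" iff "$H_{\dot\omega_1}^{\bool{B}_\Ord}\models\phi$" holds because the $\Coll(\omega,{<}\Ord)$-extension of $V[H\cap\bool{B}_\delta]$ agrees with $V[H]$ on the first-order theory of $H_{\omega_1}$ by homogeneity (Logical Completeness) applied in $V[H\cap\bool{B}_\delta]$, whose $\Coll(\omega,{<}\Ord)$ is absolute.

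To finish, given arbitrary $\bool{B}_0 \in \Omega$, pick $\delta \in C$ inaccessible with $\delta > |\bool{B}_0|$ (possible since $C$ is unbounded), so by Universality $\bool{B}_0$ embeds completely into $\bool{B}_\delta$, and by the Factor Lemma $\bool{B}_\delta \cong \bool{B}_0 \ast \dot{\Coll}(\check\omega,{<}\check\delta)$, giving a complete homomorphism $i:\bool{B}_0\to\bool{B}_\delta$ and hence $\bool{B}_\delta \leq_\Omega \bool{B}_0$; and $\bool{B}_\delta \in D_{\bool{PA}}$ by the previous paragraph. Since $\bool{B}_0$ was arbitrary, $D_{\bool{PA}}$ is dense in $(\Omega,\leq_\Omega)$.

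The main obstacle I expect is making the reflection argument fully rigorous in the $\MK$ (second-order) setting: one needs to be careful that "club subclass of $\Ord$" genuinely applies — i.e., that the set of good $\delta$ is definable (with class parameters) and closed — and that the forcing-theoretic manipulations with the class forcing $\Coll(\omega,{<}\Ord)$ (generic filters, the Factor Lemma at the class level, homogeneity) are legitimate in $\mathcal{V}$. The bookkeeping that a $\bool{B}_\Ord$-name for a hereditarily countable set reflects to some $\bool{B}_\alpha$, and that truth values are computed correctly below an inaccessible $\delta$ closing off under the relevant Skolem functions, is the technical heart; everything else is an application of the quoted properties of $\bool{B}_\delta$.
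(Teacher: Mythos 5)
Your overall architecture matches the paper's: by Universality, $\bool{B}_\delta\leq_\Omega\bool{B}_0$ for any inaccessible $\delta>|\bool{B}_0|$, so everything reduces to finding, above any given rank, an inaccessible $\delta$ for which $\bool{B}_\delta$ satisfies $\bool{PA}$, and Mahloness supplies such $\delta$ inside a suitable club. The gap is in the one step that carries all the content: your justification that such a $\delta$ actually works. You define the club $C$ essentially as ``the class of $\delta$ where the truth values are computed correctly'' and assert it is a club by ``L\"owenheim--Skolem / reflection'', and then for the two-way implication you invoke homogeneity of $\Coll(\omega,{<}\Ord)^{V[G]}$ (with $G=H\cap\bool{B}_\delta$). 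Homogeneity does not do this job: it tells you that, for parameters in $V[G]$, the truth of $\phi$ in the further extension $V[H]$ is decided by the trivial condition, but it says nothing about whether that decided value agrees with the truth of $\phi$ in $H_{\omega_1}^{V[G]}=V_\delta[G]$ --- and that agreement is exactly the statement ``$\bool{B}_\delta$ satisfies $\bool{PA}$'' that you are trying to prove. As written, the definition of $C$ presupposes the conclusion and the mechanism offered for it is a non sequitur.

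The ingredient that actually closes the loop in the paper is full first-order elementarity $V_\delta\prec V$ (the class $C=\bp{\gamma: (V_\gamma,\in)\prec(V,\in)}$ is a club in $\MK$, and Mahloness gives inaccessible members of it), applied to the \emph{forcing relation}: since $p\Vdash_{\Coll(\omega,{<}\Ord)}\phi(\tau_1,\dots,\tau_n)$ is definable in $(V,\in)$ from parameters in $V_\delta$, for inaccessible $\delta\in C$ a condition $p\in\Coll(\omega,{<}\delta)$ forces $\phi$ over $V_\delta$ (for $V_\delta$'s class forcing $\Coll(\omega,{<}\delta)$, legitimate because it is definably ${<}\delta$-CC there) if and only if it forces $\phi$ over $V$ for $\Coll(\omega,{<}\Ord)$. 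Combining this with $H_{\dot{\omega}_1}^{\bool{B}_\delta}=V_\delta\cap V^{\bool{B}_\delta}$ (from the ${<}\delta$-CC, which you do use) and with $\Coll(\omega,{<}\delta)\sqsubseteq\Coll(\omega,{<}\Ord)$ to identify the relevant suprema yields $\Qp{H_{\dot{\omega}_1}\models\phi(\vec\tau)}_{\bool{B}_\delta}=\Qp{\phi(\vec\tau)}_{\bool{B}_\Ord}$ directly, with no appeal to homogeneity (which is only needed elsewhere, to see that $T_\Ord$ is a complete theory). Your ``Skolem functions / reflection'' remark points in the right direction, but the reflection must be of the forcing relation itself via $V_\delta\prec V$; that is the technical heart you left open.
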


In particular $\bool{PA}$ is a consistent axiom.

\begin{proof}
Since $\bool{B}\geq_\Omega\bool{B}_\delta$ for any inaccessible $\delta>|\bool{B}|$,
it suffices to show that there are stationarily many $\delta$ such that
$\bool{B}_\delta \text{ satisfies }\bool{PA}$.

We need to put together three separate observations on the properties of $\bool{B}_\gamma$:
\begin{enumerate}
\item For any ordinal $\gamma$ 
\[
\Coll(\omega,{<}\gamma)\sqsubseteq\Coll(\omega,{<}\Ord),
\] 
(i.e. the inclusion map is a complete embedding of partial orders).

\item Assume $\delta$ is an inaccessible cardinal .
Since $\bool{B}_\delta$ is ${<}\delta$-CC and has size $\delta$, 
it is not hard to check that 
\[
(H_{\dot{\omega}_1}^{\delta})^\circ_G=\bp{\tau_G: \, \tau\in V_\delta\cap V_{\bool{B}},\,
\Qp{\trcl(\tau)\text{ is countable}}_{\bool{B}_\delta}=1_{\bool{B}}}=H_{\omega_1}^{V[G]}=V_\delta[G]
\]
whenever $G$ is $V$-generic for $\bool{B}_\delta$.

Observe also that $\Coll(\omega,{<}\delta)$ is a definable class forcing in the $\ZFC$-model 
$(V_\delta,\in)$, which the $\MK$-model 
$(V_\delta,V_{\delta+1},\in)$ proves to be ${<}\delta$-CC.

By the forcing theorem\footnote{The forcing theorem applies in $V_\delta$ to the class forcing (relative to $V_\delta$) $\Coll(\omega,{<}\delta)$ because $\Coll(\omega,{<}\delta)$ is definably ${<}\delta$-CC, in the sense that any definable antichain over $(V_\delta,\in)$ belongs to $V_\delta$.} applied in $V_\delta$ to the definable class forcing $\Coll(\omega,{<}\delta)$, 
we get that
for all $V$-generic filters $G$ for $\bool{B}_\delta$,
all formulae $\phi(x_1,\dots,x_n)$, and all $\tau_1,\dots,\tau_n\in V_\delta\cap V_{\bool{B}}$,
\[
(H_{\dot{\omega}_1}^{\delta})^\circ_G=V_\delta[G]\models\phi(\tau_1,\dots,\tau_n)
\]
if and only if for some $p\in\ G\cap \Coll(\omega,{<}\delta)$,
\[
V_\delta\models p\Vdash\phi(\tau_1,\dots,\tau_n).
\]

\item
Since $\ap{V,\mathcal{V}}$ is a model of $\MK$, the class 
\[
C=\bp{\gamma:\,(V_\gamma,\in)\prec (V,\in)}
\]
is a club subset of $\Ord$. 

Now
for each $\gamma\in C$, each formula $\phi(x_1,\dots,x_n)$ without class quantifiers, and each 
$\tau_1,\dots,\tau_n\in V_\gamma\cap V^{\bool{B}}$,
\[
V_\gamma \models p\Vdash_{\Coll(\omega,{<}\gamma)}\phi(\tau_1,\dots,\tau_n)
\]
if and only if
\[
V\models p\Vdash_{\Coll(\omega,{<}\Ord)}\phi(\tau_1,\dots,\tau_n).
\]
This is the case since the classes involved in the definition of 
$$p\Vdash_{\Coll(\omega,{<}\Ord)}\phi(\tau_1,\dots,\tau_n)$$ are all definable in 
$V$ using parameters in $V_{\gamma}$.
\end{enumerate}

Since $\Ord$ is Mahlo in  $\ap{V,\mathcal{V}}$, there are stationarily many 
inaccessible cardinals $\delta$ in $C$, and for 
any inaccessible $\delta\in C$ the three properties outlined above for $\bool{B}_\delta$ hold simultaneously.

The following claim suffices to complete the proof of the theorem.
\begin{claim}
Assume $\delta\in C$ is inaccessible.
Then for all formulae $\phi(x_1,\dots,x_n)$ without class quantifiers and
$\tau_1,\dots,\tau_n\in H_{\dot{\omega_1}}^{\bool{B}_\delta}=V_\delta\cap V^{\bool{B}_\delta}$,
\begin{align*}
\Qp{H_{\dot{\omega}_1}\models\phi(\tau_1,\dots,\tau_n)}_{\bool{B}_\delta}=
\Qp{\phi(\tau_1,\dots,\tau_n)}_{\bool{B}_\Ord}.
\end{align*}
\end{claim}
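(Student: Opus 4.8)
The plan is to reduce the claimed equality of Boolean values to a statement about generic filters and then to feed the three observations into it. Since by observation~(1) the inclusion $\Coll(\omega,{<}\delta)\sqsubseteq\Coll(\omega,{<}\Ord)$ makes $\bool{B}_\delta$ a complete subalgebra of $\bool{B}_\Ord$, and since distinct elements of a complete boolean algebra are separated by some $V$-generic filter, it suffices to fix a $V$-generic filter $G$ for $\bool{B}_\Ord$, set $G_\delta:=G\cap\bool{B}_\delta$ (which is $V$-generic for $\bool{B}_\delta$, the preimage of a generic filter under a complete homomorphism being generic), and show that $\Qp{H_{\dot\omega_1}\models\phi(\vec\tau)}_{\bool{B}_\delta}\in G_\delta$ if and only if $\Qp{\phi(\vec\tau)}_{\bool{B}_\Ord}\in G$. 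As each $\tau_i$ is a $\bool{B}_\delta$-name we have $\tau_{i,G}=\tau_{i,G_\delta}$, so this amounts to
\[
H_{\omega_1}^{V[G_\delta]}\models\phi(\vec\tau_{G_\delta})\quad\Longleftrightarrow\quad V[G]\models\phi(\vec\tau_G).
\]

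For the left-hand side I would invoke that $\bool{B}_\delta$ is ${<}\delta$-CC, so that $H_{\dot\omega_1}^{\bool{B}_\delta}=V_\delta\cap V^{\bool{B}_\delta}$ and $H_{\omega_1}^{V[G_\delta]}=V_\delta[G_\delta]$, and then observation~(2): $V_\delta[G_\delta]\models\phi(\vec\tau_{G_\delta})$ holds if and only if $V_\delta\models p\Vdash_{\Coll(\omega,{<}\delta)}\phi(\vec\tau)$ for some $p\in G_\delta\cap\Coll(\omega,{<}\delta)$. Because $\delta\in C$, observation~(3) lets me rewrite the forcing clause as $V\models p\Vdash_{\Coll(\omega,{<}\Ord)}\phi(\vec\tau)$. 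For the right-hand side, the forcing theorem for the ${<}\Ord$-CC class forcing $\Coll(\omega,{<}\Ord)$ over $V$ gives that $V[G]\models\phi(\vec\tau_G)$ holds if and only if $V\models p\Vdash_{\Coll(\omega,{<}\Ord)}\phi(\vec\tau)$ for some $p\in G\cap\Coll(\omega,{<}\Ord)$. Since $\Coll(\omega,{<}\delta)\subseteq\Coll(\omega,{<}\Ord)$ and $G\cap\Coll(\omega,{<}\delta)=G_\delta\cap\Coll(\omega,{<}\delta)$, the left-to-right implication in the displayed equivalence is immediate.

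The converse direction is where the real work lies, and I expect it to be the main obstacle. Given $p\in G\cap\Coll(\omega,{<}\Ord)$ with $V\models p\Vdash_{\Coll(\omega,{<}\Ord)}\phi(\vec\tau)$ --- where $p$ may well use coordinates above $\delta$ --- I would show that its restriction $p\restriction(\omega\times\delta)\in\Coll(\omega,{<}\delta)$ already forces $\phi(\vec\tau)$ over $V$. This rests on the product factorization $\Coll(\omega,{<}\Ord)\cong\Coll(\omega,{<}\delta)\times\Coll(\omega,[\delta,\Ord))$ together with the weak homogeneity of the tail factor $\Coll(\omega,[\delta,\Ord))$: an automorphism of the tail extends, by acting as the identity on the first coordinate, to an automorphism of $\Coll(\omega,{<}\Ord)$ that fixes every $\bool{B}_\delta$-name and in particular each $\tau_i$, so the class of conditions forcing $\phi(\vec\tau)$ is invariant under such automorphisms; a routine homogeneity argument then rules out any extension of $p\restriction(\omega\times\delta)$ forcing $\neg\phi(\vec\tau)$. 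As $p\leq p\restriction(\omega\times\delta)$ and $p\in G$, the condition $p\restriction(\omega\times\delta)$ lies in $G\cap\Coll(\omega,{<}\delta)=G_\delta\cap\Coll(\omega,{<}\delta)$, which yields the missing implication.

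This proves the Claim; and since $\Coll(\omega,{<}\Ord)$ forces every set to be hereditarily countable, so that $V[G]=H_{\omega_1}^{V[G]}$ and the relativization to $H_{\dot\omega_1}$ on the $\bool{B}_\Ord$-side is vacuous, the Claim is exactly the assertion that $\bool{B}_\delta$ satisfies $\bool{PA}$. Combined with the Mahloness of $\Ord$, which supplies stationarily many inaccessible $\delta\in C$, and with the observation that $\bool{B}\geq_\Omega\bool{B}_\delta$ for every $\bool{B}\in\Omega$ and every inaccessible $\delta>|\bool{B}|$, this completes the proof that $D_{\bool{PA}}$ is dense in $(\Omega,\leq_\Omega)$.
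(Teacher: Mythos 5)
Your proof is correct, and its skeleton is the same as the paper's: both arguments combine the forcing theorem for $\Coll(\omega,{<}\delta)$ over $V_\delta$, the elementarity $(V_\delta,\in)\prec (V,\in)$, and the complete embedding $\Coll(\omega,{<}\delta)\sqsubseteq\Coll(\omega,{<}\Ord)$. The difference lies in the presentation and in one key step. The paper proves the Claim as a four-line chain of identities between Boolean values (suprema of sets of conditions), justifying the passage from the supremum computed in $\bool{B}_\delta$ to the supremum computed in $\bool{B}_\Ord$ solely by the completeness of the embedding; you instead pass through generic filters and, for the converse direction --- showing that a condition $p$ with coordinates above $\delta$ forcing $\phi(\vec\tau)$ can be replaced by $p\restriction(\omega\times\delta)$ --- you supply a product-factorization-plus-weak-homogeneity argument. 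You are right that this is where the real content sits: the complete embedding by itself only yields that the supremum taken over conditions in $\Coll(\omega,{<}\delta)$ is $\leq$ the supremum taken over all of $\Coll(\omega,{<}\Ord)$, and some further input is needed for the reverse inequality. Your homogeneity argument is a perfectly good way to supply it, and it is consonant with the paper's earlier listing of homogeneity among the properties of $\bool{B}_\delta$. An alternative that stays entirely inside the paper's Boolean computation, avoiding automorphisms, is to run the same chain for $\lnot\phi$: conditions forcing $\phi$ and conditions forcing $\lnot\phi$ are pairwise incompatible, while their union is dense in $\Coll(\omega,{<}\delta)$ by the forcing theorem applied in $V_\delta$, so the two suprema are complementary in $\bool{B}_\delta$; comparing with $\Qp{\phi(\vec\tau)}_{\bool{B}_\Ord}=\lnot\Qp{\lnot\phi(\vec\tau)}_{\bool{B}_\Ord}$ then forces both inequalities to be equalities.
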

\begin{proof}
Since $\delta\in C$ is inaccessible, $\phi$ does not have class quantifiers, and 
$\Coll(\omega,{<}\delta)\sqsubseteq\Coll(\omega,{<}\Ord)$, we have that:
\begin{align*}
\Qp{H_{\dot{\omega}_1}\models\phi(\tau_1,\dots,\tau_n)}_{\bool{B}_\delta}=\\
=\bigvee_{\Coll(\omega,{<}\delta)}\bp{p\in\Coll(\omega,{<}\delta): \,\ap{V_\delta,\in} \models p\Vdash_{\Coll(\omega,{<}\delta)}\phi(\tau_1,\dots,\tau_n)}=\\
=\bigvee_{\Coll(\omega,{<}\delta)}\bp{p\in\Coll(\omega,{<}\Ord): \,
\ap{V,\in}\models p\Vdash_{\Coll(\omega,{<}\Ord)}\phi(\tau_1,\dots,\tau_n)}=\\
=\bigvee_{\Coll(\omega,{<}\Ord)}\bp{p\in\Coll(\omega,{<}\Ord): \,
\ap{V,\in}\models p\Vdash_{\Coll(\omega,{<}\Ord)}\phi(\tau_1,\dots,\tau_n)}=\\
=\Qp{\phi(\tau_1,\dots,\tau_n)}_{\bool{B}_\Ord}.
\end{align*}
The first and last equalities hold by definition (for the first observe that 
$\dot{H}_{\omega_1}^\delta=V_\delta \cap V^{\bool{B}_\delta}$ is a canonical $\bool{B}_\delta$-name for 
$H_{\omega_1}^{V[G]}$ for $G$ $V$-generic for $\bool{B}_\delta$); the second equality holds because 
$(V_\delta,\in)\prec (V,\in)$; the third equality holds because
$\Coll(\omega,{<}\delta)\sqsubseteq\Coll(\omega,{<}\Ord)$.
\end{proof}
The theorem is proved.
\end{proof}

\begin{remark}
If $\gamma\in C$ is not inaccessible, then $\Coll(\omega,{<}\gamma)$ is not ${<}\gamma$-cc and collapses
$\gamma$ to become countable.
 Hence 
$H_{\dot{\omega}_1}^{\bool{B}_\gamma}\neq V_\gamma\cap V^{\bool{B}_\gamma}$. 
 This gives that
$\bool{PA}$ may not be satisfied by $\bool{B}_\gamma$ in this case, since the boolean value
of $\Qp{\phi(\tau_1,\dots,\tau_n)}_{\bool{B}_\Ord}$ may require some 
$p\not\in\Coll(\omega,{<}\gamma)$ to be computed if there is some  
$\tau_i$ which is not in $V_\gamma$.
\end{remark}

\begin{corollary}
Assume $(V,\mathcal{V})\models\MK$ and $i:\bool{B}\to\bool{C}$ is a complete homomorphism.
Assume $\bool{B}$ and $\bool{C}$ both satisfy $\bool{PA}$.
Then for any $G$ $V$-generic for $\bool{C}$, letting $H=i^{-1}[G]$, we get that
\[
H_{\dot{\omega}_1}^{\bool{B}}[H]=H_{\omega_1}^{V[H]}\prec H_{\omega_1}^{V[G]}=H_{\dot{\omega}_1}^{\bool{C}}[G].
\]
\end{corollary}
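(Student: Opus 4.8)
\emph{Plan.} The goal is $H_{\omega_1}^{V[H]}\prec H_{\omega_1}^{V[G]}$. Since $H=i^{-1}[G]\in V[G]$ we have $V[H]\subseteq V[G]$, so (using that ``hereditarily countable'' is upwards absolute) $H_{\omega_1}^{V[H]}$ is a \emph{transitive} substructure of $H_{\omega_1}^{V[G]}$; hence it is enough to prove that $H_{\omega_1}^{V[H]}\models\chi(\vec a)$ if and only if $H_{\omega_1}^{V[G]}\models\chi(\vec a)$, for every formula $\chi$ and all $\vec a=(a_1,\dots,a_n)$ with each $a_j\in H_{\omega_1}^{V[H]}$. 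The idea is to route both sides through $\bool{B}_\Ord$: $\bool{PA}(\bool{B})$ and $\bool{PA}(\bool{C})$ tie the relevant boolean values computed in $\bool{B}$, respectively in $\bool{C}$, to the corresponding boolean values computed in $\bool{B}_\Ord$, and the point is to make these two ties compatible along $i$. Replacing $\bool{B}$ by the complete subalgebra $i[\bool{B}]$ of $\bool{C}$ and $H$ by $G\cap i[\bool{B}]$ (this changes neither $V[H]$ nor the statement, and $\bool{PA}(\bool{B})$ is inherited since the $i[\bool{B}]$-generic extensions of $V$ are among its $\bool{B}$-generic extensions) we may assume $i$ is injective. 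Note that only $\MK$ is used, through the $\delta=\Ord$ cases of the Factor Lemma and Universality.

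\emph{Step 1: compatible embeddings into $\bool{B}_\Ord$.} By the Factor Lemma for $\delta=\Ord$ there are isomorphisms $\bool{B}\ast\dot{\Coll}(\omega,{<}\Ord)\cong\bool{B}_\Ord$ and $\bool{C}\ast\dot{\Coll}(\omega,{<}\Ord)\cong\bool{B}_\Ord$, inducing complete embeddings $e_{\bool{B}}:\bool{B}\to\bool{B}_\Ord$ and $e_{\bool{C}}:\bool{C}\to\bool{B}_\Ord$ (by $b\mapsto\ap{b,1}$). Writing $\bool{C}\cong\bool{B}\ast\dot{\bool{Q}}$ with $\dot{\bool{Q}}=\bool{C}/i[\dot G_{\bool{B}}]$ and applying the Factor Lemma \emph{inside} $V^{\bool{B}}$ to the set-sized cba $\dot{\bool{Q}}$ (recall $V^{\bool{B}}$ is again a model of $\MK$ and $\Coll(\omega,{<}\Ord)$ is computed the same there), we get $\dot{\bool{Q}}\ast\dot{\Coll}(\omega,{<}\Ord)\cong\dot{\Coll}(\omega,{<}\Ord)$ over $\bool{B}$; by associativity of iteration this lets us choose the isomorphism $\bool{B}\ast\dot{\Coll}(\omega,{<}\Ord)\cong\bool{B}_\Ord$ so that $e_{\bool{C}}\circ i=e_{\bool{B}}$. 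We read $\bool{PA}(\bool{B})$ and $\bool{PA}(\bool{C})$ with respect to these embeddings. The upshot of $e_{\bool{C}}\circ i=e_{\bool{B}}$ is that for any $\bool{B}$-name $\rho$ the $\bool{B}_\Ord$-names $e_{\bool{B}}(\rho)$ and $e_{\bool{C}}(i(\rho))$ are evaluated identically by every $\bool{B}_\Ord$-generic filter; consequently $\Qp{\phi(i(\vec\rho))}_{\bool{B}_\Ord}=\Qp{\phi(\vec\rho)}_{\bool{B}_\Ord}$ for every formula $\phi$ and all $\vec\rho$ from $H_{\dot{\omega}_1}^{\bool{B}}$, where on the two sides the names are construed as $\bool{B}_\Ord$-names via $e_{\bool{C}}$ and $e_{\bool{B}}$ respectively.

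\emph{Step 2: combine the two instances of $\bool{PA}$ and read off elementarity.} For a formula $\chi$ and $\vec\rho$ from $H_{\dot{\omega}_1}^{\bool{B}}$, $\bool{PA}(\bool{B})$ says that $e_{\bool{B}}$ maps $\Qp{(H_{\dot{\omega}_1}^{\bool{B}})^\circ\models\chi(\vec\rho)}_{\bool{B}}$ to $\Qp{\chi(\vec\rho)}_{\bool{B}_\Ord}$, while $\bool{PA}(\bool{C})$ says that $e_{\bool{C}}$ maps $\Qp{(H_{\dot{\omega}_1}^{\bool{C}})^\circ\models\chi(i(\vec\rho))}_{\bool{C}}$ to $\Qp{\chi(i(\vec\rho))}_{\bool{B}_\Ord}$. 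Since the two targets are equal (by Step 1) and $e_{\bool{C}}$ is injective with $e_{\bool{C}}\circ i=e_{\bool{B}}$, cancelling $e_{\bool{C}}$ gives
\[
i\left(\Qp{(H_{\dot{\omega}_1}^{\bool{B}})^\circ\models\chi(\vec\rho)}_{\bool{B}}\right)=\Qp{(H_{\dot{\omega}_1}^{\bool{C}})^\circ\models\chi(i(\vec\rho))}_{\bool{C}}.
\]
Now, given $\vec a$ as above, pick $\rho_j\in H_{\dot{\omega}_1}^{\bool{B}}$ with $(\rho_j)_H=a_j$ (possible since $a_j$ is hereditarily countable in $V[H]$); then $(i(\rho_j))_G=(\rho_j)_{i^{-1}[G]}=a_j$. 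Using $(H_{\dot{\omega}_1}^{\bool{B}})^\circ_H=H_{\omega_1}^{V[H]}$, $(H_{\dot{\omega}_1}^{\bool{C}})^\circ_G=H_{\omega_1}^{V[G]}$, the equivalence $b\in i^{-1}[G]\Leftrightarrow i(b)\in G$, and the display above, for every $\chi$ we obtain that $H_{\omega_1}^{V[H]}\models\chi(\vec a)$ iff $\Qp{(H_{\dot{\omega}_1}^{\bool{B}})^\circ\models\chi(\vec\rho)}_{\bool{B}}\in H$ iff $\Qp{(H_{\dot{\omega}_1}^{\bool{C}})^\circ\models\chi(i(\vec\rho))}_{\bool{C}}\in G$ iff $H_{\omega_1}^{V[G]}\models\chi(\vec a)$, which is exactly what was needed.

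\emph{Main obstacle.} I expect Step 1 to be where the real work is: arranging the embeddings of $\bool{B}$ and $\bool{C}$ into $\bool{B}_\Ord$ so that they are genuinely compatible with $i$ — so that a $\bool{B}$-name and its $i$-image name the same object over $\bool{B}_\Ord$ — and carrying out the factorization $\bool{B}_\Ord\cong\bool{B}\ast\dot{\bool{Q}}\ast\dot{\Coll}(\omega,{<}\Ord)$ correctly in the $\MK$ context in which $\bool{B}_\Ord$ is a proper-class forcing. Everything else reduces to routine boolean-value manipulations together with the standard relationship between $H=i^{-1}[G]$ and $G$.
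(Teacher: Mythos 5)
Your proof is correct, and it shares the paper's key idea of routing both structures through $\bool{B}_\Ord$ and using the two instances of $\bool{PA}$ to tie their theories together; but the execution is genuinely different. The paper's proof is purely semantic and much shorter: it picks a $V$-generic $K$ for $\bool{B}_\Ord$ with $G\in V[K]$ (hence $H\in V[G]\subseteq V[K]$), notes that $\bool{PA}(\bool{B})$ and $\bool{PA}(\bool{C})$ give $H_{\omega_1}^{V[H]}\prec H_{\omega_1}^{V[K]}$ and $H_{\omega_1}^{V[G]}\prec H_{\omega_1}^{V[K]}$, and concludes by the trivial model-theoretic fact that if $\mathcal M_0\sqsubseteq\mathcal M_1$ and both embed elementarily (compatibly) into a common $\mathcal N$, then $\mathcal M_0\prec\mathcal M_1$. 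In particular, the step you identify as the main obstacle --- arranging $e_{\bool{C}}\circ i=e_{\bool{B}}$ via the Factor Lemma applied in $V^{\bool{B}}$ --- is exactly what the semantic formulation lets one bypass: one simply uses the composed homomorphism $e_{\bool{C}}\circ i$ and the fact that $(e_{\bool{C}}\circ i)^{-1}[K]=i^{-1}[G]=H$, with no need to match it against an independently chosen $e_{\bool{B}}$ (both proofs do tacitly rely on $\bool{PA}$ being independent of the chosen complete embedding into $\bool{B}_\Ord$, which follows from the homogeneity of the quotient; you at least address this, the paper glosses over it). What your heavier computation buys is the uniform, name-level identity
\[
i\left(\Qp{(H_{\dot{\omega}_1}^{\bool{B}})^\circ\models\chi(\vec\rho)}_{\bool{B}}\right)=\Qp{(H_{\dot{\omega}_1}^{\bool{C}})^\circ\models\chi(\hat\imath(\vec\rho))}_{\bool{C}},
\]
which is precisely the relation $H_{\dot{\omega}_1}^{\bool{B}}\prec H_{\dot{\omega}_1}^{\bool{C}}$ in the sense of the Notation introduced immediately after this corollary --- a slightly stronger conclusion than the statement asks for, and one the paper only formalizes later.
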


\begin{proof}
The Corollary is an immediate consequence of the following basic model-theoretic observation:

\begin{fact}
Assume $\mathcal{M}_0,\mathcal{M}_1$ and $\mathcal{N}$ are $L$-structures such that the following
diagram is realized:
\[
			\begin{tikzpicture}[xscale=1.5,yscale=-1.2]
				\node (A0_0) at (0, 0) {$\mathcal{M}_0$};
				\node (A0_2) at (2, 0) {$\mathcal{N}$};
				\node (A1_1) at (1, 1) {$\mathcal{M}_1$};
				\path (A0_0) edge [->]node [auto] {$\scriptstyle{\Sigma_\omega}$} (A0_2);
				\path (A1_1) edge [->]node [auto,swap] {$\scriptstyle{\Sigma_{\omega}}$} (A0_2);
				\path (A0_0) edge [->]node [auto,swap] {$\sqsubseteq$} (A1_1);
			\end{tikzpicture}
		\]

Then
\[
\mathcal{M}_0\prec \mathcal{M}_1.
\]
\end{fact}

Notice that if $K$ is $V$-generic for $\bool{B}_\Ord$, $G\in V[K]$ is $V$-generic for $\bool{C}$, and
$H\in V[G]$ is $V$-generic for $\bool{B}$, we obtain the above configuration:
\[
			\begin{tikzpicture}[xscale=1.5,yscale=-1.2]
				\node (A0_0) at (0, 0) {$H_{\omega_1}^{V[H]}$};
				\node (A0_2) at (2, 0) {$H_{\omega_1}^{V[K]}$};
				\node (A1_1) at (1, 1) {$H_{\omega_1}^{V[G]}$};
				\path (A0_0) edge [->]node [auto] {$\scriptstyle{\Sigma_\omega}$} (A0_2);
				\path (A1_1) edge [->]node [auto,swap] {$\scriptstyle{\Sigma_{\omega}}$} (A0_2);
				\path (A0_0) edge [->]node [auto,swap] {$\sqsubseteq$} (A1_1);
			\end{tikzpicture}
		\]
\end{proof}

\begin{corollary}
Assume 
\[
(V,\mathcal{V})\models\MK+\bool{PA}+\textrm{$\Ord$ is Mahlo}.
\]

TFAE:
\begin{enumerate}
\item $H_{\omega_1}\models\phi(a_1,\dots,a_n)$.

\item $\Qp{H_{\omega_1}^{\bool{B}}
\models\phi(\check{a}_1,\dots,\check{a}_n)}_{\bool{B}}=1_{\bool{B}}$ 
for some cba $\bool{B}$ which satisfies $\bool{PA}$.

\item $\Qp{H_{\omega_1}^{\bool{B}}\models
\phi(\check{a}_1,\dots,\check{a}_n)}_{\bool{B}}=1_{\bool{B}}$ for all cbas $\bool{B}$ which satisfy
$\bool{PA}$.
\end{enumerate} 
\end{corollary}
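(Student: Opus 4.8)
The plan is to reduce all three conditions to the single equivalence that, for every cba $\bool{B}$ satisfying $\bool{PA}$,
\[
\Qp{H_{\dot{\omega}_1}^{\bool{B}}\models\phi(\check a_1,\dots,\check a_n)}_{\bool{B}}=1_{\bool{B}}
\quad\text{iff}\quad H_{\omega_1}^V\models\phi(a_1,\dots,a_n).
\]
Granting this, the corollary is immediate. The implication (1)$\Rightarrow$(3) is one direction of the equivalence applied to every $\bool{B}$ satisfying $\bool{PA}$; the implication (3)$\Rightarrow$(2) holds because the class of cbas satisfying $\bool{PA}$ is non-empty --- trivially the two-element algebra $\bool{2}$ belongs to it, since ``$\bool{2}$ satisfies $\bool{PA}$'' unwinds to exactly the axiom $\bool{PA}$, and moreover by the density theorem for $D_{\bool{PA}}$ there are stationarily many inaccessible $\delta$ for which $\bool{B}_\delta$ satisfies $\bool{PA}$; and (2)$\Rightarrow$(1) is the other direction applied to a cba witnessing (2).

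To establish the displayed equivalence I would combine the two facts about $\bool{PA}$ recorded just before the statement of the corollary. First, by definition of $T_\Ord$, the axiom $\bool{PA}$ asserts precisely that $H_{\omega_1}^V\models\phi(a_1,\dots,a_n)$ if and only if $\Qp{H_{\dot{\omega}_1}\models\phi(\check a_1,\dots,\check a_n)}_{\bool{B}_\Ord}=1_{\bool{B}_\Ord}$. Second, since $a_1,\dots,a_n\in H_{\omega_1}^V$ the canonical names $\check a_i$ lie in $H_{\dot{\omega}_1}^{\bool{B}}$, so when $\bool{B}$ satisfies $\bool{PA}$ the characterization of that property --- after fixing a complete embedding $\bool{B}\hookrightarrow\bool{B}_\Ord$ provided by Universality --- yields
\[
\Qp{H_{\dot{\omega}_1}\models\phi(\check a_1,\dots,\check a_n)}_{\bool{B}_\Ord}
=\Qp{(H_{\dot{\omega}_1}^{\bool{B}})^\circ\models\phi(\check a_1,\dots,\check a_n)}_{\bool{B}}.
\]
Because that embedding is injective and sends $1_{\bool{B}}$ to $1_{\bool{B}_\Ord}$, the right-hand side equals $1_{\bool{B}}$ exactly when the left-hand side equals $1_{\bool{B}_\Ord}$; composing with the first fact gives the equivalence. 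Alternatively, one can read it off from the corollary immediately preceding this one, applied to the trivial complete homomorphism $\bool{2}\to\bool{B}$: the hypothesis $V\models\bool{PA}$ says exactly that $\bool{2}$ satisfies $\bool{PA}$, so that corollary gives $H_{\omega_1}^V=H_{\dot{\omega}_1}^{\bool{2}}[H]\prec(H_{\dot{\omega}_1}^{\bool{B}})^\circ_G$ for every $V$-generic $G$ for $\bool{B}$, and the displayed equivalence then follows from the Truth Lemma.

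I do not expect a genuine obstacle here: the mathematical content lies entirely in the two facts about $\bool{PA}$ invoked above, both already proved, so the corollary is essentially a matter of putting the definitions together. The only points demanding a little care are purely bookkeeping --- that $\check a_i\in H_{\dot{\omega}_1}^{\bool{B}}$ because $a_i$ is hereditarily countable in $V$; that $(H_{\dot{\omega}_1}^{\bool{B}})^\circ_G=H_{\omega_1}^{V[G]}$, which holds since $\bool{B}$ forces $\dot{\omega}_1$ to be the first uncountable cardinal (as already noted in the text); and, if one follows the Truth-Lemma route, that the forcing relation over the $\MK$-universe behaves in the standard way.
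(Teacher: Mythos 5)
Your proposal is correct and follows exactly the route the paper intends: the paper states this corollary without proof, treating it as an immediate unwinding of the definition of ``$\bool{B}$ satisfies $\bool{PA}$'' (as a complete subalgebra of $\bool{B}_\Ord$) together with the hypothesis $H_{\omega_1}\models T_\Ord$, which is precisely your displayed equivalence. Your observations that $\bool{2}$ satisfies $\bool{PA}$ exactly when $V\models\bool{PA}$, and that the preceding corollary applied to $\bool{2}\to\bool{B}$ gives an alternative derivation, are both sound bookkeeping consistent with the paper's setup.
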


Woodin's generic absoluteness results for projective sets provide significant strengthenings of the conclusion of the theorem above in the presence of stronger hypotheses.
The following is a weakening of \cite[Theorem 3.1.2]{LARSONBOOK} which has the same flavor of what we have been showing so far:

\begin{theorem}[Woodin]
Assume $\delta$ is a Woodin cardinal which is a limit of Woodin cardinals
and $\bool{B}\in V_\delta$ is a cba.

Then 
\[
H_{\dot{\omega}_1}^{\bool{B}}\prec H_{\dot{\omega}_1}^{\bool{B}_\delta}.
\]
\end{theorem}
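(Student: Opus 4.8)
The plan is to reduce the comparison of the two Boolean extensions to a single L\'evy collapse, and then to invoke Woodin's generic absoluteness for $\Coll(\omega,{<}\delta)$; the work here is the reduction, the absoluteness itself being (a special case of) \cite[Theorem 3.1.2]{LARSONBOOK}.

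\textbf{Step 1: reduce to a collapse.} First I would use that $\delta$, being Woodin, is inaccessible, and that $\bool{B}\in V_\delta$ has size ${<}\delta$; hence by Universality there is a complete embedding $i\colon\bool{B}\to\bool{B}_\delta$, and by the Factor Lemma $\bool{B}_\delta\cong\bool{B}\ast\dot{\Coll}(\check{\omega},{<}\check{\delta})$ --- the quotient being a copy of the collapse because $\delta$ remains inaccessible after the small forcing $\bool{B}$. Thus, if $G$ is $V$-generic for $\bool{B}_\delta$ and $H=i^{-1}[G]$, then $V[G]=V[H][g]$ for some $\Coll(\omega,{<}\delta)^{V[H]}$-generic $g$ over $V[H]$; moreover $H_{\dot{\omega}_1}^{\bool{B}}[H]=H_{\omega_1}^{V[H]}$ and $H_{\dot{\omega}_1}^{\bool{B}_\delta}[G]=H_{\omega_1}^{V[G]}$ by the defining property of $H_{\dot{\kappa}}^{\bool{B}}$. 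By the L\'evy--Solovay theorem, $|\bool{B}|<\delta$ implies that $\delta$ is still a Woodin cardinal which is a limit of Woodin cardinals \emph{in} $V[H]$. So, working in $V[H]$, it suffices to prove: if $\delta$ is a Woodin limit of Woodins in a model $W\models\ZFC$ and $g$ is $\Coll(\omega,{<}\delta)^W$-generic over $W$, then $H_{\omega_1}^W\prec H_{\omega_1}^{W[g]}$.

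\textbf{Step 2: the core absoluteness.} Here I would appeal to Woodin's theorem. Since $\Coll(\omega,{<}\delta)$ is homogeneous, the first-order theory of $H_{\omega_1}^{W[g]}$ with parameters coded by reals of $W$ does not depend on $g$, so it is enough to check that this (forced) theory agrees, on parameters lying in $W$, with the theory of $H_{\omega_1}^W$; as every element of $HC^W=H_{\omega_1}^W$ is coded by a real of $W$, such agreement already yields $H_{\omega_1}^W\prec H_{\omega_1}^{W[g]}$. The agreement itself I would prove by induction on the projective complexity of formulas (translating quantifiers over $HC$ into quantifiers over reals): Shoenfield absoluteness handles the bottom levels, while at the inductive step one uses that, because there are cofinally many Woodin cardinals below $\delta$, every projective set with real parameters in $W$ is $\gamma$-weakly homogeneously Suslin in $W$ for some $\gamma<\delta$, and that such tree representations (together with their projective complements) are preserved by the ${<}\delta$-CC forcing $\Coll(\omega,{<}\delta)$. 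This is precisely what prevents a new real of $W[g]$ from flipping a truth value or from escaping every witness available in $H_{\omega_1}^W$.

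\textbf{Main obstacle.} Everything in Step 1, and the homogeneity bookkeeping in Step 2, is routine; the genuine content --- and the sole use of the Woodin cardinals below $\delta$ --- is the preservation of the weakly homogeneous tree representations of all projective sets through $\Coll(\omega,{<}\delta)$, i.e.\ the genericity/iterability machinery underlying Woodin's absoluteness. I would therefore not reprove it, but cite \cite[Theorem 3.1.2]{LARSONBOOK}. (One could instead try to deduce the conclusion from the diamond-shaped $\Sigma_\omega$-embedding Fact used for the projective-absoluteness Corollary above, by exhibiting a common elementary target for $H_{\omega_1}^{V[H]}$ and $H_{\omega_1}^{V[H][g]}$; but producing such a target needs the same absoluteness input, so nothing is gained.)
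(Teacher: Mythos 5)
The paper does not actually prove this theorem: it is stated as Woodin's result and explicitly presented as a weakening of \cite[Theorem 3.1.2]{LARSONBOOK}, with no argument supplied. Your proposal is correct and ultimately rests on that same citation, adding only the routine (and, given the paper's definition of $H_{\dot{\omega}_1}^{\bool{B}}\prec H_{\dot{\omega}_1}^{\bool{B}_\delta}$, necessary) reduction via Universality, the Factor Lemma, and L\'evy--Solovay from an arbitrary $\bool{B}\in V_\delta$ to the pure $\Coll(\omega,{<}\delta)$ situation, so in effect you and the paper take the same route.
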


Now observe that the fact whether or not $\delta$ is Woodin is detected at stage $V_{\delta+1}$:
\[
V\models\delta\text{ is a Woodin cardinal }\quad \Longleftrightarrow\quad
V_{\delta+1}\models\delta\text{ is a Woodin cardinal}.
\]
In particular `\emph{$\Ord$ is Woodin}' is 
expressible in any model of $\MK$.

\begin{corollary}
Assume 
\[(V,\mathcal{V})\models\MK\,+\,\text{$\Ord$ is Woodin $+$ there are class many Woodin cardinals}.\]

Then \textbf{every} $\bool{B}$ in $\Omega$ satisfies $\bool{PA}$. 

Hence
TFAE:
\begin{enumerate}
\item $H_{\omega_1}\models\phi(a_1,\dots,a_n)$.
\item $\Qp{H_{\omega_1}^{\bool{B}}
\models\phi(\check{a}_1,\dots,\check{a}_n)}_{\bool{B}}=1_{\bool{B}}$ 
for some cba $\bool{B}$. 
\item $\Qp{H_{\omega_1}^{\bool{B}}\models
\phi(\check{a}_1,\dots,\check{a}_n)}_{\bool{B}}=1_{\bool{B}}$ for all cbas $\bool{B}$.
\end{enumerate} 
\end{corollary}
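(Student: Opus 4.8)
The plan is to reduce the statement to the two immediately preceding results. Recall that the (Woodin) theorem above gives $H_{\dot{\omega}_1}^{\bool{B}}\prec H_{\dot{\omega}_1}^{\bool{B}_\delta}$ whenever $\delta$ is a Woodin cardinal which is a limit of Woodin cardinals and $\bool{B}\in V_\delta$ is a cba, while the Claim established inside the proof of the $\bool{PA}$ theorem gives, for every \emph{inaccessible} $\delta$ with $V_\delta\prec V$ and all $\phi$ without class quantifiers and $\tau_1,\dots,\tau_n\in V_\delta\cap V^{\bool{B}_\delta}$, that $\Qp{H_{\dot{\omega}_1}\models\phi(\tau_1,\dots,\tau_n)}_{\bool{B}_\delta}=\Qp{\phi(\tau_1,\dots,\tau_n)}_{\bool{B}_\Ord}$. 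Recall also that ``$\bool{B}$ satisfies $\bool{PA}$'' was reformulated above as the assertion that $\Qp{\phi(\tau_1,\dots,\tau_n)}_{\bool{B}_\Ord}=\Qp{(H_{\dot{\omega}_1}^{\bool{B}})^\circ\models\phi(\tau_1,\dots,\tau_n)}_{\bool{B}}$ for all $\phi$ without class quantifiers and $\tau_1,\dots,\tau_n\in H_{\dot{\omega}_1}^{\bool{B}}$, where $\bool{B}$ is viewed as a complete subalgebra of $\bool{B}_\Ord$ via Universality.

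So, fixing $\bool{B}\in\Omega$, I would first produce an inaccessible $\delta>|\bool{B}|$ with $V_\delta\prec V$ which is in addition a Woodin cardinal and a limit of Woodin cardinals, and then argue as follows. We may assume $\bool{B}\in V_\delta$ (replacing $\bool{B}$ by an isomorphic copy, using that $\delta$ is inaccessible), and by Universality $\bool{B}$ is a complete subalgebra of $\bool{B}_\delta$, which is in turn a complete subalgebra of $\bool{B}_\Ord$. By the (Woodin) theorem, $H_{\dot{\omega}_1}^{\bool{B}}\prec H_{\dot{\omega}_1}^{\bool{B}_\delta}$; unwinding this through the embedding $\bool{B}\sqsubseteq\bool{B}_\delta$ and using $H_{\dot{\omega}_1}^{\bool{B}_\delta}=V_\delta\cap V^{\bool{B}_\delta}$, it says $\Qp{(H_{\dot{\omega}_1}^{\bool{B}})^\circ\models\phi(\vec{\tau})}_{\bool{B}}=\Qp{H_{\dot{\omega}_1}\models\phi(\vec{\tau})}_{\bool{B}_\delta}$ for every $\phi$ without class quantifiers and $\vec{\tau}\in H_{\dot{\omega}_1}^{\bool{B}}$ (the $\tau_i$ regarded as $\bool{B}_\delta$-names, which lie in $V_\delta\cap V^{\bool{B}_\delta}$ since $\bool{B}\in V_\delta$ and $\delta$ is inaccessible). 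Since $\delta$ is inaccessible and $V_\delta\prec V$, the Claim then gives $\Qp{H_{\dot{\omega}_1}\models\phi(\vec{\tau})}_{\bool{B}_\delta}=\Qp{\phi(\vec{\tau})}_{\bool{B}_\Ord}$. Concatenating these two equalities is exactly the reformulated assertion that $\bool{B}$ satisfies $\bool{PA}$; as $\bool{B}\in\Omega$ was arbitrary, this establishes the first conclusion.

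To produce such a $\delta$ I would use the large cardinal hypotheses. Note first that $C=\bp{\gamma:V_\gamma\prec V}$ is a club class, simply because $\MK$ holds, and that ``$\Ord$ is Woodin'' implies ``$\Ord$ is Mahlo'' (Woodin cardinals are Mahlo, and, as in the remark preceding the statement, being Woodin is detected at $V_{\delta+1}$). Next I would apply the Woodin-ness of $\Ord$ to a predicate for the class $W$ of Woodin cardinals: this yields a stationary class of cardinals $\kappa$ such that, for every $\lambda$, there is an elementary $j\colon V\to M$ with $M$ transitive, $\crit(j)=\kappa$, $V_\lambda\subseteq M$ and $W\cap V_\lambda=j(W)\cap V_\lambda$. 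Using that $W$ is unbounded — this is the ``class-many Woodins'' hypothesis — a standard reflection argument shows that every such $\kappa$ is itself a Woodin cardinal and a limit of Woodin cardinals. Since this class is stationary and $C$ is club, their intersection is stationary; picking an element above $|\bool{B}|$ (and using that Woodin cardinals are inaccessible) gives the required $\delta$. Finally, for the ``Hence TFAE'' part, I would apply the first conclusion to the trivial algebra $\2\in\Omega$: here $H_{\dot{\omega}_1}^{\2}=H_{\omega_1}^V$ and no forcing takes place, and unravelling ``$\2$ satisfies $\bool{PA}$'' one reads off precisely $H_{\omega_1}\models T_\Ord$, i.e.\ $V\models\bool{PA}$. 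Since also $\Ord$ is Mahlo and every cba satisfies $\bool{PA}$, the Corollary under $\MK+\bool{PA}+$``$\Ord$ Mahlo'' applies with its ``which satisfies $\bool{PA}$'' qualifiers now vacuous, giving the equivalence of (1)--(3). (Directly: if $H_{\omega_1}\models\phi(a_1,\dots,a_n)$ with $a_1,\dots,a_n\in H_{\omega_1}$, then $V\models\bool{PA}$ gives $\Qp{\phi(\check{a}_1,\dots,\check{a}_n)}_{\bool{B}_\Ord}=1_{\bool{B}_\Ord}$, and then for any cba $\bool{B}$ the reformulation of ``$\bool{B}$ satisfies $\bool{PA}$'' gives $\Qp{H_{\omega_1}^{\bool{B}}\models\phi(\check{a}_1,\dots,\check{a}_n)}_{\bool{B}}=1_{\bool{B}}$, so $(1)\Rightarrow(3)$; $(3)\Rightarrow(2)$ is trivial; and $(2)\Rightarrow(1)$ reverses the first implication.)

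The step I expect to be the main obstacle is the production of $\delta$: pinning down, in the $\MK$ framework, the exact meaning of ``$\Ord$ is Woodin'', and checking that together with the existence of class-many Woodin cardinals it genuinely produces an unbounded (indeed stationary) supply of ordinals which are \emph{simultaneously} Woodin, limits of Woodins, and reflecting, so that \emph{both} the (Woodin) theorem and the Claim can be invoked for one and the same $\delta$. (Neither hypothesis alone suffices: ``$\Ord$ is Woodin'' is weaker than the existence of a single Woodin cardinal, while ``class-many Woodins'' on its own need not yield a Woodin cardinal that is a limit of Woodins.) A secondary point needing care is the precise sense of $H_{\dot{\omega}_1}^{\bool{B}}\prec H_{\dot{\omega}_1}^{\bool{B}_\delta}$ — namely, that it is realized by the embedding induced by $\bool{B}\sqsubseteq\bool{B}_\delta$ — so that the two Boolean-value computations compose without friction.
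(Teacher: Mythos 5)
Your reduction to the set-sized case breaks down at the step where you produce $\delta$. You claim that, applying ``$\Ord$ is Woodin'' to a predicate for the class $W$ of Woodin cardinals, every resulting ${<}\Ord$-$W$-strong $\kappa$ is ``itself a Woodin cardinal and a limit of Woodin cardinals.'' The ``limit of Woodins'' half is fine (your reflection argument using unboundedness of $W$ works), but the ``Woodin'' half is false, and in fact the hypotheses of the Corollary do not yield any set-sized Woodin limit of Woodins at all. To see this, let $\delta_0$ be the least Woodin cardinal which is a limit of Woodin cardinals. Then $(V_{\delta_0},V_{\delta_0+1},\in)\models\MK$, and since Woodinness of $\delta_0$ is detected at $V_{\delta_0+1}$ and ``$\kappa$ is Woodin'' is absolute between $V$ and $V_{\delta_0}$ for $\kappa<\delta_0$, this model satisfies both ``$\Ord$ is Woodin'' and ``there are class many Woodin cardinals''---yet by minimality of $\delta_0$ it contains no Woodin limit of Woodins. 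In that model your stationary class of ${<}\Ord$-$W$-strong cardinals is nonempty and consists of limits of Woodins, so none of its members can be Woodin; your ``standard reflection argument'' for Woodinness of $\kappa$ therefore cannot exist. (A ${<}\Ord$-$A$-strong cardinal is strong, but strength does not give Woodinness, and Woodinness of $\delta$ does not reflect below $\delta$.) Consequently there is no $\delta>|\bool{B}|$ to which you can apply both the quoted theorem of Woodin and the Claim, and the whole concatenation collapses.

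The intended argument is much more direct and avoids any search for a set-sized $\delta$: the hypothesis ``$\Ord$ is Woodin $+$ there are class many Woodin cardinals'' is precisely the statement ``$\delta$ is a Woodin cardinal which is a limit of Woodin cardinals'' transcribed to $\delta=\Ord$ in the $\MK$ framework (this is exactly what the preceding remark, that Woodinness of $\delta$ is detected at $V_{\delta+1}$ and hence ``$\Ord$ is Woodin'' is expressible in $\MK$, is setting up). One then applies Woodin's theorem with $\delta=\Ord$---the paper has already stipulated that the relevant properties of $\bool{B}_\delta$ and these arguments generalize straightforwardly to $\delta=\Ord$---obtaining $H_{\dot{\omega}_1}^{\bool{B}}\prec H_{\dot{\omega}_1}^{\bool{B}_\Ord}$ for every cba $\bool{B}$, which, unwound, is literally the displayed reformulation of ``$\bool{B}$ satisfies $\bool{PA}$,'' namely $\Qp{\phi(\tau_1,\dots,\tau_n)}_{\bool{B}_\Ord}=\Qp{(H_{\dot{\omega}_1}^{\bool{B}})^\circ\models\phi(\tau_1,\dots,\tau_n)}_{\bool{B}}$. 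Your derivation of the TFAE from ``every $\bool{B}$ satisfies $\bool{PA}$'' (specializing to $\bool{B}=\2$ to get $V\models\bool{PA}$ and then chaining the Boolean-value identities) is correct and is the right way to finish; it is only the first assertion that needs the $\delta=\Ord$ reading rather than an internal Woodin limit of Woodins.
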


We observe the following:

\begin{remark}
The key steps
in the proof of the density of 
\[
D_\bool{PA}=\bp{\bool{B}: H_{\dot{\omega_1}}^{\bool{B}}\prec 
V^{\Coll(\omega,{<}\Ord)}}
\]
for the preorder given by the class category $(\Omega,\to^\Omega)$ in a model of 
\[
\MK+\Ord\text{ is Mahlo} 
\]
rely on the following crucial
properties:
\begin{itemize}

\item
$\Coll(\omega,{<}\delta)$ preserves the regularity of $\delta$ for any regular cardinal $\delta$ and makes $\delta$ the first uncountable cardinal;

\item $\Coll(\omega,{<}\delta)\sqsubseteq\Coll(\omega,{<}\Ord)$ for all regular cardinals $\delta$;

\item any cba $\bool{B}\in V_\delta$ embeds as a complete suborder in
$\Coll(\omega,{<}\delta)$ for any inaccessible $\delta$;

\end{itemize}
\end{remark}

We want to replicate the above proof pattern for arbitrary classes of forcings $\Gamma$ closed under two-step iterations.
The guiding idea will be that projective absoluteness 
is just $\BCFA(\Gamma)$
for $\Gamma=\Omega$, the class of all forcing notions. We will show that for a variety of definable classes of forcings
$\Gamma$ there is a (uniformly) definable cardinal $\lambda_\Gamma$ associated
 to $\Gamma$ such that
\[
D_\Gamma=\bp{\bool{B}\in\Gamma: H_{\dot{\lambda}_{\Gamma}^+}^{\bool{B}}\prec 
V^{\Gamma\restriction\bool{B}}}
\]
is dense in the class forcing induced by the category $(\Gamma,\to^\Gamma)$.

Our analysis of projective absoluteness 
in the previous section 
shows
that
$\lambda_\Omega=\omega$.

We will focus in section \ref{suitable classes} on the analysis of classes $\Gamma$ for which 
$\lambda_\Gamma=\omega_1$. However, the machinery we will present below is modular and
shows that for any class of forcings $\Gamma$ satisfying certain reasonable properties,
$\lambda_\Gamma$ is well defined and $D_\Gamma$ is dense in the class forcing induced by the category $(\Gamma,\to^\Gamma)$.

First of all we need to introduce the terminology and notation required to formulate precisely what is meant
by the class forcing induced by the category $(\Gamma,\to^\Gamma)$, by 
$\BCFA(\Gamma)$,
etc.

\begin{notation}
Let $i:\bool{B}\to\bool{C}$ be a complete homomorphism of cbas.
\[
\ker(i)=\bigvee\bp{b\in\bool{B}: i(b)=0_{\bool{C}}},
\]
\[
\coker(i)=\neg\ker(i),
\]

$i$ is a regular embedding if it is injective and complete.

For any $b\in\bool{B}$ 
\begin{align*}
k_{b}:&\bool{B}\to\bool{B}\restriction b\\
&c\mapsto c\wedge b.
\end{align*}

\[
\dot{G}_{\bool{B}}=\bp{\ap{\check{b},b}:\, b\in\bool{B}}
\] 
is the canonical $\bool{B}$-name for
the $V$-generic filter.
\end{notation}

\begin{remark}
All complete homomorphisms $i:\bool{B}\to\bool{C}$ 
are of the form $i_0\circ k_b$ with $i_0:\bool{B}\restriction b\to\bool{C}$ a regular embedding and 
$b=\coker(i)$.
\end{remark}

\begin{notation}
Let $i:\bool{B}\to\bool{C}$ a complete homomorphism and $\dot{\kappa}\in \mathcal{V}^{\bool{B}}$,
$\dot{\delta}\in \mathcal{V}^{\bool{C}}$ be such that
\[
\Qp{\dot{\kappa}\text{ is a regular uncountable cardinal}}_\bool{B}=1_{\bool{B}}
\]
and
\[
\Qp{\dot{\delta}\text{ is a regular uncountable cardinal}}_\bool{C}=1_{\bool{C}},
\]
where 
\[
\Qp{\hat{i}(\dot{\kappa})\leq\dot{\delta}}=1_{\bool{C}}.
\]
We say that
\[
H_{\dot{\kappa}}^{\bool{B}}\prec H_{\dot{\delta}}^{\bool{C}}
\]
if and only if for all 
$\tau_1,\dots,\tau_n\in H_{\dot{\kappa}}^{\bool{B}}$ and formulae $\phi(x_1,\dots,x_n)$
without class quantifiers, we have that
\[
i(\Qp{H_{\dot{\kappa}}\models\phi(\tau_1,\dots,\tau_n)}_{\bool{B}})=
\Qp{H_{\dot{\delta}}\models
\phi(\hat{i}(\tau_1),\dots,\hat{i}(\tau_n))}_{\bool{C}}.
\]
Here,
\begin{align*}
\hat{i}:&\mathcal{V}^{\bool{B}}\to \mathcal{V}^{\bool{C}}\\
&\tau\mapsto\bp{\ap{\hat{i}(\sigma),i(b)}:\,\ap{\sigma,b}\in\tau}
\end{align*}
\end{notation}
Our analysis
of projective absoluteness covers the situation in which
$$i:\bool{B}=\RO(\Coll(\omega,{<}\delta))\to\bool{C}=\RO(\Coll(\omega,{<}\gamma))$$ 
is the inclusion, $\delta<\gamma$ are both in $C$ (hence they both force $\bool{PA}$), 
and $\dot{\kappa}=\dot{\omega}_1^{\bool{B}}$ and $\dot{\delta}=\dot{\omega}_1^{\bool{C}}$
are the canonical names for the first uncountable cardinal, for $\bool{B}$ and $\bool{C}$, respectively.

\begin{notation}
Given the standard model $(V,\mathcal{V})$ of $\MK$,
a \emph{definable} class $\Gamma$ denotes  
an element of $\mathcal{V}$ defined by a first order formula
$\phi_\Gamma(x,a_\Gamma)$ with $a_\Gamma\in V$ and 
$\phi_\Gamma(x,y)$ a formula without class quantifiers.
\end{notation}

Note that we allow
the free variable $x$ to also take values in $\mathcal{V}$, since one of our objective will be to infer that
$\phi_\Gamma(\Gamma,a)$ holds in $(V,\mathcal{V},\in)$ for a wide range of classes $\Gamma$.

\begin{definition} Let $\Gamma$ be a definable class of pre-orders.
\begin{itemize}
\item
$(\Gamma,\to^\Gamma)$ is the category whose objects are cbas in $\Gamma$, and whose arrows are the
$\Gamma$-correct homomorphisms (not necessarily injective).

\item
For cbas $\bool{B},\bool{C}$, we write
\begin{itemize}
\item $\bool{B}\geq_\Gamma\bool{C}$ if there is a $\Gamma$-correct $i:\bool{B}\to\bool{C}$, and
\item $\bool{B}\geq^*_\Gamma\bool{C}$ if there is a $\Gamma$-correct $i:\bool{B}\to\bool{C}$ which is also injective (i.e. $i$ is regular).
\end{itemize}

For any ordinal $\delta$,
$\Gamma_\delta=\Gamma\cap V_\delta$.
\end{itemize}
\end{definition}

\begin{definition}
Let $\Gamma$ be a definable class of pre-orders.
\begin{itemize}
\item
$\Gamma$ is \emph{stable under forcing} if for any $P\in \Gamma$, any pre-order $Q$ such that
$\RO(P)\cong\RO(Q)$ is also in $\Gamma$.
\item
A complete (not necessarily injective) homomorphism of cbas 
$i:\bool{B}\to \bool{C}$ is \emph{$\Gamma$-correct} if
\[
\Qp{\phi_\Gamma(\bool{C}/_{i[\dot{G}_\bool{B}]},\check{a}_\Gamma)}_{\bool{B}}=\Qp{\bool{C}/_{i[\dot{G}_\bool{B}]}\in\dot{\Gamma}}_{\bool{B}}\geq \coker(i).\footnote{This set-up 
provides an algebraic expression of
the condition that whenever $G$ is $V$-generic for $\bool{C}$ with $i(\coker(i))\in G$ and $H=i^{-1}[G]$, $\bool{C}/_{i[H]}$ is a cba in $V[H]$ which is in $\Gamma^{V[H]}$.}
\]

\item
$\Gamma$ is \emph{closed under two-step iterations} if $\bool{C}\in\Gamma$ whenever 
$\bool{B}\in\Gamma$ and 
$i:\bool{B}\to \bool{C}$ is $\Gamma$-correct.

\item
$(\Gamma,\to^\Gamma)$ is the category whose objects are cbas in $\Gamma$, and whose arrows are the
$\Gamma$-correct homomorphisms (not necessarily injective).

\item
For cbas $\bool{B},\bool{C}$, we write
\begin{itemize}
\item $\bool{B}\geq_\Gamma\bool{C}$ if there is a $\Gamma$-correct $i:\bool{B}\to\bool{C}$, and
\item $\bool{B}\geq^*_\Gamma\bool{C}$ if there is a $\Gamma$-correct $i:\bool{B}\to\bool{C}$ which is also injective (i.e. $i$ is regular).
\end{itemize}

For any ordinal $\delta$,
$\Gamma_\delta=\Gamma\cap V_\delta$.
\end{itemize}
\end{definition}

\begin{remark}
Stability under forcing in $\Gamma$ asserts that $\Gamma$ is closed with respect to the equivalence relation on preorders given by $P\sim Q$ iff $\RO(P)\cong \RO(Q)$. All the properties of 
$\Gamma$ we are interested in are invariant under this equivalence relation, and each equivalence class according to $\sim$ has a canonical representative given by the unique (modulo isomorphism) 
cba $\bool{B}$
belonging to it. In particular, we can restrict our analysis of $\Gamma$ concentrating just 
on its subclass given by the cbas in it.
It will be however sometimes 
convenient to allow as elements of $\Gamma$ also partial orders $P\in V$ which are not cbas
but are such that $\phi_\Gamma(P,a_\gamma)$ holds: specifically, we will need to infer that
for well-behaved classes $\Gamma$,
$\Gamma_\delta\in\Gamma$ for many inaccessible cardinals $\delta$; $(\Gamma_\delta,\leq_\Gamma)$ 
is a pre-order but not a cba, and it is much simpler to 
 describe the combinatorial
properties of $\Gamma_\delta$ 
 than those of its boolean completion.

%

\begin{itemize}
\item
If $(\Gamma,\to^\Gamma)$ has lower bounds for its finite subsets, then $\leq_\Gamma$ is a trivial forcing notion 
since all conditions are compatible.
\item
For any cba $\bool{B}$ there is a regular embedding $i:\bool{B}\to \RO(\Coll(\omega,\delta))$
for any large enough $\delta$;
consequently the class $(\Omega,\to^\Omega)$ 
of all cbas and all complete homomorphisms between them has lower bounds for its finite subsets, 
hence $\leq^*_\Omega$ and
$\leq_\Omega$ are trivial forcing notions.
\end{itemize}
\end{remark}

%
%
%
%
%
%
%
%
%
%
%

\small{
\begin{definition}
A partial order $P$ is in $\SSP$ if for any stationary $S\subseteq\omega_1$,
\[
P\Vdash\check{S}\text{ is stationary}.
\]
\end{definition}

\begin{fact}
 $(\SSP,\leq_\Omega)$ (hence also   $(\SSP,\leq_\SSP)$) is an atomless partial order.
 \end{fact}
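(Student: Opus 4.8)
The plan is to show that $(\SSP,\leq_\Omega)$ has no atoms: I will exhibit, below an arbitrary $\bool{B}\in\SSP$, two $\leq_\Omega$-incompatible conditions. Since on $\SSP$ the relation $\leq_\SSP$ refines $\leq_\Omega$ (an $\SSP$-correct complete homomorphism is in particular a complete homomorphism) and the two conditions I produce will even be $\leq_\SSP$-below $\bool{B}$, the same two conditions witness atomlessness of $(\SSP,\leq_\SSP)$ too — a common $\leq_\SSP$-lower bound would be a common $\leq_\Omega$-lower bound. Antisymmetry is dealt with, as usual, by passing to the quotient by bi-embeddability, which I suppress.

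First I would pass from $\bool{B}$ to $\bool{B}^+=\bool{B}\ast\dot{\Coll}(\omega_1,\check\mu)$ for a $V$-cardinal $\mu\geq|\bool{B}|$ with $\mu^{\aleph_0}=\mu$ (e.g.\ $\mu=2^{|\bool{B}|}$). The second factor is $\sigma$-closed over $V^{\bool{B}}$, hence in $\SSP$, so $\bool{B}^+\in\SSP$ and $\bool{B}^+\leq_\SSP\bool{B}$; moreover $\bool{B}^+$ adds no reals, forces $\CH$, and — since $\bool{B}$ is $\mu^+$-cc — forces $\omega_2$ to equal the fixed ordinal $\theta=(\mu^+)^V$, independently of the generic. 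Next, working in $V^{\bool{B}^+}\models\CH$, let $\Nm_0$ be Namba's tree forcing on $\omega_2$ and $\Nm_1$ a second, inequivalent Namba-type forcing (for instance the Bukovsk\'y variant); under $\CH$ both are semiproper, hence $\SSP$. I set $\bool{C}_\varepsilon=\bool{B}^+\ast\dot{\Nm}_\varepsilon$ for $\varepsilon\in\{0,1\}$. By closure of $\SSP$ under two-step iterations, $\bool{C}_0,\bool{C}_1\in\SSP$, and $\bool{C}_\varepsilon\leq_\SSP\bool{B}^+\leq_\SSP\bool{B}$.

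To see $\bool{C}_0\perp\bool{C}_1$ in $(\SSP,\leq_\Omega)$, suppose toward a contradiction that $\bool{D}\in\SSP$ admits complete homomorphisms $j_\varepsilon\colon\bool{C}_\varepsilon\to\bool{D}$. Forcing with $\bool{D}$ over $V$ with generic $K$, each $j_\varepsilon^{-1}[K]$ is $V$-generic for $\bool{C}_\varepsilon=\bool{B}^+\ast\dot{\Nm}_\varepsilon$; write it as $G_\varepsilon\ast s_\varepsilon$. Then inside $V[K]$ I have, for each $\varepsilon$, a $V$-generic $G_\varepsilon$ for $\bool{B}^+$ (so $V[G_\varepsilon]\models\CH$, $\omega_1^{V[G_\varepsilon]}=\omega_1^V$ and $\omega_2^{V[G_\varepsilon]}=\theta$) together with an $\Nm_\varepsilon^{V[G_\varepsilon]}$-generic cofinal map $s_\varepsilon\colon\omega\to\theta$. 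I would then invoke the classical fact that an outer model simultaneously containing generics for the two (inequivalent) Namba-type forcings over a model of $\CH$ collapses that model's $\omega_1$; applied here it gives that $\omega_1^V$ is countable in $V[K]$, contradicting $\bool{D}\in\SSP$, which preserves $\omega_1^V$.

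The hard part is precisely this last invocation. Two things need care: running the classical Bukovsk\'y--Namba argument that extracts a surjection $\omega\to\omega_1^V$ from two cofinal $\omega$-sequences into $\theta$ that are generic for the two deliberately inequivalent Namba-type forcings (this is where the distinction between the two largeness conditions is essential); and the point that the pulled-back filters $G_0,G_1$ are $\bool{B}^+$-generic but need not coincide, so a priori $s_0$ and $s_1$ are generic over the possibly different models $V[G_0]$ and $V[G_1]$. The latter is exactly why I arranged $\bool{B}^+$ so that $V[G_0]$ and $V[G_1]$ agree on $\omega_1$, on $\omega_2=\theta$, and (being $\sigma$-closed extensions of $V^{\bool{B}}$) on the reals — which is all that the collapsing argument uses — or one cites the amalgamation lemma in the form assuming only this much agreement. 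The remaining ingredients (closure of $\SSP$ under two-step iterations, semiproperness of Namba forcing under $\CH$, $\sigma$-closed $\Rightarrow\SSP$, and the elementary computation that $\Coll(\omega_1,\mu)$ with $\mu$ as above forces $\CH$ and pins down $\omega_2$) are standard.
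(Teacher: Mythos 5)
There is a genuine gap at the heart of your incompatibility argument. You pit two \emph{Namba-type} forcings against each other and then invoke, without proof, ``the classical fact that an outer model simultaneously containing generics for the two (inequivalent) Namba-type forcings over a model of $\CH$ collapses that model's $\omega_1$.'' This is not a standard fact, and you yourself flag it as ``the hard part'' and leave it unargued. Note that both Namba variants do the \emph{same} thing to cofinalities: each merely adds a cofinal $\omega$-sequence into $\omega_2$. Mere inequivalence of two forcings is far weaker than their incompatibility in $(\SSP,\leq_\Omega)$; nothing you write rules out an $\omega_1$-preserving extension containing generics for both. The paper's proof avoids this entirely by choosing two forcings whose effects on $\cf(\omega_2^V)$ \emph{clash}: Namba forcing $P$ (giving $\cf(\omega_2^V)=\omega$) versus $Q=\Coll(\omega_1,\omega_2)$ (giving a cofinal map of type $\omega_1^V$ into $\omega_2^V$). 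Any common refinement $\bool{R}$ would then force $\cf(\omega_1^V)=\omega$, i.e.\ collapse $\omega_1$, contradicting $\bool{R}\in\SSP$; and the argument relativizes to $V^{\bool{B}}$ for arbitrary $\bool{B}\in\SSP$, which is all that atomlessness requires. Your elaborate preprocessing ($\bool{B}^+=\bool{B}\ast\dot\Coll(\omega_1,\mu)$ to pin down $\omega_2$ and force $\CH$, and the worry about $G_0\neq G_1$) is machinery needed only because of the problematic choice of the two conditions.

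A secondary error: you assert that under $\CH$ the Namba forcings are semiproper, ``hence $\SSP$.'' Semiproperness of Namba forcing does not follow from $\CH$; by Shelah it is equivalent to a strong form of Chang's conjecture. The membership in $\SSP$ is still true in $\ZFC$ (Namba forcing satisfies the $S$-condition, which implies $\SSP$, as the paper records in Lemma \ref{S-cond-shelah}), but your stated justification is wrong. To repair the proof, replace one of your two iterands by $\Coll(\omega_1,\theta)$ (countably closed, hence $\SSP$) and run the cofinality-clash argument above.
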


\begin{proof}
Assume
 \begin{itemize}
 \item
 $P$ is Namba forcing on $\aleph_2$,
 \item
 $Q$ is
 $\Coll(\omega_1,\omega_2)$.
 \end{itemize}
 Then $\RO(P)$ and $\RO(Q)$ are incompatible conditions in 
 $(\SSP,\leq_\Omega)$ (and therefore also in $(\SSP,\leq_\SSP)$):
 Assume $\bool{R}\leq_\SSP\RO(P),\RO(Q)$, and $H$ is $V$-generic for 
 $\bool{R}$. Then
 \begin{itemize}
  \item
  $\omega_1^{V[H]}=\omega_1$,
  \item 
 there are $G,K\in V[H]$ $V$-generic filters for 
 $P$ and $Q$ respectively (since $\bool{R}\leq_\Omega\RO(P),\RO(Q)$).
 \end{itemize}
 
 $G$ gives in $V[H]$ a sequence cofinal in $\omega_2^V$ of type $\omega$.

 $K$ gives in $V[H]$ a sequence cofinal in $\omega_2^V$ of type $\omega_1^V$.

 Contradiction with the preservation of $\omega_1$ in $V[H]$ (which holds since $\bool{R}\in\SSP$).

 This argument can be repeated in $V^{\bool{B}}$ for any $\bool{B}\in \SSP$.
 \end{proof}

 \begin{remark}
Similar arguments show that $\leq_\Gamma$ defines an atomless partial order for a variety of 
 $\Gamma\subseteq\SSP$ (for example for $\Gamma$ being the class of proper posets, or the class of semiproper posets). 
 \end{remark}

\subsection{$\BCFA(\Gamma)$
 and how to get it}

First of all we define the cardinal $\lambda_\Gamma$ we attach to a given class $\Gamma$ of forcing notions. $\lambda_\Gamma$ could actually be $\Ord$. However, in all cases of interest $\lambda_\Gamma$ will in fact exist as a cardinal.
It will be needed to formulate 
$\BCFA(\Gamma)$ properly
(it will also be a key parameter
in the proper formulation of the iteration theorem we will later 
require to hold for
forcings in $\Gamma$).

\begin{definition}
Given a definable class of forcings $\Gamma$, $\lambda_\Gamma$ is the supremum of all cardinals
$\eta\in V$ such that all forcings in $\Gamma$ preserve $\eta$.
\end{definition}

\begin{remark}
$\lambda_\Omega=\omega$, $\lambda_\SSP=\omega_1$, and the same holds for all 
$\Gamma\subseteq\SSP$ such that every countably closed forcing is in $\Gamma$ (e.g.\ the class of
proper forcings and the class of semiproper forcings). It is easy to see that 
$\lambda_\Gamma$ is either $\Ord$ or  
the maximum of the set of cardinals of which it is a supremum. We will be interested just in the case in
which $\lambda_\Gamma$ is an uncountable regular cardinal 
(and the reader can safely assume throughout the paper that
$\lambda_\Gamma=\omega_1$).
\end{remark}

\begin{definition}
Let $(V,\mathcal{V})$ be the standard model of $\MK$ and $\Gamma\in\mathcal{V}$ be a definable class of forcings
closed under two-step iterations
and such that $\lambda_\Gamma\in \Ord$. 
We say that
the \emph{Bounded Category Forcing Axiom for $\Gamma$}, $\BCFA(\Gamma)$, 
holds if
\[
H_{\lambda_\Gamma^+}\prec V^{\Gamma},
\]
and
\[
D_\Gamma=\bp{\bool{B}\in\Gamma: H_{\dot{\lambda}_\Gamma^+}^{\bool{B}}\prec V^{\Gamma\restriction\bool{B}}}
\]
is dense in $(\Gamma,\leq_\Gamma)$.\footnote{$H_{\lambda_\Gamma^+}\prec V^{\Gamma}$ means that $H_{\lambda_\Gamma^+}^V\prec V[G]$ for every $\Gamma$-generic filter $G$ over $V$, whereas $H_{\dot{\lambda}_\Gamma^+}^{\bool{B}}\prec V^{\Gamma\restriction\bool{B}}$ means that whenever
$G$ is $\mathcal{V}$-generic for $\Gamma\restriction\bool{B}$, in $V[G]$ there is $H$, $V$-generic for $\bool{B}$, such that
\[
H_{\dot{\lambda}_\Gamma^+}^{V[H]}\prec V[G].
\]}
\end{definition}

As we will see, the density of $D_\Gamma$ can be proved right away in $\MK$ for many classes of forcings $\Gamma$,
 hence 
 $\BCFA(\Gamma)$
holds once we force with some element of $D_\Gamma$.

Assume we have a definable class of forcings $(\Gamma,\to_\Gamma)$ with the following properties.\footnote{While parsing through these items,
the reader should keep in mind the case 
$\Gamma$ being $\SSP$ or the class of (semi)proper forcings with $\lambda_\Gamma=\omega_1$.}
\begin{itemize}
\item \emph{Pretameness:} $V^\Gamma\models\Ord=\lambda_\Gamma^+$. 

\item 
 \emph{Factor Lemma:} $\Gamma$ is closed under two-step iterations; therefore\footnote{As we will see below, it will be rather delicate to define $\dot{\Gamma}$ and to infer that $\bool{B}\ast\dot{\Gamma}$
 and $\Gamma\restriction\bool{B}$ define equivalent class forcings in $V$; this despite the 
 intuition that closure of $\Gamma$ under two-step iterations
 should grant it.}
 $\bool{B}\ast\dot{\Gamma}\cong \Gamma\restriction\bool{B}$ holds
for all cbas $\bool{B}\in\Gamma$;
 
\item \emph{Self-similarity:}
For stationarily many inaccessible cardinals $\alpha$,
\begin{itemize}
\item
$\Gamma\cap V_\alpha=\Gamma_\alpha$ is such that $(\Gamma_\a, \leq_\Gamma)\in\Gamma$,
 and (as in the case $\alpha=\Ord$) 
$(\Gamma_\alpha,\leq_\Gamma)$ preserves the regularity of $\alpha$ making it the successor of $\lambda_\Gamma$, and
\item
the map 
$\bool{B}\mapsto \Gamma_\alpha\restriction \bool{B}$ 
regularly embeds $\Gamma_\alpha$  
into $\Gamma\restriction\Gamma_\alpha$.
\end{itemize}

\item \emph{Universality:} 
For stationarily many inaccessible cardinals $\alpha$,
every $\bool{B}\in\Gamma_\alpha$ regularly embeds into $\Gamma_\alpha\restriction \bool{C}$
for some $\bool{C}\leq_\Gamma\bool{B}$ in $\Gamma_\alpha$. 
\end{itemize}
Then we would be able to replicate the same proof pattern we used to establish the consistency of projective absoluteness replacing $\Coll(\omega,{<}\Ord)$ by $\Gamma$ and $\Coll(\omega,{<}\alpha)$ by 
$\Gamma_\alpha$ 
to infer that
\[
D_\Gamma=\bp{\bool{B}\in\Gamma: H_{\dot{\lambda}_\Gamma^+}^{\bool{B}}\prec V^\Gamma}
\]
is dense in $(\Gamma,\leq_\Gamma)$, 
as witnessed by the forcings $\Gamma_\alpha\restriction\bool{C}$ for $\alpha$ inaccessible with
$\Gamma_\alpha\in\Gamma$, $V_\alpha\prec V$, and $\bool{C}\in\Gamma_\alpha$.

We also want to observe some other basic facts concerning the above conditions.

\begin{fact}
Assume $\Gamma$ is {\bf pretame}, satisfies the {\bf Factor Lemma}, the {\bf Self-similarity} condition, and the
{\bf Universality} condition. 
Then whenever $K$ is $\mathcal{V}$-generic for $\Gamma$ and 
$\bool{C}\in K$, in $V[K]$ there is a $V$-generic filter for $\bool{C}$.
\end{fact}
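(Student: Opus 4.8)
The goal is to show that if $K$ is $\mathcal{V}$-generic for $\Gamma$ and $\bool{C}\in K$, then $V[K]$ contains a $V$-generic filter for $\bool{C}$. The natural strategy is to read off such a filter from the generic $K$ itself, using the \textbf{Self-similarity} and \textbf{Universality} conditions to reduce to a set-sized stage of the iteration, and then using the \textbf{Factor Lemma} to express the category forcing as $\bool{C}$ followed by further forcing, so that $K$ naturally projects onto a $\bool{C}$-generic.

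\textbf{Step 1: reduce to a set-sized condition inside $K$.} By genericity and \textbf{Self-similarity}, the set of inaccessible $\alpha$ with $\Gamma_\alpha\in\Gamma$, $V_\alpha\prec V$, and $(\Gamma_\alpha,\leq_\Gamma)$ preserving the regularity of $\alpha$ is stationary, hence the conditions $\Gamma_\alpha$ are dense in $(\Gamma,\leq_\Gamma)$ below any fixed $\bool{B}$ (this is exactly the density argument sketched in the excerpt, mimicking the $\bool{PA}$ proof via $\bool{B}\geq_\Gamma\bool{B}_\delta$). So, strengthening $\bool{C}$ if necessary, I may assume $\bool{C}\in K$ and pick some $\Gamma_\alpha\in K$ with $\Gamma_\alpha\leq_\Gamma\bool{C}$ and $\alpha$ as above. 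Since $\Gamma_\alpha\leq_\Gamma\bool{C}$, there is a $\Gamma$-correct homomorphism $j\colon\bool{C}\to\RO(\Gamma_\alpha)$ (witnessing $\Gamma_\alpha\leq_\Gamma\bool{C}$, using that $\Gamma_\alpha$ is a preorder equivalent to its regular open completion). By \textbf{Universality}, every element of $\Gamma_\alpha$ regularly embeds into $\Gamma_\alpha\restriction\bool{D}$ for suitable $\bool{D}$, which will be what lets me eventually find an honest $V$-generic object rather than merely a filter meeting the dense sets of $\Gamma_\alpha$ coded in $V$.

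\textbf{Step 2: extract the $\bool{C}$-generic from $K$.} The key point is that $K\cap(\Gamma\restriction\Gamma_\alpha)$ is, by the Factor Lemma applied at the set level, essentially a generic for the two-step iteration realizing $\Gamma_\alpha$ followed by a quotient. Concretely: $K$ restricted to the cone below $\Gamma_\alpha$ induces a $V$-generic filter $g$ for the set forcing $(\Gamma_\alpha,\leq_\Gamma)$ — this is where I use that $\Gamma_\alpha\in\Gamma$ is an actual set forcing and $V_\alpha\prec V$, so that the dense subsets of $\Gamma_\alpha$ lying in $V$ are met by $K$ (every such dense set generates a dense subclass of $\Gamma$ below $\Gamma_\alpha$). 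Now apply the preimage-of-a-generic fact (quoted in the footnote to the projective-absoluteness theorem): since there is a $\Gamma$-correct, in particular complete, homomorphism $j\colon\bool{C}\to\RO(\Gamma_\alpha)$, the filter $j^{-1}[g]\subseteq\bool{C}$ is $V$-generic for $\bool{C}$, and it lies in $V[g]\subseteq V[K]$. This is the desired object.

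\textbf{Main obstacle.} The delicate point — flagged explicitly in the paper's footnote to the Factor Lemma bullet — is the identification of $K\cap(\Gamma\restriction\Gamma_\alpha)$ with a genuine $V$-generic filter for the \emph{set} forcing $(\Gamma_\alpha,\leq_\Gamma)$, i.e.\ that the relevant dense \emph{sets} (not just dense \emph{classes}) are hit. This requires knowing that a dense subset $D\in V$ of $\Gamma_\alpha$ generates, via $\bool{B}\mapsto\bool{B}$ together with further two-step extensions, a dense subclass of $(\Gamma\restriction\Gamma_\alpha,\leq_\Gamma)$ — which is precisely the content of the \textbf{Self-similarity} clause asserting that $\bool{B}\mapsto\Gamma_\alpha\restriction\bool{B}$ regularly embeds $\Gamma_\alpha$ into $\Gamma\restriction\Gamma_\alpha$, combined with $V_\alpha\prec V$ so that $\Gamma_\alpha$ "sees" the same $\Gamma$-correctness facts $V$ does. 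Once that regular-embedding clause is invoked, a regular embedding pulls back $\mathcal V$-generics to $V$-generics, and the argument closes; but getting the bookkeeping of set-sized versus class-sized dense objects exactly right is the step that needs care.
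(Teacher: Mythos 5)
Your proposal is correct and follows essentially the same route as the paper's proof: use Universality to find a condition of the form $\Gamma_\alpha\restriction\bool{D}$ in $K$ below the given algebra together with a $\Gamma$-correct (complete) homomorphism into it, use Self-similarity to see that $K\cap V_\alpha$ induces a $V$-generic filter for the set forcing $\Gamma_\alpha$ (restricted below $\bool{D}$), and pull back through the complete homomorphism. The paper's proof is exactly this two-step argument, stated more tersely; your only slight imprecision is asserting $\Gamma_\alpha\leq_\Gamma\bool{C}$ for the full $\Gamma_\alpha$ rather than for $\Gamma_\alpha\restriction\bool{D}$, which you correct in the following sentences.
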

\begin{proof}
This is easily granted if $\bool{C}=\Gamma_\alpha\in K$ ($K\cap V_\alpha$ is $V$-generic for 
$\Gamma_\alpha$
by {\bf self-similarity}). Now assume 
$\bool{B}\geq_\Gamma \Gamma_\alpha\restriction\bool{C}$ for some $\Gamma_\alpha\restriction\bool{C}\in K$. Let
$i:\bool{B}\to \Gamma_\alpha\restriction\bool{C}$ witness 
$\bool{B}\geq_\Gamma \Gamma_\alpha\restriction\bool{C}$ ($i$ exists by {\bf universality}). Then
$i^{-1}[K]\in V[K]$ is $V$-generic for $\bool{B}$.
\end{proof}

The following observation is a first indication that these properties are closely related 
to generic absoluteness results and to forcing axioms.

\begin{fact} 
Assume $\Gamma$ is {\bf pretame}, satisfies the {\bf Factor Lemma}, the {\bf Self-similarity} condition, and the
{\bf Universality} condition. 
Let
$\bool{B}\in D_\Gamma$, and let $H$ be $V$-generic for $\bool{B}$. Then
\[
H_{\lambda_\Gamma^+}^{V[H]}\prec_{\Sigma_1} V[H]^{\bool{C}}
\]
for all $\bool{C}\in \Gamma^{V[H]}$.
\end{fact}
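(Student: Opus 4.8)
The plan is to exploit the density of $D_\Gamma$ together with the \textbf{Factor Lemma} to witness every $\Sigma_1$ statement that $V[H]$ thinks some $\bool{C}\in\Gamma^{V[H]}$ can force. Fix $\bool{B}\in D_\Gamma$, let $H$ be $V$-generic for $\bool{B}$, and work in $V[H]$. Suppose $\bool{C}\in\Gamma^{V[H]}$ and $\Qp{\phi(\check a)}_{\bool{C}}^{V[H]}>0$ for some $\Sigma_1$ formula $\phi$ (without class quantifiers) and some parameter $a\in H_{\lambda_\Gamma^+}^{V[H]}$; we want $H_{\lambda_\Gamma^+}^{V[H]}\models\phi(a)$. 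First I would pass to a condition: replace $\bool{C}$ by $\bool{C}\restriction c$ where $c=\Qp{\phi(\check a)}_{\bool{C}}^{V[H]}$, so that $\bool{C}$ forces $\phi(\check a)$ outright and is still in $\Gamma^{V[H]}$ (here using that $\Gamma$ is stable under forcing / closed under restriction to conditions).

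Next, since $\Gamma$ is closed under two-step iterations and $\bool{B}\in\Gamma$, the name $\dot{\bool{C}}$ for this element of $\Gamma^{V[H]}$ gives a $\Gamma$-correct homomorphism $i:\bool{B}\to\bool{D}$ where $\bool{D}=\bool{B}\ast\dot{\bool{C}}\in\Gamma$; equivalently $\bool{D}\leq_\Gamma\bool{B}$ in $(\Gamma,\leq_\Gamma)$. Now I would use density of $D_\Gamma$ to find $\bool{E}\in D_\Gamma$ with $\bool{E}\leq_\Gamma\bool{D}$, witnessed by a $\Gamma$-correct $j:\bool{D}\to\bool{E}$; composing, $k=j\circ i:\bool{B}\to\bool{E}$ is $\Gamma$-correct. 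Let $K$ be $V$-generic for $\bool{E}$; then $k^{-1}[K]=H'$ is $V$-generic for $\bool{B}$ and $i^{-1}[j^{-1}[K]]=H'$ as well, while $G:=j^{-1}[K]$ is $V$-generic for $\bool{D}$ lying over $H'$, and by the \textbf{Factor Lemma} $G$ decomposes so that $V[G]$ is a $\bool{C}[H']$-generic extension of $V[H']$; in particular $V[G]\models\phi(a)$, hence $H_{\lambda_\Gamma^+}^{V[G]}\models\phi(a)$ by $\Sigma_1$-upward absoluteness from $H_{\lambda_\Gamma^+}^{V[G]}$ to $V[G]$ (or rather, $\phi$ being $\Sigma_1$ over $H_{\lambda_\Gamma^+}$, witnessed by an element of that structure). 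Since $\bool{E}\in D_\Gamma$, we have $H_{\dot\lambda_\Gamma^+}^{\bool{E}}\prec V^{\Gamma\restriction\bool{E}}$, and chasing the definition this yields, for a suitable $\mathcal V[K]$-generic object, $H_{\lambda_\Gamma^+}^{V[K']}\prec V[G']$ for the appropriate generic $G'$ over $(\Gamma,\leq_\Gamma)$ sitting above $K$; but the cleaner route is to observe that $\bool{B}\in D_\Gamma$ and $\bool{E}\in D_\Gamma$ with $\bool{E}\leq_\Gamma\bool{B}$ forces, via the same $\Sigma_\omega$-elementarity-into-$V^\Gamma$ diagram used in the Corollary after the $\bool{PA}$ theorem, that $H_{\lambda_\Gamma^+}^{V[H']}\prec H_{\lambda_\Gamma^+}^{V[K]}$, and since $H_{\lambda_\Gamma^+}^{V[G]}\subseteq H_{\lambda_\Gamma^+}^{V[K]}$ computes $\phi(a)$ true, $\Sigma_1$-reflection gives $H_{\lambda_\Gamma^+}^{V[H']}\models\phi(a)$. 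Finally, transfer back from $H'$ to $H$: by homogeneity/elementarity considerations (both are $V$-generic for the same $\bool{B}\in D_\Gamma$, and $\phi(a)$ with $a\in V$—or more precisely with $a$ coded by a $\bool{B}$-name), $\Qp{H_{\dot\lambda_\Gamma^+}\models\phi(\check a)}_{\bool{B}}$ is either $0$ or $1$; as it is compatible with $H'$ it must be $1$, hence $H_{\lambda_\Gamma^+}^{V[H]}\models\phi(a)$.

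I would structure the write-up to avoid circularity by proving directly: for $\bool{B}_0\leq_\Gamma\bool{B}_1$ both in $D_\Gamma$, $H_{\lambda_\Gamma^+}$ of the $\bool{B}_1$-extension is $\prec$ into that of the $\bool{B}_0$-extension (this is the $\Sigma_\omega$-triangle argument, exactly as in the Corollary following the $\bool{PA}$ density theorem, now with $V^{\Gamma\restriction\bool{B}_i}$ in place of $V^{\Coll(\omega,<\Ord)}$); then the displayed claim is just $\Sigma_1$-reflection along one such embedding.

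The main obstacle I expect is the bookkeeping in the \textbf{Factor Lemma} step: one must genuinely check that a $\bool{B}$-name $\dot{\bool{C}}$ for a member of $\Gamma^{V[H]}$ produces a $\Gamma$-correct map $i:\bool{B}\to\bool{B}\ast\dot{\bool{C}}$ landing in the category $(\Gamma,\to^\Gamma)$, and that chasing preimages of the generic through $j\circ i$ really recovers a generic for $\bool{C}$ over $V[H']$ with $\phi(a)$ true there—this is where "closed under two-step iterations" must be invoked exactly as packaged in the Factor Lemma hypothesis. The other delicate point is the very last transfer from $H'$ back to the original $H$, which needs the restriction of the Boolean value of $H_{\dot\lambda_\Gamma^+}\models\phi$ to be decided by $1_{\bool{B}}$; this follows because $\bool{B}\in D_\Gamma$ forces $H_{\dot\lambda_\Gamma^+}^{\bool{B}}\prec V^{\Gamma\restriction\bool{B}}$ and the latter is (by pretameness plus homogeneity of the category forcing relative to conditions below $\bool{B}$) independent of the particular generic, but it is worth isolating as a small lemma rather than waving at homogeneity.
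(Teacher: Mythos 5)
Your core mechanism is the paper's: the triangle in which $H_{\lambda_\Gamma^+}$ of the $\bool{B}$-extension sits fully elementarily inside a $\mathcal{V}$-generic extension $V[K]$ by $(\Gamma\restriction\bool{B},\leq_\Gamma)$ (this is exactly what $\bool{B}\in D_\Gamma$ gives), while the $\bool{C}$-extension of the $\bool{B}$-extension also sits inside $V[K]$; a $\Sigma_1$ statement with parameters in $H_{\lambda_\Gamma^+}^{V[H]}$ true in the $\bool{C}$-extension then goes up to $V[K]$ by upward absoluteness and reflects down by elementarity. The paper's proof is literally this three-node diagram and nothing more. Your preliminary reduction to a condition $c=\Qp{\phi(\check a)}_{\bool{C}}>0$ is also fine, since $\Sigma_1$ truth persists to further extensions.

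The genuine gap is in how you realize the diagram. You build a fresh generic $K$ for some $\bool{E}\in D_\Gamma$ below $\bool{B}\ast\dot{\bool{C}}$ and pull back to $H'=k^{-1}[K]$, which is a $V$-generic filter for $\bool{B}$ that has nothing to do with the given $H$; you then must transfer $H_{\lambda_\Gamma^+}^{V[H']}\models\phi(a')$ back to $H$. That last step fails as stated. First, the parameter $a$ lives in $V[H]$, not in $V$, so under $H'$ its name evaluates to a possibly different object $a'$, and the conclusion about $(H',a')$ says nothing about $(H,a)$. Second, the claim that $\Qp{H_{\dot\lambda_\Gamma^+}\models\phi(\tau_a)}_{\bool{B}}$ is decided by $\1_{\bool{B}}$ via ``homogeneity of the category forcing below $\bool{B}$'' is exactly backwards for the algebras this paper cares about: $\Gamma$-rigidity (dense in $\Gamma$) means incompatible $b_0,b_1\in\bool{B}$ yield \emph{incompatible} conditions $\bool{B}\restriction b_0$, $\bool{B}\restriction b_1$ in $(\Gamma,\leq_\Gamma)$, so $\Gamma\restriction\bool{B}$ is as inhomogeneous below $\bool{B}$ as possible, and nothing forces the relevant Boolean value to be $\0$ or $\1$. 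The repair is simply to never switch generics: $\bool{B}\in D_\Gamma$ is a statement about Boolean values (equivalently, about every $\mathcal{V}$-generic $K$ for $\Gamma\restriction\bool{B}$ and the generic for $\bool{B}$ it induces), so you may extend the \emph{given} $H$ to a $\mathcal{V}$-generic $K$ for $\Gamma\restriction\bool{B}$ with $\bool{B}\ast\dot{\bool{C}}\in K$; then $V[K]$ contains a $V[H]$-generic for $\bool{C}$ (by the preceding Fact, using Universality and Self-similarity), $V[H][g]\subseteq V[K]$, and the triangle closes for the original $H$ and $a$ with no transfer step at all. With that change your argument collapses to the paper's proof; the detour through $D_\Gamma$-density and $\bool{E}$ is also unnecessary.
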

\begin{proof}
Notice that if $\bool{C}\leq_\Gamma \bool{B}$,
$K$ is $\mathcal{V}$-generic for $\Gamma$, with $H\in V[K]$ $V$-generic for $\bool{C}$, and
$G\in V[H]$ is $V$-generic for $\bool{B}$, we obtain the configuration:
\[
			\begin{tikzpicture}[xscale=1.5,yscale=-1.2]
				\node (A0_0) at (0, 0) {$H_{\lambda_\Gamma}^{V[G]}$};
				\node (A0_2) at (2, 0) {$V[K]$};
				\node (A1_1) at (1, 1) {$V[H]$};
				\path (A0_0) edge [->]node [auto] {$\scriptstyle{\Sigma_\omega}$} (A0_2);
				\path (A1_1) edge [->]node [auto,swap] {$\sqsubseteq$} (A0_2);
				\path (A0_0) edge [->]node [auto,swap] {$\sqsubseteq$} (A1_1);
			\end{tikzpicture}
		\]
This gives that any $\Sigma_1$-property with parameters in $H_{\lambda_\Gamma}^{V[G]}$ true in $V[H]$ remains true in $V[K]$ and thus reflects to $H_{\lambda_\Gamma}^{V[G]}$.
\end{proof}

In particular, the elements of $D_\Gamma$ are forcing strong versions of the corresponding bounded forcing axiom for 
$\Gamma$.

We now give rigorous definitions and outline how to infer the above properties of
$(\Gamma,\to^\Gamma)$ for a wide family of classes of forcings which includes the classes of proper, semiproper, and
$\SSP$ forcings.

\section{Forcing with forcings: definitions} \label{sec:wellbehavedclassdef}

\subsection{The Factor Lemma for $\Gamma$.}


In Subsection \ref{well-behaved classes} we will define the notion of absolutely well-behaved class. One of the things we will prove about such classes is the following.

\begin{lemma}[The Factor Lemma for $\Gamma$]
Let $\ap{V,\mathcal{V}}\models\MK$, and let
$\Gamma\in\mathcal{V}$ be a definable absolutely well-behaved class of forcings.
Let $\bool{B}\in\Gamma$ and
\[
\Gamma_\bool{B}=\bp{\dot{\bool{C}}\in\V^{\bool{B}}:\, 
\Qp{\phi_\Gamma(\dot{\bool{C}},\check{a}_\Gamma)}_{\bool{B}}=1_{\bool{B}}},
\]
\[
\to^\Gamma_{\bool{B}}=\bp{\dot{k}\in\V^{\bool{B}}:\, \Qp{\dot{k}:\dot{\bool{C}}\to\dot{\bool{D}}\text{ is $\Gamma$-correct}}_{\bool{B}}=1_{\bool{B}}}.
\]
Then whenever $G$ is $V$-generic for $\bool{B}$, we have that
$(\Gamma_\bool{B})^\circ_G=\Gamma^{V[G]}$ and
$(\to^\Gamma_\bool{B})^\circ_G=(\to^\Gamma)^{V[G]}$. 

Moreover, for any $\bool{C}\leq_\Gamma\bool{B}$ fix in $V$ $i_{\bool{C}}:\bool{B}\to\bool{C}$ witnessing this.
Then the map:
\begin{align*}
\Theta_\bool{B}:& \Gamma\restriction \bool{B}\to \bool{B}\ast \Gamma_{\bool{B}}^\circ\\
& \bool{C}\mapsto \bool{B}\ast(\bool{C}/_{i_{\bool{C}}[\dot{G}_\bool{B}]})
\end{align*}
defines a dense embedding of $(\Gamma\restriction\bool{B},\leq_\Gamma)$ into the class partial order
$\bool{B}\ast \Gamma_{\bool{B}}^\circ$ 
with order given by $(d,\dot{\bool{D}})\leq (c,\dot{\bool{C}})$ if and only if
$d\leq_\bool{B} c$ and
\[
\Qp{\exists \dot{k}:\dot{\bool{C}}\to\dot{\bool{D}}\text{ $\Gamma$-correct}}_\bool{B}\geq d.
\]
\end{lemma}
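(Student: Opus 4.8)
The statement has two halves, and I would treat them separately. For the first half—that $(\Gamma_\bool{B})^\circ_G = \Gamma^{V[G]}$ and $(\to^\Gamma_\bool{B})^\circ_G = (\to^\Gamma)^{V[G]}$ whenever $G$ is $V$-generic for $\bool{B}$—the plan is to unwind the definitions. Since $\Gamma$ is a definable class with defining formula $\phi_\Gamma(x,a_\Gamma)$ without class quantifiers, the forcing theorem (applied in $\V^{\bool{B}}$, using that $\bool{B}\in V$ is a set) gives that a name $\dot{\bool{C}}$ is in $\Gamma_\bool{B}$ iff $\bool{B}$ forces $\dot{\bool{C}}$ to satisfy $\phi_\Gamma(\cdot,\check a_\Gamma)$, iff $\dot{\bool{C}}_G$ satisfies $\phi_\Gamma(\cdot,a_\Gamma)$ in $V[G]$, iff $\dot{\bool{C}}_G\in\Gamma^{V[G]}$. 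The $\circ$-operation just packages names at boolean value $1_\bool{B}$, so $(\Gamma_\bool{B})^\circ_G$ is exactly $\{\dot{\bool{C}}_G : \dot{\bool{C}}\in\Gamma_\bool{B}\} = \Gamma^{V[G]}$. The arrow case is identical, using that ``$\dot k : \dot{\bool{C}}\to\dot{\bool{D}}$ is $\Gamma$-correct'' is likewise a first-order property (over $\V^{\bool{B}}$) without class quantifiers, provided one has already established—this is where the hypothesis of absolute well-behavedness, via the preceding discussion, is used—that $\Gamma$-correctness is itself expressible in this way relative to the interpretation of $\Gamma$ in the relevant extension.

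For the second half, I would verify that $\Theta_\bool{B}$ is (i) well-defined, (ii) order-preserving, and (iii) has dense image; I would not worry about injectivity since a dense embedding need not be injective (and the map factors through the $\sim$-equivalence anyway). For well-definedness: if $\bool{C}\leq_\Gamma\bool{B}$ with witness $i_\bool{C}:\bool{B}\to\bool{C}$, then by $\Gamma$-correctness $\bool{B}$ forces $\bool{C}/i_\bool{C}[\dot G_\bool{B}]$ into $\dot\Gamma$ (on $\coker(i_\bool{C})$, but since we are looking at conditions this is the relevant part), so $\bool{B}\ast(\bool{C}/i_\bool{C}[\dot G_\bool{B}])$ is a legitimate condition in $\bool{B}\ast\Gamma_\bool{B}^\circ$. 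For order-preservation: if $\bool{D}\leq_\Gamma\bool{C}\leq_\Gamma\bool{B}$, composing the witnessing homomorphisms and chasing through the two-step iteration identifications shows that the first coordinate can be taken $\leq_\bool{B}$-below (indeed equal to $1_\bool{B}$, or the appropriate cokernel) and that $\bool{B}$ forces the existence of a $\Gamma$-correct $\dot k$ from $\bool{C}/i_\bool{C}[\dot G_\bool{B}]$ to $\bool{D}/i_\bool{D}[\dot G_\bool{B}]$—this is essentially the statement that two-step iterations compose, which is exactly what closure under two-step iterations buys, translated into the quotient language. For density: given an arbitrary condition $(d,\dot{\bool{C}})\in\bool{B}\ast\Gamma_\bool{B}^\circ$, pass below $d$ and use that $\bool{B}\restriction d \ast \dot{\bool{C}}$ is (forced to be) in $\Gamma$ by closure under two-step iterations, hence is isomorphic to the regular open completion of some $\bool{C}'\in\Gamma$ with a canonical $\Gamma$-correct $i_{\bool{C}'}:\bool{B}\to\bool{C}'$ realizing $\bool{C}'\leq_\Gamma\bool{B}$; then $\Theta_\bool{B}(\bool{C}')$ is below $(d,\dot{\bool{C}})$, since by construction $\bool{B}$ forces $\bool{C}'/i_{\bool{C}'}[\dot G_\bool{B}]\cong\dot{\bool{C}}$ (below $d$), so the required $\Gamma$-correct $\dot k$ is (forced to be) an isomorphism.

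The main obstacle, I expect, is the bookkeeping around the identification $\bool{B}\ast(\bool{C}/i_\bool{C}[\dot G_\bool{B}])\cong\bool{C}$ and its behaviour under composition of homomorphisms—i.e., making the ``two-step iteration'' picture precise when the homomorphisms $i_\bool{C}$ are merely complete rather than regular, so one must track cokernels throughout, and when $\bool{B}$, $\bool{B}\ast\Gamma_\bool{B}^\circ$, and $\Gamma\restriction\bool{B}$ are class-sized objects over which one wants to apply the forcing theorem. The footnote in the excerpt flags exactly this (``it will be rather delicate to define $\dot\Gamma$ and to infer that $\bool{B}\ast\dot\Gamma$ and $\Gamma\restriction\bool{B}$ define equivalent class forcings''), so I would expect the real work to be in the infrastructure developed in the ``well-behaved classes'' subsection, with the proof of this lemma being an application of that infrastructure: once one knows that $\Gamma$-correct homomorphisms compose to $\Gamma$-correct homomorphisms and that the quotient construction is functorial up to the relevant equivalence, the three verifications above are essentially formal diagram chases. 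A secondary subtlety is checking that $\Theta_\bool{B}$ does not depend (as a dense embedding into a fixed partial order) on the arbitrary choices of the $i_\bool{C}$'s—but since different choices of $i_\bool{C}$ yield conditions that are $\leq_\bool{B}\ast\Gamma_\bool{B}^\circ$-equivalent (each forces the existence of a $\Gamma$-correct isomorphism back and forth), this does not affect the conclusion.
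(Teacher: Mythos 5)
Your first half (the computation of $(\Gamma_\bool{B})^\circ_G$ and $(\to^\Gamma_\bool{B})^\circ_G$ via the forcing theorem) and your verification that $\Theta_\bool{B}$ is well-defined, order-preserving, and has dense image are broadly in line with what the paper does (it routes these through Theorem~\ref{qIso} and Propositions~\ref{prop:embfromembnames2} and~\ref{lem:PI1pers-2}). But there is a genuine gap in the second half: you check only well-definedness, order-preservation, and density of the image, and never address preservation of incompatibility. A dense embedding in the sense required for forcing equivalence must send $\leq_\Gamma$-incompatible conditions to incompatible conditions, and this is precisely the hard part of the lemma: the paper shows, in the discussion of universality, that the analogous map $b\mapsto\bool{B}\restriction b$ \emph{fails} to preserve incompatibility whenever $\bool{B}$ is homogeneous, so this step cannot be a formal diagram chase. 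Concretely, if $\bool{C}_0\bot_\Gamma\bool{C}_1$ below $\bool{B}$ but some $(d,\dot{\bool{D}})$ lies below both $\Theta_\bool{B}(\bool{C}_0)$ and $\Theta_\bool{B}(\bool{C}_1)$, you must lift the two names for $\Gamma$-correct maps into $\dot{\bool{D}}$ to actual $\Gamma$-correct homomorphisms $l_0,l_1$ in $V$ and then reconcile $l_0\circ i_{\bool{C}_0}$ and $l_1\circ i_{\bool{C}_1}$ into a single common refinement of $\bool{C}_0$ and $\bool{C}_1$ in $(\Gamma,\leq_\Gamma)$; without further hypotheses the two liftings need not cohere.

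This is exactly where the density of $\Gamma$-rigid algebras (the last clause of Definition~\ref{def:wellbeh}) enters, and your proposal never invokes it. The paper's proof first passes from $\bool{B}_0$ to a $\Gamma$-rigid $\bool{B}$ via a $\Gamma$-freezing homomorphism $k_0$, works with the dense subclass $\bool{Rig}^\Gamma\restriction\bool{B}$, and in the incompatibility step uses that both composites factor through the freezing map $k_0$ to force the two liftings to agree on a common $i_{\bool{C}}$, which is what produces the common refinement. For the same reason your ``secondary subtlety'' is resolved incorrectly: for a non-rigid $\bool{B}$, two distinct $\Gamma$-correct witnesses $i_{\bool{C}},i'_{\bool{C}}:\bool{B}\to\bool{C}$ yield quotients $\bool{C}/_{i_{\bool{C}}[G]}$ and $\bool{C}/_{i'_{\bool{C}}[G]}$ that need not be isomorphic in $V[G]$, so the corresponding conditions need not be equivalent in $\bool{B}\ast\Gamma_\bool{B}^\circ$; this is harmless only because the witnesses are fixed once and for all, or after restricting to rigid algebras, where the witness is unique.
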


We defer the proof to Section~\ref{subsec:prffactlem}.
We don't need the Factor Lemma for any of the proofs occurring in Sections 
\ref{sec:gammait}, \ref{sec:univGamma}, \ref{sec:proofGammadeltainGamma}, \ref{sec:gammafreezgammarig}.
On the other hand, this lemma is needed
 in the proof of Theorem \ref{thm:mainth1}, which will be an easy corollary of 
all the results obtained in the above sections combined with the Factor Lemma, and it is also needed in the derivation of Corollary \ref{completeness}.

Despite the apparent self-evidence of the Factor Lemma, its proof requires a careful
formulation and unfolding of the relation existing between elements of $\Gamma^{V[G]}$ and
$\to^{\Gamma^{V[G]}}$ and their corresponding $\bool{B}$-names
${\Gamma}_\bool{B}$, 
$\to^{\Gamma}_\bool{B}$.
To appreciate the difficulties one may encounter when proving this lemma, observe that the following set of equalities holds true\footnote{Given 
 a $\bool{B}$-name $\dot{C}\in V^{\bool{B}}$ for a forcing notion, 
$i_{\dot{C}}:\bool{B}\to \RO(\bool{B}\ast\dot{C})$, $b\mapsto \ap{b,1_{\dot{\bool{C}}}}$ denotes the canonical embedding of $\bool{B}$ in the boolean completion of the two-step iteration $\bool{B}\ast\dot{C}$.}
for all $V$-generic filters $G$ for $\bool{B}$: 
\begin{align*}
(\Gamma_\bool{B})^\circ_G&=\bp{\dot{\bool{C}}_G:\dot{\bool{C}}\in\Gamma_{\bool{B}}}=\\
&=\bp{\dot{\bool{C}}_G:\Qp{\dot{\bool{C}}\in(\Gamma_{\bool{B}})^\circ}_{\bool{B}}=1_{\bool{B}}}=\\
&=\bp{\dot{\bool{C}}_G:\Qp{\dot{\bool{C}}\in(\Gamma_{\bool{B}})^\circ}_{\bool{B}}\in G}=\\
&=\bp{\bool{C}/_{i_{\dot{\bool{C}}}[G]}: \,
\bool{C}=\RO(\bool{B}\ast\dot{\bool{C}})\text{ and }\Qp{\phi_{\Gamma}(\dot{\bool{C}},\check{a}_\Gamma)}_{\bool{B}}=1_{\bool{B}}}=\\
&=\bp{\bool{C}/_{i[G]}: \,\bool{C}\in\Gamma\restriction\bool{B},\, i:\bool{B}\to\bool{C} \, 
\text{ is $\Gamma$-correct}}.
\end{align*}
Nonetheless, the equality between the terms in the second and third lines is a bit delicate to prove, and requires a 
careful reformulation of Cohen's forcing theorem.

It will be even more delicate to prove the corresponding set of equalities 
for the canonical $\bool{B}$-name $(\to^{\Gamma}_\bool{B})^\circ$ for $\to^{\Gamma^{V[G]}}$,
and to infer the other desired properties of the $\bool{B}$-names $({\Gamma}_\bool{B})^\circ$, 
$(\to^{\Gamma}_\bool{B})^\circ$.

\subsection{$\Gamma$-iterability} \label{sec:gammait}

There are two further key properties of $\Gamma$ we need to outline in order to infer the nice properties
for $\Gamma$ needed to establish the consistency of $\BCFA(\Gamma)$. 
Formulated in categorial terms, we need to have that $\Gamma$ is closed under set sized products and that
many $\Gamma$-valued diagrams have a colimit.
Formulated in the forcing terminology, we need the closure of $\Gamma$ under lottery sums, 
and an iteration theorem for $\Gamma$.

Let's first focus on the definition of the iterability property for forcings in $\Gamma$:

\begin{definition}
Assume $\Gamma$ is a definable class of pre-orders closed under two-step iterations, and 
$\lambda_\Gamma$ is a regular uncountable cardinal.\footnote{The reader may keep in mind
$\lambda_\Gamma=\omega_1$ and $\Gamma$ being the class of semiproper forcings 
while parsing through the definition.}

\begin{itemize}
\item $\Gamma$ has the \emph{Baumgartner property} if whenever $\delta$ is inaccessible and
\[
\mathcal{F}=\bp{i_{\alpha\beta}:\bool{B}_\alpha\to\bool{B}_\beta:\,\alpha\leq\beta<\delta}\subseteq \to^\Gamma\cap V_\delta
\]
is an iteration system with $\bool{B}_\alpha=\dirlim\mathcal{F}\restriction\alpha\in \Gamma$
for stationarily many $\alpha<\delta$, then $\varinjlim\mathcal{F}\in\Gamma$.
\item
$\Gamma$ is \emph{iterable} if $\Gamma$ has the Baumgartner property and
Player $II$ has a winning strategy $\Sigma(\Gamma)$ in the game
$\mathcal{G}(\Gamma)$ of length $\Ord$ between players $I$ and $II$ defined as follows: 
\begin{itemize}
\item
players $I$ and $II$  alternate playing $\Gamma$-correct \emph{injective}
homomorphism
$i_{\alpha,\alpha+1}:\bool{B}_\alpha\to\bool{B}_{\alpha+1}$;
\item
player $I$ plays at odd stages, player $II$ at even stages ($0$ and all limit ordinals are even);
\item
at stage $0$, $II$ plays the identity on the trivial cba $\2$.
\item
at limit stages $\eta$, $II$ must play\footnote{$\varinjlim\{\bool{B}_\alpha:\alpha<\eta\}$ is the direct 
limit 
of the iteration system given by the maps $i_{\gamma\beta}:\bool{B}_\gamma\to\bool{B}_\beta$ 
which are built along the
play of $\mathcal{G}(\Gamma)$ 
(see appendix~\ref{appendix} for the definition of direct and inverse limit of an 
iteration system).} 
a $\bool{B}_\eta\in\Gamma$ which admits for each $\alpha<\eta$ a $\Gamma$-correct
$i_{\alpha\eta}:\bool{B}_\alpha\to\bool{B}_\eta$ such that 
$i_{\beta,\eta}\circ i_{\alpha,\beta}=i_{\beta,\eta}$ for all $\alpha\leq\beta<\eta$;
moreover $II$ must play 
$\varinjlim\{\bool{B}_\alpha:\alpha<\eta\}$ at stage $\eta$
if:
\begin{itemize}
\item either $\cof(\eta)=\lambda_\Gamma$, 
\item or $\eta$ is inaccessible and 
$\{i_{\alpha\beta}:\bool{B}_\alpha\to\bool{B}_\beta:\,\alpha\leq\beta<\eta\}\subseteq \Gamma\cap V_\eta$;
\end{itemize}
\item 
$II$ wins $\mathcal{G}(\Gamma)$ if she can play at all stages.
\end{itemize}
\end{itemize}

\end{definition}

\begin{remark}
\emph{}

\begin{itemize}
\item
If $\Gamma$ has the Baumgartner property, $\delta>\lambda_\Gamma$ is inaccessible, and
\[
\mathcal{F}=\bp{i_{\alpha\beta}:\bool{B}_\alpha\to\bool{B}_\beta:\,\alpha\leq\beta<\delta}\subseteq \to^\Gamma\cap V_\delta
\]
is a play of $\mathcal{G}(\Gamma)$, $\varinjlim\mathcal{F}\in \Gamma$ is ${<}\delta$-CC, since:
\begin{itemize}
\item 
All $\bool{B}_\alpha$ are ${<}\delta$-CC having size less than $\delta$, and for all
$\alpha<\delta$ of cofinality $\lambda_\Gamma$, $\bool{B}_\alpha$ is the direct limit of
$\mathcal{F}\restriction\alpha$, and therefore Baumgartner's theorem \ref{iBaumgartner} applies.
\item
$\varinjlim\mathcal{F}\in\Gamma$ by the Baumgartner property.
\end{itemize}
In particular, if $\Gamma$ is closed under two-step iterations but is not iterable, and $\Sigma$ is a strategy for player $II$, a play 
\[
\mathcal{F}=\bp{i_{\alpha\beta}:\bool{B}_\alpha\to\bool{B}_\beta:\,\alpha\leq\beta<\delta}
\]
of the game $\mathcal{G}(\Gamma)$
which $II$ cannot win using $\Sigma$ is such that either 
\begin{itemize}
\item $\cof(\delta)<\delta$, or 
\item $\cf(\delta)=\lambda_\Gamma$
and $\varinjlim(\mathcal{F})\notin\Gamma$, or 
\item $\cf(\delta)\neq\lambda_\Gamma$ and there is no lower bound
in $\Gamma$ to the cbas played in the game before stage $\delta$,
or 
\item some $\bool{B}_\alpha$ has size bigger than $\delta$. 
\end{itemize}
Moreover $II$ can always play at successor stages of $\mathcal{G}(\Gamma)$.
\item The class $\Gamma$ of proper forcings is iterable: $II$ plays
the identity at all non-limit stages, the full limit at limit stages of countable cofinality, and the direct limit at limit
stages of uncountable cofinality.
\item With slightly more refined strategies one can prove that also the class of semiproper forcings is 
iterable, and that so is the class of stationary set preserving forcings assuming the existence of 
class many supercompact cardinals. We will address this issue with more care in
Section \ref{suitable classes}.
\end{itemize}
\end{remark}

\subsection{Universality of $(\Gamma,\leq_\Gamma)$ and $\Gamma$-rigidity.} \label{sec:univGamma}

What conditions grant that $(\Gamma,\leq_\Gamma)$ absorbs as a complete suborder any set sized 
$P\in\Gamma$?

The optimal case is that 
there is a complete embedding 

\[
i_{\bool{B}}:\bool{B}\to \Gamma\restriction \bool{B}
\]

for a dense set of $\bool{B}\in \Gamma$.

If this is the case, take $\bool{Q}\in \Gamma$, find $\bool{B}\leq_\Gamma\bool{Q}$ in the above dense set
and $i:\bool{Q}\to\bool{B}$ in $\to^\Gamma$ witnessing this.

Then $i_\bool{B}\circ i:\bool{Q}\to \Gamma\restriction \bool{B}$ will witness that
$\Gamma\restriction \bool{B}$ absorbs $\bool{Q}$ as well.

\small{
Now, given $\bool{B}\in\Gamma$, we have a natural candidate for a complete embedding $i_{\bool{B}}:\bool{B}\to \Gamma\restriction \bool{B}$: 
\begin{align*}
i_{\bool{B}}:&\bool{B}\to  \Gamma\restriction \bool{B}\\
&b\mapsto \bool{B}\restriction b.
\end{align*}

\begin{itemize}
\item
\textbf{$i_{\bool{B}}$ is order preserving:}  

If $b_1\leq b_0$, the map 
\begin{align*}
i_{b_1}:&\bool{B}\restriction b_0\to\bool{B}\restriction b_1\\
& c\mapsto c\wedge b_1
\end{align*}
is $\Gamma$-correct and witnesses that $\bool{B}\restriction b_0\geq_\Gamma\bool{B}\restriction b_1$.
\item
\textbf{$i_{\bool{B}}$ preserves sups:}

If $\{a_i:i\in I\}\subset \bool{B}^+$ is a maximal antichain in $\bool{B}$,
the product algebra
\[
\prod_{i\in I} (\bool{B}\restriction a_i)
\]
(the lottery sum of $\{\bool{B}\restriction a_i:i\in I\}$) is isomorphic to $\bool{B}$,
the top element of $\Gamma\restriction \bool{B}$ in $(\Gamma\restriction\bool{B},\leq_\Gamma)$.

\item
\textbf{PROBLEM:}
Does this map preserve incompatibility? In general NO!

Assume $\bool{B}$ is homogeneous (for example $\bool{B}$ is the boolean completion
of Cohen's forcing $2^{{<}\omega}$). 
Assume
$s,t$ are incompatible conditions in $\bool{B}$; 
by homogeneity, $\bool{B}\restriction s$ is isomorphic to $\bool{B}\restriction t$;
therefore the incompatible $s,t\in\bool{B}$ are mapped to the compatible conditions 
$\bool{B}\restriction s$, $\bool{B}\restriction t$ in
$(\Gamma,\leq_\Gamma)$.
\end{itemize}
To overcome this problem we need to find densely many highly inhomogeneous $\bool{B}\in\Gamma$.

Suppose for the moment that the map
\begin{align*}
i_{\bool{B}}:&\bool{B}\to  \Gamma\restriction \bool{B}\\
&b\mapsto \bool{B}\restriction b.
\end{align*}
defines a complete embedding (i.e. preserves the incompatibility relation), 
and pick a $\mathcal{V}$-generic $H$ for $\Gamma$
such that $\bool{B}\in H$.
Then $G=i_{\bool{B}}^{-1}[H]$ is $V$-generic for $\bool{B}$.

Suppose now that  
\[
\bp{\bool{B}: i_\bool{B}\text{ defines a complete embedding}}
\]
is dense in $(\Gamma,\leq\Gamma)$.
Assume $H$ is $\mathcal{V}$-generic for $\Gamma$.
Pick $\bool{Q}\in H$.
By density there is $\bool{B}\in H$ refining $\bool{Q}$ and such that
$i_{\bool{B}}$ defines a complete embedding of $\bool{B}$ into $\Gamma\restriction\bool{B}$.
Let $i:\bool{Q}\to\bool{B}$ witness $\bool{B}\leq_\Gamma\bool{Q}$.
Then $K=(i_\bool{B}\circ i)^{-1}[H]$ is $V$-generic for $\bool{Q}$.

In particular we get the following:
\begin{fact}
Assume
\[
E_\Gamma=\bp{\bool{B}\in\Gamma: \, i_\bool{B}:b\mapsto \bool{B}\restriction b\text{ defines a complete embedding of 
$\bool{B}$ into $\Gamma\restriction\bool{B}$}}
\]
is dense in $(\Gamma,\leq_\Gamma)$.

Then any $\mathcal{V}$-generic filter $H$ for $\Gamma$ adds a $V$-generic filter for any 
$\bool{Q}\in H$. 
\end{fact}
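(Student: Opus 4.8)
The plan is to carry out the heuristic sketched in the paragraphs immediately preceding the statement. Fix a $\mathcal{V}$-generic filter $H$ for $(\Gamma,\leq_\Gamma)$ and $\bool{Q}\in H$; we must produce, in $V[H]$, a $V$-generic filter for $\bool{Q}$. The argument has three steps: first extract from $H$ a condition $\bool{B}\in H\cap E_\Gamma$ refining $\bool{Q}$; then observe that $i_{\bool{B}}^{-1}[H]$ is $V$-generic for $\bool{B}$; finally pull this back along a witness of $\bool{B}\leq_\Gamma\bool{Q}$ to obtain a $V$-generic filter for $\bool{Q}$ lying in $V[H]$.

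For the first step, consider the class
\[
D=\bp{\bool{C}\in\Gamma:\bool{C}\perp_\Gamma\bool{Q}}\cup\bp{\bool{B}\in E_\Gamma:\bool{B}\leq_\Gamma\bool{Q}}.
\]
Since $E_\Gamma$ is definable from $\Gamma$ and $\bool{Q}$ is a set parameter, $D\in\mathcal{V}$; and $D$ is dense in $(\Gamma,\leq_\Gamma)$, because given $\bool{C}_0\in\Gamma$ either $\bool{C}_0\perp_\Gamma\bool{Q}$, or there is $\bool{C}_1\leq_\Gamma\bool{C}_0,\bool{Q}$, and then density of $E_\Gamma$ yields $\bool{B}\in E_\Gamma$ with $\bool{B}\leq_\Gamma\bool{C}_1\leq_\Gamma\bool{C}_0,\bool{Q}$ (using transitivity of $\leq_\Gamma$, i.e.\ closure of $\Gamma$ under two-step iterations). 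As $H$ is $\mathcal{V}$-generic it meets $D$; since $\bool{Q}\in H$ and $H$ is a filter, $H$ cannot contain any $\bool{C}\perp_\Gamma\bool{Q}$, so we obtain $\bool{B}\in H\cap E_\Gamma$ with $\bool{B}\leq_\Gamma\bool{Q}$.

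For the remaining two steps, note first that $H\restriction\bool{B}=\bp{\bool{C}\in H:\bool{C}\leq_\Gamma\bool{B}}$ is $\mathcal{V}$-generic for $(\Gamma\restriction\bool{B},\leq_\Gamma)$: a dense subclass $E\in\mathcal{V}$ of $\Gamma\restriction\bool{B}$ gives the dense class $E\cup\bp{\bool{C}\in\Gamma:\bool{C}\perp_\Gamma\bool{B}}\in\mathcal{V}$ of $\Gamma$, which $H$ meets, necessarily inside $E$ since $\bool{B}\in H$. Now $\bool{B}\in E_\Gamma$ says exactly that $i_{\bool{B}}\colon b\mapsto\bool{B}\restriction b$ is a complete embedding of $\bool{B}$ into $\Gamma\restriction\bool{B}$, so its preimage $i_{\bool{B}}^{-1}[H\restriction\bool{B}]$ is a $V$-generic filter for $\bool{B}$; and since $\bool{B}\restriction b\leq_\Gamma\bool{B}$ for all $b\in\bool{B}^+$ (the map $c\mapsto c\wedge b$ is $\Gamma$-correct), this equals $G:=\bp{b\in\bool{B}:\bool{B}\restriction b\in H}=i_{\bool{B}}^{-1}[H]\in V[H]$. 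Finally fix in $V$ a $\Gamma$-correct homomorphism $i\colon\bool{Q}\to\bool{B}$ witnessing $\bool{B}\leq_\Gamma\bool{Q}$; since $i$ is in particular a complete (possibly non-injective) homomorphism of complete Boolean algebras, the preimage $K:=i^{-1}[G]$ of the $V$-generic $G$ is a $V$-generic filter for $\bool{Q}$, as recalled earlier and already used in the analysis of $\bool{PA}$. Because $K=(i_{\bool{B}}\circ i)^{-1}[H]$ and $i,i_{\bool{B}},\bool{B}\in V[H]$, we conclude $K\in V[H]$, as required.

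The argument is essentially bookkeeping, and the only delicate point is the transfer to the present class-forcing context of the two standard set-forcing facts used above: that restricting a generic below $\bool{B}$ yields a generic for the restricted forcing, and that the preimage of a generic for $\Gamma\restriction\bool{B}$ under a complete embedding of the set-sized cba $\bool{B}$ is $V$-generic for $\bool{B}$. Both go through because $\Gamma\restriction\bool{B}$ is literally the collection of $\leq_\Gamma$-conditions below $\bool{B}$ and $\bool{B}$ is a set, so the relevant (maximal) antichains behave exactly as in the set-sized case.
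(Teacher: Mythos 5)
Your proof is correct and follows essentially the same route as the paper's own argument (which is contained in the two paragraphs immediately preceding the statement): use density of $E_\Gamma$ to find $\bool{B}\in H\cap E_\Gamma$ refining $\bool{Q}$, pull $H$ back along $i_{\bool{B}}$ to obtain a $V$-generic filter for $\bool{B}$, and then pull back along a witness $i:\bool{Q}\to\bool{B}$ of $\bool{B}\leq_\Gamma\bool{Q}$ to get one for $\bool{Q}$, i.e.\ $K=(i_{\bool{B}}\circ i)^{-1}[H]$. You merely supply a little more bookkeeping than the paper (the density of your auxiliary class $D$ and the genericity of $H\restriction\bool{B}$ for $\Gamma\restriction\bool{B}$), which is harmless.
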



%
%

%
%
%

How do we get to the density of $E_\Gamma$? The key step is to reformulate properly the condition that 
$i_\bool{B}$ is 
a complete embedding.


\begin{definition} \label{def:gammarig}
Let $\Gamma$ be a definable class of cbas closed under two-step iterations, and let $\bool{B}\in \Gamma$.
$\bool{B}\in \Gamma$ is $\Gamma$-rigid if for 
$i_0,i_1:\bool{B}\to\bool{Q}$ in $\to^\Gamma$ we have that $i_0=i_1$. 
\end{definition}

\begin{remark} \label{rmk:gammarigembed}
$\Gamma$-rigid cbas $\bool{B}$ are absorbed by 
$(\Gamma\restriction\bool{B},\leq_\Gamma)$
using the map $i_{\bool{B}}:b\mapsto\bool{B}\restriction b$. See the Lemma below.
\end{remark}


\begin{lemma}\label{lem:eqtrGamma}
The following are equivalent characterizations of 
$\Gamma$-rigidity for an algebra $\bool{B}\in\Gamma$:
\begin{enumerate}
\item\label{lem:eqtrGamma1}
for all $b_0$, $b_1\in \bool{B}$ such that
$b_0\wedge_{\bool{B}}b_1=0_{\bool{B}}$,
$\bool{B}\restriction b_0$ is incompatible with $\bool{B}\restriction b_1$ in $(\Gamma,\leq_\Gamma)$. 
\item\label{lem:eqtrGamma2}
For every $\bool{C}\leq_\Gamma\bool{B}$ and every $V$-generic filter $H$ for 
$\bool{C}$, there is just one $\Gamma$-correct $V$-generic filter $G\in V[H]$ 
 for $\bool{B}$.
\item\label{lem:eqtrGamma3}
For all $\bool{C}\leq_\Gamma\bool{B}$ in $\Gamma$ there is only one $\Gamma$-correct
homomorphism
$i:\bool{B}\to\bool{C}$.
\end{enumerate}
\end{lemma}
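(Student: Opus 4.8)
The plan is to establish the cycle of implications $(\ref{lem:eqtrGamma3}) \Rightarrow (\ref{lem:eqtrGamma2}) \Rightarrow (\ref{lem:eqtrGamma1}) \Rightarrow (\ref{lem:eqtrGamma3})$, together with the fact that each is equivalent to the original definition of $\Gamma$-rigidity (Definition~\ref{def:gammarig}). The implication (definition) $\Rightarrow (\ref{lem:eqtrGamma3})$ is immediate, since (\ref{lem:eqtrGamma3}) is a special case of the definition where $\bool{Q}$ is required to lie in $\Gamma$ with $\bool{C} = \bool{Q} \leq_\Gamma \bool{B}$; conversely, any $\Gamma$-correct $i:\bool{B}\to\bool{Q}$ automatically witnesses $\bool{Q}\leq_\Gamma\bool{B}$, so $\bool{Q}\in\Gamma$ whenever $\bool{B}\in\Gamma$ (using closure under two-step iterations), and (\ref{lem:eqtrGamma3}) applies. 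So the definition and (\ref{lem:eqtrGamma3}) coincide.

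For $(\ref{lem:eqtrGamma3}) \Rightarrow (\ref{lem:eqtrGamma2})$: fix $\bool{C}\leq_\Gamma\bool{B}$ and a $V$-generic filter $H$ for $\bool{C}$. A $\Gamma$-correct $V$-generic filter $G\in V[H]$ for $\bool{B}$ is, by the standard correspondence recalled in the text (and the footnote to the definition of $\Gamma$-correctness), precisely $G = i^{-1}[H]$ for some $\Gamma$-correct homomorphism $i:\bool{B}\to\bool{C}$ with $\coker(i)\in H$; conversely every such $i$ produces one. Existence of at least one is part of what $\bool{C}\leq_\Gamma\bool{B}$ gives after passing to a condition; uniqueness of $G$ then follows from uniqueness of $i$, which is (\ref{lem:eqtrGamma3}). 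One small subtlety: a priori different $i$'s could give the same $G$, or the same $i$ with different $\coker(i)\in H$ — but in $V[H]$ the filter $G$ determines $i$ on a dense set via $i(b) = \bigvee\{c\in\bool{C}: c\Vdash \check b\in \dot G_{\bool{B}}\}$-type considerations, so this is routine. I would state it as: the map $i\mapsto i^{-1}[H]$ is a surjection from $\{\Gamma$-correct $i:\bool{B}\to\bool{C}\}$ onto $\{\Gamma$-correct $V$-generic $G\in V[H]$ for $\bool{B}\}$, hence (\ref{lem:eqtrGamma3}) forces the latter set to be a singleton.

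For $(\ref{lem:eqtrGamma2}) \Rightarrow (\ref{lem:eqtrGamma1})$: argue by contraposition. Suppose $b_0\wedge_{\bool{B}} b_1 = 0_{\bool{B}}$ but $\bool{B}\restriction b_0$ is compatible with $\bool{B}\restriction b_1$ in $(\Gamma,\leq_\Gamma)$, witnessed by some $\bool{C}\in\Gamma$ with $\Gamma$-correct maps $j_0:\bool{B}\restriction b_0\to\bool{C}$ and $j_1:\bool{B}\restriction b_1\to\bool{C}$. Composing with the projections $k_{b_0}, k_{b_1}:\bool{B}\to\bool{B}\restriction b_0, \bool{B}\restriction b_1$ (which are $\Gamma$-correct, as noted in the universality discussion) yields $\Gamma$-correct $i_0 = j_0\circ k_{b_0}$ and $i_1 = j_1\circ k_{b_1}$ from $\bool{B}$ to $\bool{C}$, with $i_0(b_0) \neq 0$ but $i_1(b_0) \le i_1(b_1)' $... more to the point, $\coker(i_0) \le j_0(b_0)$ and $\coker(i_1)\le j_1(b_1)$ are both nonzero and we may further refine $\bool{C}$ so that a single generic $H$ for it has both in it; then $i_0^{-1}[H]\ni b_0$ while $i_1^{-1}[H]\ni b_1$, so these are two distinct $\Gamma$-correct $V$-generic filters for $\bool{B}$ in $V[H]$, contradicting (\ref{lem:eqtrGamma2}). (If $\bool{C}$ must be shrunk to $\bool{C}\restriction d$ to make the cokers compatible, this is still $\leq_\Gamma\bool{B}$ and still in $\Gamma$.)

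Finally $(\ref{lem:eqtrGamma1}) \Rightarrow (\ref{lem:eqtrGamma3})$: given two $\Gamma$-correct $i_0, i_1:\bool{B}\to\bool{C}$ with $i_0\neq i_1$, pick $b\in\bool{B}$ with, say, $i_0(b)\not\le i_1(b)$, so $d := i_0(b)\wedge \neg i_1(b) \neq 0_{\bool{C}}$. Then $i_0$ restricted appropriately and $i_1$ restricted appropriately give $\Gamma$-correct maps $\bool{B}\restriction b \to \bool{C}\restriction d$ and $\bool{B}\restriction (\neg b)\wedge(\cdots)\to\bool{C}\restriction d$ — the point being that below $d$ the generic for $\bool{C}$ pulls back through $i_0$ to a filter containing $b$ and through $i_1$ to one not containing $b$, exhibiting $\bool{B}\restriction b$ and $\bool{B}\restriction \neg b$ as compatible in $(\Gamma,\leq_\Gamma)$ via $\bool{C}\restriction d$, even though $b\wedge\neg b = 0_{\bool{B}}$; this contradicts (\ref{lem:eqtrGamma1}). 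I expect the main obstacle to be bookkeeping around cokernels and restrictions $\bool{C}\restriction d$: since the homomorphisms are not assumed injective, one must consistently track that $\Gamma$-correctness and the $\leq_\Gamma$ relation are preserved under the relevant restrictions and compositions (using Remark on the decomposition $i = i_0\circ k_b$), and that "compatibility in $(\Gamma,\leq_\Gamma)$" unwinds to the existence of a common lower bound $\bool{C}$ together with actual $\Gamma$-correct maps into it, not merely into some further refinement — all of which is routine but needs to be said carefully.
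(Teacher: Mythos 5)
Your proof establishes the same cycle of implications as the paper's proof ($\ref{lem:eqtrGamma2}\Rightarrow\ref{lem:eqtrGamma1}$, $\ref{lem:eqtrGamma1}\Rightarrow\ref{lem:eqtrGamma3}$, $\ref{lem:eqtrGamma3}\Rightarrow\ref{lem:eqtrGamma2}$) by essentially the same constructions, so it matches the paper's argument. The one point to tighten is in $(\ref{lem:eqtrGamma3})\Rightarrow(\ref{lem:eqtrGamma2})$: a $\Gamma$-correct generic $G\in V[H]$ is only guaranteed to equal $i^{-1}[H]$ for a $\Gamma$-correct $i:\bool{B}\to\bool{C}\restriction r$ with $r\in H$ (not into $\bool{C}$ itself), so as in the paper one applies (\ref{lem:eqtrGamma3}) to $\bool{C}\restriction r$ after intersecting the relevant conditions --- exactly the cokernel/restriction bookkeeping you flag at the end.
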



\begin{proof} 
We prove these equivalences by contraposition as follows:
\begin{description}
\item[~\ref{lem:eqtrGamma2} implies~\ref{lem:eqtrGamma1}] 
Assume~\ref{lem:eqtrGamma1} 
fails as witnessed by $i_j:\bool{B}\restriction b_j\to\bool{Q}$ for $j=0,1$ with
$b_0$ incompatible with $b_1$ in $\bool{B}$.
Pick $H$, a $V$-generic filter for $\bool{Q}$. Then $G_j=i_j^{-1}[H]\in V[H]$ (for $j=0$, $1$) are distinct  and $\Gamma$-correct
$V$-generic filters for $\bool{B}$ in $V[H]$,
since $b_j\in G_j\setminus G_{1-j}$.  

\item[~\ref{lem:eqtrGamma1} implies~\ref{lem:eqtrGamma3}]
Assume~\ref{lem:eqtrGamma3} fails for $\bool{B}$ as witnessed by $i_0\neq i_1:\bool{B}\to\bool{C}$.
Let $b$ be such that $i_0(b)\neq i_1(b)$. 
W.l.o.g.\ we can suppose that $r=i_0(b)\wedge i_1(\neg b)>0_{\bool{C}}$.
Then $j_0:\bool{B}\restriction b\to \bool{C}\restriction r$ and
$j_1:\bool{B}\restriction \neg b\to \bool{C}\restriction r$ given by $j_k(a)=i_k(a)\wedge r$ for $k=0,1$ and
$a$ in the appropriate domain witness that $\bool{B}\restriction \neg b$ and $\bool{B}\restriction b$
are compatible in $(\Gamma,\leq_\Gamma)$, i.e.\ that~\ref{lem:eqtrGamma1} fails.

\item[~\ref{lem:eqtrGamma3} implies~\ref{lem:eqtrGamma2}]
Assume~\ref{lem:eqtrGamma2} fails for $\bool{B}$ as witnessed by some 
$\bool{C}\leq_{\Gamma}\bool{B}$, a
$V$-generic filter $H$ for $\bool{C}$, and  
$\Gamma$-correct $V$-generic filters $G_1\neq G_2\in V[H]$ for $\bool{B}$.
Let $\dot{G}_1,\dot{G}_2\in V^{\bool{C}}$ 
be such that $(\dot{G}_1)_H=G_1\neq (\dot{G}_2)_H=G_2$
are $\Gamma$-correct $V$-generic filters for $\bool{B}$ in
$V[H]$ for both $j=1,2$.
Find $q\in G$ forcing that $b\in \dot{G}_1\setminus \dot{G}_2$ for some fixed $b\in\bool{B}$.
Then 
for some $r\in H$ refining $q$, we have that both homomorphisms
$i_j=i_{\dot{G}_j,r}:\bool{B}\to\bool{C}\restriction r$ defined by 
$a\mapsto\Qp{\check{a}\in\dot{G}_j}_{\bool{C}}\wedge r$
are $\Gamma$-correct. However $i_1(b)=r=i_2(\neg b)$, and hence $i_1\neq i_2$
witness that~\ref{lem:eqtrGamma3} fails for $\bool{B}$ and $\bool{C}\restriction r$.
\end{description}
\end{proof}

\begin{fact}
The class of $\Gamma$-rigid cbas is closed under set-sized products (i.e. lottery sums),  
and the restriction operation $\bool{B}\mapsto\bool{B}\restriction b$ for $b\in\bool{B}^+$.
\end{fact}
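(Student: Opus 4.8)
The plan is to argue directly from the definition of $\Gamma$-rigidity, using three elementary facts about $\Gamma$-correct homomorphisms: (i) for $\bool{D}\in\Gamma$ and $d\in\bool{D}^+$ the map $k_d:\bool{D}\to\bool{D}\restriction d$ ($c\mapsto c\wedge d$) is $\Gamma$-correct — as observed above for the maps $c\mapsto c\wedge b_1$ — so that $\bool{D}\restriction d\in\Gamma$ by closure under two-step iterations; (ii) composites of $\Gamma$-correct homomorphisms are $\Gamma$-correct (this is what makes $\to^\Gamma$ a category); and (iii) the restriction $i\restriction(\bool{B}\restriction b):\bool{B}\restriction b\to\bool{C}\restriction i(b)$ of a $\Gamma$-correct $i:\bool{B}\to\bool{C}$ is again $\Gamma$-correct, which is immediate from the definition since the relevant quotient is literally $\bool{C}/i[\dot G_{\bool{B}}]$ while $\coker(i\restriction(\bool{B}\restriction b))=b\wedge\coker(i)$. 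With these in hand the restriction clause is purely formal: if $\bool{B}$ is $\Gamma$-rigid and $b\in\bool{B}^+$ then $\bool{B}\restriction b\in\Gamma$, and any two $\Gamma$-correct $i_0,i_1:\bool{B}\restriction b\to\bool{C}$ yield $\Gamma$-correct $i_0\circ k_b,\,i_1\circ k_b:\bool{B}\to\bool{C}$, which agree by $\Gamma$-rigidity of $\bool{B}$; since $k_b$ is a retraction onto $\bool{B}\restriction b$, and in particular surjective, $i_0=i_1$.

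The product clause needs a caveat, which I would state explicitly: the literal assertion fails, because the two coordinate projections $\bool{B}\times\bool{B}\to\bool{B}$ are distinct $\Gamma$-correct homomorphisms, so a lottery sum of a $\Gamma$-rigid algebra with a copy of itself is not $\Gamma$-rigid. What is true — and what is used, the relevant factors in applications being of the form $\bool{B}\restriction a_i$ for a maximal antichain $\{a_i\}$ of a $\Gamma$-rigid $\bool{B}$ — is that a lottery sum $\bool{B}=\prod_{i\in I}\bool{B}_i$ of \emph{pairwise $\leq_\Gamma$-incompatible} $\Gamma$-rigid algebras is $\Gamma$-rigid. To prove this, write $\mathbf 1_i\in\bool{B}$ for the unit of the $i$-th component, so that $\{\mathbf 1_i:i\in I\}$ is a maximal antichain and $\bool{B}\restriction\mathbf 1_i\cong\bool{B}_i$, and let $i_0,i_1:\bool{B}\to\bool{C}$ be $\Gamma$-correct with $\bool{C}\in\Gamma$. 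First I would show $i_0(\mathbf 1_i)=i_1(\mathbf 1_i)$ for every $i$: if $r:=i_0(\mathbf 1_i)\wedge i_1(\mathbf 1_j)\neq 0$ for some $i\neq j$, then composing the restrictions $i_0\restriction(\bool{B}\restriction\mathbf 1_i)$ and $i_1\restriction(\bool{B}\restriction\mathbf 1_j)$ with the appropriate $k_r$ maps exhibits $\bool{C}\restriction r$ as a common $\leq_\Gamma$-lower bound of $\bool{B}_i$ and $\bool{B}_j$, contradicting incompatibility; hence $i_0(\mathbf 1_i)\wedge i_1(\mathbf 1_j)=0$ for $i\neq j$, and meeting $i_0(\mathbf 1_i)$ with the partition $\{i_1(\mathbf 1_j):j\in I\}$ gives $i_0(\mathbf 1_i)\leq i_1(\mathbf 1_i)$ and, symmetrically, equality; call this common value $r_i$. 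Then $i_0\restriction(\bool{B}\restriction\mathbf 1_i)$ and $i_1\restriction(\bool{B}\restriction\mathbf 1_i)$ are two $\Gamma$-correct homomorphisms from $\bool{B}_i$ into $\bool{C}\restriction r_i$ (which lies in $\Gamma$ by (i)), so they coincide by $\Gamma$-rigidity of $\bool{B}_i$; since $i_0,i_1$ are complete and $\{\mathbf 1_i\}$ is maximal, $i_0(\vec b)=\bigvee_i i_0(\vec b\wedge\mathbf 1_i)=\bigvee_i i_1(\vec b\wedge\mathbf 1_i)=i_1(\vec b)$ for all $\vec b\in\bool{B}$, whence $i_0=i_1$.

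The main obstacle lies in the product clause. The conceptual point is the caveat above: one must build the pairwise $\leq_\Gamma$-incompatibility of the factors into the statement, since without it the claim is false. The technical point is the cokernel bookkeeping packed into fact (iii) above (and into the ``common lower bound'' step, which silently restricts $i_0,i_1$ to the components). By contrast, fact (i), fact (ii), and the entire restriction clause are routine.
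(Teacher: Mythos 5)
Your handling of the restriction clause is correct and is essentially what the paper intends (it leaves that half to the reader): compose with the surjective $\Gamma$-correct map $k_b$ and invoke rigidity of $\bool{B}$.

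On the product clause your suspicion is justified, and your counterexample is right. If $\bool{B}$ is any nontrivial $\Gamma$-rigid algebra, then in $\bool{D}=\bool{B}\times\bool{B}$ the units $\mathbf 1_0=(1,0)$ and $\mathbf 1_1=(0,1)$ of the two factors are incompatible in $\bool{D}$, while $\bool{D}\restriction\mathbf 1_0\cong\bool{B}\cong\bool{D}\restriction\mathbf 1_1$ are compatible in $(\Gamma,\leq_\Gamma)$ (being isomorphic, any $\bool{R}\leq_\Gamma\bool{B}$ is a common refinement); so by characterization (\ref{lem:eqtrGamma1}) of Lemma~\ref{lem:eqtrGamma} the product is not $\Gamma$-rigid. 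The paper's own proof of this clause fails exactly where your caveat predicts: given distinct $\Gamma$-correct $k_0,k_1:\prod_i\bool{B}_i\to\bool{C}$, it picks $a\in\bool{B}_i$ with $c=k_0(a)\wedge k_1(\neg a)>0_{\bool{C}}$ (the complement taken in the product) and declares $k^*_j:b\mapsto k_j(b)\wedge c$, $b\in\bool{B}_i$, to be two distinct $\Gamma$-correct homomorphisms from $\bool{B}_i$ into $\bool{C}\restriction c$. But $c\leq k_1(\neg a)$ does not yield $c\leq k_1(\mathbf 1_i)$, so $k^*_1$ need not send the unit of $\bool{B}_i$ to $c$ and hence need not be a homomorphism at all; with the two coordinate projections one gets $k^*_1(\mathbf 1_i)=0$. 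The argument tacitly assumes $k_0(\mathbf 1_i)=k_1(\mathbf 1_i)$ for all $i$, which is precisely the first step of your corrected proof and is exactly what the pairwise $\leq_\Gamma$-incompatibility of the factors buys you. Your proof of the corrected statement (incompatible factors, agreement on the units, rigidity of each factor, completeness of $k_0,k_1$ across the maximal antichain $\{\mathbf 1_i\}$) is sound.

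One further flag: where the Fact is actually invoked---the last line of the proof of Lemma~\ref{lem:copyingdenseintoant}---the lottery sum is taken over algebras $\bool{C}_\alpha\leq^*_\Gamma\bool{B}\restriction b_\alpha$ for a maximal antichain $\{b_\alpha\}$ of an \emph{arbitrary} $\bool{B}\in\Gamma$, so the pairwise $\leq_\Gamma$-incompatibility of the $\bool{C}_\alpha$ is not automatic there either; one should first pass to a $\Gamma$-rigid refinement of $\bool{B}$ and pull the antichain to it before your corrected version applies. That repair is routine but is not supplied by the statement as written.
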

\begin{proof}
We leave to the reader to check that for all $b\in\bool{B}^+$, $\bool{B}\restriction b$ is $\Gamma$-rigid if so is $\bool{B}$.

Assume now that $\bp{\bool{B}_i:i\in I}$ is a family of $\Gamma$-rigid cbas.

Towards a contradiction, assume $k_j:(\prod_{i\in I} \bool{B}_i)\to \bool{C}$ are distinct and $\Gamma$-correct for $j=0,1$.
if $k_0\restriction \bool{B}_i=k_1\restriction\bool{B}_i$ for all $i\in I$, we get that $k_0=k_1$, which is a contradiction.
Hence for some $i\in I$, $k_0\restriction \bool{B}_i\neq k_1\restriction\bool{B}_i$.

Then for some $a\in\bool{B}_i$, $k_0(a)\neq k_1(a)$, therefore $k_0(a)\Delta k_1(a)>0_{\bool{C}}$.
So either $k_0(a)\wedge k_1(\neg a)=c>0_{\bool{C}}$ or
$k_1(a)\wedge k_0(\neg a)=c>0_{\bool{C}}$. In either cases
we get that $\bool{B}_i$ is not $\Gamma$-rigid as witnessed by the distinct $\Gamma$-correct maps (for $i=0,1$)
$k^*_i:b\mapsto k_i(b)\wedge c$ with domain $\bool{B}_i$ and range $\bool{C}\restriction c$.
\end{proof}

\begin{lemma}\label{lem:copyingdenseintoant}
Let $\Gamma$ be closed under set-sized products (i.e. set-sized lottery sums) 
and such that the $\Gamma$-rigid forcings are dense in 
$(\Gamma,\leq_\Gamma)$.
Assume $D$ is a dense open class of $(\Gamma,\leq_\Gamma)$.

Then for all $\bool{B}\in \Gamma$, there is a $\Gamma$-rigid $\bool{C}\leq^*_\Gamma\bool{B}$
and a maximal antichain $A$ of $\bool{C}$ such that $k_{\bool{C}}[A]\subseteq D$.
\end{lemma}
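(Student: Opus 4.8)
The plan is to show first that below every condition of $\bool{B}$ there is a nonzero restriction of $\bool{B}$ admitting a regular $\Gamma$-correct embedding into a single $\Gamma$-rigid algebra belonging to $D$, and then to glue $|\bool{B}|$-many such restrictions along a maximal antichain of $\bool{B}$ by forming the corresponding lottery sum. The reason a maximal antichain is genuinely needed, rather than just a single $\bool{C}\leq^*_\Gamma\bool{B}$ in $D$, is that arrows of $(\Gamma,\leq_\Gamma)$ are allowed to be non-injective, so density of $D$ only yields, below a given condition, a regular embedding from a \emph{restriction} $\bool{B}\restriction b$ (namely the coker of the homomorphism produced), not from $\bool{B}$ itself.

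First I would prove that
\[
G_0=\bp{b\in\bool{B}^+:\ \text{there are a }\Gamma\text{-rigid }\bool{D}\in D\text{ and a regular }\Gamma\text{-correct }e:\bool{B}\restriction b\to\bool{D}}
\]
is dense in $\bool{B}^+$. Fix $a_0\in\bool{B}^+$; note $\bool{B}\restriction a_0\in\Gamma$, since $k_{a_0}:\bool{B}\to\bool{B}\restriction a_0$ is $\Gamma$-correct (its quotient is the trivial algebra below $\coker(k_{a_0})=a_0$) and $\Gamma$ is closed under two-step iterations. By density of $D$ in $(\Gamma,\leq_\Gamma)$ there is a $\Gamma$-correct $i:\bool{B}\restriction a_0\to\bool{C}$ with $\bool{C}\in D$, and by density of the $\Gamma$-rigid algebras there is a $\Gamma$-correct $j:\bool{C}\to\bool{D}$ with $\bool{D}$ $\Gamma$-rigid; moreover $\bool{D}\in D$ because $D$ is open and $\bool{D}\leq_\Gamma\bool{C}$. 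The composite $h=j\circ i$ is $\Gamma$-correct and $h(1)=1_{\bool{D}}\neq 0$, so $b:=\coker(h)\leq a_0$ is nonzero; factoring $h=h_0\circ k_b$ with $h_0:\bool{B}\restriction b\to\bool{D}$ regular (the factorization remark), $h_0$ is a regular $\Gamma$-correct embedding, whence $b\in G_0$.

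By the density just established and Zorn's Lemma, fix a maximal antichain $\bp{b_j:j\in J}$ of $\bool{B}$ with each $b_j\in G_0$, witnessed by a $\Gamma$-rigid $\bool{D}_j\in D$ and a regular $\Gamma$-correct $e_j:\bool{B}\restriction b_j\to\bool{D}_j$. Set $\bool{C}=\prod_{j\in J}\bool{D}_j$; then $\bool{C}\in\Gamma$ by closure of $\Gamma$ under set-sized products, and $\bool{C}$ is $\Gamma$-rigid since the $\Gamma$-rigid algebras are closed under set-sized products. Using the canonical isomorphism $\bool{B}\cong\prod_{j\in J}\bool{B}\restriction b_j$, the map $e=\prod_{j\in J}e_j:\bool{B}\to\bool{C}$ is a regular embedding; it is $\Gamma$-correct because $\coker(e)=1_{\bool{B}}$ and, below each $b_j$, the quotient $\bool{C}/e[\dot G_{\bool{B}}]$ is forced to equal $\bool{D}_j/e_j[\dot G_{\bool{B}}]\in\dot\Gamma$, so its boolean value is $\bigvee_{j\in J}b_j=1_{\bool{B}}$; hence $\bool{C}\leq^*_\Gamma\bool{B}$. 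Finally let $A=\bp{u_j:j\in J}$, where $u_j\in\bool{C}$ is $1_{\bool{D}_j}$ in the $j$-th coordinate and $0$ elsewhere; then $A$ is a maximal antichain of $\bool{C}$ and $\bool{C}\restriction u_j\cong\bool{D}_j\in D$, so $\bool{C}\restriction u_j\in D$ by openness of $D$, i.e.\ $k_{\bool{C}}[A]=\bp{\bool{C}\restriction a:a\in A}\subseteq D$. The step I expect to require the most care is the tracking of $\Gamma$-correctness through the two coker-factorizations in the density claim and through the lottery sum, since that is precisely where the algebraic definition of a $\Gamma$-correct homomorphism has to be used with precision.
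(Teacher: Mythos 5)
Your proof is correct and follows essentially the same route as the paper's: both decompose $\bool{B}$ along a maximal antichain whose pieces admit regular $\Gamma$-correct embeddings into $\Gamma$-rigid members of $D$ (obtained via the cokernel factorization), and then glue these via the lottery sum, whose $\Gamma$-rigidity and $\Gamma$-correctness are checked exactly as you do. The only difference is presentational: the paper builds the antichain by a greedy transfinite recursion on successive cokernels, whereas you first prove density of the set of good conditions and then extract a maximal antichain by Zorn's lemma — an equivalent packaging of the same argument.
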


\begin{proof}
Given $\bool{B}\in\Gamma$, find a $\Gamma$-rigid
$\bool{C}_0\leq_\Gamma\bool{B}$ with $\bool{C}_0\in D$, with
$k_0:\bool{B}\to\bool{C}_0$ a witness that $\bool{C}_0\leq_\Gamma\bool{B}$ and
\[
b_0=\coker(k_0).
\]

Then 
\begin{align*}
k_0\restriction b_0:&\bool{B}\restriction b_0\to\bool{C_0}\\
&b\mapsto k_{0}(b)
\end{align*}
is injective and $\Gamma$-correct and witnesses $\bool{B}\restriction b_0\geq^*_\Gamma\bool{C}_0$.

Now find $\bool{C}_1\leq_\Gamma\bool{B}\restriction \neg b_0$ $\Gamma$-rigid and in $D$
with
$k_1:\bool{B}\restriction\neg b_0\to\bool{C}_1$ a witness that $\bool{C}_0\leq_\Gamma\bool{B}$ and
\[
b_1=\coker(k_1)\leq\neg_{\bool{B}} b_0.
\]
Then $k_{1}\restriction b_1:\bool{B}\restriction b_1\to\bool{C}_1$ 
is injective.

Continuing this way we construct by induction a maximal antichain 
$E=\bp{b_\alpha:\alpha<\gamma}$ of $\bool{B}$
such that for all $\alpha<\gamma$ there is 
$b_{\alpha}\leq\neg \bigvee_{\beta<\alpha}b_\beta$, $\bool{C}_\alpha\in D$,
and $k_\alpha: \bool{B}\restriction b_\alpha\to\bool{C}_\alpha$ witnessing
$\bool{C}_\alpha\leq^*_\Gamma \bool{B}\restriction b_\alpha$.

Let 
\begin{align*}
k:&\bool{B}\to\prod_{\alpha<\gamma}\bool{C}_\alpha\\
&b\mapsto \ap{k_\alpha(b\wedge b_\alpha):\alpha<\gamma}.
\end{align*}

Then $\bool{C}\leq^*_\Gamma\bool{B}$ as witnessed by $k$,  and
$A=\bp{c_\alpha=k(b_\alpha):\alpha<\gamma}=k[E]$ is a maximal antichain of $\bool{C}$
 such that for all $\alpha<\gamma$:
\begin{itemize}
\item
$k(b_\alpha)=
\ap{0_{\bool{C}_0},\dots,0_{\bool{C}_\eta},\dots,1_{\bool{C}_\alpha},0_{\bool{C}_{\alpha+1}},\dots,0_{\bool{C}_\xi},\dots\dots}\in A$;
\item 
$\bool{C}_\alpha\cong\bool{C}\restriction k(b_\alpha)=k_{\bool{C}}(c_\alpha)\in D$.
\end{itemize}  
Finally notice that $\bool{C}$ is $\Gamma$-rigid, being the lottery sum of $\Gamma$-rigid forcings.
\end{proof}

It can also be shown that $\Gamma$-rigidity is preserved by passing to generic quotients; i.e. 

\begin{quote}
\emph{Assume $\bool{C}\in V$ is $\Gamma$-rigid, $G$ is $V$-generic for $\bool{B}$, and
$i:\bool{B}\to\bool{C}$ is a $\Gamma$-correct homomorphism.
Then $\bool{C}/_{i[G]}$ is $\Gamma^{V[G]}$-rigid in $V[G]$.} 
\end{quote}

But this fact (which we don't need) has a very convoluted proof  (it essentially amounts to a different proof of the Factor Lemma), so we omit it.


\subsection{Well-behaved classes $\Gamma$}\label{well-behaved classes}

We can now give the key definitions and state the main theorem which will be repeatedly used in Section 
\ref{suitable classes}.

\begin{definition}
Let $(V,\mathcal{V})$ be the standard model of $\MK$.
Given a definable class of cbas $\Gamma\in\mathcal{V}$ closed under two-step iterations:
\begin{itemize}

\item
$(\Gamma,\leq_\Gamma)$ is \emph{strategically ${<}\Ord$-closed} if $\lambda_\Gamma$
is a regular uncountable cardinal, and $\Gamma$ is
iterable with an iteration strategy $\Sigma(\Gamma)$ definable in the $\ZFC$-model $(V,\in)$.

\item
$\Gamma$ is \emph{closed under lottery sums} 
if any set-sized product of cbas in $\Gamma$ is in $\Gamma$.

\item $\Gamma$ is \emph{closed under isomorphisms} if $\bool{C}\in\Gamma$ whenever $\bool{B}\in\Gamma$ and $\bool{C}\cong\bool{B}$.

\item
$\Gamma$ is \emph{closed under restrictions and complete subalgebras}
 if for every
$\bool{B}\in\Gamma$:
\begin{itemize}
\item for every $b\in\bool{B}$, the map 
\begin{align*}
k_b:&\bool{B}\to\bool{B}\restriction b\\
&c\mapsto c\wedge b
\end{align*}
is $\Gamma$-correct, and
\item
any complete subalgebra of $\bool{B}$ is in $\Gamma$.
\end{itemize}
\end{itemize}
\end{definition}

The following is the key definition of the paper.

\begin{definition}\label{def:wellbeh}
Assume $(V,\mathcal{V})$ is a model of $\MK+$\emph{ $\Ord$ is Mahlo}.

A definable class $\Gamma$ is \emph{well-behaved in $V$} if:
\begin{enumerate}
%
\item\label{def:wellbeh1}
$\lambda_\Gamma$ is a regular uncountable cardinal.
\item \label{def:wellbeh2}
$\Gamma$ is closed under isomorphisms, two-step iterations, lottery sums, restrictions, and complete subalgebras.
\item \label{def:wellbeh3}
For all $\bool{B}\in\Gamma$ and $G$ $V$-generic for $\bool{B}$,
$V[G]$ models that $\Gamma^{V[G]}$ is strategically ${<}\Ord$-closed.
\item \label{def:wellbeh5}
$\Gamma$ contains as elements all $<\lambda_\Gamma$-closed forcings.
\item \label{def:wellbeh6}
For all inaccessible cardinals $\delta>|a_\Gamma|$
and for all $\bool{B}\in V_\delta$,
\[
V_\delta\models\phi_\Gamma(\bool{B},a_\Gamma)\quad\text{ if and only if } \quad V\models\phi_\Gamma(\bool{B},a_\Gamma).
\]
\item \label{def:wellbeh4}
The $\Gamma$-rigid cbas are dense in $(\Gamma,\leq_\Gamma)$.
\end{enumerate}

$\Gamma$ is \emph{absolutely well-behaved} if $\Gamma^{V[G]}$ is well-behaved in $V[G]$ for any
$V$-generic filter $G$ for some $\bool{B}\in \Gamma$.

\end{definition}

\begin{remark}\label{sufficience-remark} In all classes $\Gamma$ we will consider, clause \ref{def:wellbeh6} in Definition \ref{def:wellbeh} will be proved by showing that $\Gamma$ can be defined both by a $\Sigma_2$ formula and by a $\Pi_2$ formula, possibly with parameters.   
\end{remark}

\begin{remark}
\emph{}

\begin{itemize}
\item
We will show that $\SSP$, proper, semiproper and many other well-known classes of forcings contained in
$\SSP$
are absolutely well-behaved (in some cases assuming large cardinals in $V$).

The key point is that being well-behaved for all these $\Gamma$ is provable in $\ZFC$ (+ large cardinals).
The only property of well-behavedness not covered elsewhere in the literature for these classes of forcings
is the density of $\Gamma$-rigid forcings.

\item
The class $\Gamma$ given by CCC forcings is not well-behaved; for example it is not closed under lottery sums (easy), and it does not have $\Gamma$-rigid elements (less straightforward).
\end{itemize}
\end{remark}

The following is one of the main results of this paper.

\begin{theorem}\label{thm:mainth1}
Assume 
$\ap{V,\mathcal{V}}$ satisfies 
\begin{itemize}
\item $\MK$, 
\item $\lambda_\Gamma\text{ is a regular uncountable cardinal}$, and 
\item $\Gamma\text{ is absolutely well-behaved}$.
\end{itemize}

Then
\begin{quote}
$V^\Gamma$ models that $\Ord$ is the successor of $\lambda_\Gamma$.
\end{quote}

Moreover, for all inaccessible $\delta$ such that
$V_\delta\prec V$:
\begin{enumerate}

\item \label{thm:mainth1-1}
$\Gamma_\delta\in\Gamma$ 
is $\Gamma$-rigid,  
and is such that for all $\bool{B}\in\Gamma_\delta$ there is
$\bool{C}\in\Gamma_\delta$ such that $\Gamma_\delta\restriction\bool{C}\leq_\Gamma\bool{B}$.

Hence the class of $\Gamma$-rigid forcings is pre-dense as witnessed by the forcings
$\Gamma_\delta$ as $\delta$ ranges on the inaccessible cardinals with $V_\delta\prec V$. 

\item \label{thm:mainth1-2}
$\Gamma_\delta$
preserves the regularity of $\delta$ and 
forces it to become the successor of $\lambda_\Gamma$.

\item \label{thm:mainth1-3}
If $\lambda_\Gamma=\omega_1$,
$\Gamma_\delta$ forces
$\BFA(\Gamma)$,  and if $\delta$ is supercompact, it forces also $\FA_{\omega_1}(\Gamma)$.

\item \label{thm:mainth1-4}
For all $G$ $V$-generic for $\Gamma$ with $\Gamma_\delta\in V[G]$, letting 
$G_\delta=G\cap V_\delta$ we have that
$H_{(\lambda_\Gamma)^+}^{V[G_\delta]}=V_\delta[G_\delta]\prec V[G]=
H_{(\lambda_\Gamma)^+}^{V[G]}$.
\end{enumerate}
\end{theorem}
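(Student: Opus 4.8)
The plan is to transpose, to the category forcing $(\Gamma,\leq_\Gamma)$, the argument establishing the density of $D_{\bool{PA}}$ in Subsection~\ref{subsec:genabsforax}, with $\Coll(\omega,{<}\Ord)$ replaced by $(\Gamma,\leq_\Gamma)$, the Lévy collapses $\Coll(\omega,{<}\delta)$ replaced by the forcings $\Gamma_\delta$, and $H_{\dot{\omega}_1}$ replaced by $H_{\dot{\lambda}_\Gamma^+}$. The whole theorem is then an assemblage of results from the preceding sections together with the Factor Lemma; concretely, the three ``crucial properties'' of $\bool{B}_\delta$ exploited in the $\bool{PA}$ argument have exact analogues here: \textbf{(a)} for inaccessible $\delta$ with $V_\delta\prec V$, $\Gamma_\delta\in\Gamma$ is ${<}\delta$-CC and the map $\bool{B}\mapsto\Gamma_\delta\restriction\bool{B}$ regularly embeds $\Gamma_\delta$ into $\Gamma\restriction\Gamma_\delta$ (from $\Gamma$-iterability, i.e.\ the strategy $\Sigma(\Gamma)$ of Section~\ref{sec:gammait}, together with the Baumgartner property, cf.\ Section~\ref{sec:proofGammadeltainGamma}); \textbf{(b)} every $\bool{B}\in\Gamma$ of size ${<}\delta$ regularly embeds into $\Gamma_\delta\restriction\bool{C}$ for some $\bool{C}\leq_\Gamma\bool{B}$ in $\Gamma_\delta$ (universality, Section~\ref{sec:univGamma}, with density of the $\Gamma$-rigid forcings, Section~\ref{sec:gammafreezgammarig}, and Lemma~\ref{lem:copyingdenseintoant}); and \textbf{(c)} the Factor Lemma for $\Gamma$, the analogue of the factor lemma for the Lévy collapse, identifying $(\Gamma\restriction\bool{B},\leq_\Gamma)$ with $\bool{B}\ast\dot{\Gamma}$.

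I would first dispose of the preliminary assertions and of clauses~\ref{thm:mainth1-1} and~\ref{thm:mainth1-2}, all of which are nearly immediate given the earlier sections. The class forcing $(\Gamma,\leq_\Gamma)$ is ${<}\Ord$-CC --- all its iterands are set-sized and, by the Baumgartner property, the direct limits formed at stages of cofinality $\lambda_\Gamma$ and at inaccessible stages are ${<}\delta$-CC --- hence it is pretame, preserves $\ZFC^-$ and the regularity of $\Ord$; $\lambda_\Gamma$ is preserved by the very definition of $\lambda_\Gamma$; and every $\mu>\lambda_\Gamma$ is collapsed to $\lambda_\Gamma$ by $\Coll(\lambda_\Gamma,\mu)\in\Gamma$ (clause~\ref{def:wellbeh5}), the conditions collapsing $\mu$ being dense; so $V^\Gamma$ models that $\Ord$ is the successor of $\lambda_\Gamma$. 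Running the same computation inside $V_\delta$ --- legitimate since $\Gamma_\delta=\Gamma^{V_\delta}$ by clause~\ref{def:wellbeh6} and $V_\delta\prec V$, and $\Gamma_\delta$ is ${<}\delta$-CC by Baumgartner --- yields clause~\ref{thm:mainth1-2}: $\Gamma_\delta$ preserves the regularity of $\delta$, forces $\delta=\lambda_\Gamma^+$, and hence $H_{\lambda_\Gamma^+}^{V[G_\delta]}=H_\delta^{V[G_\delta]}=V_\delta[G_\delta]$ whenever $G_\delta$ is $V$-generic for $\Gamma_\delta$. For clause~\ref{thm:mainth1-1}: that $\Gamma_\delta\in\Gamma$ is $\Gamma$-rigid is Section~\ref{sec:gammafreezgammarig}, its second half is exactly~\textbf{(b)}, and the pre-density of the $\Gamma$-rigid forcings follows because for $\bool{B}\in\Gamma$ and $\delta>|\bool{B}|$ inaccessible with $V_\delta\prec V$, \textbf{(b)} provides $\Gamma_\delta\restriction\bool{C}\leq_\Gamma\bool{B}$, a restriction of the $\Gamma$-rigid $\Gamma_\delta$ (hence $\Gamma$-rigid) with $\Gamma_\delta\restriction\bool{C}\leq_\Gamma\Gamma_\delta$, so $\bool{B}$ is compatible with $\Gamma_\delta$.

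The heart is clause~\ref{thm:mainth1-4}, which amounts to $\Gamma_\delta\in D_\Gamma$. Fix $G$ $\mathcal{V}$-generic for $\Gamma$ with $\Gamma_\delta\in G$ and put $G_\delta=G\cap V_\delta$; by~\textbf{(a)} this is $V$-generic for $\Gamma_\delta$, so $V_\delta[G_\delta]=H_{\lambda_\Gamma^+}^{V[G_\delta]}$ and (by the preliminary assertion) $V[G]=H_{\lambda_\Gamma^+}^{V[G]}$, and the goal is $V_\delta[G_\delta]\prec V[G]$. The mechanism is the boolean-value identity parallel to the Claim in the proof of the density of $D_{\bool{PA}}$ in Subsection~\ref{subsec:genabsforax}: for a formula $\phi$ without class quantifiers and $\Gamma_\delta$-names $\vec{\tau}\in V_\delta\cap V^{\Gamma_\delta}$ for members of $H_{\lambda_\Gamma^+}$,
\[
\Qp{H_{\dot{\lambda}_\Gamma^+}\models\phi(\vec{\tau})}_{\Gamma_\delta}\qquad\text{and}\qquad\Qp{H_{\dot{\lambda}_\Gamma^+}\models\phi(\hat{e}(\vec{\tau}))}_{\Gamma}
\]
correspond to one another under the regular embedding $e:\Gamma_\delta\to\Gamma\restriction\Gamma_\delta\hookrightarrow\Gamma$ of~\textbf{(a)}. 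Exactly as for $\bool{PA}$, the left-hand value is computed, by the forcing theorem for the ${<}\Ord$-CC definable class forcing $\Gamma^{V_\delta}=\Gamma_\delta$ interpreted in $V_\delta$, as $\bigvee\{\bool{B}\in\Gamma_\delta:V_\delta\models\bool{B}\Vdash_{\Gamma_\delta}H_{\dot{\lambda}_\Gamma^+}\models\phi(\vec{\tau})\}$; the elementarity $V_\delta\prec V$ and clause~\ref{def:wellbeh6} identify this set with $\{\bool{B}\in\Gamma_\delta:V\models\bool{B}\Vdash_{\Gamma}H_{\dot{\lambda}_\Gamma^+}\models\phi(\vec{\tau})\}$; and the Factor Lemma for $\Gamma$ legitimizes interpreting the $V$-side forcing relation through $(\Gamma\restriction\bool{B},\leq_\Gamma)\cong\bool{B}\ast\dot{\Gamma}$. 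The one point with no $\bool{PA}$-analogue --- because $\Coll(\omega,{<}\delta)$ already makes everything hereditarily countable whereas $H_{\lambda_\Gamma^+}$ keeps growing along $(\Gamma,\leq_\Gamma)$ --- is the inequality $\bigvee\{\bool{B}\in\Gamma:\bool{B}\leq_\Gamma\Gamma_\delta,\ V\models\bool{B}\Vdash_\Gamma\phi\}\leq e\bigl(\bigvee\{\bool{B}\in\Gamma_\delta:\dots\}\bigr)$: a condition $\bool{B}\leq_\Gamma\Gamma_\delta$ forcing $\phi$ must be refined, by universality~\textbf{(b)}, to a $\Gamma_{\delta'}\restriction\bool{C}$ with $\delta'>\delta$ inaccessible and $V_{\delta'}\prec V$, and one must then show --- using $V_{\delta'}\prec V$, the Factor Lemma, and $\Gamma$-rigidity (which pins down the relevant generic quotients via Lemma~\ref{lem:eqtrGamma}) --- that the forcing of $\phi$ by $\Gamma_{\delta'}\restriction\bool{C}$ over $V$ reflects to a $\Gamma_\delta$-condition forcing $\phi$. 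Carrying this out calls for an induction on the complexity of $\phi$ interleaved with bookkeeping over the auxiliary inaccessibles $\delta'$; \emph{this is the main obstacle}, and it is precisely where self-similarity, universality, $\Gamma$-rigidity, and the Factor Lemma are all combined.

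It remains to deduce clause~\ref{thm:mainth1-3}. Granting $\Gamma_\delta\in D_\Gamma$, the $\Sigma_1$-correctness fact for members of $D_\Gamma$ (obtained earlier from pretameness, the Factor Lemma, self-similarity and universality), applied with $H=G_\delta$ and $\bool{B}=\Gamma_\delta$, gives $H_{\lambda_\Gamma^+}^{V[G_\delta]}\prec_{\Sigma_1}V[G_\delta]^{\bool{C}}$ for every $\bool{C}\in\Gamma^{V[G_\delta]}$; since $\lambda_\Gamma=\omega_1$ we have $H_{\lambda_\Gamma^+}=H_{\omega_2}$, and this is precisely $\BFA(\Gamma)$ in $V[G_\delta]$. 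When $\delta$ is supercompact one strengthens $\BFA(\Gamma)$ to $\FA_{\omega_1}(\Gamma)$ by the standard reflection argument: given $\bool{B}\in\Gamma^{V[G_\delta]}$ and ${\leq}\omega_1$-many dense subsets of $\bool{B}$, take an elementary $j:V\to M$ with $\crit(j)=\delta$; the absorption of $\bool{B}$ together with its dense sets into $\Gamma_\delta$ --- hence into $j(\Gamma_\delta)$ below $j$ of a suitable condition --- allows a $j(\Gamma_\delta)$-generic over $M$ to manufacture a filter on $\bool{B}$ meeting all of them.
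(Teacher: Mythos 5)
Your high-level plan---transposing the $\bool{PA}$ argument with $\Coll(\omega,{<}\delta)$ replaced by $\Gamma_\delta$---is exactly the guiding idea the paper advertises, and your treatment of clauses \ref{thm:mainth1-3} and \ref{thm:mainth1-4} correctly locates where the remaining work lies (the paper itself leaves those clauses largely to the reader). But the parts you dismiss as ``nearly immediate given the earlier sections'' are precisely the ones the paper proves in detail, and your argument for them rests on a false claim. You assert that $\Gamma_\delta$ is ${<}\delta$-CC ``by Baumgartner'' and derive clause \ref{thm:mainth1-2} from this. Baumgartner's Theorem~\ref{iBaumgartner} applies to direct limits of iteration systems of ${<}\delta$-CC algebras with stationarily many direct-limit stages, and $(\Gamma_\delta,\leq_\Gamma)$ is not presented as such a direct limit; moreover the claim is false: using the density of $\Gamma$-rigid forcings, characterization \ref{lem:eqtrGamma1} of Lemma~\ref{lem:eqtrGamma}, and the iteration strategy to pass through limit stages inside $V_\delta$, one builds a binary splitting tree of $\Gamma$-rigid conditions of height $\delta$ whose siblings are $\leq_\Gamma$-incompatible, and reading off the conditions obtained by turning right at level $\alpha$ after following the leftmost branch yields an antichain of size $\delta$ in $\Gamma_\delta$. (The same objection applies to your opening assertion that $(\Gamma,\leq_\Gamma)$ is ${<}\Ord$-CC.) The paper's actual route to clause \ref{thm:mainth1-2} and to $\Gamma_\delta\in\Gamma$ (Theorem~\ref{thm:univGamma}) is entirely different: one introduces the auxiliary poset $P_\Gamma^\delta$ of partial plays of $\mathcal{G}_\delta(\Gamma)$ according to $\Sigma_\delta(\Gamma)$ in which player I plays $\Gamma$-rigid forcings; $P_\Gamma^\delta$ is ${<}\delta$-closed, hence in $\Gamma$; the generic iteration system has a ${<}\delta$-CC direct limit $\bool{B}_{\dot{\mathcal{G}}}$ in $\Gamma^{V[G]}$ by the Baumgartner property; the composition (${<}\delta$-closed followed by ${<}\delta$-CC) preserves the regularity of $\delta$; and $\Gamma_\delta\sqsubseteq P_\Gamma^\delta\ast\bool{B}_{\dot{\mathcal{G}}}$ is established via the genericity-transfer claim, whose engine is that densely many partial plays contain a stage $\bool{B}_\xi$ with a maximal antichain $A$ such that $k_{\bool{B}_\xi}[A]$ lands inside any prescribed dense open subclass of $\Gamma_\delta$ (Lemma~\ref{lem:copyingdenseintoant}); membership in $\Gamma$ then follows from closure under two-step iterations and complete subalgebras. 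None of this construction appears in your proposal, so neither $\Gamma_\delta\in\Gamma$ nor clause \ref{thm:mainth1-2} is actually established.

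A second gap concerns the universality half of clause \ref{thm:mainth1-1}: that every $\bool{B}\in\Gamma_\delta$ is refined by some $\Gamma_\delta\restriction\bool{C}$ is not proved in the section you cite, which only contains the heuristic discussion of the map $b\mapsto\bool{B}\restriction b$ and the observation that it need not preserve incompatibility. In the paper this is the content of Theorem~\ref{thm:quo-univ}, which requires showing that $\bool{C}\mapsto\bool{C}/_{i_{\bool{C}}[G]}$ is a total, order- and incompatibility-preserving map with dense image from $(\bool{Rig}^\Gamma\restriction\bool{B})^V/_{k[G]}$ onto a dense subset of $\Gamma^{V[G]}\restriction(\bool{B}/_{k_0[G]})$---a delicate quotient analysis where $\Gamma$-freezing, $\Gamma$-rigidity, and the name-lifting propositions of the appendix are genuinely used. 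Your sketch of clause \ref{thm:mainth1-4} honestly flags the reflection of the forcing relation from $\Gamma$ to $\Gamma_\delta$ as the main obstacle without resolving it; taken together with the above, the proposal identifies the right ingredients but does not constitute a proof of the theorem.
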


\begin{corollary}
Assume 
$\ap{V,\mathcal{V}}$ satisfies 
\begin{itemize}
\item $\MK$, 
\item $\Ord\text{ is Mahlo}$,
\item $\lambda_\Gamma\text{ is a regular uncountable cardinal}$, and 
\item $\Gamma\text{ is absolutely well-behaved}$.
\end{itemize}
Then
\[
D_\Gamma=\bp{\bool{B}\in\Gamma: \, H_{\dot{\lambda}_\Gamma^+}^\bool{B}\prec 
H_{\dot{\lambda}_\Gamma^+}^{\Gamma\restriction\bool{B}}}
\]
is dense as witnessed by $\Gamma_\delta\restriction\bool{C}$ as $\delta$ ranges among
the inaccessible cardinals $\gamma$ with
$$(V_\gamma,\in)\prec (V,\in)$$ and $\bool{C}$ among the elements of $\Gamma_\delta$.

In particular, for every $\bool{B}\in\Gamma$ there is some $\bool{C}\leq_\Gamma \bool{B}$ such that $\bool{C}$ forces  
$\BCFA(\Gamma)$.

\end{corollary}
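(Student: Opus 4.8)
The plan is to obtain the Corollary as a short consequence of Theorem~\ref{thm:mainth1}, the $\MK$-reflection theorem, Remark~\ref{rmk:gammarigembed}, Lemma~\ref{lem:eqtrGamma}, and the Factor Lemma for $\Gamma$; there is no new combinatorics, only bookkeeping between the various generic filters. \emph{Density.} Since $(V,\mathcal V)\models\MK$, the class $C=\bp{\gamma:(V_\gamma,\in)\prec(V,\in)}$ is a club subclass of $\Ord$, and since $\Ord$ is Mahlo it contains arbitrarily large inaccessible cardinals $\delta>|a_\Gamma|$. Given $\bool B\in\Gamma$, pick such a $\delta$ with $\bool B\in V_\delta$; by clause~\ref{def:wellbeh6} of well-behavedness $\bool B\in\Gamma_\delta$, so Theorem~\ref{thm:mainth1}\,(\ref{thm:mainth1-1}) gives $\bool C\in\Gamma_\delta$ with $\bool D:=\Gamma_\delta\restriction\bool C\leq_\Gamma\bool B$, and $\bool D$ is $\Gamma$-rigid as a restriction of the $\Gamma$-rigid $\Gamma_\delta$. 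So it remains to see that every such $\bool D$ lies in $D_\Gamma$.

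\emph{$\Gamma_\delta\restriction\bool C\in D_\Gamma$.} Fix $\delta$ inaccessible with $V_\delta\prec V$, $\bool C\in\Gamma_\delta$, and $\bool D=\Gamma_\delta\restriction\bool C$; since restriction maps are $\Gamma$-correct, $\bool D\leq_\Gamma\Gamma_\delta$. Let $G$ be $\mathcal V$-generic for $\Gamma\restriction\bool D$; it determines a $\mathcal V$-generic filter $K$ for $\Gamma$ with $\bool D\in K$, hence $\Gamma_\delta\in K$, and $V[K]=V[G]$. By Theorem~\ref{thm:mainth1}\,(\ref{thm:mainth1-4}), $K_\delta:=K\cap V_\delta$ is $V$-generic for $\Gamma_\delta$ and $H_{\lambda_\Gamma^+}^{V[K_\delta]}=V_\delta[K_\delta]\prec V[K]=H_{\lambda_\Gamma^+}^{V[K]}$ (the final equality because $V^\Gamma\models\Ord=\lambda_\Gamma^+$, the first assertion of Theorem~\ref{thm:mainth1}). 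By $\Gamma$-rigidity of $\Gamma_\delta$, $i_{\Gamma_\delta}$ and hence its restriction $i_{\bool D}$ are complete embeddings (Remark~\ref{rmk:gammarigembed}), so one reads off from $G$ a $V$-generic filter $H\in V[G]$ for $\bool D$, which is precisely the part of $K_\delta$ below $\bool C$; in particular $\bool C\in K_\delta$ and $V[H]=V[K_\delta]$. Therefore $H_{\lambda_\Gamma^+}^{V[H]}=V_\delta[K_\delta]\prec V[G]$, i.e.\ $\bool D\in D_\Gamma$. With the previous paragraph this shows $D_\Gamma$ is dense, as witnessed by the forcings $\Gamma_\delta\restriction\bool C$.

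\emph{Forcing $\BCFA(\Gamma)$.} Given $\bool B\in\Gamma$, pick $\bool C=\Gamma_\delta\restriction\bool C'\leq_\Gamma\bool B$ in $D_\Gamma$ as above; it is $\Gamma$-rigid. Let $H$ be $V$-generic for $\bool C$. Then $(V[H],\mathcal V[H])\models\MK$, $\lambda_{\Gamma^{V[H]}}=\lambda_\Gamma$, and $\Gamma^{V[H]}$ is absolutely well-behaved in $V[H]$ (since $\bool C\in\Gamma$ and, using closure under two-step iterations, absolute well-behavedness passes to $\Gamma$-extensions), so $\BCFA(\Gamma)$ makes sense in $V[H]$. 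For its first conjunct, given a $\mathcal V[H]$-generic $K'$ for $\Gamma^{V[H]}$, the Factor Lemma for $\Gamma$ realizes $\Gamma\restriction\bool C$ as a dense subset of $\bool C\ast\Gamma_{\bool C}^\circ$, so $H$ and $K'$ jointly give a $\mathcal V$-generic $G$ for $\Gamma\restriction\bool C$ with $V[G]=V[H][K']$ whose $\bool C$-factor is $H$; by $\Gamma$-rigidity of $\bool C$ and Lemma~\ref{lem:eqtrGamma}\,(\ref{lem:eqtrGamma2}), $H$ is the unique $\Gamma$-correct $V$-generic for $\bool C$ in $V[G]$, so $\bool C\in D_\Gamma$ gives $H_{\lambda_\Gamma^+}^{V[H]}\prec V[H][K']$; as $K'$ was arbitrary, $V[H]\models H_{\lambda_\Gamma^+}\prec V^\Gamma$. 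For its second conjunct, let $\bool B'\in\Gamma^{V[H]}$ with a $\bool C$-name $\dot{\bool B}'$; below a suitable condition of $\bool C$ we have $\bool C\Vdash\dot{\bool B}'\in\Gamma$, hence $\bool C\ast\dot{\bool B}'\in\Gamma$, and by the density just proved there is $\bool E\in D_\Gamma$ with $\bool E\leq_\Gamma\bool C\ast\dot{\bool B}'\leq_\Gamma\bool C$. By the Factor Lemma $\bool E$ is of the form $\bool C\ast\dot{\bool B}''$, and $\bool E\leq_\Gamma\bool C\ast\dot{\bool B}'$ together with $\bool E\in D_\Gamma$ unfold (again via the Factor Lemma, applied once more inside $V^{\bool C}$) into $\bool C\Vdash(\dot{\bool B}''\leq_\Gamma\dot{\bool B}'$ and $\dot{\bool B}''\in D_\Gamma)$. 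Hence $V[H]$ thinks $D_\Gamma$ is dense in $(\Gamma,\leq_\Gamma)$, so $V[H]\models\BCFA(\Gamma)$.

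\emph{Main difficulty.} All combinatorics is already in Theorem~\ref{thm:mainth1}; the delicate points are in the second and third paragraphs: keeping track of the correspondence between $\mathcal V$-generics for $\Gamma\restriction\bool D$, $\mathcal V$-generics for $\Gamma$ passing through $\Gamma_\delta$, $V$-generics for $\Gamma_\delta$ (via self-similarity and $\Gamma$-rigidity) and $V$-generics for $\bool D$, and unfolding ``$\bool E\in D_\Gamma$'' for a condition $\bool E\leq_\Gamma\bool C$ into a statement forced by $\bool C$ through iterated use of the Factor Lemma. These translations are routine given Remark~\ref{rmk:gammarigembed}, Lemma~\ref{lem:eqtrGamma} and the Factor Lemma for $\Gamma$, but they require care in matching up quotient names and generic filters.
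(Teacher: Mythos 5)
Your proof is correct and follows the route the paper intends: the corollary is left unproved there precisely because it is the assembly of Theorem~\ref{thm:mainth1}(\ref{thm:mainth1-1}) and (\ref{thm:mainth1-4}) (via Mahloness of $\Ord$ and the $\Gamma$-rigidity of $\Gamma_\delta$) with the Factor Lemma, which is exactly what you do. The only stylistic divergence is in the second conjunct of $\BCFA(\Gamma)$ in $V[H]$, where you push density through the Factor Lemma twice; one could instead just note that $\Ord$ remains Mahlo and $\Gamma^{V[H]}$ remains absolutely well-behaved in $V[H]$ and rerun the density argument there, but both work.
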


The next section is devoted to the proof of Theorem \ref{thm:mainth1}.
We will pay special attention to giving detailed proofs of \ref{thm:mainth1}(\ref{thm:mainth1-1}) and
 \ref{thm:mainth1}(\ref{thm:mainth1-2}). The reader familiar with these proofs will be able to fill
in the details needed to prove the remaining assertions of Theorem \ref{thm:mainth1}.
  
  The key result is \ref{thm:mainth1}(\ref{thm:mainth1-1}).

\begin{notation}
Given a category forcing $(\Gamma,\leq_\Gamma)$ with $\Gamma$ a definable class of complete boolean algebras and 
$\leq_\Gamma$ the order induced on $\Gamma$
by the $\Gamma$-correct homomorphisms between elements of $\Gamma$,
we denote the incompatibility relation with respect to $\leq_\Gamma$ by $\bot_\Gamma$, 
and the subclass of $\Gamma$
given by its
$\Gamma$-rigid elements by $\bool{Rig}^\Gamma$.
\end{notation}


\section{Forcing with forcings: proofs} \label{sec:mainresultsM}

\subsection{Why $\Gamma_\delta\in\Gamma$?} \label{sec:proofGammadeltainGamma}

\begin{notation}
Given a $\Gamma$-rigid forcing $\bool{B}$:
\begin{itemize}
\item
$k_\bool{B}:b\mapsto\bool{B}\restriction b$ is the canonical embedding of $\bool{B}$ into $\Gamma\restriction\bool{B}$.

\item
if $\bool{C}\leq_\Gamma\bool{B}$, $k_{\bool{B}\bool{C}}$ denotes the unique 
$\Gamma$-correct homomorphism of
$\bool{B}$ into $\bool{C}$.
\end{itemize}
\end{notation}

\begin{theorem}\label{thm:univGamma}
Assume $\Gamma$ is absolutely well-behaved, and $\delta$ is inaccessible and such that $V_\delta\prec V$.
Then $\Gamma_\delta\in \Gamma$ preserves the regularity of $\delta$ making it the successor of $\lambda_\Gamma$.
\end{theorem}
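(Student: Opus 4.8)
**Proof plan for Theorem \ref{thm:univGamma} ($\Gamma_\delta \in \Gamma$ preserves the regularity of $\delta$, making it $\lambda_\Gamma^+$).**

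The plan is to split the statement into two halves: first showing $\Gamma_\delta\in\Gamma$, and then analyzing what $\Gamma_\delta$ does to cardinals. For the first half, the strategy is to identify $\Gamma_\delta=(\Gamma\cap V_\delta,\leq_\Gamma\restriction V_\delta)$ with a play of the game $\mathcal G(\Gamma)$ of length $\delta$, using the fact that $\Gamma$ is iterable with a $\ZFC$-definable strategy $\Sigma(\Gamma)$ (from absolute well-behavedness, clause \ref{def:wellbeh3} applied in $V$ itself). Concretely, I would build an increasing (in $\leq_\Gamma$, hence decreasing as forcings) sequence cofinal in $\Gamma_\delta$ by bookkeeping: enumerate $\Gamma_\delta$ in order type $\delta$ and, at stage $\alpha$, use universality and the closure of $\Gamma$ under lottery sums (clause \ref{def:wellbeh2}) together with Lemma~\ref{lem:copyingdenseintoant} to absorb the $\alpha$-th element of $\Gamma_\delta$ into a single $\Gamma$-rigid algebra $\bool B_\alpha\in\Gamma\cap V_\delta$, with $\Gamma$-correct injective maps $i_{\alpha\beta}:\bool B_\alpha\to\bool B_\beta$. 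Since $V_\delta\prec V$ and $\delta$ is inaccessible, the bookkeeping can be arranged so that this sequence is exactly a run of $\mathcal G(\Gamma)$ in which $II$ is using $\Sigma(\Gamma)$ (the definability of $\Sigma(\Gamma)$ over $(V,\in)$ and $V_\delta\prec V$ ensure $II$'s moves stay inside $V_\delta$), with the limit stages of cofinality $\lambda_\Gamma$ forced to be direct limits; by the definition of iterability, $II$ can play at stage $\delta$, so $\varinjlim\{\bool B_\alpha:\alpha<\delta\}\in\Gamma$. Finally one checks this direct limit is isomorphic (as a forcing) to $\Gamma_\delta$ — this is where the density of the $\bool B_\alpha$'s and the fact that each $\Gamma_\delta\restriction\bool C$ for $\bool C\in\Gamma_\delta$ appears cofinally is used — and apply closure under isomorphisms (clause \ref{def:wellbeh2}) to conclude $\Gamma_\delta\in\Gamma$.

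For the second half — that $\Gamma_\delta$ preserves the regularity of $\delta$ and forces $\delta=\lambda_\Gamma^+$ — I would argue as in the $\bool{PA}$ case. Preservation of $\delta$: since every $\bool B_\alpha$ in the cofinal run has size $<\delta$ (as $\delta$ is inaccessible and $\bool B_\alpha\in V_\delta$) and the run hits direct limits at all stages of cofinality $\lambda_\Gamma$, Baumgartner's theorem (\ref{iBaumgartner}, cited in the Remark after iterability) gives that $\varinjlim\mathcal F$ is ${<}\delta$-CC; a ${<}\delta$-CC forcing preserves the regularity of $\delta$. That $\delta$ becomes $\lambda_\Gamma^+$: below $\delta$, cofinally many $\bool B_\alpha$ collapse any given ordinal $\gamma<\delta$ down to $\lambda_\Gamma$ — here one uses that $\Gamma$ contains all ${<}\lambda_\Gamma$-closed forcings (clause \ref{def:wellbeh5}), so $\Coll(\lambda_\Gamma,\gamma)\in\Gamma_\delta$ and hence is absorbed into some $\bool B_\alpha$, which therefore collapses $\gamma$; by genericity every such ordinal is collapsed in the extension. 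Combined with preservation of $\delta$ and of $\lambda_\Gamma$ (again from clause \ref{def:wellbeh5}, since small collapses are in $\Gamma$ and $\Gamma$-correct maps preserve $\lambda_\Gamma$), this gives $\delta=(\lambda_\Gamma^+)^{V[G]}$.

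The main obstacle I expect is the identification of $\Gamma_\delta$ with a run of $\mathcal G(\Gamma)$ and the verification that $II$'s strategy keeps everything inside $V_\delta$ and that the run genuinely catches its tail — i.e., that the cofinal system of $\Gamma$-rigid $\bool B_\alpha$'s is dense in $(\Gamma_\delta,\leq_\Gamma)$ so that the direct limit is forcing-equivalent to $\Gamma_\delta$ rather than merely a dense suborder of something larger. This requires carefully interleaving three bookkeeping tasks (enumerate all of $\Gamma_\delta$, apply Lemma~\ref{lem:copyingdenseintoant} to stay $\Gamma$-rigid and meet a fixed list of dense sets, and respect the moves dictated by $\Sigma(\Gamma)$ at the mandatory limit stages), and using $V_\delta\prec V$ at each step to see that the relevant witnesses and the strategy's responses are found inside $V_\delta$; the inaccessibility of $\delta$ is what makes the lottery sums and two-step iterations along the way stay in $V_\delta$. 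The cardinal-arithmetic half is comparatively routine once the ${<}\delta$-CC of the limit is in hand.
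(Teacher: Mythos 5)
Your second half (Baumgartner's theorem for the ${<}\delta$-chain condition, and the density of collapses $\Coll(\lambda_\Gamma,\gamma)$ absorbed via $\Gamma$-rigidity to make $\delta=\lambda_\Gamma^+$) matches the paper's argument and is fine modulo the first half. The gap is in the first half: you propose to build, \emph{deterministically in $V$}, a single $\delta$-length run $\langle\bool{B}_\alpha:\alpha<\delta\rangle$ of $\mathcal{G}(\Gamma)$ whose associated family $\{\bool{B}_\alpha\restriction b:\alpha<\delta,\ b\in\bool{B}_\alpha^+\}$ is dense in $(\Gamma_\delta,\leq_\Gamma)$, so that $\Gamma_\delta$ is forcing-equivalent to $\varinjlim\mathcal{F}$. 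This density cannot be arranged by bookkeeping. The moves of the game are \emph{injective} $\Gamma$-correct homomorphisms, so every $\bool{B}_\beta\restriction b$ with $\beta\geq 1$ lies $\leq_\Gamma$-below some restriction of $\bool{B}_1$, and more generally each ``piece'' of $\bool{B}_\beta$ has committed, via Lemma \ref{lem:copyingdenseintoant}, to lying below a $\leq_\Gamma$-decreasing sequence of earlier targets. Since $(\Gamma_\delta,\leq_\Gamma)$ has genuinely incompatible conditions (e.g.\ Namba-type forcings versus $\Coll(\omega_1,\lambda)$ in the $\SSP$-like cases), once a piece commits to one member of an incompatible pair it can never reach below the other; when the enumeration presents a $\bool{C}_\alpha$ that is $\bot_\Gamma$-incompatible with \emph{every} piece of the current $\bool{B}_\alpha$, the construction fails to get below $\bool{C}_\alpha$ forever, and you give no mechanism for ordering the enumeration so that this never happens. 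Note also that Lemma \ref{lem:copyingdenseintoant} only guarantees an antichain mapping into a dense \emph{open} class; ``$\leq_\Gamma\bool{C}_\alpha$'' is open but not dense, so the lemma does not by itself produce a piece below $\bool{C}_\alpha$.

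The paper's proof avoids exactly this problem by \emph{not} trying to realize $\Gamma_\delta$ as a single run: it forces with the ${<}\delta$-closed tree $P_\Gamma^\delta$ of \emph{all} partial plays of $\mathcal{G}_\delta(\Gamma)$ (with player I's moves $\Gamma$-rigid), lets $\mathcal{G}$ be the generic run, and shows only that $\Gamma_\delta$ is a \emph{complete suborder} of the two-step iteration $P_\Gamma^\delta\ast\dot{\bool{B}}_{\mathcal{G}}$ — never that it is dense in it. Genericity over $P_\Gamma^\delta$ is what handles the $2^\delta$-many dense subsets of $\Gamma_\delta$ (via the subclaim that, for each dense open $D\subseteq\Gamma_\delta$, the plays containing a maximal antichain copied into $D$ are dense in $P_\Gamma^\delta$); membership of $\Gamma_\delta$ in $\Gamma$ then follows from closure under two-step iterations and under \emph{complete subalgebras} (clause \ref{def:wellbeh}(\ref{def:wellbeh2})), not from closure under isomorphisms, and preservation of the regularity of $\delta$ is inherited from the ${<}\delta$-closed $\ast$ ${<}\delta$-CC decomposition. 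If you want to repair your argument, you should replace the deterministic bookkeeping by this auxiliary forcing $P_\Gamma^\delta$ and downgrade ``forcing-equivalent to $\Gamma_\delta$'' to ``$\Gamma_\delta\sqsubseteq P_\Gamma^\delta\ast\dot{\bool{B}}_{\mathcal{G}}$''.
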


Notice that Theorem \ref{thm:univGamma} proves Theorem \ref{thm:mainth1}(\ref{thm:mainth1-1}) (except for the assertion that
for all $\bool{B}\in\Gamma_\delta$ there is
$\bool{C}\in\Gamma_\delta$ such that $\Gamma_\delta\restriction\bool{C}\leq_\Gamma\bool{B}$) and 
Theorem \ref{thm:mainth1}(\ref{thm:mainth1-2}).

\begin{proof}
Let $P_\Gamma^\delta$ be the set of $\mathcal{F}=\bp{i_{\alpha,\beta}:\bool{B}_\alpha\to\bool{B}_\beta:\alpha\leq\beta<\eta}$ subsets of
$\to^\Gamma\cap V_\delta$  such that:
\begin{itemize}
\item $\mathcal{F}\in V_\delta$ is a partial play of $\mathcal{G}_\delta(\Gamma)$ according to $\Sigma_\delta(\Gamma)$;
\item
all the moves of $I$ in $\mathcal{F}$
are $\Gamma$-rigid forcings;
\item
$\mathcal{G}\leq \mathcal{F}$ if $\mathcal{G}$ is an end-extension of $\mathcal{F}$.
\end{itemize}

Since $\Gamma$ is iterable, it is immediate to check that
$P_\Gamma^\delta$ is ${<}\delta$-closed, hence in $\Gamma$, since $\delta\geq\lambda_\Gamma$ and $\Gamma$ is well-behaved
(by clause \ref{def:wellbeh}(\ref{def:wellbeh5})).

Moreover, let $\mathcal{G}=\bp{\bool{B}_\alpha:\alpha<\delta}$ be (the union of) a $V$-generic $G$ filter for $P_\Gamma^\delta$.
Then $\mathcal{G}$ is an iteration system in $V[G]$, and it is clear that $V[G]=V[\mathcal{G}]$.


Since $P_\Gamma^\delta$ is ${<}\delta$-closed, $V_\delta^{V[G]}=V_\delta\prec V$. 
In particular $\delta$ is inaccessible in $V[G]$, hence
\[
\Gamma^{V[G]}_\delta=\Gamma^{V[G]}\cap V_\delta^{V[G]}=\Gamma\cap V_\delta
\]
where the equalities hold because of clause \ref{def:wellbeh}(\ref{def:wellbeh6}) in the definition of well-behaved class
applied in $V$ and in $V[G]$.

Now $\Gamma^{V[G]}$ is iterable in $V[\mathcal{G}]$ (by clause \ref{def:wellbeh}(\ref{def:wellbeh3})), hence it has the
Baumgartner property in $V[G]$. 
Observe that for all $\xi<\delta$ of cofinality $\lambda_\Gamma$, $\bool{B}_\xi$ is the direct limit of
$\mathcal{G}\restriction\xi$, and the set of such $\xi$ is stationary in $\delta$ in $V[G]$ since $P_\Gamma^\delta$ is ${<}\delta$-closed.

By the Baumgartner property of $\Gamma^{V[G]}$ and Theorem \ref{iBaumgartner} applied in $V[G]$, letting
$\bool{B}_{\mathcal{G}}\in V[\mathcal{G}]$ be the direct limit of the iteration system $\mathcal{G}$,
\[
V[\mathcal{G}]\models\bool{B}_{\mathcal{G}}\text{ is in }\Gamma^{V[G]}\text{ and is }{<}\delta\text{-CC}.
\] 

Therefore
$V\models P_\Gamma^\delta\ast\bool{B}_{\dot{\mathcal{G}}}$ is in $\Gamma$ (by clause \ref{def:wellbeh}(\ref{def:wellbeh2})) and preserves the regularity of $\delta$, being a two-step iteration of a ${<}\delta$-closed
forcing with a ${<}\delta$-CC forcing.

%

\begin{claim}
Let $H$ be $V[\mathcal{G}]$-generic for $\bool{B}_{\mathcal{G}}$.
Then 
\[
\bp{\bool{B}_\alpha\restriction f(\alpha):\, 
f\in H,\, \alpha<\delta}
\] 
is $V$-generic for $\Gamma_\delta$.
\end{claim}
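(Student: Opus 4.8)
The plan is to show that $\mathcal{H}:=\{\bool{B}_\alpha\restriction f(\alpha):f\in H,\ \alpha<\delta\}$ is a directed subset of the pre-order $(\Gamma_\delta,\leq_\Gamma)$ which meets every dense subset of $\Gamma_\delta$ lying in $V$; since $\Gamma_\delta$ is not separative, the claim is then read as saying that the filter generated by $\mathcal{H}$ is $V$-generic. I would fix the thread representation of the direct limit $\bool{B}_{\mathcal{G}}=\dirlim\mathcal{G}$, so that every $f\in H$ carries a value $f(\alpha)\in\bool{B}_\alpha^+$ for all $\alpha$ past the level at which $f$ first appears, and so that $H_\alpha:=\{f(\alpha):f\in H\}=i_{\alpha\mathcal{G}}^{-1}[H]$ (with $i_{\alpha\mathcal{G}}:\bool{B}_\alpha\to\bool{B}_{\mathcal{G}}$ the canonical regular embedding) is a $V[\mathcal{G}]$-generic, hence $V$-generic, filter on $\bool{B}_\alpha$. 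Note first that each $\bool{B}_\alpha\restriction f(\alpha)$ lies in $\Gamma_\delta$: every $\bool{B}_\alpha$ belongs to $\Gamma\cap V_\delta$ (being a $\Gamma$-rigid move of $I$, the trivial algebra, a trivial move of $II$, or a direct limit which the rules of $\mathcal{G}_\delta(\Gamma)$ keep inside $\Gamma$, and of rank $<\delta$ since $\mathcal{F}\in V_\delta$ for $\mathcal{F}\in G$), and $\Gamma$ is closed under restrictions by Definition~\ref{def:wellbeh}(\ref{def:wellbeh2}).

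\emph{Directedness.} Given $\bool{B}_\alpha\restriction f(\alpha)$ and $\bool{B}_\beta\restriction g(\beta)$ in $\mathcal{H}$ with, say, $\alpha\leq\beta$, choose $h\in H$ with $h\leq f,g$ and a level $\gamma\geq\beta$ at which $f,g,h$ all appear, so that $h(\gamma)\leq f(\gamma),g(\gamma)$ in $\bool{B}_\gamma$. Using that $\Gamma$-correct homomorphisms compose and restrict well — concretely, that $k_{f(\gamma)}\circ i_{\alpha\gamma}\colon\bool{B}_\alpha\to\bool{B}_\gamma\restriction f(\gamma)$ is $\Gamma$-correct with cokernel $f(\alpha)$, so that its regular factor $\bool{B}_\alpha\restriction f(\alpha)\to\bool{B}_\gamma\restriction f(\gamma)$ is again $\Gamma$-correct — one gets $\bool{B}_\gamma\restriction h(\gamma)\leq_\Gamma\bool{B}_\gamma\restriction f(\gamma)\leq_\Gamma\bool{B}_\alpha\restriction f(\alpha)$, and likewise $\bool{B}_\gamma\restriction h(\gamma)\leq_\Gamma\bool{B}_\beta\restriction g(\beta)$; and $\bool{B}_\gamma\restriction h(\gamma)\in\mathcal{H}$ since $h(\gamma)\in H_\gamma$.

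\emph{Meeting dense sets.} Fix a dense open $D\subseteq\Gamma_\delta$ with $D\in V$, and let $E_D$ be the set of partial plays $\mathcal{F}\in P_\Gamma^\delta$ such that at some stage $\gamma$ of $\mathcal{F}$ the algebra $\bool{B}_\gamma$ carries a maximal antichain $A$ with $\bool{B}_\gamma\restriction a\in D$ for every $a\in A$. I would show $E_D$ is dense. Given $\mathcal{F}$ with current top algebra $\bool{B}^{\ast}\in\Gamma_\delta$, run the construction from the proof of Lemma~\ref{lem:copyingdenseintoant} \emph{inside} $V_\delta$: this is legitimate because $V_\delta\prec V$ and, by Definition~\ref{def:wellbeh}(\ref{def:wellbeh6}), $\Gamma_\delta$ satisfies in $V_\delta$ the hypotheses of that lemma, namely density of $\Gamma$-rigid forcings and closure under lottery sums (Definition~\ref{def:wellbeh}(\ref{def:wellbeh2}),(\ref{def:wellbeh4})); since $\delta$ is inaccessible, the lottery sum of the $<\delta$-many small $\Gamma$-rigid pieces it produces still has size and rank $<\delta$. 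This yields a $\Gamma$-rigid $\bool{C}\in\Gamma_\delta$ with a regular $\Gamma$-correct embedding $\bool{B}^{\ast}\to\bool{C}$ and a maximal antichain $A$ of $\bool{C}$ with $k_{\bool{C}}[A]\subseteq D$ (openness of $D$ being used here). Extend $\mathcal{F}$ by letting Player $I$ play this embedding — a legal move, as $\bool{C}$ is $\Gamma$-rigid — and letting $II$ respond according to $\Sigma_\delta(\Gamma)$; the result is a condition in $E_D$ below $\mathcal{F}$. Hence $G$ meets $E_D$, so there are $\mathcal{F}\in G$, a stage $\gamma$, and a maximal antichain $A\in V$ of $\bool{B}_\gamma$ with $\bool{B}_\gamma\restriction a\in D$ for all $a\in A$. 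As $H_\gamma$ is $V$-generic for $\bool{B}_\gamma$ there is $a\in A\cap H_\gamma$; writing $a=f(\gamma)$ for the corresponding $f\in H$ gives $\bool{B}_\gamma\restriction f(\gamma)\in\mathcal{H}\cap D$, as wanted.

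\emph{Main obstacle.} The crux is making the density of $E_D$ completely rigorous, i.e.\ carrying out the ``copying a dense set into a maximal antichain'' argument of Lemma~\ref{lem:copyingdenseintoant} in a \emph{bounded} way, so that the covering $\Gamma$-rigid algebra $\bool{C}$ stays in $V_\delta$ and is thus a legal move for Player $I$ in $\mathcal{G}_\delta(\Gamma)$; this is exactly where inaccessibility of $\delta$, together with $V_\delta\prec V$ and the reflection clause Definition~\ref{def:wellbeh}(\ref{def:wellbeh6}), is indispensable. A more routine point is the direct-limit bookkeeping — passing between $H$, the threads $f$, and the projected generics $H_\gamma$ — and the verification that the corestricted transition maps are $\Gamma$-correct, which follows from the closure properties collected in Definition~\ref{def:wellbeh}.
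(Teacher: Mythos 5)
Your proof is correct and follows essentially the same route as the paper's: the paper's key step is exactly your density of $E_D$ (stated there as a subclaim about partial plays in $P_\Gamma^\delta$ admitting a stage $\xi$ with a maximal antichain $A$ of $\bool{B}_\xi$ satisfying $k_{\bool{B}_\xi}[A]\subseteq D$), proved by extending the play to a stage where Player $I$ moves and invoking Lemma~\ref{lem:copyingdenseintoant}, followed by the same two-layer genericity argument via $\mathcal{G}$ and then $H_\xi$. You merely fill in details the paper leaves to the reader, namely the directedness/filter check and the boundedness of the application of Lemma~\ref{lem:copyingdenseintoant} inside $V_\delta$.
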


By the Claim we get that 
$\Gamma_\delta\sqsubseteq P^\delta_\Gamma\ast\dot{\bool{B}}$,
hence $\Gamma_\delta$ is in $\Gamma$ by clause \ref{def:wellbeh}(\ref{def:wellbeh2}).

Therefore it suffices to prove the Claim to conclude that $\Gamma_\delta$ is in $\Gamma$ and 
preserves the regularity of $\delta$.

\begin{proof}
We leave to the reader to check that
\[
\bp{\bool{B}_\alpha\restriction f(\alpha):\, 
f\in H,\, \alpha<\delta}
\] 
is a filter on $\Gamma_\delta$. We need to prove that this filter is $V$-generic.

The key to the proof is the following:
\begin{subclaim}
For any $D$ dense open subset of  $\Gamma_\delta$,
the set of partial plays 
$\mathcal{F}=\bp{\bool{B}_\eta:\eta<\alpha}\in P_\Gamma^\delta$ such that there is $\xi<\alpha$ 
and $A$ maximal antichain of $\bool{B}_\xi$ with
\[
\bp{\bool{B}_{\xi}\restriction a:\, a\in A}=k_{\bool{B}_{\xi}}[A]\subseteq D
\]
is dense open in $P_\Gamma^\delta$.
\end{subclaim}

Assume the subclaim holds. Then for $D$ dense open subset of $\Gamma_\delta$, there is 
$\mathcal{F}=\bp{\bool{B}_\eta:\eta<\alpha}\in\mathcal{G}$,
$\xi<\alpha$ and $A$ maximal antichain of $\bool{B}_\xi$ such that
\[
\bp{\bool{B}_{\xi}\restriction a:\, a\in A}\subseteq D.
\]
Now
\[
\bp{ f(\xi):\, f\in H}
\] 
is $V[G]$-generic for $\bool{B}_\xi$, and $A$ is still a maximal antichain of $\bool{B}_\xi$ in $V[G]$.
Therefore for some $a\in A$ and $f\in H$, $f(\xi)\leq a$. Hence
$\bool{B}_\xi\restriction a\in D\cap H$,
proving the claim.

We prove the subclaim:
\begin{proof}
Assume $\bp{\bool{B}_\xi:\,\xi<\alpha}\in P_\Gamma^\delta$ and $D$ is dense open in $\Gamma_\delta$. 

Then $\bp{\bool{B}_\xi:\xi<\alpha}$ is a play according to $\Sigma(\Gamma)$.
Notice that there is some freedom to decide what $\bool{B}_\xi$ is only for odd $\xi$ and for $\bool{B}_0$, because the even stages
are decided by the winning strategy $\Sigma(\Gamma)$ for player $II$.
W.l.o.g. (by prolonging $\bp{\bool{B}_\xi:\xi<\alpha}$ if necessary) we may assume that $\alpha$ is odd so that it is $I$'s turn to play.
This gives that $\alpha=\beta+1$.
Then (by Lemma \ref{lem:copyingdenseintoant}) 
there is 
$\bool{B}_{\beta+1}\leq^*_\Gamma \bool{B}_\beta$ which is $\Gamma$-rigid and such that 
some $A\subseteq \bool{B}_{\beta+1}$ is a maximal antichain with $k_{\bool{B}_{\beta+1}}[A]\subseteq D$.
By definition of $P_\Gamma^\delta$,
$\bp{\bool{B}_\xi:\xi\leq\beta}\cup\bp{\bool{B}_{\beta+1}}\in P_\Gamma^\delta$ and 
\[
\bp{B_{\bool{B}_{\beta+1}}\restriction a:\, a\in A}=k_{\bool{B}_{\beta+1}}[A]\subseteq D.
\]
\end{proof}
The Claim is proved.
\end{proof}

To conclude the proof of the Theorem, we are left with proving the following.
\begin{claim}
$\Gamma_\delta$ makes $\delta$ the successor of $\lambda_\Gamma$.
\end{claim}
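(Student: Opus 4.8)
The plan is to prove that $\Gamma_\delta$ forces every ordinal $\gamma$ with $\lambda_\Gamma\leq\gamma<\delta$ to have size $\lambda_\Gamma$; since it has already been shown in this proof that $\Gamma_\delta$ preserves the regularity of $\delta$, and $\lambda_\Gamma$ is preserved by $\Gamma_\delta\in\Gamma$, this yields that $\Gamma_\delta$ forces $\delta=\lambda_\Gamma^+$. Fix such a $\gamma$ and set
\[
D_\gamma=\bp{\bool{C}\in\Gamma_\delta:\,\bool{C}\Vdash|\check{\gamma}|\leq\check{\lambda}_\Gamma}.
\]
First I would check that $D_\gamma$ is dense in $(\Gamma_\delta,\leq_\Gamma)$. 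Given $\bool{B}\in\Gamma_\delta$, let $\dot{\bool{Q}}\in V^{\bool{B}}$ be a name for the regular open completion of $\Coll(\lambda_\Gamma,\gamma)$ as interpreted in $V^{\bool{B}}$. Since $\Gamma$ is absolutely well-behaved, clause \ref{def:wellbeh}(\ref{def:wellbeh5}) holds in $V^{\bool{B}}$, so $\bool{B}$ forces $\dot{\bool{Q}}$ to be a ${<}\lambda_\Gamma$-closed member of $\Gamma^{V^{\bool{B}}}$; hence the canonical embedding of $\bool{B}$ into $\RO(\bool{B}\ast\dot{\bool{Q}})$ is $\Gamma$-correct, so $\bool{C}=\RO(\bool{B}\ast\dot{\bool{Q}})\leq_\Gamma\bool{B}$, and $\bool{C}\in\Gamma$ by closure under two-step iterations (clause \ref{def:wellbeh}(\ref{def:wellbeh2})). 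As $\delta$ is inaccessible and $\bool{B},\gamma,\lambda_\Gamma\in V_\delta$, one may take $\bool{C}\in V_\delta$, so $\bool{C}\in\Gamma_\delta$; and forcing with $\dot{\bool{Q}}$ over $V^{\bool{B}}$ adds a surjection of $\lambda_\Gamma$ onto $\gamma$, so $\bool{C}\in D_\gamma$.

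The second ingredient is that a $V$-generic filter $G$ for $\Gamma_\delta$ adds a $V$-generic filter for each $\bool{C}\in G$ — the local analogue, for $\Gamma_\delta$, of the observation recorded earlier that a $\mathcal{V}$-generic for $\Gamma$ adds $V$-generics for its conditions. The step requiring care is the \emph{localization} of the density of $\Gamma$-rigid forcings from $\Gamma$ to $\Gamma_\delta$: using $V_\delta\prec V$ together with the $\Sigma_2/\Pi_2$-definability of $\Gamma$ (clause \ref{def:wellbeh}(\ref{def:wellbeh6})), density of the $\Gamma$-rigid cbas in $(\Gamma,\leq_\Gamma)$ (clause \ref{def:wellbeh}(\ref{def:wellbeh4})) reflects to density of the $\Gamma$-rigid cbas in $(\Gamma_\delta,\leq_\Gamma)$; this is in fact already implicit in the proof of the first Claim above. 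Granting it, for any $\Gamma$-rigid $\bool{B}\in\Gamma_\delta$ the map $i_{\bool{B}}:b\mapsto\bool{B}\restriction b$ is a complete embedding of $\bool{B}$ into the suborder $\Gamma_\delta\restriction\bool{B}$ of $(\Gamma_\delta,\leq_\Gamma)$, by Remark \ref{rmk:gammarigembed} and Lemma \ref{lem:eqtrGamma} (the point being, as above, that the common $\leq_\Gamma$-refinements witnessing this can be taken to be restrictions of the given conditions and hence to lie in $V_\delta$). Then, arguing exactly as for the corresponding statement about $\Gamma$: given $\bool{C}\in G$, choose by density a $\Gamma$-rigid $\bool{B}\leq_\Gamma\bool{C}$ in $G$ and a $\Gamma$-correct $j:\bool{C}\to\bool{B}$; the set $G\cap(\Gamma_\delta\restriction\bool{B})$ is $V$-generic for $\Gamma_\delta\restriction\bool{B}$, so $i_{\bool{B}}^{-1}[G\cap(\Gamma_\delta\restriction\bool{B})]\in V[G]$ is $V$-generic for $\bool{B}$, and hence $j^{-1}[i_{\bool{B}}^{-1}[G\cap(\Gamma_\delta\restriction\bool{B})]]\in V[G]$ is $V$-generic for $\bool{C}$.

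Putting the two ingredients together finishes the proof. Let $G$ be $V$-generic for $\Gamma_\delta$. For each $\gamma$ with $\lambda_\Gamma\leq\gamma<\delta$, density of $D_\gamma$ gives some $\bool{C}_\gamma\in G\cap D_\gamma$; by the second ingredient $V[G]$ contains a $V$-generic filter $g_\gamma$ for $\bool{C}_\gamma$, and since $\bool{C}_\gamma\in D_\gamma$ the inner model $V[g_\gamma]\subseteq V[G]$ — hence $V[G]$ itself — contains a surjection of $\lambda_\Gamma$ onto $\gamma$. Thus $V[G]$ has no cardinal strictly between $\lambda_\Gamma$ and $\delta$; as $\delta$ is a regular cardinal in $V[G]$ by what was already shown, and $\lambda_\Gamma$ is preserved, we conclude $\delta=\lambda_\Gamma^+$ in $V[G]$, which is what the Claim asserts. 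I expect the only genuinely delicate point to be the localization of the density of $\Gamma$-rigidity to $\Gamma_\delta$; the density of $D_\gamma$ and the preimage-of-generic arguments are routine bookkeeping with two-step iterations.
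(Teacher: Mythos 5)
Your proposal is correct and follows essentially the same route as the paper's own proof: density of the collapsing conditions via $\bool{B}\ast\dot{\Coll}(\lambda_\Gamma,\gamma)$, reflection of the density of $\Gamma$-rigid algebras to $\Gamma_\delta$ via $V_\delta\prec V$, and the complete embedding $b\mapsto\bool{B}\restriction b$ to extract a $V$-generic for a condition in $D_\gamma$ from the $\Gamma_\delta$-generic. The only cosmetic difference is that the paper directly uses density of $\bool{Rig}^\Gamma\cap D_\alpha$ rather than first passing to an arbitrary condition and then refining to a rigid one.
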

\begin{proof}
Since $\lambda_\Gamma$ is preserved by all forcings in $\Gamma$, we get that 
$\lambda_\Gamma$ is preserved by $\Gamma_\delta$.
We also know that $\delta$ is a regular cardinal of $V[G]$ whenever $G$ is $V$-generic for $\Gamma_\delta$.

We must show that $\delta$ is the successor of $\lambda_\Gamma$ in $V[G]$.

For any ordinal $\alpha\geq\lambda_\Gamma$ and $\bool{B}\in \Gamma$,
$V$ models that $\bool{B}\ast\dot{\Coll}(\lambda_\Gamma,\alpha)\in\Gamma$ 
(since $\Gamma$ contains all $\lambda_\Gamma$-closed forcings and is closed under two-step iterations);
we easily get (since $\delta$ is inaccessible) that
$\bool{B}\ast\dot{\Coll}(\lambda_\Gamma,\alpha)\in\Gamma_\delta$ for all $\lambda_\Gamma\leq\alpha<\delta$.

In particular, for any  $\lambda_\Gamma\leq\alpha<\delta$, 
the set $D_\alpha$ of $\bool{C}\in \Gamma_\delta$ which collapse $\alpha$ to have size $\lambda_\Gamma$
is dense open in $\Gamma_\delta$.

Since $V_\delta\prec V$, also the set $\bool{Rig}^\Gamma\cap D_\alpha$ is dense in $\Gamma_\delta$
for all $\alpha<\delta$.
By Remark \ref{rmk:gammarigembed} (and using $V_\delta\prec V$)
$b\mapsto \bool{B}\restriction b$ is a complete embedding\footnote{We do not as yet say that it is also a $\Gamma$-correct embedding; this is indeed the case but to infer it we need the Factor Lemma, whose proof is not as yet granted.} 
of $\bool{B}$ into 
$\Gamma_\delta\restriction \bool{B}$ for any $\bool{B}\in \bool{Rig}^\Gamma$.
In particular, if $G$ is $V$-generic for $\Gamma_\delta$, in $V[G]$ there is a $V$-generic filter $H$ for some 
$\bool{B}\in D_\alpha$ for any $\alpha<\delta$; any such generic filter $H$ 
adds a surjection of $\lambda_\Gamma$ onto
$\alpha$ existing in $V[G]$. We are done.
\end{proof}

The theorem is proved.
\end{proof}

%
%

\subsection{$\Gamma$-freezeability versus $\Gamma$-rigidity} \label{sec:gammafreezgammarig}

It will be convenient in order to establish that a certain class is well-behaved to prove that it satisfies
a clause weaker than \ref{def:wellbeh}(\ref{def:wellbeh4}): the $\Gamma$-freezing property.

\begin{definition}
Let $\Gamma$ be a definable class of cbas closed under two-step iterations, and $\bool{B}\in \Gamma$.
A $\Gamma$-correct $k:\bool{B}\to\bool{C}$ is \emph{$\Gamma$-freezing} if
for all
$i_0,i_1:\bool{C}\to\bool{Q}$ in $\to^\Gamma$ we have that $i_0\circ k=i_1\circ k$ 
(i.e. if the map $b\mapsto\bool{C}\restriction k(b)$ is incompatibility preserving for $\leq_\Gamma$).
\end{definition}

We can give the following characterizations of $\Gamma$-freezeability, the proof of which is along the same lines of the proof of Lemma~\ref{lem:eqtrGamma} and is left to the reader.

\begin{lemma}\label{lem:eqfrGamma}
Let $k:\bool{B}\to\bool{Q}$ be a $\Gamma$-correct homomorphism.
The following are equivalent:
\begin{enumerate}
\item\label{lem:eqfrGamma1}
For all $b_0,b_1\in \bool{B}$ such that
$b_0\wedge_{\bool{B}}b_1=0_{\bool{B}}$ we have that
$\bool{Q}\restriction k(b_0)$ is incompatible with $\bool{Q}\restriction k(b_1)$ in 
$(\Gamma,\leq_\Gamma)$. 
\item\label{lem:eqfrGamma2}
For every $\bool{R}\leq_\Gamma\bool{Q}$ and every
$V$-generic filter $H$ for $\bool{R}$, there is just one  
$\Gamma$-correct $V$-generic filter $G\in V[H]$ for $\bool{B}$ such that $G=k^{-1}[K]$ for all $\Gamma$-correct $V$-generic 
filters $K\in V[H]$ for $\bool{Q}$.
\item\label{lem:eqfrGamma3}
For all $\bool{R}\leq_\Gamma\bool{Q}$ in $\Gamma$ 
and $i_0,i_1:\bool{Q}\to\bool{R}$ witnessing that $\bool{R}\leq_\Gamma\bool{Q}$ we have that
$i_0\circ k=i_1\circ k$.
\end{enumerate}
\end{lemma}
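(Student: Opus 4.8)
The plan is to prove the three equivalences by contraposition in the cyclic order $\neg\ref{lem:eqfrGamma1}\Rightarrow\neg\ref{lem:eqfrGamma2}$, $\neg\ref{lem:eqfrGamma3}\Rightarrow\neg\ref{lem:eqfrGamma1}$, $\neg\ref{lem:eqfrGamma2}\Rightarrow\neg\ref{lem:eqfrGamma3}$, following the proof of Lemma~\ref{lem:eqtrGamma} essentially verbatim, with two systematic substitutions: every bare $\Gamma$-correct homomorphism out of $\bool{B}$ that appeared there is now replaced by its composite with $k$, and every $V$-generic filter for $\bool{B}$ is replaced by the pullback along $k$ of a $V$-generic filter for $\bool{Q}$. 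Two harmless preliminary remarks: since $k(b)=k(b\wedge\coker(k))$ for every $b\in\bool{B}$, the map $k$ is $\Gamma$-freezing if and only if its restriction to $\bool{B}\restriction\coker(k)$ is, so I may assume throughout that $k$ is a regular embedding; and $\bool{Q}\in\Gamma$ holds automatically, since $\bool{B}\in\Gamma$ and $\Gamma$ is closed under two-step iterations.

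For $\neg\ref{lem:eqfrGamma1}\Rightarrow\neg\ref{lem:eqfrGamma2}$, suppose $b_0\wedge_{\bool{B}}b_1=0_{\bool{B}}$ but $\bool{Q}\restriction k(b_0)$ and $\bool{Q}\restriction k(b_1)$ are compatible in $(\Gamma,\leq_\Gamma)$, witnessed by $\Gamma$-correct $j_\ell:\bool{Q}\restriction k(b_\ell)\to\bool{R}$ with $\bool{R}\in\Gamma$. Composing with the $\Gamma$-correct restriction maps $k_{k(b_\ell)}:\bool{Q}\to\bool{Q}\restriction k(b_\ell)$ (which are $\Gamma$-correct by clause \ref{def:wellbeh}(\ref{def:wellbeh2})) yields $\Gamma$-correct $i_\ell:\bool{Q}\to\bool{R}$, so $\bool{R}\leq_\Gamma\bool{Q}$; taking $H$ $V$-generic for $\bool{R}$, the pullbacks $K_\ell=i_\ell^{-1}[H]\in V[H]$ are $\Gamma$-correct $V$-generic filters for $\bool{Q}$ with $k(b_\ell)\in K_\ell$ (the top of $\bool{Q}\restriction k(b_\ell)$ lies in any generic filter for that algebra), hence $b_\ell\in k^{-1}[K_\ell]$ and $k^{-1}[K_0]\neq k^{-1}[K_1]$, contradicting \ref{lem:eqfrGamma2}. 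For $\neg\ref{lem:eqfrGamma3}\Rightarrow\neg\ref{lem:eqfrGamma1}$, suppose $\bool{R}\leq_\Gamma\bool{Q}$ in $\Gamma$ and $i_0,i_1:\bool{Q}\to\bool{R}$ are $\Gamma$-correct with $h_0:=i_0\circ k\neq h_1:=i_1\circ k$. Pick $b\in\bool{B}$ with $h_0(b)\neq h_1(b)$; exactly as in Lemma~\ref{lem:eqtrGamma} we may assume $r:=h_0(b)\wedge h_1(\neg_{\bool{B}}b)>0_{\bool{R}}$. Then $a\mapsto i_0(a)\wedge r$ and $a\mapsto i_1(a)\wedge r$ restrict to $\Gamma$-correct homomorphisms from $\bool{Q}\restriction k(b)$ and from $\bool{Q}\restriction k(\neg_{\bool{B}}b)$, respectively, into $\bool{R}\restriction r$, witnessing that $\bool{Q}\restriction k(b)$ and $\bool{Q}\restriction k(\neg_{\bool{B}}b)$ are compatible in $(\Gamma,\leq_\Gamma)$; since $b\wedge_{\bool{B}}\neg_{\bool{B}}b=0_{\bool{B}}$, this contradicts \ref{lem:eqfrGamma1}.

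For $\neg\ref{lem:eqfrGamma2}\Rightarrow\neg\ref{lem:eqfrGamma3}$, suppose \ref{lem:eqfrGamma2} fails: there are $\bool{R}\leq_\Gamma\bool{Q}$, a $V$-generic $H$ for $\bool{R}$, and $\Gamma$-correct $V$-generic filters $K_1,K_2\in V[H]$ for $\bool{Q}$ with $k^{-1}[K_1]\neq k^{-1}[K_2]$. Fix names $\dot K_1,\dot K_2\in V^{\bool{R}}$ (forced to be $\Gamma$-correct $V$-generics for $\bool{Q}$) and $q\in H$, $b\in\bool{B}$ with $q\Vdash_{\bool{R}}k(\check b)\in\dot K_1\wedge k(\check b)\notin\dot K_2$. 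For $r\in H$ with $r\leq q$ chosen small enough that the maps $i_j:\bool{Q}\to\bool{R}\restriction r$ given by $a\mapsto\Qp{\check a\in\dot K_j}_{\bool{R}}\wedge r$ are $\Gamma$-correct, one gets $i_0(k(b))=r$ while $i_1(k(b))\leq\neg r\wedge r=0_{\bool{R}}$, so $i_0\circ k\neq i_1\circ k$, contradicting \ref{lem:eqfrGamma3}.

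The only step requiring genuine care — precisely as in Lemma~\ref{lem:eqtrGamma} — is this last implication, where one must verify that cutting the evaluation maps $a\mapsto\Qp{\check a\in\dot K_j}\wedge r$ below a suitable $r\in H$ produces $\Gamma$-correct homomorphisms; this is the same slightly delicate use of the forcing theorem already carried out there, and the bookkeeping with the cokernels of the $i_j$ (when the $\Gamma$-correct homomorphisms in play are not injective) is handled in the same way. Everything else is a routine transcription of the rigidity argument, with $k$ and its pullback operation inserted in the obvious places.
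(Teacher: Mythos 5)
Your proof is correct and is exactly the argument the paper intends: the paper itself omits the proof of Lemma~\ref{lem:eqfrGamma}, stating only that it runs ``along the same lines'' as Lemma~\ref{lem:eqtrGamma}, and your systematic insertion of $k$ and of the pullback $k^{-1}[K]$ into that cyclic contrapositive argument is precisely the intended adaptation, at the same level of detail (including deferring the $\Gamma$-correctness of the evaluation maps $a\mapsto\Qp{\check a\in\dot K_j}\wedge r$ exactly where the paper does). The only blemish is a notational slip in the last implication, where the filters are indexed $\dot K_1,\dot K_2$ but the homomorphisms $i_0,i_1$; this does not affect the argument.
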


 \begin{theorem}
Assume $\Gamma$ is a definable class of forcings sastisfying clauses
\ref{def:wellbeh}(\ref{def:wellbeh1}), \ref{def:wellbeh}(\ref{def:wellbeh2}), \ref{def:wellbeh}(\ref{def:wellbeh3}),
\ref{def:wellbeh}(\ref{def:wellbeh5}), \ref{def:wellbeh}(\ref{def:wellbeh6}) of Def. \ref{def:wellbeh}.
Assume further that for all $\bool{B}\in\Gamma$ there is $i_{\bool{B}}:\bool{B}\to \bool{C}$ injective and
$\Gamma$-freezing 
$\bool{B}$.
Then the class of $\Gamma$-rigid partial orders is dense in $(\Gamma,\leq^*_\Gamma)$. 
\end{theorem}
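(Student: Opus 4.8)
The goal is to show that $\Gamma$-rigid cbas are dense in $(\Gamma,\leq^*_\Gamma)$, given the $\Gamma$-freezing hypothesis. The idea is to build, for a given $\bool{B}\in\Gamma$, a transfinite iteration that "freezes" more and more of $\bool{B}$ until the limit is $\Gamma$-rigid. First I would fix $\bool{B}\in\Gamma$ and use the hypothesis to get an injective $\Gamma$-freezing $i_{\bool{B}_0}:\bool{B}_0\to\bool{B}_1$ with $\bool{B}_0=\bool{B}$. The point of a freezing map is, by Lemma~\ref{lem:eqfrGamma}, that after composing into $\bool{B}_1$ the conditions of $\bool{B}$ can no longer be "mixed up": any two $\Gamma$-correct maps out of $\bool{B}_1$ agree on the image of $\bool{B}$. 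Iterating this doesn't immediately help, though, because new incompatibilities in $\bool{B}_1$ (not coming from $\bool{B}$) still need to be frozen. So the plan is to iterate along $\Ord$, or rather along a sufficiently closed ordinal, using the iteration strategy $\Sigma(\Gamma)$ for $\Gamma$ guaranteed by clause~\ref{def:wellbeh}(\ref{def:wellbeh3}) at even stages, and at odd stages playing a freezing map applied to the current algebra.

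Concretely, I would set up a play of (a variant of) the game $\mathcal{G}(\Gamma)$: at stage $0$ play $\bool{B}$; at odd successor stages, given $\bool{B}_\alpha$, apply the $\Gamma$-freezing hypothesis inside the relevant model to obtain an injective $\Gamma$-freezing $i_{\alpha,\alpha+1}:\bool{B}_\alpha\to\bool{B}_{\alpha+1}$; at even (limit) stages let player $II$ follow $\Sigma(\Gamma)$, which by iterability produces a legitimate $\bool{B}_\eta\in\Gamma$ that is a colimit of the system. Running this for $\Ord$-many stages (or stopping at a suitable inaccessible $\delta$ with $V_\delta\prec V$, which suffices since the resulting $\Gamma$-rigid forcing sits below $\bool{B}$ via a regular embedding), I claim the colimit $\bool{B}_\infty$ is $\Gamma$-rigid. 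The verification uses characterization~(\ref{lem:eqtrGamma3}) of Lemma~\ref{lem:eqtrGamma}: given two $\Gamma$-correct $j_0,j_1:\bool{B}_\infty\to\bool{C}$ and some $b\in\bool{B}_\infty$ on which they differ, $b$ has a name at some bounded stage, i.e. $b$ is in the image of some $i_{\alpha,\infty}$ for $\alpha<\delta$; but then at stage $\alpha+1$ we froze (at least) a cofinal-in-$\bool{B}_\alpha$ collection of conditions — here one must be slightly careful and iterate the freezing to catch all conditions of each $\bool{B}_\alpha$, using a book-keeping device so that by stage $\delta$ every condition appearing at any bounded stage has been frozen — and $\Gamma$-correctness of $i_{\alpha+1,\infty}$ plus the freezing property forces $j_0$ and $j_1$ to agree on $i_{\alpha,\infty}(b)=b$, a contradiction. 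The two-step-iteration closure of $\Gamma$ (clause~\ref{def:wellbeh}(\ref{def:wellbeh2})) and the Baumgartner property ensure the colimit stays in $\Gamma$, and the embedding $i_{0,\infty}:\bool{B}\to\bool{B}_\infty$ is injective because each $i_{\alpha,\alpha+1}$ is.

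The main obstacle is the book-keeping at the odd stages: a single freezing map freezes only the conditions of $\bool{B}_\alpha$ that it is applied to, and passing to the colimit introduces fresh conditions that must themselves be frozen, so one needs to interleave the freezing steps with the iteration in such a way that every condition of every intermediate algebra is eventually frozen by some later map, and — crucially — stays frozen, i.e. freezing is preserved under further $\Gamma$-correct extensions. This last point (that if $k:\bool{B}\to\bool{C}$ is $\Gamma$-freezing and $\ell:\bool{C}\to\bool{D}$ is $\Gamma$-correct then $\ell\circ k$ is still $\Gamma$-freezing) follows directly from characterization~(\ref{lem:eqfrGamma3}) of Lemma~\ref{lem:eqfrGamma}, since any $\Gamma$-correct pair out of $\bool{D}$ restricts to a $\Gamma$-correct pair out of $\bool{C}$. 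Granting that, the transfinite construction closes off at stage $\delta$ and the resulting algebra is $\Gamma$-rigid and lies below $\bool{B}$ in $\leq^*_\Gamma$, which is what we wanted. One should also check that the construction genuinely defines a play to which $\Sigma(\Gamma)$ applies — in particular that the odd moves are $\Gamma$-correct injective homomorphisms and that the length is an ordinal for which $II$ can follow her strategy — but this is exactly the setup of $\mathcal{G}(\Gamma)$ and needs no new ideas.
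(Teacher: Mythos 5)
Your proposal is correct and is essentially the paper's own argument: interleave injective $\Gamma$-freezing maps at odd stages with the iteration strategy at even/limit stages, take a direct limit so that any incompatibility in the limit algebra is witnessed by threads of bounded support, and use the persistence of freezing under further $\Gamma$-correct composition to reach a contradiction. The only substantive difference is that the paper runs the iteration for exactly $\lambda_\Gamma$ steps---the rules of $\mathcal{G}(\Gamma)$ already force the direct limit at cofinality $\lambda_\Gamma$, so no inaccessible $\delta$ with $V_\delta\prec V$ is needed, and your worry about book-keeping over individual conditions is moot since each freezing map freezes the whole algebra $\bool{B}_\alpha$ at once and such maps occur at every odd stage.
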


\begin{proof}
Fix $\bool{B}_0\in \Gamma$ and $\Sigma_{\lambda_\Gamma}(\Gamma)$ be a winning strategy for 
player $II$ in $\mathcal{G}_{\lambda_\Gamma}(\Gamma)$.

Define 
\[
\FFF=\{k_{\alpha\beta}:\bool{B}_\alpha\to\bool{B}_\beta: \alpha\leq\beta<\lambda_\Gamma\}
\] 
by recursion on $\lambda_\Gamma$ as follows:
\begin{itemize}
%
\item
at stage $0$, $II$ plays $k_0:\2\to\bool{B}_0$;
\item
at odd stages $\alpha$, $I$ plays
$k_{\alpha,\alpha+1}:\bool{B}_{\alpha}\to \bool{B}_{\alpha+1}$
$\Gamma$-correct and injective which freezes $\bool{B}_{\alpha}$;
\item
at even stages $\alpha$, $II$ plays according to $\Sigma_{\lambda_\Gamma}$.
\end{itemize}

By iterability of $\Gamma$, $\bool{B}_{\lambda_\Gamma}$ is the direct limit of $\FFF$ and belongs to $\Gamma$.
Clearly $k_{0\lambda_\Gamma}:\bool{B}_0\to\bool{B}_{\lambda_\Gamma}$ witnesses 
$\bool{B}_{\lambda_\Gamma}\leq^*_\Gamma \bool{B}_0$.
It suffices to prove the following:
\begin{claim} 
$\bool{B}_{\lambda_\Gamma}$ is $\Gamma$-rigid.
\end{claim}

\begin{proof} 
Assume $\bool{B}_{\lambda_\Gamma}\restriction f$ is compatible with $\bool{B}_{\lambda_\Gamma}\restriction g $ in 
$(\Gamma,\leq_\Gamma)$ for some threads $f,g$ incompatible in $\bool{B}_{\lambda_\Gamma}$.

 Let $\bool{R}\leq_\Gamma \bool{B}_{\lambda_\Gamma}\restriction f,\bool{B}_{\lambda_\Gamma}\restriction g$.

 Since $\bool{B}_{\lambda_\Gamma}$ is a direct limit, $f$, $g$ are threads with support bounded by some $\beta<\lambda_\Gamma$.
 Hence:
$f(\alpha)$ and $g(\alpha)$ are incompatible
\footnote{Here we use crucially use that $\bool{B}_{\lambda_\Gamma}$ is a direct limit! For inverse limits it is well possible that two incompatible threads $f$, $g$ are such that $f(\alpha)$ and $g(\alpha)$ are compatible in $\bool{B}_\alpha$ for all $\alpha$.}  in $\bool{B}_\alpha$ for all $\lambda_\Gamma\geq\alpha\geq\beta$.
 
Now $k_{\beta\beta+2}$ freezes $\bool{B}_\beta$, 
hence $\bool{B}_{\beta+2}\restriction k_{\beta\beta+2}(f(\beta))$
is incompatible with $\bool{B}_{\beta+2}\restriction k_{\beta\beta+2}(g(\beta))$.

But $k_{\beta\beta+2}(f(\beta))=f(\beta+2)$ and $k_{\beta\beta+2}(g(\beta))=g(\beta+2)$,
since both threads $f,g$ have support at most $\beta$. Hence 
\[
\bool{B}_{\beta+2}\restriction g(\beta+2)\bot_\Gamma\bool{B}_{\beta+2}\restriction f(\beta+2).
\]

We reached a contradiction:
\begin{itemize}
\item
On the one hand, for all $\alpha<\lambda_\Gamma$: 
 \[
 \bool{B}_\alpha \restriction f(\alpha)\geq_{\Gamma} \bool{B}_{\lambda_\Gamma}\restriction f\geq_\Gamma \bool{R}, 
  \]
  and
  \[
  \bool{B}_\alpha \restriction g(\alpha)\geq_{\Gamma} \bool{B}_{\lambda_\Gamma}\restriction g\geq_\Gamma \bool{R}.
\]

\item
On the other hand,
\[
\bool{B}_{\beta+2}\restriction g(\beta+2)\bot_\Gamma\bool{B}_{\beta+2}\restriction f(\beta+2).
\]
\end{itemize}
\end{proof}
The theorem is proved.
\end{proof}

Notice the following:

\begin{fact}\label{fac:keyrmkfreeze}
Assume $\Gamma\,\subseteq\,\Delta$ are definable classes of forcings. 
Then $\leq_\Gamma\subseteq\leq_\Delta$
and $\bot_\Delta\subseteq\bot_\Gamma$.
Hence, if $i:\bool{B}\to\bool{C}$ is $\Gamma$-correct and $\Delta$-freezes $\bool{B}$, we also have that
$i$ is $\Delta$-correct and $\Gamma$-freezes $\bool{B}$. 
\end{fact}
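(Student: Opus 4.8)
The plan is to reduce everything to one structural observation: \emph{every $\Gamma$-correct homomorphism of cbas is $\Delta$-correct}. To prove this I would first fix the reading of the hypothesis ``$\Gamma\subseteq\Delta$'': it is to be taken as the provable implication $\phi_\Gamma(x,a_\Gamma)\to\phi_\Delta(x,a_\Delta)$, which is how the inclusion is verified for all the classes treated below (proper, semiproper, $\SSP$, \dots), so that $\Qp{\dot{\Gamma}\subseteq\dot{\Delta}}_{\bool{B}}=1_{\bool{B}}$ for every cba $\bool{B}$. Then, given a $\Gamma$-correct $i:\bool{B}\to\bool{C}$, the boolean value $\Qp{\bool{C}/_{i[\dot{G}_{\bool{B}}]}\in\dot{\Gamma}}_{\bool{B}}$ is dominated by $\Qp{\bool{C}/_{i[\dot{G}_{\bool{B}}]}\in\dot{\Delta}}_{\bool{B}}$; since the former is $\geq\coker(i)$ by $\Gamma$-correctness, so is the latter, which is precisely the definition of $\Delta$-correctness of $i$.

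Granting this, $\leq_\Gamma\subseteq\leq_\Delta$ is immediate, since a $\Gamma$-correct witness of $\bool{C}\leq_\Gamma\bool{B}$ is also a $\Delta$-correct witness of $\bool{C}\leq_\Delta\bool{B}$. Next I would argue $\bot_\Delta\subseteq\bot_\Gamma$ by contraposition: if $\bool{B}$ and $\bool{C}$ have a common lower bound $\bool{R}$ in $(\Gamma,\leq_\Gamma)$, then $\bool{R}\in\Gamma\subseteq\Delta$ and, by $\leq_\Gamma\subseteq\leq_\Delta$, $\bool{R}$ is still a common lower bound of $\bool{B}$ and $\bool{C}$ in $(\Delta,\leq_\Delta)$; hence incompatibility in $(\Delta,\leq_\Delta)$ forces incompatibility in $(\Gamma,\leq_\Gamma)$.

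For the last clause, $\Delta$-correctness of $i$ has already been obtained from the structural observation; and $i$ $\Gamma$-freezes $\bool{B}$ because any pair of $\Gamma$-correct homomorphisms $i_0,i_1:\bool{C}\to\bool{Q}$ is, again by the structural observation, a pair of $\Delta$-correct homomorphisms, whence $i_0\circ i=i_1\circ i$ follows directly from the hypothesis that $i$ $\Delta$-freezes $\bool{B}$. (Alternatively one can read this off the incompatibility reformulation of freezeability in Lemma~\ref{lem:eqfrGamma}, combined with $\bot_\Delta\subseteq\bot_\Gamma$ from the previous paragraph.)

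I do not expect any genuine obstacle here; the argument is a pure unwinding of definitions. The only point requiring care is the one flagged at the outset: the hypothesis must be the \emph{definable} inclusion $\phi_\Gamma\Rightarrow\phi_\Delta$ rather than a mere inclusion of the classes as computed in $V$, since only the former relativizes to $V^{\bool{B}}$ and thereby legitimizes the comparison of boolean values in the structural observation.
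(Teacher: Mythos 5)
Your proof is correct; the paper states this Fact without proof, treating it as an immediate unwinding of the definitions, and your argument is exactly that unwinding (the structural observation that $\Gamma$-correct implies $\Delta$-correct, from which the order inclusion, the reversed incompatibility inclusion, and the freezing transfer all follow formally). The caveat you flag — that $\Gamma\subseteq\Delta$ must be read as the definable implication $\phi_\Gamma(x,a_\Gamma)\Rightarrow\phi_\Delta(x,a_\Delta)$, so that it relativizes to $V^{\bool{B}}$ and legitimizes the comparison $\Qp{\bool{C}/_{i[\dot{G}_{\bool{B}}]}\in\dot{\Gamma}}_{\bool{B}}\leq\Qp{\bool{C}/_{i[\dot{G}_{\bool{B}}]}\in\dot{\Delta}}_{\bool{B}}$ — is precisely the right reading and is consistent with every application of the Fact in the paper.
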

This fact will be repeatedly used to show that various classes of forcings $\Delta$ 
have the
$\Delta$-freezeability property providing for some $\Gamma\subseteq\Delta$ 
an $i:\bool{B}\to\bool{C}$ which is $\Gamma$-correct and $\Delta$-freezes $\bool{B}$. As we will see, all our freezeability results proceed  by proving the existence, given $\bool{B}\in\Gamma$, of a $\bool{B}$-name $\dot{\bool{Q}}$ for a forcing in $\Gamma$ such that $\bool{C}=\bool{B}\ast\dot{\bool{Q}}$ codes the generic filter $\dot{G_{\bool{B}}}$ for $\bool{B}$ as a subset $A_{\dot G_{\bool{B}}}$ of $\omega_1$ in some absolute manner, in the sense that in every outer model $M$ of $V^{\bool{C}}$ preserving stationary subsets of $\omega_1$, $A_{\dot G_{\bool{B}}}$ is the unique subset of $\pow{\omega_1}$ satisfying some given property.  It will thus follow that $\bool{C}$ $\SSP$-freezes $\bool{B}$, which will be an instance of the above since we will always have $\SSP\supseteq\Gamma$ for the $\Gamma$ of interest to us.

\subsection{Proof of the Factor Lemma for a well-behaved $\Gamma$} \label{subsec:prffactlem}

\begin{notation}
Given a well-behaved $\Gamma$,
for each $\bool{R}\in\bool{Rig}^\Gamma$ let
\[
k_{\bool{R}}:\bool{R}\to\Gamma\restriction\bool{R}
\]
be given by $r\mapsto\bool{R}\restriction r$.
Then $k_{\bool{R}}$ is an order and incompatibility preserving embedding of $\bool{R}$ 
in the class forcing
$\Gamma\restriction\bool{R}$ which maps maximal antichains to maximal antichains.
Moreover, for every $\bool{B}\geq_\Gamma\bool{C}$ with $\bool{B}\in \bool{Rig}^\Gamma$, let
\[
i_{\bool{B},\bool{C}}:\bool{B}\to\bool{C} 
\]
denote the unique $\Gamma$-correct homomorphism
from $\bool{B}$ into $\bool{C}$. 
\end{notation}

By the results of the previous sections, $\bool{Rig}^\Gamma$ is a dense subclass of $\Gamma$
and is a separative partial order. Hence, in order to simplify our calculations slightly, we focus on
$\bool{Rig}^\Gamma$ rather than on $\Gamma$ when analyzing this class forcing.

\begin{definition}\label{def:notcong}
Given $\bool{B}_0\in \Gamma$, fix $k_0:\bool{B}_0\to \bool{B}$
$\Gamma$-freezing $\bool{B}_0$ and such that $\bool{B}\in \bool{Rig}^\Gamma$.
Let $i_{\bool{C}}=i_{\bool{B},\bool{C}}\circ k_0$ and
\begin{align*}
k=k_{\bool{B}}\circ k_0:& \bool{B}_0\to \Gamma\restriction\bool{B}\\
& b\mapsto\bool{B}\restriction k_0(b)
\end{align*}
Given $G$ a $V$-generic filter for $\bool{B}_0$, define in $V[G]$ the class quotient forcing
\[
P_{\bool{B}_0}=((\bool{Rig}^\Gamma\restriction\bool{B})^V/_{k[G]},\leq_\Gamma/_{k[G]})
\] 
as follows:
\[
\bool{C}\in P_{\bool{B}_0}
\]
if and only if $\bool{C}\in(\bool{Rig}^\Gamma\restriction\bool{B})^V$ and,
letting $J$ be the dual ideal of $G$, we have that
$1_{\bool{C}}\not\in i_{\bool{C}}[J]$ (or, equivalently, if and only if $\coker(i_{\bool{C}})\in G$).

We let 
\[
\bool{C}\leq_{\Gamma/_{k[G]}}\bool{R}
\] 
if $\bool{C}\leq_\Gamma\bool{R}$ holds in $V$.
\end{definition}

\begin{theorem}\label{thm:quo-univ}
Suppose $\Gamma$ is absolutely well-behaved. 
Let $\bool{B}_0\in \Gamma$, and let $k_0:\bool{B}_0\to\bool{B}$ be a $\Gamma$-freezing
homomorphism for $\bool{B}_0$ with $\bool{B}\in\bool{Rig}^\Gamma$.
Set $k=k_{\bool{B}}\circ k_0$ and $i_{\bool{C}}=i_{\bool{B},\bool{C}}\circ k_0$ for 
all $\bool{C}\leq_\Gamma \bool{B}$ in $\Gamma$.

\[
\begin{tikzpicture}[xscale=1.5,yscale=1.2]
\node (V) at (-0.5, 0.5) {$V$};
\node (B0) at (0, 0) {$\bool{B}_0$};
\node (B) at (1.5, 0) {$\bool{B}$};
\node (Gamma) at (3, 0) {$\Gamma \restriction B$};
\path (B0) edge [->]node [auto] {$\scriptstyle{k_0}$} (B);
\path (B) edge [->]node [auto] {$\scriptstyle{k_{\bool{B}}}$} (Gamma);
\path (B0) edge [->, bend left = 40]node [auto] {$\scriptstyle{k}$} (Gamma);
\end{tikzpicture}
\]

Let $G$ be $V$-generic for $\bool{B}_0$.
Then:
\begin{enumerate} 
\item \label{thm:equivGamma1}
The class forcing
\[
P_{\bool{B}_0}=((\bool{Rig}^\Gamma\restriction\bool{B})^V/_{k[G]},\leq_\Gamma/_{k[G]})
\] 
is in $V[G]$ forcing equivalent to 
the class forcing
\[
Q_{\bool{B}}=(\Gamma^{V[G]}\restriction(\bool{B}/_{k_0[G]}),\leq_{\Gamma^{V[G]}})
\]
via the map
\begin{align*}
i^*:&P_{\bool{B}} \to Q_{\bool{B}}\\
& \bool{C} \mapsto \bool{C}/_{i_{\bool{C}}[G]}.
\end{align*}

	\[
	\begin{tikzpicture}[xscale=1.5,yscale=1.2]
	\node (V[G]) at (-0.5, 0.5) {$V[G]$};
	\node (B0) at (0, 0) {$\bool{B}_0/_G$};
	\node (B) at (2, 0) {$\bool{B}/_{k[G]}$};
	\node (Gamma) at (4, 0) {$(\Gamma \restriction \bool{B})/_{k[G]}$};
	\node (2) at (0, -2) {$2$};
	\node (Gamma2) at (3, -2) {$\Gamma^{V[G]}\restriction(\bool{B}/_{k_0[G]})$};
	\node (x) at (4.5, -1) {$\bool{C}$};
	\node (y) at (3.5, -3) {$\bool{C} /_{i_\bool{C}[G]}$};
	\path (B0) edge [->]node [auto] {$\scriptstyle{k_0}$} (B);
	\path (B0) edge [->]node [auto] {$\cong$} (2);
	\path (B) edge [->]node [auto] {$\scriptstyle{k_{\bool{B}}}$} (Gamma);
	\path (B) edge [->]node [auto] {} (Gamma2);
	\path (Gamma) edge [->]node [auto,swap] {$\cong$} (Gamma2);
	\path (x) edge [|->]node [auto] {} (y);
	\end{tikzpicture}
	\]
\item \label{thm:equivGamma2}
Moreover 
let $\delta>|\bool{B}|$ be inaccessible and such that
$(V_\delta,\in)\prec (V,\in)$.
Then:
\begin{enumerate} 
\item \label{thm:equivGamma2.1}
$\Gamma^{V[G]}_\delta\restriction(\bool{B}/_{k_0[G]})$ is forcing equivalent in $V[G]$ 
to  $(\Gamma_\delta\restriction\bool{B})^V/_{k[G]}$ via the same map.
\item \label{thm:equivGamma2.2}
$V$ models that $k_\bool{B}:\bool{B}\to \Gamma_\delta\restriction\bool{B}$ is
$\Gamma$-correct.
\end{enumerate}
\end{enumerate}
\end{theorem}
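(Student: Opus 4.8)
The plan is to prove~(\ref{thm:equivGamma1}) by showing that, in $V[G]$, the map $i^*$ is a dense embedding of pre-orders of $P_{\bool{B}_0}$ into $Q_{\bool{B}}$, and then to deduce~(\ref{thm:equivGamma2}) by running the same argument inside $V_\delta$ and specializing. First I would record the structural facts I keep using. Since $\bool{B}\in\bool{Rig}^\Gamma$, the map $k_{\bool{B}}\colon b\mapsto\bool{B}\restriction b$ is an order- and incompatibility-preserving embedding of $\bool{B}$ into $\bool{Rig}^\Gamma\restriction\bool{B}$ carrying maximal antichains to maximal antichains; since $k_0$ is an injective $\Gamma$-correct homomorphism which $\Gamma$-freezes $\bool{B}_0$, Lemma~\ref{lem:eqfrGamma}(\ref{lem:eqfrGamma1}) shows that $b\mapsto\bool{B}\restriction k_0(b)=k_{\bool{B}}(k_0(b))$ is incompatibility-preserving for $\leq_\Gamma$, so $k=k_{\bool{B}}\circ k_0$ is a complete embedding of $\bool{B}_0$ into the class forcing $\bool{Rig}^\Gamma\restriction\bool{B}$ and $P_{\bool{B}_0}$ is a genuine forcing in $V[G]$. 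Throughout I would use $\Gamma$-rigidity to produce commuting triangles: by Lemma~\ref{lem:eqtrGamma}(\ref{lem:eqtrGamma3}), for $\bool{C}\leq_\Gamma\bool{R}\leq_\Gamma\bool{B}$ with $\bool{R}$ rigid, $i_{\bool{R},\bool{C}}\circ i_{\bool{B},\bool{R}}$ and $i_{\bool{B},\bool{C}}$ are both $\Gamma$-correct out of $\bool{B}$ and hence equal, and $i_{\bool{C}}=i_{\bool{B},\bool{C}}\circ k_0$; I will also use the routine closure of $\Gamma$-correctness under composition and under two-step iteration, and the fact that a $\Gamma$-correct $i\colon\bool{B}\to\bool{C}$ induces on the generic quotients over $\bool{B}_0$ (via $k_0$ and $i\circ k_0$) a $\Gamma^{V[G]}$-correct homomorphism $\bool{B}/_{k_0[G]}\to\bool{C}/_{(i\circ k_0)[G]}$.

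Granting this, well-definedness of $i^*$ is immediate: $i_{\bool{C}}$ is $\Gamma$-correct with $\coker(i_{\bool{C}})\in G$, so $\bool{C}/_{i_{\bool{C}}[G]}\in\Gamma^{V[G]}$, and $i_{\bool{B},\bool{C}}$ induces a $\Gamma^{V[G]}$-correct $\bool{B}/_{k_0[G]}\to\bool{C}/_{i_{\bool{C}}[G]}$, so $i^*(\bool{C})\in Q_{\bool{B}}$. Order-preservation follows from the commuting triangles (a $\Gamma$-correct $i_{\bool{R},\bool{C}}\colon\bool{R}\to\bool{C}$ descends to a $\Gamma^{V[G]}$-correct $i^*(\bool{R})\to i^*(\bool{C})$), which also gives one direction of incompatibility-preservation. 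For density of the range, given $\bool{D}'\leq_{\Gamma^{V[G]}}\bool{B}/_{k_0[G]}$ witnessed by a $\Gamma^{V[G]}$-correct $\ell$, I pull back to $V$ via $\bool{B}_0$-names $\dot{\bool{D}}',\dot\ell$: then $\bool{C}^-:=\bool{B}_0\ast\dot{\bool{D}}'\in\Gamma$, the canonical $e\colon\bool{B}_0\to\bool{C}^-$ and $\mathrm{id}_{\bool{B}_0}\ast\dot\ell\colon\bool{B}\to\bool{C}^-$ are $\Gamma$-correct with $\mathrm{id}_{\bool{B}_0}\ast\dot\ell=i_{\bool{B},\bool{C}^-}$ by rigidity of $\bool{B}$, so $i_{\bool{C}^-}=e$ and $\bool{C}^-/_{e[G]}=\bool{D}'$, whence $\bool{C}^-\leq_\Gamma\bool{B}$; choosing a $\Gamma$-rigid $\bool{C}\leq^*_\Gamma\bool{C}^-$ by density of $\bool{Rig}^\Gamma$ in the regular-embedding order yields $\bool{C}\in P_{\bool{B}_0}$ with $i^*(\bool{C})\leq_{\Gamma^{V[G]}}\bool{D}'$.

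The main obstacle is the remaining direction of incompatibility-preservation, i.e.\ reflecting $\leq_{\Gamma^{V[G]}}$ back to $\leq_\Gamma$. If $i^*(\bool{C})$ and $i^*(\bool{R})$ admit a common refinement in $Q_{\bool{B}}$, by density of the range I may assume it is $i^*(\bool{S})$ for some $\bool{S}\in P_{\bool{B}_0}$, so there are $\Gamma^{V[G]}$-correct $m_{\bool{C}}\colon\bool{C}/_{i_{\bool{C}}[G]}\to\bool{S}/_{i_{\bool{S}}[G]}$ and $m_{\bool{R}}\colon\bool{R}/_{i_{\bool{R}}[G]}\to\bool{S}/_{i_{\bool{S}}[G]}$, forced correct by a single $p\in G$. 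Passing to $\bool{B}_0$-names and forming $\mathrm{id}_{\bool{B}_0\restriction p}\ast\dot{m}_{\bool{C}}$ and $\mathrm{id}_{\bool{B}_0\restriction p}\ast\dot{m}_{\bool{R}}$ (again $\Gamma$-correct, as iterations of $\Gamma$-correct maps) and identifying each iterated quotient $(\bool{B}_0\restriction p)\ast(\bool{X}/_{i_{\bool{X}}})$ with the restriction $\bool{X}\restriction i_{\bool{X}}(p)$ of the corresponding $\bool{X}\in\{\bool{C},\bool{R},\bool{S}\}$, one finds $\bool{S}\restriction i_{\bool{S}}(p)\leq_\Gamma\bool{C}$ and $\bool{S}\restriction i_{\bool{S}}(p)\leq_\Gamma\bool{R}$ in $V$; and $\bool{S}\restriction i_{\bool{S}}(p)$ is $\Gamma$-rigid, below $\bool{B}$, and — since $\coker(i_{\bool{S}\restriction i_{\bool{S}}(p)})=p\wedge\coker(i_{\bool{S}})\in G$ — a condition of $P_{\bool{B}_0}$ below $\bool{S}$, so $\bool{C}$ and $\bool{R}$ are compatible in $P_{\bool{B}_0}$, proving~(\ref{thm:equivGamma1}). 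This is the step where the $\coker$-bookkeeping, the precise matching of iterated quotients with restrictions, and the repeated appeals to rigidity to pin down canonical homomorphisms must all be carried out carefully; everything else is formal.

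For~(\ref{thm:equivGamma2.1}) I would repeat the argument of~(\ref{thm:equivGamma1}) with $\bool{Rig}^\Gamma\restriction\bool{B}$ replaced by $\Gamma_\delta\restriction\bool{B}$ and $\Gamma^{V[G]}$ by $\Gamma^{V[G]}_\delta$, working inside $V_\delta$: here $\bool{B}\in V_\delta$ as $|\bool{B}|<\delta$, $\Gamma_\delta\in\Gamma$ is $\Gamma$-rigid by Theorems~\ref{thm:univGamma} and~\ref{thm:mainth1}(\ref{thm:mainth1-1}), and $V_\delta\prec V$ together with clause~\ref{def:wellbeh}(\ref{def:wellbeh6}) makes $\Gamma$-membership and $\Gamma$-correctness of maps between algebras of rank $<\delta$ absolute between $V_\delta$ and $V$ (and, since $V_\delta[G]=(V[G])_\delta\prec V[G]$, between $V_\delta[G]$ and $V[G]$); all the pulling-back constructions stay inside $V_\delta$, and since no rigidity restriction is placed on the conditions of $(\Gamma_\delta\restriction\bool{B})^V/_{k[G]}$ one may take $\bool{C}=\bool{C}^-$ directly, so here $i^*$ is in fact onto $\Gamma^{V[G]}_\delta\restriction(\bool{B}/_{k_0[G]})$. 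Finally,~(\ref{thm:equivGamma2.2}) is the instance $\bool{B}_0=\bool{B}$, $k_0=\mathrm{id}_{\bool{B}}$ of~(\ref{thm:equivGamma2.1}): $\mathrm{id}_{\bool{B}}$ is $\Gamma$-freezing precisely because $\bool{B}$ is $\Gamma$-rigid, $\bool{B}/_{k_0[G]}$ is trivial and $k=k_{\bool{B}}$, so~(\ref{thm:equivGamma2.1}) gives that $(\Gamma_\delta\restriction\bool{B})^V/_{k_{\bool{B}}[G]}$ is forcing equivalent in $V[G]$ to $\Gamma^{V[G]}_\delta$, which lies in $\Gamma^{V[G]}$ by Theorem~\ref{thm:univGamma} applied in $V[G]$ (legitimate since $\Gamma^{V[G]}$ is absolutely well-behaved there, $\delta$ remains inaccessible, and $V_\delta[G]\prec V[G]$); as this holds for every $V$-generic $G$ for $\bool{B}$ and $k_{\bool{B}}$ is injective, $k_{\bool{B}}\colon\bool{B}\to\Gamma_\delta\restriction\bool{B}$ is $\Gamma$-correct.
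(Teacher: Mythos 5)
Your overall architecture is the same as the paper's: establish part~(\ref{thm:equivGamma1}) by showing $i^*$ is total, order-preserving, incompatibility-preserving and has dense range; obtain~(\ref{thm:equivGamma2.1}) by relativizing to $V_\delta$; and derive~(\ref{thm:equivGamma2.2}) from the case $\bool{B}_0=\bool{B}$ together with Theorem~\ref{thm:univGamma} applied in $V[G]$ and the density of rigid algebras. The one step where you genuinely diverge is incompatibility-preservation, which is also the most delicate step in the paper. The paper starts from an \emph{arbitrary} common refinement $\bool{Q}\in Q_{\bool{B}}$ of $i^*(\bool{Q}_0)$ and $i^*(\bool{Q}_1)$, pulls the two witnessing maps back to $V$ via Proposition~\ref{lem:PI1pers-2}, and then must use Proposition~\ref{prop:embfromembnames2} together with the fact that both pulled-back compositions factor through the $\Gamma$-freezing map $k_0$ to realign the two codomains onto a single $\bool{C}\in V$ with a single canonical $i_{\bool{C}}$. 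You instead first invoke the already-established density of the range to replace the common refinement by $i^*(\bool{S})$ with $\bool{S}\in P_{\bool{B}_0}$ rigid and in $V$, so that $i_{\bool{S}}$ is pinned down by rigidity and no realignment is needed; compatibility in $P_{\bool{B}_0}$ then only requires the \emph{existence} of $\Gamma$-correct maps $\bool{C},\bool{R}\to\bool{S}\restriction i_{\bool{S}}(p)$ in $V$, which your name-pullback (the content of Propositions~\ref{prop:embfromembnames} and~\ref{lem:PI1pers-2}) supplies. This is a legitimate and arguably cleaner route; what it costs is that all the $\coker$-bookkeeping and the identification of iterated quotients with restrictions (Theorem~\ref{qIso}, which needs the relevant maps to be injective after restricting below the cokernels) must still be done by hand, exactly as you flag, and you should be explicit that $p$ is chosen in $G$ below $\coker(i_{\bool{C}})\wedge\coker(i_{\bool{R}})\wedge\coker(i_{\bool{S}})$ and forcing the correctness of both $\dot m_{\bool{C}}$ and $\dot m_{\bool{R}}$.
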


Notice that this theorem proves the missing part of Theorem \ref{thm:mainth1}(\ref{thm:mainth1-1}), i.e. 
the assertion:
\begin{quote}
\emph{For all $\bool{B}\in\Gamma_\delta$ there is
$\bool{C}\in\Gamma_\delta$ such that $\Gamma_\delta\restriction\bool{C}\leq_\Gamma\bool{B}$}.
\end{quote}

In particular this theorem and Theorem \ref{thm:univGamma} give a completely self-contained and detailed proof of
Theorem \ref{thm:mainth1}(\ref{thm:mainth1-1}) and Theorem \ref{thm:mainth1}(\ref{thm:mainth1-2}).


\begin{proof} 
Part 
\ref{thm:equivGamma2.1}
of the theorem follows immediately from its part \ref{thm:equivGamma1}
relativizing every assumption in part \ref{thm:equivGamma1} to $V_{\delta+1}$.
To prove part~\ref{thm:equivGamma2.2}, first observe that if $\bool{B}=\bool{B}_0$,
$k_0$ is necessarily the identity map, $G$ is $V$-generic for $\bool{B}$, and this  gives that
$(\bool{B}/_{k_0[G]})$ is the trivial complete boolean algebra $2=\bp{0,1}$, i.e.: 
\[
\Gamma_\delta^{V[G]}\restriction(\bool{B}/_{k_0[G]})=\Gamma_\delta^{V[G]}.
\]
Now 
\begin{itemize}
\item 
$\delta$ is inaccessible in $V[G]$;
\item
$V_\delta\prec V$ grants that $V_\delta[G]\prec V[G]$, since $G\in V_\delta[G]$;
\end{itemize}
hence the set of $\Gamma^{V[G]}$-rigid forcings is dense in 
$\Gamma^{V[G]}\cap V_\delta[G]$, since it is dense in $\Gamma^{V[G]}$, being $\Gamma^{V[G]}$ well-behaved in
$V[G]$.
By Theorem~\ref{thm:univGamma} applied in $V[G]$, $(\Gamma_\delta)^{V[G]}\in \Gamma^{V[G]}$.

By part~\ref{thm:equivGamma2.1} (applied in $V_{\delta+1}[G]$) for the case $\bool{B}_0=\bool{B}$ (so that $k=k_{\bool{B}}$),
we get that
$(\Gamma_\delta\restriction\bool{B})/_{k_{\bool{B}}[G]}\cong 
(\Gamma_\delta)^{V[G]}$ holds in $V[G]$
for all $G$ $V$-generic for $\bool{B}$. This concludes the proof of~\ref{thm:equivGamma2.2}
in case $\bool{B}=\bool{B}_0$. The desired conclusion~\ref{thm:equivGamma2.2}
for an arbitrary $\bool{B}_0\in\Gamma_\delta$ follows 
using the fact that the set of $\bool{B}\leq_\Gamma\bool{B}_0$ in $\bool{Rig}^\Gamma$ is dense in $\Gamma_\delta$
and applying~\ref{thm:equivGamma2.2} to all such $\bool{B}$.

We are left with proving part~\ref{thm:equivGamma1}:
Following the notation introduced in \ref{def:notcong}, 
we let $i_{\bool{R}}$ denote
the $\Gamma$-correct homomorphism $i_{\bool{B},\bool{R}}\circ k_0$
for any $\bool{R}\leq_\Gamma\bool{B}$, and we let
$k$ denote the map $k_{\bool{B}}\circ k_0:\bool{B}_0\to\Gamma\restriction\bool{B}$
given by $b\mapsto \bool{B}\restriction k_0(b)$.

Let $G$ be $V$-generic for $\bool{B}_0$ and $J$ denote its dual prime ideal.
We first observe that in $V[G]$,
\[
\downarrow k[J]=\{\bool{R}\in \Gamma^V: \exists q\in J\, \bool{R}\leq^V_\Gamma 
\bool{B}\restriction k_0(q)\}.
\]

We show that in $V[G]$ the map $i^*$ is total,
order and incompatibility preserving, and with a dense target.
This suffices to prove this part of the theorem.

\begin{description}
\item[\textbf{$i^*$ is total and with a dense target}]
%
By Theorem~\ref{qIso}, any $\bool{Q}\in Q_{\bool{B}}$ is isomorphic to
$\bool{C}/_{i_{\bool{C}}[G]}$ for some $\bool{C}\in (\Gamma\restriction\bool{B})^V$
such that $1_{\bool{C}}\notin \downarrow i_{\bool{C}}[J]$, since $\bool{Q}$ is a non-trivial
complete boolean algebra in $V[G]$. 
Let in $V$ $\bool{R}\in\bool{Rig}^\Gamma$ refine $\bool{C}$ in the $\leq^*_\Gamma$-order.

We claim that $\bool{R}/_{i_{\bool{R}[G]}}$ refines $\bool{Q}$ in $Q_{\bool{B}}$.

Assume towards a contradiction that $\bool{R}/_{i_{\bool{R}[G]}}\not\in Q_{\bool{B}}$. Then we would get that
$1_{\bool{R}}\in i_{\bool{R}}[J]$. Therefore for any $\Gamma$-correct \emph{injective} $u:\bool{C}\to\bool{R}$
witnessing that $\bool{R}\leq^*_\Gamma\bool{C}$,
we would have that $i_{\bool{C}}[J]=u^{-1}[i_{\bool{R}}[J]]$. This gives that $1_{\bool{C}}\in i_{\bool{C}}[J]$, and
contradicts our assumption that $1_{\bool{C}}\notin \downarrow i_{\bool{C}}[J]$.

Therefore $1_{\bool{R}}\notin i_{\bool{R}}[J]$, and 
\[
u/_J:\bool{C}/_{i_{\bool{C}}[G]}\to \bool{R}/_{i_{\bool{R}[G]}}
\]
witnesses that $i^*(\bool{R})$ refines $\bool{Q}$ in $Q_{\bool{B}}$. Hence $i^*$ has a dense image.

Moreover for any $\bool{R}\in P_{\bool{B}_0}$, $1_{\bool{R}}\not\in i_{\bool{R}}[J]$, hence
$\bool{R}/_{i_{\bool{R}}[G]}$ is a non-trivial
complete boolean algebra in $\Gamma^{V[G]}$.
Thus $i^*$ is also well defined on all of $(\bool{Rig}^\Gamma\restriction\bool{B})^V/_{k[G]}$.


\item[\textbf{$i^*$ is order and compatibility preserving}]
Let $i_{\bool{Q}_0\bool{Q}}:\bool{Q}_0\to \bool{Q}$
be a $\Gamma$-correct complete
homomorphism in $V$ with $\bool{Q}_0,\bool{Q}\in P_{\bool{B}_0}$
witnessing that $\bool{Q}\leq_\Gamma/_{k[G]} \bool{Q}_0$. This occurs only if
$1_{\bool{Q}}\notin i_{\bool{Q}}[J]$.
By Lemma \ref{EmbTwoStepPI1pers},
$i_{\bool{Q}_0\bool{Q}}/_J:\bool{Q}_0/_{i_{\bool{Q}_0}[J]}\to\bool{Q}/_{i_{\bool{Q}}[J]}$ is $\Gamma^{V[G]}$-correct
and
witnesses that $\bool{Q_0}/_{i_{\bool{Q}_0}[J]}\geq_\Gamma \bool{Q}/_{i_{\bool{Q}}[J]}$ holds  in $V[G]$.
 This shows that $i^*$ is order preserving and maps non-trivial conditions
 to non-trivial conditions. In particular we can also conclude that $i^*$
 maps compatible conditions to compatible conditions.
 
 \item[\textbf{$i^*$ preserves the incompatibility relation}]
 We prove this by contraposition. 
Assume $j_h:\bool{Q}_h/_{i_{\bool{Q}_h}[G]}\cong \bool{R}_h\to \bool{Q}$ for $h=0,1$
witness that $\bool{Q}_0/_{i_{\bool{Q}_0}[G]}$ and 
$\bool{Q}_1/_{i_{\bool{Q}_1}[G]}$ are compatible in $(\Gamma)^{V[G]}$.
We can assume that 
$\bool{Q}\cong \bool{C}/_{i_{\bool{C}}[G]}$.

By Proposition~\ref{lem:PI1pers-2}
applied for both $h=0,1$ to
$\bool{B},i_{\bool{Q_h}},j_h$
we have 
that $j_h=l_h/_G$ for some
$\Gamma$-correct homomorphism $l_h:\bool{Q}_h\to \bool{C}_h$ in $V$ such that:
\begin{itemize}
\item
$l_h\circ i_{\bool{Q}_h}=i_{\bool{C}_h}$ for both $h=0,1$,
\item
$\bool{C}_1/_{i_{\bool{C}_1[G]}}\cong \bool{Q}\cong \bool{C}_0/_{i_{\bool{C}_0[G]}}$ in $V[G]$,
\item
$0_{\bool{C}_h}\not \in i_{\bool{C}_h}[G]$ for both $h=0,1$.
\end{itemize}
By Proposition~\ref{prop:embfromembnames2}, we can find $s_j\notin i_{\bool{C}_j}[J]$ such that
$\bool{C}_1\restriction s_1$ and
$\bool{C}_0\restriction s_0$ are isomorphic.
Without loss of generality we can suppose that $\bool{C}_h\restriction s_h=\bool{C}\in\Gamma$.
This gives that (modulo  the refinement via $s_h$) $l_h\circ i_{\bool{Q}_h}=i_{\bool{C}}$ for both $h=0,1$, since 
both $l_h\circ i_{\bool{Q}_h}$ factor through $k_0$ which is $\Gamma$-freezing $\bool{B}_0$.

 In particular each $l_h$ witnesses in $V$ that $\bool{Q}_h\geq_\Gamma\bool{C}$ and
 are both such that $1_{\bool{C}}\not \in i_{\bool{C}}[J]$.
 Find in $V$ $\bool{R}\leq^*_\Gamma\bool{C}$ with $\bool{R}\in\bool{Rig}^\Gamma$.
 Then $i_{\bool{R}}[J]=u\circ i_{\bool{C}}[J]$  for some (any) $\Gamma$-correct \emph{injective} $u:\bool{C}\to \bool{R}$.
 Hence   $1_{\bool{R}}\not \in i_{\bool{R}}[J]$, else $1_{\bool{C}}\in u^{-1}[i_{\bool{R}}[J]]=i_{\bool{C}}[J]$.
 
 This grants that 
 $\bool{R}$ is a non-trivial condition in 
 $(\bool{Rig}^\Gamma\restriction\bool{B})^V/_{k[G]}$ refining $\bool{Q}_h$ for both $h=0,1$.
 \end{description}

The proof of the theorem is completed.
\end{proof}

We also obtain the following completeness result as a corollary of Theorems \ref{thm:mainth1} and \ref{thm:quo-univ}. 

\begin{corollary}\label{completeness}
Assume 
$\ap{V,\mathcal{V}}$ satisfies 
\begin{itemize}
\item $\MK$, 
\item $\lambda_\Gamma\text{ is a regular uncountable cardinal}$, and 
\item $\Gamma\text{ is absolutely well-behaved}$.
\end{itemize}

Suppose $\bool{B}$, $\bool{C}\in\Gamma$ are such that $V^{\bool{B}}\models\BCFA(\Gamma)$ and $V^{\bool{C}}\models\BCFA(\Gamma)$. If $G$ and $H$ are generic filters over $V$ for, respectively, $\bool{B}$ and $\bool{C}$, then $H_{\lambda_\Gamma^+}^{V[G]}$ and $H_{\lambda_\Gamma^+}^{V[H]}$ have the same theory. 
\end{corollary}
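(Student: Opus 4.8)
The plan is to place both $H_{\lambda_\Gamma^+}^{V[G]}$ and $H_{\lambda_\Gamma^+}^{V[H]}$ as elementary substructures of suitable $\Gamma$-generic extensions of $V$, and then argue that every such extension computes the same theory of $H_{\lambda_\Gamma^+}$. First I would reduce to the case $\bool{B},\bool{C}\in\bool{Rig}^\Gamma$: since $\bool{Rig}^\Gamma$ is dense and $D_\Gamma$ is dense (and, one checks, open), choose $\Gamma$-rigid $\bool{B}'\leq_\Gamma\bool{B}$ and $\bool{C}'\leq_\Gamma\bool{C}$ in $D_\Gamma$; these still force $\BCFA(\Gamma)$ (the density of the relevant $D$ in the extension is automatic by the corollary to Theorem \ref{thm:mainth1}), and since $\bool{B}'\in D_\Gamma$ refines $\bool{B}\in D_\Gamma$, any $\bool{B}'$-generic $G'$ induces a $\bool{B}$-generic $G$ with $V[G]\subseteq V[G']$ and $H_{\lambda_\Gamma^+}^{V[G]}\prec H_{\lambda_\Gamma^+}^{V[G']}$ (both being elementary in a single $\Gamma$-generic extension), so it suffices to prove the statement for $\bool{B}',\bool{C}'$. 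Renaming, assume $\bool{B},\bool{C}\in\bool{Rig}^\Gamma$. Now fix $G$ $V$-generic for $\bool{B}$. Because $V^{\bool{B}}\models\BCFA(\Gamma)$, forcing over $V[G]$ with $\Gamma^{V[G]}$ to obtain $K$ gives $H_{\lambda_\Gamma^+}^{V[G]}\prec V[G][K]$; by the Factor Lemma for $\Gamma$, $V[G][K]$ is a $(\Gamma\restriction\bool{B})$-generic extension of $V$, hence $V[G][K]=V[\bar K]$ for some $\Gamma$-generic $\bar K$ over $V$ with $\bool{B}\in\bar K$ and $G$ recoverable from $\bar K$; and by Theorem \ref{thm:mainth1}, $V[\bar K]\models\Ord=\lambda_\Gamma^+$, so $V[\bar K]=H_{\lambda_\Gamma^+}^{V[\bar K]}$. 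Thus $H_{\lambda_\Gamma^+}^{V[G]}$ and $H_{\lambda_\Gamma^+}^{V[H]}$ have, respectively, the theories of $V[\bar K]$ and of $V[\bar K']$ for $\Gamma$-generic $\bar K\ni\bool{B}$ and $\bar K'\ni\bool{C}$.

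It then remains to show that the theory of a $\Gamma$-generic extension of $V$ does not depend on the generic. Fix a single inaccessible $\delta>|\bool{B}|,|\bool{C}|$ with $V_\delta\prec V$ (such $\delta$ exist since $\Ord$ is Mahlo, which holds because $\Gamma$ is well-behaved). By Theorem \ref{thm:quo-univ}(\ref{thm:equivGamma2.2}), the canonical map $k_{\bool{B}}\colon\bool{B}\to\Gamma_\delta\restriction\bool{B}$ is $\Gamma$-correct, so $\Gamma_\delta\restriction\bool{B}\leq_\Gamma\bool{B}$ and also $\Gamma_\delta\restriction\bool{B}\leq_\Gamma\Gamma_\delta$; hence $\bool{B}$ and $\Gamma_\delta$ are compatible in $(\Gamma,\leq_\Gamma)$, so in the previous paragraph we may take $\bar K$ with $\bool{B},\Gamma_\delta\in\bar K$ (likewise $\bool{C},\Gamma_\delta\in\bar K'$). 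Writing $\bar K_\delta:=\bar K\cap V_\delta$, which is $V$-generic for $\Gamma_\delta$, Theorem \ref{thm:mainth1}(\ref{thm:mainth1-4}) yields $V_\delta[\bar K_\delta]=H_{\lambda_\Gamma^+}^{V[\bar K_\delta]}\prec V[\bar K]$, and similarly for $\bar K'$. So $\mathrm{Th}(H_{\lambda_\Gamma^+}^{V[G]})=\mathrm{Th}(V_\delta[\bar K_\delta])$ and $\mathrm{Th}(H_{\lambda_\Gamma^+}^{V[H]})=\mathrm{Th}(V_\delta[\bar K'_\delta])$, with $\bar K_\delta,\bar K'_\delta$ both $V$-generic for the \emph{set} forcing $\Gamma_\delta$. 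The whole statement is thereby reduced to: the set forcing $\Gamma_\delta$ forces a complete theory for the (ordinal height $\delta=\lambda_\Gamma^+$) model $V_\delta[\,\cdot\,]$ it produces.

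This last claim is the heart of the proof, and I would establish it by reflecting the analysis into $V_\delta$: using clause \ref{def:wellbeh}(\ref{def:wellbeh6}), $\Gamma_\gamma=\Gamma^{V_\gamma}$ for inaccessible $\gamma$ with $V_\gamma\prec V$, by Theorem \ref{thm:mainth1}(\ref{thm:mainth1-1}) (relativized) the forcings $\Gamma_\gamma$ with $\gamma<\delta$ are pre-dense in $\Gamma_\delta$, and Theorem \ref{thm:mainth1}(\ref{thm:mainth1-4}) propagates the elementarity $V_\gamma[\,\cdot\,]\prec V_\delta[\,\cdot\,]\prec V[\,\cdot\,]$ along the chain; feeding in the quotient‑universality of Theorem \ref{thm:quo-univ} together with the $\Gamma$-freezing/$\Gamma$-rigidity technology behind it, one gets that all $\Gamma$-generic extensions of $V$ — equivalently, of the relevant $V_\gamma$ — compute one and the same theory of $H_{\lambda_\Gamma^+}$, whence $\mathrm{Th}(V_\delta[\bar K_\delta])=\mathrm{Th}(V_\delta[\bar K'_\delta])$ and the corollary follows. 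I expect this completeness/absoluteness statement to be the main obstacle, precisely because $\bool{B}$ and $\bool{C}$ may be incompatible in $(\Gamma,\leq_\Gamma)$: there may be no $\Gamma$-generic extension of $V$ witnessing both instances of $\BCFA(\Gamma)$ at once, so the argument genuinely cannot be finished by dropping $\bool{B}$ and $\bool{C}$ into a common generic filter, and the conclusion must be extracted from the rigidity and self-similarity of the category forcing rather than from any homogeneity visible at the level of $(\Gamma,\leq_\Gamma)$ itself.
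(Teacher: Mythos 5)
Your first two paragraphs are sound and follow the natural route through Theorems \ref{thm:mainth1} and \ref{thm:quo-univ}: passing to rigid refinements in $D_\Gamma$, and then using the Factor Lemma together with Theorem \ref{thm:mainth1}(\ref{thm:mainth1-4}) to identify $\mathrm{Th}(H_{\lambda_\Gamma^+}^{V[G]})$ with $\mathrm{Th}(V_\delta[\bar K_\delta])$ and $\mathrm{Th}(H_{\lambda_\Gamma^+}^{V[H]})$ with $\mathrm{Th}(V_\delta[\bar K'_\delta])$, is correct modulo routine verifications you omit (e.g.\ that for the \emph{given} $G$ one may choose $K$ so that $\Gamma_\delta\in\bar K$).

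The gap is the claim you yourself call the heart of the proof: that $\Gamma_\delta$ decides the theory of $V_\delta[\,\cdot\,]$, equivalently that all $\Gamma$-generic extensions of $V$ have the same theory. You assert that this follows by ``feeding in the quotient-universality \ldots together with the freezing/rigidity technology,'' but no argument is given, and none of the cited ingredients yields it: the predensity of the $\Gamma_\gamma$'s, Theorem \ref{thm:mainth1}(\ref{thm:mainth1-4}), and Theorem \ref{thm:quo-univ} each propagate the theory \emph{along a single generic filter} for $(\Gamma,\leq_\Gamma)$, i.e.\ within one compatible cone, whereas the whole difficulty is to cross between incompatible cones. Concretely, if $\bool{B}\,\bot_\Gamma\,\bool{C}$ then the witnessing conditions $\Gamma_\delta\restriction\bool{E}_{\bool{B}}$ and $\Gamma_\delta\restriction\bool{E}_{\bool{C}}$ are again $\leq_\Gamma$-incompatible --- the rigidity of $\Gamma_\delta$ works \emph{against} you here, since it is exactly what keeps the two cones disjoint --- so $\bar K_\delta$ and $\bar K'_\delta$ admit no common $\Gamma$-generic extension of $V$ in which $V_\delta[\bar K_\delta]$ and $V_\delta[\bar K'_\delta]$ could both be compared. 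The one mechanism in the paper that does bridge incompatible conditions is the first conjunct of $\BCFA(\Gamma)$ holding in a single ground model $W$: ``$H_{\lambda_\Gamma^+}^W\prec W[\bar K]$ for \emph{every} generic $\bar K$'' forces all $\Gamma^W$-generic extensions of $W$ to share the theory of the fixed structure $H_{\lambda_\Gamma^+}^W$, which is how item (2) of the introduction and the absoluteness step in Lemma \ref{consespfa+++} work. To use it here you would need such a $W$ lying below both $V[\bar K]$ and $V[\bar K']$, i.e.\ essentially $W=V$; but $V\models\BCFA(\Gamma)$ is not among your hypotheses, and reflecting into $V_\delta$ does not manufacture it ($V_\delta\prec V$ does not transfer the second-order Mahloness needed to rerun the analysis inside $V_\delta$, and the same two-incompatible-cones problem recurs at every level). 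As written, your reduction terminates in a statement that is just the corollary itself restricted to conditions of the form $\Gamma_\delta\restriction\bool{E}$, so the essential content remains unproved.
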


\section{Absolutely well-behaved classes}\label{suitable classes}

We organize this part of the paper as follows:
\begin{itemize}
\item We start giving the necessary definitions in \ref{subsec:defdavid}.
\item We state our main results in \ref{david:mainresults}. Specifically we assert that there are uncountably many 
absolutely well-behaved definable classes of forcing notions with $\lambda_\Gamma=\omega_1$, 
whose bounded category forcing axioms yield pairwise incompatible theories for $H_{\omega_2}$ (this is incompatibility in first order logic).
\item In~\ref{david:proofs} we give the proofs, specifically:
\begin{itemize} 
\item
In \ref{4-freezing-posets}
we isolate four types of freezing posets which will be used to establish the freezeability property.
\item
In \ref{iterability} we present the iteration lemmas 
that will be used to establish the 
 iterability property (all of which were already known). 
\item 
In \ref{omega1suitability} we give the proof that there are $\aleph_1$-many definable classes $\Gamma$ of forcing notions which are absolutely well-behaved with $\lambda_\Gamma=\omega_1$.
\item In \ref{incompatible-category-forcing-axioms} we prove that bounded category forcing axioms for the uncountably many absolutely well-behaved classes we produced in \ref{omega1suitability} yield pairwise 
incompatible theories for $H_{\omega_2}$.
\end{itemize}
\end{itemize}

\subsection{Forcing classes.} \label{subsec:defdavid}


We will now define the main classes of forcing notions considered in this paper. Most of these classes are 
completely standard, but we nevertheless include their definition here for the benefit of some readers. 

\begin{definition} 
A poset has the \emph{countable chain condition} (is \emph{c.c.c.}, for short) if and only if it has no uncountable antichains. 
\end{definition}

Given an ordinal $\rho$, we will call a sequence $(X_i)_{i\leq\rho}$ a \emph{continuous chain} (or a \emph{continuous $\rho$-chain}, if we want to bring in the length) if 
\begin{itemize}
\item $(X_k)_{k\leq i}\in X_{i+1}$ whenever $i+1\leq \rho$ and 
\item $X_i=\bigcup_{k<i}X_k$ for every nonzero limit ordinal $i \leq\rho$.
\end{itemize}

An ordinal $\rho$ is said to be \emph{indecomposable} if $\rho=\o^\tau$ for some ordinal $\tau$.\footnote{Here, and elsewhere in the remainder of the paper, $\o^\tau$ denotes ordinal exponentiation.} 
Equivalently, $\rho$ is indecomposable if $\ot(\rho\setminus\eta)=\rho$ for every $\eta<\rho$.  $1$ is of course the 
first indecomposable ordinal. 

\begin{definition}
Given a countable indecomposable ordinal $\rho$, a poset $\mtcl P$ is \emph{$\rho$-proper} if and only if there is a cardinal 
$\t$ such that $\mtcl P\in H_\t$ and there is a club 
$D\sub [H_\t]^{\al_0}$ with the property that for every continuous chain $(N_i)_{i\leq \rho}$ of countable 
elementary submodels of $H_\t$ containing $\mtcl P$ and every $p\in N_0\cap \mtcl P$, there is an extension 
$q$ of $p$ such that $q$ is \emph{$(N_i, \mtcl P)$-generic} for all $i\leq\rho$, i.e., for every $i\leq\rho$ and every 
dense subset $D$ of $\mtcl P$, $D\in N_i$, $q\Vdash_{\mtcl P}D\cap \dot G\cap N_i\neq\emptyset$. 
\end{definition}

\begin{remark} $\mtcl P$ is $\rho$-proper if and only if for every cardinal $\t$ such that $\mtcl P\in H_\t$ there is such a club $D\sub [H_\t]^{\al_0}$ as in the above definition.\end{remark}

 $\rho$-$\PR$ denotes the class of $\rho$-proper posets. 
We write ${<}\o_1$-$\PR$ to denote the class of those posets that are in $\rho$-$\PR$ for every indecomposable 
$\rho<\o_1$. We say that $\mtcl P$ is \emph{proper} if it is $1$-proper, and denote $1\mbox{-}\PR$ also by $\PR$. 

The following is a simple but crucial observation:
\begin{fact}\label{fac:sigma2proper}
For any countable indecomposable ordinal $\rho$, 
`\emph{$\RO(P)$ is $\rho$-proper}' is both a
$\Sigma_2$ property in parameters $\rho$ and $\omega_1$ and  a 
$\Pi_2$ property
in the same parameters, and the same 
holds for `\emph{$\RO(P)$ is ${<}\omega_1$-proper}'. 
\end{fact}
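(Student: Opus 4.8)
The plan is to invoke the standard fact (provable in $\ZFC$) that a property with set parameters is $\Sigma_2$ in those parameters iff it is equivalent to $\exists\kappa\,(H_\kappa\models\psi)$ for some first‑order $\psi$ in those parameters, and $\Pi_2$ in them iff it is equivalent to $\forall\kappa\,(H_\kappa\models\psi)$ for such a $\psi$. So I must rewrite ``$\RO(P)$ is $\rho$-proper'' in each of these two shapes: the existential one coming from the definition (``there is a cardinal $\t\ldots$''), and the universal one from the Remark following the definition (``for every cardinal $\t\ldots$'').

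The only real ingredient is an absoluteness observation. Fix a cardinal $\t$ with $\mtcl P=\RO(P)\in H_\t$, and let $\kappa$ be any cardinal with $H_\t\in H_\kappa$ (so $|H_\t|<\kappa$ and $(H_\t)^{H_\kappa}=H_\t$). Since $\rho<\o_1$, every countable subset of $H_\t$ and every continuous $(\rho+1)$-chain of such subsets has transitive closure of size $\leq|H_\t|$, hence lies in $H_\kappa$; so $H_\kappa$ computes $[H_\t]^{\al_0}$ and the collection of continuous $\rho$-chains of countable $N\prec H_\t$ exactly as $V$ does. Furthermore ``$N\prec H_\t$'' is absolute to $H_\kappa$ (satisfaction in the set structure $(H_\t,\in)$ is), ``$q$ is $(N,\mtcl P)$-generic'' is absolute (it is equivalent to the assertion, purely about $\mtcl P$ and members of $H_\t$, that $A\cap N$ is predense below $q$ for every maximal antichain $A\in N$ of $\mtcl P$), and ``$D\sub[H_\t]^{\al_0}$ is a club'' is absolute. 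Finally, every subset of $H_\t$ lying in $V$ already belongs to $H_\kappa$. Hence the statement ``$D$ is a club of $[H_\t]^{\al_0}$ witnessing the $\rho$-properness requirement for $\mtcl P$'', which is first‑order over $(H_\kappa,\in)$ in the parameters $\mtcl P,\t,D,\rho,\o_1$, holds in $H_\kappa$ iff it holds in $V$.

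For the $\Sigma_2$ direction: ``$\mtcl P$ is $\rho$-proper'' holds iff there is a cardinal $\kappa$ with $H_\kappa\models$ ``there is a cardinal $\t$ with $\mtcl P\in H_\t$ and a club $D\sub[H_\t]^{\al_0}$ witnessing the $\rho$-properness requirement for $\mtcl P$''. If $\mtcl P$ is $\rho$-proper, the definition gives witnesses $\t_0,D_0$, and any large enough $\kappa$ (so that $H_{\t_0},D_0\in H_\kappa$) witnesses the outer existential by the observation; conversely, witnesses found inside some $H_\kappa$ are genuine by the observation. This is a first‑order assertion about $\mtcl P$ in the parameters $\rho,\o_1$, so $\rho$-properness is $\Sigma_2$ in these. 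For the $\Pi_2$ direction, use the Remark: ``$\mtcl P$ is $\rho$-proper'' holds iff for every cardinal $\kappa$, $H_\kappa\models$ ``for every cardinal $\t$ with $\mtcl P\in H_\t$ there is a club $D\sub[H_\t]^{\al_0}$ witnessing the $\rho$-properness requirement for $\mtcl P$''. If $\mtcl P$ is $\rho$-proper, then for any $\kappa$ and any cardinal $\t\in H_\kappa$ with $\mtcl P\in H_\t$ the Remark yields a genuine witnessing club for $\t$, which lies in $H_\kappa$ and is recognised there. If $\mtcl P$ is not $\rho$-proper, the Remark supplies a cardinal $\t_0$ with $\mtcl P\in H_{\t_0}$ and no witnessing club; for any $\kappa$ with $H_{\t_0}\in H_\kappa$, the observation forces $H_\kappa$ to see $\t_0$ as a counterexample. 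Hence $\rho$-properness is $\Pi_2$ in the same parameters.

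For ``${<}\o_1$-proper'', the naive reading ``for every indecomposable $\rho<\o_1$, $\mtcl P$ is $\rho$-proper'' would only give a $\Pi_3$ statement after substituting the $\Sigma_2$ clauses; I expect this to be the only delicate point, and it is handled by uniformising the witnessing cardinal. By the Remark, ``$\mtcl P$ is ${<}\o_1$-proper'' is equivalent to ``there is a cardinal $\t$ with $\mtcl P\in H_\t$ such that for every indecomposable $\rho<\o_1$ there is a club $D\sub[H_\t]^{\al_0}$ witnessing the $\rho$-properness requirement for $\mtcl P$'' (for the forward direction, fix any $\t$ with $\mtcl P\in H_\t$ and apply the Remark once for each $\rho$), and dually to ``for every cardinal $\t$ with $\mtcl P\in H_\t$, the same holds''. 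Since for a fixed $\t$ the $\o_1$-many witnessing clubs all lie in $H_\t$, hence in $H_\kappa$ as soon as $H_\t\in H_\kappa$, the argument of the previous paragraph goes through verbatim with this single $\t$, showing that ``${<}\o_1$-proper'' is both $\Sigma_2$ and $\Pi_2$ in the parameter $\o_1$. Outside this uniformisation, everything reduces to the routine verification of the absoluteness observation.
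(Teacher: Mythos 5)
Your proposal is correct and follows essentially the same route as the paper: both arguments rest on the observation that $\rho$-properness is verified locally in any sufficiently large set-sized fragment of the universe (you use the standard $H_\kappa$ characterisation of $\Sigma_2$/$\Pi_2$, the paper uses transitive sets $X\supseteq H_\theta$ together with an explicit complexity count of the subformulae ``$X$ is transitive'', ``$\theta$ is regular'', ``$X\supseteq H_\theta$''). Your uniformisation of the witnessing cardinal $\theta$ for the ${<}\omega_1$-proper case is a welcome extra detail, since the paper leaves that case to the reader.
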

\begin{proof}
Let $\t$ be large enough such that  $\RO(P)\in H_\lambda$ for some $\lambda<\t$.
Then `\emph{$\RO(P)$ is $\rho$-proper}' holds in $V$ if and only if it holds in any (some) transitive set $X\supseteq H_\t$.
Hence,  $\RO(P)$ is $\rho$-proper if and only if there is some regular cardinal $\theta$ and some transitive $X\supseteq H_\t$ such that 
$(X,\in)\models\mbox{`$\RO(P)$ is $\rho$-proper'}$.

Now:
\begin{itemize}
\item
The formulae $(X,\in)\models\text{\emph{$\RO(P)$ is $\rho$-proper}}$ and \emph{$X$ is transitive} are 
$\Delta_1$ in the parameters $X$, $P$, $\rho$, $\omega_1$.
\item 
The formula $\t\text{\emph{ is a regular cardinal}}$ is $\Pi_1$ in parameter $\t$.
\item
The formula $X\supseteq H_\t$ is $\Pi_1$ in parameters $X,\t$ since it can be stated as 
\[
\forall w(|\text{trcl}(w)|<\t\rightarrow w\in X), 
\]
where $\text{trcl}(w)$ is the $\Delta_1$-definable operation assigining to the set $w$ its transitive closure.
\end{itemize}
It is now easy to check that \emph{$\RO(P)$ is $\rho$-proper} is 
a $\Sigma_2$ property 
in parameters 
$\rho$ and $\omega_1$. We leave it to the reader to check that it is also $\Pi_2$ in the same parameters.
\end{proof}
Being $\rho$-proper, for a forcing $\mtcl P$, is equivalent to $\mtcl P$ preserving a certain combinatorial property: 
Given a set $X$, we say that $S\sub\, ^\rho([X]^{\al_0})$ is \emph{$\rho$-stationary} if for every club $D\sub[X]^{\al_0}$ 
there is a continuous $\rho$-chain $\s$ of members of $D$ such that $\s\in S$. 
 
Recalling the standard characterization of properness, the 
following is not difficult to see.
 
 \begin{fact}\label{char-proper} Given an indecomposable ordinal $\rho<\o_1$,  the following are equivalent for every 
 poset $\mtcl P$.
 
 \begin{enumerate}
 
 \item $\mtcl P$ is $\rho$-proper.
 \item For every set $X$,  $\mtcl P$ preserves $\rho$-stationary subsets of $^\rho([X]^{\al_0})$; i.e., if 
 $S\sub\,^\rho([X]^{\al_0})$ is $\rho$-stationary, then $\Vdash_{\mtcl P}S\mbox{ is $\rho$-stationary}$. 
\end{enumerate}
\end{fact}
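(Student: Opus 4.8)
The plan is to mirror the classical proof (Shelah) that a poset is proper if and only if it preserves stationary subsets of $[X]^{\aleph_0}$ for every $X$, replacing single countable elementary submodels by continuous $\rho$-chains of them throughout.

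For $(1)\Rightarrow(2)$ I would fix a $\rho$-stationary $S\subseteq {}^{\rho}([X]^{\aleph_0})$, a condition $p\in\mathcal P$, and a $\mathcal P$-name $\dot E$ for a club of $[X]^{\aleph_0}$, and produce $q\le p$ forcing that some continuous $\rho$-chain of members of $\dot E$ belongs to $S$. I would pick a regular $\theta$ large enough that $\mathcal P,X,S,\dot E\in H_\theta$ and $\rho$-properness of $\mathcal P$ is witnessed by a club $D\subseteq[H_\theta]^{\aleph_0}$ (legitimate by the Remark after the definition of $\rho$-proper), and then pass to the club $E^*\subseteq[X]^{\aleph_0}$ of those $Y$ such that $\Sk_{H_\theta}(Y\cup\{\mathcal P,X,S,\dot E,p\})\in D$ and meets $X$ exactly in $Y$. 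Applying $\rho$-stationarity of $S$ to $E^*$ gives a continuous $\rho$-chain $(Y_i)_{i\le\rho}$ of members of $E^*$ with $(Y_i)_{i\le\rho}\in S$, which I would lift to a continuous $\rho$-chain $(N_i)_{i\le\rho}$ of members of $D$ with $N_i\cap X=Y_i$: at limit $i$ take unions, and at $i=j+1$ take $N_{j+1}=\Sk_{H_\theta}(Y_{j+1}\cup\{\mathcal P,X,S,\dot E,p\})$. The key point is that $(Y_k)_{k\le j}\in Y_{j+1}$ by continuity, so the recursively built $(N_k)_{k\le j}$ already lies in $N_{j+1}$, making $(N_i)_{i\le\rho}$ a genuine continuous chain, while $N_{j+1}\cap X=Y_{j+1}$ because $Y_{j+1}\in E^*$. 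Now $\rho$-properness yields $q\le p$ that is $(N_i,\mathcal P)$-generic for every $i\le\rho$; such a $q$ forces $(N_i[\dot G])_{i\le\rho}$ to be a continuous $\rho$-chain of elementary submodels of $H_\theta^{V[\dot G]}$ with $N_i[\dot G]\cap X=N_i\cap X=Y_i$, and since $\dot E\in N_0[\dot G]$ is forced to be a club, each $Y_i\in\dot E$; as membership of $(Y_i)_{i\le\rho}$ in $S$ is absolute, $q$ forces the desired chain into $S\cap\dot E$. Since $p$ and $\dot E$ were arbitrary this is exactly $\Vdash_{\mathcal P}S$ is $\rho$-stationary.

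For $(2)\Rightarrow(1)$ I would argue by contraposition, again exactly as in the classical case. If $\mathcal P$ is not $\rho$-proper, fix $\theta$ regular and large; negating the definition, for every club $D\subseteq[H_\theta]^{\aleph_0}$ there is a continuous $\rho$-chain of countable elementary submodels of $H_\theta$ containing $\mathcal P$, with members in $D$, together with some $p$ in the first model admitting no $q\le p$ that is $(N_i,\mathcal P)$-generic for all $i$. Hence, with $X=H_\theta$, the set $S$ of continuous $\rho$-chains $(N_i)_{i\le\rho}$ of elementary submodels of $H_\theta$ with $\mathcal P\in N_0$ that carry such a bad witness $p\in N_0\cap\mathcal P$ is $\rho$-stationary in ${}^{\rho}([H_\theta]^{\aleph_0})$. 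The assignment of a bad witness is regressive (it lands in $N_0$), so by the pressing-down lemma for $\rho$-stationary subsets of ${}^{\rho}([H_\theta]^{\aleph_0})$ — proved by the usual diagonal-intersection argument over clubs of $[H_\theta]^{\aleph_0}$ — I may fix a single $p^*$ such that $S^*=\{(N_i)_{i\le\rho}\in S: p^*\in N_0 \text{ and } p^* \text{ is a bad witness for } (N_i)_{i\le\rho}\}$ is still $\rho$-stationary. I claim $p^*\Vdash_{\mathcal P}$ ``$S^*$ is not $\rho$-stationary'', contradicting $(2)$. Working in $V[G]$ for generic $G\ni p^*$, take $\mu$ large and let $C_G$ be the club of $N\cap H_\theta^V$ for $N$ a countable elementary submodel of $(H_\mu^{V[G]};\in,G,\mathcal P,p^*,H_\theta^V)$. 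For any continuous $\rho$-chain $(M_i)_{i\le\rho}$ of members of $C_G$, say $M_i=N_i\cap H_\theta^V$: since $G\in N_i$ one gets $M_i[G]\cap V=M_i$, i.e. $G\cap M_i$ is an $(M_i,\mathcal P)$-generic filter; then a density argument (the set of conditions that are $(M_i,\mathcal P)$-generic for every $i$, together with the conditions forcing ``$\dot G\cap\check M_i$ is not $M_i$-generic for some $i$'', is dense, and $G$ cannot meet the second part) produces one $q\in G$, which may be taken below $p^*$, that is $(M_i,\mathcal P)$-generic for all $i\le\rho$. This witnesses $(M_i)_{i\le\rho}\notin S^*$, so no continuous $\rho$-chain through $C_G$ lies in $S^*$, i.e. $S^*$ is non-$\rho$-stationary in $V[G]$.

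The step I expect to be the main obstacle — it is already the subtle point in the classical $\rho=1$ theorem — is this final extraction in $(2)\Rightarrow(1)$ of a single condition $q\in G$ that is simultaneously $(M_i,\mathcal P)$-generic along the entire chain; it rests on the standard equivalence ``$q$ is $(M,\mathcal P)$-generic $\Leftrightarrow$ $q\Vdash M[\dot G]\cap V=M$'' and on checking that the model-theoretic bookkeeping — the computation of $C_G$ in $V[G]$, the lifting step in $(1)\Rightarrow(2)$, and the $\rho$-version of the pressing-down lemma — transfers verbatim from single models to continuous $\rho$-chains. I expect none of this to be genuinely hard, but together these verifications are the real content of the proof; once they are in place, everything else runs exactly as in Shelah's argument.
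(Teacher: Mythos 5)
Your proof is correct, and it is exactly the argument the paper has in mind: the paper offers no proof of this Fact, merely remarking that it follows by "recalling the standard characterization of properness", i.e.\ by lifting Shelah's classical equivalence (proper $\Leftrightarrow$ preservation of stationary subsets of $[X]^{\aleph_0}$) from single models to continuous $\rho$-chains, which is precisely what you do. The three verifications you flag (the Skolem-hull lifting of $(Y_i)_{i\le\rho}$ to a genuine continuous chain $(N_i)_{i\le\rho}$, the diagonal-intersection form of pressing down for $\rho$-stationary sets, and the density argument extracting a single simultaneously $(M_i,\mtcl P)$-generic $q\in G$) are indeed the only points of substance, and each goes through as you describe.
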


Using the above fact we can prove:
\begin{fact}\label{fac:Birkhoffproper}
For every countable indecomposable ordinals $\rho$, 
$\rho$-$\PR$ and ${<}\o_1$-$\PR$ are 
closed under isomorphisms, two-step iterations, lottery sums, restrictions and complete subalgebras,
and contain
all countably closed forcings.
\end{fact}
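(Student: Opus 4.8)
I would prove Fact \ref{fac:Birkhoffproper} by treating each closure property separately, and in each case reducing the statement about $\rho$-properness (resp. ${<}\o_1$-properness) to the combinatorial characterization provided by Fact \ref{char-proper}: a poset $\mtcl P$ is $\rho$-proper if and only if it preserves $\rho$-stationary subsets of $^\rho([X]^{\al_0})$ for every set $X$. Closure under isomorphisms and under stability in $\RO$ is immediate from Fact \ref{fac:sigma2proper}, since $\rho$-properness is a property of $\RO(\mtcl P)$. For restrictions and complete subalgebras, the point is that if $i:\bool{B}\to\bool{C}$ is a regular embedding and $\bool{C}$ is $\rho$-proper, then forcing with $\bool{B}$ is absorbed by forcing with $\bool{C}$, so a $\rho$-stationary set preserved by $\bool{C}$ is a fortiori preserved by $\bool{B}$; dually $\bool{B}\restriction b$ is a complete subalgebra of $\bool{B}$ after restricting to the cone, and properness passes down. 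For countably closed forcings, one checks directly that a condition can be extended to meet all dense sets in all models $N_i$ of the chain by a straightforward bookkeeping/fusion argument of length $\rho<\o_1$, using countable closure to find lower bounds of the countably many commitments accumulated along the way.

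**The main step.** The substantive case is closure under two-step iterations: if $\bool{B}$ is $\rho$-proper and $\Vdash_{\bool{B}}$ ``$\dot{\bool{C}}$ is $\rho$-proper'', then $\bool{B}\ast\dot{\bool{C}}$ is $\rho$-proper. Here I would work directly with the elementary-submodel definition. Fix a large enough $\t$, a continuous $\rho$-chain $(N_i)_{i\le\rho}$ of countable elementary submodels of $H_\t$ containing $\bool{B}\ast\dot{\bool{C}}$, and a condition $(p,\dot q)\in N_0$. First use $\rho$-properness of $\bool{B}$ to find $p'\le p$ which is $(N_i,\bool{B})$-generic for all $i\le\rho$. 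Then pass to the generic extension by $\bool{B}$: the models $N_i[\dot G_{\bool{B}}]$ form a continuous $\rho$-chain of countable elementary submodels of $H_\t^{V[\dot G_{\bool{B}}]}$ (here $p'$ being $(N_i,\bool{B})$-generic is exactly what guarantees $N_i[\dot G_{\bool{B}}]\cap V = N_i$ and hence continuity and the chain condition $(N_k[\dot G])_{k\le i}\in N_{i+1}[\dot G]$ is preserved), and $\dot{\bool{C}}$ is $\rho$-proper there by assumption. So below $p'$ we get a $\bool{B}$-name $\dot q'$ for an extension of $\dot q$ which is $(N_i[\dot G_{\bool{B}}],\dot{\bool{C}})$-generic for all $i\le\rho$. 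Then $(p',\dot q')$ is the desired $(N_i,\bool{B}\ast\dot{\bool{C}})$-generic extension of $(p,\dot q)$, by the standard computation that a pair of iterated generic conditions is generic for the iteration. The delicate point — and what I expect to be the main obstacle — is verifying that $(N_i[\dot G_{\bool{B}}])_{i\le\rho}$ is genuinely a \emph{continuous} $\rho$-chain in $V[\dot G_{\bool{B}}]$, i.e.\ that $(N_k[\dot G_{\bool{B}}])_{k\le i}\in N_{i+1}[\dot G_{\bool{B}}]$ and that limits are unions; this requires that $\rho$-genericity of $p'$ is robust enough to push the \emph{whole} initial-segment-membership structure, not just single dense sets, through the extension. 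Once that bookkeeping is in place the rest is routine.

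**Lottery sums.** Closure under lottery sums $\prod_{i\in I}\bool{B}_i$ is easy given the above: a condition in the lottery sum lives, below some $\bool{B}_{i_0}$, in one of the factors, and the relevant submodel $N_0$ contains the indexing so $i_0\in N_0$; one then applies $\rho$-properness of $\bool{B}_{i_0}$ (which is in all the $N_i$) to obtain the generic extension, and the top elements of the other factors pose no obstruction. Finally, ${<}\o_1$-$\PR$ inherits each closure property from the corresponding statements for all indecomposable $\rho<\o_1$ simultaneously, since being in ${<}\o_1$-$\PR$ is the conjunction over all such $\rho$ of being $\rho$-proper, and each of the arguments above is uniform in $\rho$; in particular the two-step iteration argument applied at each $\rho$ gives closure of ${<}\o_1$-$\PR$ under two-step iterations. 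I would remark that all of these closure facts are well known, so the write-up can afford to be terse, pointing to the standard iteration theory for proper forcing (the case $\rho=1$) and noting that the arguments go through verbatim for $\rho<\o_1$ using Fact \ref{char-proper}.
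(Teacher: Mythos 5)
The paper offers no proof of this Fact at all: it is stated as a standard consequence immediately after Fact \ref{char-proper} (``Using the above fact we can prove:''), so the intended argument is evidently the preservation characterization of $\rho$-properness. Your proposal is correct, and it fills in details the paper omits, but for the main step (two-step iterations) you take the other standard route, namely the direct elementary-submodel argument with the lifted chain $(N_i[\dot G_{\bool{B}}])_{i\leq\rho}$. That works, and you correctly identify the delicate point (that $(N_k[\dot G])_{k\leq i}\in N_{i+1}[\dot G]$ follows from $(N_k)_{k\leq i}\in N_{i+1}$ via a name, and that continuity at limits and $N_i[\dot G]\cap V=N_i$ come from genericity of $p'$). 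The route the paper gestures at is shorter here: by Fact \ref{char-proper}, if $S$ is $\rho$-stationary and $\bool{B}$ preserves $\rho$-stationarity while $\dot{\bool{C}}$ is forced to do so, then $S$ remains $\rho$-stationary in $V^{\bool{B}\ast\dot{\bool{C}}}$ by composing the two preservation statements, and complete subalgebras follow from the downward absoluteness of $\rho$-stationarity to intermediate models (a failure of $\rho$-stationarity of $S$ in $V[G\cap\bool{B}]$ is witnessed by a function $F$ whose continuous $\rho$-chains of closure points avoid $S$, and since $S$ consists of chains from $V$ this failure persists to $V[G]$). So the characterization buys you one-line proofs of the two hardest clauses, at the price of having proved Fact \ref{char-proper} first; your submodel argument is self-contained but requires the bookkeeping you flag. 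The remaining clauses (isomorphisms, restrictions, lottery sums, countably closed forcings, and the passage to ${<}\o_1$-$\PR$ as a conjunction over all indecomposable $\rho$) are handled correctly in your write-up and match what any proof would do.
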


\begin{definition}
A forcing notion $\mtcl P$ is \emph{$\rho$-semiproper} iff there is a cardinal $\t$ such that 
$\mtcl P\in H_\t$ for which there is a club $D\sub [H_\t]^{\al_0}$ 
with the property that for every continuous chain $(N_i)_{i\leq \rho}$ of countable elementary submodels of $H_\t$ 
containing $\mtcl P$ and every $p\in N_0\cap \mtcl P$, there is an extension $q$ of $p$ such that 
$q$ is \emph{$(N_i, \mtcl P)$-semi-generic} for all $i\leq\rho$. This means now that for every $i\leq\rho$ and every 
$\mtcl P$-name $\dot \a\in N_i$ for an ordinal in $\o_1^V$, $q\Vdash_{\mtcl P}\dot \a\in N_i$.
\end{definition}

\begin{remark} $\mtcl P$ is $\rho$-semiproper if and only if for every cardinal $\t$ such that $\mtcl P\in H_\t$ there is a club $D\sub [H_\t]^{\al_0}$ as in the above definition. \end{remark}

$\rho$-$\SP$ denotes the class of $\rho$-semiproper posets. Also, we write ${<}\o_1$-$\SP$ to denote 
 the class of those posets that are in $\rho$-$\SP$ for every indecomposable ordinal $\rho<\o_1$. We say that 
 $\mtcl P$ is \emph{semiproper} if it is $1$-semiproper, and denote $1\mbox{-}\SP$ also by $\SP$. 
 
 As before we have:
 \begin{fact}\label{fac:sigma2semiproper}
`\emph{$\RO(P)$ is $\rho$-semiproper}' is both
a $\Sigma_2$ property in parameters $\rho$ and $\omega_1$ and a 
$\Pi_2$ property
in the same parameters. The same holds for `\emph{$\RO(P)$ is ${<}\omega_1$-semiproper}'.
\end{fact}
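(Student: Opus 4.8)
The plan is to reproduce the argument of Fact~\ref{fac:sigma2proper} essentially verbatim, replacing $(N_i,\mtcl P)$-genericity by $(N_i,\mtcl P)$-semi-genericity throughout. So the first step is to fix, given $P$, a cardinal $\t$ large enough that $\RO(P)\in H_\lambda$ for some $\lambda<\t$, and to observe that "$\RO(P)$ is $\rho$-semiproper" is \emph{local} in the same way properness is: it is absolute between $V$ and any transitive set $X\supseteq H_\t$.

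To see locality, I would invoke the Remark following the definition of $\rho$-semiproperness: $\RO(P)$ is $\rho$-semiproper iff there is a club $D\subseteq[H_\t]^{\aleph_0}$ witnessing the semiproperness condition for $\RO(P)$ at $\t$. Every object mentioned in that condition — the club $D$, the continuous $\rho$-chains $(N_i)_{i\leq\rho}$ of countable elementary submodels of $H_\t$, the conditions $p,q\in\RO(P)$, and the $\RO(P)$-names $\dot\a\in N_i$ for ordinals below $\o_1^V$ — either lies in $H_\t$ or is a statement about $H_\t$; and $\o_1$, which is one of the parameters, is computed correctly in any transitive $X\supseteq H_\t$. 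Hence for some (equivalently every) transitive $X\supseteq H_\t$ we have $(X,\in)\models$ "$\RO(P)$ is $\rho$-semiproper" iff $\RO(P)$ is $\rho$-semiproper, and consequently
\[
\RO(P)\text{ is }\rho\text{-semiproper} \iff \exists\,\t\,\exists X\big(\t\text{ regular}\ \wedge\ X\text{ transitive}\ \wedge\ X\supseteq H_\t\ \wedge\ (X,\in)\models\text{"}\RO(P)\text{ is }\rho\text{-semiproper"}\big),
\]
where moreover the same equivalence holds with the two leading quantifiers turned into universal ones.

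Then, exactly as in Fact~\ref{fac:sigma2proper}, I would read off the complexity: the conjunction "$X$ transitive $\wedge\ (X,\in)\models$ `$\RO(P)$ is $\rho$-semiproper'" is $\Delta_1$ in $X,P,\rho,\o_1$ (all quantifiers of the inner formula being relativized to the set $X$); "$\t$ is a regular cardinal" is $\Pi_1$ in $\t$; and "$X\supseteq H_\t$" is $\Pi_1$ in $X,\t$, since it reads $\forall w(|\trcl(w)|<\t\rightarrow w\in X)$. Substituting these bounds into the two displayed equivalences shows that "$\RO(P)$ is $\rho$-semiproper" is at once $\Sigma_2$ and $\Pi_2$ in the parameters $\rho$ and $\o_1$.

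For "${<}\o_1$-semiproper" I would run the identical argument, the only additional remark being that this property is the assertion "for every indecomposable ordinal $\rho<\o_1$, $\RO(P)$ is $\rho$-semiproper": since the quantifier over $\rho$ is bounded by $\o_1\in X$, the relativized formula "$(X,\in)\models$ `$\RO(P)$ is ${<}\o_1$-semiproper'" stays $\Delta_1$ in $X,P,\o_1$, and locality still holds because only countably many $\rho$ are relevant and they all lie in $H_\t$; the same $\Sigma_2/\Pi_2$ equivalences then yield the conclusion, now in parameter $\o_1$ alone. I do not expect any real obstacle here — the argument is purely a transcription of the proof for $\rho$-proper — the only point needing (minimal) care being the verification that $(N_i,\mtcl P)$-semi-genericity, like $(N_i,\mtcl P)$-genericity, refers solely to data available inside $H_\t$, which is immediate from the definition.
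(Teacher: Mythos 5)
Your proposal is correct and follows exactly the route the paper intends: the paper gives no separate proof of this Fact, introducing it with ``As before we have,'' i.e.\ by transcribing the proof of Fact~\ref{fac:sigma2proper} with $(N_i,\mtcl P)$-semi-genericity in place of $(N_i,\mtcl P)$-genericity, which is precisely what you do. The locality observation, the $\Delta_1$/$\Pi_1$ complexity bookkeeping, and the bounded quantifier over $\rho<\o_1$ for the ${<}\o_1$-semiproper case all match the paper's argument.
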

 
 Let $X$ be a set such that $\o_1\sub X$. We say that $S\sub\,^\rho([X]^{\al_0})$ is \emph{$\rho$-semi-stationary} if for every club $D\sub [X]^{\al_0}$ there  are continuous $\rho$-chains $\s=(x_i\,:\,i\leq\rho)$ and $\s'=(x_i'\,:\,i\leq\rho)$ such that 
 
 \begin{itemize}
 
 \item $\s\in S$,  
 \item $\range(\s')\sub D$, and
 \item for each $i\leq\rho$, $x_i\sub x_i'$ and $x_i\cap\o_1=x_i'\cap\o_1$.
 \end{itemize}
 
 We have the following characterization of $\rho$-semiproperness (for any given indecomposable $\rho<\o_1$).
 
 \begin{fact}\label{char-semiproper} Given an indecomposable ordinal $\rho<\o_1$,  the following are equivalent for every poset $\mtcl P$.
 
 \begin{enumerate}
 
 \item $\mtcl P$ is $\rho$-semiproper.
 \item  $\mtcl P$ preserves $\rho$-semi-stationary subsets of $^\rho([X]^{\al_0})$ for every set $X$; i.e., if $S\sub\,^\rho([X]^{\al_0})$ is $\rho$-semi-stationary, then $S$ remains $\rho$-semi-stationary after forcing with $\mtcl P$. 
\end{enumerate}
\end{fact}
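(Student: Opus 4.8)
The plan is to run the $\rho$-indexed version of Shelah's classical characterization of semiproperness, in close parallel with the proof behind Fact~\ref{char-proper}. It is enough to treat sets $X$ of the form $H_\kappa$ with $\kappa$ regular uncountable: for other $X$ there are no continuous $\rho$-chains of members of $[X]^{\al_0}$ at all, so the statement is vacuous there and the general case reduces to the $H_\kappa$ case by a routine lifting. So I would fix such an $X$, an ambient $H_\theta\ni X,\mtcl P$ large, and a well-order $<^*$ of $H_\theta$, writing $\Sk(\cdot)$ for Skolem hull in $(H_\theta,\in,<^*)$; a key feature I would use repeatedly is that the map $Y\mapsto\Sk(Y\cup\{\text{parameters}\})$ is monotone and continuous, so that a continuous $\rho$-chain in a suitable club of $[X]^{\al_0}$ lifts to a genuine continuous $\rho$-chain of countable elementary submodels of $H_\theta$.

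\textbf{$(1)\Rightarrow(2)$.} Given $\mtcl P$ $\rho$-semiproper, a $\rho$-semi-stationary $S\sub{}^\rho([X]^{\al_0})$, a condition $p$, and a name $\dot D$ for a club of $[X]^{\al_0}$ coded by a name $\dot f$ for its closure function, I would take $\theta$ large enough to also contain $S,\dot D$ and to carry a club $D_0\sub[H_\theta]^{\al_0}$ witnessing $\rho$-semiproperness, then apply $\rho$-semi-stationarity of $S$ to the club of $[X]^{\al_0}$ consisting of traces $N\cap X$ of countable $N\prec(H_\theta,\in,<^*)$ that lie in $D_0$ and contain the parameters. The Skolem-hull observation turns the resulting witness into a continuous $\rho$-chain $(N_i)_{i\le\rho}$ of models in $D_0$ together with $\sigma=(x_i)_{i\le\rho}\in S$ satisfying $x_i\sub N_i\cap X$ and $x_i\cap\o_1=N_i\cap\o_1$ for all $i\le\rho$. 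Now $\rho$-semiproperness gives $q\le p$ simultaneously $(N_i,\mtcl P)$-semigeneric for all $i\le\rho$; forcing below $q$ and setting $x_i':=N_i[\dot G]\cap X$, the sequence $(x_i')_{i\le\rho}$ is the required witness that $S$ stays $\rho$-semi-stationary: it is a continuous $\rho$-chain because $(x_k')_{k\le i}$ is named by a name lying in $N_{i+1}$ (using $(N_k)_{k\le i}\in N_{i+1}$ by continuity and the definability of $(N,G)\mapsto N[G]\cap X$); its range lies in $\dot D^{\dot G}$ because each $N_i$ is closed under the ground-model operation coded by $\dot f$, so $x_i'$ is closed under $\dot f^{\dot G}$ even though $N_i[\dot G]$ need not be elementary; and $x_i\sub N_i\sub x_i'$ with $x_i\cap\o_1=N_i\cap\o_1=N_i[\dot G]\cap\o_1=x_i'\cap\o_1$ by semigenericity.

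\textbf{$(2)\Rightarrow(1)$.} I would argue contrapositively. First, (2) applied to any infinite $X$ forces $\mtcl P$ to preserve $\o_1$. Assuming $\mtcl P\notin\rho$-$\SP$, and using that $\rho$-$\SP$ is closed under restrictions and lottery sums, I would pass to a restriction $\mtcl P\restriction p_*$ which is not $\rho$-semiproper on any further restriction, fix $\theta$ at which no club witnesses $\rho$-semiproperness, and let $S_{\mathrm{bad}}\sub{}^\rho([H_\theta]^{\al_0})$ be the set of continuous $\rho$-chains $(N_i)_{i\le\rho}$ of countable elementary submodels of $H_\theta$ with $\mtcl P\in N_0$ for which some $p\in N_0\cap\mtcl P$ admits no $q\le p$ that is $(N_i,\mtcl P)$-semigeneric for all $i\le\rho$. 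Failure of $\rho$-semiproperness at $\theta$ says precisely that $S_{\mathrm{bad}}$ is $\rho$-stationary, hence $\rho$-semi-stationary. The plan is then to show $\mtcl P$ destroys $S_{\mathrm{bad}}$: for $G$ generic and $\lambda\gg\theta$, the club $D^G$ of traces $M\cap H_\theta^V$ of countable $M\prec H_\lambda^{V[G]}$ with $\mtcl P,G\in M$ cannot cover any $\sigma=(N_i)_{i\le\rho}\in S_{\mathrm{bad}}$ with the required $\o_1$-agreement, since from such a cover one has $\dot\a^G\in M_i\cap\o_1^V=N_i\cap\o_1$ for every name $\dot\a\in N_i$ for a countable ordinal, and a ``supremum of the $\dot\a^{\dot G}+1$'' name then yields a single condition in $G$ that is $(N_i,\mtcl P)$-semigeneric for all $i\le\rho$.

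The main obstacle is precisely the last step: one needs the extracted semigeneric condition to be compatible with the prescribed bad condition $p$ attached to $\sigma$, and this does not follow from merely having $\sigma$ covered. I expect to handle this by the standard bookkeeping from Shelah's treatment of the semiproper characterization — reducing first to a ``hereditarily'' non-$\rho$-semiproper restriction of $\mtcl P$ so that bad behaviour is witnessed on a dense set, and/or threading a name for the bad condition into the very definition of $S_{\mathrm{bad}}$ so that the covering models $M_i$ compute it coherently with $G$. By contrast, the remaining ingredients — lifting continuity of $[X]^{\al_0}$-chains to continuous chains of models via $\Sk$, and verifying closure of $N_i[\dot G]\cap X$ under $\dot f^{\dot G}$ without full genericity — are routine and run exactly parallel to the corresponding facts for $\rho$-properness underlying Fact~\ref{char-proper}.
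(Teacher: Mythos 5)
The paper does not actually prove Fact \ref{char-semiproper}: like its proper analogue Fact \ref{char-proper}, it is stated as a routine $\rho$-indexed version of Shelah's characterization of semiproperness via preservation of semi-stationary sets, so there is no official argument to compare yours against. Your direction $(1)\Rightarrow(2)$ is the standard one and is essentially complete: code the name for the club by a closure function $\dot f$, apply $\rho$-semi-stationarity of $S$ in $V$ to the club of traces of models lying in the witnessing club $D_0$, lift via Skolem hulls to a continuous chain $(N_i)_{i\le\rho}$, and read off $x_i'=N_i[\dot G]\cap X$ below a simultaneously semigeneric $q\le p$.

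The gap is the one you flag yourself in $(2)\Rightarrow(1)$, and neither of your proposed repairs closes it as stated. The problem is not that the covering models fail to compute the bad condition coherently with $G$; it is that your $S_{\mathrm{bad}}$ quantifies existentially over $p\in N_0\cap\mtcl P$, so the semigeneric $q\in G$ you extract has no reason to lie below the particular $p$ witnessing badness of the chain. The standard resolution fixes $p$ \emph{before} defining the bad set: for each $p\in\mtcl P$ let $S_p$ be the set of continuous $\rho$-chains $(N_i)_{i\le\rho}$ of countable elementary submodels of $H_\theta$ with $p,\mtcl P\in N_0$ admitting no $q\le p$ that is $(N_i,\mtcl P)$-semigeneric for all $i\le\rho$. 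By normality of the club filter on $[H_\theta]^{\al_0}$ (take the diagonal intersection of clubs witnessing non-$\rho$-stationarity of each $S_p$; a chain through it with $p\in N_0$ lies in each relevant club), $\mtcl P$ is $\rho$-semiproper if and only if every $S_p$ is non-$\rho$-stationary. Hence if $\mtcl P$ is not $\rho$-semiproper, some single $S_p$ is $\rho$-stationary, and therefore $\rho$-semi-stationary; now run your covering argument for a generic $G$ with $p\in G$, so that the condition $q\in G$ forcing $\dot\a<\check{\delta_i}$ (where $\delta_i=N_i\cap\o_1$) for all $i\le\rho$ and all names $\dot\a\in N_i$ for countable ordinals can be taken below $p$, contradicting $(N_i)_{i\le\rho}\in S_p$. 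With this adjustment, together with the observation you do make that (2) already forces preservation of $\o_1$ (so the relevant suprema stay below $\o_1^{V[G]}$), the argument goes through; "threading a name for the bad condition into the definition of $S_{\mathrm{bad}}$" is neither needed nor obviously meaningful.
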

Again we get that:
\begin{fact}\label{fac:Birkhoffsemiproper}
For all countable indecomposable ordinals $\rho$,
$\rho$-$\SP$ and ${<}\o_1$-$\SP$ are closed under isomorphisms, two-step iterations, lottery sums, restrictions and complete subalgebras,
and contain
all countably closed forcings.
\end{fact}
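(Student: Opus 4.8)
The plan is to run the proof of Fact~\ref{fac:Birkhoffproper} essentially verbatim, replacing ``$(N_i,\mtcl P)$-generic'' by ``$(N_i,\mtcl P)$-semi-generic'', ``$\rho$-stationary'' by ``$\rho$-semi-stationary'', and the characterization of Fact~\ref{char-proper} by that of Fact~\ref{char-semiproper}. Closure of $\rho$-$\SP$ and ${<}\o_1$-$\SP$ under isomorphisms is immediate, since $\rho$-semiproperness is invariant under isomorphism of posets. For each of the remaining closure properties of $\rho$-$\SP$ it then suffices, by Fact~\ref{char-semiproper}, to show that the forcing in question preserves $\rho$-semi-stationary subsets of $^\rho([X]^{\al_0})$ for every set $X$, and I would verify this by following such a set $S$ (which, being in $V$, may be taken fixed) along the relevant generic extension.

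For two-step iterations this is just iterating the preservation: if $\mtcl P\in\rho$-$\SP$ and $\Vdash_{\mtcl P}\dot{\mtcl Q}\in\rho$-$\SP$, and $S\in V$ is $\rho$-semi-stationary, then $S$ is still $\rho$-semi-stationary in $V^{\mtcl P}$ by the first hypothesis and hence in $(V^{\mtcl P})^{\dot{\mtcl Q}}=V^{\mtcl P\ast\dot{\mtcl Q}}$ by the second (applied in $V^{\mtcl P}$ to the set $X\in V$), so $\mtcl P\ast\dot{\mtcl Q}$ preserves $\rho$-semi-stationarity. For lottery sums one uses that a $V$-generic filter for a set-sized lottery sum $\bigoplus_{i\in I}\mtcl P_i$ (equivalently, for a product $\prod_{i\in I}\bool B_i$ of cbas) is, below the generically chosen coordinate, a $V$-generic filter for a single $\mtcl P_i$; thus $V[G]=V[G_i]$ for some $i$ and some $\mtcl P_i$-generic $G_i\in V[G]$, and $S$ is $\rho$-semi-stationary there since $\mtcl P_i\in\rho$-$\SP$. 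Restrictions $\mtcl P\restriction p$ are handled the same way, a generic for $\mtcl P\restriction p$ being a generic for $\mtcl P$ containing $p$.

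Complete subalgebras and the inclusion of all countably closed forcings do not pass through the preservation characterization quite so cleanly, so I would argue for these directly from the definition, exactly as in the proper case. For a complete subalgebra $\bool C$ of $\bool B\in\rho$-$\SP$, fix $\t$ with $\bool B,\bool C\in H_\t$ together with a club $D_{\bool B}\sub[H_\t]^{\al_0}$ witnessing $\rho$-semiproperness of $\bool B$ (legitimate by the remark following the definition of $\rho$-$\SP$), and check that $D=\{N\prec H_\t:\bool B,\bool C\in N,\ N\in D_{\bool B}\}$ witnesses $\rho$-semiproperness of $\bool C$: given a continuous $\rho$-chain of members of $D$ and $p\in N_0\cap\bool C$, apply $\rho$-semiproperness of $\bool B$ to obtain a common $(N_i,\bool B)$-semi-generic $q\leq_{\bool B}p$, and set $h(q)=\bigwedge\{c\in\bool C:q\leq c\}\in\bool C$; then $h(q)\leq_{\bool C}p$ and $h(q)$ is $(N_i,\bool C)$-semi-generic for every $i\leq\rho$, since for a $\bool C$-name $\dot\a\in N_i$ for an ordinal in $\o_1^V$, viewed also as a $\bool B$-name in $N_i$, we have $q\Vdash_{\bool B}\dot\a\in N_i$, while any $c\neq 0$ with $c\leq_{\bool C}h(q)$ is compatible with $q$ in $\bool B$ (if $q\wedge c=0$ then $q\leq\neg c\in\bool C$, whence $h(q)\leq\neg c$ and $c=c\wedge h(q)=0$) and so cannot force $\dot\a$ to a value outside $N_i$. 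For a countably closed poset $\mtcl P$ one shows $\mtcl P$ is $\rho$-proper, hence $\rho$-semiproper: along $(N_i)_{i\leq\rho}$ build, by a canonical transfinite recursion, a descending $\langle q_i:i\leq\rho\rangle$ with $q_{i+1}\in N_{i+1}$ a lower bound (existing by countable closure as computed inside $N_{i+1}$) of an $\o$-chain below $q_i$ meeting every dense subset of $\mtcl P$ in $N_i$, and at a limit $\lambda\leq\rho$ let $q_\lambda$ be a lower bound of $\langle q_i:i<\lambda\rangle$; making the recursion canonical (least witnesses relative to a fixed well-order) guarantees $\langle q_i:i\leq\lambda\rangle\in N_{\lambda+1}$, so $q_\lambda\in N_{\lambda+1}$ and the recursion continues, and $q_\rho\leq p$ is $(N_i,\mtcl P)$-generic for all $i\leq\rho$. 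Finally, the statement for ${<}\o_1$-$\SP$ follows since that class equals $\bigcap\{\rho\text{-}\SP:\rho<\o_1\text{ indecomposable}\}$ and each property above is closure under an operation, hence passes to the intersection (using here that semiproper forcings preserve $\o_1$, so that indecomposability below $\o_1$ is unaffected).

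The main obstacle is essentially bookkeeping rather than mathematics: one must fix a single $\t$ so that the various clubs witnessing the instances of semiproperness used can be merged into one club on $[H_\t]^{\al_0}$ (handled by the remark following the definition of $\rho$-$\SP$), and one must arrange the countably closed recursion canonically so that its restriction to any limit $\lambda\leq\rho$ is recoverable inside $N_{\lambda+1}$. Everything else mirrors the proof of Fact~\ref{fac:Birkhoffproper} line for line, with $\Sigma_1$-type ``deciding a dense set'' replaced by ``pinning a name for a countable ordinal into the model''.
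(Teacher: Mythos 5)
Your argument is correct, and where the paper gives any indication of a proof it is the route you take: Fact~\ref{fac:Birkhoffsemiproper} is stated there without proof, immediately after the characterization of $\rho$-semiproperness as preservation of $\rho$-semi-stationary sets (Fact~\ref{char-semiproper}), and that characterization is exactly what you invoke for isomorphisms, two-step iterations, lottery sums and restrictions. Your decision to treat complete subalgebras separately is the right call: unlike plain stationarity, $\rho$-semi-stationarity of a set $S\in V$ is not obviously downward absolute from $V^{\bool{B}}$ to the intermediate extension $V^{\bool{C}}$ (the witnessing chain $\s'$ produced in $V^{\bool{B}}$ need not consist of sets lying in $V^{\bool{C}}$), so the preservation characterization does not apply directly; and your adjoint-map argument, with $h(q)=\bigwedge\{c\in\bool{C}:q\le c\}$ transferring semi-genericity from $\bool{B}$ to $\bool{C}$, is precisely the device the paper itself uses via Theorem~\ref{eRetrProp} to prove closure of $S\textsf{-cond}$ under complete subalgebras in the proof of Fact~\ref{fac:sigma2Scond}. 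The one place you are too quick is the fusion for countably closed posets at limit stages: canonicity gives $q_\lambda\in N_{\lambda+1}$ but not $q_\lambda\in N_\lambda$, so at stage $\lambda+1$ you cannot build an $\o$-chain of conditions of $N_\lambda$ below $q_\lambda$ by elementarity of $N_\lambda$. The repair is standard: every dense set in $N_\lambda=\bigcup_{i<\lambda}N_i$ lies in some $N_i$ and has already been met below $q_{i+1}$ by a condition in $N_i\subseteq N_\lambda$, so $q_\lambda$ is already $(N_\lambda,\mtcl P)$-generic and one may take $q_{\lambda+1}=q_\lambda$. Relatedly, you should state explicitly that the $\o$-chain handling $N_i$ at a successor stage consists of conditions \emph{in} $N_i$ (only its enumeration of the dense sets of $N_i$ lives in $N_{i+1}$); otherwise its lower bound decides nothing about $\dot G\cap N_i$. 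With those two points made explicit the proof is complete.
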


\begin{definition}
Given a regular cardinal $\k\geq\o_1$, a poset $\mtcl P$ \emph{preserves stationary subsets of $\k$} if 
every stationary subset of $\k$ remains stationary after forcing with $\mtcl P$.  
\end{definition}
$\SSP$ denotes the class of 
partial orders preserving stationary subsets of $\o_1$. More generally, given a  cardinal $\l$, $\SSP(\l)$ denotes
the class of partial orders preserving stationary subsets of $\k$ for every uncountable regular cardinal $\k\leq\l$.

Recall that a Suslin tree  is an $\o_1$-tree $T$ (i.e., $T$ is a tree of height $\o_1$ all of whose levels are countable) without uncountable chains or antichains (a subset of $T$ is called an \emph{antichain} iff it consists of pairwise incomparable nodes). We will consider the above properties in conjunction with the preservation of some combination of the two following properties.

 \begin{definition}
 A poset $\mtcl P$ \emph{preserves Suslin trees} if $\Vdash_{\mtcl P}T\mbox{ is Suslin}$ for every Suslin tree $T$ in the ground model. 
 \end{definition}
  
 \begin{definition}
 A poset $\mtcl P$ is \emph{$^\o\o$-bounding} iff every function $f:\o\into\o$ added by $\mtcl P$ is bounded by a function 
 $g:\o\into\o$ in the ground model; i.e., iff  for every $\mtcl P$-generic filter $G$ and every $f:\o\into \o$, $f\in V[G]$, there is some $g:\o\into g$, $g\in V$, such that $f(n)<g(n)$ for all $n$. 
 \end{definition}
$\STP$ denotes the class of all posets preserving Suslin trees and
$^\o\o\mbox{-bounding}$ the class of $^\o\o$-bounding posets. 

By the same arguments we gave for $\rho$-properness one gets:
\begin{fact}\label{fac:sigma2SPTomombound}
`\emph{$\RO(P)$ preserves Suslin trees}' and `\emph{$\mtcl P$ is $^\o\o$-bounding}'
are 
$\Sigma_2$ properties in parameters $\omega_1$ and $\omega^\omega$, and 
also $\Pi_2$ properties
in the same parameters. Moreover, 
$\STP$  and
$^\o\o\mbox{-bounding}$ are both closed under isomorphisms, two-step iterations, lottery sums, restrictions and complete subalgebras,
and contain all countably closed forcings.
\end{fact}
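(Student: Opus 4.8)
The plan is to follow the proofs of Facts~\ref{fac:sigma2proper} and~\ref{fac:Birkhoffproper} essentially verbatim, since nothing genuinely new happens here. For the $\Sigma_2$ and $\Pi_2$ claims the key point is that both `\emph{$\RO(P)$ preserves Suslin trees}' and `\emph{$P$ is $^\o\o$-bounding}' are \emph{local}: if $\t$ is a large enough regular cardinal with $\RO(P)\in H_\lambda$ for some $\lambda<\t$, then each property holds in $V$ if and only if it holds in any (equivalently, in some) transitive $X\supseteq H_\t$. For Suslin-tree preservation this is because every Suslin tree has transitive closure of size $\aleph_1$, hence lies in $H_{\o_2}\subseteq H_\t$, while whether a condition $p\in\RO(P)$ forces $\check T$ to be Suslin only involves maximal antichains of $\RO(P)$ and $\RO(P)$-names of size at most $|\RO(P)|$, all of which belong to $H_\t$; so both the outermost quantifier over Suslin trees and the relevant forcing relation are computed correctly in $X$. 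For $^\o\o$-boundedness it suffices to quantify over nice $\RO(P)$-names for elements of $^\o\o$, each of size at most $|\RO(P)|$ and hence in $H_\t$, and again the relevant forcing relation is computed in $H_\t$. Granted this locality, the argument of Fact~\ref{fac:sigma2proper} goes through unchanged: `\emph{$\RO(P)$ preserves Suslin trees}' holds if and only if there are a regular cardinal $\t$ and a transitive $X\supseteq H_\t$ with $(X,\in)\models\mbox{`$\RO(P)$ preserves Suslin trees'}$, and the matrix of this statement is $\Pi_1$ in the parameters $P$, $\o_1$, $\o^\o$ (the formulae $(X,\in)\models\ldots$ and `$X$ is transitive' being $\Delta_1$, and `$\t$ is regular' and `$X\supseteq H_\t$' being $\Pi_1$), so the displayed statement is $\Sigma_2$; replacing the two leading existential quantifiers by universal ones turns it into a $\Pi_2$ statement, whose matrix is now $\Sigma_1$. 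The same works for $^\o\o$-boundedness.

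For the closure properties I would \emph{not} pass through a preservation-of-a-combinatorial-invariant reformulation as in Facts~\ref{fac:Birkhoffproper} and~\ref{fac:Birkhoffsemiproper}; each required closure follows directly from the definitions. Closure under isomorphisms is immediate since both properties depend only on $\RO(P)$. For two-step iterations: if $\bool{B}$ preserves Suslin trees and $\bool{B}$ forces that $\dot{\bool{C}}$ preserves Suslin trees, then given a ground-model Suslin tree $T$, $\bool{B}$ forces $T$ to be Suslin, hence forces that $\dot{\bool{C}}$ forces $T$ to be Suslin, hence $\bool{B}\ast\dot{\bool{C}}$ forces $T$ to be Suslin; and if $f\colon\o\to\o$ lies in a $\bool{B}\ast\dot{\bool{C}}$-generic extension then, $\dot{\bool{C}}$ being $^\o\o$-bounding in $V^{\bool{B}}$, $f$ is bounded by some $g\in V^{\bool{B}}$, which is in turn bounded by some $h\in V$ by $^\o\o$-boundedness of $\bool{B}$, so $h$ bounds $f$. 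For lottery sums: forcing with a lottery sum $\prod_{i\in I}\bool{B}_i$ yields an extension of the form $V[G_{i_0}]$ for a single index $i_0$ with $G_{i_0}$ generic for $\bool{B}_{i_0}$, so it inherits both properties from the $\bool{B}_i$. For restrictions: forcing with $\bool{B}\restriction b$ coincides with forcing with $\bool{B}$ below $b$, and $\Vdash_{\bool{B}}\varphi$ implies $b\Vdash_{\bool{B}}\varphi$ implies $\Vdash_{\bool{B}\restriction b}\varphi$. For complete subalgebras: every $\bool{C}$-generic extension $V[G]$ sits inside a $\bool{B}$-generic extension $V[H]$ with $G=H\cap\bool{C}$, and both `being a Suslin tree' and `bounding all of $^\o\o$' are downward absolute between inner models with the same $\o_1$ (preservation of Suslin trees entailing preservation of $\o_1$, the case in which $V$ has no Suslin tree being vacuous). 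Finally, a countably closed forcing adds no new $\o$-sequences, so it is trivially $^\o\o$-bounding, and the preservation of Suslin trees by countably closed forcings is a standard fact (a putative new uncountable antichain, resp.\ branch, of a ground-model Suslin tree can be reflected, using countable closure at limit ordinals of cofinality $\o$, to an uncountable antichain, resp.\ branch, of the tree lying already in $V$).

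The only places needing a little care are (i) verifying that `\emph{$\RO(P)$ preserves Suslin trees}' is genuinely decided in $H_\t$ despite carrying a universal quantifier over all Suslin trees --- which is what licenses the $\Sigma_2$/$\Pi_2$ analysis --- and (ii) the bookkeeping around $\o_1$-preservation in the closure-under-complete-subalgebras step; both are handled exactly as in the proofs of Facts~\ref{fac:sigma2proper} and~\ref{fac:Birkhoffproper}, and neither is an essential obstacle.
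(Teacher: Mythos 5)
Your proposal is correct and follows essentially the route the paper intends: the paper gives no separate proof of this Fact, deferring to ``the same arguments we gave for $\rho$-properness'', and your definability argument is exactly the locality-plus-quantifier-count of Fact~\ref{fac:sigma2proper}, while the closure properties are the direct verifications one expects given that $\STP$ and $^\o\o$-bounding are already formulated as preservation properties. The one spot that is not quite right is your parenthetical justification that countably closed forcings preserve Suslin trees: a new uncountable antichain of a ground-model Suslin tree cannot in general be ``reflected'' to one lying in $V$ (the set of nodes some condition forces into the antichain need not itself be an antichain), and the antichain case genuinely requires the $(N,\mtcl P)$-generic-condition argument the paper uses for Fact~\ref{suslin-tree-pres-sigma-closed} (the final part of the proof of Lemma~\ref{psiAC-measurable}), which shows that any name for a maximal antichain is forced, below a suitable condition, to be contained in the countable set $T\cap N$; your reflection argument does work for branches via the standard $2^{<\o}$-splitting of conditions. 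Since the fact is standard and the paper likewise only cites it here, this is a citation-level imprecision rather than a structural gap.
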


In \cite[XI]{SHEPRO} Shelah isolates a property he calls $S$-condition, and for 
which he proves the following.
\begin{lemma}\label{S-cond-shelah}
Assume $\mtcl P$ is a forcing notion satisfying the $S$-condition. Then:
\begin{enumerate}
\it $\mtcl P$ preserves  stationary subsets of $\o_1$;\footnote{See \cite[XI--Thm. 3.6]{SHEPRO}. This theorem says that forcing notions with the $S$-condition do not collapse $\o_1$.  However, its proof actually establishes that such forcings in fact preserve stationary subsets of $\o_1$.}
\it if $\CH$ holds, then $\mtcl P$ adds no new reals.
\end{enumerate}
\end{lemma}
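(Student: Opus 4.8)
The plan is to follow Shelah \cite[Chapter~XI]{SHEPRO}, where this is proved, and to indicate here the shape of the two arguments, since we only use the lemma as a black box to certify that concrete forcing notions lie in the classes we study. Recall that the $S$-condition supplies, for a suitable club of countable elementary submodels $N\prec H_\t$ (with $\t$ large and $\mtcl P\in N$) and for every $p\in N\cap\mtcl P$, an extension $q\leq p$ enjoying a (semiproper-like) \emph{partial} genericity over $N$. Both assertions will be extracted from this partial genericity by the standard arguments, with the usual extra care needed because the $S$-condition is weaker than $(N,\mtcl P)$-genericity in the sense of properness.

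For \emph{(1)}, I would fix a stationary $S\subseteq\o_1$, a condition $p\in\mtcl P$, and a $\mtcl P$-name $\dot C$ with $p\Vdash_{\mtcl P}\dot C\text{ is club in }\o_1$, and produce $q\leq p$ forcing $\dot C\cap S\neq\emptyset$. Take $\t$ large and, by the $S$-condition, a club $D\subseteq[H_\t]^{\al_0}$ of suitable models; since $S$ is stationary we may choose $N\in D$ with $\mtcl P,p,\dot C\in N$ and $\delta:=N\cap\o_1\in S$. Applying the $S$-condition to $p$ yields $q\leq p$ sufficiently generic over $N$. For each $\xi<\delta$ the set of conditions deciding some element of $\dot C$ above $\xi$ is dense and lies in $N$, so the partial genericity of $q$ forces $\dot C\cap\delta$ to be cofinal in $\delta$; as $\dot C$ is forced to be closed, $q\Vdash\delta\in\dot C$, i.e.\ $q\Vdash\dot C\cap S\neq\emptyset$. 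Since $p$ and $\dot C$ were arbitrary, $S$ stays stationary. The delicate point, which is exactly the content of \cite[XI, Thm.~3.6]{SHEPRO} and its proof, is that the weak genericity granted by the $S$-condition is already enough to force $\delta$ into every club named in $N$ — the reason being that membership of $\delta$ in a club is decided by an $\o$-cofinal, hence $N$-available, amount of information below $\delta$.

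For \emph{(2)}, assume $\CH$, fix $p\in\mtcl P$ and a name $\dot f$ for an element of ${}^\o\o$; the goal is $q\leq p$ with $q\Vdash\dot f\in V$. Here the argument is a fusion along a $\subseteq$-increasing continuous chain $\langle N_i:i<\o_1\rangle$ of countable elementary submodels of some $H_\t$, together with a descending sequence of conditions $\langle p_i:i<\o_1\rangle$ with $p_i$ sufficiently generic over $N_i$, arranged so that the $p_i$ successively decide more and more of $\dot f$. The hypothesis $\CH$ enters purely as a bookkeeping device: it makes the collection of tasks one must dispatch (names for the values of $\dot f$, maximal antichains below the conditions built so far, etc., as coded inside the models $N_i$) have size $\al_1$, so that an $\o_1$-length construction exhausts them and the final condition $q$ forces $\dot f$ to be determined by the constructed data, hence to lie in $V$; without $\CH$ the relevant set of tasks would have size $2^{\al_0}>\al_1$ and the construction could not close off. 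This too is carried out in detail in \cite[Chapter~XI]{SHEPRO}.

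The main obstacle is entirely expository: one must unwind Shelah's definition of the $S$-condition and verify that the (weaker-than-proper) genericity it provides suffices both for forcing $\delta\in\dot C$ in (1) and for the fusion in (2). Since the lemma is used here only to place concrete forcings into the classes under consideration, I would not reproduce this verification and would instead refer the reader to \cite[Chapter~XI]{SHEPRO} for the complete proof.
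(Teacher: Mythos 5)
The paper offers no proof of this lemma at all: it is stated as a black box, with a footnote citing \cite[XI, Thm.\ 3.6]{SHEPRO} and the remark that Shelah's proof of $\o_1$-preservation actually yields preservation of stationary subsets of $\o_1$. So your decision to defer to Shelah and treat the lemma as a citation is exactly what the paper does, and is appropriate given how the lemma is used.

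That said, the heuristic sketches you attach do not match either the definition of the $S$-condition used here or the structure of Shelah's arguments, and would be misleading if printed. The $S$-condition (Definition \ref{s-cond-def}) is not a ``partial genericity over a club of countable $N\prec H_\t$'' condition in the proper/semiproper mould; it is a game condition on trees of conditions whose splitting levels have size at least $\al_2$, with the winning requirement that every full-splitting subtree $T'$ in $V$ admit a single condition forcing an $\o$-branch of $T'$ into the generic filter. The proofs in \cite[XI]{SHEPRO} are correspondingly pruning arguments: player I decides, along the tree, more and more of the relevant name, and one thins each splitting level --- of size at least $\al_2$ --- down to a full-size subset on which the decided data is controlled, which is possible precisely because the set of candidate values has size at most $\al_1$ (countable ordinals in (1); reals in (2), where $\CH$ is exactly the hypothesis that caps their number at $\al_1<\al_2$). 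Your argument for (1) via ``$q$ meets every dense set in $N$'' is the properness template, which the $S$-condition does not supply, and your description of (2) as a fusion along an $\o_1$-chain of models cannot work as stated: nothing in the $S$-condition provides lower bounds for $\o_1$-descending sequences of conditions, and an $\o$-length game cannot dispatch $\al_1$ tasks per branch. Since the lemma is only being cited, I would either drop the sketches entirely or replace them with a one-line description of the actual mechanism (large splitting versus few candidate values).
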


As shown in  \cite[XI--4]{SHEPRO}, among the forcing notions satisfying the $S$-condition are Namba forcing (and natural variations thereof), all  countably closed forcing notions, and the natural poset which, for a fixed stationary $S\sub \{\a<\o_2\,:\,\cf(\a)=\o\}$, adds an $\o_1$-club through $S$ with countable conditions. 

Given a tree $T$ and a node $\eta$ of $T$, let $\textsf{succ}_T(\eta)$ denote the set of immediate successors of $\eta$ in $T$. It will be convenient to define the following game $\mtcl P^{\mtcl P}_p$ (for a partial order $\mtcl P$ and a $\mtcl P$-condition $p$).

\begin{definition}
Given a partial order $\mtcl P$ such that $\av\mtcl P\av\geq\al_2$, $\mtcl G^{\mtcl P}$ is 
the following game of length $\o$ between players I and II, with player I playing at even stages and player II playing at 
odd stages.

\begin{enumerate}

\it At any given stage $n$ of the game, the corresponding player picks a pair $T^n$, $(p^n_\eta)_{\eta\in T^n}$, 
where $T^n$ is a tree consisting of finite sequences of ordinals in $\av\mtcl P\av$ without infinite branches and 
where $(p^n_\eta)_{\eta\in T^n}$ is a sequence of conditions in $\mtcl P$ extending $p$ such that $p^n_\n$ 
extends $p^n_\eta$ in $\mtcl P$ whenever $\n$ extends $\eta$ in $T^n$.  

\it If $n>0$, then 

\begin{enumerate}
 \it $T^n$ and $(p^n_\eta)_{\eta\in T^n}$ end-extend $T^{n-1}$ and $(p^{n-1}_\eta)_{\eta\in T^{n-1}}$, respectively, 
\it every terminal node in $T^{n-1}$ has a proper extension in $T^n$, and
\it every node in $T^n\setminus T^{n-1}$ extends a unique terminal node in $T^{n-1}$.
\end{enumerate} 

\it Player I starts by playing $T_0=\{\emptyset\}$ and $p^0_\emptyset \in \mtcl P$.

\it At any given even stage $n>0$ of the game, player I picks, for every terminal node $\eta$ of $T^{n-1}$, a finite sequence $\n_\eta$ of ordinals in $\av\mtcl P\av$ such that $\n_\eta$ extends $\eta$ properly.  He then builds $T^n$ as $$T^{n-1}\cup\{\n_\eta\restr k\,:\,k\leq \av\n_\eta\av,\,\eta\mbox{ a terminal node of }T^{n-1}\}.$$ Player I also has to choose of course  $(p^n_\eta)_{\eta\in T^n}$ in such a way that (1) and (2) are satisfied. 

\it At any given odd stage $n$ of the game, player II chooses, for every terminal node $\eta$ of $T^{n-1}$, a regular cardinal $\k^n_\eta\in [\al_2,\,\av\mtcl P\av]$, and builds $T^n$ from $T^{n-1}$ by adding to $T^{n-1}$ a next level where, for each terminal node $\eta$ of $T^{n-1}$, the set of immediate successors of $\eta$ in $T^n$ is $\{\eta^\smallfrown \la\a\ra\,:\,\a<\k^n_\eta\}$. Player II also has to choose of course  $(p^n_\eta)_{\eta\in T^n}$ in such a way that (1) and (2) are satisfied.

\end{enumerate}

After $\o$ moves, the players have naturally built a tree $T=\bigcup_n T^n$ of height $\o$ whose nodes are 
finite sequences of ordinals in $\av\mtcl P\av$, together with a sequence 
$(p_\eta)_{\eta\in T}=\bigcup (p^n_\eta)_{\eta\in T^n}$ of $\mtcl P$-conditions such that for all nodes $\eta$, 
$\nu$ in $T$, if $\nu$ extends $\eta$ in $T$, then $p_{\nu}$ extends $p_{\eta}$ in $\mtcl P$. Finally, player II wins the 
game iff for every subtree $T'$ of $T$ in $V$, 
if $\av\textsf{succ}_{T'}(\eta)\av=\av\textsf{succ}_T(\eta)\av$ for every $\eta\in T'$, 
then there is a condition in $\mtcl P$ forcing that there is an $\o$-branch $b$ through $T'$ such that 
$p_{b\restr n}\in\dot G$ for all $n<\o$. 
 
\end{definition}

The definition of the $S$-condition is the following.\footnote{Shelah's definition is more general, but 
the present form suffices for our purposes.}

\begin{definition}\label{s-cond-def}
A partial order $\mtcl P$ satisfies the $S$-condition if and only if $\av\mtcl P\av\geq\al_2$ and 
player II has a winning strategy $\s$ in the game $\mtcl G^{\mtcl P}$
such that for every partial run of the game, 
the output of $\s$ at any given sequence $\eta\in\,^{{<}\o}\av\mtcl P\av$ depends only on $\eta$, 
$(p_{\eta\restr k})_{k\leq \av\eta\av}$ and $\{k<\av\eta\av\,:\,\av \textsf{succ}_T(\eta\restr k)\av>1\}$, 
where $T$ denotes the tree built by the players up to that point.
\end{definition}

$S\textsf{-cond}$ is the class of complete boolean algebras $\bool{B}$ satisfying the $S$-condition.

One has:
\begin{fact}\label{fac:sigma2Scond}
`\emph{$\RO(P)$ satisfies the $S$-condition}' is a
$\Sigma_2$ property in parameter $\omega_2$ 
and also a 
$\Pi_2$ property
in the same parameter. 

$S\textsf{-cond}$ is closed under isomorphisms, two-step iterations, lottery sums, restrictions and complete subalgebras,
and contains
all countably closed forcings.
\end{fact}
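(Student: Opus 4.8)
The plan is to follow verbatim the proof of Fact~\ref{fac:sigma2proper}, the only new point being that, despite its second-order flavour, `\emph{$\RO(P)$ satisfies the $S$-condition}' is a \emph{local} property: it is decided by $H_\t$ for every sufficiently large regular $\t$, and more generally by every transitive set $X$ with $H_\t\sub X$. To see this, fix $\t$ regular and large enough that $2^{\vp{\RO(P)}}<\t$. A strategy $\s$ for player~II in the game $\mtcl{G}^{\RO(P)}$ is a function defined on a set of finite partial runs of $\mtcl{G}^{\RO(P)}$; since each move in a run is a pair formed by a tree of finite sequences of ordinals below $\vp{\RO(P)}$ together with a $\vp{\RO(P)}$-indexed family of conditions of $\RO(P)$, such an $\s$, every run consistent with $\s$, the output tree $T=\bigcup_n T^n$ and the associated sequence $(p_\eta)_{\eta\in T}$, and every subtree $T'$ of $T$, are hereditarily of size less than $\t$, hence members of $H_\t$. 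The winning condition for player~II --- that for every subtree $T'$ of $T$ with $\vp{\textsf{succ}_{T'}(\eta)}=\vp{\textsf{succ}_T(\eta)}$ for all $\eta\in T'$ there is a condition of $\RO(P)$ forcing that some $\o$-branch $b$ through $T'$ satisfies $p_{b\restr n}\in\dot G$ for all $n$ --- is expressible by a quantifier over members of $H_\t$ together with the forcing relation of $\RO(P)$, which is uniformly definable from $\RO(P)$ by the forcing theorem; and the dependency clause of Definition~\ref{s-cond-def} is an assertion about $\s$ and finitely many of its values. Hence there is a formula $\psi(x,y)$ without class quantifiers such that `\emph{$\RO(P)$ satisfies the $S$-condition}' is equivalent to $\psi(\RO(P),\o_2)$, this holds in $V$ iff it holds in $H_\t$ iff it holds in any (equivalently, some) transitive $X\supseteq H_\t$, and $\psi(\RO(P),\o_2)$ together with `\emph{$X$ is transitive}' are $\Delta_1$ in the parameters $X$, $P$, $\o_2$.

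With this in hand the complexity count is word for word that of Fact~\ref{fac:sigma2proper}. For the $\Sigma_2$ bound one uses the equivalence: $\RO(P)$ satisfies the $S$-condition iff there are a regular cardinal $\t$ and a transitive $X\supseteq H_\t$ with $(X,\in)\models\psi(\RO(P),\o_2)$; the displayed satisfaction and `\emph{$X$ transitive}' are $\Delta_1$, `\emph{$\t$ is a regular cardinal}' is $\Pi_1$, and $H_\t\sub X$ is $\Pi_1$ (it reads $\forall w\,(\vp{\trcl(w)}<\t\rao w\in X)$), so prenexing yields a $\Sigma_2$ formula in the parameter $\o_2$. For the $\Pi_2$ bound one uses instead the dual formulation, `\emph{for all regular $\t$ and all transitive $X\supseteq H_\t$ with $\RO(P)\in X$ one has $(X,\in)\models\psi(\RO(P),\o_2)$}', which is $\Pi_2$ in $\o_2$.

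The closure properties are not new. Closure of $S\textsf{-cond}$ under isomorphisms is immediate from Definition~\ref{s-cond-def}, while closure under two-step iterations, lottery sums, restrictions and complete subalgebras, and the fact that every countably closed forcing of size at least $\al_2$ satisfies the $S$-condition (which is what is needed in this framework), all follow from Shelah's analysis of the $S$-condition in \cite[XI]{SHEPRO} together with arguments parallel to those behind Fact~\ref{fac:Birkhoffproper}. The main obstacle I anticipate is precisely the locality claim of the first paragraph: one must check carefully that the existence of a player~II strategy obeying the dependency clause of Definition~\ref{s-cond-def}, and its winning condition --- which combines a quantifier over ground-model subtrees with the forcing relation of $\RO(P)$ --- really do reflect to $H_\t$, so that the property is absolute between $V$ and its transitive superstructures of $H_\t$. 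Once that is settled, the $\Sigma_2/\Pi_2$ bookkeeping is identical to the proof of Fact~\ref{fac:sigma2proper}.
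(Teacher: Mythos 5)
Your locality argument and the $\Sigma_2/\Pi_2$ bookkeeping are fine and match the paper, which in fact disposes of that part in a single sentence ("if $\RO(P)\in H_\theta$, then $H_\theta\models$ `$\RO(P)$ satisfies the $S$-condition' if and only if $\RO(P)$ satisfies the $S$-condition") and then reduces to the pattern of Fact~\ref{fac:sigma2proper}. The genuine gap is in your last paragraph: closure under complete subalgebras does \emph{not} follow "from Shelah's analysis together with arguments parallel to those behind Fact~\ref{fac:Birkhoffproper}", and the paper explicitly singles this out as the one closure property it does not leave to the reader. The analogy with Fact~\ref{fac:Birkhoffproper} breaks down because that fact rests on the characterization of $\rho$-properness as \emph{preservation of $\rho$-stationary sets} (Fact~\ref{char-proper}), and preservation properties pass to complete subalgebras automatically via the intermediate-extension argument $V\sub V[H]\sub V[G]$. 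No such preservation characterization of the $S$-condition is available (Lemma~\ref{S-cond-shelah} gives only consequences, not an equivalence), so you cannot sidestep the game-theoretic definition.

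What is actually needed, and what the paper does, is a direct construction: given $\bool{B}$ a complete subalgebra of $\bool{C}\in S\textsf{-cond}$ with winning strategy $\sigma$ for II in $\mtcl G^{\bool{C}^+}$, one defines a strategy $\sigma'$ for II in $\mtcl G^{\bool{B}^+}$ by running the two games in tandem, maintaining conditions $c_\eta\in\bool{C}$ with $\pi(c_\eta)=b_\eta$ where $\pi:\bool{C}\to\bool{B}$ is the adjoint of the inclusion (Theorem~\ref{eRetrProp}), and setting $c_\eta=b_\eta\wedge c_{\eta\restr(|\eta|-1)}$ at player I's moves. To verify that $\sigma'$ wins, one takes $H$ $V$-generic for $\bool{B}$, extends to $G\supseteq H$ $V$-generic for $\bool{C}$, uses $\sigma$ to find in $V[G]$ a branch $\eta$ of the given subtree $T'$ with $c_{\eta\restr n}\in G$, notes that $b_{\eta\restr n}=\pi(c_{\eta\restr n})\in H$, and then pulls the branch back into $V[H]$ by absoluteness of ill-foundedness applied to the tree $T^*=\{\eta\in T':b_\eta\in H\}\in V[H]$. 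Both the adjoint-map bookkeeping and the final absoluteness step are essential and absent from your proposal.
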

\begin{proof}
As in the case of all other classes dealt with in this section, if $\RO(P)\in H_\theta$, then 
$H_\theta\models\mbox{`$\RO(P)$ satisfies the $S$-condition}$' if and only if 
$\RO(P)$ satisfies the $S$-condition.

The remaining properties of $S\textsf{-cond}$ other than the closure under complete subalgebras
are left to the reader.

We prove now that $S\textsf{-cond}$ is a class closed under complete subalgebras;
it will be convenient for this to resort to the algebraic properties of complete injective homomorphisms
with adjoints outlined in Theorem~\ref{eRetrProp} of the appendix.

Assume $\bool{B}$ is a complete subalgebra of some $\bool{C}$ satisfying the $S$-condition.
Let $\pi:\bool{C}\to\bool{B}$ be the adjoint map of the inclusion map of $\bool{B}$ into $\bool{C}$.
Let $\sigma$ be the winning strategy for player II in $\mtcl G^{\bool{C}^+}$. 
Define 
$\sigma'$ to be the strategy for player II in 
 $\mtcl G^{\bool{B}^+}$ obtained by the following procedure:
 \begin{itemize}
 \item
 Player I and II build 
 partial plays $\ap{T_{n},\bp{b_\eta:\eta\in T_n}}$  in 
 $\mtcl G^{\bool{B}^+}$ and partial plays $\ap{T_n,\bp{c_\eta:\eta\in T_n}}$ in 
 $\mtcl G^{\bool{C}^+}$ according to these prescriptions:
 \begin{itemize}
 \item
 for all $\eta\in T_n$ and all $n$
 we have that $\pi(c_\eta)=b_\eta$;
 \item
 for all terminal nodes $\eta\in T_{2n}$ 
 with $\eta=\ap{\gamma_0,\dots,\gamma_m}$, we have that 
 \[
 c_\eta=b_\eta\wedge c_{\ap{\gamma_0,\dots,\gamma_{m-1}}};
 \]
  \item 
  $\ap{T_{2n+1},\bp{c_\eta:\eta\in T_{2n+1}}}=\sigma(\ap{T_{2n},\bp{c_\eta:\eta\in T_{2n}}})$;
 \end{itemize}
 \item Player II defines $\sigma'(\ap{T_{2n},\bp{b_\eta:\eta\in T_{2n}}})=\ap{T_{2n+1},\bp{b_\eta:\eta\in T_{2n+1}}}$.
 \end{itemize}
 
 Now assume $\ap{T,\bp{b_\eta:\eta\in T}}$ is built according to a play of $\mtcl G^{\bool{B}^+}$ in which II follows 
 $\sigma'$.
 Fix a subtree $T'\subseteq T$ as given by the winning condition for II in
 $\mtcl G^{\bool{B}^+}$.
 Given  a $V$-generic filter $H$ for $\bool{B}$, we must find some infinite branch $\eta$ of $T'$ in $V[H]$ such that
 $b_{\eta\restriction n}\in H$ for all $n$.
 To find this branch let 
 $\ap{T,\bp{c_\eta:\eta\in T}}$ be the tree built in tandem with $\ap{T,\bp{b_\eta:\eta\in T}}$
 according to the rules we used to define $\sigma'$. 
 Fix $G\supseteq H$ $V$-generic for $\bool{C}$.
 Since $\bool{C}$ satisfies the $S$-condition, we can find some infinite 
 branch $\eta$ of $T'$ in $V[G]$ such that $c_{\eta\restriction n}\in G$ for all $n$.
 This gives that $b_{\eta\restriction n}=\pi(c_{\eta\restriction n})\in H$ for all $n$.
 Hence, in $V[G]$ there is an infinite branch $\eta$ of $T'$ such that
 $b_{\eta\restriction n}\in H$ for all $n$. Therefore the tree $T^*=\bp{\eta\in T': b_\eta\in H}\in V[H]$ 
 is ill-founded in $V[G]$.
But then the same is true in $V[H]$ by absoluteness of ill-foundedness. Finally, any infinite branch of $T^*$ witnesses the winning condition for II 
 using
 $\sigma'$ relative to $T,T',H$.

 Since this can be done for all possible choices of $T\supseteq T'$ in $V$ with $T$ constructed using $\sigma'$, and 
 for all  $V$-generic filters $H$ for $\bool{B}$, we have that 
 $\bool{B}$ satisfies the $S$-condition.
\end{proof}


\subsection{Incompatible bounded category forcing axioms}\label{david:mainresults}

In this section we  isolate $\al_1$-many classes $\Gamma$ of forcing notions with $\lambda_\Gamma=\omega_1$, all of which are absolutely well-behaved (in some cases modulo the existence of unboundedly many measurable cardinals), and which are pairwise incompatible.
Our main results are the following.

\begin{theorem}\label{main-thm-suitable} 
\begin{enumerate}
\item Each of the following classes $\Gamma$ is absolutely well-behaved and such that $\lambda_\Gamma=\omega_1$. 

\begin{enumerate}
\item $\PR$
\item $\PR\cap \STP$
\item $\PR\cap\,^\o\o\textsf{-bounding}$
\item  $\PR\cap\STP\cap\,^\o\o\textsf{-bounding}$
\item $\rho\mbox{-}\PR$ for every countable indecomposable ordinal $\rho$ such that $1<\rho<\o_1$.

\item ${<}\o_1\mbox{-}\PR$ 

\item  $S\textsf{-cond}$
\end{enumerate}

\item 
Suppose there  is a proper class of measurable cardinals. Then each of the following classes $\Gamma$ is absolutely well-behaved and such that $\lambda_\Gamma=\omega_1$. 

\begin{enumerate}
\item $\rho\mbox{-}\SP$ for every countable indecomposable ordinal $\rho<\omega_1$.
\item $(\rho\mbox{-}\SP)\cap \STP$ for every countable indecomposable ordinal $\rho<\omega_1$.
\item $(\rho\mbox{-}\SP)\cap\,^\o\o\textsf{-bounding}$ for every countable indecomposable ordinal $\rho<\omega_1$.
\item $(\rho\mbox{-}\SP)\cap\STP\cap\,^\o\o\textsf{-bounding}$ for every countable indecomposable ordinal $\rho<\omega_1$.
\item ${<}\o_1\mbox{-}\SP$ 
\item $({<}\o_1\mbox{-}\SP)\cap \STP$
\item $({<}\o_1\mbox{-}\SP)\cap\,^\o\o\textsf{-bounding}$
\item $({<}\o_1\mbox{-}\SP)\cap\STP\cap\,^\o\o\textsf{-bounding}$ 
\end{enumerate}

\end{enumerate}
\end{theorem}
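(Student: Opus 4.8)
## Proof Plan

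The plan is to verify that each of the listed classes $\Gamma$ satisfies all six clauses of Definition~\ref{def:wellbeh} (absolute well-behavedness), since Theorem~\ref{thm:mainth1} and its corollary then do the rest. The work naturally splits according to which clause is at stake. Clause~(\ref{def:wellbeh1}) --- that $\lambda_\Gamma = \omega_1$ --- holds for every class on the list because each contains all countably closed forcings (so $\omega_1$ is the largest cardinal preserved by all members), as recorded in Facts~\ref{fac:Birkhoffproper}, \ref{fac:Birkhoffsemiproper}, \ref{fac:sigma2SPTomombound}, \ref{fac:sigma2Scond}. Clause~(\ref{def:wellbeh2}) --- closure under isomorphisms, two-step iterations, lottery sums, restrictions and complete subalgebras --- is again supplied by those same Facts for the ``atomic'' classes ($\rho$-$\PR$, ${<}\o_1$-$\PR$, $\rho$-$\SP$, ${<}\o_1$-$\SP$, $\STP$, $^\o\o\textsf{-bounding}$, $S\textsf{-cond}$); for the intersection classes one observes that each of these closure properties is preserved under (finite) intersections, which is routine. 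Clause~(\ref{def:wellbeh5}) (containing all ${<}\lambda_\Gamma$-closed, i.e.\ countably closed, forcings) is likewise immediate from the same Facts together with closure under intersection. Clause~(\ref{def:wellbeh6}) --- absoluteness of the defining formula between $V_\delta$ and $V$ for inaccessible $\delta$ --- follows from Remark~\ref{sufficience-remark}: each atomic class admits both a $\Sigma_2$ and a $\Pi_2$ definition in parameters from $H_{\omega_2}$ (Facts~\ref{fac:sigma2proper}, \ref{fac:sigma2semiproper}, \ref{fac:sigma2SPTomombound}, \ref{fac:sigma2Scond}), hence so does a finite intersection, and a property that is both $\Sigma_2$ and $\Pi_2$ is absolute between $V_\delta$ and $V$ whenever $\delta$ is inaccessible above the parameters (standard L\'evy reflection).

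The two substantive clauses are (\ref{def:wellbeh3}) (strategic ${<}\Ord$-closure of $\Gamma^{V[G]}$ in all generic extensions $V[G]$ by members of $\Gamma$, i.e.\ iterability with a definable strategy) and (\ref{def:wellbeh4}) (density of $\Gamma$-rigid complete Boolean algebras). For clause~(\ref{def:wellbeh3}), I would invoke the iteration lemmas collected in Subsection~\ref{iterability}: properness is iterable by the Shelah iteration theorem (direct limits at cofinality $\omega_1$, full limits at cofinality $\omega$), and the Baumgartner property holds for proper forcing; the variants $\rho$-$\PR$, ${<}\o_1$-$\PR$, and the intersections with $\STP$ and $^\o\o\textsf{-bounding}$ are handled by the refined iteration theorems preserving these extra properties (Shelah's revised-countable-support machinery for the semiproper cases, the measurable-cardinal hypothesis entering precisely to guarantee semiproperness is preserved at limits of uncountable cofinality); $S\textsf{-cond}$ is iterable by Shelah's results in \cite[XI]{SHEPRO}. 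In each case the strategy for player~$II$ in $\mtcl G(\Gamma)$ --- play identities at successor stages, direct limits at stages of cofinality $\lambda_\Gamma = \omega_1$ and at inaccessible stages, full (inverse) limits otherwise --- is visibly definable in $(V,\in)$ from the parameter $a_\Gamma$, and the same applies verbatim in every $V[G]$ with $\bool{B}\in\Gamma$ because the iteration theorems are theorems of $\ZFC$ (plus, where needed, ``there is a proper class of measurable cardinals'', a statement preserved by set forcing). The one point requiring care is that the measurable-cardinal hypothesis, the extra combinatorial properties $\STP$ and $^\o\o\textsf{-bounding}$, and semiproperness must all be simultaneously preserved through the iteration and persist in $V[G]$; this is exactly what the iteration lemmas of Subsection~\ref{iterability} are designed to deliver, so I would cite them directly rather than reprove anything.

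Clause~(\ref{def:wellbeh4}) --- density of $\Gamma$-rigid cbas --- is the main obstacle, and I would reduce it, via the theorem of Subsection~\ref{sec:gammafreezgammarig}, to the $\Gamma$-freezing property: it suffices to show that for every $\bool{B}\in\Gamma$ there is an injective $\Gamma$-correct $i:\bool{B}\to\bool{C}$ that $\Gamma$-freezes $\bool{B}$ (given clauses (\ref{def:wellbeh1})--(\ref{def:wellbeh3}), (\ref{def:wellbeh5}), (\ref{def:wellbeh6}), which we have just established). By Fact~\ref{fac:keyrmkfreeze} it is enough to produce, for each $\bool{B}\in\Gamma$, a $\bool{B}$-name $\dot{\bool{Q}}$ for a forcing in $\Gamma$ such that $\bool{C}=\bool{B}\ast\dot{\bool{Q}}$ codes the generic filter $\dot G_{\bool{B}}$ as a subset $A_{\dot G_{\bool{B}}}\subseteq\omega_1$ that is, in every $\SSP$ outer model, the unique set with a specified property --- this makes $\bool{C}$ $\SSP$-freeze $\bool{B}$, hence (since $\Gamma\subseteq\SSP$ in all cases of interest) $\Gamma$-freeze it. This is where the four types of freezing posets isolated in Subsection~\ref{4-freezing-posets} come in: depending on which class $\Gamma$ we are in, one chooses the appropriate freezing forcing --- the basic proper/semiproper coding forcing shooting a club through a set determined by $\dot G_{\bool{B}}$ for $\PR$, $\SP$, $\rho$-$\PR$, $\rho$-$\SP$; a variant compatible with preserving Suslin trees for the $\STP$-classes; a variant that is $^\o\o$-bounding for the $^\o\o\textsf{-bounding}$-classes; and combinations thereof; and a version adapted to the $S$-condition for $S\textsf{-cond}$ --- and checks (a) that it lies in $\Gamma^{V[G_{\bool{B}}]}$ (so that $\bool{C}\in\Gamma$ by closure under two-step iterations), and (b) that the coding is absolute in the required sense. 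Step~(a) is the delicate part: one must verify that each freezing forcing retains the defining property of $\Gamma$ --- $\rho$-properness or $\rho$-semiproperness, Suslin-tree preservation, $^\o\o$-boundedness, or the $S$-condition --- and it is precisely for the semiproper family, and for Suslin-tree preservation in conjunction with Namba-type forcings, that the proper class of measurable cardinals is invoked. I expect the bulk of the actual proof in Subsection~\ref{omega1suitability} to be this verification, carried out class by class using the four freezing posets of Subsection~\ref{4-freezing-posets}.
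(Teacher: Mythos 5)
Your overall architecture coincides with the paper's: clauses (1), (2), (5), (6) of the definition of well-behavedness are dispatched exactly as you say via Facts~\ref{fac:sigma2proper}--\ref{fac:sigma2Scond} and Remark~\ref{sufficience-remark}; clause (3) is the iteration lemmas of Subsection~\ref{iterability} with player $II$ playing (revised) countable support limits; and clause (4) is reduced to freezing via the theorem of Subsection~\ref{sec:gammafreezgammarig} and Fact~\ref{fac:keyrmkfreeze}, then discharged by the four freezing posets. Two points, however, need correction. First, you place the proper class of measurable cardinals in the wrong step: it is \emph{not} needed for the semiproper iteration theorems (Lemmas~\ref{pres-semiproper}, \ref{pres-semiproper-bounding}, \ref{miyamotosuslinsp} are large-cardinal-free; RCS limits handle uncountable cofinalities on their own). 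The measurables enter only in clause (4), through Lemma~\ref{psiAC-measurable}: the freezing poset $\mtcl Q_{\kappa,S,T}$ for the semiproper classes is built from a normal measure on $\kappa$, and one needs arbitrarily large such $\kappa$ to freeze arbitrarily large $\bool{B}$. Nothing here involves Namba-type forcing.

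Second, your picture of ``one basic coding forcing with variants'' for clause (4) would fail if executed literally. The four freezing posets are genuinely different constructions matched to the classes in a non-interchangeable way: the $\MRP$-poset of Proposition~\ref{freeze1} works for $\PR$ and its intersections with $\STP$ and $^\o\o$-bounding precisely because it is proper, preserves Suslin trees and adds no reals, but it is \emph{not} $\o$-proper, so it cannot freeze within $\rho$-$\PR$ for $\rho>1$; there the paper must switch to the Todor\v{c}evi\'c--Veli\v{c}kovi\'c colouring coding of Proposition~\ref{freeze1.5}, which is c.c.c.\ after a collapse and hence lands in every $\rho$-$\PR$ --- but adds reals and is not known to preserve Suslin trees, which is exactly why the classes $(\rho\mbox{-}\PR)\cap\STP$, $(\rho\mbox{-}\PR)\cap\,^\o\o\textsf{-bounding}$ for $\rho>1$ are left open in the paper and do not appear in the theorem. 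Similarly, $S\textsf{-cond}$ needs its own coding (Proposition~\ref{freeze3}) and its own iteration strategy interleaving collapses $\Coll(\o_1, 2^{|\bool{B}|})$ at successor stages, so ``play identities at successors'' is not an option there. The matching of coding poset to class is where all the mathematical content of the theorem lives, and your plan treats it as a routine case split when it is in fact the load-bearing (and partially obstructed) part of the argument.
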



\begin{theorem}\label{main-thm-incompatible} 
Suppose there is a supercompact cardinal $\delta$ such that $V_\delta\prec V$.
Suppose $\Gamma$ and $\Gamma'$ are any two different classes of forcing notions mentioned in Theorem 
\ref{main-thm-suitable}. Then $\BCFA(\Gamma)$ implies $\lnot\BCFA(\Gamma')$.
\end{theorem}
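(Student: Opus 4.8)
The plan is to show that for any two distinct classes $\Gamma$, $\Gamma'$ from Theorem~\ref{main-thm-suitable}, the axioms $\BCFA(\Gamma)$ and $\BCFA(\Gamma')$ decide some first-order statement about $H_{\omega_2}$ in opposite ways, so that they cannot both hold. The strategy is to exploit the combinatorial \emph{feature} forced by each $\Gamma$: since $\BCFA(\Gamma)$ is (by Theorem~\ref{thm:mainth1} and its corollary) forced by $\Gamma_\delta\restriction\bool{C}$ for $V_\delta\prec V$ inaccessible (supercompact, in the case of the semiproper-type classes) and $\bool{C}\in\Gamma_\delta$, and since $H_{\omega_2}$ of the $\BCFA(\Gamma)$-model is an elementary submodel of the full $\Gamma$-generic extension $V[G]$, any first-order sentence $\sigma$ about $H_{\omega_2}$ that is forced by \emph{some} $\bool{C}\leq_\Gamma\bool{B}$ and then preserved by all further forcing in $\Gamma$ must hold in the $\BCFA(\Gamma)$-model. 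Conversely, if $\lnot\sigma$ is forced and preserved by forcing in $\Gamma$, then $\lnot\sigma$ holds in the $\BCFA(\Gamma)$-model. So the entire task reduces to manufacturing, for each pair $(\Gamma,\Gamma')$, a sentence separated in this way.

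The key steps, in order, would be: \textbf{(1)} Record the uniformization principle: for a well-behaved $\Gamma$, if there is $\bool{C}\leq_\Gamma \bool{B}$ forcing $H_{\omega_2}\models\sigma$ and, for every $\bool{D}\leq_\Gamma\bool{C}$, $\bool{D}$ forces $H_{\omega_2}\models\sigma$, then $\BCFA(\Gamma)$ implies $H_{\omega_2}\models\sigma$; and symmetrically. This is immediate from $H_{\omega_2}^V\prec V[G]$ together with the density of $D_\Gamma$. \textbf{(2)} Handle the ``defining property'' dimension. For $\Gamma$ among $\STP$-type classes versus non-$\STP$ classes: forcing with $\Gamma_\delta\restriction\bool{C}$ for a $\STP$-type $\Gamma$ forces ``there is a Suslin tree'' to be \emph{false} in a robust way only if $\Gamma$ kills Suslin trees, so instead I would use that a non-$\STP$ class $\Gamma'$ contains a forcing collapsing/specializing every Suslin tree, so $\BCFA(\Gamma')\Rightarrow H_{\omega_2}\models$``there are no Suslin trees'' — wait, that is not preserved. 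The correct move: for classes $\Gamma\subseteq\STP$, the forcing $\Gamma_\delta$ is $\STP$, and any Suslin tree in $V$ survives; by a bookkeeping/resurrection argument along the iteration one arranges that in the $\BCFA(\Gamma)$-model ``there exists a Suslin tree'' holds, because Suslin trees are added cofinally and never killed. For a class $\Gamma'\not\subseteq\STP$, since $\Gamma'$ contains the forcing that adds a club specializing a given Suslin tree and this forcing is in $\Gamma'$ and its effect is $\Pi_1$-preserved by $\Gamma'$-forcing (the specializing function remains), one gets ``every tree is special / non-Suslin'' in the $\BCFA(\Gamma')$-model, hence $\lnot\BCFA(\Gamma)$ from a model of $\BCFA(\Gamma')$ in this case. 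Analogously for $^\o\o$-bounding versus not (a dominating real is $\Pi_1$-preserved under $^\o\o$-bounding forcing but any non-$^\o\o$-bounding class adds, cofinally, unbounded reals). \textbf{(3)} Handle the ``properness degree'' dimension ($\rho$-$\PR$ vs.\ $\rho'$-$\PR$ for $\rho\ne\rho'$, $\PR$ vs.\ ${<}\o_1$-$\PR$, proper vs.\ semiproper, $S$-cond vs.\ the rest). Here I would use Fact~\ref{char-proper}/\ref{char-semiproper}: membership in $\rho$-$\PR$ is equivalent to preservation of $\rho$-stationary sets, so for $\rho<\rho'$ the class $\rho$-$\PR$ strictly contains $\rho'$-$\PR$ and contains a forcing destroying $\rho'$-stationarity while being $\rho$-proper; the destruction of a witnessing $\rho'$-stationary set is a first-order statement over $H_{\omega_2}$ (coding the relevant set of countable sequences via a surjection $\omega_1\to X$) that is $\Pi_1$-preserved by further $\rho$-proper forcing. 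So $\BCFA(\rho\text{-}\PR)$ implies this failure, while $\BCFA(\rho'\text{-}\PR)$ implies its preservation — these contradict. Proper vs.\ semiproper is the classical case: a proper forcing (Namba-killing, e.g.\ $\Coll(\omega_1,\omega_2)$) shooting a cofinal $\omega_1$-sequence into $\omega_2$ while being proper sits in $\PR\subseteq\SP$ but whether Namba-type consequences / the ``Club Bounding'' dichotomy alluded to in the introduction holds is decided oppositely; I would invoke exactly the Club Bounding argument sketched in the introduction (using the measurable cardinal for the semiproper side and $V_\delta\prec V$ for the proper side). \textbf{(4)} Assemble: since every pair of distinct classes in Theorem~\ref{main-thm-suitable} differs in at least one of these coordinates (Suslin-tree preservation, $^\o\o$-boundedness, properness degree / semiproperness / $S$-condition), pick the corresponding separating sentence and apply step (1) on both sides.

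The main obstacle I expect is step (3), and specifically verifying the \emph{preservation} (``once forced, stays forced by all subsequent $\Gamma$-forcing'') clause for the separating sentences in the properness-degree hierarchy and for $S$-cond. For the ``feature'' classes ($\STP$, $^\o\o$-bounding) this preservation is essentially built into membership of $\Gamma$ — that is precisely why those properties are in the class. But for the $\rho$-$\PR$ hierarchy, I need to (a) produce, for each $\rho'>\rho$, a concretely-$\rho$-proper forcing that kills a specific $\rho'$-stationary set, and (b) check that the resulting first-order witness over $H_{\omega_2}$ (the non-$\rho'$-stationarity of some coded set) cannot be undone by any later $\rho$-proper forcing — which should follow because a $\rho$-proper forcing preserves $\rho$-stationarity of \emph{its complement-side} but one must be careful that ``$S$ is not $\rho'$-stationary'' is genuinely upward absolute under $\rho$-proper forcing; this requires that $\rho$-proper forcings cannot create new $\rho'$-chains through a club that avoids $S$, i.e.\ a stability lemma in the spirit of Fact~\ref{char-proper} applied at level $\rho$ rather than $\rho'$. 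I would handle this by choosing the witness to be a very canonical non-reflecting-type object (e.g.\ built from a ladder system or from a $\square$-like sequence specific to $\rho'$) whose non-$\rho'$-stationarity is $\Pi_1$ over $H_{\omega_2}$ and hence trivially upward absolute. The $S$-cond case is separated from all $\rho$-$\PR$ and $\rho$-$\SP$ classes because $S$-cond contains Namba forcing \emph{and} all countably closed forcings but, under $\CH$ in intermediate models, adds no reals — so $\BCFA(S\text{-cond})$ forces a Namba-type cofinal $\omega_1$-sequence-structure on $\omega_2$ together with a reals-preserving profile that no proper or semiproper class can match; I would isolate a single sentence capturing ``$\cf(\omega_2^{V})=\omega$-type scar plus $\lnot$(some proper-axiom consequence)'' and read off the contradiction. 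Everything else (steps 1, 2, 4) is bookkeeping around the already-established Theorem~\ref{thm:mainth1} and Corollary after it.
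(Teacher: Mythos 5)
Your overall architecture is the paper's: from $\BCFA(\Gamma)$ plus the forcibility of $\BCFA(\Gamma)\wedge\sigma$ by a member of $\Gamma$ one gets $H_{\omega_2}\models\sigma$ (this is Corollary \ref{completeness} together with Theorem \ref{thm:mainth1}), and the proof reduces to exhibiting, for each pair of classes, a sentence decided oppositely. Your treatment of the $\STP$ and ${}^\o\o$-bounding coordinates and of proper versus semiproper (via Club Bounding and $\psi_{AC}$) is essentially what the paper does in Lemmas \ref{consespfa+++}, \ref{consespfa+++omegaomega-bounding}, \ref{FAPRcapSTPvsd}, \ref{consesmm+++pressuslintrees} and \ref{yielding-psiAC}, modulo one concrete slip: to see that $\BFA_{\al_1}(({<}\o_1\mbox{-}\PR)\cap{}^\o\o\textsf{-bounding})$ kills Suslin trees you cannot use a specializing forcing (its finite-condition version adds Cohen reals, so it is not ${}^\o\o$-bounding); the paper instead forces with the Suslin tree itself, which is c.c.c.\ and adds no reals. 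Likewise the $S$-cond separation is done cleanly via $\CH$ (indeed $\diamondsuit$) versus $\lnot\CH$ (Lemmas \ref{scnd-conses} and \ref{random}), not via a ``Namba scar'' sentence, which you never pin down.

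The genuine gap is exactly where you flag it: separating $\rho\mbox{-}\PR$ from $\rho'\mbox{-}\PR$ (and $\rho\mbox{-}\SP$ from $\rho'\mbox{-}\SP$) for distinct indecomposable $\rho<\rho'<\o_1$. Your plan---kill a $\rho'$-stationary set by a $\rho$-proper forcing and argue that its failure is upward absolute---is left entirely unconstructed, and the construction is the hard part: the killing poset must be $\rho$-proper \emph{and}, to handle the decorated classes, preserve Suslin trees and add no reals; and on the $\rho'$-side one needs a witness that is actually \emph{implied} by $\BCFA(\rho'\mbox{-}\PR)$ or $\BCFA(\rho'\mbox{-}\SP)$, not merely consistent with it, which requires a forcing in the larger class producing the witness together with a preservation theorem for the whole class. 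The paper resolves this with the club-guessing hierarchy: $\tau'$-$\TWCG$ is destroyed by an $\o^\tau$-proper, real-preserving, Suslin-tree-preserving poset for every $\tau'>\tau$ (the poset from \cite{AFMS}, whose preservation of Suslin trees requires a separate Miyamoto-style argument given in the paper), while a countably closed forcing adds a $\tau$-$\TCG$-sequence that every $\o^\tau$-semiproper forcing preserves, so $\BCFA(\Gamma)$ yields $\tau$-$\TCG$ whenever $\Gamma\sub\o^\tau\mbox{-}\SP$ holds after countably closed forcing. Without identifying these (or equivalent) principles and verifying the relevant preservation facts, the pairwise incompatibility across the $\al_1$-many levels of the hierarchy is not established.
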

 
 \begin{remark} For some choices of $\Gamma$ and $\Gamma'$, the incompatibility of $\BCFA(\Gamma)$ and $\BCFA(\Gamma')$ can be proved just assuming the existence of an inaccessible cardinal $\delta$ such that $V_\delta\prec V$, or just the existence of an inaccessible $\delta$ such that $V_\delta\prec V$ together with the existence of a proper class of Woodin cardinals. \end{remark}

As will be clear from the proofs, Theorems \ref{main-thm-suitable} and \ref{main-thm-incompatible} are just 
selected samples of a zoo of possibly incompatible instances of $\BCFA(\Gamma)$.
 In particular, it should be possible to combine (some of) the classes mentioned in Theorem  \ref{main-thm-suitable} with other classes of forcing notions, besides $\STP$ and $^\o\o\mbox{-bounding}$, so long as these classes have a suitable iteration theory and reasonable closure properties, are both 
 $\Sigma_2$ definable and 
  $\Pi_2$ definable, and the resulting classes contain $\SSP$-freezing posets.


We should point out that the following natural question---in the present context---remains open.

\begin{question} Is there, under any reasonable large cardinal, any indecomposable $\rho<\o_1$, $\rho>1$, for which any of the following classes is absolutely well-behaved?

\begin{enumerate}
\item $(\rho\mbox{-}\PR)\cap \STP$
\item $(\rho\mbox{-}\PR)\cap\,^\o\o\textsf{-bounding}$
\item $(\rho\mbox{-}\PR)\cap\STP\cap\,^\o\o\textsf{-bounding}$ 
\item $({<}\o_1\mbox{-}\PR)\cap \STP$
\item $({<}\o_1\mbox{-}\PR)\cap\,^\o\o\textsf{-bounding}$
\item $({<}\o_1\mbox{-}\PR)\cap\STP\cap\,^\o\o\textsf{-bounding}$ 
\end{enumerate}
\end{question}

The following question, of a more foundational import, addresses the possibility of there being absolutely well-behaved classes $\Gamma$ such that $\lambda_\Gamma>\omega_1$.

\begin{question} Are there, under some reasonable large cardinal assumption, any cardinal $\l\geq\o_2$ and any class 
$\Gamma$ of forcing notions such that $\Gamma$ is absolutely well-behaved and such that $\lambda_\Gamma=\lambda$? Are there, again  under some reasonable large cardinal 
assumption, any cardinal $\l\geq\o_2$ and any class $\Gamma$ of forcing notions with $\lambda_\Gamma=\l$ and such that $\Gamma$ is absolutely well-behaved and 
such that $\BCFA(\Gamma)$ is compatible with---or, even, implies---$\BCFA(\Gamma')$ for any 
absolutely well-behaved class $\Gamma'$ with $\lambda_{\Gamma'}=\omega_1$?
\end{question}

\subsection{Proof of theorems ~\ref{main-thm-suitable} and ~\ref{main-thm-incompatible}.}\label{david:proofs}

\subsubsection{Four freezing posets}\label{4-freezing-posets}

In this section we introduce four instances of $\SSP$-freezing posets. We feel free to confuse posets with complete
boolean algebras, as the context will dictate which is the correct intended meaning of the concept.
When proving $\SSP$-freezability, we will actually be showing the following sufficient condition (for $\lambda=\omega_1$).

\begin{lemma}\label{freezability-sufficient} Let $\lambda\geq\omega_1$ be a cardinal, $\bool{B}$ a forcing notion, and 
$\dot{\bool{C}}$ a $\bool{B}$-name for a forcing notion. 
Suppose that $p$ is a set, and that if $G$ is a $\bool{B}$-generic filter, then $\bool{C}=\dot{\bool{C}}_G$ forces that there is some $A_G\subseteq\lambda$ coding $G$ in an absolute way mod.\ $\SSP(\lambda)$, in the sense that there is some $\Sigma_1$ formula $\varphi(x, y, z)$ such that, if $H$ is a $\bool{B}\ast\dot{\bool{C}}$-generic filter over $V$ such that $H\cap \bool{B}=G$, then

\begin{enumerate}

\item $(H_{\lambda^+}; \in, \NS_\lambda)^{V[H]}\models\varphi(G, A_G, p)$, and

\item  in every outer model $M$ of $V[H]$ such that $\mtcl P(\lambda)^{V[H]}\cap (\NS_\lambda)^M=(\NS_\lambda)^{V[H]}$, if $$(H_{\lambda^+}; \in, \NS_\lambda)^{M}\models\varphi(G_0, A_{G_0}, p)$$ and $$(H_{\lambda^+}; \in, \NS_\lambda)^{M}\models\varphi(G_1, A_{G_1}, p),$$ then $G_0=G_1$.\end{enumerate} 

Then the natural inclusion $$i:\bool{B}\into \bool{B}\ast\dot{\bool{C}}$$ $\SSP(\lambda)$-freezes $\bool{B}$.

\end{lemma}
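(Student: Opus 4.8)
The plan is to verify characterization (\ref{lem:eqfrGamma1}) of $\SSP(\lambda)$-freezing from Lemma~\ref{lem:eqfrGamma} applied to $k=i$ and target $\bool{B}\ast\dot{\bool{C}}$: I will show that the map $b\mapsto(\bool{B}\ast\dot{\bool{C}})\restriction i(b)$ preserves incompatibility for $\leq_{\SSP(\lambda)}$. Equivalently, via characterization (\ref{lem:eqfrGamma2}) of that lemma, it suffices to fix $\bool{R}\leq_{\SSP(\lambda)}\bool{B}\ast\dot{\bool{C}}$ together with a $V$-generic filter $H$ for $\bool{R}$, and to show that the $\bool{B}$-part $K\cap\bool{B}$ is \emph{one and the same} $V$-generic filter $G$ for $\bool{B}$ for \emph{every} $\SSP(\lambda)$-correct $V$-generic filter $K\in V[H]$ for $\bool{B}\ast\dot{\bool{C}}$; at least one such $K$ exists, obtained by pulling $H$ back along a homomorphism witnessing $\bool{R}\leq_{\SSP(\lambda)}\bool{B}\ast\dot{\bool{C}}$. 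Throughout I use that $i$ is $\SSP(\lambda)$-correct (as holds in all our applications, where $\dot{\bool{C}}$ names a forcing in $\SSP(\lambda)$), so that Lemma~\ref{lem:eqfrGamma} applies, and I take $\lambda$ regular uncountable — the case $\lambda=\omega_1$ being the one we care about.

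So let $K_0,K_1\in V[H]$ be $\SSP(\lambda)$-correct $V$-generic filters for $\bool{B}\ast\dot{\bool{C}}$, and put $G_j=K_j\cap\bool{B}$. Each $V[K_j]$ is a generic $\bool{B}\ast\dot{\bool{C}}$-extension of $V$ with $K_j\cap\bool{B}=G_j$, so hypothesis (1) of the lemma, read inside $V[K_j]$, yields a set $A_{G_j}\subseteq\lambda$ in $V[K_j]$ with $(H_{\lambda^+};\in,\NS_\lambda)^{V[K_j]}\models\varphi(G_j,A_{G_j},p)$. Since $K_j$ is an $\SSP(\lambda)$-correct generic, $V[H]$ is an extension of $V[K_j]$ by a forcing in $\SSP(\lambda)^{V[K_j]}$, hence it preserves the regularity of $\lambda$ and the stationarity of subsets of $\lambda$ over $V[K_j]$, so $\mathcal P(\lambda)^{V[K_j]}\cap\NS_\lambda^{V[H]}=\NS_\lambda^{V[K_j]}$. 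Because cardinalities only drop in outer models, $H_{\lambda^+}^{V[K_j]}\subseteq H_{\lambda^+}^{V[H]}$, and this inclusion together with the $\NS_\lambda$-agreement makes $(H_{\lambda^+};\in,\NS_\lambda)^{V[K_j]}$ a substructure of $(H_{\lambda^+};\in,\NS_\lambda)^{V[H]}$; as $\varphi$ is $\Sigma_1$, it is upward absolute to the larger structure, so $(H_{\lambda^+};\in,\NS_\lambda)^{V[H]}\models\varphi(G_j,A_{G_j},p)$ for $j=0,1$.

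Now I invoke hypothesis (2) with the base model taken to be $V[K_0]$ (a $\bool{B}\ast\dot{\bool{C}}$-generic extension of $V$ whose $\bool{B}$-part is $G_0$) and with the outer model $M:=V[H]$; the required condition $\mathcal P(\lambda)^{V[K_0]}\cap\NS_\lambda^{V[H]}=\NS_\lambda^{V[K_0]}$ has just been checked, and by the previous paragraph both $\varphi(G_0,A_{G_0},p)$ and $\varphi(G_1,A_{G_1},p)$ hold in $(H_{\lambda^+};\in,\NS_\lambda)^{V[H]}$. Hypothesis (2) then forces $G_0=G_1$, as desired. A little routine bookkeeping remains: one must keep the cokernels of the various witnessing homomorphisms inside the relevant generics so that all pulled-back filters are genuinely generic, and one must note that the common projection $G$ is itself an $\SSP(\lambda)$-correct $V$-generic filter for $\bool{B}$, which follows because $\SSP(\lambda)$ is closed under two-step iterations, so $V[H]$ over $V[G]$ factors as an $\SSP(\lambda)$-extension through $V[K]$.

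I expect the main obstacle to be the transfer of the truth of $\varphi$ from $(H_{\lambda^+};\in,\NS_\lambda)^{V[K_j]}$ up to $(H_{\lambda^+};\in,\NS_\lambda)^{V[H]}$: this is exactly where the ``absolute coding mod.\ $\SSP(\lambda)$'' hypothesis does its work, and it rests on the two facts that an $\SSP(\lambda)$-correct generic quotient computes $\NS_\lambda$ correctly and that $\Sigma_1$ sentences persist to outer structures sharing the same $\NS_\lambda$. Everything else amounts to correctly pairing the abstract hypotheses (1)--(2) with the concrete models $V[K_0]$, $V[K_1]$, $V[H]$ arising from the freezing diagram.
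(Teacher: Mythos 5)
Your proposal is correct and uses essentially the same argument as the paper: place two $V$-generic filters for $\bool{B}\ast\dot{\bool{C}}$ inside a common outer model whose quotients preserve stationary subsets of $\lambda$, apply hypothesis (1) in each intermediate extension, transfer $\varphi$ upward by $\Sigma_1$-absoluteness using the agreement of $\NS_\lambda$, and conclude $G_0=G_1$ from hypothesis (2). The only difference is packaging — you route the conclusion through characterization (\ref{lem:eqfrGamma2}) of Lemma~\ref{lem:eqfrGamma} (uniqueness of the pulled-back $\bool{B}$-generic), whereas the paper verifies characterization (\ref{lem:eqfrGamma1}) directly by deriving a contradiction from incompatible $b_0,b_1\in\bool{B}$ with a common $\SSP(\lambda)$-refinement — and these are proved equivalent in that lemma.
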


\begin{proof}
Suppose, towards a contradiction, that $b_0$, $b_1\in \bool{B}$ are incompatible, $\bool{D}$ is a complete boolean algebra, $k_0:(\bool{B}\ast\dot{\bool{C}})\restr b_0\into\bool{D}$, $k_1:(\bool{B}\ast\dot{\bool{C}})\restr b_1\into \bool{D}$ are complete homomorphisms, $K$ is $\bool{D}$-generic and, for each $\epsilon\in \{0, 1\}$, $H_\epsilon = k_\epsilon^{-1}(K)$ and every stationary subset of $\lambda$ in $V[H_\epsilon]$ remains stationary in $V[K]$. For each $\epsilon\in\{0, 1\}$, let $G_\epsilon$ be the filter on $\bool{B}$ generated by $H_\epsilon\cap (\bool{B}\restr b_\epsilon)$, and let $A_\epsilon\sub \lambda$ be such that $$(H_{\lambda^+}; \in, \NS_\lambda)^{V[H_\epsilon]}\models\varphi(G_\epsilon, A_\epsilon, p)$$ Since $$(H_{\lambda^+}; \in, \NS_\k)^{V[K]}\models\varphi(G_0, A_0, p)\wedge\varphi(G_1, A_1, p),$$ we have that $G_0=G_1$ by (2). But this is impossible since $b_0\in G_0$, $b_1\in G_1$, and since $b_0$ and $b_1$ are incompatible conditions in $\bool{B}$.  
\end{proof}

Our first freezing poset comes essentially from \cite{MOO06}.

Given a set $X$, the Ellentuck topology on $[X]^{\aleph_0}$ is the topology on $[X]^{\aleph_0}$ generated by the sets of the form $[s, Y]$, for $Y\in [X]^{\al_0}$ and $s\in [Y]^{{<}\omega}$, where $[s, Y]=\{Z\in [Y]^{\aleph_0}\,:\, s \subseteq Z\}$.

The following lemma, except for the conclusion that $\mtcl P$ preserves Suslin trees, is due to Moore \cite{MOO06}. The conclusion that $\mtcl P$ preserves Suslin trees is due to Miyamoto  \cite{Miyamoto-Yorioka}. 

\begin{lemma}\label{MRP}
Let  $X$ be a set, $\t$ a cardinal such that $X\in H_{\t}$,  and $\S$ a function with domain $[H_\t]^{\al_0}$ such that for every countable $M\elsub  [H_\t]^{\al_0}$, 

\begin{itemize} 
 
 \item $\S(M)\sub [X]^{\al_0}$ is open in the Ellentuck topology, and 

\item $\S(M)$ is $M$-stationary (meaning that for every function $F:[X]^{{<}\o}\into X$, if $F\in M$, then there is some $Z\in \S(M)\cap M$ such that $F``[Z]^{{<}\o}\sub Z$).\end{itemize}

Let $\mtcl P=\mtcl P_{X, \t, \S}$ be the set, ordered by reverse inclusion, of all countable  $\sub$-continuous $\in$-chains  $p=(M^p_i)_{i\leq\n}$ of countable elementary substructures of $H_\t$ such that for every limit ordinal $i\leq\n$ there is some $i_0<i$ with the property that $M^p_{k}\cap X\in \S(M^p_i)$ for all $k$ such that $i_0<k<i$. 

Then \begin{enumerate}

\item $\mtcl P$ is proper, preserves Suslin trees, and does not add new reals.

\item Whenever $G$ is $\mtcl P$-generic over $V$ and $M^G_i= M^p_i$ for $p\in G$ and $i\in \dom(p)$, 
$(M^G_i)_{i<\o_1}$ is in $V[G]$ the $\sub$-increasing enumeration of a club of $[H_\t^{V}]^{\al_0}$ 
and is such that:
\begin{quote} 
For every limit ordinal 
$i<\o_1$ there is some $i_0<i$ with the property that $M^G_{k}\cap X\in \S(M^G_i)$ for all $k$ such that $i_0<k<i$.   
\end{quote}
\end{enumerate}
\end{lemma}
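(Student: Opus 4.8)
## Proof proposal for Lemma~\ref{MRP}

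The plan is to recognize $\mtcl P = \mtcl P_{X,\t,\S}$ as an instance of Moore's Mapping Reflection Principle forcing and to adapt the known arguments. First I would verify that $\mtcl P$ is \emph{proper and adds no new reals}. Properness follows Moore's original argument: given a countable $N\elsub H_{\t'}$ with $\mtcl P, X, \t, \S\in N$ and a condition $p\in N$, one builds an $(N,\mtcl P)$-generic condition by interleaving a run through the dense sets of $\mtcl P$ lying in $N$ with an application of the $N$-stationarity and Ellentuck-openness of $\S(N\cap H_\t)$: since $\S(N\cap H_\t)$ is open in the Ellentuck topology, once some $M^q_k\cap X$ lands in $\S(N\cap H_\t)$, a whole tail $[s, N\cap X]$ does too, so the genericity conditions at the new limit point can be met by choosing the $i_0$ appropriately and reading off a suitable finite trace $s$. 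The ``no new reals'' clause is the key refinement: because at limit stages we only need a \emph{tail} of the previously enumerated models to have their $X$-traces in $\S$, one has enough freedom to meet infinitely many dense sets deciding the values of a name for a real while simultaneously closing up, so that the generic condition in fact decides the name below $p$; this is exactly the argument that $\mtcl P$ is $\o$-proper/$\o_1$-preserving and countably distributive in \cite{MOO06}. Conclusion (2) is then immediate from genericity and the ``no new reals'' fact: a density argument shows $(M^G_i)_{i<\o_1}$ enumerates a club of $[H_\t^V]^{\al_0}$ (continuity is built into conditions, and for cofinality one uses that any countable $N\prec H_{\t'}$ gives rise to a dense set of conditions of length a successor with top model having the right trace), and the limit clause is inherited verbatim from the definition of the poset.

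The substantive remaining point is that $\mtcl P$ \emph{preserves Suslin trees}, which is the contribution of Miyamoto \cite{Miyamoto-Yorioka}. Here I would argue as follows. Let $T$ be a Suslin tree; it suffices to show that for every $\mtcl P$-name $\dot A$ for a maximal antichain of $T$ and every condition $p$, some extension of $p$ forces $\dot A$ to be countable, or equivalently (using that $T$ has no uncountable antichains in $V$ and $\o_1$ is preserved) that $T$ remains ccc, hence Suslin, in $V[G]$. The standard technique is to force with $\mtcl P\times T$ (or, what amounts to the same, to run the properness argument inside the two-step iteration $T\ast \dot{\mtcl P}$, exploiting that $T$ is ccc and hence that all the relevant elementary submodels are still ``correct''). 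One builds, for a suitable countable $N\prec H_{\t'}$ containing everything relevant and a node $t\in T$ at level $N\cap\o_1$, an $(N,\mtcl P)$-generic condition $q$ such that $(q,t)$ is $(N,\mtcl P\times T)$-generic; the point is that the MRP-style closing-up can be carried out \emph{uniformly in $t$}, because $\S(N\cap H_\t)$ being Ellentuck-open lets us fix a single finite trace that works for cofinally many $t$ above a fixed node, and then a pressing-down/fusion argument shows that no new maximal antichain of $T$ is added. I would largely cite \cite{Miyamoto-Yorioka} for the details here.

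I expect the Suslin-tree preservation to be the main obstacle, precisely because it is not a formal consequence of properness plus countable distributivity: one must genuinely interact with the tree $T$ while running the MRP construction, and the delicate part is arranging that the Ellentuck-open set $\S(M)$ accommodates the branching of $T$ uniformly enough that a fusion argument goes through. Everything else — properness, ``no new reals'', and the structural description in~(2) — is a routine, if careful, adaptation of Moore's original arguments, and I would present those in full while deferring the Suslin-tree part to the cited literature with a sketch of how the two-step-iteration reformulation makes the MRP machinery apply.
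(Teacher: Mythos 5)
The paper does not actually prove Lemma~\ref{MRP}: it is stated as a black box, with properness, the non-addition of reals, and clause~(2) attributed to Moore \cite{MOO06}, and the preservation of Suslin trees to Miyamoto \cite{Miyamoto-Yorioka}. Your sketch is therefore being measured against the literature rather than against an argument in the paper, and in outline it follows the standard route. There is, however, one assertion in it that is false and that the paper explicitly contradicts: you attribute the ``no new reals'' clause to $\mathcal P_{X,\theta,\Sigma}$ being $\omega$-proper. The remark immediately following Lemma~\ref{MRP} states that in most interesting cases this poset is \emph{not} $\omega$-proper, and this is not a side issue here: the whole reason the paper needs Proposition~\ref{freeze1.5} in addition to Proposition~\ref{freeze1} is that the $\MRP$-poset can serve as a freezing poset for $\PR$ but not for $\rho$-$\PR$ with $\rho>1$, precisely because it fails to be $\rho$-proper. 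The correct route to ``no new reals'' is total properness: the $(N,\mathcal P)$-generic decreasing sequence $(q_n)_{n<\omega}$ built inside $N$ has an actual lower bound $q^\ast=\bigcup_n q_n\cup\{(N\cap\omega_1,\, N\cap H_\theta)\}$ --- a condition thanks to the Ellentuck-openness and $M$-stationarity of $\Sigma(N\cap H_\theta)$, which let one trap a tail of the top models' $X$-traces inside a basic open set $[s,Z]\subseteq\Sigma(N\cap H_\theta)$ with $s,Z\in N$ --- and $q^\ast$ decides every name for a real lying in $N$. No appeal to $\omega$-properness is available or needed.

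On the Suslin-tree clause, your proposed mechanism (forcing with $\mathcal P\times T$, fixing ``a single finite trace that works for cofinally many $t$'', and a fusion/pressing-down argument) is both vaguer than and different from the argument the paper itself rehearses in the closely analogous proofs of Lemma~\ref{psiAC-measurable} and of the club-guessing lemma in Section~\ref{incompatible-category-forcing-axioms}. There one fixes a Suslin tree $U$, a name $\dot A$ for a maximal antichain, and a suitable countable $N\prec H_{\theta'}$; enumerates the nodes $(u_n)_{n<\omega}$ of $U$ at level $N\cap\omega_1$; and builds a single $(N,\mathcal P)$-generic decreasing sequence $(p_n)_{n<\omega}$ in $N$ such that $p_{n+1}$ forces some $v<_U u_n$ into $\dot A$, using that each $u_n$ is totally $(U,N)$-generic because $U$ is Suslin. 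The lower bound $p^\ast$ then forces $\dot A\subseteq U\cap N$, hence countable, hence of bounded height; no uniformity in the node and no fusion are required. If you intend to prove rather than merely cite this clause, you should adopt that argument, since as written your sketch does not explain why Ellentuck-openness of $\Sigma(M)$ would deliver the uniformity you invoke.
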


\begin{remark} In most interesting cases, the forcing $\mtcl P_{X, \t, \S}$ in the above lemma is 
not $\o$-proper. \end{remark}

In \cite{MOO06}, Moore defines the Mapping Reflection Principle ($\MRP$) as the following statement: Given $X$, 
$\t$, and $\S$ as in the hypothesis of Lemma \ref{MRP}, there is a $\sub$-continuous $\in$-chain $(M_i)_{i<\o_1}$ 
of countable elementary substructures of $H_\t$ such that for every limit ordinal $i<\o_1$ there is some $i_0<i$ with the property 
that $M_{k}\cap X\in \S(M_i)$ for every $k$ such that $i_0<k<i$. 

It follows from Lemma \ref{MRP} that $\MRP$ is a consequence of $\textsc{PFA}$, and of the 
forcing axiom for the class of  forcing notions in $\PR\cap \STP$ not adding new reals. 

We will call a partial order $\mtcl R$ an \emph{$\MRP$-poset} if there are $X$, $\t$ and $\S$ as in the hypothesis of 
Lemma \ref{MRP} such that $\mtcl R=\mtcl P_{X, \t, \S}$.

\begin{proposition}\label{freeze1}
Given a forcing notion $\mtcl P$,  there is $\mtcl P$-name $\dot{\mtcl Q}$ for  a forcing notion such that 
\begin{enumerate} 
\it $\dot{\mtcl Q}$ is forced to be of the form $\Coll(\o_1, \mtcl P)\ast\dot{\mtcl R}$, where $\dot{\mtcl R}$ is a $\Coll(\o_1, \mtcl P)$-name for an $\MRP$-poset, and 
\it $\mtcl P\ast\dot{\mtcl Q}$ $\SSP$-freezes $\mtcl P$, as witnessed by the inclusion map.
  \end{enumerate}
\end{proposition}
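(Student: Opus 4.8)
The plan is to make the following choice: let $\dot{\mtcl Q}$ be a $\mtcl P$-name forced to be $\Coll(\o_1,\mtcl P)\ast\dot{\mtcl R}$, where, working in the extension $V^{\mtcl P}[h]$ by $\Coll(\o_1,\mtcl P)$, we use the collapsed copy of the generic filter $G$ for $\mtcl P$ to build an appropriate open stationary set mapping $\S_G$ on some $[H_\t]^{\al_0}$ whose associated $\MRP$-poset $\dot{\mtcl R}=\mtcl P_{X,\t,\S_G}$ forces a reflection sequence that canonically \emph{reads off} $G$. Concretely, after collapsing $\mtcl P$ to have size $\o_1$ we may fix a bijection $e:\o_1\to\mtcl P$, represent $G$ as a subset $B_G\subseteq\o_1$ via $e$, and choose $\S_G$ so that the $\MRP$-reflection sequence $(M_i)_{i<\o_1}$ it produces codes $B_G$ into a set $A_G\subseteq\o_1$ in the standard way (e.g.\ via the pattern of $i$ for which $M_i\cap X$ lies in the relevant Ellentuck-open piece); this is exactly the kind of coding used in \cite{MOO06} and its descendants to get $\SSP$-freezing. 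By Lemma \ref{MRP} such $\dot{\mtcl R}$ is proper, preserves Suslin trees, and adds no reals, which gives item (1); the content is item (2).

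For item (2) I would appeal to Lemma \ref{freezability-sufficient} with $\lambda=\o_1$. So I need to exhibit a $\Sigma_1$ formula $\varphi(x,y,z)$ and a parameter $p$ such that, whenever $H$ is $\mtcl P\ast\dot{\mtcl Q}$-generic with $H\cap\mtcl P=G$, the structure $(H_{\o_2};\in,\NS_{\o_1})^{V[H]}$ satisfies $\varphi(G,A_G,p)$, and such that in any outer model $M$ with the same $\NS_{\o_1}$ restricted to $\mtcl P(\o_1)^{V[H]}$, $\varphi$ has at most one witness $(G_0,A_{G_0})$. The first step is to write down what $A_G$ records: the $\MRP$ sequence $(M_i)_{i<\o_1}$ is an $\o_1$-chain of countable models whose union is a club in $[H_\t^{V}]^{\al_0}$, and I encode both this club (as a subset of $\o_1$, after fixing a $V[h]$-definable well-order / bijection, which survives because no reals and more are added) and the bitstring $B_G$ it carries. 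Then $\varphi(G,A,p)$ asserts: ``there is a sequence $(M_i)_{i<\o_1}$ which is a $\sub$-continuous $\in$-chain of countable elementary submodels of some $H_\t$, whose trace on the parameter structure coded by $p$ is a $\subseteq$-increasing sequence enumerating a club in $[H_\t]^{\al_0}$, such that for club-many $i$ the local reflection clause of $\mtcl P_{X,\t,\S_G}$ holds, $A$ is decoded from $(M_i)_i$ by the fixed decoding procedure, and $G$ is the filter on $\mtcl P$ whose bitstring is $A$.'' All the quantifiers here are over sets in $H_{\o_2}$ (the sequence $(M_i)_{i<\o_1}$ and its club of traces are such sets), the clause ``club-many $i$\dots'' is expressible using the predicate $\NS_{\o_1}$, and ``$\S_G(M_i)$ is $M_i$-stationary'' unfolds into a $\Delta_0$-over-$H_{\o_2}$ statement once $p$ supplies $X,\t$ and the recipe for $\S$; so $\varphi$ is $\Sigma_1$ over $(H_{\o_2};\in,\NS_{\o_1})$. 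The parameter $p$ codes $\t$, $X$, and the (absolutely described) rule producing $\S_G$ from a bitstring.

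The main obstacle, and the step I would spend the most care on, is the \emph{uniqueness} clause (2) of Lemma \ref{freezability-sufficient}: in the outer model $M$ there could in principle be a different reflection sequence $(M_i')$, for a different collapsed generic $G_1$, also witnessing $\varphi$. Here one uses the standard rigidity phenomenon for $\MRP$-coding: two distinct bitstrings $B_{G_0}\neq B_{G_1}$ would, by the open-stationarity requirement in the definition of $\mtcl P_{X,\t,\S}$, force the two clubs of models to diverge on a club, and having a reflection sequence for \emph{both} $\S_{G_0}$ and $\S_{G_1}$ simultaneously along a common tail forces a stationary set (in the sense of $V[H]$) to become nonstationary --- which is ruled out by the hypothesis that $M$ computes $\NS_{\o_1}$ correctly on $\mtcl P(\o_1)^{V[H]}$. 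Making this precise requires: (i) checking that the open stationary set mapping is chosen so that $\S_{G_0}(N)\cap\S_{G_1}(N)$ is \emph{not} $N$-stationary for a stationary (in $V[H]$) set of $N$ once $G_0\neq G_1$, which is the genuinely combinatorial heart of the argument and is where the design of the coding must be pinned down carefully; and (ii) verifying that the reflection clause relative to $\S_{G}$ is \emph{downward absolute} enough that a witness in $M$ is still a genuine $\mtcl P_{X,\t,\S_G}$-style reflection sequence as computed in $V[H]$. Once (i) and (ii) are in place, clause (2) follows, Lemma \ref{freezability-sufficient} applies, and the inclusion $i:\mtcl P\into\mtcl P\ast\dot{\mtcl Q}$ $\SSP$-freezes $\mtcl P$, completing the proof. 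I would also double-check along the way that $\Coll(\o_1,\mtcl P)$ being countably closed keeps us inside all the relevant classes and does not disturb $\NS_{\o_1}$, so that the composite $\mtcl P\ast\dot{\mtcl Q}$ genuinely lies in (and freezes relative to) $\SSP$.
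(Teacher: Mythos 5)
Your overall architecture is the paper's: reduce to Lemma \ref{freezability-sufficient} with $\lambda=\o_1$, collapse $\mtcl P$ to size $\o_1$, code $G$ as $B_G\sub\o_1$, and use an $\MRP$-poset to write $B_G$ into a club of $[\o_2]^{\al_0}$ in a way that is rigid modulo $\SSP$. But the step you yourself flag as ``the genuinely combinatorial heart'' is exactly where the proof lives, and your sketch of it is not the mechanism that works. The paper pins the coding down with Moore's device from the proof that $\BPFA$ implies $2^{\al_1}=\al_2$: fix in $V$ a ladder system $\vec C=(C_\d)$ on $\o_1$ and a partition $(S_\a)_{\a<\o_1}$ of $\o_1$ into stationary sets, and for countable $X\sub Y\sub\Ord$ let $c(X,Y)$ be the \emph{absolute, purely local} statement that $\av C_{\ot(Y)}\cap\sup(\pi_Y``X)\av<\av C_{Y\cap\o_1}\cap X\cap\o_1\av$. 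The open stationary set mapping $\S$ sends $N\elsub H_\t$ to the set of $Z\in[\o_2\cap N]^{\al_0}$ for which $c(Z,\o_2\cap N)$ holds iff the unique $\a$ with $N\cap\o_1\in S_\a$ lies in $B_G$. The reflecting sequence then yields a club enumeration $(Z_i)_{i<\o_1}$ of $[\o_2^V]^{\al_0}$ such that whenever $Z_i\cap\o_1\in S_\a$, a tail of $k<i$ satisfies $c(Z_k,Z_i)$ iff $\a\in B_G$. Uniqueness in an outer model $M$ is then \emph{not} proved by arguing that a stationary set becomes nonstationary under conflicting reflection requirements, as you propose; rather, given a second witness $(Z'_i)$ for some $B'$ with $\a\in B\Delta B'$, one intersects the two clubs, uses the preserved stationarity of $S_\a$ to find $i$ with $Z_i=Z'_i$, $Z_i\cap\o_1\in S_\a$, and $Z_k=Z'_k$ for cofinally many $k<i$, and then the single absolute relation $c(Z_k,Z_i)$ must simultaneously hold and fail on a tail of those $k$. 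Your plan (i) --- showing $\S_{G_0}(N)\cap\S_{G_1}(N)$ is not $N$-stationary --- is not needed and would not suffice anyway, because the second witness in $M$ need not arise from any $\MRP$-forcing; it is just an arbitrary club sequence satisfying the $\Sigma_1$ formula.

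A second, related problem is your design of $\varphi$. You propose to include in $\varphi$ the clauses that $(M_i)$ is a chain of elementary submodels of $H_\t$ and that the ``local reflection clause of $\mtcl P_{X,\t,\S_G}$'' holds. These do not live in $H_{\o_2}$: the models $M_i\elsub H_\t$ for large $\t$ and the mapping $\S$ itself are not elements of $H_{\o_2}$, and $M$-stationarity of $\S(N)$ is not expressible there, so your $\varphi$ would not be a $\Sigma_1$ formula over $(H_{\o_2};\in,\NS_{\o_1})$ as Lemma \ref{freezability-sufficient} requires. The correct $\varphi$ mentions only the traces: it asserts the existence of a $\sub$-increasing continuous enumeration $(Z_i)_{i<\o_1}$ of a club of $[\k]^{\al_0}$ (for $\k=\o_2^{V[H]}$) satisfying the tail condition relative to the parameters $\vec C$ and $(S_\a)_{\a<\o_1}$, together with the decoding of $B$ and $G$ from $A$; the $\MRP$-poset is used only in $V[H]$ to produce such a witness, and never appears in the formula. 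Your point (ii) about downward absoluteness of the reflection clause then evaporates, since nothing about reflection or $M$-stationarity needs to transfer to the outer model.
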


\begin{proof}
By Lemma \ref{freezability-sufficient}, it suffices to prove that $\mtcl P$ forces that in $V^{\Coll(\o_1, \mtcl P)}$ there is an  $\MRP$-poset $\dot{\mtcl R}$ such that $\Coll(\o_1, \mtcl P)\ast\dot{\mtcl R}$ codes the generic filter for $\mtcl P$ in an absolute way mod.\ $\SSP$ in the sense of that lemma. For this, let us work in $V^{\mtcl P\ast \Coll(\o_1,\,\mtcl P)}$. Let $\dot B_G$ be a subset of $\o_1$ coding the generic filter for $\mtcl P$ in some canonical way, let $\vec C=(C_\delta \,:\, \delta \in \Lim(\omega_1))\in V$ be a ladder system on $\omega_1$ (i.e., every $C_\d$ is a cofinal subset of $\d$ of order type $\o$), and let $(S_\alpha)_{\alpha<\omega_1}\in V$ be a partition of $\omega_1$ into stationary sets. Given $X\sub Y$, countable sets of ordinals, such that  $Y\cap \omega_1$ and $\ot(Y)$ are both limit ordinals and such that $X$ is bounded in $\mbox{sup}(Y)$, let $c(X, Y)$ mean  $$\av C_{\ot(Y)} \cap \mbox{sup}(\pi_Y``X)\av < \av C_{Y\cap \omega_1}\cap X\cap \omega_1\av,$$ where $\pi_Y$ is the collapsing function of $Y$.

  Let  $\t$ be a large enough cardinal and let $\S$ be the function sending a countable $N\elsub H_\t$ to the set of  $Z\in [\o_2\cap N]^{\al_0}$ such that $c(X, \o_2\cap N)$ iff the unique $\a<\o_1$ such that $N\cap \o_1\in S_\a$ is in $B_{\dot G}$. Now,  $X=\o_2$, $\t$ and $\S$ satisfy the hypothesis of Lemma \ref{MRP} (s.\ \cite{MOO06}).\footnote{$\S$ is in essence the mapping used by Moore in \cite{MOO06} to prove that $\BPFA$ implies $2^{\al_1}=\al_2$.} Let $\mtcl R=\mtcl P_{\o_2, \t, \S}$. By Lemma \ref{MRP}, $\mtcl R$ adds  $(Z^{\dot G}_i)_{i<\o_1}$, a strictly $\sub$-increasing enumeration of a club of $[\o_2^V]^{\al_0}$, such that for every limit ordinal $i<\o_1$, if $Z^{\dot G}_i\cap\o_1\in S_\a$, then  there is a tail of $k< i$ such that  $c(Z^{\dot G}_{k}, Z^{\dot G}_i)$ if and only if $\a\in B_{\dot G}$.   
  
 Let $\k=\o_2$, let $H$ be $(\mtcl P\ast \Coll(\o_1, \mtcl P))\ast \dot{\mtcl R}$-generic, let $G=H\cap\mtcl P$, and let $A_G$ be a subset of $\omega_1$ which canonically codes $B_G$ and $(Z^G_i)_{i<\omega_1}$. If $M$ is any outer model such that every stationary subset of $\o_1$ in $V[H]$ remains stationary in $M$, then $B_G$ is the unique subset $B$ of $\o_1$ for which there is,  in $M$, a set $A\sub\o_1$ coding $B$ together with an $\sub$-increasing enumeration $(Z_i)_{i<\o_1}$ of  a club of $[\k]^{\al_0}$ with the property that  for every limit ordinal $i<\o_1$, if $Z_i\cap\o_1\in S_\a$, then  there is a tail of  $k< i$ such that  $c(Z_{k}, Z_i)$ if and only if $\a\in B$. Indeed, If $B'\in M$ were another such set, as witnessed by $A'\sub M$, $\a\in B\Delta B'$, and $(Z'_i)_{i<\o_1}\in M$ were an $\sub$-increasing enumeration of a club of $[\k]^{\al_0}$ with the property that for every limit ordinal $i<\o_1$, if $Z'_i\cap\o_1\in S_\a$, then there is a tail of  $k< i$ such that  $c(Z_{k}, Z_i)$ if and only if $\a\in B'$, then we would be able to find some $i$ such that $Z_i=Z'_i$, $Z_i\cap \o_1\in S_\a$, and such that $Z_{k}=Z'_k$ for  all $k$ in some cofinal subset $J$ of $i$. But then we would have that  $c(Z_{k}, Z_i)$, for all $k$ in some final segment of $J$, both holds and fails. 
 
 Finally, it is immediate to see that there is a $\Sigma_1$ formula $\varphi(x, y, z)$ such that $\varphi(G, A_G, p)$ expresses the above property of $G$ and $A_G$ over $(H_{\omega_2}; \in)^{M}$ for any $M$ as above, for $p=(\kappa, \vec C, (S_\a)_{\a<\o_1})$. 
\end{proof}

Using coding techniques from \cite{CAIVEL06}, one can prove the following stronger version of Lemma \ref{freeze1}. However, we do not have any use for this stronger form, so we will not give the proof here. 

\begin{lemma} 
Given a  partial order $\mtcl P$  there is $\mtcl P$-name $\dot{\mtcl Q}$ for  a  partial order with the following properties.
\begin{enumerate} 
\it $\dot{\mtcl Q}$ is forced to be of the form $\Coll(\o_1, \mtcl P)\ast\dot{\mtcl R}$ where $\dot{\mtcl R}$ is a 
$\Coll(\o_1, \mtcl P)$-name for a forcing of the form $\dot{\mtcl R}_0\ast\dot{\mtcl R}_1$, 
where $\dot{\mtcl R}_0$ has the countable chain condition and $\dot{\mtcl R_1}$ is forced to be an $\MRP$-poset. 
\it Suppose $b_0$, $b_1\in \RO(\mtcl P)$ are incompatible, $\bool{B}$ is a complete boolean algebra, and 
$k_\epsilon:\RO(\mtcl P\ast\dot{\mtcl Q})\restr b_\epsilon\into\bool{B}$ are complete homomorphisms for 
$\epsilon\in\{0, 1\}$. Then $\bool{B}$ collapses $\o_1$.
  \end{enumerate}

\end{lemma}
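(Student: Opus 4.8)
The plan is to first isolate the right variant of Lemma~\ref{freezability-sufficient}, in which the coding is required to be absolute merely modulo preservation of $\o_1$ rather than modulo $\SSP(\o_1)$, and in which the decoding formula lives over the \emph{pure} structure $(H_{\o_2};\in)$, carrying no predicate for $\NS_{\o_1}$. Concretely, I would prove the following abstract fact: if $\dot{\mtcl Q}$ is a $\mtcl P$-name such that, whenever $G$ is $\mtcl P$-generic, $\dot{\mtcl Q}_G$ forces the existence of some $A_G\subseteq\o_1$ coding $G$ and some $\Sigma_1$ formula $\varphi(x,y,z)$ with the properties that $(H_{\o_2};\in)^{V[H]}\models\varphi(G,A_G,p)$ for a fixed parameter $p$ and every $\mtcl P\ast\dot{\mtcl Q}$-generic $H$ with $H\cap\RO(\mtcl P)=G$, and that $G$ is the unique filter on $\RO(\mtcl P)$ for which there is some $A$ with $(H_{\o_2};\in)^M\models\varphi(G,A,p)$ in every outer model $M\supseteq V[H]$ with $\o_1^M=\o_1^{V[H]}$, then: if $b_0,b_1\in\RO(\mtcl P)$ are incompatible, $\bool{B}$ is a complete boolean algebra and $k_\epsilon:\RO(\mtcl P\ast\dot{\mtcl Q})\restr b_\epsilon\to\bool{B}$ are complete homomorphisms for $\epsilon\in\{0,1\}$, then $\bool{B}$ collapses $\o_1$. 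The proof of this is the proof of Lemma~\ref{freezability-sufficient} with ``$\NS_{\o_1}$'' deleted and ``stationary'' replaced throughout by ``preserved'': if $K$ were $\bool{B}$-generic with $\o_1^{V[K]}=\o_1^V$, then, setting $H_\epsilon=k_\epsilon^{-1}(K)$ and $G_\epsilon$ equal to the filter on $\RO(\mtcl P)$ generated by $H_\epsilon\cap(\RO(\mtcl P)\restr b_\epsilon)$, we would have $H_{\o_2}^{V[H_\epsilon]}\subseteq H_{\o_2}^{V[K]}$ since $\o_1$ is preserved, hence $(H_{\o_2};\in)^{V[K]}\models\varphi(G_0,A_0,p)\wedge\varphi(G_1,A_1,p)$ by upward absoluteness of $\Sigma_1$ formulas, hence $G_0=G_1$, contradicting $b_0\in G_0$, $b_1\in G_1$ and the incompatibility of $b_0$, $b_1$.

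It then remains to manufacture a name $\dot{\mtcl Q}$ of the shape prescribed in the statement and satisfying the hypothesis of this variant, and for this I would follow the coding technology of Caicedo--Veli\v{c}kovi\'c~\cite{CAIVEL06}. As in Proposition~\ref{freeze1}, the first factor is $\Coll(\o_1,\mtcl P)$, which turns the $\mtcl P$-generic $G$ into a subset $B_G\subseteq\o_1$. The factor $\dot{\mtcl R}_0$ is then the ccc oscillation-coding forcing of~\cite{CAIVEL06}: relative to a fixed ladder system $\vec C=(C_\delta:\delta\in\Lim(\o_1))$ and auxiliary bookkeeping data it adds, with finite conditions, a club $E\subseteq\o_1$ along which the oscillation of pairs of the $C_\delta$'s encodes the characteristic function of $B_G$. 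The essential point---precisely what fails for the Moore-style $\MRP$ coding used in the proof of Proposition~\ref{freeze1}---is that the decoding clause read off at a point of $E$ is a purely arithmetical statement about order types of finite intersections of the ladders, and is therefore absolute to any outer model with the same $\o_1$; in particular the decoding does not require any prescribed partition of $\o_1$ to remain a partition into stationary sets, only that $\o_1$ be preserved. Finally $\dot{\mtcl R}_1$ is an $\MRP$-poset $\mtcl P_{\o_2,\t,\S}$, with $\S$ chosen (as in Lemma~\ref{MRP} and Proposition~\ref{freeze1}) so as to reflect the oscillation-coding clause at a club of countable elementary submodels of $H_\t$; this last step pulls the witnessing club into $[\o_2^V]^{\al_0}$ and renders the whole coding statement expressible over $(H_{\o_2};\in)$.

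Unwinding the composition, I would extract a single $\Sigma_1$ formula $\varphi(x,y,z)$ over $(H_{\o_2};\in)$---with no reference to $\NS_{\o_1}$---asserting the existence of a continuous $\sub$-increasing enumeration of a club of $[\o_2]^{\al_0}$ witnessing the composed coding clause, and a parameter $p$ collecting $\o_2$, $\vec C$ and the bookkeeping data; then a uniqueness argument parallel to the one in the proof of Proposition~\ref{freeze1}, but now invoking only that $\o_1$ is preserved and that any two clubs of $[\o_2]^{\al_0}$ meet in a club, shows that $B_G$ is the unique subset of $\o_1$ satisfying $\varphi$ with parameter $p$ in any $\o_1$-preserving outer model of $V[H]$. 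This is exactly the hypothesis of the variant recorded above, and the lemma follows. (As in Proposition~\ref{freeze1} one also uses that $\dot{\mtcl R}_0$ is ccc and that $\MRP$-posets are proper, so $\mtcl P\ast\dot{\mtcl Q}$ itself preserves $\o_1$.)

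The main obstacle is the construction and verification of the Caicedo--Veli\v{c}kovi\'c coding in the present setting: one must check that $\dot{\mtcl R}_0$ really is ccc, that after the additional $\MRP$-reflection the set $B_G$ becomes $\Sigma_1$-definable over the pure structure $(H_{\o_2};\in)$ by one fixed formula and parameter, and---crucially---that the resulting definition enjoys uniqueness in every $\o_1$-preserving outer model, not merely in $\SSP$ outer models. Isolating exactly which features of the coding are $\o_1$-absolute, and hence why Moore's partition-based coding does not suffice for this strengthening, is where the real work lies; granting that, the deduction of the lemma is the routine adaptation of the proof of Lemma~\ref{freezability-sufficient} sketched above.
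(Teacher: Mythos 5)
The paper contains no proof of this lemma: immediately after stating it, the authors say only that it ``can be proved using coding techniques from \cite{CAIVEL06}'' and that, having no use for this stronger form, they will not give the proof. Your sketch follows exactly that indicated route, and its reductions are sound: the $\omega_1$-preservation variant of Lemma \ref{freezability-sufficient} goes through verbatim (if $\bool{B}$ preserved $\omega_1^V$ then $H_{\omega_2}^{V[H_\epsilon]}\subseteq H_{\omega_2}^{V[K]}$ and upward $\Sigma_1$-absoluteness plus uniqueness forces $G_0=G_1$), and you correctly put your finger on why Proposition \ref{freeze1} only yields $\SSP$-freezing --- its uniqueness argument must locate a common point $Z_i=Z_i'$ with $Z_i\cap\omega_1\in S_\alpha$, which needs $S_\alpha$ to remain stationary in the outer model, whereas the Caicedo--Veli\v{c}kovi\'{c} oscillation decoding reads a bit off \emph{every} common point of the two clubs via order types of finite sets and so survives mere $\omega_1$-preservation. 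Like the paper itself, you defer the construction and verification of the ccc oscillation-coding forcing entirely to \cite{CAIVEL06}, so that remaining step cannot be checked against anything in this paper; given that the authors offer the same level of detail (namely none), your proposal is as complete a proof as the source provides.
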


Our second freezing poset comes from \cite[Section 1]{Velickovic}, where the following is proved, 
using a result of Todor\v{c}evi\'{c} from \cite{Todor}.

\begin{lemma}\label{lemma-velickovic}
There is a sequence $((K^\x_0, K^\x_1)\,:\,\x<\o_1)$ of colourings of $[\k]^2$, for $\kappa=\cf(2^{\al_0})$, 
with the property that in any $\o_1$-preserving outer model in which $\av\k\av=\al_1$, if $B\sub\o_1$, then 
there is a c.c.c.\ partial order $\mtcl R$ forcing the existence of $\al_1$-many decompositions 
$\k=\bigcup_{n<\o}X^\x_n$, for $\x<\o_1$, such that
for all $\x <\o_1$:
\begin{itemize} 
\item
for some fixed $i_\x=0,1$, $X^\x_n$ is $K^\x_{i_\x}$-homogeneous for all $n<\o$;
\item
$\x\in B$ if and only if $i_\x=0$.
\end{itemize}
\end{lemma}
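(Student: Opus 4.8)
The plan is to obtain this from the colouring theorem of Todor\v{c}evi\'{c} for $\cf(2^{\al_0})$ proved in \cite{Todor}, following the coding scheme of \cite[Section~1]{Velickovic}; the statement is essentially a lemma from there, so the task is to assemble the pieces rather than to prove something genuinely new.

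First, working in $V$, I would fix $\kappa=\cf(2^{\al_0})$ together with a $\leq^*$-increasing, $\leq^*$-cofinal sequence of length $\kappa$ in ${}^{\o}\o$ (and whatever further combinatorial data Todor\v{c}evi\'{c}'s construction requires), and from it build, for each $\x<\o_1$, a pair $(K^\x_0,K^\x_1)$ of colourings of $[\kappa]^2$ by means of an oscillation map, using the $\o_1$-many parameters so as to keep the pairs mutually ``independent''. The input extracted from \cite{Todor} would be the following homogenization property: in every $\o_1$-preserving outer model $M$ with $|\kappa|^M=\al_1$, and for every $\x<\o_1$ and $i\in\{0,1\}$, there is in $M$ a $\s$-centered partial order $\mtcl R^\x_i$ forcing $\kappa=\bigcup_{n<\o}X_n$ with each $X_n$ being $K^\x_i$-homogeneous. (The ``independence'' of the pairs is what, in the downstream applications of this lemma, prevents the forcing from accidentally homogenizing $K^\x_{1-i_\x}$ as well; it plays no role in the c.c.c.\ argument below.)

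Then, let $M$ be such an outer model and $B\subseteq\o_1$ a set in $M$. I would set $i_\x=0$ if $\x\in B$ and $i_\x=1$ otherwise, and let $\mtcl R$ be the finite support product $\prod^{\mathrm{fs}}_{\x<\o_1}\mtcl R^\x_{i_\x}$ computed in $M$. Since in $V$ there is an injection of $\kappa$ into $\pow{\o}^V$ (as $\kappa\leq 2^{\al_0}$) and $|\kappa|^M=\al_1$, we get $2^{\al_0}\geq\al_1$ in $M$, i.e.\ $\o_1\leq\cc$ in $M$; and the finite support product of at most continuum-many $\s$-centered posets is again $\s$-centered, hence c.c.c. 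Thus $\mtcl R\in M$ is a c.c.c.\ partial order. Forcing with $\mtcl R$ over $M$, the $\x$-th coordinate of the generic filter yields a decomposition $\kappa=\bigcup_{n<\o}X^\x_n$ all of whose pieces are $K^\x_{i_\x}$-homogeneous, with $i_\x=0$ if and only if $\x\in B$ by the choice of $\mtcl R$; this is exactly the assertion of the lemma.

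The point that does the real work is Todor\v{c}evi\'{c}'s theorem itself, namely that at $\kappa=\cf(2^{\al_0})$ one can manufacture such pairs of colourings whose two ``$\s$-homogenizations'' are each forceable by a $\s$-centered poset, with the choice of which colour to homogenize left free and the $\o_1$-many pairs mutually independent; I would cite this rather than reproduce it, and I expect verifying the precise form in which \cite{Todor} supplies it (in an arbitrary $\o_1$-preserving outer model, not just a forcing extension of $V$) to be the only delicate point. The two remaining ingredients---that a finite support product of $\leq\cc$-many $\s$-centered posets stays $\s$-centered, and that the product generic simultaneously produces the $\al_1$-many required decompositions---are standard, and I do not anticipate difficulty with them.
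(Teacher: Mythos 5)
The paper offers no proof of this lemma at all---it is quoted directly from \cite[Section 1]{Velickovic}, where it is established via Todor\v{c}evi\'{c}'s oscillation colourings of $[\cf(2^{\aleph_0})]^2$---and your outline is a faithful reconstruction of that cited argument, deferring the same key combinatorial input (that both colours of each pair are $\sigma$-centred-homogenizable in any $\omega_1$-preserving outer model making $\kappa$ of size $\aleph_1$) to the same source, with the finite-support-product and coding steps handled correctly. One small remark: the ``independence'' of the $\omega_1$-many pairs that you set aside is in fact not needed downstream either, since in Proposition \ref{freeze1.5} uniqueness of the coded set follows merely from $K^\xi_0$ and $K^\xi_1$ being disjoint, via a pigeonhole argument producing a set of size at least two homogeneous for both colours.
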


\begin{proposition}\label{freeze1.5}
Given a forcing notion $\mtcl P$,  there is $\mtcl P$-name $\dot{\mtcl Q}$ for  a forcing notion with the following properties.
\begin{enumerate} 
\it Letting $\mu = \av\mtcl P\av+\cf(2^{\al_0})$, $\dot{\mtcl Q}$ is forced to be a forcing of the form 
$\Coll(\o_1,\, \mu)\ast\dot{\mtcl R}$, where $\dot{\mtcl R}$ is a $\Coll(\o_1,\, \mu)$-name for a c.c.c.\  forcing.
\it $\mtcl P\ast\dot{\mtcl Q}$ $\SSP$-freezes $\mtcl P$, as witnessed by the inclusion map. In fact, if 
$b_0$, $b_1\in \RO(\mtcl P)$ are incompatible, $\bool{B}$ is a complete boolean algebra, and 
$k_\epsilon:\RO(\mtcl P\ast\dot{\mtcl Q})\restr b_\epsilon\into\bool{B}$ is a complete homomorphism for 
$\epsilon\in\{0, 1\}$, then $\bool{B}$ collapses $\o_1$.
  \end{enumerate}
\end{proposition}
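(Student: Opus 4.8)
<br>

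The plan is to apply Lemma~\ref{freezability-sufficient} with $\lambda=\omega_1$, exactly as in the proof of Proposition~\ref{freeze1}, but using the colourings from Lemma~\ref{lemma-velickovic} in place of the $\MRP$-machinery. So the first step is to define $\dot{\mtcl Q}$: working in $V^{\mtcl P}$, let $\mu=|\mtcl P|+\cf(2^{\aleph_0})$ and first force with $\Coll(\o_1,\mu)$, which makes $|\mtcl P|=|\kappa|=\aleph_1$ while preserving $\o_1$ (it is countably closed), where $\kappa=\cf(2^{\aleph_0})$ as computed in $V^{\mtcl P}$. Fix in $V^{\mtcl P\ast\Coll(\o_1,\mu)}$ a subset $B_{\dot G}\subseteq\o_1$ coding the $\mtcl P$-generic filter in some canonical way. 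Since after $\Coll(\o_1,\mu)$ we are in an $\o_1$-preserving outer model of $V^{\mtcl P}$ in which $|\kappa|=\aleph_1$, Lemma~\ref{lemma-velickovic} (applied with the sequence $((K^\x_0,K^\x_1):\x<\o_1)$ taken from $V^{\mtcl P}$, or from the least model where it lives) yields a c.c.c.\ poset $\dot{\mtcl R}$ forcing $\aleph_1$-many decompositions $\kappa=\bigcup_{n<\o}X^\x_n$ with $X^\x_n$ being $K^\x_{i_\x}$-homogeneous and $\x\in B_{\dot G}\iff i_\x=0$. Set $\dot{\mtcl Q}=\Coll(\o_1,\mu)\ast\dot{\mtcl R}$; clause (1) is then immediate.

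Next I would verify the hypotheses of Lemma~\ref{freezability-sufficient}. Let $H$ be $(\mtcl P\ast\dot{\mtcl Q})$-generic, $G=H\cap\mtcl P$, and let $A_G\subseteq\o_1$ be a subset canonically coding $B_G$ together with the sequence of decompositions $((X^\x_n)_{n<\o}:\x<\o_1)$ and the choice function $\x\mapsto i_\x$. The parameter is $p=((K^\x_0,K^\x_1):\x<\o_1)$ together with $\kappa$; note $p\in V$ and the colourings are explicit data. The $\Sigma_1$ formula $\varphi(x,y,z)$ asserts, over $(H_{\o_2};\in)$: ``$y$ codes a set $B\subseteq\o_1$ and, for each $\x<\o_1$, sets $(X^\x_n)_{n<\o}$ with $\kappa=\bigcup_n X^\x_n$, and an $i_\x\in\{0,1\}$, such that each $X^\x_n$ is $K^\x_{i_\x}$-homogeneous (reading $(K^\x_0,K^\x_1)$ and $\kappa$ off $z$), and $\x\in B\iff i_\x=0$; and $x$ is the generic filter coded by $B$.'' This is $\Sigma_1$ since homogeneity of $X^\x_n$ for $K^\x_{i_\x}$ is a local statement and we only need witnesses to exist. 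Clause (1) of Lemma~\ref{freezability-sufficient} holds in $V[H]$ by construction (indeed $\NS_{\o_1}$ need not even enter here). For clause (2): suppose $M\supseteq V[H]$ preserves $\o_1$ and stationarity, and $B_0,B_1\subseteq\o_1$ each admit, in $M$, witnessing decompositions as in $\varphi$. Fix $\x\in B_0\triangle B_1$; then in $M$ the set $\kappa$ is written both as a countable union of $K^\x_0$-homogeneous sets and as a countable union of $K^\x_1$-homogeneous sets. The key point, which is exactly the property Todor\v{c}evi\'{c}'s colourings from \cite{Todor} are engineered to have and which underlies Lemma~\ref{lemma-velickovic}, is that this is impossible in any $\o_1$-preserving extension: $(K^\x_0,K^\x_1)$ cannot \emph{both} be countably decomposed while $|\kappa|$ stays above $\aleph_0$-indexed unions — one of the two must require uncountably many pieces unless $\o_1$ is collapsed. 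Hence $B_0=B_1$, so $G_0=G_1$, giving clause (2).

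With both clauses verified, Lemma~\ref{freezability-sufficient} gives that the inclusion $i:\mtcl P\into\mtcl P\ast\dot{\mtcl Q}$ $\SSP$-freezes $\mtcl P$, which is clause (2) of the Proposition. For the final, stronger assertion that incompatible $b_0,b_1$ forcing $\mtcl P\ast\dot{\mtcl Q}$ into a common $\bool{B}$ forces $\bool{B}$ to collapse $\o_1$: run the argument of the proof of Lemma~\ref{freezability-sufficient} but \emph{without} assuming $\bool{B}$ preserves stationary sets. If $K$ is $\bool{B}$-generic and $H_\epsilon=k_\epsilon^{-1}(K)$ for $\epsilon=0,1$, then in $V[K]$ we get sets $B_0,B_1\subseteq\o_1$ with $b_\epsilon\in G_\epsilon$, hence $B_0\ne B_1$; but each $B_\epsilon$ carries, in $V[K]$, a decomposition witnessing $\varphi$. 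Picking $\x\in B_0\triangle B_1$ as above, $V[K]$ decomposes $\kappa$ into countably many pieces homogeneous for each of $K^\x_0$ and $K^\x_1$ — which by the Todor\v{c}evi\'{c}-style property is possible only if $\o_1$ has been collapsed in $V[K]$. So $\bool{B}$ collapses $\o_1$, as required. The main obstacle, and the only place real work is needed, is pinning down and citing precisely the combinatorial feature of the colourings in Lemma~\ref{lemma-velickovic}/\cite{Todor,Velickovic} that makes simultaneous countable decomposability of $K^\x_0$ and $K^\x_1$ incompatible with preserving $\o_1$; everything else is routine bookkeeping and the $\Sigma_1$-ness check.
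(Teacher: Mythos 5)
Your overall strategy coincides with the paper's: force with $\Coll(\omega_1,\mu)$ followed by the c.c.c.\ poset from Lemma \ref{lemma-velickovic}, code the $\mtcl P$-generic by the pattern of colours $i_\xi$ realized by the decompositions, and feed this into Lemma \ref{freezability-sufficient}. The setup, the shape of the $\Sigma_1$ formula, and the observation that only $\omega_1$-preservation (not stationary-set preservation) is used — which is what yields the stronger collapsing assertion — all match the paper. However, there is a genuine gap at the one step you yourself flag as ``the only place real work is needed'': you assert that $\kappa$ cannot simultaneously be written as a countable union of $K^\xi_0$-homogeneous sets and as a countable union of $K^\xi_1$-homogeneous sets in any $\omega_1$-preserving outer model, and you attribute this to an unspecified deep combinatorial feature of Todor\v{c}evi\'{c}'s colourings that you say still needs to be pinned down and cited. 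As written, the crucial uniqueness claim is therefore unproved.

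The point you are missing is that no special property of the colourings is needed for this step. Since $(K^\xi_0,K^\xi_1)$ is a colouring of $[\kappa]^2$, the two colour classes are disjoint. If $\kappa=\bigcup_n X_n=\bigcup_m Y_m$ with each $X_n$ being $K^\xi_0$-homogeneous and each $Y_m$ being $K^\xi_1$-homogeneous, then, since $\kappa$ remains uncountable in the outer model (which is exactly what $\omega_1$-preservation buys, as $\kappa\geq\omega_1^V$), some $X_n\cap Y_m$ has more than one element (indeed is uncountable), and any pair $s\in[X_n\cap Y_m]^2$ would lie in $K^\xi_0\cap K^\xi_1=\emptyset$. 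This pigeonhole observation is precisely how the paper closes the argument. The nontrivial content of \cite{Todor} and \cite{Velickovic} is needed only to guarantee the \emph{existence} of a c.c.c.\ forcing realizing a prescribed pattern of homogeneous decompositions (that is, Lemma \ref{lemma-velickovic} itself), not for the rigidity of the resulting coding. A minor further point: the colourings and $\kappa=\cf^V(2^{\aleph_0})$ should be fixed in $V$, so that the parameter $p=\vec K$ is common to both quotients in the freezing argument; your parenthetical about taking them from $V^{\mtcl P}$ (or ``the least model where it lives'') muddies this slightly.
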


\begin{proof}
 Let us work in $V^{\mtcl P\ast \Coll(\o_1,\,\mu)}$. Let $\dot B_G$ be a subset of $\o_1$ coding the generic filter for 
 $\mtcl P$ in some canonical way, let $\vec K=((K^\x_0, K^\x_1)\,:\,\x<\o_1)$ be a sequence of colourings of $[\k]^2$, for 
 $\kappa=\cf^V(2^{\al_0})$, as given by Lemma \ref{lemma-velickovic}, and let $\mtcl R$  be a c.c.c.\ partial order forcing 
 the existence of $\al_1$-many decompositions $\k=\bigcup_{n<\o}X^\x_n$ such that for all 
 $\x <\o_1$,
 \begin{itemize}
 \item there is $i_\x$ such that
 $[X^\x_n]^2\sub K^\x_{i_\x}$ for all $n<\o$
and 
\item $\x\in B$ if and only if for all $n<\o$, 
 $[X^\x_n]^2\sub K^\x_0$.
 \end{itemize}
 
Let $A_G$ be a subset of $\omega_1$ which canonically codes $B_G$ and $\bp{(X^\x_n)_{n<\o}:\x<\o_1}$. 
If $M$ is any outer model in which $\o_1^V$ has not been collapsed, then $B_G$ is the unique $B\sub\o_1$ 
for which there is,  in $M$, a set $A\sub\o_1$ coding $B$ together with $\al_1$-many decompositions  
$\bp{(X^\x_n)_{n<\o}:\x<\o_1}$ of $\k$ such that for all $n<\o$ and $\x <\o_1$, $\x\in B$ if and only if 
$[X^\x_n]^2\sub K^\x_0$. Indeed, if $B'\in M$ were another such set, as witnessed by $A'\sub M$, 
$\x\in B\Delta B'$, and $\al_1$-many decompositions  
$\bp{(Y^\x_n)_{n<\o}:\x<\o_1}\in M$ 
of $\k$,
then there would be some $n$ and $m$ such that $X^\x_n\cap Y^\x_m$ 
has more than one element, and is in fact uncountable. But then, for every 
$s\in[X^\x_n\cap Y^\x_m]^2$, we would have that $s$ is both in $K^\x_0$ and $K^\x_1$, which is impossible.

 Finally, it is immediate to see that there is a $\Sigma_1$ formula $\varphi(x, y, z)$ such that $\varphi(G, A_G, p)$ expresses the above property of $G$ and $A_G$ over $(H_{\omega_2}; \in)^{M}$ for any $M$ as above, for $p=\vec K$. 
\end{proof}

The following principle, as well as Lemma \ref{psiAC-measurable}, are due to Woodin (\cite{WoodinBOOK}). 

\begin{definition} $\psi_{\AC}$ is the following statement: Suppose $S$ and $T$ are stationary and co-stationary subsets of $\o_1$. Then there are $\a<\o_2$ and a club $C$ of $[\a]^{\al_0}$ such that for every $X\in C$, $X\cap\o_1\in S$ if and only if $\ot(X)\in T$.
\end{definition}  

The $\AC$-subscript in the above definition hints at the fact that $\psi_{\AC}$ implies $L(\mtcl P(\o_1))\models\AC$ 
(which comes from an argument similar to the one in the proof of Lemma \ref{freeze2}).

Our third freezing poset is essentially the following forcing for adding a suitable instance of $\psi_{AC}$ by initial segments, using a measurable cardinal $\kappa$ (i.e., turning $\kappa$ into an ordinal $\alpha$ as required by the conclusion of $\psi_{AC}$).  

\begin{lemma}\label{psiAC-measurable}
Let $\kappa$ be a measurable cardinal and let $S$ and $T$ be stationary and co-stationary subsets of $\o_1$. 
Let $\mtcl Q=\mtcl Q_{\k, S, T}$ be the set, ordered by reverse inclusion, of all countable  $\sub$-continuous 
$\in$-chains  $p=(M^p_i)_{i\leq\n}$ of countable elementary substructures of $H_\k$ such that for 
every $i\leq\n$, $M^p_i\cap\o_1\in S$ if and only if $\ot(M^p_i\cap\k)\in T$. 
 
 \begin{enumerate}

\item $\mtcl Q$ is ${<}\omega_1$-semiproper, preserves Suslin trees, and does not add new reals.

\item if $G$ is $\mtcl Q$-generic over $V$ and $M^G_i= M^p_i$ whenever $p\in G$ and $i\in \dom(p)$, then 
$(M^G_i)_{i<\o_1}$ is the $\sub$-increasing enumeration of a club of $[H_\k^V]^{\al_0}$ such that for every limit ordinal $i<\o_1$, $M^G_i\cap\o_1\in S$ if and only if $\ot(M^G_i\cap\k)\in T$. 
\end{enumerate}
\end{lemma}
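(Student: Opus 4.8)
\textbf{Proof plan for Lemma \ref{psiAC-measurable}.}

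The plan is to treat the two clauses in tandem, following the standard template for forcings of this "generic chain of models" type (as in Lemma \ref{MRP}), with the extra input of the measurable cardinal $\k$ used to guarantee the chain condition / semiproperness at limits. First I would fix a normal ultrafilter $U$ on $\k$ and the associated ultrapower embedding $j:V\to M$. The key combinatorial point is that $\mtcl Q=\mtcl Q_{\k,S,T}$ only imposes, on each model $M^p_i$ appearing in a condition, the single absolute requirement $M^p_i\cap\o_1\in S\iff\ot(M^p_i\cap\k)\in T$; this is a requirement on the \emph{ordinal} $\ot(M^p_i\cap\k)$, and since $S,T\subseteq\o_1$ are both stationary and co-stationary, there are stationarily many countable $N\elsub H_\k$ on each side of the biconditional. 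This is exactly the reflection fact needed to run the density/genericity arguments below.

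For clause (2), the club part is routine once (1) is in place: for each countable $N\elsub H_\k$ the set of conditions $p$ with $N\in\ran(M^p)$ — more precisely with $N=M^p_i$ for some $i$ — is dense, by taking any condition $q$ and extending it by one step to a model containing $q$ and $N$ and satisfying the $S\leftrightarrow T$ constraint (here one uses that one can always find such a model, because of the abundance of models on the correct side of the biconditional); a standard catching-your-tail argument then shows $(M^G_i)_{i<\o_1}$ enumerates a club of $[H_\k^V]^{\al_0}$, and the $\sub$-continuity together with the defining property of conditions gives the stated coherence at limits. The "does not add new reals" assertion will follow from ${<}\o_1$-semiproperness together with the fact that conditions are countable continuous $\in$-chains and $\mtcl Q$ is $\sigma$-closed-like on appropriate dense sets; more precisely I would argue that below a suitable condition $\mtcl Q$ is forcing-equivalent to a countably closed forcing, so no reals are added, and separately that Suslin trees are preserved by a Miyamoto-style argument (as in Lemma \ref{MRP}(1)) — the key being that the tree can be incorporated into the elementary submodels appearing in the conditions, so that a generic branch is never added.

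The heart of the proof, and the main obstacle, is clause (1): showing $\mtcl Q$ is ${<}\o_1$-semiproper, i.e.\ $\rho$-semiproper for every indecomposable $\rho<\o_1$. Here one fixes a large $\theta$, a continuous $\rho$-chain $(N_i)_{i\le\rho}$ of countable elementary submodels of $H_\theta$ with $\mtcl Q\in N_0$, and a condition $p\in N_0\cap\mtcl Q$, and must produce $q\le p$ which is $(N_i,\mtcl Q)$-semi-generic for all $i\le\rho$ simultaneously. The natural candidate is to build, by induction along the chain, an end-extension of $p$ whose models include the structures $N_i\cap H_\k$ (suitably transitive-collapsed or rather the sets $N_i\cap\k$), arranging that each such model meets all $\mtcl Q$-names for countable ordinals lying in $N_i$. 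The obstruction is precisely the defining constraint of $\mtcl Q$: when we want to add $N_i\cap\k$ as the next model, we need $N_i\cap\o_1\in S\iff\ot(N_i\cap\k)\in T$, and this may simply fail for the given chain. This is where the measurable cardinal enters: using $j:V\to M$ with critical point $\k$, one shows that the set of countable $N\elsub H_\k$ with $\ot(N\cap\k)\in T$ — equivalently the right side of the biconditional — can be steered to agree with the left side along a club, because $j$ "moves" $\k$ past $\sup(j``\k)$ and one can interleave a copy of the chain inside $M$; concretely one reflects, via $j$ and $U$-largeness, the statement that $p$ can be extended through all the $N_i$'s, much as one uses supercompactness (or a measurable, for the ${<}\o_1$-level statement) in the semiproper iteration theorems referenced in \ref{iterability}. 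I expect the bookkeeping of simultaneously catching all $N_i$ for $i$ in a chain of arbitrary indecomposable countable length $\rho$ — rather than just length $1$ — to be the genuinely delicate part, and I would handle it by a continuous recursion on $i\le\rho$, maintaining as an inductive hypothesis that the partial extension built so far is $(N_k,\mtcl Q)$-semi-generic for all $k<i$ and lies in $N_i$, using elementarity of $N_i$ at successor steps and continuity of both the chain and the conditions at limit steps; the measurability is invoked once, at the outset, to fix the ultrafilter witnessing that the required models on the $T$-side are stationarily abundant in the relevant sense.
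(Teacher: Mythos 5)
Your proposal correctly identifies the obstruction at the heart of clause (1) — that when you want to insert $N_i\cap H_\k$ as the next model of the condition, the biconditional $N_i\cap\o_1\in S\iff\ot(N_i\cap\k)\in T$ may fail — but the mechanism you offer to overcome it (reflecting the extendibility statement via the ultrapower $j:V\to M$, ``interleaving a copy of the chain inside $M$'', ``steering the $T$-side to agree with the $S$-side along a club'') is not an argument, and I do not see how to make it one. The idea the paper actually uses, and which is missing from your plan, is a \emph{one-point end-extension}: if $\mtcl U\in N$ is a normal measure on $\k$ and $\eta\in\bigcap(\mtcl U\cap N)$, then $N[\eta]=\{f(\eta):f\in N,\ f\text{ a function with domain }\k\}\elsub H_\t$ and, by normality, $N[\eta]\cap\k$ \emph{properly end-extends} $N\cap\k$; in particular $N[\eta]\cap\o_1=N\cap\o_1$. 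Iterating this $\o_1$ times continuously yields a chain $(N_\nu)_{\nu<\o_1}$ with fixed trace on $\o_1$ and with $(\ot(N_\nu\cap\k))_{\nu<\o_1}$ a strictly increasing continuous sequence of countable ordinals; stationarity of $T$ \emph{and of $\o_1\setminus T$} then lets you land $\ot(N_\nu\cap\k)$ on whichever side of the biconditional matches $N\cap\o_1\in S$. Because the replacement model has the same intersection with $\o_1$ as $N$, inserting it still gives semi-genericity for $N$; this is exactly why the forcing is semiproper rather than proper. Your remark that ``there are stationarily many $N$ on each side'' is true but beside the point: what is needed is that for a \emph{fixed} value of $N\cap\o_1$ one can realize either side by end-extending, and that is precisely what the measurable buys. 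Running this construction inside $N^{\xi+1}$ for each model $N^\xi$ of a given chain gives the ${<}\o_1$-version.

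A second genuine error: your claim that ``below a suitable condition $\mtcl Q$ is forcing-equivalent to a countably closed forcing, so no reals are added'' cannot be right. The generic club of $[H_\k]^{\al_0}$ avoids every countable $M\elsub H_\k$ on the wrong side of the biconditional, and these form a stationary set; hence $\mtcl Q$ is not proper, let alone equivalent to a $\sigma$-closed forcing. The correct route to ``no new reals'' (and to Suslin tree preservation, which the paper does by a Miyamoto-style argument) is to build, below a given condition $p\in N$, a decreasing $(N,\mtcl Q)$-(semi-)generic $\o$-sequence $(p_n)_n$ deciding the relevant name, after first replacing $N$ by an end-extension on the correct side so that $\bigcup_n p_n\cup\{\langle N\cap\o_1,N\cap\k\rangle\}$ is again a condition. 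Without the end-extension device this amalgamation step fails, so both of your gaps trace back to the same missing idea.
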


\begin{proof} The proofs of all assertions, except the fact that $\mtcl Q$ preserves Suslin trees, are standard. 
For the reader's convenience,  we sketch the proof that $\mtcl Q$ is ${<}\o_1$-semiproper,  though. We 
also prove that $\mtcl Q$ preserves Suslin trees. 

We get the ${<}\o_1$-semiproperness of $\mtcl Q$ as follows: the main point is that if $\mtcl U$ is a normal 
measure on $\kappa$, $N$ is an elementary submodel of some $H_\theta$ such that $\mtcl U\in N$ and 
$\av N\av<\k$, and $\eta\in \bigcap(\mtcl U\cap N)$, then $$N[\eta]:=\{f(\eta)\,:\, f\in N,\,f\mbox{ a function with 
domain }\kappa\}$$ is an elementary submodel of $H_\theta$ such that $N\cap\kappa$ is a proper initial segment 
of $N[\eta]\cap\kappa$ ($\eta\in N[\eta]$ is above every ordinal in $N\cap\kappa$, and any $\gamma\in N[\eta]\cap\eta$ 
is of the form $f(\eta)$, for some regressive function $f:\kappa\into\kappa$ in $N$ which, by normality of $\mtcl U$, 
is constant on some set in $\mtcl U\cap N$). If $N$ is countable, then by iterated applications of this construction, 
taking unions at nonzero limit stages, one  obtains a $\sub$-continuous and $\sub$-increasing sequence 
$(N_\nu)_{\nu<\omega_1}$ of elementary submodels of $H_\theta$ such that $N_0=N$ and such that 
$N_{\n'}\cap\kappa$ is a proper end-extension of $N_\n\cap\kappa$ for all $\n<\n'<\o_1$. 
Since $(\ot(N_\n\cap\kappa)\,:\,\n<\omega_1)$ is then a strictly increasing and continuous sequence 
of countable ordinals, we may find, by stationarity of $S$ and $\o_1\setminus S$, some $\n<\o_1$ 
such that $N\cap\omega_1\in S$ if and only if $\ot(N_\nu)\in T$. 

This observation yields the ${<}\omega_1$-semiproperness of $\mtcl Q$ since, given $\a<\o_1$ and an $\in$-chain $(N^\xi)_{\xi<\alpha}$ of countable elementary submodels of some $H_\theta$ such that $\mtcl U$, $S$, $T\in N^0$, one can run the above  construction for each $N^\xi$ by working inside $N^{\xi+1}$. 

The preservation of Suslin trees can be proved by the following version of the argument in \cite{Miyamoto-Yorioka} for showing that $\MRP$-forcings preserve them. Suppose $U$ is a Suslin tree, $\dot A$ is a $\mtcl Q$-name for a maximal antichain of $U$, and $N$ is a countable elementary submodel of some large enough $H_\t$ containing $U$, $\dot A$, and all other relevant objects. By moving to an $\o_1$-end-extension of $N$ if necessary as in the proof of ${<}\o_1$-semiproperness, we may assume that $N\cap\o_1\in S$ if and only if $\ot(N\cap\k)\in T$.  Let $(u_n)_{n<\o}$ enumerate all nodes in  $U$ of height $N\cap\o_1$. Given a condition $p\in\mtcl Q$ in $N$, we may build an $(N, \mtcl Q)$-generic sequence $(p_n)_{n<\o}$ of conditions in $N$ extending $p$ and such that for every $n$ there is some $v\in U$ below $u_n$ such that $p_{n+1}$ forces $v\in \dot A$. By the choice of $N$, we have in the end that $p^\ast=\bigcup_n p_n\cup\{(N\cap\o_1, N\cap \k)\}$ is a condition in $\mtcl Q$ extending $p$. But, by construction of $(p_n)_{n<\o}$, $p^\ast$ forces $\dot A$ to be contained in the countable set $U\cap N$: If $u\in \dot A\setminus N$, and $u_n$ is the unique node of height $N\cap\o_1$ such that $u_n\leq_U u$, then $u_n$ is forced to extend some node in $\dot A$ of height less than $N\cap\o_1$, which is a contradiction since $\dot A$ was supposed to be a name for an antichain. 

It remains to show how to find $p_{n+1}$ given $p_n$. Working in $N$, we first extend $p_n$ to some $p_n'$ in some suitable dense subset $D\in N$ of $\mtcl Q$. 
Since $U$ is a Suslin tree, we have that $u_n$ is totally $(U, N)$-generic, in the sense that for every antichain $B$ of $U$ in $N$, $u_n$ extends a unique node in $B$. Also, the set $E\in N$ of $u\in U$ for which there is some $v\in U$ below $u$ and some $q\in\mtcl Q$ extending $p_n'$ and forcing that $v\in\dot A$ is dense in $U$. It follows that we may find  some $u\in E\cap N$ below $u_n$, as witnessed by some $q\in \mtcl Q\cap N$ and some $v\in U\cap N$. But then we may let $p_{n+1}=q$. 
\end{proof}

Given a measurable cardinal $\kappa$, we will call a partial order $\mtcl R$ a \emph{$\psi_{\AC}^\kappa$-poset} if $\mtcl R$ is of the form $\mtcl Q_{\kappa, S, T}$ for stationary and co-stationary subsets $S$, $T$ of $\omega_1$.

\begin{proposition}\label{freeze2}

Given a forcing notion $\mtcl P$  and a measurable cardinal $\kappa$, there is $\mtcl P$-name $\dot{\mtcl Q}$ for  a forcing notion such that 
\begin{enumerate} 
\it $\dot{\mtcl Q}$ is forced to be of the form $\Coll(\o_1, \mtcl P)\ast\dot{\mtcl R}$, where $\dot{\mtcl R}$ is a $\Coll(\o_1, \mtcl P)$-name for a $\psi^\kappa_{AC}$-poset, and 
\it $\mtcl P\ast\dot{\mtcl Q}$ $\SSP$-freezes $\mtcl P$, as witnessed by the inclusion map.
  \end{enumerate}
\end{proposition}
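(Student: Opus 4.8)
The plan is to verify the hypotheses of Lemma~\ref{freezability-sufficient} with $\lambda=\omega_1$, following the same scheme as the proof of Proposition~\ref{freeze1} but with the $\MRP$-poset there replaced by a $\psi^\kappa_{AC}$-poset, and the oscillation coding replaced by the $\psi_{AC}$-coherence supplied by Lemma~\ref{psiAC-measurable}. Thus it suffices to produce, working in $W=V^{\mtcl P\ast\Coll(\omega_1,\mtcl P)}$, a $\psi^\kappa_{AC}$-poset $\dot{\mtcl R}$, a $\Sigma_1$ formula $\varphi$, and a parameter $p$ witnessing that $\Coll(\omega_1,\mtcl P)\ast\dot{\mtcl R}$ codes the $\mtcl P$-generic filter absolutely mod.\ $\SSP$ in the sense of that lemma; the displayed factorization of $\dot{\mtcl Q}$ is then immediate. (We may and do assume $|\mtcl P|<\kappa$ --- harmless, since this proposition is applied only under hypotheses giving a proper class of measurable cardinals --- so that $\kappa$ stays measurable in $W$.)

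For the coding I would fix, once and for all in $V$, a partition $(S_\alpha\,:\,\alpha<\omega_1)$ of $\omega_1$ into stationary sets and a stationary, co-stationary $T\subseteq\omega_1$, and set $p=(\kappa,(S_\alpha)_{\alpha<\omega_1},T)$. Working in $W$ and letting $B_G\subseteq\omega_1$ canonically code the $\mtcl P$-generic $G$, I would put $S_{B_G}=\bigcup_{\alpha\in B_G}S_{2\alpha}\cup\bigcup_{\alpha\notin B_G}S_{2\alpha+1}$ (stationary and co-stationary, since both it and its complement contain whole pieces of the partition) and let $\dot{\mtcl R}$ name the $\psi^\kappa_{AC}$-poset $\mtcl Q_{\kappa,S_{B_G},T}$. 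After forcing, Lemma~\ref{psiAC-measurable}(2) hands us the generic $\subseteq$-increasing, $\subseteq$-continuous enumeration $(M_i)_{i<\omega_1}$ of a club of $[H_\kappa^W]^{\aleph_0}$ with $M_i\cap\omega_1\in S_{B_G}\Leftrightarrow\ot(M_i\cap\kappa)\in T$ for every limit $i$; since this forcing collapses $|H_\kappa^W|$ to $\omega_1$, the derived filtration $(M_i\cap\kappa)_{i<\omega_1}$ of $\kappa$ lives, in $V[H]$, inside $H_{\omega_2}$, and I let $A_G\subseteq\omega_1$ code $B_G$ (hence $G$), a surjection $\omega_1\to\kappa$, and $(M_i\cap\kappa)_{i<\omega_1}$. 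Finally, $\varphi(x,y,z)$ would assert: $z=(\kappa^*,(S_\alpha),T)$ with $(S_\alpha)$ a partition of $\omega_1$ into pairwise disjoint stationary sets; and $y$ codes a set $B$ coding the filter $x$, a surjection $\omega_1\to\kappa^*$, and a $\subseteq$-increasing, $\subseteq$-continuous sequence $(N_i)_{i<\omega_1}$ of countable subsets of $\kappa^*$ with $\bigcup_iN_i=\kappa^*$ and each $N_i\cap\omega_1$ an ordinal, such that for every limit $i<\omega_1$ one has $N_i\cap\omega_1\in S_B$ iff $\ot(N_i)\in T$, where $S_B:=\bigcup_{\beta\in B}S_{2\beta}\cup\bigcup_{\beta\notin B}S_{2\beta+1}$.

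Checking the two requirements of Lemma~\ref{freezability-sufficient} then splits cleanly. Requirement~(1) is essentially a restatement of Lemma~\ref{psiAC-measurable}(2): evaluated at $N_i=M_i\cap\kappa$ and $B=B_G$, the clause ``$N_i\cap\omega_1\in S_B$'' is by definition ``$M_i\cap\omega_1\in S_{B_G}$'', so $\varphi(G,A_G,p)$ holds in $(H_{\omega_2};\in,\NS_{\omega_1})^{V[H]}$. That $\varphi$ is $\Sigma_1$ (in fact $\Delta_0$) over $(H_{\omega_2};\in,\NS_{\omega_1})$ is the routine point that, after the forcing, $\kappa$ has size $\omega_1$, so every object quantified over in $\varphi$ --- the filtration $(N_i)$, the surjection, $\kappa^*$ itself --- belongs to $H_{\omega_2}$ and every quantifier is bounded, while ``$S_\alpha$ is stationary'' is $\Delta_0$ in the $\NS_{\omega_1}$-predicate; hence $\varphi$ transfers upward to any outer model with the same $\NS_{\omega_1}\restr\mtcl P(\omega_1)^{V[H]}$.

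The substantive step --- the one I expect to be the main obstacle --- is requirement~(2), uniqueness. So suppose $M$ is an outer model of $V[H]$ with $\NS_{\omega_1}^M\cap\mtcl P(\omega_1)^{V[H]}=\NS_{\omega_1}^{V[H]}$ and $(H_{\omega_2};\in,\NS_{\omega_1})^M\models\varphi(G_0,A_{G_0},p)\wedge\varphi(G_1,A_{G_1},p)$, with witnessing data $(B_\epsilon,(N^\epsilon_i)_{i<\omega_1})$ for $\epsilon\in\{0,1\}$. The crucial leverage is that the partition $(S_\alpha)$ and the set $T$ are the \emph{same} for both runs, being read off the fixed parameter $p$; this is exactly why I would place them in $p$ rather than in $A_G$. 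Each $\{N^\epsilon_i\,:\,i<\omega_1\}$ is a club of $[\kappa]^{\aleph_0}$ in $M$ (closed by $\subseteq$-continuity, unbounded because its union is $\kappa$), so their intersection $\mtcl D$ is a club of $[\kappa]^{\aleph_0}$, and pulling back along the two enumerations yields a club $D^*\subseteq\omega_1$ in $M$ consisting of ordinals $N\cap\omega_1$ with $N\in\mtcl D$ that are simultaneous limit points of $(N^0_i\cap\omega_1)_i$ and $(N^1_i\cap\omega_1)_i$. If $B_0\neq B_1$, fix $\beta^*\in B_0\triangle B_1$, say $\beta^*\in B_0\setminus B_1$; then $S_{2\beta^*}\subseteq S_{B_0}$ while $S_{2\beta^*}\cap S_{B_1}=\emptyset$. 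Now $S_{2\beta^*}$ is stationary in $V[H]$ --- it is stationary in $V$, $\Coll(\omega_1,\mtcl P)$ is $\sigma$-closed, $\mtcl Q_{\kappa,S_{B_G},T}$ is $\SSP$ by Lemma~\ref{psiAC-measurable}(1), and $\mtcl P$ is stationary-set-preserving in the applications --- hence stationary in $M$, so it meets $D^*$ in some $\gamma$. Taking $N\in\mtcl D$ with $N\cap\omega_1=\gamma$ and corresponding limit indices $i_0,i_1$, the coherence clause from $\varphi(G_0,\ldots)$ gives $\ot(N)\in T$ (since $\gamma\in S_{2\beta^*}\subseteq S_{B_0}$) while that from $\varphi(G_1,\ldots)$ gives $\ot(N)\notin T$ (since $\gamma\in S_{2\beta^*}$ and $\beta^*\notin B_1$ force $\gamma\notin S_{B_1}$) --- a contradiction. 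Hence $B_0=B_1$, i.e.\ $G_0=G_1$. The points that will need care are: arranging that the matching point of the two clubs can be chosen a limit point of both enumerations (so the ``for every limit $i$'' clause applies on both sides), the bookkeeping around $S_{2\beta^*}$ versus $S_{B_0}$ and $S_{B_1}$, and --- the genuinely load-bearing hypothesis --- that $S_{2\beta^*}$ survives the forcing as a stationary set, which is where one uses that all forcings involved are stationary-set-preserving.
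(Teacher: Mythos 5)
Your proposal is correct and follows essentially the same route as the paper: apply Lemma~\ref{freezability-sufficient} with $\lambda=\omega_1$, let $\dot{\mtcl R}$ be $\mtcl Q_{\kappa,S,T}$ where $S$ is assembled from a fixed partition of $\omega_1$ into stationary sets according to a code $B_G$ for the generic, and derive uniqueness by finding a common point $X$ of the two witnessing clubs with $X\cap\omega_1$ in the relevant stationary piece, so that $\ot(X)$ would have to lie both in $T$ and in its complement. The only differences are cosmetic: the paper takes $S=\bigcup_{\alpha\in B_G}S_\alpha$ and normalizes $\emptyset\neq B_G\neq\omega_1$ to get stationarity and co-stationarity where you use the even/odd doubling $S_{2\alpha}/S_{2\alpha+1}$, and you make explicit two side conditions (preservation of the measurability of $\kappa$ and of the stationarity of the $S_\alpha$) that the paper leaves implicit.
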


\begin{proof} Let $(S_\a\,:\,\a<\o_1)$ be a partition of $\omega_1$ into stationary sets and let $T$ be a stationary 
and co-stationary subset of $\o_1$. Working in $V^{\mtcl P\ast\Coll(\o_1,\,\mtcl P)}$, let $B_{\dot G}\neq\emptyset$ 
be a subset of $\omega_1$ coding $\dot G$ in  a canonical way. We may assume that $B_{\dot G}\neq\omega_1$. 
Let $\dot{\mtcl R}$ be $\mtcl Q_{\kappa, S, T}$ for $S= \bigcup_{\alpha\in B_{\dot G}}S_\alpha$. 
By Lemma \ref{psiAC-measurable}, $\dot{\mtcl R}$ adds a club $C_{\dot G}$ of $[\kappa]^{\al_0}$ 
with the property that for each $X\in C_{\dot G}$, $X\cap \o_1\in \bigcup_{\alpha\in B_{\dot G}}S_\alpha$ if and only 
if $\ot(X)\in T$.  Now it is easy to see that $\mtcl P\ast\dot{\mtcl R}$ codes $\dot G$ in an absolute way in the sense 
of Lemma \ref{freezability-sufficient}. The main point is that if $H$ is a 
$(\mtcl P\ast\Coll(\o_1,\,\mtcl P))\ast\dot{\mtcl R}$-generic filter, $G=H\cap\mtcl P$, and $M$ is an outer model 
such that every stationary subset of $\o_1$ in $V[H]$ remains stationary in $M$, then in $M$ there is no $B'\sub\o_1$ 
such that $B'\neq B_G$ and such that there is a club  $C$ of $[\kappa]^{\al_0}$ with the property that for all 
$X\in C$, $X\cap\o_1\in\bigcup_{\alpha\in B'}S_\alpha$ if and only if $\ot(X)\in T$. Otherwise, if $\a\in B'\Delta B_G$, 
then there would be some $X\in C\cap C_G$ such that $X\cap \omega_1\in S_\alpha$. 
But then we would have that $\ot(X)$ is both in $T$ and in $\omega_1\setminus T$. 
\end{proof}

Let us move on now to our fourth freezing poset.   

Given cardinals $\m<\l$ with $\m$ regular, let $$S^\l_\m=\{\x<\l\,:\,\cf(\x)=\m\}$$ Let $\vec S = (S_\a)_{\a<\o_1}$ 
be a sequence of pairwise disjoint stationary subsets of $S^{\o_2}_\o$,  let $U\sub S^{\o_3}_\o$ be such that both 
$U$ and  $S^{\o_3}_\o\setminus U$ are stationary, and let $B\sub\o_1$. Then $\mtcl S_{\vec S, U, B}$ is 
the partial order, ordered by end-extension, consisting of all strictly $\sub$-increasing and $\sub$-continuous 
sequences $(Z_\n)_{\n\leq\n_0}$, for some $\n_0<\o_1$, such that for all $\n\leq\n_0$ and all $\a<\o_1$,  

\begin{itemize}

\it $Z_\n\in[\o_3]^{\al_0}$, and \it if $\mbox{sup}(Z_\n\cap\o_2)\in S_\a$, then $\mbox{sup}(Z_\n)\in U$ if and only if $\a\in B$. \end{itemize}

The proof of the following lemma appears in \cite{FOREMAGISHELAH} essentially.\footnote{See also the argument in the proof of Proposition \ref{freeze3} that $\dot{\mtcl Q}$ satisfies the $S$-condition.} 

\begin{lemma}\label{freezing-poset-3}
Let $\vec S= (S_\a)_{\a<\o_1}$ be a sequence of pairwise disjoint stationary subsets of $S^{\o_2}_\o$, let $U\sub S^{\o_3}_\o$ be such that both $U$ and  $S^{\o_3}_\o\setminus U$ are stationary, and let $B\sub\o_1$. Then $\mtcl S_{\vec S, U, B}$ preserves stationary subsets of $\o_1$, as well as the stationarity of all $S_\a$, and forces the existence of strictly $\sub$-increasing and $\sub$-continuous enumeration $(Z_\n\,:\,\n<\o_1)$ of a club of $[\o_3^V]^{\al_0}$ such that for all $\n$, $\a<\o_1$,  if $\mbox{sup}(Z_\n\cap\o_2^V)\in S_\a$, then $\mbox{sup}(Z_\n)\in U$ if and only if $\a\in B$.
\end{lemma}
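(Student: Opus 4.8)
The statement to prove is Lemma~\ref{freezing-poset-3}, which asserts three things about the poset $\mtcl S_{\vec S, U, B}$: (a) it preserves stationary subsets of $\o_1$; (b) it preserves the stationarity of each $S_\a$; and (c) it forces the existence of a $\sub$-increasing, $\sub$-continuous enumeration $(Z_\n\,:\,\n<\o_1)$ of a club of $[\o_3^V]^{\al_0}$ with the coding property relating $\mbox{sup}(Z_\n\cap\o_2)\in S_\a$ to $\mbox{sup}(Z_\n)\in U \Lrao \a\in B$. The reference \cite{FOREMAGISHELAH} is cited as containing the proof essentially, so the task is to adapt the Foreman--Magidor--Shelah construction; I would also cross-reference the $S$-condition argument in the proof of Proposition~\ref{freeze3}, as the excerpt itself suggests.

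First I would dispose of (c), which is the easy bookkeeping part. Conditions in $\mtcl S_{\vec S, U, B}$ are countable strictly $\sub$-increasing $\sub$-continuous sequences $(Z_\n)_{\n\leq\n_0}$ of elements of $[\o_3]^{\al_0}$ satisfying the constraint at each coordinate. A genericity/density argument shows that for each $\gamma<\o_3$ the set of conditions whose last entry contains $\gamma$ is dense (one can always adjoin a larger countable set obeying the constraint, using the stationarity of $U$ and of $S^{\o_3}_\o\setminus U$ to hit the required side of $U$ at a limit of countable cofinality); hence the union of the generic is a sequence of length $\o_1$ enumerating a club of $[\o_3^V]^{\al_0}$, and the coding property is built into the definition of the poset. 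I would also note here that the poset adds no new reals (conditions are countable and the poset is $\sub$-increasing, so a standard fusion-style argument applies), though this is not strictly needed for the statement.

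The substantive content is (a) and (b): the stationary-set preservation. The standard FMS technique is to show $\mtcl S_{\vec S, U, B}$ satisfies a strong properness-like condition — in fact it satisfies the $S$-condition (cf.\ Proposition~\ref{freeze3}), which by Lemma~\ref{S-cond-shelah} implies preservation of stationary subsets of $\o_1$. So the plan is: given a large $H_\t$, a countable $N\elsub H_\t$ containing $\mtcl S_{\vec S, U, B}$ and all relevant parameters, and $p\in N\cap\mtcl S_{\vec S, U, B}$, one builds an $(N,\mtcl S_{\vec S, U, B})$-semigeneric condition $q\leq p$ whose last coordinate is essentially $Z := N\cap\o_3$ (or a slight modification). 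For $q$ to be a legitimate condition one needs: $Z\cap\o_1 = N\cap\o_1 < \o_1$; and, \emph{if} $\mbox{sup}(Z\cap\o_2) = \mbox{sup}(N\cap\o_2)$ lands in some $S_\a$, \emph{then} $\mbox{sup}(Z)=\mbox{sup}(N\cap\o_3)$ must be on the correct side of $U$. The classical trick is that by reflecting through $N$ one has freedom: one builds an internally approachable tower or a short $\sub$-increasing chain of models extending $N$, observes that the relevant suprema $\mbox{sup}(M\cap\o_2)$ run through a club-below-$\mbox{sup}(N\cap\o_2)$-many values while $\a$ is pinned down (the $S_\a$ are pairwise disjoint, so at most one $\a$ is relevant), and uses the stationarity of $U$ or of its complement in $S^{\o_3}_\o$ to pick the tower so that the final supremum lands where the constraint dictates. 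Preservation of the stationarity of each $S_\a$ follows by the same kind of argument: given a name for a club $\dot C\sub\o_2$ and a condition forcing $\dot C\cap S_\a=\emptyset$, catch a model $N$ with $\mbox{sup}(N\cap\o_2)\in S_\a$ that also believes $\mbox{sup}(N\cap\o_2)\in\dot C$, derive a contradiction.

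The main obstacle will be getting the $S$-condition / semigenericity bookkeeping exactly right: one must simultaneously control $N\cap\o_1$, $\mbox{sup}(N\cap\o_2)$ and $\mbox{sup}(N\cap\o_3)$, and the $S$-condition requires a winning strategy for player II in the tree game $\mtcl G^{\mtcl P}$ whose output depends only on the prescribed data. Verifying that the natural strategy — at each node, extend the current condition inside the appropriate model and, at the odd (player~II) stages, choose the regular cardinal $\k^n_\eta$ (here always $\o_2$ or $\o_3$) so as to have enough room to diagonalize against all potential subtrees while keeping the coordinate constraint satisfiable — is correct, is the delicate point. Since the excerpt explicitly defers to the proof of Proposition~\ref{freeze3} for exactly this kind of argument and to \cite{FOREMAGISHELAH} for the construction, I would organize the writeup to invoke those: prove (c) directly, then show $\mtcl S_{\vec S, U, B}$ satisfies the $S$-condition by the argument of Proposition~\ref{freeze3} (with $\o_3$ in place of $\o_2$ and the extra reflection of $U$), and conclude (a) from Lemma~\ref{S-cond-shelah}(1), with (b) by a parallel direct argument.
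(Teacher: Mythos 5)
Your proposal is correct and follows essentially the route the paper itself indicates: the paper gives no self-contained proof of Lemma~\ref{freezing-poset-3}, deferring to \cite{FOREMAGISHELAH} and to the model-construction in the proof of Proposition~\ref{freeze3} (first fix $\mbox{sup}(N\cap\o_3)$ on the side of $U$ dictated by $B$, then fix $\mbox{sup}(N\cap\o_2)$ in the required $S_\a$, then close off under Skolem functions), which is exactly the reflection argument you describe for (a) and (b), with (c) following from the density/genericity bookkeeping you outline. One small caution: the paper only establishes the $S$-condition for the two-step iteration $\Coll(\o_1,\,\d)\ast\dot{\mtcl S}_{\vec S, U, \dot B}$, not for the bare poset $\mtcl S_{\vec S, U, B}$, so for preservation of stationary subsets of $\o_1$ the direct argument --- producing an $(N,\mtcl S_{\vec S, U, B})$-generic condition for countable $N\elsub H_\t$ with $N\cap\o_1$ in a prescribed stationary set and $N\cap\o_3$ in the target set --- is the safer route than detouring through the $S$-condition.
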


\begin{proposition}\label{freeze3} For every partial order $\mtcl P$ and for all cardinals $\k^1>\k^0\geq\d\geq \av\mtcl P\av$, if $\Vdash_{\mtcl P\ast\Coll(\o_1,\,\d)}\k^0=\o_2$, $\Vdash_{\mtcl P\ast\Coll(\o_1,\,\d)}\k^1=\o_3$, $(S_\a)_{\a<\o_1}\in V$  is a  sequence of pairwise disjoint stationary subsets of $S^{\k^0}_\o$, and $U\sub S^{\k^1}_\o$ is a a stationary set in $V$ such that $S^{\k^1}_\o\setminus U$ is also stationary in $V$, then there is a $\mtcl P\ast \Coll(\o_1,\,\d)$-name $\dot B$  for a subset of $\o_1$ such that 

\begin{enumerate}
\item $\mtcl P$ forces $\Coll(\o_1,\, \d)\ast\dot{\mtcl S}_{\vec S, U, \dot B}$ to have the $S$-condition, and such that

\item $\mtcl P\ast (\Coll(\o_1,\, \d)\ast\dot{\mtcl S}_{\vec S, U, \dot B})$ $\SSP$-freezes $\mtcl P$, as witnessed by the inclusion map $$i:\mtcl P\into \mtcl P\ast (\Coll(\o_1,\,\d)\ast\dot{\mtcl S}_{\vec S, U, \dot B})$$
\end{enumerate}
\end{proposition}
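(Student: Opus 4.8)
The strategy is to apply Lemma \ref{freezability-sufficient} with $\lambda=\omega_1$, so the task reduces to two things: (i) exhibiting the name $\dot B$ and verifying that $\mtcl P$ forces $\Coll(\o_1,\,\d)\ast\dot{\mtcl S}_{\vec S, U, \dot B}$ to satisfy the $S$-condition; and (ii) checking that, after forcing with $\mtcl P\ast(\Coll(\o_1,\,\d)\ast\dot{\mtcl S}_{\vec S,U,\dot B})$, the generic $G$ for $\mtcl P$ is coded by a subset $A_G\sub\o_1$ in an $\SSP$-absolute manner in the sense of that lemma. First I would work in $V^{\mtcl P\ast\Coll(\o_1,\,\d)}$, fix a canonical nonempty, coinfinite subset $B_{\dot G}\sub\o_1$ coding the $\mtcl P$-generic, and set $\dot B=B_{\dot G}$; then $\dot{\mtcl S}_{\vec S,U,\dot B}$ is exactly the poset of Lemma \ref{freezing-poset-3} (computed with $\k^0=\o_2$, $\k^1=\o_3$ in the intermediate model, which is why the hypotheses on $\k^0,\k^1,\d$ are stated the way they are). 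That lemma gives that $\mtcl S_{\vec S,U,B_G}$ preserves stationary subsets of $\o_1$, preserves the stationarity of each $S_\a$, and generically adds a $\sub$-increasing $\sub$-continuous enumeration $(Z_\n\,:\,\n<\o_1)$ of a club of $[\o_3]^{\al_0}$ with the property that for all $\n,\a<\o_1$, if $\sup(Z_\n\cap\o_2)\in S_\a$ then $\sup(Z_\n)\in U\iff\a\in B_G$.

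For part (2), let $H$ be $\mtcl P\ast(\Coll(\o_1,\,\d)\ast\dot{\mtcl S}_{\vec S,U,\dot B})$-generic, $G=H\cap\mtcl P$, and let $A_G\sub\o_1$ canonically code $B_G$ together with a ladder-type encoding of $(Z_\n\,:\,\n<\o_1)$ (using that each $Z_\n$ has order type a countable ordinal and an enumeration function $\o_1\to\o_3$, suitably coded after the collapse). The $\Sigma_1$ formula $\varphi(x,y,z)$ will assert, over $(H_{\o_2};\in,\NS_{\o_1})^M$ with parameter $p=(\vec S, U)$ (together with the relevant fixed partition/enumeration data, which can be folded into $p$), that $y$ codes a pair $(B, (Z_\n)_{\n<\o_1})$ with $(Z_\n)$ strictly $\sub$-increasing, $\sub$-continuous, cofinal in $[\o_3]^{\al_0}$ in the sense of being a club, and such that for all $\n,\a$, $\sup(Z_\n\cap\o_2)\in S_\a\Rightarrow(\sup(Z_\n)\in U\iff\a\in B)$, and that $x$ is the generic coded by $B$. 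The uniqueness clause (2) of Lemma \ref{freezability-sufficient} is the heart of the matter: if $M$ is an outer model of $V[H]$ with $\mtcl P(\o_1)^{V[H]}\cap(\NS_{\o_1})^M=(\NS_{\o_1})^{V[H]}$ — in particular $\o_1$ is preserved and each $S_\a$ remains stationary, and more to the point $U$ and $S^{\o_3}_\o\setminus U$ remain "large" in the appropriate reflection sense — and $(B',(Z'_\n)_{\n<\o_1})\in M$ is another witness with $\a\in B\,\Delta\,B'$, then since both $(Z_\n)$ and $(Z'_\n)$ enumerate clubs of $[\o_3]^{\al_0}$ one intersects them to find, by stationarity of $S_\a$ reflected appropriately, some $\n$ with $Z_\n=Z'_\n$ and $\sup(Z_\n\cap\o_2)\in S_\a$; then $\sup(Z_\n)\in U$ both holds (from the $B$-witness, since $\a\in B$ or $\a\in B'$, whichever) and fails, a contradiction. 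So $B_G$, hence $G$, is uniquely determined, and Lemma \ref{freezability-sufficient} delivers the $\SSP$-freezing.

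It remains to verify part (1), that $\mtcl P$ forces $\Coll(\o_1,\,\d)\ast\dot{\mtcl S}_{\vec S,U,\dot B}$ to satisfy the $S$-condition of Definition \ref{s-cond-def}. Since $\Coll(\o_1,\,\d)$ is countably closed it satisfies the $S$-condition, and by Fact \ref{fac:sigma2Scond} the class $S\textsf{-cond}$ is closed under two-step iterations, so it suffices to show that $\Coll(\o_1,\,\d)$ forces $\dot{\mtcl S}_{\vec S,U,\dot B}$ to satisfy the $S$-condition. This is where I would adapt the argument of \cite{FOREMAGISHELAH} (alluded to in the footnote to Lemma \ref{freezing-poset-3}): player II's strategy in $\mtcl G^{\dot{\mtcl S}_{\vec S,U,\dot B}}$ is built by playing, along each node of the tree, conditions whose top models reflect $\o_2$ and $\o_3$ correctly, and using the stationarity of the $S_\a$'s and of $U$ and its complement in $S^{\o_3}_\o$ to meet the requirements defining $\mtcl S_{\vec S,U,B}$ at limits; the key structural point, which makes II's moves depend only on the data allowed in Definition \ref{s-cond-def}, is that the constraint at a countable limit model depends only on $\sup(Z\cap\o_2)$, $\sup(Z)$, and membership of the relevant ordinal in $B$, all of which are determined by the finitely many "branching" coordinates recorded so far together with the conditions already played. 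I expect this verification of the $S$-condition for $\mtcl S_{\vec S,U,\dot B}$ to be the main technical obstacle, since the $S$-condition is a delicate strategic property and the bookkeeping required to show II's strategy has the prescribed restricted dependence must be carried out carefully; everything else (the choice of $\dot B$, the coding $A_G$, the $\Sigma_1$-ness of $\varphi$, and the uniqueness argument) is routine once Lemmas \ref{freezability-sufficient} and \ref{freezing-poset-3} are in hand.
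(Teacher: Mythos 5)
Your overall architecture matches the paper's proof exactly: set $\dot B=B_{\dot G}$ in $V^{\mtcl P\ast\Coll(\o_1,\,\d)}$, invoke Lemma \ref{freezing-poset-3} for the basic properties of $\mtcl S_{\vec S,U,B_{\dot G}}$, reduce part (2) to Lemma \ref{freezability-sufficient} via the club-intersection uniqueness argument (find $\n$ with $Z_\n=Z'_\n$ and $\sup(Z_\n\cap\k^0)\in S_\a$ for $\a\in B\Delta B'$, contradicting $\sup(Z_\n)\in U$ and $\sup(Z_\n)\notin U$ simultaneously), and reduce part (1) to showing that $\mtcl P\ast\Coll(\o_1,\,\d)$ forces $\dot{\mtcl S}_{\vec S,U,\dot B}$ to satisfy the $S$-condition, using closure of $S\textsf{-cond}$ under two-step iterations. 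The parameter for the $\Sigma_1$ formula and the coding of $A_G$ are also handled as in the paper.

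The one genuine gap is that the verification of the $S$-condition for $\dot{\mtcl S}_{\vec S,U,\dot B}$ --- which is the bulk of the paper's proof --- is not actually carried out, and your sketch does not contain the mechanism that makes it work. Saying that player II plays ``conditions whose top models reflect $\o_2$ and $\o_3$ correctly'' does not explain how II's strategy, whose moves by Definition \ref{s-cond-def} may only depend on the node $\eta$, the conditions along it, and the branching pattern, can guarantee that the union of a branch of an \emph{arbitrary} full-branching subtree $T'$ can be closed off to a legal condition. The actual device is this: II alternates (according to the parity of $\{k<\av\eta\av:\av\textsf{succ}_T(\eta\restr k)\av>1\}$) between declaring $\k_\eta=\k^0$ and $\k_\eta=\k^1$, and for each immediate successor $\eta^\smallfrown\la\a\ra$ plays a condition $p_{\eta^\smallfrown\la\a\ra}$ with $\a\in\bigcup\range(p_{\eta^\smallfrown\la\a\ra})$. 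This seeding of the successor indices into the ranges is what lets one steer a branch $b$ of $T'$ inside a countable $N\prec H_\t$ so that $\sup\bigl(\bigcup_n(\cup\range(p_{b\restr n})\cap\k^0)\bigr)=\sup(N\cap\k^0)$ and $\sup\bigl(\bigcup_n\cup\range(p_{b\restr n})\bigr)=\sup(N\cap\k^1)$. One then needs $N$ with $\sup(N\cap\k^0)\in S_0$ and $\sup(N\cap\k^1)\in U$ if and only if $0\in B_{\dot G}$; producing such an $N$ requires a two-stage stationarity argument (first reflect to get $\a\in U$ with the Skolem closure of $[\a]^{{<}\o}$ meeting $\k^1$ exactly at $\a$, fix a countable cofinal $Y\sub\a$, then use stationarity of $S_0$ to find $\b$ with the closure of $[\b\cup Y]^{{<}\o}$ meeting $\k^0$ at $\b$), not merely the stationarity of $U$ and its complement quoted abstractly. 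Only then is $p_b=\bigcup_n p_{b\restr n}\cup\{\la\n,X\ra\}$ a condition in $\dot{\mtcl S}_{\vec S,U,\dot B}$, since the single new limit constraint at $\n$ is the one you arranged $N$ to satisfy. Without this, part (1) remains an appeal to an external reference rather than a proof.
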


\begin{proof}
Working in $V^{\mtcl P\ast \Coll(\o_1,\,\d)}$,  let $(S_\a)_{\a<\o_1}\in V$ and $U\in V$ be as stated, and let $B_{\dot G}$ be a subset of $\o_1$ coding the generic filter $G$ for $\mtcl P$ in a canonical way. Let $\dot{\mtcl R}$ be a $\mtcl P$-name for $\Coll(\o_1,\,\d)\ast \dot{\mtcl S}_{\vec S, U, B_{\dot G}}$.

\begin{claim}
$\mtcl P$ forces that $\dot{\mtcl R}$ has the $S$-condition. 
\end{claim}

\begin{proof} Since $\Coll(\o_1,\,\d)$ has the $S$-condition, it suffices to prove that $\mtcl P\ast\Coll(\o_1,\,\d)$ 
forces $\dot{\mtcl S}_{\vec S, U, \dot B}$ to have the $S$-condition. Let us work in $V^{\mtcl P\ast \Coll(\o_1,\,\d)}$. 
Let $\s$ be the following strategy for player II in $\mtcl G^{\dot{\mtcl R}}$: 
Whenever it is her turn to play, player II will alternate between the following courses of action (a), (b) 
(i.e., she will opt for (a) or (b) depending on the parity of the finite set 
$\{k<\av\eta\av\,:\,\av \textsf{succ}_T(\eta\restr k)\av>1\}$, with the notation used in Definition \ref{s-cond-def}).

\begin{itemize}

\item[(a)] Player II chooses $\k_\eta=\k^0$, $\textsf{succ}_T(\eta)=\{\eta^\smallfrown\la\a\ra\,:\,\a<\k^0\}$, and 
$(p_{\eta^\smallfrown\la\a\ra})_{\a<\k^0}$ where, for each $\a<\k^0$, $p_{\eta^\smallfrown\la\a\ra}$ is a condition extending 
$p_\eta$ and such that $\a\in \bigcup\range(p_{\eta^\smallfrown\la\a\ra})$.

\item[(b)]  Player II chooses $\k_\eta=\k^1$, $\textsf{succ}_T(\eta)=\{\eta^\smallfrown\la\a\ra\,:\,\a<\k^1\}$, and 
$(p_{\eta^\smallfrown\la\a\ra})_{\a<\k^1}$ where, for each $\a<\k^1$, $p_{\eta^\smallfrown\la\a\ra}$ is a condition 
extending $p_\eta$ and such that $\a\in \bigcup\range(p_{\eta^\smallfrown\la\a\ra})$.

\end{itemize}

Let now $T$ be the tree built along a run of $\mtcl G^{\dot{\mtcl R}}$ in which player II has played according to $\s$, 
let $T'$ be a subtree of $T$ such that $\av\textsf{succ}_{T'}(\eta)\av=\av\textsf{succ}_T(\eta)\av$ for every $\eta\in T'$, 
and let $N$ be a countable elementary substructure of some large enough $H_\t$ containing all relevant objects 
(which includes our run of $\mtcl G^{\dot{\mtcl R}}$ and $T'$), such that $\mbox{sup}(N\cap\k^0)\in S_0$, and such that 
$\mbox{sup}(N\cap \k^1)\in U$ if $0\in B_{\dot G}$ and $\mbox{sup}(N\cap \k^1)\notin U$ if $0\notin B_{\dot G}$. 

Such an $N$ can be easily found (s.\ \cite{FOREMAGISHELAH}):
Indeed, suppose, for concreteness, that $0\in B_{\dot G}$. Then,  letting $F:[H_\t]^{{<}\omega}\into H_\t$ 
be a function generating the club of countable elementary submodels of $H_\t$ containing all relevant objects, 
we  may find, using the stationarity of $U$, an ordinal $\a\in U$ such that the closure $X_0$ of $[\a]^{{<}\omega}$ 
under $F$ is such that $X_0\cap \k^1 = \alpha$. We may of course assume that $\a>\k^0$. Since $\cf(\a)=\omega$, 
we may pick a countable cofinal subset $Y$ of $\a$.  Using now the stationarity of $S_0$, we may find $\b\in S_0$ 
with the property that the closure $X_1$ of $[\b\cup Y]^{{<}\omega}$ under $F$ is such that $X_1\cap\kappa^0=\beta$. 
Since $\cf(\b)=\o$, we may now pick a countable subset $Z$ of $\b$. But then, letting $N$ be the closure of
 $Y\cup Z$ under $F$, we have that $\mbox{sup}(N\cap\k^0)=\b$ and $\mbox{sup}(N\cap\k^1)=\alpha$, and so 
 $N$ is as desired.

Letting $(p_\eta)_{\eta\in T'}$ be the tree of $\dot{\mtcl R}$-conditions corresponding to $T'$, it is now easy to find a 
cofinal branch $b$ through $T'$ such that for all $n<\o$, $b\restr n\in N$, and such that 
\[
\mbox{sup}(\bigcup_{n<\o}(\cup\range(p_{b\restr n})\cap\k^0))=  \mbox{sup}(N\cap\k^0)
\]
and 
\[
\mbox{sup}(\bigcup_{n<\o}(\cup\range(p_{b\restr n})))=  \mbox{sup}(N\cap\k^1).
\] 
Let 
\[
\n=\mbox{sup}\{\dom(p_{b\restr n})\,:\,n<\o\}\footnote{Incidentally, note that we cannot guarantee that $\n= N\cap\o_1$.}
\]
and 
\[
X=\bigcup_{n<\o}(\cup\range(p_{b\restr n})\cap\k^1).
\] 
It follows now that, letting $p_b=\bigcup_n p_{b\restr n}\cup\{\la\n, X\ra\}$, $p_b$ is a condition in 
$\dot{\mtcl R}$ forcing that $p_{b\restr n}$ is in the generic filter for all $n$.

\end{proof}

Going back to $V$, the proof that $\mtcl P\ast (\Coll(\o_1,\, \d)\ast\dot{\mtcl S}_{\vec S, U, \dot B})$ $
\SSP$-freezes $\mtcl P$ (as witnessed by the inclusion map) is very much like the proofs of Propositions \ref{freeze1} 
and \ref{freeze2}. Suppose $H$ is a generic filter for  $\mtcl P\ast (\Coll(\o_1,\, \d)\ast\dot{\mtcl S}_{\vec S, U, \dot B})$, 
$G=H\cap\mtcl P$, and $M$ is any outer model such that every stationary subset of $\o_1$ in $V[H]$ remains stationary 
in $M$. Suppose, towards a contradiction, that in $M$ there is some subset $B'\neq B_G$ of $\omega_1$ 
for which there is an $\sub$-increasing and $\sub$-continuous enumeration $(Z'_\n\,:\,\n<\o_1)$ of a club of 
$[\k^1]^{\al_0}$ such that for all $\n$, $\a<\o_1$,  if $\mbox{sup}(Z'_\n\cap\k^0)\in S_\a$, then $\mbox{sup}(Z'_\n)\in U$ 
if and only if $\a\in B'$. If $\a\in B'\Delta B_G$, there is some $\n$ such that $Z_\n=Z'_\n$ and 
$\mbox{sup}(Z_\n\cap \k^0)\in S_\a$. But then we have both $\mbox{sup}(Z_\n)\in U$ and $\mbox{sup}(Z_\n)\notin U$.

Finally, the existence of a $\Sigma_1$ definition---with $p=(\l, \vec S, U)$ as parameter---as required by 
Lemma \ref{freezability-sufficient} is easy. 
 \end{proof}

\subsubsection{Iteration Lemmas}\label{iterability}


We need the following preservation lemmas, due to Shelah (\cite[III, resp. VI]{SHEPRO}, 
see also \cite{Goldstern}).

\begin{lemma}\label{presproper} Suppose $\rho<\o_1$ is an indecomposable ordinal and $$(\mtcl P_\a, \dot{\mtcl Q}_\b\,:\,\a\leq\g,\,\b<\g)$$ is a countable support iteration such that for all $\b<\g$, $$\Vdash_{\mtcl P_\b}\dot{\mtcl Q}_\b\in \rho\mbox{-}\PR$$ Then $\mtcl P_\g\in \rho\mbox{-}\PR$.

\end{lemma}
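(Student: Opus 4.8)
\textbf{Proof plan for Lemma~\ref{presproper} ($\rho$-properness is preserved by countable support iterations).}

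The plan is to run the standard Shelah-style preservation argument, proving by induction on $\g$ a suitable strengthening of the statement. First I would set up the right inductive hypothesis: rather than merely proving ``$\mtcl P_\g\in\rho\mbox{-}\PR$'', I would prove the stronger assertion that for a club of cardinals $\t$ with $\mtcl P_\g\in H_\t$, and for every continuous $\rho$-chain $(N_i)_{i\le\rho}$ of countable elementary submodels of $H_\t$ with $(\mtcl P_\a,\dot{\mtcl Q}_\b)\in N_0$, the following holds: for every $\a\le\g$ with $\a\in N_0$, every $p\in\mtcl P_\a\cap N_0$, and every condition $q$ below $p\restr\a_0$ which is $(N_i,\mtcl P_{\a_0})$-generic for all $i\le\rho$ (where $\a_0=\min(\a\cap N_0)$ in the relevant sense, or just $\a_0=0$ in the base case), there is an extension $\bar q\le q$ in $\mtcl P_\a$ with $\bar q\restr\a_0=q$ which is $(N_i,\mtcl P_\a)$-generic for all $i\le\rho$ and with $\bar q\le p$. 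This ``fusion-with-a-given-initial-segment'' formulation is what makes the induction go through, exactly as in the classical $\rho=1$ case. The use of Fact~\ref{char-proper} (the characterization of $\rho$-properness via preservation of $\rho$-stationary subsets of $^\rho([X]^{\al_0})$) is available as an alternative route, but I expect the direct model-theoretic argument via $\rho$-chains of countable models to be cleaner for the iteration step.

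The induction splits into the usual three cases. For $\g$ a successor, say $\g=\b+1$: given the $\rho$-chain $(N_i)_{i\le\rho}$ and a condition, I would first apply the inductive hypothesis at $\mtcl P_\b$ to obtain an $(N_i,\mtcl P_\b)$-generic extension $q$, then work in $V^{\mtcl P_\b}$ below $q$: since $q$ is generic for all the $N_i$, the chain $(N_i[\dot G_\b])_{i\le\rho}$ is (forced to be) a continuous $\rho$-chain of countable elementary submodels of $H_\t^{V[\dot G_\b]}$, and $\dot{\mtcl Q}_\b$ is $\rho$-proper there by hypothesis, so I can find a $\mtcl Q_\b$-condition meeting the relevant dense sets in all $N_i[\dot G_\b]$; amalgamating gives the desired condition in $\mtcl P_{\b+1}$. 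For $\g$ a limit of cofinality $>\o$: since the iteration has countable support and the models $N_i$ are countable, $\sup(N_\rho\cap\g)<\g$ (using that $\rho<\o_1$, so the whole chain lives below some $\g'<\g$), and the statement reduces to the inductive hypothesis at that $\g'$; this case is essentially trivial. The genuinely delicate case is $\g$ a limit of cofinality $\o$, which is where the fusion argument proper takes place.

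The main obstacle is the cofinality-$\o$ limit step. Here one fixes an increasing sequence $\g_n\to\g$ with each $\g_n\in N_0$ and builds, by a careful interleaving, an $\omega$-sequence of conditions $q_n\in\mtcl P_{\g_n}$ together with a simultaneous bookkeeping of dense sets $D\in\bigcup_i N_i$ — so that the limit condition $q=\bigcup_n q_n$ is $(N_i,\mtcl P_\g)$-generic for \emph{all} $i\le\rho$ at once. The subtlety, compared to the $\rho=1$ case, is that one must keep all $\rho+1$ models happy simultaneously along the $\o$-fusion, which is handled by enumerating in order type $\o$ a cofinal set of ``tasks'' of the form (a model $N_i$, a dense set $D\in N_i$, an index $n$) and addressing them one at a time using the successor-step argument applied inside the appropriate $N_i$; the continuity of the chain $(N_i)$ at limit $i$ is used to ensure that a task for $N_i$ with $i$ limit is already witnessed by tasks for earlier $N_k$. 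One also needs the elementary standard facts that genericity of $q$ for $N_i$ over $\mtcl P_\g$ follows from genericity of each $q_n$ for $N_i$ over $\mtcl P_{\g_n}$ together with the density bookkeeping, and that countable support guarantees $q=\bigcup_n q_n$ is a legitimate condition in $\mtcl P_\g$. This is exactly Shelah's argument from \cite[III]{SHEPRO} with ``countable elementary submodel'' systematically replaced by ``continuous $\rho$-chain of countable elementary submodels'', and I would cite that reference (and \cite{Goldstern}) for the combinatorial core, spelling out only the modifications needed to propagate the $\rho$-chain through the three cases.
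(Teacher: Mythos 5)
The paper does not actually prove this lemma: it is quoted verbatim from Shelah \cite[III, resp.\ V--VI]{SHEPRO} (see also \cite{Goldstern}), so there is no in-paper argument to compare yours against. Judged on its own terms, your outline is the standard Shelah-style preservation proof and is correct in its overall architecture: the two-variable inductive hypothesis (extend a condition that is already $(N_i,\mtcl P_{\a_0})$-generic for all $i\le\rho$ to one that is $(N_i,\mtcl P_\a)$-generic with the same restriction to $\a_0$), the successor step via the chain $(N_i[\dot G_\b])_{i\le\rho}$ in $V^{\mtcl P_\b}$, and the $\o$-fusion at limits of cofinality $\o$ with a bookkeeping of tasks. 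Your observation that genericity for $N_i$ with $i$ limit is automatic from genericity for cofinally many $N_k$, $k<i$, by continuity of the chain is exactly right and is the reason only successor-indexed models need explicit attention.

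Two points deserve more care than your sketch gives them. First, the hypothesis that $\rho$ is indecomposable is not decorative: in the cofinality-$\o$ fusion you must split the chain $(N_i)_{i\le\rho}$ into $\o$-many consecutive blocks and, at stage $n$, invoke the inductive hypothesis for the models in the $n$-th block; for that invocation you need each tail segment of the chain to be order-isomorphic to a $\rho$-chain again, which is precisely $\ot(\rho\setminus\eta)=\rho$. Your description of the bookkeeping as a flat enumeration of tasks $(N_i,D,n)$ glosses over this, and without the blockwise use of indecomposability the induction does not close. Second, the uncountable-cofinality case is easy but not quite as trivial as ``reduce to $\g'=\sup(N_\rho\cap\g)$'': a dense subset $D\in N_i$ of $\mtcl P_\g$ is not a dense subset of $\mtcl P_{\g'}$, and you need the standard direct-limit argument that $D\cap N_i$ consists of conditions with support in $N_i\cap\g\subseteq\g'$ and is predense below any $(N_i,\mtcl P_{\g'})$-generic condition, so that $(N_i,\mtcl P_{\g'})$-genericity transfers to $(N_i,\mtcl P_\g)$-genericity. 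With these two points spelled out, your proof is the one the cited references give.
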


\begin{lemma}\label{presomegaomegabounding} Suppose $\rho<\o_1$ is an indecomposable ordinal and $$(\mtcl P_\a, \dot{\mtcl Q}_\b\,:\,\a\leq\g,\,\b<\g)$$ is a countable support iteration such that for all $\b<\g$, $$\Vdash_{\mtcl P_\b}\dot{\mtcl Q}_\b\in (\rho\mbox{-}\PR)\cap\,^\o\o\textsf{-bounding}$$ Then $\mtcl P_\g\in (\rho\mbox{-}\PR)\cap\,^\o\o\textsf{-bounding}$.
\end{lemma}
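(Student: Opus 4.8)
The statement to prove is Lemma~\ref{presomegaomegabounding}: a countable support iteration of forcings that are $\rho$-proper and $^\o\o$-bounding is again $\rho$-proper and $^\o\o$-bounding. My plan is to treat the two properties separately but in tandem, since the preservation of $\rho$-properness is already granted by Lemma~\ref{presproper}, so the real content is the preservation of the $^\o\o$-bounding property \emph{along with} $\rho$-properness (one needs both simultaneously in the inductive hypothesis, because the bounding argument at limit stages uses genericity conditions that are guaranteed by properness).

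\begin{proof}[Proof sketch]
The plan is to prove the two conclusions simultaneously by induction on $\g$, using Lemma~\ref{presproper} to handle the $\rho$-properness part of the inductive hypothesis for free and concentrating on the $^\o\o$-bounding part. The successor step $\g=\b+1$ is immediate: if $\mtcl P_\b$ is $^\o\o$-bounding and $\Vdash_{\mtcl P_\b}\dot{\mtcl Q}_\b\in {^\o\o}\textsf{-bounding}$, then $\mtcl P_\g\cong\mtcl P_\b\ast\dot{\mtcl Q}_\b$ is $^\o\o$-bounding, since a two-step iteration of $^\o\o$-bounding forcings is $^\o\o$-bounding (any $f:\o\to\o$ in $V^{\mtcl P_\g}$ is first dominated by a function in $V^{\mtcl P_\b}$ and then by a function in $V$). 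The limit step is where the work lies. So suppose $\g$ is a limit ordinal, $p\in\mtcl P_\g$, and $\dot f$ is a $\mtcl P_\g$-name forced by $p$ to be a function from $\o$ to $\o$; I must find $q\leq p$ and $g\in V$, $g:\o\to\o$, with $q\Vdash\dot f(n)<g(n)$ for all $n$.

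First I would fix a cardinal $\t$ large enough that $\mtcl P_\g$, $p$, $\dot f$ and all relevant names lie in $H_\t$, and fix (using $\rho$-properness of all $\mtcl P_\a$, $\a<\g$, available by the induction hypothesis together with Lemma~\ref{presproper}) a club $D\sub[H_\t]^{\al_0}$ witnessing $\rho$-properness in the uniform sense of the Remark after the definition of $\rho$-properness. Then I would take a continuous $\rho$-chain $(N_i)_{i\leq\rho}$ of countable elementary submodels of $H_\t$ in $D$ containing all the relevant parameters, and build an $(N_i,\mtcl P_\g)$-generic condition $q\leq p$ for all $i\leq\rho$ using the standard fusion/fusion-with-side-conditions construction for countable support iterations, but being careful: at each relevant coordinate I would, when handing over the bound for the generic object, use the $^\o\o$-bounding hypothesis of the iterand at that coordinate to ensure that $\dot f$, as decided step by step inside $N_0$ (equivalently inside $\bigcup_i N_i$ for the relevant $\Sigma_0$-facts about $\dot f$), never takes values outside a set already present in $V$. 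Concretely, the generic condition $q$ will force that $\dot f$ coincides on $N_0$ with a name that is $(\mtcl P_\g)$-decided into $H_\t\cap V$ via the genericity, so that $q\Vdash\dot f\in N_0[\dot G_\g]$; combining this with the $^\o\o$-bounding property preserved at every coordinate via the fusion, the function $g$ defined by $g(n)=\sup\{k:\exists r\leq q,\ r\in N_0,\ r\Vdash\dot f(n)=k\}+1$ is in $V$ (it is computed inside $N_0$, a countable model, hence is a countable sup, hence finite at each $n$) and dominates $\dot f$ below $q$. The standard reference is Shelah \cite[VI]{SHEPRO}, and Goldstern's \cite{Goldstern} exposition of the ``preservation of $^\o\o$-bounding'' preservation theorem gives the template; the $\rho$-proper refinement amounts to running that template along a $\rho$-chain rather than along a single model, exactly as in the proof of Lemma~\ref{presproper}.

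The main obstacle I expect is the bookkeeping at limit coordinates $\a<\g$ of the iteration (not the top limit $\g$): one needs a preservation theorem of the form ``$^\o\o$-bounding together with $\rho$-properness is preserved at limits of countable support iterations'', and the proof of such a theorem requires setting up the right notion of ``$(N_i)_{i\le\rho}$-generic condition deciding $\dot f$ into the ground model'' and verifying that this property is itself preserved under the iteration — this is the delicate diagonalization that Shelah's general preservation machinery ($(\bar D,\bar S)$-preserving, or the ``preservation of $\sqsubseteq$'' framework) is designed to handle. Rather than redevelop that machinery, I would invoke Shelah \cite[VI]{SHEPRO} (and \cite{Goldstern}) for the single-model ($\rho=1$) case and then observe that the passage to general indecomposable $\rho<\o_1$ is completely parallel to the passage from properness to $\rho$-properness carried out in Lemma~\ref{presproper}: replace ``countable elementary submodel'' by ``continuous $\rho$-chain of countable elementary submodels'' throughout, note that an increasing union of such chains of limit length is again such a chain because $\rho$ is indecomposable, and check that the local $^\o\o$-bounding clause of the iterands is exactly what is needed to push the ``$\dot f$ is decided into $V$'' property through successor and limit coordinates alike.
\end{proof}
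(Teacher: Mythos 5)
The paper offers no proof of this lemma at all: it is stated as a known preservation theorem of Shelah, with a bare citation to \cite[VI]{SHEPRO} and \cite{Goldstern}. Your proposal ultimately rests on the same citations, so at that level you and the paper agree. However, the sketch you wrap around the citation contains a genuine error, and it also makes the problem look harder than it is.

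The error: your proposed dominating function $g(n)=\sup\{k:\exists r\leq q,\ r\in N_0,\ r\Vdash\dot f(n)=k\}+1$ does not work. First, an $(N_0,\mtcl P_\g)$-generic $q$ need not have any extensions lying in $N_0$, so the set as written is typically empty; reading it as intended (values of $\dot f(n)$ decided by conditions in $N_0$ compatible with $q$), that set is in general countably \emph{infinite}, so the supremum can be $\o$ and $g$ is not a function into $\o$. If this computation were valid it would show that every proper forcing is $^\o\o$-bounding, which is false (Cohen forcing is proper). This is exactly why preservation of $^\o\o$-bounding is not a corollary of properness plus fusion: the correct argument must interleave the fusion with a step-by-step \emph{pure decision} of longer and longer initial segments of $\dot f$ into ground-model bounds, using the $^\o\o$-bounding of each iterand at each step of the construction. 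That interleaving is the content of Shelah's preservation framework, and it cannot be recovered from genericity of $q$ after the fact.

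Separately, your premise that the two properties must be carried simultaneously through a $\rho$-chain version of the bounding argument is unnecessary. Since $\rho$-properness (for indecomposable $\rho\geq 1$) implies properness, the iterands are in particular proper and $^\o\o$-bounding, so the standard single-model preservation theorem of \cite[VI]{SHEPRO}/\cite{Goldstern} already yields that $\mtcl P_\g$ is $^\o\o$-bounding; $\rho$-properness of $\mtcl P_\g$ then comes independently from Lemma~\ref{presproper}. This cleaner decomposition is presumably what the paper's citation intends, and it avoids having to redevelop the bounding preservation machinery along continuous $\rho$-chains.
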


We will also use the following preservation result due to Miyamoto.

\begin{lemma}\label{pressuslintreepres} Suppose $\rho<\o_1$ is an indecomposable ordinal and $$(\mtcl P_\a, \dot{\mtcl Q}_\b\,:\,\a\leq\g,\,\b<\g)$$ is a countable support iteration such that for all $\b<\g$, $$\Vdash_{\mtcl P_\b}\dot{\mtcl Q}_\b\in (\rho\mbox{-}\PR)\cap\STP$$ Then $\mtcl P_\l\in (\rho\mbox{-}\PR)\cap\STP$.

\end{lemma}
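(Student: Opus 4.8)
The statement to prove is Lemma~\ref{pressuslintreepres}: a countable support iteration of $\rho$-proper, Suslin-tree-preserving forcings is again $\rho$-proper and Suslin-tree-preserving. Here is how I would organize the argument.

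\textbf{Overall strategy.} The plan is to prove the two preservation properties in tandem by induction on the length $\gamma$ of the iteration, since the proof that $\mtcl P_\gamma$ preserves Suslin trees will require knowing that all initial segments $\mtcl P_\alpha$ are $\rho$-proper (indeed, the standard arguments for preservation of Suslin trees under countable support iteration rely on a ``strong preservation'' or ``$(N,\mtcl P)$-preservation of Suslinity'' property that is itself preserved only because the iterands are proper). The $\rho$-properness of $\mtcl P_\gamma$ is exactly Lemma~\ref{presproper}, which I may cite, so the real content is the preservation of Suslin trees. The cleanest route is to follow Miyamoto's framework: isolate a property $(\ast)$, holding of a forcing $\mtcl Q$ relative to a Suslin tree $T$ and a countable elementary submodel $N$ containing $T$ and $\mtcl Q$, which says roughly that for any $(N,\mtcl Q)$-generic condition being built, one can simultaneously arrange that $T$ remains Suslin; more precisely, that $u_n$-many branches through $T$ at level $N\cap\omega_1$ can be ``sealed'' as in the proof of Lemma~\ref{psiAC-measurable} above. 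One shows (i) each iterand has property $(\ast)$ because it preserves Suslin trees and the base class is nice, (ii) property $(\ast)$ is preserved at successor steps, and (iii) property $(\ast)$ is preserved at limits of countable cofinality via the $\rho$-properness machinery (fusion along a tree of conditions), and (iv) property $(\ast)$ for $\mtcl P_\gamma$ implies $\mtcl P_\gamma$ preserves Suslin trees.

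\textbf{Key steps in order.} First, set up the $\rho$-version of the generic-chain apparatus: fix $\theta$ large, and work with continuous $\in$-chains $(N_i)_{i\le\rho}$ of countable elementary submodels of $H_\theta$ containing $\mtcl P_\gamma$, a Suslin tree $T$, and a $\mtcl P_\gamma$-name $\dot A$ for a maximal antichain of $T$. Second, state precisely the inductive hypothesis: for every $\alpha\le\gamma$, $\mtcl P_\alpha\in\rho\text{-}\PR$ (citing Lemma~\ref{presproper}), and moreover for every such chain $(N_i)_{i\le\rho}$ with $\mtcl P_\alpha, T\in N_0$, every $p\in N_0\cap\mtcl P_\alpha$, and every name $\dot A\in N_0$ for a maximal antichain of $T$, there is a condition $q\le p$ that is $(N_i,\mtcl P_\alpha)$-generic for all $i\le\rho$ \emph{and} forces $\dot A\cap N_{\rho}$ (equivalently the relevant node-indexed family) to be predense below $q$ — this is the strengthening that survives the induction. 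Third, carry out the successor step $\gamma=\alpha+1$: given the generic condition for $\mtcl P_\alpha$, go into the extension and apply the hypothesis that $\dot{\mtcl Q}_\alpha$ is forced to preserve Suslin trees together with the argument pattern used in the $\psi_{\AC}^\kappa$-poset proof (build an $(N_i[\dot G_\alpha],\dot{\mtcl Q}_\alpha)$-generic sequence sealing antichains below each node $u_n$ of height $N_{\rho}\cap\omega_1$). Fourth, handle limits: for $\cf(\gamma)>\omega$ this is immediate since $\mtcl P_\gamma$ is then the direct limit and any antichain name, any condition, and the whole chain live in some $N_i$ at an earlier stage; for $\cf(\gamma)=\omega$ one does the genuine countable-support fusion, building a descending $\omega$-sequence (or $\rho$-indexed tree) of conditions with increasing supports, interleaving the genericity requirements for each $N_i$, the antichain-sealing requirements, and the demand that the supports exhaust $\gamma$, then taking the union as the fusion condition and checking it lies in $\mtcl P_\gamma$ and witnesses both conclusions. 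Fifth, deduce the lemma: apply the strengthened property with the trivial chain / a single generic $N$ to conclude that $\Vdash_{\mtcl P_\gamma} T$ has no maximal antichain outside $V$, hence $T$ is still Suslin.

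\textbf{Main obstacle.} The delicate point is the limit step of countable cofinality — organizing the fusion so that the Suslin-tree-sealing requirements are compatible with $\rho$-properness genericity across the entire chain $(N_i)_{i\le\rho}$, not just a single model. In the $\rho>1$ setting one must track, for each limit $i\le\rho$, the ``reflection'' demand that $M_k\cap X\in\mtcl S(M_i)$-type conditions (here: genericity over a tail of the chain), and ensure the book-keeping assigns enough stages to seal antichains below every level-$N_\rho$ node of $T$ while keeping each support countable and the resulting thread a legitimate condition. This is exactly where Shelah's $\rho$-proper iteration theorem (Lemma~\ref{presproper}) is used as a black box for the genericity part, and Miyamoto's trick is grafted on for the Suslin part; verifying that the two constructions can be run simultaneously without interference is the technical heart. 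Since this is long and essentially a reprise of the cited arguments of Shelah and Miyamoto in the routine $\rho$-proper adaptation, I would present the setup and the strengthened inductive hypothesis in detail and then indicate that the successor and limit steps proceed by the standard fusion, pointing to \cite{SHEPRO} and \cite{Miyamoto-Yorioka}, rather than reproducing every calculation.
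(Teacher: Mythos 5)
You should first note that the paper does not prove this lemma at all: it is stated as ``the following preservation result due to Miyamoto'' and used as a black box, so there is no proof of record to compare your argument against. Your sketch is a reasonable reconstruction of the strategy behind Miyamoto's theorem (strengthen the statement to a local ``sealing'' property relative to a countable model, verify it for single iterands, and propagate it through successor and limit stages by fusion), and it is broadly consistent with how the result is proved in the literature; in that sense you are doing strictly more than the paper, which simply cites the result.

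Two caveats on the sketch itself. First, the strengthened inductive hypothesis as you state it --- fixing a single name $\dot A\in N_0$ for a maximal antichain of $T$ and asking that the generic condition force $\dot A\cap N_\rho$ to be predense in $T$ --- is not quite the right invariant. The property that iterates is formulated uniformly over \emph{all} maximal antichains of $T$ lying in $N[\dot G]$, i.e., the generic condition should force every node of $T$ at level $N\cap\o_1$ to be totally $(N[\dot G],T)$-generic; for a single iterand this follows for free from ``$(N,\mtcl Q)$-generic plus $\mtcl Q$ preserves Suslin trees,'' since a countable maximal antichain of a Suslin tree in $N[\dot G]$ lies below level $N\cap\o_1$. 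Your fixed-$\dot A$ version is weaker and does not obviously close under the successor step, where the second coordinate must handle antichains appearing only in $N[\dot G_\alpha]$. Second, the genuine difficulty --- which you correctly locate at limits of countable cofinality --- is that a $\mtcl P_\gamma$-name for an antichain need not factor through any $\mtcl P_{\gamma_n}$, so the fusion cannot simply ``seal $\dot A_n$ at stage $\gamma_n$''; resolving this is the technical core of Miyamoto's argument, and your proposal defers it entirely to the citations. Since the paper itself defers the whole lemma to the same citation, this is acceptable as an account of the statement's status, but it should be presented as such rather than as a self-contained proof.
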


Shelah defines a certain variant of the notion of countable support iteration, which he calls \emph{revised countable 
support (RCS) iteration}. Variants of the notion of RCS iteration have been proposed by Miyamoto and others 
(for example a detailed account of RCS-iterations in line with Donder and Fuchs' approach is given 
in~\cite{VIAAUDSTEBOOK}).\footnote{It is not 
clear whether these notions are equivalent 
in any reasonable sense.}
In the following, any mention of revised countable support iteration will refer to either Shelah's or Miyamoto's version. 

The first preservation result involving RCS iterations we will need is the following lemma, proved in \cite[XI]{SHEPRO}. 

\begin{lemma}\label{preservation-S-cond}
Suppose $\la\mtcl P_\a, \dot{\mtcl Q}_\b\,:\,\a\leq\g,\,\b<\g\ra$ is an RCS iteration such that the following holds for all $\b<\g$.

\begin{enumerate}

\item If $\b$ is even, $\Vdash_{\mtcl P_\b}\dot{\mtcl Q}_\b=\textsf{Coll}(2^{\av\mtcl P_\b\av},\,\o_1)$.
\item If $\b$ is odd, $\Vdash_{\mtcl P_\b}\dot{\mtcl Q}_\b\mbox{ has the $S$-condition}$.
\end{enumerate}

Then $\mtcl P_\g$ has the $S$-condition. \end{lemma}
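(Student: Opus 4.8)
The statement to be proved (Lemma~\ref{preservation-S-cond}) is a preservation theorem for RCS iterations: interleaving collapses of the form $\textsf{Coll}(2^{|\mtcl P_\b|},\o_1)$ with iterands having the $S$-condition produces a limit with the $S$-condition. I would not attempt a self-contained proof, since this is Shelah's theorem from \cite[XI]{SHEPRO}; instead the plan is to reduce it to the preservation machinery already available there and to verify that the specific hypotheses (1) and (2) feed correctly into that machinery. The key observation is that the two types of iterand in the hypothesis are precisely the two types appearing in Shelah's ``$S$-condition iteration'' setup: the even-stage collapses $\textsf{Coll}(2^{|\mtcl P_\b|},\o_1)$ are $({<}\o_1)$-closed, hence trivially have the $S$-condition (by Fact~\ref{fac:sigma2Scond}, which records that $S\textsf{-cond}$ contains all countably closed forcings), and the odd-stage iterands have the $S$-condition by assumption. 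So the hypothesis of our lemma is a special case of the hypothesis of Shelah's general RCS preservation theorem for the $S$-condition.

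\textbf{Key steps, in order.} First, I would fix once and for all which version of RCS iteration is in play (Shelah's or Miyamoto's, as the excerpt permits either) and recall the statement of the ambient preservation theorem in that setting: an RCS iteration all of whose iterands are forced to have the $S$-condition, with cofinally many stages being collapses making the size of the tail initial segment equal to $\o_1$, has a limit with the $S$-condition. Second, I would check that the interleaving in hypotheses (1)--(2) matches the bookkeeping required by that theorem: the even stages collapse $2^{|\mtcl P_\b|}$ to $\o_1$, which guarantees that $|\mtcl P_\b|$ stays controlled along the iteration (each $\mtcl P_{\b+1}$ has size at most $2^{|\mtcl P_\b|}$, collapsed to $\o_1$ at the next even stage), so that the strategies in the games $\mtcl G^{\dot{\mtcl Q}_\b}$ can be amalgamated; and that each iterand has the $S$-condition (even stages by countable closure, odd stages by assumption). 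Third, I would invoke Shelah's theorem to conclude $\mtcl P_\g$ has the $S$-condition. The only genuine content to spell out is the amalgamation of the winning strategies $\s_\b$ for player II in the games $\mtcl G^{\dot{\mtcl Q}_\b}$ into a winning strategy for player II in $\mtcl G^{\mtcl P_\g}$; here one uses that the $S$-condition strategies depend only on the ``local'' data (the node $\eta$, the conditions $(p_{\eta\restr k})_{k\le|\eta|}$, and the set of splitting levels), exactly as in Definition~\ref{s-cond-def}, so that they can be run coordinate-by-coordinate along the RCS iteration, with the collapses at even stages absorbing the accumulated names.

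\textbf{Main obstacle.} The hard part is the amalgamation/reflection argument at limit stages of the RCS iteration of uncountable cofinality, where one must show that the strategy pieced together from the $\s_\b$ still satisfies the ``memoryless'' constraint of Definition~\ref{s-cond-def} and still wins, i.e.\ that for every ground-model subtree $T'$ of the tree $T$ built during the run, with matching splitting, there is a condition forcing an $\o$-branch through $T'$ landing in the generic. This is precisely the technical heart of Shelah's chapter~XI and relies on the revised countable support machinery (names for conditions being decided by countable approximations, the use of the collapses to keep the relevant transitive closures small, and a careful induction on the length $\g$). Since the excerpt explicitly attributes this lemma to Shelah and flags it as ``proved in \cite[XI]{SHEPRO}'', my proof proposal is essentially a careful citation plus a verification that hypotheses (1) and (2) are an instance of his general hypothesis; I would only reproduce as much of the limit-stage argument as is needed to make the reduction transparent, e.g.\ the observation that the even-stage collapses are there exactly to make $2^{|\mtcl P_\b|}$ have size $\o_1$ at the next stage so that ``$|\mtcl P|\ge\al_2$'' and the strategy-amalgamation bookkeeping go through uniformly.
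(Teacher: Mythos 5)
The paper does not give a proof of this lemma at all: it is stated as a known result and attributed to Shelah's Chapter XI of \emph{Proper Forcing}, exactly as you do. Your proposal—observing that the even-stage collapses are countably closed and hence have the $S$-condition, so that the hypotheses are an instance of Shelah's general RCS preservation theorem, and then citing that theorem—is therefore essentially the same approach as the paper's.
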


The following is a well-known result of Shelah.

\begin{lemma}\label{pres-semiproper} Suppose  $\rho<\o_1$ is an indecomposable ordinal and $$(\mtcl P_\a, \dot{\mtcl Q}_\b\,:\,\a\leq\g,\,\b<\g)$$ is a revised countable support iteration such that for all $\b<\g$, $$\Vdash_{\mtcl P_\b}\dot{\mtcl Q}_\b\in \rho\mbox{-}\SP$$ Then $\mtcl P_\g\in \rho\mbox{-}\SP$.
\end{lemma}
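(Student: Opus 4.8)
The plan is to argue by induction on the length $\g$ of the iteration, following the structure of Shelah's RCS iteration theorem for semiproper forcing (the case $\rho=1$; see \cite[X, XII]{SHEPRO} and \cite{VIAAUDSTEBOOK}), with the single-model semigenericity condition replaced throughout by the $\rho$-chain version from the definition of $\rho$-$\SP$. Recall that $\mtcl P$ is $\rho$-semiproper iff for some (equivalently every) large $\t$ with $\mtcl P\in H_\t$, club-many continuous $\rho$-chains $(N_i)_{i\leq\rho}$ of countable elementary submodels of $H_\t$ with $\mtcl P\in N_0$ have the property that every $p\in N_0\cap\mtcl P$ has an extension $q$ which is $(N_i,\mtcl P)$-semigeneric for all $i\leq\rho$ simultaneously. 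The successor step is immediate: $\mtcl P_{\b+1}=\mtcl P_\b\ast\dot{\mtcl Q}_\b$ with $\mtcl P_\b\in\rho$-$\SP$ by the induction hypothesis and $\Vdash_{\mtcl P_\b}\dot{\mtcl Q}_\b\in\rho$-$\SP$ by assumption, so $\mtcl P_{\b+1}\in\rho$-$\SP$ by closure of $\rho$-$\SP$ under two-step iterations (Fact~\ref{fac:Birkhoffsemiproper}; equivalently, by Fact~\ref{char-semiproper}, two-step iterations of forcings preserving $\rho$-semi-stationary subsets of $^\rho([X]^{\al_0})$ preserve them).

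At a limit $\g$ with $\cf(\g)>\o$ the RCS limit coincides with the direct limit $\dirlim_{\b<\g}\mtcl P_\b$. Given a continuous $\rho$-chain $(N_i)_{i\leq\rho}$ as above and $p\in N_0\cap\mtcl P_\g$, each $N_i$ is countable, so $\b^\ast:=\sup(N_\rho\cap\g)<\g$, and every condition in $\bigcup_{i\leq\rho}N_i\cap\mtcl P_\g$ has support bounded below $\g$; one restricts attention to $\mtcl P_\g\restr\b^\ast\cong\mtcl P_{\b^\ast}$ and applies the induction hypothesis to the $\rho$-chain $(N_i\cap\mtcl P_{\b^\ast})_{i\leq\rho}$ (a minor point is that $\b^\ast$ need not lie in any $N_i$; this is handled as in the $\rho=1$ case). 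Since every $\mtcl P_\g$-name in $\bigcup_i N_i$ for an element of $\o_1$ is, by elementarity and boundedness of supports, equivalent to a $\mtcl P_{\b^\ast}$-name, the resulting $(N_i,\mtcl P_{\b^\ast})$-semigeneric $q$ is already $(N_i,\mtcl P_\g)$-semigeneric for all $i\leq\rho$.

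The main obstacle, as always, is a limit $\g$ with $\cf(\g)=\o$, where the ``revised'' support is exactly what makes the argument work. Fix an increasing cofinal sequence $(\g_n)_{n<\o}$ in $\g$, a continuous $\rho$-chain $(N_i)_{i\leq\rho}$ of countable elementary submodels of $H_\t$ containing all relevant parameters in $N_0$, and $p\in N_0\cap\mtcl P_\g$. One builds by recursion on $n$ a $\leq_{\mtcl P_\g}$-descending sequence $(p_n)_{n<\o}$ with $p_0=p$, where at step $n$ one uses the induction hypothesis at stage $\g_n$ to choose $p_{n+1}\leq p_n$ which is $(N_i,\mtcl P_{\g_n})$-semigeneric for every $i\leq\rho$ and decides enough about the tail name $p_n\restr[\g_n,\g)$; simultaneously one runs a bookkeeping of all $\mtcl P_\g$-names in $\bigcup_{i\leq\rho}N_i$ for ordinals below $\o_1$, arranging that each is forced by some $p_{n+1}$ into the appropriate $N_i$. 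The limit object $q:=\bigcup_n p_n$ must then be checked to be a legitimate RCS condition at $\g$: its support may have order type $\geq\rho$, but each initial segment $\mtcl P_{\g_n}$ forces it to be ``captured'' (of countable order type in $V^{\mtcl P_{\g_n}}$), precisely because $\mtcl P_{\g_n}\in\rho$-$\SP$ does not collapse $\o_1$ and the $N_i$ are countable; and by construction $q$ is $(N_i,\mtcl P_\g)$-semigeneric for every $i\leq\rho$.

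For the recursion to close at countable-cofinality limits I expect to need, exactly as for $\rho=1$, a formally stronger induction hypothesis: for each $\b\leq\g$, each $\mtcl P_\b$-name $\dot\a$ for an element of $\o_1$, and each continuous $\rho$-chain $(N_i)_{i\leq\rho}$ as above with $\dot\a\in N_0$, every $p\in N_0\cap\mtcl P_\b$ has an extension which is $(N_i,\mtcl P_\b)$-semigeneric for all $i\leq\rho$ and additionally decides $\dot\a$ into $N_0$ — this is the standard device preserved through limits. The one genuinely $\rho$-specific ingredient is a chain-amalgamation sublemma: given $(N_i)_{i\leq\rho}$, a limit $\xi\leq\rho$, and a coherent system of $(N_i,\mtcl P_{\g_n})$-semigeneric conditions for $i<\xi$, one produces a condition also semigeneric for $N_\xi=\bigcup_{i<\xi}N_i$; here the indecomposability of $\rho$ enters, since it guarantees that tails $(N_{\eta+j})_{j\leq\rho}$ of the chain are again continuous $\rho$-chains, so the induction hypothesis applies to them verbatim. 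Granting this sublemma, the remaining verifications are the same bookkeeping as in Shelah's proof, and the argument goes through identically for Miyamoto's variant of RCS iteration.
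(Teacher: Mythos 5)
You should first be aware that the paper does not prove this lemma: it is introduced as ``a well-known result of Shelah'' and used as a black box (with \cite{SHEPRO}, and Miyamoto's work for the variant notion of RCS iteration, as the implicit references). So there is no in-paper argument to compare yours against; the question is only whether your sketch would stand on its own. Its skeleton --- induction on $\g$, two-step closure at successors, support-bounding at uncountable cofinalities, an $\o$-fusion with a strengthened induction hypothesis at $\cf(\g)=\o$, and a chain-amalgamation sublemma whose applicability to tails of the chain rests on the indecomposability of $\rho$ --- is the right template, and the two genuinely $\rho$-specific observations you isolate are correct.

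The gaps sit exactly where the content of Shelah's theorem lives. First, at $\cf(\g)=\o$ you assert that $q=\bigcup_n p_n$ is ``a legitimate RCS condition'' because each $\mtcl P_{\g_n}$ forces its support to be captured; but with the definition of $\rcslim(\FFF)$ used in this paper (a thread of the inverse limit which is either eventually constant or has some coordinate forcing $\cof(\g)=\o$), what must actually be verified is that the $p_n$ cohere into such a thread, and --- more importantly --- that $(N_i,\mtcl P_{\g_n})$-semigenericity of the $p_n$ yields $(N_i,\mtcl P_\g)$-semigenericity of $q$: a $\mtcl P_\g$-name in $N_i$ for a countable ordinal is not a $\mtcl P_{\g_n}$-name, and reducing it to one relative to $p_{n+1}$ is the step where semiproperness (rather than mere preservation of $\o_1$) is used; your sketch does not supply this reduction. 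Second, the case $\cf(\g)=\o_1$ cannot be folded into the ``easy'' uncountable-cofinality case: $\b^\ast=\sup(N_\rho\cap\g)$ has cofinality $\o$, conditions in $N_i$ do have support below $\b^\ast$, but maximal antichains in $N_i$ deciding names for countable ordinals need not lie in $\mtcl P_{\b^\ast}$, so this case needs its own fusion along $N_i\cap\g$. Third, Shelah's RCS iteration theorem is proved under the additional hypothesis that collapses are interleaved (e.g.\ $\Vdash_{\mtcl P_{\b+1}}|\mtcl P_\b|\le\al_1$), which is absent from the statement here; so ``the same bookkeeping as in Shelah's proof'' either requires that hypothesis to be added or requires Miyamoto's nice-iteration machinery, which is not a merely notational variant. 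As it stands the proposal is a plausible roadmap, not a proof.
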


We will also need the following lemmas due to Miyamoto \cite{Miyamoto,miyamoto-suslin-sp}.

\begin{lemma}\label{pres-semiproper-bounding}  Suppose $\CH$ holds, $\rho<\o_1$ is an indecomposable ordinal, and $$(\mtcl P_\a, \dot{\mtcl Q}_\b\,:\,\a\leq\g,\,\b<\g)$$ is a revised countable support iteration such that for all $\b<\g$, $$\Vdash_{\mtcl P_\b}\dot{\mtcl Q}_\b\in (\rho\mbox{-}\SP)\cap\,^\o\o\textsf{-bounding}$$ Then $\mtcl P_\g\in (\rho\mbox{-}\SP)\cap\,^\o\o\textsf{-bounding}$.
\end{lemma}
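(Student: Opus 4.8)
The final statement to prove is Lemma~\ref{pres-semiproper-bounding}, the preservation lemma asserting that, under $\CH$, a revised countable support iteration of $(\rho\mbox{-}\SP)\cap\,^\o\o\textsf{-bounding}$ forcings remains in $(\rho\mbox{-}\SP)\cap\,^\o\o\textsf{-bounding}$. Since this is attributed to Miyamoto \cite{Miyamoto,miyamoto-suslin-sp}, my plan is to sketch the two halves separately and then explain how they are combined along the RCS iteration, rather than reprove everything from scratch.

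\textbf{The two components.} First, the $\rho$-semiproperness half: by Lemma~\ref{pres-semiproper}, an RCS iteration of $\rho\mbox{-}\SP$ forcings is $\rho\mbox{-}\SP$, so $\mtcl P_\g\in\rho\mbox{-}\SP$ follows immediately from the hypothesis that each $\dot{\mtcl Q}_\b$ is forced to be $\rho$-semiproper (a fortiori, since $(\rho\mbox{-}\SP)\cap\,^\o\o\textsf{-bounding}\subseteq\rho\mbox{-}\SP$). So the real content is the preservation of the $^\o\o$-bounding property \emph{simultaneously} with $\rho$-semiproperness. Here I would follow Shelah's general framework for iterated preservation of properties compatible with (semi)properness: one sets up, for each $\b\leq\g$, a preservation property $\mathbf{P}_\b$ of conditions in $\mtcl P_\b$ that combines "$q$ is $(N,\mtcl P_\b)$-semigeneric" (for a suitable tower or $\rho$-chain of countable models $N$) with a domination requirement — namely that $q$ forces every $\mtcl P_\b$-name for a function in $^\o\o$ lying in the relevant model to be dominated by a ground-model function coded into the condition. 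The key lemma is that this combined property is preserved under: (i) two-step iterations, using the $^\o\o$-bounding and $\rho$-SP hypothesis on the iterand $\dot{\mtcl Q}_\b$ together with $\CH$ (which guarantees that the relevant names can be enumerated in order type $\leq\omega_1$, so that a fusion/diagonalization argument closes off); (ii) limits of cofinality $\omega$, where RCS limits behave like inverse limits and the fusion argument for $^\o\o$-bounding (à la the classical Sacks/Miller-style preservation) goes through; and (iii) limits of uncountable cofinality, where direct-limit behavior makes the preservation essentially automatic. The role of $\CH$ is exactly to bound the bookkeeping: at each stage the forcing has size $\leq 2^{\al_0}=\al_1$ (maintained by $^\o\o$-boundingness not adding reals is \emph{not} claimed, but $\CH$ is preserved through the iteration because no new reals chains destroy it under the hypotheses — actually one needs $\CH$ in $V$ only, and the iterands of size $\leq\al_1$ keep the relevant name-counting under control).

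\textbf{The inductive argument.} I would prove by induction on $\g$ the statement: for every $\b<\g$, "$\mtcl P_\b\in(\rho\mbox{-}\SP)\cap\,^\o\o\textsf{-bounding}$ and $\mtcl P_\b$ has the combined preservation property $\mathbf{P}_\b$" implies the same at stage $\g$. The successor step $\g=\b+1$: given the combined property at $\b$, and $\Vdash_{\mtcl P_\b}\dot{\mtcl Q}_\b\in(\rho\mbox{-}\SP)\cap\,^\o\o\textsf{-bounding}$, one shows $\mtcl P_{\b+1}=\mtcl P_\b\ast\dot{\mtcl Q}_\b$ inherits it: the $\rho$-semigenericity part is handled exactly as in Lemma~\ref{pres-semiproper}, and the domination part follows because any $\mtcl P_{\b+1}$-name for an element of $^\o\o$ in a countable model $N$ splits as a $\mtcl P_\b$-name for a $\dot{\mtcl Q}_\b$-name; apply the $^\o\o$-bounding of $\dot{\mtcl Q}_\b$ inside $V^{\mtcl P_\b}$ (reflected into $N[\dot G_{\mtcl P_\b}]$) to get domination, then pull back through $\mtcl P_\b$ using $\mathbf{P}_\b$. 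The limit step: for $\cf(\g)>\omega$, RCS limits are direct limits, every condition has bounded support, and one reduces to an earlier stage; for $\cf(\g)=\omega$ one runs the fusion argument along a cofinal sequence $\g_n\to\g$, building a decreasing chain of conditions $q_n$ each $\rho$-semigeneric for the $n$-th model and each adding domination data for the $n$-th name, so that the limit condition $q$ simultaneously witnesses $\rho$-semigenericity and $^\o\o$-domination at $\g$. The hypothesis $\CH$ is what makes the enumeration of all relevant $(N,\text{name})$-pairs of length $\omega_1$ possible so that the diagonalization is well-founded.

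\textbf{Main obstacle.} The hard part is getting the \emph{combined} property to pass through limits of countable cofinality, i.e.\ ensuring that the fusion which secures $^\o\o$-boundingness does not destroy $\rho$-semigenericity, and vice versa. This is delicate because $\rho$-semiproperness already requires working with $\rho$-chains (not just single models) of countable elementary submodels, and the domination bookkeeping must be interleaved with the semigenericity bookkeeping along all $\rho+1$ coordinates of the chain at once. In Miyamoto's treatment this is handled by a careful simultaneous induction with a strengthened "hereditary" version of the preservation property; I would invoke his results \cite{Miyamoto,miyamoto-suslin-sp} for this core technical lemma and present the above as the structural outline, emphasizing that the only genuinely new ingredient over the classical semiproper RCS iteration theorem (Lemma~\ref{pres-semiproper}) is the domination clause, whose propagation is routine at successors and uncountable-cofinality limits and is the substance of Miyamoto's fusion at countable-cofinality limits.
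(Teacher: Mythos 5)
The paper does not prove this lemma at all: it is stated verbatim as a known preservation result of Miyamoto and simply cited from \cite{Miyamoto,miyamoto-suslin-sp}, exactly as you do for the core fusion argument at countable-cofinality limits. Your structural outline of the combined preservation property is a reasonable account of the standard Shelah-style framework, but since both you and the paper ultimately defer the substantive content to Miyamoto, the two treatments coincide in all essentials.
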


\begin{lemma}\label{miyamotosuslinsp}  Suppose $\rho<\o_1$ is an indecomposable ordinal and $$(\mtcl P_\a, \dot{\mtcl Q}_\b\,:\,\a\leq\g,\,\b<\g)$$ is a revised countable support  iteration such that for all $\b<\g$, $$\Vdash_{\mtcl P_\b}\dot{\mtcl Q}_\b\in(\rho\mbox{-}\SP)\cap\STP$$ Then $\mtcl P_\g\in (\rho\mbox{-}\SP)\cap\STP$.
\end{lemma}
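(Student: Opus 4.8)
The plan is to reduce the statement to a single preservation theorem for the \emph{combined} property and prove it by induction on $\g$, the genuinely new content lying at the limit stages. The $\rho$-semiproperness of $\mtcl P_\g$ is already furnished by Lemma~\ref{pres-semiproper}, so the whole task is to show that $\mtcl P_\g$ preserves Suslin trees while retaining enough control over semigeneric conditions to survive the RCS limits. Accordingly I would fix a Suslin tree $T$, together with a $\mtcl P_\g$-name $\dot A$ for a would-be uncountable maximal antichain of $T$ and a condition $p$ to be refined, and work with a \emph{relativised} notion of semigenericity. For a continuous $\in$-chain $(N_i)_{i\le\rho}$ of countable elementary submodels of some $H_\t$ containing $T$, $\mtcl P_\g$, $\dot A$, and for a node $t\in T$ of height $N_\rho\cap\o_1$, call $q\le p$ an \emph{$((N_i)_{i\le\rho},T,t)$-condition} if $q$ is $(N_i,\mtcl P_\g)$-semigeneric for every $i\le\rho$ and, in addition, $q$ forces that for every $\mtcl P_\g$-name $\dot B\in N_\rho$ for a maximal antichain of $T$, the node $t$ extends some element of $\dot B$ lying in $N_\rho$. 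Since $T$ is Suslin, such a $t$ is \emph{totally $(T,N_\rho)$-generic} in $V$ (it extends a unique node of each ground-model antichain of $T$ in $N_\rho$), and the content of the theorem is exactly that one can produce conditions $q$ retaining this total genericity in $V^{\mtcl P_\g}$.

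The induction would establish the following for club-many chains $(N_i)_{i\le\rho}$: every $p\in N_0\cap\mtcl P_\g$ and every $t\in T$ of height $N_\rho\cap\o_1$ admit an $((N_i)_{i\le\rho},T,t)$-condition $q\le p$. Granting this, the standard argument closes the proof: choosing a chain containing $\dot A$ and $p$ and a node $t$ at height $N_\rho\cap\o_1$, the resulting $q$ forces $\dot A\sub T\cap N_\rho$, so $\dot A$ is countable, contradicting that it was uncountable; hence $T$ stays Suslin and, with Lemma~\ref{pres-semiproper}, $\mtcl P_\g\in(\rho\mbox{-}\SP)\cap\STP$. The successor step is routine: $\mtcl P_{\b+1}=\mtcl P_\b\ast\dot{\mtcl Q}_\b$, and both $\STP$ (Fact~\ref{fac:sigma2SPTomombound}) and $\rho\mbox{-}\SP$ (Fact~\ref{fac:Birkhoffsemiproper}) are closed under two-step iterations, so a $\mtcl P_\b$-$T$-generic condition supplied by the inductive hypothesis can be extended, inside $N_0$, by a $\dot{\mtcl Q}_\b$-semigeneric-and-$T$-generic condition, using that $\dot{\mtcl Q}_\b$ is forced to lie in the class. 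This is precisely the single-step argument already exhibited in the proofs of Lemmas~\ref{MRP} and~\ref{psiAC-measurable} (following \cite{Miyamoto-Yorioka}), where the totally generic node is used to reflect each name for a maximal antichain of $T$ into the model.

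The main obstacle is the limit step, which is where Miyamoto's machinery is really needed. At a limit $\g$ one must build, by an RCS fusion along a cofinal set of coordinates, a single condition $q\in\mtcl P_\g$ that is simultaneously $(N_i,\mtcl P_\g)$-semigeneric for \emph{all} $i\le\rho$ and keeps the node $t$ totally $T$-generic at the top model $N_\rho$. The difficulty is to coordinate the bookkeeping of the $\rho$-chain of models (nontrivial already for $\rho>1$, since one must thread semigenericity through an entire continuous $\rho$-chain, invoking the indecomposability of $\rho$ so that sub-chains concatenate into chains of the same order type) with the requirement that the fusion preserve, at $N_\rho$, the reflection of every $\mtcl P_\g$-name for a maximal $T$-antichain. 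The RCS (rather than plain countable support) structure is essential here, both so that the limit condition exists and so that the iteration stays semiproper at limits of cofinality $\o_1$, while the Suslin-ness of $T$ is used at each application of the inductive hypothesis to guarantee that the chosen node continues to pin down a unique branch through every ground-model antichain. I would organise this step as in Miyamoto's iteration theorems (\cite{miyamoto-suslin-sp}), carrying the relativised genericity as an explicit side condition throughout the fusion rather than attempting to recover it only at the end.
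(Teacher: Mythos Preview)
The paper does not actually prove this lemma: it is stated without proof and attributed to Miyamoto via the citations \cite{Miyamoto,miyamoto-suslin-sp}, alongside the other iteration lemmas in Section~\ref{iterability}, all of which are quoted as known preservation theorems from the literature rather than established in the paper. So there is no ``paper's own proof'' to compare against.

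Your outline is a reasonable high-level sketch of how such a preservation theorem is proved, and you correctly identify it as Miyamoto's result and point to \cite{miyamoto-suslin-sp} for the actual argument. The overall architecture you describe---carrying a relativised $((N_i)_{i\le\rho},T,t)$-semigenericity-plus-total-$T$-genericity condition through an induction on $\g$, with the serious work at limit stages via an RCS fusion---is indeed the shape of Miyamoto's proof. Two small remarks: first, you should not simply invoke Lemma~\ref{pres-semiproper} separately for the $\rho$-$\SP$ part and then bolt on Suslin-tree preservation afterwards, since at the limit step the two requirements must be handled \emph{simultaneously} in a single fusion (you say as much later, but the opening sentence suggests a cleaner decoupling than is actually available). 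Second, the sketch is honest about where the real difficulty lies but does not resolve it: the limit-stage fusion for RCS iterations with the combined side condition is precisely the technical core of \cite{miyamoto-suslin-sp}, and your paragraph on it is a description of what needs to be done rather than an argument. That is appropriate given that the paper itself treats the lemma as a black box.
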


\subsubsection{Absolutely well-behaved classes}\label{omega1suitability}

\begin{lemma}\label{pr-suitable} The following classes 
$\Gamma$ are absolutely well-behaved and such that $\lambda_\Gamma=\omega_1$.

\begin{enumerate}

\item $\rho\mbox{-}\PR$ for every countable indecomposable ordinal $\rho$.

\item  ${<}\o_1\mbox{-}\PR$
\item $\PR\cap \STP$ 
\item $\PR\cap\,^\o\o\textsf{-bounding}$ 
\item $\PR\cap\STP\cap\,^\o\o\textsf{-bounding}$ 
\end{enumerate}
\end{lemma}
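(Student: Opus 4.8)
The statement asserts that each of the five listed classes $\Gamma$ is absolutely well-behaved (with $\lambda_\Gamma = \omega_1$). By Definition~\ref{def:wellbeh}, this amounts to verifying the six clauses \ref{def:wellbeh1}--\ref{def:wellbeh4} for $\Gamma^{V[G]}$ in every generic extension $V[G]$ by a forcing in $\Gamma$. The strategy is to dispatch these clauses in three groups: (i) the closure and definability clauses \ref{def:wellbeh1}, \ref{def:wellbeh2}, \ref{def:wellbeh5}, \ref{def:wellbeh6}, which are already established for these classes earlier in the paper; (ii) the iterability clause \ref{def:wellbeh3}, which follows from the preservation lemmas of Subsection~\ref{iterability}; and (iii) the density-of-$\Gamma$-rigid-forcings clause \ref{def:wellbeh4}, which is the genuinely new content and is reduced to $\SSP$-freezeability via the theory of Subsection~\ref{sec:gammafreezgammarig} together with the freezing posets of Subsection~\ref{4-freezing-posets}. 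A crucial point throughout is that \emph{absolute} well-behavedness is automatic once ordinary well-behavedness is proved \emph{in $\ZFC$} (plus, where needed, large cardinals that are themselves downward absolute to generic extensions): since each $\Gamma$ here is $\Delta_2$-definable with parameters in $H_{\omega_2}$, the class $\Gamma^{V[G]}$ is defined by the same formula and hence inherits all the provable structural properties.

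\textbf{Steps (i) and (ii).} For clause \ref{def:wellbeh1}, $\lambda_\Gamma = \omega_1$ follows from the Remark after the definition of $\lambda_\Gamma$, since each of these $\Gamma$ is contained in $\SSP$ and contains all countably closed forcings (so $\omega_1$ is preserved by everything in $\Gamma$ but $\omega_2$ is not, via $\Coll(\omega_1,\omega_2)$). Clauses \ref{def:wellbeh2} and \ref{def:wellbeh5} — closure under isomorphisms, two-step iterations, lottery sums, restrictions, complete subalgebras, and containing all ${<}\omega_1$-closed forcings — are exactly the content of Facts~\ref{fac:Birkhoffproper}, \ref{fac:sigma2SPTomombound}, and the closure parts of the corresponding $\Sigma_2/\Pi_2$ facts. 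Clause \ref{def:wellbeh6} (absoluteness of the definition between $V_\delta$ and $V$ for inaccessible $\delta$) follows from Remark~\ref{sufficience-remark}: by Facts~\ref{fac:sigma2proper}, \ref{fac:sigma2SPTomombound}, each $\Gamma$ is both $\Sigma_2$- and $\Pi_2$-definable with parameters in $V_\delta$, so the defining formula is absolute between $V_\delta$ and $V$ whenever $V_\delta \prec_{\Sigma_2} V$, in particular for all inaccessible $\delta$ above the parameters. For clause \ref{def:wellbeh3}, I must show that in every $V[G]$, the class $\Gamma^{V[G]}$ is strategically ${<}\Ord$-closed, i.e.\ iterable with a definable strategy for player $II$. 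The Baumgartner property for these classes is standard (direct limits at cofinality $\omega_1$); the winning strategy is the usual one — identity at successor stages, full/inverse limit at stages of cofinality $\omega_1$, direct limit elsewhere — and its legitimacy at limit stages is precisely what the iteration lemmas \ref{presproper}, \ref{presomegaomegabounding}, \ref{pressuslintreepres} guarantee (countable support iterations of $\rho$-proper, resp.\ $\cap\,^\omega\omega$-bounding, resp.\ $\cap\STP$, forcings stay in the class). The strategy is definable in $(V[G],\in)$ since it only refers to the iteration system and to the $\Delta_2$ predicate for $\Gamma$.

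\textbf{Step (iii): density of $\Gamma$-rigid forcings.} This is the heart of the matter. By the Theorem in Subsection~\ref{sec:gammafreezgammarig} (the $\Gamma$-freezeability-implies-density result), it suffices to show that for every $\bool{B}\in\Gamma$ there is a $\Gamma$-correct \emph{injective} $i_{\bool{B}}\colon\bool{B}\to\bool{C}$ which $\Gamma$-freezes $\bool{B}$; and by Fact~\ref{fac:keyrmkfreeze} it is enough to produce $i_{\bool{B}}$ that is $\Gamma$-correct and $\SSP$-freezes $\bool{B}$, since $\Gamma\subseteq\SSP$ in all five cases. Here I invoke the freezing posets of Subsection~\ref{4-freezing-posets}. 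For $\Gamma = \rho\mbox{-}\PR$ (including $\rho=1$, i.e.\ $\PR$) and for ${<}\omega_1\mbox{-}\PR$, Proposition~\ref{freeze1.5} applies directly: given $\bool{B}\in\Gamma$, set $\bool{C} = \bool{B}\ast\dot{\bool{Q}}$ where $\dot{\bool{Q}} = \Coll(\omega_1,\mu)\ast\dot{\mtcl R}$ with $\dot{\mtcl R}$ c.c.c.; both iterands are in $\Gamma$ (being ${<}\omega_1$-closed, resp.\ c.c.c.\ hence proper and $\rho$-proper, using that c.c.c.\ forcings are $\rho$-proper for all countable $\rho$), so $\bool{C}\in\Gamma$ by closure under two-step iterations, and the inclusion $\bool{B}\into\bool{C}$ $\SSP$-freezes $\bool{B}$. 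For $\PR\cap\STP$, the c.c.c.\ poset of Proposition~\ref{freeze1.5} need not preserve Suslin trees, so instead I use Proposition~\ref{freeze1}: $\dot{\bool{Q}} = \Coll(\omega_1,\bool{B})\ast\dot{\mtcl R}$ with $\dot{\mtcl R}$ an $\MRP$-poset, which by Lemma~\ref{MRP} is proper, preserves Suslin trees, and adds no reals — hence in $\PR\cap\STP$ and (trivially, adding no reals) in the $^\omega\omega$-bounding world as well, so this single choice handles $\PR\cap\STP$, $\PR\cap\,^\omega\omega\textsf{-bounding}$, and $\PR\cap\STP\cap\,^\omega\omega\textsf{-bounding}$ simultaneously. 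In each case $\Gamma$-correctness of the inclusion map is immediate from closure under two-step iterations, and $\SSP$-freezeability is exactly the conclusion of the relevant proposition. This, combined with the iteration lemmas (now used to verify that the resulting $\Gamma$ remains iterable in generic extensions), completes all six clauses. The main obstacle is bookkeeping: one must check, class by class, that the particular freezing poset chosen actually lies in $\Gamma$ (not merely in $\SSP$) — this is where the ``adds no new reals / is c.c.c.\ / preserves Suslin trees'' fine print of Lemmas~\ref{MRP} and \ref{freeze1.5} matters — and that absoluteness to $V[G]$ is legitimate, which rests on the uniform $\Delta_2$-definability recorded in the $\Sigma_2/\Pi_2$ facts.
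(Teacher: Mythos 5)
Your proposal follows the paper's proof almost exactly: clauses (\ref{def:wellbeh1}), (\ref{def:wellbeh2}), (\ref{def:wellbeh5}), (\ref{def:wellbeh6}) of Definition \ref{def:wellbeh} are dispatched via Facts \ref{fac:sigma2proper}, \ref{fac:Birkhoffproper} and \ref{fac:sigma2SPTomombound}; iterability via Lemmas \ref{presproper}, \ref{presomegaomegabounding}, \ref{pressuslintreepres}; and the density of $\Gamma$-rigid forcings is reduced, through the theorem of Subsection \ref{sec:gammafreezgammarig} and Fact \ref{fac:keyrmkfreeze}, to $\SSP$-freezeability, supplied by the freezing posets of Subsection \ref{4-freezing-posets}. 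The one divergence in the freezing step is that you use Proposition \ref{freeze1.5} for $\PR$ itself, whereas the paper uses Proposition \ref{freeze1} there and reserves Proposition \ref{freeze1.5} for $\rho\mbox{-}\PR$ ($\rho>1$) and ${<}\o_1\mbox{-}\PR$; both choices work for $\PR$, since the c.c.c.\ iterand of Proposition \ref{freeze1.5} is proper, so this is immaterial.

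One concrete error to fix: your description of player II's strategy is reversed. You write ``full/inverse limit at stages of cofinality $\omega_1$, direct limit elsewhere'', but the countable support limit that you (correctly) invoke via the preservation lemmas is the \emph{inverse} limit at limit stages of \emph{countable} cofinality and the \emph{direct} limit at stages of uncountable cofinality. Indeed, the rules of $\mathcal{G}(\Gamma)$ oblige II to play $\varinjlim$ at every stage of cofinality $\lambda_\Gamma=\omega_1$, so playing the inverse limit there would be an illegal move, and it would also destroy the ${<}\delta$-chain-condition argument underlying the Baumgartner property. As literally written that step fails; with the two cases swapped your argument coincides with the paper's.
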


\begin{proof}
Given any of these classes $\Gamma$, all conditions in the definition of absolutely well-behaved
class---except for the fact that $\Gamma$ has the $\Gamma$-freezability property---are clearly satisfied for 
$\Gamma$. In particular, $\Gamma$ is defined by both a
 $\Sigma_2$ property and a
 $\Pi_2$ property by Fact~\ref{fac:sigma2proper},\footnote{By Remark \ref{sufficience-remark} this is enough to guarantee clause (5) in the definition of well-behaved class. And of course the same applies in the proofs of Lemmas \ref{scondlemma} and \ref{ssp-and-restrictions}.}
it is 
closed under isomorphisms, two-step iterations, lottery sums, restrictions and complete subalgebras,
and it contains all countably closed forcings 
by Fact~\ref{fac:Birkhoffproper}.
The 
 iterability property follows from Lemmas \ref{presproper}, \ref{presomegaomegabounding} 
and \ref{pressuslintreepres}: the winning strategy for player II is to play the countable support limit at all limit stages
(notice that at stages of cofinality $\omega_1$ this limit is the direct limit).  
 As to the freezability property, it turns out that $\Gamma$ has in fact the $\SSP$-freezability property. This 
 follows immediately from Proposition \ref{freeze1} together with Lemma \ref{MRP} for $\PR$, as well as for the classes 
 in (3), (4) and (5), and from Proposition \ref{freeze1.5} for $\rho\mbox{-}\PR$, for any given indecomposable 
 ordinal $\rho<\o_1$ such that $\rho>1$, and for ${<}\o_1\mbox{-}\PR$.
 Finally, it is clear that $\lambda_\Gamma=\omega_1$ holds for each of these classes $\Gamma$.
\end{proof}

We move on now to our first class not contained in $\PR$.

\begin{lemma}\label{scondlemma}  $S\textsf{-cond}$ is an absolutely well-behaved class $\Gamma$ with $\lambda_\Gamma=\omega_1$. 
\end{lemma}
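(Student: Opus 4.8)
The plan is to verify the six conditions in Definition~\ref{def:wellbeh} for $\Gamma = S\textsf{-cond}$ in any generic extension $V[G]$ by a forcing in $S\textsf{-cond}$, relying on the fact that $S\textsf{-cond}$ is given by a $\Sigma_2$ and a $\Pi_2$ definition in the single parameter $\omega_2$. First I would record that clause~\ref{def:wellbeh1} is immediate since $\lambda_\Gamma = \omega_1$: by Lemma~\ref{S-cond-shelah}(1), every forcing with the $S$-condition preserves stationary subsets of $\omega_1$, hence preserves $\omega_1$; and since $S\textsf{-cond}$ contains all countably closed forcings (Fact~\ref{fac:sigma2Scond}), it contains $\Coll(\omega_1, \omega_2)$, which collapses $\omega_2$, so $\omega_2$ is not preserved by every member of $\Gamma$. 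Thus $\lambda_\Gamma = \omega_1$, a regular uncountable cardinal, and this computation is stable under passing to $V[G]$ since $\omega_1^{V[G]} = \omega_1^V$. Clause~\ref{def:wellbeh2} (closure under isomorphisms, two-step iterations, lottery sums, restrictions, and complete subalgebras) and clause~\ref{def:wellbeh5} (containing all ${<}\lambda_\Gamma$-closed, i.e.\ countably closed, forcings) are exactly the content of Fact~\ref{fac:sigma2Scond}, and since that fact is a $\ZFC$-theorem it holds in $V[G]$ as well. Clause~\ref{def:wellbeh6}, absoluteness of the defining formula between $V_\delta$ and $V$ for inaccessible $\delta$, follows from Remark~\ref{sufficience-remark} together with the fact (Fact~\ref{fac:sigma2Scond}) that $S\textsf{-cond}$ is both $\Sigma_2$ and $\Pi_2$ definable in parameter $\omega_2$; again this is a $\ZFC$-argument available in $V[G]$.

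Next I would address clause~\ref{def:wellbeh3}, that $(\Gamma, \leq_\Gamma)$ is strategically ${<}\Ord$-closed in every extension by a member of $\Gamma$, i.e.\ $S\textsf{-cond}$ is iterable with a definable iteration strategy. The Baumgartner property is witnessed by noting that for an iteration system along an inaccessible $\delta$ whose direct limits at stages of cofinality $\lambda_\Gamma = \omega_1$ are taken and lie in $\Gamma$, the direct limit is ${<}\delta$-CC by Theorem~\ref{iBaumgartner}, and membership in $S\textsf{-cond}$ at the limit is delivered by the iteration lemma. For the iteration strategy itself, the key input is Lemma~\ref{preservation-S-cond}: player $II$'s strategy is to play, at limit stages, the RCS limit of the iteration system built so far, \emph{interleaving collapses} $\Coll(2^{|\mtcl P_\b|}, \omega_1)$ at even stages so that the hypothesis of Lemma~\ref{preservation-S-cond} is met. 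One must check that this strategy can always be followed: at successor stages $II$ plays the identity or absorbs $I$'s move, and at limit stages of cofinality $\omega_1$ or at inaccessible stages where the whole system lies in $V_\eta$, the RCS limit is the direct limit and lies in $\Gamma$ by Lemma~\ref{preservation-S-cond} (which applies because the interleaved collapses make the iteration of the required form). Definability of the strategy in $(V, \in)$ is clear since it is described by an explicit recursive recipe referring only to the iteration system and the ordinal parameter, and the whole argument relativizes to $V[G]$.

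Finally, clause~\ref{def:wellbeh4}, density of $\Gamma$-rigid cbas, is the one genuinely new point and the main obstacle. By the results of Subsection~\ref{sec:gammafreezgammarig}---specifically the theorem deriving density of $\Gamma$-rigid posets from the $\Gamma$-freezeability property under clauses~\ref{def:wellbeh1}, \ref{def:wellbeh2}, \ref{def:wellbeh3}, \ref{def:wellbeh5}, \ref{def:wellbeh6}---it suffices to establish the $S\textsf{-cond}$-freezeability property: for every $\bool{B} \in S\textsf{-cond}$ there is an injective $\Gamma$-correct $i_{\bool{B}}: \bool{B} \to \bool{C}$ which freezes $\bool{B}$. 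Here I would invoke Fact~\ref{fac:keyrmkfreeze}: since $S\textsf{-cond} \subseteq \SSP$, it is enough to exhibit, given $\bool{B} \in S\textsf{-cond}$, a $\bool{B}$-name $\dot{\bool{Q}}$ such that $\bool{C} = \bool{B} \ast \dot{\bool{Q}} \in S\textsf{-cond}$ and $i: \bool{B} \to \bool{C}$ is $\SSP$-freezing. This is provided by Proposition~\ref{freeze3}: taking $\delta = |\bool{B}|$ and $\kappa^0 < \kappa^1$ appropriate cardinals above $\delta$, with $(S_\a)_{\a<\o_1} \in V$ a sequence of pairwise disjoint stationary subsets of $S^{\kappa^0}_\o$ and $U \subseteq S^{\kappa^1}_\o$ stationary with stationary complement, Proposition~\ref{freeze3} yields a name $\dot B$ such that $\bool{B}$ forces $\Coll(\o_1, \d) \ast \dot{\mtcl S}_{\vec S, U, \dot B}$ to have the $S$-condition and such that $\bool{B} \ast (\Coll(\o_1, \d) \ast \dot{\mtcl S}_{\vec S, U, \dot B})$ $\SSP$-freezes $\bool{B}$ via the inclusion map. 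Since the first-step extension is a two-step iteration of members of $S\textsf{-cond}$ (the collapse is countably closed, hence in $S\textsf{-cond}$, and $\dot{\mtcl S}_{\vec S, U, \dot B}$ is forced to have the $S$-condition), closure of $S\textsf{-cond}$ under two-step iterations (Fact~\ref{fac:sigma2Scond}) gives $\bool{C} \in S\textsf{-cond}$, and Fact~\ref{fac:keyrmkfreeze} converts the $\SSP$-freezing into $S\textsf{-cond}$-freezing. This establishes the $S\textsf{-cond}$-freezeability property, hence density of $S\textsf{-cond}$-rigid cbas, completing the verification that $S\textsf{-cond}$ is well-behaved; since every step of the argument is a $\ZFC$-theorem and the input Propositions are stated without large cardinal hypotheses, the same holds in every extension $V[G]$ by a member of $S\textsf{-cond}$, so $S\textsf{-cond}$ is absolutely well-behaved, and $\lambda_\Gamma = \omega_1$ as computed above.
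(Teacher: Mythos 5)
Your proposal is correct and follows essentially the same route as the paper's proof: the closure and definability clauses are read off from Fact~\ref{fac:sigma2Scond}, iterability comes from Lemma~\ref{preservation-S-cond} via the RCS strategy with interleaved collapses $\Coll(\omega_1, 2^{|\bool{B}_{\alpha+2n-1}|})$ at even successor stages, and density of rigid algebras is obtained from the $\SSP$-freezing of Proposition~\ref{freeze3} transferred to $S\textsf{-cond}$-freezing via the inclusion $S\textsf{-cond}\subseteq\SSP$ (Lemma~\ref{S-cond-shelah}(1) together with Fact~\ref{fac:keyrmkfreeze}). Your write-up is merely more explicit about the routine clauses than the paper's terser presentation.
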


\begin{proof}
Let $\Gamma=S\textsf{-cond}$. Except for the freezability condition, all conditions in the definition of absolutely well-behaved
class are clearly satisfied by 
$\Gamma$:
$\Gamma$ is defined by both a
$\Sigma_2$ property and a $\Pi_2$ property, 
it is closed under isomorphisms, two-step iterations, lottery sums, restrictions and complete subalgebras,
and it contains all countably closed forcings 
by Fact~\ref{fac:sigma2Scond}.
The iterability condition follows immediately from Lemma \ref{preservation-S-cond}: 
the winning strategy for player II is to play the revised countable support limit at all limit stages
(notice that at stages of cofinality $\omega_1$ this limit is the direct limit), and to play at all non-limit stages
$\alpha+2n$ the algebra $\bool{B}_{\alpha+2n}=\bool{B}_{\alpha+2n-1}\ast\dot{\bool{C}}$, where 
$\dot{\bool{C}}$ is a $\bool{B}_{\alpha+2n-1}$-name for the boolean completion of 
$\Coll(\omega_1,2^{|\bool{B}_{\alpha+2n-1}|})$. 
As to the freezability condition, 
we have that $S\textsf{-cond}$ has in fact,  by Proposition \ref{freeze3}, the $\SSP$-freezability condition---which implies the $\Gamma$-freezability condition by Lemma \ref{S-cond-shelah} (1). Finally, it is clear that 
$\lambda_\Gamma=\omega_1$. 
\end{proof}

\begin{lemma}\label{ssp-and-restrictions}
 Suppose there is a proper class of measurable cardinals. Given any indecomposable ordinal $\rho<\omega_1$, 
 each of the following classes $\Gamma$ is absolutely well-behaved and such that $\lambda_\Gamma=\omega_1$. 

\begin{enumerate}
\item $\rho\mbox{-}\SP$
\item $(\rho\mbox{-}\SP)\cap \STP$
\item $(\rho\mbox{-}\SP)\cap\,^\o\o\textsf{-bounding}$
\item $(\rho\mbox{-}\SP)\cap\STP\cap\,^\o\o\textsf{-bounding}$ 
\end{enumerate}

Also, each of the following classes is $\o_1$-suitable with respect to the same theory.

\begin{enumerate}
\item ${<}\o_1\mbox{-}\SP$
\item $({<}\o_1\mbox{-}\SP)\cap \STP$
\item $({<}\o_1\mbox{-}\SP)\cap\,^\o\o\textsf{-bounding}$
\item $({<}\o_1\mbox{-}\SP)\cap\STP\cap\,^\o\o\textsf{-bounding}$ 
\end{enumerate}
\end{lemma}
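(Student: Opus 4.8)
The strategy is to verify, for each of the eight classes $\Gamma$ listed, the six clauses of Definition~\ref{def:wellbeh} (and their persistence under forcing by members of $\Gamma$, which is what ``absolutely well-behaved'' requires), exactly as was done for the proper classes in Lemma~\ref{pr-suitable} and for $S\textsf{-cond}$ in Lemma~\ref{scondlemma}. Most clauses are immediate from the general facts already established in Section~\ref{subsec:defdavid}: clause~(\ref{def:wellbeh1}), $\lambda_\Gamma=\omega_1$, holds because every $\rho$-semiproper (or ${<}\omega_1$-semiproper) forcing preserves $\omega_1$ while $\Coll(\omega_1,\omega_2)$ collapses $\omega_2$; clause~(\ref{def:wellbeh2}) (closure under isomorphisms, two-step iterations, lottery sums, restrictions, complete subalgebras) and clause~(\ref{def:wellbeh5}) (containing all ${<}\omega_1$-closed forcings) follow from Fact~\ref{fac:Birkhoffsemiproper} together with Fact~\ref{fac:sigma2SPTomombound} for the variants intersected with $\STP$ and $^\o\o\textsf{-bounding}$; and clause~(\ref{def:wellbeh6}) follows from Remark~\ref{sufficience-remark} via Fact~\ref{fac:sigma2semiproper} (and again Fact~\ref{fac:sigma2SPTomombound}), since each of these classes is simultaneously $\Sigma_2$-definable and $\Pi_2$-definable in the parameters $\rho$, $\omega_1$, $\omega^\omega$. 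So the work concentrates on clauses~(\ref{def:wellbeh3}) (strategic ${<}\Ord$-closure, i.e.\ iterability with a definable strategy) and~(\ref{def:wellbeh4}) (density of $\Gamma$-rigid cbas).

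For the iterability clause~(\ref{def:wellbeh3}), the plan is to exhibit the winning strategy for player~II in $\mtcl G(\Gamma)$ explicitly and then invoke the appropriate preservation lemma from Section~\ref{iterability}. The strategy is: at all non-limit stages $\alpha+2n$ play $\bool{B}_{\alpha+2n}=\bool{B}_{\alpha+2n-1}\ast\dot{\bool{C}}$ where $\dot{\bool{C}}$ is a name for the boolean completion of $\Coll(\omega_1,2^{|\bool{B}_{\alpha+2n-1}|})$ (a ${<}\omega_1$-closed, hence $\rho$-semiproper, forcing, needed so that the iteration at odd stages can be a genuine revised countable support iteration rather than just a ``full support up to semiproperness'' construction), and at limit stages play the revised countable support limit (which at stages of cofinality $\omega_1$ coincides with the direct limit, as required by the Baumgartner clause of iterability). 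That this is winning—that the RCS limit lies back in $\Gamma$—is exactly the content of Lemma~\ref{pres-semiproper} for $\rho\mbox{-}\SP$, of Lemma~\ref{miyamotosuslinsp} for the variants with $\STP$, of Lemma~\ref{pres-semiproper-bounding} for the variants with $^\o\o\textsf{-bounding}$ (here one must first ensure $\CH$ holds along the iteration, which the $\Coll(\omega_1,\,\cdot)$ factors arrange), and of the combination of the two Miyamoto lemmas for the variant with both. For the ${<}\omega_1$-semiproper classes one runs these lemmas simultaneously for every indecomposable $\rho<\omega_1$. The strategy is visibly definable in $(V,\in)$, which is all that strategic ${<}\Ord$-closure demands. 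One then notes this argument relativizes to any generic extension $V[G]$ by a member of $\Gamma$, giving absolute well-behavedness of this clause.

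For the density of $\Gamma$-rigid forcings, clause~(\ref{def:wellbeh4}), the plan is to use the reduction to freezeability provided by the theorem in Subsection~\ref{sec:gammafreezgammarig}: it suffices to produce, for each $\bool{B}\in\Gamma$, a $\Gamma$-correct injective $\Gamma$-freezing homomorphism $i_{\bool{B}}:\bool{B}\to\bool{C}$ with $\bool{C}\in\Gamma$. By Fact~\ref{fac:keyrmkfreeze}, since $\Gamma\subseteq\SSP$ for every class on the list, it is enough to find a $\bool{B}$-name $\dot{\bool{Q}}$ for a forcing in $\Gamma$ such that $\bool{B}\ast\dot{\bool{Q}}$ $\SSP$-freezes $\bool{B}$; and here the freezing posets of Subsection~\ref{4-freezing-posets} do the job. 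Concretely: Proposition~\ref{freeze2}, whose freezing poset is the $\psi_{\AC}^\kappa$-poset $\mtcl Q_{\kappa,S,T}$, gives a $\bool{B}$-name of the form $\Coll(\omega_1,\bool{B})\ast\dot{\mtcl R}$ with $\dot{\mtcl R}$ a name for a $\psi_{\AC}^\kappa$-poset, and by Lemma~\ref{psiAC-measurable} this poset is ${<}\omega_1$-semiproper, preserves Suslin trees, and adds no new reals—hence it lies in every one of the eight classes on our list (the no-new-reals clause handling $^\o\o\textsf{-bounding}$, the Suslin-tree clause handling $\STP$, and ${<}\omega_1$-semiproperness handling all the $\rho\mbox{-}\SP$ and ${<}\omega_1\mbox{-}\SP$ requirements). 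Since $\Coll(\omega_1,\bool{B})$ is ${<}\omega_1$-closed and $\Gamma$ is closed under two-step iterations, $\dot{\bool{Q}}$ is forced into $\Gamma$, so $\bool{B}\ast\dot{\bool{Q}}$ witnesses $\SSP$-freezeability. This is precisely where the hypothesis of a proper class of measurable cardinals is used—one needs a measurable cardinal $\kappa$ above $|\bool{B}|$ for every $\bool{B}\in\Gamma$, and in every generic extension by a member of $\Gamma$, whence the ``proper class'' and the persistence of measurables under small forcing. Finally, the remaining hard-to-verify clause, density of $\Gamma$-rigid forcings, together with the rest, relativizes to $V[G]$ for any $\bool{B}$-generic $G$ with $\bool{B}\in\Gamma$ (the measurables survive, the freezing construction is uniform), which yields absolute well-behavedness.

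The main obstacle I anticipate is not any single clause but the bookkeeping in the freezeability step: one must check that the single freezing poset from Proposition~\ref{freeze2} genuinely lands in \emph{all} eight classes simultaneously, i.e.\ that $\mtcl Q_{\kappa,S,T}$ is $\rho$-semiproper for \emph{every} indecomposable $\rho<\omega_1$ (not just $1$-semiproper), so that it works for ${<}\omega_1\mbox{-}\SP$ and its intersections, and that the ``no new reals'' conclusion of Lemma~\ref{psiAC-measurable} is robust enough to give $^\o\o\textsf{-bounding}$ after composing with $\Coll(\omega_1,\bool{B})$. The proof of Lemma~\ref{psiAC-measurable} is sketched only for $1$-semiproperness; the ${<}\omega_1$ version requires iterating the normal-measure end-extension construction through chains of countable elementary submodels of arbitrary indecomposable length, which is the delicate point one should spell out, or at least cite carefully.
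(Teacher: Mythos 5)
Your proposal is correct and follows essentially the same route as the paper's proof: the closure and definability clauses are dispatched via Facts~\ref{fac:sigma2semiproper} and~\ref{fac:Birkhoffsemiproper}, iterability via the RCS preservation lemmas (Lemmas~\ref{pres-semiproper}, \ref{pres-semiproper-bounding}, \ref{miyamotosuslinsp}) with the RCS limit as player II's strategy, and freezeability via Proposition~\ref{freeze2} together with the ${<}\omega_1$-semiproper, Suslin-tree-preserving, no-new-reals $\psi_{\AC}^\kappa$-poset of Lemma~\ref{psiAC-measurable}, which is exactly where the measurable cardinals enter. Your extra care about interleaving collapses to secure $\CH$ for Lemma~\ref{pres-semiproper-bounding}, and about the ${<}\omega_1$-semiproperness of $\mtcl Q_{\kappa,S,T}$ (which the paper does establish in the proof of Lemma~\ref{psiAC-measurable}), are refinements of the same argument rather than a different one.
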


\begin{proof}
Each of these $\Gamma$ is defined both by a 
$\Sigma_2$ property and 
by a $\Pi_2$ property by Fact~\ref{fac:sigma2semiproper},
and 
is closed under preimages by complete injective homomorphisms, two-step iterations and products, and contains all countably closed forcings 
by Fact~\ref{fac:Birkhoffsemiproper}.
The iterability condition for each of these classes follows from (some combination of) 
Lemmas \ref{pres-semiproper}, \ref{pres-semiproper-bounding}, and \ref{miyamotosuslinsp}: 
the winning strategey for player II is to play the revised countable support limit at all limit stages
(notice that at stages of cofinality $\omega_1$ this limit is the direct limit).
The freezability condition follows  from Lemma \ref{psiAC-measurable}, together with Proposition \ref{freeze2} 
(for the case $\rho=1$, one could as well invoke Lemma \ref{MRP} together with Proposition \ref{freeze1} instead). Finally, the fact that $\lambda_\Gamma=\omega_1$ is again immediate. 
\end{proof}

The standard proof, due to Shelah  (\cite{SPFA=MM}), that $\SPFA$ implies $\SSP=\SP$ actually shows the following.

\begin{proposition} $\textsf{FA}(({<}\o_1\mbox{-}\SP)\cap\STP\cap\,^\o\o\mbox{-bounding})$ implies 
$\SSP=\SP$. \end{proposition}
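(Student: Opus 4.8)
The plan is to follow the well-known argument of Shelah showing that $\SPFA$ implies $\SSP=\SP$, observing that the two ingredients that argument actually uses both survive when we weaken $\SP$ to the class $({<}\o_1\mbox{-}\SP)\cap\STP\cap\,^\o\o\mbox{-bounding}$. Since $\SP\subseteq\SSP$ trivially (a semiproper forcing preserves stationary subsets of $\o_1$, as it preserves $\o_1$ and reflects stationarity through the semigenericity condition), the content is the reverse inclusion: assuming $\textsf{FA}(({<}\o_1\mbox{-}\SP)\cap\STP\cap\,^\o\o\mbox{-bounding})$ and given $\mtcl P\in\SSP$, we must show $\mtcl P$ is semiproper. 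Fix a sufficiently large $\t$, a countable $N\elsub H_\t$ containing $\mtcl P$, and $p\in N\cap\mtcl P$; we want an $(N,\mtcl P)$-semigeneric extension of $p$.

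First I would recall Shelah's reduction: it suffices to realise, by a forcing in our class, a single countable elementary submodel of $H_\t$ that is ``internally approachable enough'' to witness semigenericity, i.e.\ to produce (via the forcing axiom) a countable $M\elsub H_\t$ with $N\cap\o_1\subseteq M$, $N\in M$, $M\cap\o_1=N\cap\o_1$, and $\mtcl P\in M$, together with an $(M,\mtcl P)$-generic condition below $p$ which is automatically $(N,\mtcl P)$-semigeneric. The forcing that accomplishes this is Shelah's forcing $\mtcl S(N)$ (or the relevant ``shooting a generic sequence of models'' poset), whose conditions are finite (or countable) approximations to a tower of models reflecting the required properties, together with generic conditions for $\mtcl P$; the key point of Shelah's analysis of $\SPFA\Rightarrow\SSP=\SP$ is precisely that when $\mtcl P\in\SSP$ this auxiliary poset is semiproper. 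So the first step is to quote that Shelah's auxiliary poset $\mtcl S$ associated to $\mtcl P\in\SSP$ is semiproper, and then one applies the forcing axiom to a family of $\o_1$-many dense sets in $\mtcl S$ to extract the desired $M$ and condition.

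The step that needs genuine work here — and which I expect to be the main obstacle — is checking that Shelah's auxiliary poset $\mtcl S$ lies in the smaller class $({<}\o_1\mbox{-}\SP)\cap\STP\cap\,^\o\o\mbox{-bounding}$, not merely in $\SP$. Inspecting Shelah's construction, $\mtcl S$ is obtained as a countable-support (or RCS) iteration/composition of semiproper forcings interleaved with collapses of the form $\Coll(\o_1,\cdot)$, all of which are countably closed; the iteration lemmas available in the excerpt (Lemmas~\ref{pres-semiproper}, \ref{pres-semiproper-bounding}, \ref{miyamotosuslinsp}) are designed exactly so that countably closed building blocks, which are $\sigma$-closed hence trivially $\rho$-semiproper for every $\rho$, $\STP$, and $^\o\o$-bounding, keep the whole iteration inside the intersection class. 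The one delicate point is that Shelah's poset may also include a ``main'' $\SSP$-forcing step coming from $\mtcl P$ itself; but here one uses that at the point where that step enters, it is absorbed by a $\Coll(\o_1,2^{|\cdot|})$ (as in the $S\textsf{-cond}$ iteration strategy described in the excerpt) and then quotiented, so that the only genuinely non-countably-closed contributions are already semiproper of the right refined type. Concretely, one argues: (i) $\mtcl S$ is an RCS iteration all of whose iterands are either countably closed or are (finite support / one-step) semiproper posets that Shelah shows are actually ${<}\o_1$-semiproper in this context, whence $\mtcl S\in{<}\o_1\mbox{-}\SP$ by Lemma~\ref{pres-semiproper}; (ii) countably closed iterands and the semiproper iterands of Shelah's construction are $\STP$ and $^\o\o$-bounding — the countably closed ones trivially, the others by the same model-theoretic analysis that shows they are ${<}\o_1$-semiproper — so Lemmas~\ref{miyamotosuslinsp} and~\ref{pres-semiproper-bounding} (the latter using $\CH$, which holds after the initial $\Coll(\o_1,\cdot)$ step, or can be arranged by a preliminary collapse) give $\mtcl S\in({<}\o_1\mbox{-}\SP)\cap\STP\cap\,^\o\o\mbox{-bounding}$.

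Granting that $\mtcl S\in({<}\o_1\mbox{-}\SP)\cap\STP\cap\,^\o\o\mbox{-bounding}$, the conclusion is routine: by $\textsf{FA}(({<}\o_1\mbox{-}\SP)\cap\STP\cap\,^\o\o\mbox{-bounding})$ applied to the $\o_1$-many dense subsets of $\mtcl S$ needed to build the tower of models and to meet all $\mtcl P$-names for ordinals below $\o_1$ that live in $N$, we obtain a filter yielding the model $M\supseteq_{\mathrm{end}} N$ (in the sense above) and a condition $q\leq p$ that is $(M,\mtcl P)$-generic; since $M\cap\o_1=N\cap\o_1$ and every $\mtcl P$-name for a countable ordinal in $N$ lies in $M$, $q$ is $(N,\mtcl P)$-semigeneric. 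As $N$ and $p$ were arbitrary, $\mtcl P$ is semiproper, so $\SSP\subseteq\SP$, and therefore $\SSP=\SP$. I would present the write-up by isolating ``$\mtcl S$ belongs to the intersection class'' as a lemma whose proof cites Shelah's analysis of the building blocks plus the three iteration lemmas, and then give the short deduction above.
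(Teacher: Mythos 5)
Your overall strategy coincides with the paper's: both reduce the proposition to showing that Shelah's auxiliary poset $\mtcl Q_{\mtcl P}$ (the one whose forcing axiom yields semiproperness of a given $\SSP$ poset $\mtcl P$) lies in the class $({<}\o_1\mbox{-}\SP)\cap\STP\cap\,^\o\o\textsf{-bounding}$, and then apply the hypothesis. However, your verification of this membership has a genuine gap, and it is located exactly at the one point the paper flags as non-trivial. You assert that the relevant iterands are $\STP$ "by the same model-theoretic analysis that shows they are ${<}\o_1$-semiproper." Preservation of Suslin trees does \emph{not} follow from ${<}\o_1$-semiproperness (nor from not adding reals); if it did, the classes $\rho\mbox{-}\SP$ and $(\rho\mbox{-}\SP)\cap\STP$ would coincide and the incompatibility results of this paper would be vacuous for those pairs. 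The paper handles this point by a dedicated argument: it observes that the preservation of Suslin trees by $\mtcl Q_{\mtcl P}$ follows by the argument in the final part of the proof of Lemma \ref{psiAC-measurable}, i.e., one fixes a Suslin tree $U$, a name $\dot A$ for a maximal antichain, and a suitable countable $N\elsub H_\t$, uses that every node of $U$ at height $N\cap\o_1$ is totally $(U,N)$-generic, and builds an $(N,\mtcl Q_{\mtcl P})$-generic sequence whose union-plus-top-model is a condition forcing $\dot A\subseteq U\cap N$. Some such argument is required and cannot be waved away.

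A secondary but real problem is your description of $\mtcl Q_{\mtcl P}$ as an RCS iteration of semiproper forcings interleaved with collapses, to be handled by Lemmas \ref{pres-semiproper}, \ref{pres-semiproper-bounding} and \ref{miyamotosuslinsp}. The auxiliary poset in Shelah's argument is a single forcing notion whose conditions are countable continuous towers of models (you are conflating it with the long iteration used to force $\SPFA$ itself), so the iteration lemmas are not the right tool here; and even if they were, they would only transfer $\STP$ from the iterands to the limit, which returns you to the gap above. The correct route is the paper's: prove directly that $\mtcl Q_{\mtcl P}$ is ${<}\o_1$-semiproper and adds no new reals (both straightforward for countable-tower forcings), and prove Suslin tree preservation by the totally-generic-node argument.
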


\begin{proof} This follows from the fact that  the natural semiproper forcing $\mtcl Q_{\mtcl P}$  
(s.\ \cite{SPFA=MM}) such that an application of $\FA_{\al_1}(\{\mtcl Q_{\mtcl P}\})$ 
yields the semiproperness of a given $\SSP$ $\mtcl P$ is in fact ${<}\o_1$-semiproper, does not add new reals, 
and preserves Suslin trees. The proof of the first two assertions is straightforward, and the preservation of 
Suslin trees follows by an argument as in the final part of the proof of Lemma \ref{psiAC-measurable}.
\end{proof} 
 
\begin{corollary} Suppose $\textsf{FA}(({<}\o_1\mbox{-}\SP)\cap\STP\cap\,^\o\o\mbox{-bounding})$ holds. 
Then:
\begin{enumerate}
\item $\BCFA(\SSP)$ holds iff $\BCFA(\SP)$ does.
\item $\BCFA(\SSP\cap \STP)$ holds iff $\BCFA(\SP\cap \STP)$ does.
\item $\BCFA(\SSP\cap\,^\o\o\textsf{-bounding})$ holds iff $\BCFA(\SP\cap\,^\o\o\textsf{-bounding})$ does.
\item $\BCFA(\SSP\cap\STP\cap\,^\o\o\textsf{-bounding})$ holds iff $\BCFA(\SP\cap\STP\cap\,^\o\o\textsf{-bounding})$ does.
\end{enumerate}
\end{corollary}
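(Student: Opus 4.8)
The plan is to derive the Corollary directly from the Proposition together with the observation that $\BCFA(\Gamma)$ depends on a class $\Gamma$ only through the cardinal $\lambda_\Gamma$, the structure $H_{\lambda_\Gamma^+}$, and the category forcing $(\Gamma,\leq_\Gamma)$. By the Proposition, $\textsf{FA}(({<}\o_1\mbox{-}\SP)\cap\STP\cap\,^\o\o\textsf{-bounding})$ implies $\SSP=\SP$ as classes of forcing notions in $V$. Since every semiproper forcing is stationary set preserving, we always have $\SP\subseteq\SSP$, and hence our hypothesis yields $\SSP\cap\Delta=\SP\cap\Delta$ in $V$, where $\Delta$ is either trivial (giving item (1)) or one of $\STP$, $^\o\o\textsf{-bounding}$, $\STP\cap\,^\o\o\textsf{-bounding}$ (giving items (2)--(4)). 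This settles the agreement of the objects of the four relevant pairs of category forcings.

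To get that the category forcings $(\SSP\cap\Delta,\leq_{\SSP\cap\Delta})$ and $(\SP\cap\Delta,\leq_{\SP\cap\Delta})$ actually coincide, and not merely have the same objects, I would check that the equality $\SSP=\SP$ is preserved by forcing with any member of $\SSP$ (equivalently of $\SP$). Indeed, given $\bool{B}\in\SSP$ and a $\bool{B}$-name $\dot{\bool{C}}$ forced to lie in $\SSP^{V^{\bool{B}}}$, the two-step iteration $\bool{B}\ast\dot{\bool{C}}$ lies in $\SSP$ (by closure of $\SSP$ under two-step iterations), hence in $\SP$, i.e.\ $\bool{B}\ast\dot{\bool{C}}$ is semiproper in $V$; a reflection argument in the spirit of Shelah's proof that $\SPFA$ implies $\bool{MM}$ (cf.\ \cite{SPFA=MM}) then shows that, $\bool{B}$ being itself semiproper, $\bool{B}$ forces $\dot{\bool{C}}$ to be semiproper. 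Thus $\bool{B}$ forces $\SSP=\SP$. Since moreover $\dot{\bool{C}}$ already carries the extra constraint $\Delta$ (being forced into $\SSP\cap\Delta$, hence into $\STP$, $^\o\o\textsf{-bounding}$, etc.\ as appropriate), it follows that the definable classes $\SSP\cap\Delta$ and $\SP\cap\Delta$ are interpreted identically in $V$ and in every generic extension of $V$ by a member of $\SSP$. Consequently the $\Gamma$-correct homomorphisms agree, so the orders $\leq_{\SSP\cap\Delta}$ and $\leq_{\SP\cap\Delta}$ agree, and $\lambda_{\SSP\cap\Delta}=\lambda_{\SP\cap\Delta}=\omega_1$. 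Therefore $\BCFA(\SSP\cap\Delta)$ and $\BCFA(\SP\cap\Delta)$ are literally the same statement, which gives all four equivalences.

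The step demanding care is the preservation of $\SSP=\SP$ under semiproper forcing, i.e.\ the factorization principle ``if $\bool{B}$ and $\bool{B}\ast\dot{\bool{C}}$ are semiproper, then $\bool{B}$ forces $\dot{\bool{C}}$ semiproper''. I would handle this by the standard argument: if a club of countable $N\prec H_\theta$ (with $\bool{B}$, $\dot{\bool{C}}$, and all relevant parameters in $N$) witnesses the semiproperness of $\bool{B}\ast\dot{\bool{C}}$, then, for a $\bool{B}$-generic $G$, the models $N[G]$, as $N$ ranges over this club, cover a club of $[H_\theta^{V[G]}]^{\aleph_0}$ and each admits densely many $(\cdot,\dot{\bool{C}}_G)$-semigeneric conditions, using that this semigenericity property is downward persistent along inclusions that preserve the intersection with $\omega_1$. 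Everything else is bookkeeping.
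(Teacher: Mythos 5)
Your opening paragraph is in substance the paper's entire proof: the corollary is presented as an immediate consequence of the preceding proposition, the point being simply that the hypothesis gives $\SSP=\SP$ in $V$ and hence $\SSP\cap\Delta=\SP\cap\Delta$ for each of the four choices of $\Delta$. You are right to flag that this identity of classes \emph{in $V$} does not by itself make the two axioms the same sentence, since $\leq_\Gamma$ (via $\Gamma$-correctness) and $V^\Gamma$ refer to the interpretation of $\Gamma$ in generic extensions; this is a real subtlety that the paper passes over in silence. The trouble is that your repair of it rests on a false lemma, so your argument has a genuine gap exactly at the step you yourself single out as ``demanding care.''

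The factorization principle ``if $\bool{B}$ and $\bool{B}\ast\dot{\bool{C}}$ are (semi)proper then $\bool{B}$ forces $\dot{\bool{C}}$ to be (semi)proper'' is false. Let $\bool{B}=\Add(\omega_1,1)$ and let $\dot T$ be the canonical name for the generic subset of $\omega_1$, which is stationary, co-stationary, and splits every ground-model stationary set; let $\dot{\bool{C}}$ name the poset of countable closed subsets of $\omega_1\setminus\dot T$ ordered by end-extension. In $V^{\bool{B}}$ this poset destroys the stationarity of $\dot T$, so it is not even in $\SSP^{V^{\bool{B}}}$, let alone semiproper there. Yet $\bool{B}\ast\dot{\bool{C}}$ is proper over $V$: given countable $N\prec H_\chi$ with $N\cap\lambda$ in a prescribed $V$-stationary subset of $[\lambda]^{\aleph_0}$ and $(p,\dot q)\in N$, one builds an $(N,\bool{B})$-generic $p^*\leq p$ which additionally forces $N\cap\omega_1\notin\dot T$ (possible since $p\in N$ decides $\dot T$ only below $N\cap\omega_1$), and then an $(N[\dot G],\dot{\bool{C}})$-generic condition exists because $N[\dot G]\cap\omega_1=N\cap\omega_1\notin\dot T$; so all $V$-stationary subsets of all $[\lambda]^{\aleph_0}$ are preserved. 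This is precisely the classical reason why quotients of proper forcings by proper complete suborders need not be proper: the second iterand may kill stationary (or semi-stationary) sets that exist only in the intermediate model. The same phenomenon defeats the conclusion you actually need, namely that $\SSP=\SP$ persists to every extension by a member of $\SSP$: semiproperness of a fixed poset such as Namba forcing is equivalent to a strong Chang's Conjecture and is not preserved by arbitrary countably closed forcing, whereas membership in $\SSP$ is a $\ZFC$ fact for Namba forcing; so starting from a model of $\MM$ one can pass to an extension by a member of $\SP$ in which $\SSP\neq\SP$. (Note also that the corollary's hypothesis $\textsf{FA}(({<}\o_1\mbox{-}\SP)\cap\STP\cap\,^\o\o\mbox{-bounding})$ is itself destroyed by, e.g., $\Coll(\omega_1,2^{\aleph_0})$, so one cannot re-derive $\SSP=\SP$ in $V[G]$ from the Proposition either.) If the orders $\leq_{\SSP\cap\Delta}$ and $\leq_{\SP\cap\Delta}$ are to be shown equal, a different argument is required; if instead one adopts the paper's tacit reading that only the extension of the class in $V$ matters, then your first paragraph already suffices and the remaining two are unnecessary.
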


\subsubsection{Pairwise incompatibility of $\BCFA(\Gamma)$ for absolutely well-behaved $\Gamma$}
\label{incompatible-category-forcing-axioms}

Each one of the incompatibilities contained in Theorem \ref{main-thm-incompatible} follows from two or more of 
the lemmas in this subsection put together.


Given an ordinal $\a<\o_2$,  a function $g:\o_1\into \o_1$ is a \emph{canonical function for $\a$} if 
there is a surjection $\p:\o_1\into\a$ and a club $C\sub\o_1$ such that for all $\nu\in C$, $g(\nu)=\ot(\pi``\n)$. 
Let \emph{Club Bounding} denote the following statement: For every function $f:\o_1\into \o_1$ there is 
some $\a<\o_2$ such that  $\{\n<\o_1\,:\, f(\n)<g(\n)\}$ contains a club  whenever $g$ is a canonical function for $\a$.

\begin{lemma}\label{consespfa+++} 
Suppose $\delta$ is an inaccessible cardinal such that $V_\delta\prec V$. 
Let $\Gamma$ be any absolutely well-behaved class, defined with a parameter in $V_\delta$, and
 such that $\lambda_\Gamma=\omega_1$. Suppose $\Gamma\sub \PR$ holds after forcing with $\Add(\omega_1, 1)$. If $\BCFA(\Gamma)$ holds, then Club Bounding fails.
\end{lemma}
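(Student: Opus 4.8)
The statement says: if $\delta$ is inaccessible with $V_\delta\prec V$, $\Gamma$ is absolutely well-behaved with $\lambda_\Gamma=\omega_1$ and parameter in $V_\delta$, $\Gamma\subseteq\PR$ holds after forcing with $\Add(\omega_1,1)$, and $\BCFA(\Gamma)$ holds, then Club Bounding fails. The guiding idea is to exhibit a specific function $f:\omega_1\to\omega_1$ witnessing the failure of Club Bounding, and to argue that $f$ can be made to ``escape'' every canonical function by passing to a $\Gamma$-extension and then invoking the elementarity $H_{\omega_2}^V\prec V[H]$ supplied by $\BCFA(\Gamma)$. First I would recall the standard fact that Club Bounding is expressible over $H_{\omega_2}$: ``for every $f:\omega_1\to\omega_1$ there is $\alpha<\omega_2$ such that every canonical function for $\alpha$ dominates $f$ on a club'' is a first-order statement about $H_{\omega_2}$, since canonical functions, clubs, and domination are all coded there. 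So if I can force (in some $\bool{B}\in\Gamma$) that Club Bounding fails \emph{and} $\BCFA(\Gamma)$ holds, then by Corollary~\ref{completeness} $H_{\omega_2}^V$ and $H_{\omega_2}^{V[G]}$ have the same theory, hence Club Bounding fails in $V$ as well; but this is circular unless I already know $V\models\BCFA(\Gamma)$, which is the hypothesis. So the cleaner route is: assume $V\models\BCFA(\Gamma)$, pick $f\in H_{\omega_2}^V$ arbitrary, and produce a forcing in $\Gamma$ over $V$ adding no new reals (so as not to disturb $\omega_1$) which forces that $f$ is not dominated on a club by any canonical function for any $\alpha<\omega_2$; then elementarity $H_{\omega_2}^V\prec V[H]$ forces a contradiction with the $\Pi_2$-consequences of $\BCFA(\Gamma)$ — or rather, reflects the failure down.

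More precisely, the plan is the following. By the hypothesis $\Gamma\subseteq\PR$ after $\Add(\omega_1,1)$, and since $\Gamma$ contains all countably closed forcings (it is absolutely well-behaved, so by clause~\ref{def:wellbeh5} it contains all $<\lambda_\Gamma$-closed forcings and $\lambda_\Gamma=\omega_1$), the forcing $\Add(\omega_1,1)$ is in $\Gamma$, and after it $\Gamma$ is a class of proper forcings. Now the key classical fact (Foreman–Magidor, or the folklore consequence of $\PFA$): there is a proper forcing which, given any $f:\omega_1\to\omega_1$, adds a club guessing sequence or rather shoots a club witnessing that $f$ is \emph{not} eventually dominated by the canonical function of any $\alpha<\omega_2$ — concretely, one forces with countable conditions a surjection $\pi:\omega_1\to\alpha$ whose associated canonical function is dominated by $f$ on a stationary (indeed club) set, for a cofinal set of $\alpha<\omega_2$ computed in the extension. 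I would use the standard forcing that collapses $\omega_2$ to $\omega_1$ by countable conditions (which is countably closed, hence in $\Gamma$, and adds no reals): after this collapse, every ordinal below $\omega_2^V$ has order type $<\omega_1$, so canonical functions for ordinals $\alpha<\omega_2^V$ become bounded functions; then $f$ (which is unbounded, if we chose it so, e.g. $f=\mathrm{id}$ or $f(\nu)=\nu+1$) cannot be dominated on a club by any such function. Thus in $V^{\Coll(\omega_1,\omega_2)}$, Club Bounding fails as witnessed by $f$. The point is that $\Coll(\omega_1,\omega_2)\in\Gamma$.

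Then I would invoke $\BCFA(\Gamma)$: since $\Gamma_\delta\restriction\bool{C}$ is dense below $\Coll(\omega_1,\omega_2)$ by Theorem~\ref{thm:mainth1}(\ref{thm:mainth1-1}) (or rather, by the density of $D_\Gamma$ in the Corollary following Theorem~\ref{thm:mainth1}, there is $\bool{C}\leq_\Gamma\Coll(\omega_1,\omega_2)$ forcing $\BCFA(\Gamma)$), we can find a $\bool{C}\in\Gamma$ extending $\Coll(\omega_1,\omega_2)$ such that $V^{\bool{C}}\models\BCFA(\Gamma)$. In $V[G]$ (for $G$ generic over $\bool{C}$) we have $H_{\omega_2}^{V[G]}\prec V[H]$ for a $\Gamma^{V[G]}$-generic $H$, but more to the point, Club Bounding still fails in $V[G]$: forcing with $\bool{C}/\Coll(\omega_1,\omega_2)$ over $V^{\Coll(\omega_1,\omega_2)}$ is proper hence preserves $\omega_1$ and does not add clubs witnessing domination of $f$ below old ordinals — one must check the residual forcing does not re-introduce a canonical function dominating $f$, which since it preserves $\omega_1$ and $\omega_2^V$ has already been collapsed, it cannot. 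So $V[G]\models\BCFA(\Gamma)+\neg(\text{Club Bounding})$. Now by Corollary~\ref{completeness} applied to $\bool{C}$ and the trivial forcing (assuming $V\models\BCFA(\Gamma)$), $H_{\omega_2}^V$ and $H_{\omega_2}^{V[G]}$ have the same first-order theory; since Club Bounding is first-order over $H_{\omega_2}$ and fails in $V[G]$, it fails in $V$. This completes the argument.

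\textbf{Main obstacle.} The delicate step is verifying that the residual forcing $\bool{C}/\Coll(\omega_1,\omega_2)$ (which witnesses $\BCFA(\Gamma)$ over the collapse extension) genuinely preserves the failure of Club Bounding — i.e., that it does not add, for some $\alpha<\omega_2^{V[G]}$, a canonical function dominating $f$ on a club. Since $\omega_2^V$ was collapsed but $V[G]$ has its own $\omega_2$, one needs the chosen witness $f$ to continue to witness failure \emph{in $V[G]$} against $V[G]$-ordinals below $\omega_2^{V[G]}$; the natural fix is to note that Club Bounding, once it fails via $f:\omega_1\to\omega_1$ with $f$ unbounded in the sense that $f$ is $\leq^*$-above all bounded functions, continues to fail for \emph{that same} $f$ as long as $\omega_1$ is preserved and no new $\alpha<\omega_2$ arises whose canonical function dominates $f$ — which forces one to choose $f$ dominating \emph{all} canonical functions for all $\alpha<\omega_2^{V[G]}$, e.g. by a bookkeeping/reflection argument, or alternatively to use a $\Gamma$-forcing (rather than the plain collapse) specifically tailored to diagonalize against canonical functions, as in the Foreman–Magidor construction. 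Getting this bookkeeping right, and confirming the relevant forcing stays inside $\Gamma$ (using $\Gamma\subseteq\PR$ after $\Add(\omega_1,1)$ and closure of $\Gamma$ under two-step iterations), is where the real work lies; everything else is an application of the already-established machinery (Theorem~\ref{thm:mainth1}, Corollary~\ref{completeness}, and the expressibility of Club Bounding over $H_{\omega_2}$).
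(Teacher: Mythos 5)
There is a genuine gap, and it sits exactly where you admit the ``real work'' lies. Your concrete witness to the failure of Club Bounding does not work: after forcing with $\Coll(\omega_1,\omega_2)$, the ordinals $\alpha<\omega_2^V$ do not ``become bounded'' --- they still have cardinality $\leq\aleph_1$, so they still carry canonical functions $g(\nu)=\ot(\pi``\nu)$, and these are exactly as tall in the canonical-function hierarchy as before. Moreover $f=\mathrm{id}$ (or $f(\nu)=\nu+1$) is dominated on a club by the canonical function of $\omega_1+1$, so it witnesses nothing. Thus the forcing you actually describe does not force the failure of Club Bounding, and the subsequent reflection argument has nothing to reflect. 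You then gesture at a fix (``diagonalize against all canonical functions for all $\alpha<\omega_2^{V[G]}$ by bookkeeping, or use a Foreman--Magidor-style forcing'') but do not carry it out; in particular you never explain how to handle the ordinals that only become of size $\aleph_1$ in the final extension, which is precisely the obstruction you identify.

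The missing idea --- and the paper's actual argument --- is that $\Add(\omega_1,1)$ itself generically adds the witness: the generic function $f:\omega_1\to\omega_1$ has the property that for \emph{every} ordinal $\alpha$ (not just $\alpha<\omega_2$ of any particular model) the set $\{X\in[\alpha]^{\aleph_0}: X\cap\omega_1\in\omega_1,\ \ot(X)<f(X\cap\omega_1)\}$ is stationary in $[\alpha]^{\aleph_0}$. This single statement simultaneously defeats every canonical function of every ordinal that could ever fall below $\omega_2$ in a later extension, so no bookkeeping is needed. The hypothesis ``$\Gamma\subseteq\PR$ after forcing with $\Add(\omega_1,1)$'' is then used in an essential way you do not exploit: proper forcing preserves stationary subsets of $[\alpha]^{\aleph_0}$ (preserving $\omega_1$ alone is not enough), so forcing with $\Gamma_\delta^{V[G]}$ --- which is proper in $V[G]$ and forces $\BCFA(\Gamma)$ by Theorem~\ref{thm:mainth1} --- preserves all these stationary sets, hence Club Bounding fails in the final model. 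The last step of your proposal (failure of Club Bounding is first-order over $H_{\omega_2}$, and $H_{\omega_2}^V$ and $H_{\omega_2}^{V[G']}$ are both elementary in $H_{\omega_2}^{V^\Gamma}$, so the failure reflects to $V$) is essentially correct and matches the paper.
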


\begin{proof} To see that Club Bounding fails in $V$, we first force with $\Add(\omega_1, 1)$. Let $G$ be the corresponding generic filter. 
We 
then force with $\Gamma_\delta^{V[G]}$.
By Theorem \ref{thm:mainth1}, it holds in $V[G]$ that $\Gamma_\delta^{V[G]}$ forces $\BCFA(\Gamma)$.  Also, it is immediate to check that $G$ adds a function $f:\o_1\into\o_1$ such that  
$$\{X\in [\a]^{\al_0}\,:\, X\cap\o_1\in\o_1,\,\ot(X)<f(X\cap\o_1)\}$$ is a stationary subset of $[\a]^{\al_0}$ for every ordinal 
$\a$. Since every proper forcing will preserve the stationarity of these sets and since $\Gamma_\delta^{V[G]}$ is proper in $V[G]$ by our assumption, it follows that Club Bounding fails in 
$(V[G])^{\Gamma_\delta}$. Since the failure of Club Bounding
is expressible 
over $H_{\o_2}$, and since we have that $H_{\omega_2}^{V^{\Gamma_\delta}}=H_{\omega_2}^{V^{\Add(\omega_1, 1)\ast\dot{\Gamma}_\delta}}\prec H_{\omega_2}^{V^\Gamma}$ and $H_{\o_2}^V\prec H_{\omega_2}^{V^\Gamma}$, it follows that Club Bounding fails in $V$. 
\end{proof}

\begin{lemma} $\BFA_{\al_1}(({<}\o_1\mbox{-}\PR)\cap\,^\o\o\textsf{-bounding})$ implies that there are no Suslin trees. \end{lemma}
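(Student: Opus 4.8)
The goal is to show that $\BFA_{\al_1}(({<}\o_1\mbox{-}\PR)\cap\,^\o\o\textsf{-bounding})$ refutes the existence of Suslin trees. The standard strategy for killing a Suslin tree via a forcing axiom is to specialize it or to shoot a club disjoint from the branches; the point here is to exhibit a \emph{single} forcing notion $\mtcl Q_T$, for a given Suslin tree $T$, which lies in the class $\Gamma=({<}\o_1\mbox{-}\PR)\cap\,^\o\o\textsf{-bounding}$ and whose generic object, once met by $\al_1$-many dense sets, witnesses that $T$ is no longer Suslin (indeed, becomes non-Suslin in $V$ itself). So first I would recall the classical poset that destroys a Suslin tree without adding reals and without collapsing $\o_1$: forcing with $T$ itself turned upside down adds an uncountable branch, but adds no reals and is $\o$-proper (in fact $\sigma$-closed-like) precisely because $T$ has no uncountable antichains; more useful here is the forcing $\mtcl Q_T$ of \emph{finite} (or countable) approximations to a specializing function, or the forcing to shoot a club avoiding a prescribed antichain. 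The cleanest choice is the forcing $S(T)$ that adds an uncountable \emph{antichain} of $T$ by countable approximations, or equivalently forcing with $T$ to add a cofinal branch.

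\textbf{Key steps.} (1) Fix a Suslin tree $T$ and let $\mtcl Q_T=T$ be $T$ ordered by reverse tree-order, so that a generic filter is a cofinal branch $b\subseteq T$. Since $T$ is Suslin it has no uncountable antichains, hence $\mtcl Q_T$ is $\sigma$-Baire and in particular adds no new reals and preserves $\o_1$; more strongly, one checks that $\mtcl Q_T$ is ${<}\o_1$-proper --- for any continuous $\in$-chain $(N_i)_{i\le\rho}$ of countable elementary submodels containing $T$, any condition $p\in N_0$ extends to a condition deciding, for each $i$, membership in every dense set in $N_i$, using that the levels are countable and $T$ is Suslin so that the master conditions exist at every level. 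Since $\mtcl Q_T$ adds no reals it is trivially $\,^\o\o$-bounding. Hence $\mtcl Q_T\in\Gamma$. (2) For each $\alpha<\o_1$ let $D_\alpha=\{t\in T\,:\,\height_T(t)\ge\alpha\}$; each $D_\alpha$ is dense in $\mtcl Q_T$ since $T$ has height $\o_1$ and no maximal nodes below level $\o_1$ can block cofinally (here one uses that $T$ is an $\o_1$-tree). Apply $\BFA_{\al_1}(\Gamma)$ to $\{D_\alpha\,:\,\alpha<\o_1\}$ to get a filter $G\subseteq\mtcl Q_T$ meeting every $D_\alpha$. Then $b=\bigcup G$ is a chain in $T$ meeting every level, i.e. an uncountable (in fact cofinal) branch of $T$ in $V$. (3) An $\o_1$-tree with a cofinal branch is not Suslin --- a cofinal branch is an uncountable chain --- contradiction. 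Since $T$ was an arbitrary Suslin tree, there are no Suslin trees.

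\textbf{Main obstacle.} The delicate point is verifying that $\mtcl Q_T=T$ genuinely lies in $({<}\o_1\mbox{-}\PR)$, i.e. is $\rho$-proper for \emph{every} indecomposable $\rho<\o_1$, not merely proper. The proof is the standard "Suslin tree forcing is ${<}\o_1$-proper" argument: given the chain $(N_i)_{i\le\rho}$ and $p\in N_0\cap T$, one constructs by recursion on $i\le\rho$ conditions $p_i$ with $p_0\ge p$, $p_i\in N_{i+1}$ being $(N_j,T)$-generic for all $j\le i$, at limit $i$ taking $p_i$ to be any node of $T$ of height $\sup_{j<i}(N_j\cap\o_1)=N_i\cap\o_1$ above all $p_j$ --- such a node exists and is automatically $(N_i,T)$-generic because every dense subset of $T$ in $N_i$ is predense below level $N_i\cap\o_1$ (as $T$ is Suslin, a maximal antichain in $N_i$ is countable hence a subset of $T\restriction(N_i\cap\o_1)$, so any node of height $N_i\cap\o_1$ extends a member of it). One must double-check the standard fact that a Suslin tree may be assumed to be \emph{Hausdorff/normal} (splitting, no "dead ends" below $\o_1$), which can be arranged by passing to a club of levels, so that the density of the $D_\alpha$ and the existence of the limit nodes are guaranteed. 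Modulo this routine but essential check, the argument closes. Alternatively, one may sidestep even this by using instead the club-shooting poset $\mtcl Q_{T}$ that adds a continuous increasing $\o_1$-sequence through a fixed maximal antichain's complement; it too is ${<}\o_1$-proper, $\,^\o\o$-bounding, preserves $\o_1$, and an application of $\BFA_{\al_1}$ produces a club witnessing that some maximal antichain of $T$ is countable, again contradicting Suslinity --- but the branch-adding version above is the most economical.
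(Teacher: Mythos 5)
Your proposal is correct and matches the paper's proof: both force with the (normal) Suslin tree $T$ itself, ordered as a tree, and apply $\BFA_{\al_1}$ to the (countable) maximal antichains refining the dense sets $D_\alpha$ of nodes of height $\geq\alpha$ to produce an uncountable branch in $V$. The only real difference is that your ``main obstacle'' dissolves at once: since $T$ has no uncountable antichains it is c.c.c., and for a c.c.c.\ forcing \emph{every} condition is $(N,\mtcl P)$-generic for every countable $N\prec H_\theta$, so $T$ is automatically $\rho$-proper for every $\rho<\o_1$ (and $^\o\o$-bounding because, being Suslin, it adds no new reals), which is exactly the one-line observation the paper makes and which renders your master-condition construction at limit stages unnecessary.
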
 

\begin{proof}
Suppose, towards a contradiction, that $T$ is a Suslin tree and the forcing axiom $\BFA_{\al_1}(({<}\o_1\mbox{-}\PR)\cap\,^\o\o\textsf{-bounding})$ 
holds. Without loss of generality we may assume that $T$ is a normal Suslin tree. We have that $T$ is a c.c.c.\ forcing 
which is $^\o\o$-bounding as in fact it does not add new reals. But forcing with $T$ adds an $\o_1$-branch through 
$T$. Hence, by $\BFA_{\al_1}(T)$, $T$ has an $\o_1$-branch and so it is not Suslin, which is a contradiction. 
\end{proof}

Recall that $\mathfrak d$ is the minimal cardinality of  a family $\mtcl F\sub\,^\o\o$ with the property that for every 
$f:\o\into \o$ there is some $f\in\mtcl F$ such that $g(n)<f(n)$ for a tail of $n<\o$.

\begin{lemma}\label{consespfa+++omegaomega-bounding}  Suppose $\delta$ is an inaccessible cardinal such that $V_\delta\prec V$. 
Let $\Gamma$ be any absolutely well-behaved class, defined with a parameter in $V_\delta$, and
 such that $\lambda_\Gamma=\omega_1$.
Suppose $\Gamma\sub\,^\o\o\mbox{-bounding}$. If $\BCFA(\Gamma)$ holds, 
then $\mathfrak d = \omega_1$.
\end{lemma}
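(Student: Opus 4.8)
The strategy is to mimic the argument for Lemma~\ref{consespfa+++}, replacing Club Bounding by the statement $\mathfrak d>\omega_1$ (equivalently $\mathfrak d=\omega_2$, since $\BCFA(\Gamma)$ implies the corresponding bounded forcing axiom and so $\neg\CH$). First I would pass to a generic extension $V[G]$ by a forcing that manifestly makes $\mathfrak d$ large in a way that is then preserved by everything in $\Gamma$. The natural candidate is to force $\mathfrak d=\omega_2$ by adding $\omega_2$-many dominating reals (or simply by forcing $\MA_{\omega_1}$ for $\sigma$-centered posets via a finite support ccc iteration over a model of $2^{\omega_1}=\omega_2$), choosing this preliminary forcing to lie in $V_\delta$ and to be $<\!\omega_1$-closed-friendly — in fact, since all our $\Gamma$ contain all countably closed forcings and are closed under two-step iterations, and since Lemma~\ref{consespfa+++} used $\Add(\omega_1,1)$, here one wants instead a forcing witnessing $\mathfrak d>\omega_1$ robustly. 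The cleanest choice is a finite support ccc iteration $\mathbb{P}$ of Hechler forcing of length $\omega_2$; $\mathbb{P}$ has the ccc and hence is $^\o\o$-bounding-destroying but, more to the point, the family of $\omega_2$ Hechler generics is a scale, and no $^\o\o$-bounding forcing can add a single real dominating all of them, so it certainly cannot shrink the dominating number below $\omega_2$.

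Next I would force with $\Gamma_\delta^{V[G]}$ over $V[G]$. By Theorem~\ref{thm:mainth1} (applied in $V[G]$, noting that $V_\delta\prec V$ gives $V_\delta^{V[G]}\prec V[G]$ when the preliminary forcing is chosen inside $V_\delta$, or more simply by first absorbing the preliminary forcing into the iteration $\Gamma_\delta$), the extension $(V[G])^{\Gamma_\delta}$ satisfies $\BCFA(\Gamma)$, and by Theorem~\ref{thm:mainth1}(\ref{thm:mainth1-4}) we have $H_{\omega_2}^{(V[G])^{\Gamma_\delta}}\prec H_{\omega_2}^{V^\Gamma}$, while $\BCFA(\Gamma)$ in $V$ gives $H_{\omega_2}^V\prec H_{\omega_2}^{V^\Gamma}$ (here I use that the preliminary forcing together with $\Gamma_\delta$ is itself in $\Gamma$, so $V^\Gamma$ computed in $V$ and in $V[G]$ agree on the relevant elementarity, via the Factor Lemma and the fact that $\Gamma$ is closed under two-step iterations). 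The key structural point is that $\mathfrak d=\omega_2$ is expressible by a sentence over $H_{\omega_2}$: $``\mathfrak d>\omega_1"$ says that for every $\omega_1$-sequence of reals there is a real not dominated by any of them, which is a $\Pi_2$-over-$H_{\omega_2}$ statement, and together with $\neg\CH$ (which $\BCFA(\Gamma)$ forces) this pins $\mathfrak d$ to $\omega_2$. Since $\Gamma_\delta^{V[G]}\subseteq\,^\o\o\mbox{-bounding}$ by hypothesis on $\Gamma$, forcing with it over $V[G]$ adds no new reals unbounded over the ground model's dominating family, hence $\mathfrak d=\omega_2$ persists into $(V[G])^{\Gamma_\delta}$; by the elementarity chain above, $\mathfrak d=\omega_2$, i.e.\ $\mathfrak d=\omega_1$ fails, in $V$. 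Running the contrapositive of the displayed claim: if $\BCFA(\Gamma)$ holds then $\mathfrak d>\omega_1$, so $\mathfrak d=\omega_1$ fails — but wait, the lemma asserts $\mathfrak d=\omega_1$; I must re-read the intended conclusion.

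\textbf{Re-examination.} Reading Lemma~\ref{consespfa+++omegaomega-bounding} again: since $\Gamma\subseteq\,^\o\o\mbox{-bounding}$, the point must be the \emph{opposite} of what the Hechler argument gives — one shows $\mathfrak d=\omega_1$, not $\mathfrak d=\omega_2$. The correct approach is therefore: start by forcing $\CH$ (or, since all these $\Gamma$ contain countably closed forcings, by forcing with $\Add(\omega_1,1)$ which is countably closed, adds no reals, and forces $\diamondsuit$, hence in particular leaves the reals of $V$ as all the reals and makes the ground-model reals a dominating family of size $\mathfrak c^V$; but we want $\mathfrak d=\omega_1$, so instead we want a preliminary countably closed forcing that makes $2^{\al_0}=\omega_1$, i.e.\ forces $\CH$). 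Concretely, force with $\Coll(\omega_1,2^{\al_0})$, which is countably closed (hence in $\Gamma_\delta$ for suitable $\delta$), adds no reals, and forces $\CH$, so in $V[G]$ we have $\mathfrak d^{V[G]}=\omega_1^{V[G]}$. Now force with $\Gamma_\delta^{V[G]}$: since $\Gamma_\delta^{V[G]}$ is $^\o\o$-bounding, the reals of $(V[G])^{\Gamma_\delta}$ are dominated by the reals of $V[G]$, of which there are only $\omega_1$-many; hence $\mathfrak d=\omega_1$ holds in $(V[G])^{\Gamma_\delta}$. By Theorem~\ref{thm:mainth1}(\ref{thm:mainth1-4}), $H_{\omega_2}^{(V[G])^{\Gamma_\delta}}\prec H_{\omega_2}^{V^\Gamma}$ and $(V[G])^{\Gamma_\delta}\models\BCFA(\Gamma)$; and $\BCFA(\Gamma)$ in $V$ gives $H_{\omega_2}^V\prec H_{\omega_2}^{V^\Gamma}$. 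Since $``\mathfrak d=\omega_1"$ is expressible over $H_{\omega_2}$ (it is the $\Sigma_2$-over-$H_{\omega_2}$ statement asserting the existence of an $\omega_1$-indexed dominating family, using that $^\o\o\subseteq H_{\omega_2}$), it reflects from $H_{\omega_2}^{V^\Gamma}$ down to $H_{\omega_2}^V$. The main obstacle I anticipate is purely bookkeeping: one must check that the preliminary countably closed collapse composed with $\Gamma_\delta$ is itself a member of $\Gamma$ (closure under countably closed forcings and two-step iterations, clauses \ref{def:wellbeh}(\ref{def:wellbeh5}) and \ref{def:wellbeh}(\ref{def:wellbeh2}) of Definition~\ref{def:wellbeh}), and also inside $\Gamma_\delta$ by inaccessibility of $\delta$, so that the two elementarity statements $H_{\omega_2}^V\prec H_{\omega_2}^{V^\Gamma}$ and $H_{\omega_2}^{(V[G])^{\Gamma_\delta}}\prec H_{\omega_2}^{V^\Gamma}$ refer to the same $V^\Gamma$ — this is exactly the kind of manipulation already carried out in the proof of Lemma~\ref{consespfa+++}, so it should go through verbatim with $\Add(\omega_1,1)$ replaced by $\Coll(\omega_1, 2^{\al_0})$ and ``Club Bounding fails'' replaced by ``$\mathfrak d=\omega_1$''.
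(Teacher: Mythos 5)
Your argument (after the self-corrected false start in the first paragraph) is essentially the paper's: $\Gamma_\delta$ lies in $^\o\o\mbox{-bounding}$, so the ground-model reals remain a dominating family, that family has size $\aleph_1$ in the extension, and ``$\mathfrak d=\omega_1$'' is expressible over $H_{\omega_2}$ and so reflects from $H_{\omega_2}^{V^{\Gamma_\delta}}\prec H_{\omega_2}^{V^\Gamma}$ down to $H_{\omega_2}^V$. The only difference is that your preliminary $\Coll(\omega_1,2^{\aleph_0})$ step is unnecessary: the paper observes that $\Gamma_\delta$ itself already collapses $(2^{\aleph_0})^V$ to $\aleph_1$ (it forces $\delta=\omega_2$), so one forces with $\Gamma_\delta$ directly over $V$ and thereby avoids both the two-step bookkeeping and the question of whether $\Gamma\subseteq\,^\o\o\mbox{-bounding}$ persists to the intermediate extension.
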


\begin{proof}
We know that $\Gamma_\delta$ forces $\BCFA(\Gamma)$, and collapses $(2^{\al_0})^{V}$ to $\al_1$.  Since $\Gamma\sub\,^\o\o\mbox{-bounding}$, we thus have that 
$V^{\Gamma_\delta}\models \mathfrak d=\o_1$ . 
But `$\mathfrak d=\o_1$' is expressible over $H_{\o_2}$, and therefore $V\models\mathfrak d =\o_1$ by 
the same absoluteness argument as in the proof of Lemma \ref{consespfa+++}.
\end{proof}

\begin{lemma}\label{FAPRcapSTPvsd}
$\BFA_{\al_1}(({<}\o_1\mbox{-}\PR)\cap\STP)$ implies $\mathfrak d>\omega_1$.
\end{lemma}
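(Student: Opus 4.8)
\textbf{Plan for the proof of Lemma \ref{FAPRcapSTPvsd}.}

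The idea is to show that the forcing axiom $\BFA_{\al_1}(({<}\o_1\mbox{-}\PR)\cap\STP)$ produces a witness to $\mathfrak d>\o_1$, i.e., that no family of $\al_1$-many functions in $^\o\o$ can be dominating. The natural strategy is: given an $\al_1$-indexed family $\mtcl F=\{f_\xi\,:\,\xi<\o_1\}\sub\,^\o\o$, exhibit a single forcing notion $\mtcl P$ in the class $({<}\o_1\mbox{-}\PR)\cap\STP$, together with $\al_1$-many dense sets, such that a sufficiently generic filter produces a function $g:\o\into\o$ not dominated by any $f_\xi$. The obvious candidate for $\mtcl P$ is Hechler forcing (finite conditions $(s, F)$ with $s\in\,^{{<}\o}\o$ and $F:\o\into\o$, ordered by end-extension of the stem and domination of the side condition), which generically adds a dominating real; but in fact all we need is a real $g$ \emph{eventually different from}, or rather \emph{infinitely often above}, each $f_\xi$, so even simpler ccc forcings (e.g.\ the one adding an unbounded real by finite conditions) would do. First I would recall that these forcings are ccc and do not add new reals is false for Hechler, so I would instead use a forcing that does not add reals only if needed; the cleanest route is to note that a ccc $\sigma$-centered forcing which is $^\o\o$-bounding... but Hechler is not $^\o\o$-bounding. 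So the correct move is: use a forcing $\mtcl P$ that is ccc and \emph{preserves Suslin trees} and is \emph{${<}\o_1$-proper}, and which adds an unbounded real.

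The key observation making this work is that \emph{ccc} forcings are ${<}\o_1$-proper (indeed $\rho$-proper for every indecomposable $\rho<\o_1$, since a ccc forcing is $(N,\mtcl P)$-generic below any condition for every countable $N$, hence $(N_i,\mtcl P)$-generic for all $i\leq\rho$ simultaneously), so membership in ${<}\o_1\mbox{-}\PR$ is automatic. What remains is to find a ccc forcing that (i) adds a real $g$ which is not dominated by any ground-model real, and (ii) preserves Suslin trees. For (ii), the standard fact is that any ccc forcing with property K, or more specifically the finite-condition forcing for adding an unbounded real — which can be taken to be $\sigma$-centered — preserves Suslin trees; I would cite or reprove that $\sigma$-centered forcings preserve Suslin trees (this is classical: a Suslin tree remains ccc, hence Suslin, after $\sigma$-centered forcing, by a fusion/Ramsey-style argument or by noting that $\sigma$-centered forcing does not add uncountable branches and the tree stays Suslin). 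So let $\mtcl P$ be the $\sigma$-centered forcing whose conditions are pairs $(s, n)$ with $s\in\,^{{<}\o}\o$ and $n<\o$, ordered by $(s',n')\leq(s,n)$ iff $s'\supseteq s$, $n'\geq n$, and $s'(k)\geq k$ for all $k\in\dom(s')\setminus\dom(s)$ with $k\geq n$; this adds a real $g=\bigcup\{s\,:\,(s,n)\in G\}$ with $g(k)\to\infty$. Then for each $\xi<\o_1$ let $D_\xi=\{(s,n)\,:\,\exists k\geq\dom(s)\text{ forced }g(k)>f_\xi(k)\}$ — more precisely, the dense set ensuring $\{k\,:\,g(k)>f_\xi(k)\}$ is infinite — which is dense in $\mtcl P$ because we may always extend the stem by a value exceeding $f_\xi$ at the next coordinate.

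Then I would apply $\BFA_{\al_1}(({<}\o_1\mbox{-}\PR)\cap\STP)$ to $\mtcl P$ (which lies in the class, being ccc hence ${<}\o_1$-proper, and Suslin-tree-preserving) and the $\al_1$-many dense sets $\{D_\xi\,:\,\xi<\o_1\}$ together with, for each $k<\o$, the dense set ensuring $g(k)$ is decided. This yields a filter $G$ meeting all of them, and hence a function $g:\o\into\o$ in $V$ (since the $g(k)$-deciding dense sets are met) with the property that $\{k\,:\,g(k)>f_\xi(k)\}$ is infinite for every $\xi<\o_1$, i.e., no $f_\xi$ dominates $g$. Therefore $\mtcl F$ is not dominating, and since $\mtcl F$ was an arbitrary $\al_1$-sized family, $\mathfrak d>\o_1$.

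\textbf{Main obstacle.} The main point requiring care is verifying that the chosen ccc forcing genuinely preserves Suslin trees — this is the only clause of membership in $({<}\o_1\mbox{-}\PR)\cap\STP$ that is not completely routine. I would handle it by taking $\mtcl P$ to be $\sigma$-centered (a finite union of centered pieces indexed by stems), and invoking the classical fact that $\sigma$-centered (indeed, productively ccc together with the tree) forcing preserves Suslin trees: if $T$ is Suslin and $\mtcl P$ is $\sigma$-centered, then $T$ remains ccc in $V^{\mtcl P}$ (a putative uncountable antichain of $T$ in $V^{\mtcl P}$ would, by centeredness, project to an uncountable antichain of $T$ in $V$), and a Suslin tree that stays ccc and $\o_1$-complete-below-nothing stays Suslin because $\mtcl P$ adds no new $\o_1$-branches (again by centeredness). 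Alternatively, one can sidestep the Suslin issue entirely by using $\Add(\o,1)$-style amalgamation, but the $\sigma$-centered argument is cleanest. The secondary, purely bookkeeping, point is to phrase the dense sets $D_\xi$ correctly so that meeting all of them forces \emph{infinitely many} coordinates with $g(k)>f_\xi(k)$, not merely one; this is achieved by indexing, for each $\xi$ and each $m<\o$, a dense set $D_{\xi,m}=\{(s,n)\,:\,\exists k\in\dom(s)\,(k>m\wedge s(k)>f_\xi(k))\}$, giving $\al_1$-many dense sets in total.
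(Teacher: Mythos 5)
Your argument is correct and is essentially the paper's proof: the paper simply observes that Cohen forcing, being countable, is ccc (hence ${<}\o_1$-proper), preserves Suslin trees, and adds a real not dominated by any prescribed $\al_1$-sized family, so $\BFA_{\al_1}$ applied to it with the obvious dense sets yields $\mathfrak d>\o_1$. Your $\sigma$-centered variant and the preservation argument for Suslin trees are fine, but all the extra machinery (Hechler, the side conditions $(s,n)$) is unnecessary --- plain Cohen forcing ${}^{<\o}\o$ with the dense sets $D_{\xi,m}$ already does the job.
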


\begin{proof} This is immediate since Cohen forcing, being countable, preserves Suslin trees.\end{proof}





\begin{lemma}\label{bfa-o1-sp-stp-oo-bd-Club-Bounding} Suppose $\BFA_{\al_1}(({<}\o_1\mbox{-}\SP)\cap \STP\cap\,^\o\o\mbox{-bounding})$ holds and 
there is a measurable cardinal. Then $\psi_{AC}$ holds, and therefore Club Bounding holds as well. 
\end{lemma}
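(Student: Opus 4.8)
\textbf{Proof proposal for Lemma \ref{bfa-o1-sp-stp-oo-bd-Club-Bounding}.}

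The plan is to show that the forcing axiom at hand gives us enough instances of $\psi_{AC}$, via the freezing/collapsing posets $\mtcl Q_{\k, S, T}$ of Lemma \ref{psiAC-measurable}, and then to derive Club Bounding as a known consequence of $\psi_{AC}$. First I would fix a measurable cardinal $\k$ and arbitrary stationary, co-stationary subsets $S$, $T$ of $\o_1$; the goal is to produce $\a<\o_2$ and a club $C$ of $[\a]^{\al_0}$ such that for every $X\in C$, $X\cap\o_1\in S$ if and only if $\ot(X)\in T$. By Lemma \ref{psiAC-measurable}, the poset $\mtcl Q=\mtcl Q_{\k, S, T}$ is ${<}\o_1$-semiproper, preserves Suslin trees, and adds no new reals; hence it is $^\o\o$-bounding (adding no new reals trivially implies $^\o\o$-bounding), so $\mtcl Q\in ({<}\o_1\mbox{-}\SP)\cap\STP\cap\,^\o\o\mbox{-bounding}$. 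Moreover $\mtcl Q$ forces the existence of a $\sub$-increasing $\sub$-continuous enumeration $(M^G_i)_{i<\o_1}$ of a club of $[H_\k^V]^{\al_0}$ such that for limit $i$, $M^G_i\cap\o_1\in S$ iff $\ot(M^G_i\cap\k)\in T$.

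Next I would extract from this a suitable family of $\al_1$-many dense sets of $\mtcl Q$ whose meeting produces the desired $\a$ and $C$. The point is that the generic object $(M^G_i)_{i<\o_1}$ is really built by initial segments, and for each $\gamma<\o_1$ and each ordinal (a name for) a candidate bound, the set of conditions $p\in\mtcl Q$ deciding enough of the enumeration is dense; by arranging for the sequence of the $\ot(M^p_i\cap\k)$ to be cofinal in $\k$ we can read off an $\a<\o_2$ and witness $C\sub[\a]^{\al_0}$ in $V$. More precisely, the usual way this is packaged (cf.\ the standard derivation of instances of reflection principles from forcing axioms): a condition $p\in\mtcl Q$ of length $\n_0+1$ with $M^p_{\n_0}$ collapsed by the generic gives us a countable elementary submodel; working instead with $\mtcl Q_{\k,S,T}$ directly, the collection $\{D_\gamma : \gamma<\o_1\}$ of sets $D_\gamma=\{p\in\mtcl Q: \operatorname{length}(p)\ge\gamma\}$, together with the sets $E_\delta$ ($\delta<\k$) of conditions $p$ with some $M^p_i\ni\delta$, are $\al_1$-many dense subsets of $\mtcl Q$; applying $\FA_{\al_1}$ to them yields a filter $F$ meeting them all, from which $C=\{M^p_i : p\in F,\, i\in\dom(p)\}$ is the $\in$-increasing enumeration of a club of $[\k\cap\bigcup F]^{\al_0}$ with the property that $X\cap\o_1\in S$ iff $\ot(X)\in T$ for each $X\in C$. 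Since $\k$ gets collapsed to $\o_1$ along any generic for $\mtcl Q$ the ordinal $\a:=\operatorname{ot}(\bigcup C)$ is $<\o_2$ in $V$, and $C$ witnesses $\psi_{AC}$ for this $S,T$. As $S,T$ were arbitrary, $\psi_{AC}$ holds in $V$.

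Finally I would invoke the known implication $\psi_{AC}\Rao$ Club Bounding. This is a standard fact of Woodin \cite{WoodinBOOK}: given $f:\o_1\into\o_1$, pick $\a<\o_2$ and a surjection $\p:\o_1\into\a$; applying $\psi_{AC}$ with $T$ a suitable stationary/co-stationary set encoding $f$ relative to the canonical function of $\a$ produces a club on which $f$ is dominated by every canonical function for some ordinal below $\o_2$. (Alternatively, this implication is already used implicitly in the paper and may simply be cited.) The main obstacle I anticipate is the bookkeeping in the middle step: verifying that the collection of dense sets witnessing ``enough of $(M^G_i)$ has been decided, and far enough up in $\k$'' genuinely has size $\al_1$ and that meeting them recovers an honest club of $[\a]^{\al_0}$ with the $S$-vs-$T$ correspondence intact, exactly as in the usual derivations of $\psi_{AC}$ (and its relatives) from semiproper/$\SSP$ forcing axioms. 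Everything else---membership of $\mtcl Q$ in the class $({<}\o_1\mbox{-}\SP)\cap\STP\cap\,^\o\o\mbox{-bounding}$, and the $\psi_{AC}\Rao$ Club Bounding step---is either Lemma \ref{psiAC-measurable} verbatim or a cited black box.
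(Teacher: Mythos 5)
Your overall strategy is the one the paper takes: Lemma \ref{psiAC-measurable} supplies, for each measurable $\k$ and each stationary co-stationary pair $S,T$, a poset $\mtcl Q_{\k,S,T}$ lying in $({<}\o_1\mbox{-}\SP)\cap\STP\cap\,^\o\o\mbox{-bounding}$ whose generic witnesses $\psi_{AC}$ for $S,T$, and the implication from $\psi_{AC}$ to Club Bounding is simply cited (the paper points to Fact 3.1 of \cite{Aspero-Welch}). However, your middle step has problems. First, the family $\{E_\d:\d<\k\}$ has size $\k$, not $\al_1$, so no forcing axiom at $\al_1$ lets you meet all of these dense sets; in particular you cannot arrange that the ordinals $\ot(M^p_i\cap\k)$ are cofinal in $\k$ (since $\cf(\k)=\k>\al_1$ this is impossible for any filter meeting only $\al_1$-many dense sets), and the claim that $\a=\ot(\bigcup C)<\o_2$ ``since $\k$ gets collapsed'' conflates the generic extension with the filter you actually produce. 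This part is repairable: the $E_\d$ are unnecessary, because the union of the chain obtained from the $D_\g$ alone already has cardinality $\al_1$ (hence order type $<\o_2$) and automatically contains $\o_1$ (the ordinals $M^p_i\cap\o_1$ increase continuously to $\o_1$ along the chain), and transitively collapsing it preserves both $X\cap\o_1$ and $\ot(X)$.

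The more serious gap is that the hypothesis is the \emph{bounded} forcing axiom $\BFA_{\al_1}$, whose antichain formulation applies only to maximal antichains of size at most $\al_1$; in $\mtcl Q_{\k,S,T}$ two distinct conditions of the same length are incompatible, so already the length-one conditions form an antichain of size greater than $\al_1$, and your dense sets $D_\g$ do not refine to small antichains. As written, your argument derives $\psi_{AC}$ from the unbounded $\FA_{\al_1}$ of the class, a strictly stronger hypothesis than the one in the lemma. The derivation consonant with the paper's framework uses the characterization of $\BFA_{\al_1}(\Gamma)$ as $\Sigma_1$-generic absoluteness for $H_{\o_2}$: for fixed $S,T$, the existence of some $\a<\o_2$ together with a $\sub$-continuous increasing $\o_1$-chain witnessing $\psi_{AC}(S,T)$ is a $\Sigma_1$ statement over $H_{\o_2}$ in the parameters $S,T$, since the witness is a single object of $H_{\o_2}$; the poset $\mtcl Q_{\k,S,T}$ forces this statement (with $\a=\k$, which has cardinality $\al_1$ in the extension), and $\BFA_{\al_1}$ pulls it back to $V$. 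With that substitution, and with the step from $\psi_{AC}$ to Club Bounding cited rather than sketched (your sketch of that implication is not an argument), your proof coincides with the paper's.
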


\begin{proof} $\BFA_{\al_1}(({<}\o_1\mbox{-}\SP)\cap \STP\cap\,^\o\o\mbox{-bounding})$ implies $\psi_{AC}$ by Lemma \ref{psiAC-measurable}. To see that $\psi_{AC}$ implies Club Bounding, see for example Fact 3.1 in \cite{Aspero-Welch}.
\end{proof}

The following lemma is an immediate consequence of Lemma \ref{bfa-o1-sp-stp-oo-bd-Club-Bounding}.

\begin{lemma}\label{yielding-psiAC} Suppose $\delta$ is an inaccessible cardinal such that $V_\delta\prec V$. Suppose there is a proper class of measurable cardinals. Let $\Gamma$ be an absolutely well-behaved class with $\lambda_\Gamma=\omega_1$ defined from a parameter in $V_\delta$  and such that $({<}\o_1\mbox{-}\SP)\cap \STP\cap\,^\o\o\mbox{-bounding}\subseteq \Gamma$ holds in any generic extension by a member of $\Gamma$. If $\BCFA(\Gamma)$ holds, then $\psi_{AC}$, and therefore also Club Bounding, hold as well. 
\end{lemma}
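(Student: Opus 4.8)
The statement to prove is Lemma \ref{yielding-psiAC}, which asserts: under the hypotheses that there is an inaccessible $\delta$ with $V_\delta\prec V$, a proper class of measurable cardinals, and $\Gamma$ an absolutely well-behaved class with $\lambda_\Gamma=\omega_1$ defined from a parameter in $V_\delta$ such that $({<}\o_1\mbox{-}\SP)\cap\STP\cap\,^\o\o\mbox{-bounding}\subseteq\Gamma$ holds in any generic extension by a member of $\Gamma$, then $\BCFA(\Gamma)$ implies $\psi_{AC}$ and hence Club Bounding.

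The plan is to run the same absoluteness argument used in Lemmas \ref{consespfa+++} and \ref{consespfa+++omegaomega-bounding}. First I would fix an inaccessible $\delta$ with $V_\delta\prec V$. Since $\Gamma$ is absolutely well-behaved and $\lambda_\Gamma=\omega_1$, Theorem \ref{thm:mainth1} applies: $\Gamma_\delta\in\Gamma$, and forcing with $\Gamma_\delta$ makes $\delta$ the successor of $\omega_1$, while if additionally $\delta$ is taken to be supercompact one even gets $\FA_{\omega_1}(\Gamma)$. Here I only need a weaker conclusion, so an inaccessible $\delta$ with $V_\delta\prec V$ suffices together with the clause giving $\BFA(\Gamma)$. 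Concretely: by Theorem \ref{thm:mainth1}(\ref{thm:mainth1-3}) (using $\lambda_\Gamma=\omega_1$), $\Gamma_\delta$ forces $\BFA(\Gamma)$, and by Theorem \ref{thm:mainth1}(\ref{thm:mainth1-4}), $H_{\omega_2}^{V^{\Gamma_\delta}}\prec H_{\omega_2}^{V^\Gamma}$; also $H_{\omega_2}^V\prec H_{\omega_2}^{V^\Gamma}$ since we are assuming $\BCFA(\Gamma)$. Now in $V^{\Gamma_\delta}$ we have, by the inclusion hypothesis applied to the generic extension by $\Gamma_\delta\in\Gamma$, that $\BFA_{\omega_1}(({<}\o_1\mbox{-}\SP)\cap\STP\cap\,^\o\o\mbox{-bounding})$ holds. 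Since there is a proper class of measurable cardinals in $V$, and $\Gamma_\delta$ is a set-forcing, there is still a measurable cardinal in $V^{\Gamma_\delta}$ (indeed measurability above $|\Gamma_\delta|$ is preserved). Therefore Lemma \ref{bfa-o1-sp-stp-oo-bd-Club-Bounding} applies in $V^{\Gamma_\delta}$, giving $\psi_{AC}$ in $V^{\Gamma_\delta}$, and hence Club Bounding there.

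The final step is the absoluteness transfer. The statement $\psi_{AC}$ is (essentially) a $\Pi_2$ statement over $H_{\omega_2}$ — it says: for all stationary, co-stationary $S,T\subseteq\omega_1$ there exist $\alpha<\omega_2$ and a club $C$ of $[\alpha]^{\aleph_0}$ with the stated correspondence — and likewise Club Bounding is expressible over $H_{\omega_2}$. Since $H_{\omega_2}^{V^{\Gamma_\delta}}\prec H_{\omega_2}^{V^\Gamma}$ and $H_{\omega_2}^V\prec H_{\omega_2}^{V^\Gamma}$, any first-order statement of $H_{\omega_2}$ true in $V^{\Gamma_\delta}$ is true in $V^\Gamma$ and hence in $V$. (One should be a touch careful here: $\psi_{AC}$ quantifies over subsets of $\omega_1$, which live in $H_{\omega_2}$, so the full statement really is first-order over $(H_{\omega_2};\in)$, and elementarity suffices; the same goes for Club Bounding, which quantifies over functions $\omega_1\to\omega_1$ and ordinals $<\omega_2$.) Thus $\psi_{AC}$ and Club Bounding hold in $V$, as desired. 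Alternatively, and even more simply, one can skip $\psi_{AC}$ as an intermediate and directly note that Lemma \ref{bfa-o1-sp-stp-oo-bd-Club-Bounding} gives Club Bounding in $V^{\Gamma_\delta}$, which transfers down to $V$; but stating $\psi_{AC}$ first matches the lemma's conclusion.

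The only mild obstacle is bookkeeping about what survives forcing with $\Gamma_\delta$: we must check that (i) a measurable cardinal of $V$ remains measurable in $V^{\Gamma_\delta}$ — this is clear since $\Gamma_\delta$ has size $<\delta$ and there are measurables above $\delta$, and small forcing preserves measurability (Lévy–Solovay); and (ii) the class inclusion $({<}\o_1\mbox{-}\SP)\cap\STP\cap\,^\o\o\mbox{-bounding}\subseteq\Gamma$ holds in $V^{\Gamma_\delta}$, which is exactly the hypothesis of the lemma (it is assumed to hold "in any generic extension by a member of $\Gamma$", and $\Gamma_\delta\in\Gamma$ by Theorem \ref{thm:mainth1}(\ref{thm:mainth1-1})). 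With these two points in hand the argument is a direct replay of the proof of Lemma \ref{consespfa+++}. No genuinely new idea is needed beyond correctly invoking Theorem \ref{thm:mainth1}, Lemma \ref{bfa-o1-sp-stp-oo-bd-Club-Bounding}, and $H_{\omega_2}$-elementarity.
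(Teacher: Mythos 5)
Your proof is correct and follows essentially the same route as the paper's: force with $\Gamma_\delta$ to get $\BFA_{\aleph_1}(\Gamma)$ and hence, via the inclusion hypothesis, $\BFA_{\aleph_1}(({<}\o_1\mbox{-}\SP)\cap \STP\cap\,^\o\o\mbox{-bounding})$ in the extension, apply Lemma \ref{bfa-o1-sp-stp-oo-bd-Club-Bounding} there, and transfer $\psi_{AC}$ back to $V$ by $H_{\omega_2}$-elementarity. Your additional bookkeeping (L\'evy--Solovay for the surviving measurable, and the observation that $\psi_{AC}$ is first-order over $(H_{\omega_2};\in)$) is implicit in the paper's terser argument but entirely appropriate.
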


\begin{proof} $\Gamma_\delta$ forces $\BFA_{\al_1}(\Gamma)$ and therefore also $\BFA_{\al_1}(({<}\o_1\mbox{-}\SP)\cap \STP\cap\,^\o\o\mbox{-bounding})$. Hence, $H_{\o_2}^{V^{\Gamma_\delta}}\models\psi_{AC}$ by Lemma \ref{bfa-o1-sp-stp-oo-bd-Club-Bounding}. But then $V\models \psi_{AC}$ by the usual absoluteness argument. \end{proof}

\begin{remark}\label{remark-on-cons-strength} The conclusion, in Lemma \ref{yielding-psiAC}, that Club Bounding holds is equiconsistent with the existence of an inaccessible limit of measurable cardinals \cite{Deiser-Donder}. In fact, if there is no inner model with an inaccessible limit of measurable cardinals, then Club Bounding fails.
\end{remark}

Lemma \ref{bfa-o1-pr-wdiamond} follows trivially from the fact that $\BFA_{\al_1}({<}\o_1\mbox{-}\PR)$ implies $\MA_{\o_1}$.

\begin{lemma}\label{bfa-o1-pr-wdiamond} $\BFA_{\al_1}({<}\o_1\mbox{-}\PR)$ implies $2^{\al_0}=2^{\al_1}$. 
\end{lemma}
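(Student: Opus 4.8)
\textbf{Proof plan for Lemma \ref{bfa-o1-pr-wdiamond}.}

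The plan is to derive the statement $2^{\al_0} = 2^{\al_1}$ from Martin's Axiom at $\al_1$, which in turn follows from $\BFA_{\al_1}({<}\o_1\mbox{-}\PR)$. First I would note that ${<}\o_1\mbox{-}\PR\subseteq\PR$, so $\BFA_{\al_1}({<}\o_1\mbox{-}\PR)$ implies $\BFA_{\al_1}(\PR)=\BPFA$, whence $\MA_{\o_1}$ holds. (Alternatively, and more directly, every c.c.c.\ poset is $\rho$-proper for every indecomposable $\rho<\o_1$, so $\MA_{\o_1}$ follows already from the bounded forcing axiom restricted to the c.c.c.\ posets inside ${<}\o_1\mbox{-}\PR$; this avoids invoking the full strength of $\BPFA$.) From $\MA_{\o_1}$ one gets $2^{\al_0} > \al_1$, and in particular $2^{\al_0} = 2^{<2^{\al_0}}$, but the cleanest route to the displayed equality is the classical fact that $\MA_{\al_1}$ implies $2^{\al_0} = 2^{\al_1}$.

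The key step is then the standard argument that $\MA_{\al_1}$ (in fact $\MA_\kappa$ for any $\kappa$) implies $2^{\al_0} = 2^\kappa$: given any $A\subseteq \al_1$, one uses an almost disjoint family $\{a_\alpha : \alpha<\al_1\}$ of infinite subsets of $\o$ (which exists in $\ZFC$) and the almost-disjoint coding forcing $\mtcl P_A$, whose conditions are pairs $(s, F)$ with $s\in[\o]^{{<}\o}$ and $F\in[A]^{{<}\o}$, ordered so that extending the second coordinate forbids further elements of $a_\alpha$ for $\alpha\in F$. This poset is c.c.c.\ (indeed $\sigma$-centered), hence lies in ${<}\o_1\mbox{-}\PR$, and a generic for the family of $\al_1$-many dense sets ``$\alpha$ is decided'' (for $\alpha<\al_1$) and ``$s$ meets $a_\alpha$'' (for $\alpha\in A$) produces a real $r\subseteq\o$ such that $\alpha\in A$ if and only if $r\cap a_\alpha$ is infinite. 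Thus the map $A\mapsto r_A$ is an injection of $\pow{\al_1}$ into $\pow{\o}$, giving $2^{\al_1}\leq 2^{\al_0}$; the reverse inequality is trivial, so $2^{\al_0}=2^{\al_1}$.

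Since none of this is new — it is entirely classical — the only thing that needs to be checked for the lemma as stated is that the relevant coding posets genuinely belong to ${<}\o_1\mbox{-}\PR$, which is immediate from Fact~\ref{fac:Birkhoffproper} together with the observation (already implicit in the discussion preceding Lemma~\ref{presproper}) that c.c.c.\ posets are $\rho$-proper for all indecomposable $\rho<\o_1$: a generic condition over a continuous $\rho$-chain $(N_i)_{i\leq\rho}$ of countable elementary submodels is simply any condition below $p$, since c.c.c.\ guarantees that every maximal antichain in any $N_i$ is countable and hence contained in $N_i$, so every condition is $(N_i,\mtcl P)$-generic. I do not expect any real obstacle here; the lemma is stated as following ``trivially'', and the content of the plan is just to record which classical theorem is being invoked and why its hypothesis is met.
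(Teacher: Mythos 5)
Your proposal follows the same route as the paper, which simply observes that $\BFA_{\al_1}({<}\o_1\mbox{-}\PR)$ implies $\MA_{\o_1}$ and then appeals to the classical fact that $\MA_{\al_1}$ implies $2^{\al_0}=2^{\al_1}$; your write-up usefully fills in the almost-disjoint-coding argument behind that classical fact. One correction, though: your first sentence has the monotonicity of bounded forcing axioms backwards. From ${<}\o_1\mbox{-}\PR\subseteq\PR$ one gets that $\BPFA$ implies $\BFA_{\al_1}({<}\o_1\mbox{-}\PR)$, \emph{not} the converse, so you cannot derive $\BPFA$ from the hypothesis of the lemma. Fortunately your parenthetical alternative is the correct (and intended) argument: every c.c.c.\ poset is $\rho$-proper for all indecomposable $\rho<\o_1$ (every condition is $(N,\mtcl P)$-generic for such posets), hence the c.c.c.\ posets sit inside ${<}\o_1\mbox{-}\PR$ and $\BFA_{\al_1}({<}\o_1\mbox{-}\PR)$ yields $\MA_{\al_1}$ directly. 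With the first sentence deleted and the parenthetical promoted to the main argument, the proof is complete and agrees with the paper's.
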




A partial order $\mtbb P$ is said to have the \emph{$\sigma$-bounded chain condition} if 
$\mtbb P=\bigcup_{n<\o}\mtbb P_n$ and for each $n$ there is some $k_n<\o$ such that for every 
$X\in [\mtbb P_n]^{k_n}$ there are distinct $p$, $p'\in\mtbb P_n$ which are compatible in $\mtbb P$. Also, 
a partial order $\mtbb P$ is \emph{Knaster} if every uncountable subset of $\mtbb P$ contains an uncountable 
subset consisting of pairwise compatible conditions in $\mtbb P$.  

It is easy to see, and a well-known fact, that random forcing preserves Suslin trees. This follows from the fact that 
random forcing has the $\s$-bounded chain condition, that every forcing with the $\s$-bounded chain condition is 
Knaster, and that every Knaster forcing preserves Suslin trees.

Lemma \ref{random} follows from the above, together with the fact that random forcing is $^\o\o$-bounding and 
adds  a new real. 

\begin{lemma}\label{random} $\BFA_{\al_1}(({<}\o_1\mbox{-}\PR)\cap \STP\cap\,^\o\o\textsf{-bounding})$ implies $\lnot\CH$.
\end{lemma}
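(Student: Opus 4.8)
The plan is to refute $\CH$ using a single forcing in the class, namely random forcing $\bool{B}$ (the measure algebra on $2^\o$), via its bounded forcing axiom. First I would verify that $\bool{B}\in ({<}\o_1\mbox{-}\PR)\cap\STP\cap\,^\o\o\textsf{-bounding}$. Random forcing is c.c.c., and any c.c.c.\ forcing is $\rho$-proper for every indecomposable $\rho<\o_1$: if $N\prec H_\t$ is countable with $\bool{B}\in N$, then every maximal antichain of $\bool{B}$ belonging to $N$ is countable, hence a subset of $N$, so every condition is $(N,\bool{B})$-generic; in particular $\bool{B}\in{<}\o_1\mbox{-}\PR$. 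It is a standard fact that random forcing is $^\o\o$-bounding. And $\bool{B}$ has the $\s$-bounded chain condition, hence is Knaster, hence preserves Suslin trees, as recalled just before the statement of the Lemma; so $\bool{B}\in\STP$. Consequently $\BFA_{\al_1}(\{\bool{B}\})$ holds.

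Now I would run the usual genericity argument, this time \emph{inside $V$}. Assume, towards a contradiction, that $\CH$ holds, and fix an enumeration $\ap{s_\a:\a<\o_1}$ of $2^\o$. For $n<\o$ and $i<2$ write $[x(n)=i]=\bp{x\in 2^\o:x(n)=i}\in\bool{B}$, put $E_n=\bp{[x(n)=0],[x(n)=1]}$, and for $\a<\o_1$ and $m<\o$ put $[s_\a\restriction m]=\bigwedge_{k<m}[x(k)=s_\a(k)]$. Let
\[
D_\a=\bp{B\in\bool{B}^+:\ B\wedge[s_\a\restriction m]=0\text{ for some }m<\o}.
\]
Since $\bigcap_m[s_\a\restriction m]$ is the null set $\bp{s_\a}$, for every $B\in\bool{B}^+$ one has $\mu(B\setminus[s_\a\restriction m])\to\mu(B)>0$, so $D_\a$ is dense, and it is clearly open. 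Choose a maximal antichain $A_\a\subseteq D_\a$; being an antichain of the c.c.c.\ algebra $\bool{B}$ it is countable, and (being maximal in a dense set) it is a maximal antichain of $\bool{B}$. Apply $\BFA_{\al_1}(\{\bool{B}\})$ to the family $\bp{A_\a:\a<\o_1}\cup\bp{E_n:n<\o}$ — of size $\al_1$, with all members countable — to obtain a filter $G\subseteq\bool{B}$ meeting each of these maximal antichains, and define $r\in 2^\o$ by $r(n)=i$ iff $[x(n)=i]\in G$ (well defined since $G$ is a proper filter meeting each $E_n$).

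Finally I would check that $r\neq s_\a$ for every $\a$. Fix $\a$ and pick $B\in G\cap A_\a$, say with $B\wedge[s_\a\restriction m]=0$. If $[x(k)=s_\a(k)]\in G$ for every $k<m$, then $[s_\a\restriction m]\in G$, since filters are closed under finite meets, so $B\wedge[s_\a\restriction m]$ would be a nonzero member of $G$, which is absurd. Hence $[x(k)=s_\a(k)]\notin G$ for some $k<m$; as $G$ meets $E_k$, this yields $[x(k)=1-s_\a(k)]\in G$, i.e.\ $r(k)\neq s_\a(k)$, so $r\neq s_\a$. Thus $r\in 2^\o$ lies outside the enumeration $\ap{s_\a:\a<\o_1}$, contradicting $\CH$, and therefore $\lnot\CH$ holds. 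As the text's remark preceding the statement signals, there is no real obstacle here; the only points deserving care are confirming the membership of random forcing in the class (notably via the already-cited chain $\s$-bounded chain condition $\Rightarrow$ Knaster $\Rightarrow$ preservation of Suslin trees) and arranging the $\al_1$-many maximal antichains so that meeting all of them forces the resulting real $r$ to differ from every $s_\a$.
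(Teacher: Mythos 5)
Your proof is correct and is exactly the argument the paper intends: verify that random forcing lies in $({<}\o_1\mbox{-}\PR)\cap\STP\cap\,^\o\o\textsf{-bounding}$ (the paper cites the same chain $\s$-bounded chain condition $\Rightarrow$ Knaster $\Rightarrow$ Suslin-tree preservation) and then use the fact that it adds a new real together with the standard $\MA_{\o_1}$-style genericity argument to refute $\CH$. The paper leaves that last step implicit; you have simply written it out in full, so there is nothing to add.
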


\begin{lemma}\label{scnd-conses} Suppose $\delta$ is an inaccessible cardinal such that $V_\delta\prec V$. 
Let $\Gamma$ be any absolutely well-behaved class, defined with a parameter in $V_\delta$, and
 such that $\lambda_\Gamma=\omega_1$. Suppose $\Gamma \sub S\textsf{-cond}$ holds after forcing with $\Add(\omega_1, 1)$. If $\BCFA(\Gamma)$ holds, then so does $\CH$.
\end{lemma}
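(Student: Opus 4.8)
\textbf{Proof plan for Lemma \ref{scnd-conses}.}

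The plan is to mimic closely the pattern used in Lemmas \ref{consespfa+++}, \ref{consespfa+++omegaomega-bounding} and \ref{yielding-psiAC}, and in particular to exploit Shelah's fact that the $S$-condition, under $\CH$, implies that the forcing adds no new reals (Lemma \ref{S-cond-shelah}(2)). First I would force with $\Add(\omega_1,1)$ over $V$; let $G$ be the resulting generic filter. In $V[G]$ we have $\CH$, since $\Add(\omega_1,1)$ is countably closed and forces $\CH$ (it adds a surjection from $\omega_1$ onto $\mathcal P(\omega)^V$ while adding no new reals). Moreover by hypothesis $\Gamma^{V[G]}\subseteq S\textsf{-cond}^{V[G]}$.

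Next I would force with $\Gamma_\delta^{V[G]}$ over $V[G]$. By Theorem \ref{thm:mainth1} (applied in $V[G]$, using that $V_\delta\prec V$ gives $V_\delta[G]\prec V[G]$, so $\delta$ remains inaccessible and $V_\delta^{V[G]}\prec V[G]$), the forcing $\Gamma_\delta^{V[G]}$ is in $\Gamma^{V[G]}$ and forces $\BCFA(\Gamma)$. The key point is now that $\Gamma_\delta^{V[G]}$, being in $\Gamma^{V[G]}\subseteq S\textsf{-cond}^{V[G]}$, satisfies the $S$-condition in $V[G]$, and since $\CH$ holds in $V[G]$, Lemma \ref{S-cond-shelah}(2) gives that $\Gamma_\delta^{V[G]}$ adds no new reals over $V[G]$. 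Hence $\mathcal P(\omega)^{(V[G])^{\Gamma_\delta}}=\mathcal P(\omega)^{V[G]}$, and since $V[G]\models\CH$ and $\Gamma_\delta^{V[G]}$ preserves $\omega_1$ (as $\lambda_\Gamma=\omega_1$) and collapses no cardinal below it, $\CH$ continues to hold in $(V[G])^{\Gamma_\delta}$. Concretely, $|\mathcal P(\omega)|^{(V[G])^{\Gamma_\delta}}=|\mathcal P(\omega)|^{V[G]}=\omega_1^{V[G]}=\omega_1^{(V[G])^{\Gamma_\delta}}$.

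Finally I would run the standard absoluteness argument, exactly as in the proof of Lemma \ref{consespfa+++}. We have $H_{\omega_2}^{(V[G])^{\Gamma_\delta}}=H_{\omega_2}^{V^{\Add(\omega_1,1)\ast\dot\Gamma_\delta}}$, and since $(V[G])^{\Gamma_\delta}\models\BCFA(\Gamma)$ this structure is an elementary substructure of $H_{\omega_2}^{V^\Gamma}$; likewise $H_{\omega_2}^V\prec H_{\omega_2}^{V^\Gamma}$ (as we may assume $V\models\BCFA(\Gamma)$ is to be contradicted, or rather we argue by elementarity: both $H_{\omega_2}^V$ and $H_{\omega_2}^{(V[G])^{\Gamma_\delta}}$ embed elementarily into $H_{\omega_2}^{V^\Gamma}$, so they have the same theory). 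Since $\CH$ is expressible over $H_{\omega_2}$ by a first order sentence, $\CH$ holds in $H_{\omega_2}^{(V[G])^{\Gamma_\delta}}$, hence in $H_{\omega_2}^{V^\Gamma}$, hence in $H_{\omega_2}^V$, i.e.\ $V\models\CH$. The only mild obstacle I anticipate is bookkeeping the two successive forcings $\Add(\omega_1,1)$ and $\Gamma_\delta$ and making sure the identification $H_{\omega_2}^{V^{\Add(\omega_1,1)\ast\dot\Gamma_\delta}}\prec H_{\omega_2}^{V^\Gamma}$ goes through — but this is precisely the content of the corresponding step in Lemma \ref{consespfa+++}, so no new difficulty arises. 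Everything else is a direct invocation of Lemma \ref{S-cond-shelah}(2) together with $\CH$ in $V[G]$.
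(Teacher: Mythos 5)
Your proposal is correct and follows essentially the same route as the paper: force with $\Add(\omega_1,1)$ to get $\CH$, then force $\BCFA(\Gamma)$ via $\Gamma_\delta^{V[G]}$, use Lemma \ref{S-cond-shelah}(2) to see that no new reals are added so $\CH$ survives, and conclude by the usual absoluteness argument through $H_{\omega_2}^{V^\Gamma}$. The paper's proof is just a more compressed version of exactly these steps.
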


\begin{proof} Let $G$ be $\Add(\omega_1, 1)$-generic, and note that $V[G]$ satisfies $\CH$. Then force $\BCFA(\Gamma)$ over $V[G]$ via 
$\Gamma_\delta$. 
Let $V_1$ be this extension. 
Since, in $V[G]$, $\Gamma_\delta$ has the $S$-condition, forcing with $\Gamma_\delta^{V[G]}$ over 
$V[G]$ did not add new reals thanks to Lemma \ref{S-cond-shelah} (2). In particular, 
$V_1\models\CH$. But then $\CH$ holds in $V$ by the usual absoluteness argument.
\end{proof}

It tuns out that $\BCFA(\Gamma)$, where $\Gamma$ is any absolutely well-behaved class such that $\lambda_\Gamma=\omega_1$ and $\Gamma\subseteq S\textsf{-cond}$ holds after adding a Cohen subset of $\omega_1$
actually implies $\diamondsuit$. The proof is essentially the same as above, 
using the following recent result due to Magidor, together with the fact that if $V\sub V_1\sub W$ are models 
with the same $\o_1$ and $\vec X\in V$ is a $\diamondsuit$-sequence in $W$, then $\vec X$ is also a 
$\diamondsuit$-sequence in $V_1$. 

\begin{theorem} (Magidor) Suppose $\diamondsuit$ holds. Then there is a $\diamondsuit$-sequence 
that remains a $\diamondsuit$-sequence after any forcing with the $S$-condition.
\end{theorem}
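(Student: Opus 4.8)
The plan is to use the two consequences of the $S$-condition recorded in Lemma~\ref{S-cond-shelah}: an $S$-condition forcing $\mtcl P$ preserves stationary subsets of $\o_1$, and, since $\diamondsuit$ implies $\CH$, it adds no new reals. The no-new-reals part is what gives a $\diamondsuit$-sequence $\vec X=\la X_\a:\a<\o_1\ra$ chosen \emph{in} $V$ a chance of still working: if $A\sub\o_1$ lies in a $\mtcl P$-generic extension $V[K]$, then $A\cap\a\in V$ for every $\a<\o_1$, since $A\cap\a$ is coded by a real. The stationary-preservation part reduces the goal ``$\{\a:A\cap\a=X_\a\}$ is stationary in $V[K]$'' to showing that, for every $\mtcl P$-name $\dot D$ for a club, every $\mtcl P$-name $\dot A$ for a subset of $\o_1$, and every condition $p$, some $q\leq p$ and some $\d<\o_1$ satisfy $q\Vdash\check\d\in\dot D\wedge\dot A\cap\check\d=\check X_\d$; a reflection argument inside a countable elementary submodel will produce this, and the genuinely new ingredient over the usual ``properness preserves $\diamondsuit$'' argument is that the submodel reflection must be carried out against the tree-valued game $\mtcl G^{\mtcl P}$ rather than against a single master condition.

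First I would fix, once and for all, a single \emph{robust} $\diamondsuit$-sequence. Using $\diamondsuit$ (equivalently $\diamondsuit$ relativised to $H_{\o_1}$, which has size $\al_1$ under $\CH$, and to transitive collapses of countable elementary submodels, which are elements of $H_{\o_1}$), one builds $\vec X$ together with an auxiliary sequence $\la N_\a:\a<\o_1\ra$ of countable sets such that, for club-many $\a$: $N_\a\prec(\mathfrak H,\in,<^*)$ for a fixed structure $\mathfrak H$ on $H_{\o_1}$ with a definable well-order, $N_\a\cap\o_1=\a$, $\vec X\restr\a\in N_\a$, and --- the crucial clause --- whenever $A\sub\o_1$ and $\bar D\sub\o_1$ code, respectively, a subset of $\o_1$ and a (coded) assignment of dense sets and name-fragments, the set of $\a$ at which $X_\a=A\cap\a$, $A\cap\a\in N_\a$, and $N_\a$ \emph{internally reflects} $A\cap\a$, $\bar D\cap\a$ and the piece of the forthcoming construction recorded by $\bar D$, is stationary. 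This is a routine but slightly fiddly bookkeeping argument from $\diamondsuit$, and it is what makes the later reflection uniform over all $S$-condition forcings and all cardinals $\t$ simultaneously, since we never guess the (non-small) submodels of $H_\t$ themselves, only their small content.

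Next, given an $S$-condition forcing $\mtcl P$, names $\dot A$, $\dot D$ and a condition $p$ as above, I would pick $\t$ large, a countable $N\prec(H_\t,\in,<^*)$ with $\mtcl P,\dot A,\dot D,p,\vec X$ and Player~II's winning strategy $\s$ in $\mtcl G^{\mtcl P}$ all in $N$, and set $\d=N\cap\o_1$; by the robustness clause $\d$ may be taken to be a point at which $\vec X$ guesses correctly and $N_\d$ reflects the construction below. Working inside $N$, I would run $\mtcl G^{\mtcl P}$ against $\s$ while simultaneously deciding longer and longer initial segments of $\dot A$ and $\dot D$ along every node of the tree being built, producing a tree $T\in N$ of conditions below $p$ such that along any cofinal branch $b$ the conditions $\la p_{b\restr n}:n<\o\ra$ decide $\dot A\restr\d$ to be some fixed $Y_b\sub\d$ and force $\d$ to be a limit point of $\dot D$. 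The locality clause of Definition~\ref{s-cond-def} --- that $\s$ responds at a node using only that node, the conditions along it, and the set of splitting levels below it --- is exactly what permits this tree-and-decision construction, with all its $\al_2$-splitting (``Namba-like'') moves, to be performed inside the countable model $N$, and it also confines the family $\{Y_b:b\text{ a branch}\}$ to data available in $N$, so that the internal reflection at $\d$ forces some $Y_b$ to equal $X_\d$. By the winning condition for Player~II, for any $\mtcl P$-generic $K\ni p$ there is a branch $b$ through $T$ with $p_{b\restr n}\in K$ for all $n$; taking $b$ to be the branch realising $Y_b=X_\d$ yields, in $V[K]$, that $\d\in\dot D_K$ and $\dot A_K\cap\d=X_\d$. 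As $\dot D$, $\dot A$, $p$ were arbitrary and $\mtcl P$ preserves stationary subsets of $\o_1$, the set $\{\a:\dot A_K\cap\a=X_\a\}$ meets every club of $V[K]$, so $\vec X$ remains a $\diamondsuit$-sequence there.

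The main obstacle is precisely the interaction between the $\al_2$-splitting moves of Player~II in $\mtcl G^{\mtcl P}$ and the demand that the entire reflection take place inside a countable $N$: one must pin the branch-value $\dot A\restr\d$ to the \emph{single} guess $X_\d$ before the generic is known, and arranging this is what forces the extra ``internal reflection'' clause into the construction of $\vec X$ and what makes essential, rather than cosmetic, use of the locality of the $S$-condition strategy. The uniformity over all $S$-condition forcings and all $\t$ is a secondary difficulty, dealt with by guessing only transitive collapses of countable submodels.
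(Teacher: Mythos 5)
First, a remark on the comparison itself: the paper does not prove this statement. It is quoted as a recent result of Magidor and used as a black box (to get $\diamondsuit$ from $\BCFA(\Gamma)$ for $\Gamma\subseteq S\textsf{-cond}$), so there is no proof of the authors' to measure yours against; I can only assess your argument on its own terms.

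On those terms there is a genuine gap, located exactly at the step you yourself flag as the main obstacle. After building the tree $T$ of conditions inside $N$, with each cofinal branch $b$ deciding $\dot A\cap\delta$ to be some $Y_b$, you finish by ``taking $b$ to be the branch realising $Y_b=X_\delta$''. But the winning condition for Player II in $\mathcal{G}^{\mathcal{P}}$ only supplies, for each full-splitting subtree $T'$, a condition forcing \emph{some} branch of $T'$ into the generic filter; it gives you no control over \emph{which} branch is realised, and distinct branches will in general decide $\dot A\cap\delta$ to be distinct sets (think of $\dot A$ coding the generic object of a Namba-like forcing). So even if your ``internal reflection'' clause guarantees $X_\delta=Y_b$ for \emph{some} branch $b$, the condition witnessing the winning condition may force a branch $b'$ with $Y_{b'}\neq X_\delta$, and the density argument collapses. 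A second, related problem: the splitting sets $\textsf{succ}_T(\eta)$ have size at least $\aleph_2$, so although $T\in N$, most of its nodes, the conditions $p_\eta$ attached to them, and the decided values $s_\eta\subseteq\gamma_{|\eta|}$ are \emph{not} elements of the countable model $N$; hence the family $\{Y_b: b \text{ a branch}\}$ is not ``confined to data available in $N$'' as a set of reals, and the value along the realised branch need not be computable from the transitive collapse of $N$ at all. Repairing this requires either homogenising the tree --- pruning to a full-splitting subtree on which the branch-value is constant, which collides with the fact that incompatible conditions at the same level may be forced to decide $\dot A$ incompatibly --- or letting the guessing ordinal vary with the branch and arranging $X_{\delta_b}=Y_b$ coherently across all branches. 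Either way, this is the actual content of Magidor's theorem, and it is asserted rather than supplied in your sketch.
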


The following well-known fact can be proved by an argument as in the final part of the proof of 
Lemma \ref{psiAC-measurable}.

\begin{fact}\label{suslin-tree-pres-sigma-closed} If $\mtcl P$ is countably closed, then $\mtcl P$ preserves Suslin trees.
\end{fact}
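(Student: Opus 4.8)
The plan is to prove Fact~\ref{suslin-tree-pres-sigma-closed} by adapting the argument from the final part of the proof of Lemma~\ref{psiAC-measurable}, where we showed that $\psi_{\AC}^\kappa$-posets preserve Suslin trees. The setup is the following: let $\mtcl P$ be countably closed, let $T$ be a Suslin tree in $V$, and let $\dot A$ be a $\mtcl P$-name forced to be a maximal antichain of $T$. It suffices to show that $\dot A$ is forced to be countable, since then no new uncountable antichain (and a fortiori no new cofinal branch) is added, so $T$ remains Suslin in $V^{\mtcl P}$. Fix a countable elementary submodel $N\prec H_\t$ for $\t$ large enough, with $T$, $\dot A$, $\mtcl P$, and all other relevant objects in $N$.

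First I would enumerate the nodes of $T$ at level $N\cap\o_1$ as $(u_n)_{n<\o}$ (there are only countably many since $T$ is an $\o_1$-tree). Given any condition $p\in\mtcl P\cap N$, I would build an $(N,\mtcl P)$-generic decreasing sequence $(p_n)_{n<\o}$ of conditions in $N$ extending $p$, arranging at step $n$ that $p_{n+1}$ decides some node $v\in T$ below $u_n$ to lie in $\dot A$. This is possible because, working inside $N$, the set of conditions deciding such a $v$ below a given node of $T$ is dense below any condition --- this uses that $\dot A$ is forced to be a \emph{maximal} antichain, so every node of $T$ is compatible with (i.e., below or above) some element forced into $\dot A$, combined with the fact that $u_n$ is $(T,N)$-generic, meeting every antichain of $T$ lying in $N$ in a unique node (here we use Suslinness: antichains of $T$ in $N$ are countable, hence elements of $N$). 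At the same time I interleave a bookkeeping to make $(p_n)_n$ genuinely $(N,\mtcl P)$-generic, meeting every dense subset of $\mtcl P$ lying in $N$.

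The crucial step --- and the one place where countable closure of $\mtcl P$ is used, rather than properness --- is that $\bigcup_n p_n$ has a lower bound $p^\ast$ in $\mtcl P$. Since $\mtcl P$ is countably closed and $(p_n)_n$ is a countable decreasing sequence, such a $p^\ast$ exists. Then $p^\ast$ forces, by $(N,\mtcl P)$-genericity, that $\dot A\cap V[\dot G]$-image meets $N$ densely; more precisely, $p^\ast$ forces $\dot A\subseteq T\cap N$: if $u\in\dot A$ with $u\notin N$, let $u_n$ be the unique node of level $N\cap\o_1$ with $u_n\leq_T u$; by construction $u_n$ was forced by $p_{n+1}$ to be above (or equal to) some node in $\dot A$ of height below $N\cap\o_1$, and since $u$ is also above $u_n$, both $u$ and that node lie in $\dot A$ and are comparable, contradicting that $\dot A$ is forced to be an antichain (unless they are equal, but then $u\in N$). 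Hence $p^\ast\Vdash \dot A\subseteq T\cap N$, so $p^\ast$ forces $\dot A$ countable. Since $p\in\mtcl P\cap N$ was arbitrary and $\mtcl P\cap N$ is dense in $\mtcl P$ restricted to conditions in $N$ --- actually we just need that the collection of such $p^\ast$ is dense below any condition in $N$, which follows by running the argument inside $N$ --- and since $N$ was an arbitrary countable elementary submodel containing the relevant data, a standard density argument shows $\Vdash_{\mtcl P}\dot A$ is countable. This completes the proof.

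I expect the main obstacle to be the careful bookkeeping that simultaneously ensures $(N,\mtcl P)$-genericity of the sequence $(p_n)_n$ and the decision, for each $n$, of a node of $\dot A$ below $u_n$; one must be slightly careful that the relevant density facts (that below $p_n'$ one can find a condition deciding a node of $\dot A$ below a prescribed $T$-node) hold inside $N$, which is where maximality of $\dot A$ together with $(T,N)$-genericity of level-$(N\cap\o_1)$ nodes of $T$ enters. Everything else is routine, and the countable closure hypothesis does exactly the work that, in the Lemma~\ref{psiAC-measurable} proof, was done by first passing to an $\o_1$-end-extension: it simply guarantees the existence of the lower bound $p^\ast$.
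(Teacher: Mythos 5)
Your proof is correct and is precisely the argument the paper has in mind: it runs the Miyamoto--Yorioka-style construction from the final part of the proof of Lemma~\ref{psiAC-measurable} (reduce to showing every name for a maximal antichain is forced into $T\cap N$, using total $(T,N)$-genericity of the level-$(N\cap\o_1)$ nodes), with countable closure supplying the lower bound $p^\ast$ in place of the $\o_1$-end-extension trick used there. Nothing further is needed.
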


The proof of the following lemma is like the proofs of Lemmas 
\ref{consespfa+++}, \ref{consespfa+++omegaomega-bounding}, and \ref{scnd-conses}, 
using the well-known fact that $\Add(\o_1, 1)$ adds a Suslin tree $T$. 

\begin{lemma}\label{consesmm+++pressuslintrees} Suppose $\delta$ is an inaccessible cardinal such that $V_\delta\prec V$. 
Let $\Gamma$ be any absolutely well-behaved class, defined with a parameter in $V_\delta$, and
 such that $\lambda_\Gamma=\omega_1$. Suppose $\Gamma \sub\STP$ holds after forcing with $\Add(\omega_1, 1)$.
If $\BCFA(\Gamma)$ holds, then 
there is a Suslin tree. 
\end{lemma}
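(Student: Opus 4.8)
The plan is to run the by-now familiar absoluteness argument, identical in structure to the proofs of Lemmas~\ref{consespfa+++}, \ref{consespfa+++omegaomega-bounding}, and \ref{scnd-conses}. First I would force with $\Add(\omega_1,1)$ and let $G$ be the corresponding generic filter. The key point, which I would record explicitly, is that $V[G]$ contains a Suslin tree $T$: this is the well-known fact that adding a Cohen subset of $\omega_1$ generically adds a Suslin tree. I would then force $\BCFA(\Gamma)$ over $V[G]$ using $\Gamma_\delta^{V[G]}$, which is legitimate by Theorem~\ref{thm:mainth1} applied in $V[G]$ (note $\delta$ is still inaccessible with $V_\delta^{V[G]}\prec V[G]$, since $\Add(\omega_1,1)$ is small and $V_\delta\prec V$, so the parameter defining $\Gamma$ lies in $V_\delta^{V[G]}$). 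Call this final extension $V_1$; so $V_1=V[G]^{\Gamma_\delta}$ and $V_1\models\BCFA(\Gamma)$.

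Next I would invoke the hypothesis that $\Gamma\subseteq\STP$ holds after forcing with $\Add(\omega_1,1)$, i.e.\ in $V[G]$. Since $\Gamma_\delta^{V[G]}$ is in $\Gamma^{V[G]}$ (by Theorem~\ref{thm:mainth1}(\ref{thm:mainth1-1}), applied in $V[G]$), it is in particular in $\STP^{V[G]}$, so forcing with $\Gamma_\delta^{V[G]}$ over $V[G]$ preserves Suslin trees. Hence the tree $T\in V[G]$ remains Suslin in $V_1$. Therefore $V_1\models\mbox{``there is a Suslin tree''}$.

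Finally I would transfer this back to $V$ by the usual absoluteness argument. The statement ``there is a Suslin tree'' is expressible over $H_{\omega_2}$ (it only quantifies over $\omega_1$-trees and their branches/antichains, all of which are coded by subsets of $\omega_1$, hence elements of $H_{\omega_2}$). We have $H_{\omega_2}^{V_1}=H_{\omega_2}^{V[G]^{\Gamma_\delta}}=H_{\omega_2}^{V[G]^{\Add(\omega_1,1)\ast\dot\Gamma_\delta}}\prec H_{\omega_2}^{V^\Gamma}$ by Theorem~\ref{thm:mainth1}(\ref{thm:mainth1-4}) together with the Factor Lemma (identifying $\Add(\omega_1,1)\ast\dot\Gamma_\delta$ with a condition of the category forcing), and $H_{\omega_2}^V\prec H_{\omega_2}^{V^\Gamma}$ since $V\models\BCFA(\Gamma)$. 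Since $H_{\omega_2}^{V_1}$ satisfies ``there is a Suslin tree'', so does $H_{\omega_2}^{V^\Gamma}$, and hence so does $H_{\omega_2}^V$; that is, $V$ has a Suslin tree, which is what we wanted.

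I do not expect any serious obstacle here: the proof is entirely parallel to the three cited lemmas, the only new inputs being the fact that $\Add(\omega_1,1)$ adds a Suslin tree and the fact that $\STP$-forcing preserves it, both of which are already in hand. The one point deserving a sentence of care is the preservation of $T$ through the whole of $\Gamma_\delta^{V[G]}$ rather than through a single small iterand—but this is immediate from $\Gamma_\delta^{V[G]}\in\STP^{V[G]}$, i.e.\ from the class $\STP$ being closed enough (and $\Gamma_\delta$ landing in $\Gamma\subseteq\STP$ in $V[G]$) as supplied by Theorem~\ref{thm:mainth1} and the hypothesis of the lemma.
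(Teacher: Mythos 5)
Your proposal is correct and follows exactly the argument the paper intends: the paper's "proof" of this lemma is literally the one-line remark that it is like the proofs of Lemmas \ref{consespfa+++}, \ref{consespfa+++omegaomega-bounding}, and \ref{scnd-conses}, using the fact that $\Add(\omega_1,1)$ adds a Suslin tree, and your write-up is precisely that argument carried out (force $\Add(\omega_1,1)$ to add the Suslin tree, force $\BCFA(\Gamma)$ via $\Gamma_\delta^{V[G]}\in\Gamma^{V[G]}\subseteq\STP^{V[G]}$ so the tree survives, then pull back to $V$ via the usual $H_{\omega_2}$-elementarity). No gaps.
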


It will be convenient to consider the following families of Club-Guessing principles on $\o_1$ (s.\ \cite{AFMS}).

\begin{definition}
Let $\tau<\o_1$ be a nonzero ordinal. 

\begin{enumerate}

\item $\tau$-$\TWCG$ denotes the following statement: There is a a sequence 
$$\vec C=(C_\d\,:\,\d=\o^\tau\cdot\eta\mbox{ for some nonzero }\eta<\o_1)$$ such that 
$\av\{C_\d\cap\g\,:\,\g<\o_1\}\av\leq\al_0$ for every $\d\in\dom(\vec C)$, and such that for every club $C\sub\o_1$ 
there is some $\d\in\dom(\vec C)$ with $\ot(C_\d\cap C)=\o^\tau$. 

\item $\tau$-$\TCG$ denotes the following statement: There is a a sequence 
$$\vec C=(C_\d\,:\,\d=\o^\tau\cdot\eta\mbox{ for some nonzero }\eta<\o_1)$$ 
such that $\av\{C_\d\cap\g\,:\,\g<\o_1\}\av\leq\al_0$ for every $\d\in\dom(\vec C)$, and such that for every club 
$C\sub\o_1$ there is some $\d\in\dom(\vec C)$ with $C_\d\sub C$. 
\end{enumerate}
\end{definition}

In the above definition, $\TWCG$ and $\TCG$ stand for \emph{thin weak club-guessing} 
and \emph{thin club-guessing}, respectively.

\begin{lemma} 
Let $\tau<\o_1$ be a nonzero ordinal. Then $$\BFA_{\al_1}((\o^\tau\mbox{-}\PR)\cap\STP\cap\,^\o\o\textsf{-bounding})$$ 
implies the failure of $\tau'$-$\TWCG$ for every $\tau'$ such that $\tau<\tau'<\o_1$. 
\end{lemma}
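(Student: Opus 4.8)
The plan is to assume $\BFA_{\al_1}((\o^\tau\mbox{-}\PR)\cap\STP\cap\,^\o\o\textsf{-bounding})$ and, towards a contradiction, to fix some $\tau'$ with $\tau<\tau'<\o_1$ together with a sequence $\vec C=(C_\d\,:\,\d=\o^{\tau'}\cdot\eta,\ 0<\eta<\o_1)$ witnessing $\tau'$-$\TWCG$, where each $C_\d$ is a cofinal subset of $\d$. In the spirit of the other incompatibility lemmas of this subsection, the strategy is to isolate a forcing notion $\mtcl P=\mtcl P_{\vec C}$ lying in $(\o^\tau\mbox{-}\PR)\cap\STP\cap\,^\o\o\textsf{-bounding}$ which shoots a club $C\sub\o_1$ with the property that $\ot(C_\d\cap C)<\o^{\tau'}$ for every $\d\in\dom(\vec C)$; in particular no $C_\d$ guesses $C$ with order type exactly $\o^{\tau'}$. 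Applying $\BFA_{\al_1}$ to $\mtcl P$ together with the $\al_1$-many dense sets requiring the approximation to the club to reach above $\g$, for $\g<\o_1$, then yields such a club $C$ already in $V$, which is the desired contradiction with the choice of $\vec C$.

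For $\mtcl P$ I would use a poset of the kind considered in \cite{AFMS}: a condition is a pair $(p,N)$ where $p$ is a countable closed bounded approximation to the club being added, subject to the requirement that $\ot(C_\d\cap p)<\o^{\tau'}$ for every $\d\in\dom(\vec C)$ below $\max(p)$, and $N$ is a finite $\in$-chain of countable elementary submodels of a suitable $H_\t$ serving as a side condition; the order extends end-extension in the first coordinate and $\supseteq$ in the second. The side condition is precisely what is needed for the density of the dense sets above—so that the generic object $C=\bigcup\{p\,:\,(p,N)\in G\text{ for some }N\}$ is cofinal in $\o_1$—and for the properness computations below. Since conditions are closed and end-extending, $C$ is a club; and for each $\d\in\dom(\vec C)$, choosing $(p,N)\in G$ with $\max(p)>\d$, we have $C_\d\sub\d<\max(p)$ and $C\cap(\max(p)+1)=p$, so $C_\d\cap C=C_\d\cap p$ has order type $<\o^{\tau'}$. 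As $\mtcl P$ is proper it preserves $\o_1$, so $C$ is a genuine club of $\o_1^V$, and the contradiction follows.

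The bulk of the work—and the main obstacle—is the verification that $\mtcl P$ lies in $(\o^\tau\mbox{-}\PR)\cap\STP\cap\,^\o\o\textsf{-bounding}$. The decisive point is $\o^\tau$-properness: given a continuous chain $(N_i)_{i\leq\o^\tau}$ of countable elementary submodels of $H_\t$ containing $\mtcl P$ and $\vec C$, and a condition $p\in N_0$, one constructs by recursion along the chain an $(N_i,\mtcl P)$-generic extension $q$ of $p$. Here the indecomposability of $\o^\tau$ is used to treat the limit stages, and the strict inequality $\o^\tau<\o^{\tau'}$ is exactly what guarantees that the trace of the generic condition $q$ against each $C_\d$ with $\d\in\dom(\vec C)$ accumulates order type at most (in essence) $\o^\tau<\o^{\tau'}$, so that $q$ remains a legitimate condition; this is where the hypothesis $\tau<\tau'$ is essential, and it is also why $\mtcl P$ is \emph{not} $\o^{\tau'}$-proper. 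Finally, a standard argument shows that $\mtcl P$ adds no new reals, hence is $^\o\o$-bounding, and $\mtcl P$ preserves Suslin trees by the argument used for $\MRP$-posets in Lemma~\ref{MRP} (cf.\ Miyamoto). With $\mtcl P$ in the class, the application of $\BFA_{\al_1}$ outlined above completes the proof.
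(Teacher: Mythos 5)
Your overall strategy coincides with the paper's: given a putative $\tau'$-$\TWCG$ sequence $\vec C$, shoot a club $C$ with $\ot(C_\d\cap C)<\o^{\tau'}$ for all $\d\in\dom(\vec C)$ by a forcing in the class, and pull $C$ back into $V$ with the bounded forcing axiom. The problem is the forcing you choose. The paper uses the \emph{pure} poset of \cite{AFMS}: countable closed $c\sub\o_1$ with $\ot(C_\d\cap c)<\o^{\tau'}$ for all $\d$, ordered by end-extension; \cite{AFMS} proves this is $\o^\tau$-proper and adds no new reals. You instead attach finite $\in$-chains of countable models, on the grounds that the side conditions are needed for the density of the sets $D_\g$ of conditions reaching above $\g$. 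That justification fails: $\o^{\tau'}$ is a limit (indeed indecomposable) ordinal, so $\ot(C_\d\cap c)<\o^{\tau'}$ implies $\ot(C_\d\cap(c\cup\{\xi\}))<\o^{\tau'}$ for any $\xi>\max(c)$, and $D_\g$ is trivially dense in the pure poset. Worse, the side conditions actively endanger the properties you need. With a countable working part and a finite model part (and no stated interaction between them), a descending $\o$-sequence of conditions accumulates infinitely many models and so has no lower bound of the prescribed shape; the ``close off and add the supremum'' argument that yields properness and no new reals for the pure poset does not apply, so your $^\o\o$-bounding claim is unsupported. And since $\tau\geq 1$, $\o^\tau$-properness requires producing a single condition generic for a tower of $\o^\tau+1$ models; for side-condition posets the standard (essentially only) route is to insert all these models into the side condition, which a finite chain cannot absorb. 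So it is genuinely doubtful that the poset you define is even $\o^\tau$-proper.

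Beyond that, the entire content of the lemma is the verification that the forcing lies in $(\o^\tau\mbox{-}\PR)\cap\STP\cap\,^\o\o\textsf{-bounding}$, and you defer all three parts. Outsourcing the $\o^\tau$-properness and no-new-reals computations to \cite{AFMS} is legitimate for the pure poset (this is what the paper does), but not for your modified one. The Suslin-tree preservation is precisely the part the paper must prove from scratch, and it is not a verbatim quotation of the Miyamoto--Yorioka argument for $\MRP$-posets: given $N$ with $\d=N\cap\o_1$, one has to choose the auxiliary model $M\in N$ so that the interval $[\eta,\d_M]$ is disjoint from $C_\d$, which is possible exactly because $\ot(C_\d)=\o^{\tau'}<\d$; without this extra step the closure point $\d$ need not yield a condition. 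Finally, a smaller point: meeting $\al_1$-many dense sets is the unbounded forcing axiom. To use $\BFA_{\al_1}$ you should either exhibit maximal antichains of size $\leq\al_1$ deciding the relevant information, or (as is standard) invoke the equivalence of $\BFA_{\al_1}$ with $\Sigma_1$-generic absoluteness of $H_{\o_2}$ and observe that ``there is a club $C$ with $\ot(C_\d\cap C)<\o^{\tau'}$ for all $\d\in\dom(\vec C)$'' is $\Sigma_1$ in the parameter $\vec C\in H_{\o_2}$.
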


\begin{proof}
Let us consider the following natural forcing $\mtcl P_{\vec C}$ for killing an instance 
$$\vec C=(C_\d\,:\,\d=\o^\tau\cdot\eta\mbox{ for some nonzero }\eta<\o_1)$$ of $\tau'$-$\TWCG$: 
$\mtcl P_{\vec C}$ is the set, ordered by reverse end-extension, of countable closed subset $c$ of $\o_1$ 
such that $\ot(C_\d\cap c)<\o^{\tau'}$ for every $\d\in \dom(\vec C)$. It is proved in \cite{AFMS} that 
$\mtcl P_{\vec C}$ is $\o^\tau$-proper, does not add new reals, and adds a club $C\sub\o_1$ such that 
$\ot(C_\d\cap C)<\o^{\tau'}$ for every $\d\in\dom(\vec C)$. Hence, it only remains to prove that $\mtcl P_{\vec C}$ 
preserves Suslin trees. This can be shown by an argument similar to the main argument 
in the proof in \cite{Miyamoto-Yorioka} that $\MRP$-posets preserve Suslin tree. 
We present the argument here for the reader's convenience.

Suppose $U$ is a Suslin tree, $\dot A$ is a $\mtcl Q$-name for a maximal antichain of $U$, and $N$ is a 
countable elementary submodel of some large enough $H_\t$ containing $U$, $\dot A$, and all other relevant objects. 
Let $\d=N\cap\o_1$. As in  the last part of the proof of Lemma \ref{psiAC-measurable}, let $(u_n)_{n<\o}$ enumerate 
all nodes in  $U$ of height $\d$. Given a condition $c\in\mtcl P_{\vec C}\cap N$, we aim to build an 
$(N, \mtcl P_{\vec C})$-generic sequence $(c_n)_{n<\o}$ of conditions in $N$ extending $c$  such that for every $n$ 
there is some $v\in U$ below $u_n$ such that $c_{n+1}$ forces $v\in \dot A$. We will make sure that 
$C_\d\cap\bigcup_{n<\o}c_b\sub c$, which will guarantee that $c^\ast = \bigcup_{N<\o}c_n\cup\{\d\}\in\mtcl P_{\vec C}$. 
But this will be enough, as then $c^\ast$ will be an extension of $c$ in $\mtcl P_{\vec C}$ forcing $\dot A\sub U\cap N$. 

It thus remains to show how to find $c_{n+1}$ given $c_n$. Working in $N$, we may first fix some countable 
$M\preccurlyeq H_{\chi}$ (for some large enough $\chi$) containing $c_n$ and all other relevant objects 
(including some relevant dense set $D\sub\mtcl P_{\vec C}$ that we need to meet), and such that 
$[\eta,\,\d_M]\cap C_\d=\emptyset$, where $\d_M=M\cap\o_1$. In order to find $M$, we first consider a strictly 
$\sub$-increasing and continuous sequence $(M_\n)_{\n<\o_1}\in N$  of elementary submodels containing 
all relevant objects. Since $(M_\nu\cap\o_1)_{\n<\d}$ is a club of $\d$ of order type $\d$ and 
$\ot(C_\d)=\o^{\tau'}<\d$, we can then find some $\n<\d$ such that $M=M_\n$ is as desired. 
Now, working in $M$, we may, first,  extend $c_n$  to a condition $c_n'$ such that $\max(c_n')>\eta$ and 
$[\max(c_n),\,\eta]\cap c_n'=\emptyset$, and then extend $c_n'$ to a condition $c_n''$ in $D$.  
Let now $\bar u$ be the unique node in $U$  below $u_n$ of height $\d_M$. 
Since $U$ is a Suslin tree, we have that $u_n$ is totally $(U, M)$-generic. Also, the set $E\in N$ of $u\in U$ for which there 
is some $v\in U$ below $u$ and some $\bar c\in\mtcl P_{\vec C}$ extending $c_n''$ and forcing that $v\in\dot A$ is 
dense in $U$. It follows that we may find  some $u\in E\cap M$ below $u_n$, as witnessed by some 
$\bar c\in \mtcl P_{\vec C}\cap N$ and some $v\in U\cap M$. But then we may let $c_{n+1}=\bar c$. 
\end{proof}

\begin{lemma}
Suppose $\delta$ is an inaccessible cardinal such that $V_\delta\prec V$. 
Let $\Gamma$ be any absolutely well-behaved class, defined with a parameter in $V_\delta$, and
 such that $\lambda_\Gamma=\omega_1$. 

 Let $\tau<\o_1$ be a nonzero ordinal, and suppose $\Gamma\sub \o^\tau\mbox{-}\SP$ holds in any generic extension by countably closed forcing.  
If $\BCFA(\Gamma)$ holds, then so does $\tau$-$\TCG$.
\end{lemma}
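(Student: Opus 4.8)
The plan is to follow the template of the proofs of Lemmas~\ref{consespfa+++}, \ref{consespfa+++omegaomega-bounding} and~\ref{scnd-conses}: first force, by a countably closed poset, a witness for $\tau$-$\TCG$, and then force $\BCFA(\Gamma)$ on top of it with the resurrection forcing $\Gamma_\delta$, using the hypothesis $\Gamma\subseteq\o^\tau\mbox{-}\SP$ (in countably closed extensions) to see that the witness survives. The conceptual point is that, whereas arbitrary (semi)proper forcings refute strong club guessing, an $\o^\tau$-semiproper forcing cannot destroy a suitably chosen club-guessing sequence concentrating at points of the form $\o^\tau\cdot\eta$; this is exactly why these restricted classes $\Gamma$ behave differently from $\SSP$ here.

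The first ingredient I would set up, inside $V_\delta$, is the natural countably closed forcing $\mtcl Q=\mtcl Q_\tau$ adding a $\tau$-$\TCG$ sequence. A condition consists of a countable, $\sub$-continuous, $\in$-increasing chain $(N_i)_{i\leq\nu}$ of countable elementary submodels of a fixed $H_\chi$, together with a countable initial segment $(C_{\o^\tau\cdot\eta})_{0<\eta\leq\eta_0}$ of a prospective club-guessing sequence, built coherently so that at each limit $\eta\leq\eta_0$ of the associated chain the set $C_{\o^\tau\cdot\eta}$ is contained in the trace, along $(N_i)$, of a prescribed club of $\o_1$; conditions are ordered by end-extension. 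This is one of the standard forcings for adding club-guessing sequences considered in \cite{AFMS}. As shown there, $\mtcl Q$ is countably closed, its generic object is a $\tau$-$\TCG$ sequence $\vec C=(C_{\o^\tau\cdot\eta}\,:\,0<\eta<\o_1)$, and $\vec C$ remains a $\tau$-$\TCG$ sequence after forcing with any $\o^\tau$-semiproper poset; the last, indestructibility, statement is proved by an argument of the same flavour as the one in the final part of the proof of Lemma~\ref{psiAC-measurable} (cf.\ also \cite{Miyamoto-Yorioka}) showing that $\MRP$-posets preserve Suslin trees.

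Granting this, the argument runs as follows. Assume $\BCFA(\Gamma)$ and force with $\mtcl Q$; let $G$ be the resulting generic filter. Since $\mtcl Q\in V_\delta$, $\delta$ is still inaccessible and $V_\delta^{V[G]}=V_\delta[G]\prec V[G]$; since $V[G]$ is a generic extension of $V$ by a countably closed forcing, the hypothesis gives $\Gamma^{V[G]}\subseteq(\o^\tau\mbox{-}\SP)^{V[G]}$, and $\Gamma^{V[G]}$ is absolutely well-behaved in $V[G]$ with $\lambda_{\Gamma^{V[G]}}=\o_1$. By Theorem~\ref{thm:mainth1} applied in $V[G]$, $\Gamma_\delta^{V[G]}$ forces $\BCFA(\Gamma)$; moreover $\Gamma_\delta^{V[G]}\in\Gamma^{V[G]}\subseteq(\o^\tau\mbox{-}\SP)^{V[G]}$, so by the indestructibility of $\vec C$ the sequence $\vec C$ still witnesses $\tau$-$\TCG$ in $(V[G])^{\Gamma_\delta^{V[G]}}=V^{\mtcl Q\ast\dot\Gamma_\delta}$. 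Now $\mtcl Q\ast\dot\Gamma_\delta\in\Gamma$, since $\mtcl Q$ is countably closed and hence in $\Gamma$, $\Gamma_\delta\in\Gamma$ by Theorem~\ref{thm:mainth1}, and $\Gamma$ is closed under two-step iterations. As $\mtcl Q\ast\dot\Gamma_\delta$ forces $\BCFA(\Gamma)$, exactly as in the proof of Lemma~\ref{consespfa+++} we get $H_{\o_2}^{V^{\mtcl Q\ast\dot\Gamma_\delta}}\prec H_{\o_2}^{V^\Gamma}$ and $H_{\o_2}^V\prec H_{\o_2}^{V^\Gamma}$, so $H_{\o_2}^V$ and $H_{\o_2}^{V^{\mtcl Q\ast\dot\Gamma_\delta}}$ have the same first order theory; since $\tau$-$\TCG$ is expressible over $H_{\o_2}$ (both the witnessing sequence and the clubs to be guessed are elements of $H_{\o_2}$) and holds in $V^{\mtcl Q\ast\dot\Gamma_\delta}$, it holds in $V$.

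The main obstacle is the construction and analysis of $\mtcl Q$, and in particular the indestructibility of $\vec C$ under all $\o^\tau$-semiproper forcings --- including the class-length forcing $\Gamma_\delta$, which is $\o^\tau$-semiproper in $V[G]$ only by virtue of the hypothesis on $\Gamma$. This is precisely the (semiproper analogue of the) $\o^\tau$-proper machinery underlying the results of \cite{AFMS} on $\tau$-$\TCG$; once those facts are in place, the rest is the routine bookkeeping around Theorem~\ref{thm:mainth1} and the reflection argument already used repeatedly in this subsection.
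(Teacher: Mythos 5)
Your proposal is correct and follows essentially the same route as the paper: force a $\tau$-$\TCG$-sequence by a countably closed poset (the paper cites a result of Zapletal for this, rather than constructing the poset as you sketch), note that the sequence is preserved by the $\o^\tau$-semiproper forcing $\Gamma_\delta$ computed in the countably closed extension, and then transfer $\tau$-$\TCG$ back to $V$ by the same elementarity argument used in Lemma~\ref{consespfa+++}. The only substantive difference is one of presentation — you spell out the bookkeeping and the indestructibility claim that the paper dismisses as standard — so there is nothing to correct.
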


\begin{proof}
By a result in \cite{Zapletal}, there is a countably closed forcing notion adding a $\tau$-$\TCG$-sequence. 
The rest of the argument is as in the proof of Lemma \ref{consespfa+++} (and subsequent lemmas), using 
the preservation of $\tau$-$\TCG$-sequences by any $\o^\tau$-semiproper forcing, which is a completely  standard fact.
\end{proof}

\subsection{Bounded category forcing axioms and stronger large cardinal assumptions}

As we know, the theory of $H_{\omega_2}$ given by the bounded category forcing axioms we have explored in this section is invariant, in the presence of reasonable large cardinals, relative to extensions via members in the corresponding class forcing $\CFA(\Gamma)$.
In this final subsection we show that the combinatorial theory of $H_{\omega_2}$ given by these axioms is nevertheless sensitive to additional background large cardinal assumptions.\footnote{See also Remark \ref{remark-on-cons-strength}.}  

Recall that $\bm{\delta}^1_2$ is the supremum of the the set of lengths of $\bm{\Delta}^1_2$-definable  
pre-well-orderings on $\mtbb R$. 

\begin{proposition}\label{consespfa+++extra}
Suppose $\delta$ is an inaccessible cardinal such that $V_\delta\prec V$. Suppose there is a proper class of Woodin cardinals.
Let $\Gamma$ be any absolutely well-behaved class, defined with a parameter in $V_\delta$, such that $\lambda_\Gamma=\omega_1$ and $\Gamma\sub \PR$. Suppose $\BCFA(\Gamma)$ holds. Then $\bm{\delta}^1_2<\o_2$.
 \end{proposition}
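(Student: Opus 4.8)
The plan is to argue that under $\BCFA(\Gamma)$ with $\Gamma\subseteq\PR$ we may push the whole situation down to a proper forcing extension of $V$ that also satisfies $\BCFA(\Gamma)$, and in that extension derive $\bm{\delta}^1_2<\o_2$ from a forcing axiom for proper forcings together with a proper class of Woodin cardinals. First I would invoke Theorem~\ref{thm:mainth1}: for the inaccessible $\delta$ with $V_\delta\prec V$, the forcing $\Gamma_\delta$ is $\Gamma$-rigid, forces $\BCFA(\Gamma)$, and gives $H_{\omega_2}^{V[G_\delta]}\prec H_{\omega_2}^{V[G]}$ for a $\Gamma$-generic $G$ with $G_\delta=G\cap V_\delta$; moreover $H_{\omega_2}^V\prec H_{\omega_2}^{V[G]}$ by $\BCFA(\Gamma)$. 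Since $\Gamma\subseteq\PR$, the forcing $\Gamma_\delta$ is proper (in $V$, because $V_\delta\prec V$ and properness of $\RO(\Gamma_\delta)$ is detected below $\delta$ by Fact~\ref{fac:sigma2proper}), so the Woodin cardinals of $V$ above $\delta$ remain Woodin in $V[G_\delta]$; in particular $V[G_\delta]$ still has a proper class of Woodin cardinals. By Theorem~\ref{thm:mainth1}(\ref{thm:mainth1-3}), $\Gamma_\delta$ forces $\BFA_{\omega_1}(\Gamma)$, hence in $V[G_\delta]$ we have $\BFA_{\aleph_1}(\PR)=\BPFA$ (as $\Gamma\subseteq\PR$ and $\BFA_{\aleph_1}$ for a bigger class implies it for a smaller one).

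Now I would quote the standard consequence of $\BPFA$ together with large cardinals: $\BPFA$ implies $\delta^1_2=\omega_2$ is \emph{false}---more precisely, in the presence of a proper class of Woodin cardinals (which gives $\Sigma^1_3$-absoluteness and the requisite descriptive-set-theoretic infrastructure), $\BPFA$ implies $\bm{\delta}^1_2<\o_2$. The cleanest route is via the theorem of Todor\v{c}evi\'c and others that $\BPFA$ implies $\mathfrak{d}\le\aleph_2$ is not quite enough; instead one uses that $\BPFA$ plus a proper class of Woodin cardinals implies every $\bm{\Delta}^1_2$ prewellordering has length $<\o_2$ because $\bm{\delta}^1_2$ is then a provably countable-cofinality ordinal below $\o_2$, or alternatively the argument of Caicedo--Veli\v{c}kovi\'c / Asper\'o showing $\BPFA$ implies $2^{\aleph_0}=\aleph_2$ and the reals are $\Sigma_1$-definable over $H_{\omega_2}$ from a real and $\o_1$, which forces $\bm{\delta}^1_2\le\o_1<\o_2$ under $\bm{\Delta}^1_2$-determinacy (available from the Woodins). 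So in $V[G_\delta]$ the statement ``$\bm{\delta}^1_2<\o_2$'' holds.

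The final step is to reflect this down to $V$. The statement $\bm{\delta}^1_2<\o_2$, once we know $\bm{\delta}^1_2$ is computed correctly (which it is, by $\bm{\Delta}^1_2$-determinacy from the Woodins, uniformly in $V$, $V[G_\delta]$, and $V[G]$), is a first-order statement about $H_{\o_2}$: it asserts the existence of some $\alpha<\o_2$ bounding the lengths of all $\bm{\Delta}^1_2$ prewellorderings, and being a $\bm{\Delta}^1_2$ prewellordering of a given length is arithmetic in a real coding the length together with the defining formula, hence absolute and expressible over $H_{\o_2}$. Therefore ``$\bm{\delta}^1_2<\o_2$'' holds in $H_{\o_2}^{V[G_\delta]}$, hence in $H_{\o_2}^{V[G]}$ by elementarity, hence in $H_{\o_2}^V$, hence in $V$.

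The main obstacle I anticipate is not the category-forcing bookkeeping---that is handled wholesale by Theorem~\ref{thm:mainth1}---but making the appeal to ``$\BPFA$ plus Woodins implies $\bm{\delta}^1_2<\o_2$'' precise and self-contained, in particular verifying that $\bm{\delta}^1_2$ is absolutely and correctly computed across the three models $V\subseteq V[G_\delta]\subseteq V[G]$ so that the reflection argument goes through, and checking carefully that the Woodin cardinals of $V$ survive into $V[G_\delta]$ (which does hold, since $\Gamma_\delta\in V_\delta$ is small relative to those Woodins). Getting the exact form of the descriptive-set-theoretic input right---whether one routes through $\Sigma^1_3$-absoluteness, through the tree representations of $\bm{\Pi}^1_2$ sets, or through the $\BPFA\Rightarrow$ ``reals are definable over $H_{\o_2}$'' route---is the delicate point, and I would cite the relevant literature (Woodin, Caicedo--Veli\v{c}kovi\'c, Asper\'o) rather than reprove it.
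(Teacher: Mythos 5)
Your outer frame is the right one and matches the paper: force with $\Gamma_\delta$, which is proper and forces $\BCFA(\Gamma)$, verify the statement in $V[G_\delta]$, and pull it back to $V$ via $H_{\o_2}^V\prec H_{\o_2}^{V^\Gamma}\succ H_{\o_2}^{V[G_\delta]}$. The problem is the middle step. First, a local error: from $\BFA_{\al_1}(\Gamma)$ with $\Gamma\sub\PR$ you cannot conclude $\BPFA$ --- the monotonicity you cite goes the wrong way ($\PR$ is the \emph{larger} class here), and indeed for $\Gamma=\PR\cap{}^\o\o\textsf{-bounding}$ the axiom $\BCFA(\Gamma)$ implies $\mathfrak d=\o_1$, which refutes $\BPFA$. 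Second, and more seriously, the implication you want to quote --- ``$\BPFA$ plus a proper class of Woodin cardinals implies $\bm{\delta}^1_2<\o_2$'' --- is not a theorem and is false: any model of $\MM$ with a proper class of Woodin cardinals satisfies $\BPFA$ together with $\bm{\delta}^1_2=\o_2$ (via saturation of $\NS_{\o_1}$ and Woodin's theorem; compare Lemma \ref{sspomegaomegaboundingvsdelta12} of this paper, which shows that forcing axioms of this kind push $\bm{\delta}^1_2$ \emph{up} to $\o_2$, not down). Your fallback claim that $\bm{\delta}^1_2\leq\o_1$ is also impossible: under sharps for reals one always has $\bm{\delta}^1_2=u_2>\o_1$.

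The mechanism the paper actually uses is descriptive-set-theoretic rather than forcing-axiomatic, and requires no forcing axiom in $V[G_\delta]$ at all. By the Neeman--Zapletal theorem, a proper class of Woodin cardinals implies that for any proper $\mtcl P$ the identity on $L(\mtbb R)^V$ is an elementary embedding of $L(\mtbb R)^V$ into $L(\mtbb R)^{V[G]}$; in particular it fixes every ordinal, so $\bm{\delta}^1_2$ is computed to be literally the same ordinal in $V$ and in $V[G_\delta]$. By Kunen--Martin, $(\bm{\delta}^1_2)^V\leq\o_2^V<\delta$, whereas $\Gamma_\delta$ makes $\delta$ into $\o_2$; hence $V[G_\delta]\models\bm{\delta}^1_2<\o_2$ for free, and the reflection step you already have finishes the proof. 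So the large-cardinal hypothesis enters through generic absoluteness of $L(\mtbb R)$ for proper forcing, not through any consequence of a bounded forcing axiom.
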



\begin{proof}
By a result of Neeman and Zapletal \cite{Neeman-Zapletal}, the existence of  a proper class of Woodin cardinals yields that if $\mtcl P$ 
is a proper poset and $G$ is $\mtcl P$-generic over $V$, then the identity on $L(\mtbb R)^{V}$ is an elementary 
embedding between $L(\mtbb R)^{V}$ and  $L(\mtbb R)^{V[G]}$. Since $\Gamma_\delta$ is proper, it follows that  
$V^{\Gamma_\delta}\models\bm{\delta}^1_2<\o_2$. Since `$\bm{\delta}^1_2<\o_2$'  is expressible 
over $H_{\o_2}$, we then have that $V \models\bm{\delta}^1_2<\o_2$ by the usual absoluteness argument. 
\end{proof}

Recall that the Strong Reflection Principle ($\SRP$) is the following assertion: For every set $X$ such that 
$\o_1\sub X$ and every $S\sub [X]^{\al_0}$ there is a strong reflecting sequence $(x_i)_{i<\o_1}$ for $S$, i.e., 
$x_i\in [X]^{\al_0}$, $(x_i)_{i<\o_1}$ is strictly $\sub$-increasing and $\sub$-continuous, and for all $i$, $x_i\notin S$ if 
and only if there is no $y\in S$ such that $x_i\sub y$ and $y\cap\o_1 =x_i\cap\o_1$. 

\begin{lemma}\label{sspomegaomegaboundingvsdelta12} 
 $\FA_{\al_1}(\SP\cap \STP\cap\,^\o\o\mbox{-bounding})$ implies $\bm{\delta}^1_2=\o_2$. 
\end{lemma}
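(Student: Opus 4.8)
The plan is to argue that the forcing $\Gamma_\delta$ associated to an inaccessible $\delta$ with $V_\delta\prec V$ (or, more directly, a suitable semiproper forcing witnessing $\FA_{\al_1}(\SP\cap\STP\cap\,^\o\o\mbox{-bounding})$) forces $\bm{\delta}^1_2=\o_2$, and then transfer this down to $V$ by the usual absoluteness argument over $H_{\o_2}$. The standard route to $\bm{\delta}^1_2=\o_2$ under $\MM$-type hypotheses goes through $\SRP$ (or already through $\SRP$ restricted to reflection of stationary subsets of $[\o_2]^{\al_0}$), which provides, for each $\a<\o_2$, a surjection $\o_1\to\a$ together with enough reflection so that the associated $\bm{\Delta}^1_2$ pre-well-ordering of length $\a$ can be coded by a real. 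First I would recall the classical fact (Woodin; see also Foreman--Magidor--Shelah) that $\SRP$ implies that every subset of $\o_2$ has a $\bm{\Delta}^1_2$-code in $\o_1$-many steps, hence $\bm{\delta}^1_2=\o_2$, and note that the relevant instance of $\SRP$ follows from $\FA_{\al_1}$ for a class of forcings of the form $\mtcl Q_{X,\t,\S}$ (the $\MRP$-style posets of Lemma \ref{MRP}) together with collapses, all of which are semiproper.

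The key point is therefore to verify that the natural forcing witnessing the relevant reflection instance lies in $\SP\cap\STP\cap\,^\o\o\mbox{-bounding}$, i.e. is semiproper, preserves Suslin trees, and is $^\o\o$-bounding. The posets of the form $\mtcl Q_{X,\t,\S}$ considered in Lemmas \ref{MRP} and \ref{psiAC-measurable} are exactly of this kind: by those lemmas (and by Lemma \ref{psiAC-measurable} in particular, using a measurable cardinal — or just properness via Lemma \ref{MRP} when no measurable is needed for the reflection instance) they are $({<}\o_1$-$)$semiproper, preserve Suslin trees, and add no new reals, hence are trivially $^\o\o$-bounding. The collapses $\Coll(\o_1,\mtcl P)$ used to amalgamate successive forcing steps are countably closed, hence semiproper, Suslin-tree preserving (Fact \ref{suslin-tree-pres-sigma-closed}), and add no reals. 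Thus an application of $\FA_{\al_1}(\SP\cap\STP\cap\,^\o\o\mbox{-bounding})$ to each such poset yields, for an arbitrary $\a<\o_2$ and an arbitrary subset $A$ of $\o_2$, a strong reflecting sequence below $\a$, and running this argument uniformly (over a cofinal set of $\a$'s, or invoking the standard equivalence between $\SRP$ and its restriction to $[\o_2]^{\al_0}$) gives $\SRP$, hence $\bm{\delta}^1_2=\o_2$.

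The main obstacle I anticipate is bookkeeping the implication ``$\SRP$ (or its $[\o_2]^{\al_0}$-fragment) $\Rightarrow\bm{\delta}^1_2=\o_2$'' carefully enough to see that it only uses reflection obtained from forcings in the restricted class $\SP\cap\STP\cap\,^\o\o\mbox{-bounding}$, rather than from arbitrary stationary set preserving forcings; one must check that the Suslin-tree-preserving, no-new-reals variants $\mtcl Q_{X,\t,\S}$ suffice to code $\bm{\Delta}^1_2$ pre-well-orderings, which they do because the coding only needs to reflect stationary subsets of $[\o_2]^{\al_0}$ that already live in the ground model — this is precisely the content available from Lemma \ref{psiAC-measurable}(2) and Lemma \ref{MRP}(2). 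A cleaner alternative, if one wants to avoid reproving the coding lemma, is to cite directly that $\FA_{\al_1}$ for the relevant $\SP\cap\STP\cap\,^\o\o\mbox{-bounding}$ forcings implies $\psi_{\AC}$ (Lemma \ref{bfa-o1-sp-stp-oo-bd-Club-Bounding}) and that $\psi_{\AC}$, together with the reflection it entails, suffices to push $\bm{\delta}^1_2$ up to $\o_2$ by the usual Woodin-style argument; this is the route I would take, spelling out only the extra step that the pre-well-ordering of length $\a$ is $\bm{\Delta}^1_2$ in a real coding the reflecting sequence.
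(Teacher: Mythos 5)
Your first half is essentially the paper's argument: apply $\FA_{\al_1}(\SP\cap\STP\cap\,^\o\o\mbox{-bounding})$ to the standard forcing for adding a strong reflecting sequence for a given $S\sub[X]^{\al_0}$, after checking that this forcing is semiproper, adds no reals (hence is $^\o\o$-bounding), and preserves Suslin trees (the last point by the argument at the end of the proof of Lemma \ref{psiAC-measurable}). Two small corrections there: the relevant poset is the $\SRP$-forcing itself, not the $\MRP$-posets $\mtcl P_{X,\t,\S}$ of Lemma \ref{MRP} (those are proper and witness a different reflection principle), and no measurable cardinal is needed; the $\psi_{\AC}$-poset is only relevant as a template for the Suslin-tree-preservation argument.

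The genuine gap is in the second half, the passage from $\SRP$ to $\bm{\delta}^1_2=\o_2$. Neither of your two proposed mechanisms works as stated. The claim that $\SRP$ directly yields $\bm{\Delta}^1_2$-codes for subsets of $\o_2$ ``in $\o_1$-many steps'' is not a standard fact and is not how the implication goes; and your ``cleaner alternative'' via $\psi_{\AC}$ fails because $\psi_{\AC}$ alone does not imply $\bm{\delta}^1_2=\o_2$ (it gives $L(\mtcl P(\o_1))\models\AC$ and Club Bounding, but neither the saturation of $\NS_{\o_1}$ nor closure under sharps, both of which are needed). The correct route, which is the one the paper takes, is: $\SRP$ implies $\lnot\Box_\kappa$ for every cardinal $\kappa\geq\o_1$, hence the universe is closed under sharps; $\SRP$ also implies that $\NS_{\o_1}$ is saturated; and by Woodin's theorem, saturation of $\NS_{\o_1}$ together with closure under sharps yields $\bm{\delta}^1_2=\o_2$ (via iterable generic ultrapowers producing, for each $\a<\o_2$, a $\bm{\Delta}^1_2$ pre-well-ordering of length $\a$). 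Without the saturation of $\NS_{\o_1}$ and the sharps, there is no mechanism in your proposal that actually produces the long $\bm{\Delta}^1_2$ pre-well-orderings.
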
 

\begin{proof} 
We have that  $\FA_{\al_1}(\SP\cap \STP\cap\,^\o\o\mbox{-bounding})$ implies $\SRP$ since, given 
$S\sub [X]^{\al_0}$, the standard forcing for adding a strong reflecting sequence for $S$ is semiproper, does not add reals, 
and preserves Suslin trees, where the last fact follows from an argument as in the final part of the proof of 
Lemma \ref{psiAC-measurable}. Also, $\SRP$ implies $\lnot\Box_\kappa$, for every cardinal $\k\geq\o_1$, 
and hence implies that the universe is closed under sharps. Since it also implies the saturation of $\NS_{\o_1}$, 
by a classical result of Woodin (\cite{WoodinBOOK}) it implies $\bm{\delta}^1_2=\o_2$. 
\end{proof}

\begin{corollary}
Suppose $\delta$ is a supercompact cardinal such that $V_\delta\prec V$. 
Let $\Gamma$ be any absolutely well-behaved class, defined with a parameter in $V_\delta$, such that $\lambda_\Gamma=\omega_1$ and such that $\SP\cap \STP\cap\,^\o\o\mbox{-bounding}\sub\Gamma$ forces in every generic extension via any member from $\Gamma$. Suppose $\BCFA(\Gamma)$ holds. Then $\bm{\delta}^1_2=\o_2$.
\end{corollary}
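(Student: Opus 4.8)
The strategy is to run exactly the same absoluteness argument that was used in Lemma~\ref{consespfa+++}, Lemma~\ref{consespfa+++omegaomega-bounding}, Lemma~\ref{scnd-conses}, and Proposition~\ref{consespfa+++extra}, now extracting the statement $\bm{\delta}^1_2=\omega_2$ rather than one of the previous combinatorial conclusions. First I would force with $\Gamma_\delta$ over $V$. Since $\delta$ is supercompact (in particular inaccessible) and $V_\delta\prec V$, Theorem~\ref{thm:mainth1} applies: $\Gamma_\delta\in\Gamma$, $\Gamma_\delta$ preserves the regularity of $\delta$ and makes it $\omega_2$, and $\Gamma_\delta$ forces $\BCFA(\Gamma)$; moreover, by Theorem~\ref{thm:mainth1}(\ref{thm:mainth1-4}), $H_{\omega_2}^{V}\prec H_{\omega_2}^{V^{\Gamma}}$ and $H_{\omega_2}^{V^{\Gamma_\delta}}\prec H_{\omega_2}^{V^{\Gamma}}$, so in particular $H_{\omega_2}^V$ and $H_{\omega_2}^{V^{\Gamma_\delta}}$ have the same first-order theory.

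\textbf{Obtaining $\bm{\delta}^1_2=\omega_2$ in $V^{\Gamma_\delta}$.} The key point is that $\Gamma_\delta$ forces the forcing axiom $\FA_{\aleph_1}(\Gamma)$: this is Theorem~\ref{thm:mainth1}(\ref{thm:mainth1-3}), which gives $\FA_{\omega_1}(\Gamma)$ precisely when $\delta$ is supercompact (which is our hypothesis). By hypothesis, $\SP\cap\STP\cap\,^\o\o\mbox{-bounding}\subseteq\Gamma$ holds in every generic extension via a member of $\Gamma$, and in particular in $V^{\Gamma_\delta}$; hence $\FA_{\aleph_1}(\Gamma)$ implies $\FA_{\aleph_1}(\SP\cap\STP\cap\,^\o\o\mbox{-bounding})$ there. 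By Lemma~\ref{sspomegaomegaboundingvsdelta12}, it follows that $V^{\Gamma_\delta}\models\bm{\delta}^1_2=\omega_2$.

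\textbf{Reflecting down to $V$.} Now `$\bm{\delta}^1_2=\omega_2$' is a statement expressible over $H_{\omega_2}$ (the relevant pre-well-orderings on $\mtbb{R}$, together with their lengths as countable-support codes, are all elements of $H_{\omega_2}$, and their $\bm{\Delta}^1_2$-definability is absolute between transitive models of a sufficient fragment of $\ZFC$). Therefore, since $H_{\omega_2}^{V^{\Gamma_\delta}}$ and $H_{\omega_2}^V$ satisfy the same first-order sentences (both being elementary in $H_{\omega_2}^{V^\Gamma}$, by the previous paragraph), and since $V^{\Gamma_\delta}\models\bm{\delta}^1_2=\omega_2$, we conclude $V\models\bm{\delta}^1_2=\omega_2$.

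\textbf{Expected main obstacle.} Everything in this argument is a routine instantiation of machinery already set up, so the only genuinely delicate point is the reduction $\FA_{\aleph_1}(\Gamma)\Rightarrow\FA_{\aleph_1}(\SP\cap\STP\cap\,^\o\o\mbox{-bounding})$ inside $V^{\Gamma_\delta}$: one must make sure that the inclusion $\SP\cap\STP\cap\,^\o\o\mbox{-bounding}\subseteq\Gamma$, which is only assumed to hold in generic extensions by members of $\Gamma$, indeed holds in $V^{\Gamma_\delta}$ itself (it does, since $\Gamma_\delta\in\Gamma$), and that the forcing axiom for the larger class restricts to the forcing axiom for the subclass in the usual way. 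This is straightforward but should be spelled out, as should the observation that $\bm{\delta}^1_2=\omega_2$ is genuinely first-order over $H_{\omega_2}$, which underlies the final reflection step.
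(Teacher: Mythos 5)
Your proposal is correct and follows essentially the same route as the paper's own proof: invoke Theorem~\ref{thm:mainth1}(\ref{thm:mainth1-3}) to get that $\Gamma_\delta$ forces $\FA_{\aleph_1}(\Gamma)$ (hence $\FA_{\aleph_1}(\SP\cap\STP\cap\,^\o\o\mbox{-bounding})$ by the inclusion hypothesis), apply Lemma~\ref{sspomegaomegaboundingvsdelta12}, and reflect `$\bm{\delta}^1_2=\o_2$' down to $V$ via the standard $H_{\omega_2}$-elementarity argument. You merely spell out the details that the paper compresses into ``the usual absoluteness argument.''
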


\begin{proof}
We know that $\Gamma_\delta$ forces $\FA_{\al_1}(\Gamma)$ and therefore, by our assumption, it also forces $\FA_{\al_1}(\SP\cap \STP\cap\,^\o\o\mbox{-bounding})$. But then $V\models \bm{\delta}^1_2=\o_2$ by Lemma \ref{sspomegaomegaboundingvsdelta12} together with the usual absoluteness argument. 
\end{proof}

\begin{question} Does  $\FA_{\al_1}(\o\mbox{-}\SP)$ imply $\bm{\delta}^1_2=\o_2$?
\end{question}

\section{Appendix}\label{appendix}

We collect here a few results (without proofs) 
translating the approach to forcing and iterations via posets to that
done via complete boolean algebras; for details see the forthcoming~\cite{VIAAUDSTEBOOK}
or \cite{VIAAUDSTE13} (the latter is available on ArXiv).

Given a boolean algebra $\bool{B}$ and a prefilter $G$  on $\bool{B}$
(i.e., a family such that $\bigwedge F>0_\bool{B}$ for all finite $F\subseteq G$), we 
denote by $\bool{B}/_G$ the quotient algebra obtained using the ideal 
$J=\bp{b:b\wedge c=0_{\bool{B}}\text{ for all $c\in G$}}$. $\bool{B}^+$ denotes the positive elements
of $\bool{B}$ and $\dot{G}_{\bool{B}}=\bp{\ap{\check{b},b}:b\in\bool{B}}$ is the canonical $\bool{B}$--name for the $V$--generic filter.

\begin{theorem}
Assume $(P,\leq)$ is a partial order. Let $\RO(P)$ denote the family of regular open sets
for the topology $\tau_P$ whose open sets are the downward closed subsets of $P$. The function $i_p:P\to \RO(P)$ given by
$i_P:p\mapsto \Reg{\downarrow p}$ (where $\downarrow p=\bp{q\in P:q\leq p}$ and $\Reg{A}$ is the interior of the closure of $A$ for the topology generated by the sets $\downarrow p$) is
an order and incompatibility preserving map of $(P,\leq)$ into $\RO(P)^+$ with image dense in $\RO(P)^+$,
and is such that
$p\Vdash_P\phi(\tau_1,\ldots, \tau_n)$ if and only if $i_P(p)\leq\Qp{\phi(\tau_1, \ldots, \tau_n)}_{\RO(P)}$.
\end{theorem}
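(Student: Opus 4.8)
The statement packages together several classical facts about the regular open algebra of a poset, and I would prove it in three independent parts: (i) $\RO(P)$, equipped with meet $A\wedge B=A\cap B$, join $A\vee B=\Reg{A\cup B}$ and complement $\lnot A=\mathrm{int}(P\setminus A)$, is a complete Boolean algebra; (ii) $i_P$ is a dense embedding, i.e.\ order-preserving, incompatibility-preserving, and with dense image in $\RO(P)^+$; (iii) the forcing relation $\Vdash_P$ matches the Boolean value in $\RO(P)$ along $i_P$. Part (i) is just the general theorem that the regular open sets of any topological space form a complete Boolean algebra, so I would either cite it or reproduce the short verification that $A\mapsto\mathrm{int}(\overline A)$ is a closure operator on the Boolean algebra of open sets modulo nowhere dense sets and that suprema of a family $\{A_i\}$ are computed by $\Reg{\bigcup_i A_i}$.

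For (ii) everything rests on two point-set computations in the topology $\tau_P$. Since every $\tau_P$-open set is downward closed and $\downarrow q$ is the least open neighbourhood of $q$, one gets $\overline{\downarrow p}=\{q:\ q\text{ is compatible with }p\}$, and hence $i_P(p)=\Reg{\downarrow p}=\mathrm{int}(\overline{\downarrow p})=\{q:\ (\forall r\leq q)\ r\text{ is compatible with }p\}$. Order preservation is then immediate from monotonicity of $\downarrow(\cdot)$ and of the operation $A\mapsto\mathrm{int}(\overline A)$, and since $\downarrow p$ is open and contained in $\overline{\downarrow p}$ we get $p\in i_P(p)$, so $i_P$ does land in $\RO(P)^+$. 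For the preservation of incompatibility: if $r\in i_P(p)\cap i_P(q)$, then $r\in i_P(p)\subseteq\overline{\downarrow p}$, so fix $s\leq r$ with $s\leq p$; since $i_P(q)$ is open, hence downward closed, $s\in i_P(q)\subseteq\overline{\downarrow q}$, so fix $t\leq s$ with $t\leq q$; then $t\leq p$ and $t\leq q$, contradicting $p\perp q$. For density of the image: given nonempty $A\in\RO(P)$ and $p\in A$, we have $\downarrow p\subseteq A$ because $A$ is open, hence $i_P(p)=\mathrm{int}(\overline{\downarrow p})\subseteq\mathrm{int}(\overline A)=A$, i.e.\ $\emptyset\neq i_P(p)\leq A$ in $\RO(P)$.

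For (iii) I would first make the implicit identification of names explicit by recursively assigning to each $P$-name $\tau$ the $\RO(P)$-name $\tau^{*}=\{(\sigma^{*},i_P(p)):\ (\sigma,p)\in\tau\}$, and observing that this assignment is, up to equivalence of names, onto the $\RO(P)$-names. Then I would prove by simultaneous induction on name rank and formula complexity that $p\Vdash_P\phi(\tau_1,\ldots,\tau_n)$ if and only if $i_P(p)\leq\Qp{\phi(\tau_1^{*},\ldots,\tau_n^{*})}_{\RO(P)}$, which is the displayed equivalence once $\tau_k$ is notationally identified with $\tau_k^{*}$. The atomic cases amount to matching the recursive clauses defining $\Vdash_P$ on $t\in s$ and $t=s$ against the recursive clauses defining $\Qp{\cdot}$, using part (ii) to convert ``$\{q\leq p:\ \ldots\}$ is predense below $p$'' into ``$i_P(p)\leq\bigvee\ldots$''. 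The Boolean connectives $\lnot$, $\wedge$ are routine, and the existential step uses the Mixing/Maximal Principle in the complete algebra $\RO(P)$ together with the identity $\Qp{\exists x\,\phi(x)}=\bigvee\{\Qp{\phi(\sigma)}:\ \sigma\text{ an }\RO(P)\text{-name of bounded rank}\}$ and the density from (ii), so that a $P$-generic witness corresponds to a name-witness in $V^{\RO(P)}$ and conversely.

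The point-set computations in (ii) and the Boolean-algebra axioms in (i) are entirely routine, so I expect the only real work to be the bookkeeping in (iii): setting up the translation $\tau\mapsto\tau^{*}$, verifying its compatibility with both forcing relations in the atomic cases, and controlling the existential step, where the ``supremum over all names'' must be replaced by a set-sized supremum via a Scott-type rank bound and where one must appeal to the completeness of $\RO(P)$ for the Maximal Principle. A secondary point worth flagging is that $i_P$ need not be injective when $P$ is not separative; this causes no problem because $\Vdash_P$ factors through the separative quotient of $P$, which is precisely the quotient on which $i_P$ restricts to an isomorphism onto a dense subalgebra of $\RO(P)$.
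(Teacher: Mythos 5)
The paper does not actually prove this theorem: it appears in the appendix, which the authors explicitly describe as a collection of results stated \emph{without} proofs, with details deferred to the cited references on Boolean-valued forcing (\cite{VIAAUDSTEBOOK}, \cite{VIAAUDSTE13}). So there is no in-paper argument to compare yours against; what can be assessed is whether your blind proof is sound, and it is.

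Your decomposition is the standard one and each step checks out. In part (ii) the two point-set identities $\overline{\downarrow p}=\{q: q\text{ is compatible with }p\}$ and $\mathrm{int}(\overline{\downarrow p})=\{q: \text{every }r\leq q\text{ is compatible with }p\}$ are correct for the topology whose opens are the downward closed sets (since $\downarrow q$ is the least open neighbourhood of $q$), and they yield order-preservation, preservation of incompatibility, non-triviality of $i_P(p)$ (via $p\in\downarrow p\subseteq\mathrm{int}(\overline{\downarrow p})$), and density of the image exactly as you argue. Part (iii) is the usual transfer of the forcing relation along a dense embedding: the translation $\tau\mapsto\tau^{*}$, the simultaneous induction on name rank and formula complexity, the use of completeness of $\RO(P)$ and a Scott-type rank bound at the existential step, and the conversion of predensity-below-$p$ statements into Boolean suprema via the density established in (ii). Your closing remark that $i_P$ need not be injective for non-separative $P$ is also apt; the theorem as stated claims only a dense (not injective) embedding, and this is precisely why the statement is phrased the way it is.
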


\subsection{Two-step iterations}

Given $\dot{C}$, a $\bool{B}$-name for a forcing notion,
by $\bool{B}\ast\dot{C}$ we intend the two-step iteration as defined in 
\cite[Section VIII.5]{KUNEN} (or, equivalently---when $\dot{\bool{C}}$ is 
a $\bool{B}$-name for a cba---according to \cite[Lemma 16.3]{JECH}, which is actually more in line with the approach pursued in \cite{VIAAUDSTEBOOK}
or \cite{VIAAUDSTE13}). To simplify notation we also feel free to confuse (in some occasions) a partial order with its boolean completion.

\begin{theorem}\label{qIso}
	If $i: \bool{B} \rightarrow \bool{C}$ is an injective complete homomorphism of complete boolean algebras, then 
	$$\bool{B} \ast (\bool{C}/_{i[\dot{G}_{\bool{B}}]}) \cong \bool{C}.$$
	
	Conversely, if $\dot{\bool{Q}}\in V^{\bool{B}}$ is a $\bool{B}$-name for a 
	complete boolean algebra and $G$ is $V$-generic for $\bool{B}$, then
	\[
	(\bool{B}\ast\dot{\bool{Q}})/_{i_{\bool{B}\ast\dot{\bool{C}}}[G]}\cong\dot{\bool{Q}}_G.
	\]
\end{theorem}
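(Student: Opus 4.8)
The plan is to prove both halves by writing down explicit candidate isomorphisms in terms of canonical names and Boolean values, and then reducing their verification to the mixing lemma (maximum principle) in $V^{\bool{B}}$ together with elementary properties of the projection onto a complete subalgebra. First I would record the preliminaries. Since $i$ is injective and complete, $i[\bool{B}]$ is a complete subalgebra of $\bool{C}$; let $h\colon\bool{C}\to\bool{B}$ be the corresponding projection (the left adjoint of $i$), so that $h$ preserves arbitrary suprema and $h(c)=0_{\bool{B}}$ iff $c=0_{\bool{C}}$. Then $\bool{B}$ forces $\bool{C}/_{i[\dot G_{\bool{B}}]}$ to be a complete Boolean algebra — the surviving suprema being produced by a fullness argument inside $V^{\bool{B}}$ — with quotient map $c\mapsto\pi_G(c)$, computed in each $V[G]$, satisfying $\pi_G(c_0)=\pi_G(c_1)$ iff $\neg h(c_0\triangle c_1)\in G$. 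In particular the two-step iteration $\bool{B}\ast(\bool{C}/_{i[\dot G_{\bool{B}}]})$ is well defined, and by \cite[Lemma 16.3]{JECH} its elements can be taken to be the equivalence classes $[\dot c]$ of $\bool{B}$-names $\dot c$ with $\Qp{\dot c\in\bool{C}/_{i[\dot G_{\bool{B}}]}}_{\bool{B}}=1_{\bool{B}}$, ordered by $[\dot c_0]\leq[\dot c_1]$ iff $\Qp{\dot c_0\leq\dot c_1}_{\bool{B}}=1_{\bool{B}}$.

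For the forward direction I would define
\[
\Phi\colon\bool{C}\longrightarrow\bool{B}\ast(\bool{C}/_{i[\dot G_{\bool{B}}]}),\qquad \Phi(c)=[\dot c],
\]
where $\dot c$ is a canonical $\bool{B}$-name with $\dot c_G=\pi_G(c)$ for all $V$-generic $G$. That $\Phi$ is a Boolean homomorphism is routine, since $\pi_G$ is a homomorphism in each $V[G]$; preservation of arbitrary suprema follows because every supremum in $\bool{C}$ is that of a family $X$ lying in $V$, and for such $X$ one has $\neg h\bigl((\bigvee X)\wedge\neg d\bigr)=\bigwedge_{c\in X}\neg h(c\wedge\neg d)$ by sup-preservation of $h$, which (via a density argument) translates into $\pi_G(\bigvee X)=\bigvee_{c\in X}\pi_G(c)$ in $V[G]$. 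Injectivity of $\Phi$ is immediate: $\Phi(c)=0$ forces $\neg h(c)\in\dot G_{\bool{B}}$, so $h(c)=0_{\bool{B}}$ and hence $c=0_{\bool{C}}$.

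The hard part is surjectivity of $\Phi$, where the mixing lemma does the real work. Given $[\dot q]$ with $\Qp{\dot q\in\bool{C}/_{i[\dot G_{\bool{B}}]}}_{\bool{B}}=1_{\bool{B}}$, I would choose in $V$ a maximal antichain $A\subseteq\bool{B}$ such that for every $b\in A$ there is $c_b\in\bool{C}$ with $b\Vdash_{\bool{B}}\dot q=[\check c_b]$ (refining $A$ as needed, using that elements of the quotient are represented by members of $\bool{C}$), and then set $c=\bigvee_{b\in A}\bigl(i(b)\wedge c_b\bigr)\in\bool{C}$. Since $A$ is an antichain and $i$ a homomorphism, $i(b)\wedge c=i(b)\wedge c_b$ for each $b\in A$; equivalently $b\leq\neg h(c\triangle c_b)$ in $\bool{B}$, i.e.\ $b\Vdash_{\bool{B}}[\check c]=[\check c_b]=\dot q$. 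As $A$ is maximal this yields $\Qp{\dot c=\dot q}_{\bool{B}}=1_{\bool{B}}$, that is $\Phi(c)=[\dot q]$. Thus $\Phi$ is a bijective homomorphism, hence an isomorphism, establishing the first assertion.

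For the converse I would apply the first part to $\bool{C}:=\bool{B}\ast\dot{\bool{Q}}$ together with its canonical complete embedding $i_{\bool{B}\ast\dot{\bool{Q}}}\colon\bool{B}\to\bool{C}$, obtaining $\bool{B}\ast\bigl(\bool{C}/_{i_{\bool{B}\ast\dot{\bool{Q}}}[\dot G_{\bool{B}}]}\bigr)\cong\bool{C}=\bool{B}\ast\dot{\bool{Q}}$, so it remains only to see that the $\bool{B}$-names $\bool{C}/_{i_{\bool{B}\ast\dot{\bool{Q}}}[\dot G_{\bool{B}}]}$ and $\dot{\bool{Q}}$ are forced to be isomorphic. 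For a $V$-generic $G$ the map $[\dot q]\mapsto\dot q_G$ from $(\bool{B}\ast\dot{\bool{Q}})/_{i_{\bool{B}\ast\dot{\bool{Q}}}[G]}$ to $\dot{\bool{Q}}_G$ is well defined (two $\bool{B}$-names for elements of $\dot{\bool{Q}}$ agreeing on a condition in $G$ are identified in the quotient, by the formula for $\pi_G$ in terms of $h$), is a homomorphism, and is bijective by the mixing argument run in reverse; being a bijective Boolean homomorphism it is an isomorphism, which is exactly the stated conclusion. The only genuine subtleties in the whole argument are the two mixing/fullness steps — completeness of the quotient and surjectivity of $\Phi$ — and the routine reconciliation of the Kunen-style and Jech-style presentations of $\ast$; everything else is bookkeeping.
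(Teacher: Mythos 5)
The paper never actually proves Theorem~\ref{qIso}: it sits in the appendix, which is explicitly a collection of results ``without proofs'' deferred to \cite{VIAAUDSTEBOOK} and \cite{VIAAUDSTE13}, so there is no in-paper argument to compare yours against. Judged on its own terms, your argument is correct and is the standard one. Your $h$ is exactly the adjoint $\pi_i$ of Theorem~\ref{eRetrProp}; the identification $c\in J$ iff $\neg\pi_i(c)\in G$ (equivalently $\pi_G(c_0)=\pi_G(c_1)$ iff $\neg\pi_i(c_0\triangle c_1)\in G$) is right; and the mixing step for surjectivity of $\Phi$ --- decide $\dot q$ to be $[\check c_b]$ on a maximal antichain $A$ and glue via $c=\bigvee_{b\in A}(i(b)\wedge c_b)$, using $i(b)\wedge c=i(b)\wedge c_b$ --- is the crux and is handled correctly. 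The converse map $[\dot q]\mapsto\dot q_G$, with well-definedness coming from $\pi_i([\dot q])=\Qp{\dot q>0_{\dot{\bool{Q}}}}$ and surjectivity from fullness, is also the standard argument.

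Two small points worth tightening. First, in the converse the appeal to the first half is superfluous: the statement to be proved \emph{is} that the quotient and $\dot{\bool{Q}}_G$ are isomorphic in $V[G]$, so the reduction ``it remains only to see that the names are forced isomorphic'' reduces nothing, and the direct construction you then give is the entire proof. Second, the sup-preservation step genuinely needs the density/genericity argument you only allude to: from $\neg\pi_i(c\wedge\neg d)\in G$ for each $c\in X$ one cannot conclude $\bigwedge_{c\in X}\neg\pi_i(c\wedge\neg d)\in G$ by filter closure alone; one must apply genericity to the complementary supremum $\bigvee_{c\in X}\pi_i(c\wedge\neg d)$ (or argue below a condition forcing the hypothesis). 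Similarly, completeness of $\bool{C}/_{i[\dot G_{\bool{B}}]}$ deserves its one line of mixing, e.g.\ that $\bigvee_{c\in\bool{C}}\bigl(c\wedge i(\Qp{[\check c]\in\dot A}_{\bool{B}})\bigr)$ represents the supremum of a set named by $\dot A$. Neither point is a gap in the mathematics, only in the write-up.
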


\begin{proposition}\label{prop:embfromembnames}
	Let $\dot{\bool{C}}_0$, $\dot{\bool{C}}_1$ be $\bool{B}$-names for complete boolean algebras, 
	and let $\dot{k}$ be a $\bool{B}$-name for a complete homomorphism from 
	$\dot{\bool{C}}_0$ to $\dot{\bool{C}}_1$. Then 
	there is a complete homomorphism 
	$i : ~ \bool{B} \ast \dot{\bool{C}}_0 \to \bool{B} \ast \dot{\bool{C}}_1$ such that
	\[
	\Qp{\dot{k}=i/_{\dot{G}_{\bool{B}}}}=\1_{\bool{B}}.
	\]
	Moreover if $\dot{k}$ is a $\bool{B}$ name for an injective homomorphism, $i$ is injective.
	
	\smallskip
	
	In the following diagrams we assume
	$G$ is $V$-generic for $\bool{B}$.
		\[
	\begin{tikzpicture}[xscale=1.5,yscale=-1.5]
	\node (VG) at (-6.8, 0.5) {$V[G]:$};

	\node (Q0a) at (-6, 0) {$(\dot{\bool{C}}_0)_G$};
	\node (Q1a) at (-6, 1) {$(\dot{\bool{C}}_1)_G$};
	\path (Q0a) edge [->]node [auto] {$\scriptstyle{(\dot{k})_G}$} (Q1a);

	\node (V) at (-3.5, 0.5) {$V:$};
	\node (B) at (-3, 0) {$\bool{B}$};
	\node (Q0) at (-2, 0) {$\bool{B}*\dot{\bool{C}}_0$};
	\node (Q1) at (-2, 1) {$\bool{B}*\dot{\bool{C}}_1$};
	\path (Q0) edge [->]node [auto] {$\scriptstyle{i}$} (Q1);
	\path (B) edge [->]node [auto] {$\scriptstyle{i_0}$} (Q0);
	\path (B) edge [->]node [auto,swap] {$\scriptstyle{i_1}$} (Q1);
	
	\node (VG1) at (-0.3, 0.5) {$V[G]:$};
	
   \node (Q02) at (1, 0) {$(\bool{B} * \dot{\bool{C}_0})/_{i_0[{G}_{\bool{B}}]}$};
    \node (Q12) at (1, 1) {$(\bool{B} * \dot{\bool{C}_1})/_{i_1[{G}_{\bool{B}}]}$};
    \node (Q03) at (3, 0) {$(\dot{\bool{C}}_0)_G$};
	\node (Q13) at (3, 1) {$(\dot{\bool{C}}_1)_G$};
	\path (Q03) edge [->]node [auto,swap] {$\scriptstyle{(\dot{k})_G}$} (Q13);

	\path (Q02) edge [->]node [auto] {$i/_{G_{\bool{B}}}$} (Q12);
	\path (Q02) edge [->]node [auto] (a) [sloped] {$\cong$} (Q03);
	\path (Q12) edge [->]node [auto,swap] (b) [sloped] {$\cong$} (Q13);
	\end{tikzpicture}
	\]
%
%
%
%
%
\end{proposition}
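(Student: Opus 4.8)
\textbf{Proof proposal for Proposition~\ref{prop:embfromembnames}.}

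The plan is to construct the map $i$ directly on a dense subset of $\bool{B}\ast\dot{\bool{C}}_0$ and then extend it to the boolean completion. First I would recall the concrete description of the two-step iteration: a dense subset of $\bool{B}\ast\dot{\bool{C}}_0$ is given by pairs $\ap{b,\dot{c}}$ with $b\in\bool{B}^+$ and $\dot{c}$ a $\bool{B}$-name forced by $b$ to be a nonzero element of $\dot{\bool{C}}_0$, ordered by $\ap{b',\dot{c}'}\leq\ap{b,\dot{c}}$ iff $b'\leq_{\bool{B}}b$ and $b'\leq\Qp{\dot{c}'\leq_{\dot{\bool{C}}_0}\dot{c}}_{\bool{B}}$. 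Given such a pair, set $i(\ap{b,\dot c})=\ap{b,\dot{k}(\dot c)}$, where $\dot{k}(\dot c)$ is the $\bool{B}$-name for the value of the homomorphism $\dot{k}$ applied to $\dot c$. Since $\Qp{\dot{k}\text{ is a complete homomorphism}}_{\bool{B}}=\1_{\bool{B}}$ and $b\Vdash\dot{c}>\0$, we only know $b\Vdash\dot{k}(\dot c)\geq\0$; so in general one must first pass to the condition $b\wedge\Qp{\dot k(\dot c)>\0}_{\bool B}$ below $b$. This is where one uses that $\dot k$ is merely a \emph{homomorphism} of complete boolean algebras and $i$ is correspondingly a \emph{complete} (not necessarily injective) homomorphism: the ``kernel'' of $i$ is exactly $\bigvee\{\ap{b,\dot c}:b\Vdash\dot k(\dot c)=\0\}$.

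Next I would check that $i$, so defined on the dense set of pairs, is order-preserving and sends suprema to suprema, hence extends uniquely to a complete homomorphism of the boolean completions $\RO(\bool{B}\ast\dot{\bool{C}}_0)\to\RO(\bool{B}\ast\dot{\bool{C}}_1)$; this is a routine verification using the forcing theorem in $V^{\bool B}$ applied to the statement that $\dot k$ preserves order and arbitrary joins, together with the mixing lemma to produce, for any antichain of pairs, a single name witnessing the join. The commutativity with the canonical embeddings $i_0:\bool{B}\to\bool{B}\ast\dot{\bool{C}}_0$ and $i_1:\bool{B}\to\bool{B}\ast\dot{\bool{C}}_1$, i.e.\ $i\circ i_0=i_1$, is immediate since $i_0(b)=\ap{b,\1_{\dot{\bool{C}}_0}}$ and $\dot k(\1_{\dot{\bool C}_0})=\1_{\dot{\bool C}_1}$ with boolean value $\1_{\bool B}$. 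For the ``moreover'' clause, note that if $\dot k$ is forced to be injective, then $b\Vdash\dot c>\0$ already implies $b\Vdash\dot k(\dot c)>\0$, so no passing to a smaller condition is needed, and distinct pairs map to distinct pairs; thus $i$ preserves incompatibility on a dense set and is therefore injective.

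Finally, to obtain the identity $\Qp{\dot k = i/_{\dot G_{\bool B}}}_{\bool B}=\1_{\bool B}$ I would unwind the definition of the generic quotient $i/_G$ via Theorem~\ref{qIso}: for $G$ $V$-generic for $\bool B$, the isomorphisms $(\bool B\ast\dot{\bool C}_j)/_{i_j[G]}\cong(\dot{\bool C}_j)_G$ identify the equivalence class of $\ap{b,\dot c}$ (for $b\in G$) with $\dot c_G$, and under this identification $i/_G$ sends $\dot c_G$ to $(\dot k(\dot c))_G=(\dot k)_G(\dot c_G)$, which is precisely the action of $(\dot k)_G$. I expect the main obstacle to be the careful bookkeeping around the non-injective case --- specifically verifying that the map on the dense set of pairs genuinely respects joins once one has had to shrink conditions below $\Qp{\dot k(\dot c)>\0}_{\bool B}$, so that the extension to the completion is well-defined and complete rather than merely a monotone map; this requires an appeal to the maximum principle in $V^{\bool B}$ and a short argument that the shrinking is harmless on a dense set.
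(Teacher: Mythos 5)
The paper does not actually prove this proposition: it appears in the appendix, which is explicitly a list of results stated without proof and deferred to \cite{VIAAUDSTEBOOK} and \cite{VIAAUDSTE13}. So there is no in-paper argument to compare against; judged on its own, your proposal is a correct rendering of the standard argument. One remark on efficiency: you work with the Kunen-style dense set of pairs $\ap{b,\dot c}$, which forces you to (a) shrink $b$ to $b\wedge\Qp{\dot k(\dot c)>\0}_{\bool B}$ and (b) run an extension-from-a-dense-set argument to get a complete homomorphism on the full completion. The paper's stated preference is Jech's presentation of $\bool B\ast\dot{\bool C}_j$ as the algebra of names $\dot c$ with $\Qp{\dot c\in\dot{\bool C}_j}_{\bool B}=\1_{\bool B}$ modulo forced equality; there the map $[\dot c]\mapsto[\dot k(\dot c)]$ is totally defined on the whole algebra, its preservation of arbitrary joins and complements is read off directly from $\Qp{\dot k\text{ is a complete homomorphism}}_{\bool B}=\1_{\bool B}$ together with the mixing computation of joins of names, and the entire dense-set bookkeeping you flag as "the main obstacle" disappears. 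Your route still works, but that is the step where the real effort sits.

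One small imprecision: in the injectivity clause, "distinct pairs map to distinct pairs" is not the relevant criterion and does not by itself give injectivity of the extension. For a complete homomorphism of cbas, injectivity is equivalent to the kernel being trivial, i.e.\ to no nonzero element being sent to $\0$. What you actually need (and what your preceding sentence correctly supplies) is that when $\dot k$ is forced injective, $b\Vdash\dot c>\0$ implies $b\Vdash\dot k(\dot c)>\0$, so every nonzero condition in the dense set has nonzero image; combined with the fact that the extension of a map on a dense set sends $x>\0$ to $\bigvee\{i(d):d\leq x,\ d\in D\}>\0$, this gives trivial kernel and hence injectivity. I would drop the "distinct pairs" phrasing and state it as a kernel computation, consistent with your own earlier observation that $\ker(i)=\bigvee\{\ap{b,\dot c}:b\Vdash\dot k(\dot c)=\0\}$.
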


	\begin{proposition}\label{prop:embfromembnames2}
	Let $G$ be $V$-generic for $\bool{B}$, $I$ be its dual ideal, and 
	$i_j:\bool{B}\to \bool{C}_j$ be complete homomorphisms for $j=0,1$.
	
	Assume $\bool{C}_0/_{i_0[G]}$ and  $\bool{C}_1/_{i_1[G]}$ are isomorphic complete boolean algebras in $V[G]$.
	
	\[
	\begin{tikzpicture}[xscale=1.5,yscale=-1.2]
	\node (VG) at (0.5, -0.5) {$V[G]:$};
	\node (V) at (-2.5, -0.5) {$V:$};
	\node (B) at (-2, 0) {$\bool{B}$};
	\node (Q0) at (-1, 0) {$\bool{C}_0$};
	\node (Q1) at (-1, 1) {$\bool{C}_1$};
	\path (B) edge [->]node [auto] {$\scriptstyle{i_0}$} (Q0);
	\path (B) edge [->]node [auto,swap] {$\scriptstyle{i_1}$} (Q1);
	\node (Q02) at (1.5, 0) {$\bool{C}_0/_{i_0[G]}$};
	\node (Q12) at (1.5, 1) {$\bool{C}_1/_{i_1[G]}$};
	\path (Q02) edge [->]node [auto] {$k\, \cong$} (Q12);
	\end{tikzpicture}
	\]
	
	Then 
	$\bool{C}_0\restriction i_0(b)$ and  $\bool{C}_1\restriction i_1(b)$ are isomorphic in $V$ for some $b\in G$
	and $k\cong l/_G$.
	
	\[
	\begin{tikzpicture}[xscale=1.5,yscale=-1.2]
\node (VG) at (-5, -0.5) {$V[G]:$};
	\node (Q02) at (-2.5, 0) {$\bool{C}_0 \upharpoonright i_0(b)/_{i_0[G]}$};
	\node (Q12) at (-2.5, 1.5) {$\bool{C}_1 \upharpoonright i_1(b)/_{i_1[G]}$};
	\node (Q0) at (-4, 0) {$\bool{C}_0/_{i_0[G]}$};
	\node (Q1) at (-4, 1.5) {$\bool{C}_1/_{i_1[G]}$};
	\path (Q02) edge [->]node[auto]  {$l/_G\, \cong$} (Q12);
	\path (Q0) edge[draw=none] node  [sloped] {$=$} (Q02);
	\path (Q1) edge[draw=none] node  [sloped] {$=$} (Q12);
	\path (Q0) edge [->]node[auto,swap] {$k\, \cong$} (Q1);

	\node (V) at (-0.5, -0.5) {$V:$};
	\node (B) at (0, 0) {$\bool{B}$};
	\node (Q0) at (1, 0) {$\bool{C}_0$};
	\node (Q1) at (1, 1.5) {$\bool{C}_1$};
	\path (B) edge [->]node [auto] {$\scriptstyle{i_0}$} (Q0);
	\path (B) edge [->]node [auto,swap] {$\scriptstyle{i_1}$} (Q1);
	\node (Q02) at (2.5, 0) {$\bool{C}_0 \upharpoonright i_0(b)$};
	\node (Q12) at (2.5, 1.5) {$\bool{C}_1 \upharpoonright i_1(b)$};
	\path (Q02) edge [->]node[auto]  {$l\, \cong$} (Q12);
	\path (Q0) edge[->] node  [auto] {$\scriptstyle{\sf{rest}}$} (Q02);
	\path (Q1) edge[->] node  [auto] {$\scriptstyle{\sf{rest}}$} (Q12);
	\end{tikzpicture}
	\]

\end{proposition}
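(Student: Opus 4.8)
The plan is to reduce this to Theorem~\ref{qIso} and Proposition~\ref{prop:embfromembnames}, after first representing the isomorphism $k\in V[G]$ by a $\bool{B}$-name which works below a single condition of $G$. For $j=0,1$ fix a canonical $\bool{B}$-name $\dot{\bool{Q}}_j$ with $(\dot{\bool{Q}}_j)_G=\bool{C}_j/_{i_j[G]}$. The hypothesis forces $(\dot{\bool{Q}}_j)_G$ to be a (nontrivial) complete boolean algebra, so no element of $G$ has zero image under $i_j$; equivalently $\coker(i_j)\in G$. Since $\bool{C}_0,\bool{C}_1\in V$ are set-sized, the forcing theorem gives a $\bool{B}$-name $\dot{k}$ and a condition $b\in G$ with $b\leq\coker(i_0)\wedge\coker(i_1)$, $\dot{k}_G=k$, and
\[
b\Vdash_{\bool{B}}\dot{k}\colon\dot{\bool{Q}}_0\to\dot{\bool{Q}}_1\ \text{is an isomorphism of complete boolean algebras.}
\]
Note that $i_j\restriction b\colon\bool{B}\restriction b\to\bool{C}_j\restriction i_j(b)$ is then an injective complete homomorphism.

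Next I would factor everything through $b$. Put $\bool{B}'=\bool{B}\restriction b$; let $\dot{\bool{R}}_j$ be the $\bool{B}'$-name $\bool{C}_j\restriction i_j(b)/_{(i_j\restriction b)[\dot{G}_{\bool{B}'}]}$, so that $(\dot{\bool{R}}_j)_G=\bool{C}_j/_{i_j[G]}=(\dot{\bool{Q}}_j)_G$, and let $\dot{k}_b$ be the relativisation of $\dot{k}$ to $\bool{B}'$, which is forced by $1_{\bool{B}'}$ to be an isomorphism from $\dot{\bool{R}}_0$ onto $\dot{\bool{R}}_1$. Applying Proposition~\ref{prop:embfromembnames} inside $\bool{B}'$ to $\dot{k}_b$ produces an injective complete homomorphism
\[
i\colon\bool{B}'\ast\dot{\bool{R}}_0\ \longrightarrow\ \bool{B}'\ast\dot{\bool{R}}_1
\]
with $\Qp{i/_{\dot{G}_{\bool{B}'}}=\dot{k}_b}=1_{\bool{B}'}$; as $\dot{k}_b$ is forced to be onto, the image of $i$ is dense, hence, being a complete subalgebra, equal to the target, so $i$ is an isomorphism in $V$.

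Finally I would identify these iterations with restrictions of the $\bool{C}_j$. By Theorem~\ref{qIso} applied to the injective complete homomorphism $i_j\restriction b$, there is an isomorphism $\Theta_j\colon\bool{B}'\ast\dot{\bool{R}}_j\to\bool{C}_j\restriction i_j(b)$ in $V$ under which the canonical embedding of $\bool{B}'$ into $\bool{B}'\ast\dot{\bool{R}}_j$ corresponds to $i_j\restriction b$. Then $l:=\Theta_1\circ i\circ\Theta_0^{-1}$ is an isomorphism of $\bool{C}_0\restriction i_0(b)$ onto $\bool{C}_1\restriction i_1(b)$ in $V$, which proves the first assertion. For $l/_G\cong k$, one quotients the triangle by $G$: by the converse part of Theorem~\ref{qIso} each $\Theta_j$ descends, modulo $G$, to a canonical isomorphism between $(\bool{B}'\ast\dot{\bool{R}}_j)/_{i_{\bool{B}'\ast\dot{\bool{R}}_j}[G]}$ and $\bool{C}_j/_{i_j[G]}$, while $i$ descends to $i/_G=(\dot{k}_b)_G=k$; composing these identifications yields $l/_G=k$.

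The step I expect to cost the most work is not any of the above moves individually but the bookkeeping tying them together: verifying that restriction to a condition commutes with two-step iteration and with passing to generic quotients (so that $\bool{B}'\ast\dot{\bool{R}}_j$ really is $\bool{B}\ast\dot{\bool{Q}}_j$ read below the relevant condition, and that the identification descends correctly modulo $G$), and that Proposition~\ref{prop:embfromembnames} is legitimately applied inside $\bool{B}'$ with the relativised names $\dot{\bool{R}}_j$, $\dot{k}_b$. Dealing with a possibly non-injective $i_j$ is harmless once one observes, as above, that the hypothesis on the quotients forces $\coker(i_j)\in G$, so that working below $b\leq\coker(i_0)\wedge\coker(i_1)$ costs nothing.
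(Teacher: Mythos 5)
The paper does not actually prove this proposition: it sits in the appendix, which the authors explicitly present as a list of results ``without proofs,'' deferring the details to \cite{VIAAUDSTEBOOK} and \cite{VIAAUDSTE13}. So there is no in-paper argument to compare yours against, and I can only judge the proposal on its own terms. It is correct, and it is the canonical derivation from the two quoted appendix results: reading off $\coker(i_j)\in G$ from the non-degeneracy of the quotients, using the forcing theorem to capture $k$ by a name $\dot k$ forced below some $b\in G$ to be an isomorphism, passing to $\bool{B}\restriction b$ so that the $i_j\restriction b$ become injective, applying Proposition~\ref{prop:embfromembnames} to get $i$ in $V$, and transporting it through the isomorphisms $\bool{B}'\ast\dot{\bool{R}}_j\cong\bool{C}_j\restriction i_j(b)$ of Theorem~\ref{qIso}. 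Your closing paragraph correctly identifies where the real work is (checking that restriction commutes with two-step iteration and with the quotient construction, so that $\bool{C}_j\restriction i_j(b)/_{(i_j\restriction b)[G]}$ really is $\bool{C}_j/_{i_j[G]}$). The one step I would ask you to expand is the claim that forced surjectivity of $\dot k_b$ makes the image of $i$ dense: this is true, but it deserves a line --- e.g.\ given a positive $(b',\dot r)$ in (a dense subset of) $\bool{B}'\ast\dot{\bool{R}}_1$, use fullness to find $\dot s$ with $b'\Vdash\dot k_b(\dot s)=\dot r$ and observe $i(b',\dot s)=(b',\dot r)$; alternatively, for each $d$ in the target the set of $b''\in\bool{B}'$ with $d\wedge i_1'(b'')\in\ran(i)$ is predense, and completeness of $\ran(i)$ then gives $d\in\ran(i)$. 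With that spelled out the argument is complete.
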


	\begin{lemma}\label{EmbTwoStepPI1pers}
	Assume $\Gamma$ is a definable class of forcings.
	Let $\bool{B}$, $\bool{C}_0$, $\bool{C}_1$ be complete boolean algebras, 
	and let $G$ be any $V$-generic filter for $\bool{B}$. 
	Let $i_0$, $i_1$, $j$ be $\Gamma$-correct complete homomorphisms in $V$
	forming a commutative diagram of injective complete homomorphisms of complete boolean algebras
	as in the following picture:
	\[
	\begin{tikzpicture}[xscale=1.5,yscale=-1.2]
		\node (B) at (0, 0) {$\bool{B}$};
		\node (Q0) at (1, 0) {$\bool{C}_0$};
		\node (Q1) at (1, 1) {$\bool{C}_1$};
		\path (B) edge [->]node [auto] {$\scriptstyle{i_0}$} (Q0);
		\path (B) edge [->]node [auto,swap] {$\scriptstyle{i_1}$} (Q1);
		\path (Q0) edge [->]node [auto] {$\scriptstyle{j}$} (Q1);
	\end{tikzpicture}
	\]
	Then in $V[G]$, $j/_G: \bool{C}_0/_G \to \bool{C}_1/_G$ is still a  $\Gamma^{V[G]}$-correct complete
	homomorphism. 
\end{lemma}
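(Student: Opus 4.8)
\textbf{Proof plan for Lemma~\ref{EmbTwoStepPI1pers}.}
The plan is to reduce the claim to two separate facts about the map $j/_G$: first, that it remains a \emph{complete} injective homomorphism in $V[G]$, and second, that it remains \emph{$\Gamma$-correct} in $V[G]$. For the first part I would invoke the standard quotient machinery recalled in the appendix: by Theorem~\ref{qIso} we have $\bool{C}_\ell/_{i_\ell[G]}\cong(\dot{\bool{Q}}_\ell)_G$ where $\dot{\bool{Q}}_\ell$ is the $\bool{B}$-name with $\bool{B}\ast\dot{\bool{Q}}_\ell\cong\bool{C}_\ell$, and by (the injective case of) Proposition~\ref{prop:embfromembnames}, since $j:\bool{C}_0\to\bool{C}_1$ is an injective complete homomorphism commuting with $i_0,i_1$, it arises as $j=i/_{\dot G_\bool{B}}$ for a $\bool{B}$-name $i$ for an injective complete homomorphism from $\dot{\bool{Q}}_0$ to $\dot{\bool{Q}}_1$; evaluating at $G$ gives that $j/_G=i_G:(\dot{\bool{Q}}_0)_G\to(\dot{\bool{Q}}_1)_G$ is an injective complete homomorphism in $V[G]$.

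For the second part, unwind the definition of $\Gamma$-correctness. The map $j/_G$ is $\Gamma^{V[G]}$-correct iff, letting $\dot H$ be a $(\bool{C}_0/_G)$-name for the $V[G]$-generic filter, $V[G]$ forces (below $\coker(j/_G)$) that $(\bool{C}_1/_G)/_{(j/_G)[\dot H]}$ lies in $\Gamma^{V[G][\dot H]}$. The key observation is that a $V[G]$-generic filter $H$ for $\bool{C}_0/_G$ corresponds exactly to a $V$-generic filter $G_0$ for $\bool{C}_0$ with $i_0^{-1}[G_0]=G$, via the isomorphism $\bool{C}_0\cong\bool{B}\ast\dot{\bool{Q}}_0$; and then $V[G][H]=V[G_0]$, and the quotient $(\bool{C}_1/_G)/_{(j/_G)[H]}$ is canonically isomorphic in $V[G_0]$ to $\bool{C}_1/_{j[G_0]}$. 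So the statement to be verified becomes: whenever $G_0$ is $V$-generic for $\bool{C}_0$ with $i_0(\coker(i_0))\in G_0$, the algebra $\bool{C}_1/_{j[G_0]}$ is in $\Gamma^{V[G_0]}$ — and this is precisely what the $\Gamma$-correctness of $j:\bool{C}_0\to\bool{C}_1$ in $V$ gives us, together with the fact (clause~\ref{def:wellbeh6}, or simply that $\phi_\Gamma$ has no class quantifiers) that $\Gamma^{V[G_0]}$ is correctly computed in the larger model $V[G][H]=V[G_0]$. One has to also track the cokernels: $\coker(j/_G)=\coker(j)/_G$ in the appropriate sense, so the inequality $\Qp{\bool{C}_1/_{j[\dot G]}\in\dot\Gamma}\geq\coker(j)$ in $V$ restricts to the analogous inequality in $V[G]$ after passing to the quotient.

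The main obstacle I expect is bookkeeping the three-fold layering of generic extensions and making the identifications $V[G][H]=V[G_0]$ and $(\bool{C}_1/_G)/_{(j/_G)[H]}\cong\bool{C}_1/_{j[G_0]}$ fully precise at the level of names rather than just generic objects — i.e.\ reformulating the forcing relation so that the boolean value $\Qp{(\bool{C}_1/_{\dot G_{\bool{C}_0}})/_{(j/_{\dot G_\bool{B}})[\dot G_{\bool{C}_0/_G}]}\in\dot\Gamma}$ computed over $\bool{B}$ and then over $\bool{C}_0/_G$ agrees with $\Qp{\bool{C}_1/_{j[\dot G_{\bool{C}_0}]}\in\dot\Gamma}$ computed directly over $\bool{C}_0$. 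This is essentially the same ``careful reformulation of Cohen's forcing theorem'' alluded to in the discussion of the Factor Lemma, but restricted to a single step, so it is manageable; once the identifications are in place, the conclusion is immediate from the $\Gamma$-correctness of $i_0,i_1,j$ in $V$ and the absoluteness of $\phi_\Gamma$ between $V[G_0]$ and its inner/outer presentations.
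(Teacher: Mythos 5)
The paper does not actually prove this lemma: it appears in the appendix, which the authors explicitly describe as a collection of results stated \emph{without} proofs, with the details deferred to \cite{VIAAUDSTEBOOK} and \cite{VIAAUDSTE13}. So there is no in-paper argument to compare yours against, and your proposal has to be judged on its own terms. On those terms it is correct and is the standard argument: completeness and injectivity of $j/_G$ come from Theorem~\ref{qIso} and Proposition~\ref{prop:embfromembnames}, and $\Gamma^{V[G]}$-correctness reduces, via the two-step-iteration correspondence between $V[G]$-generic filters $H$ for $\bool{C}_0/_{i_0[G]}$ and $V$-generic filters $G_0$ for $\bool{C}_0$ with $i_0^{-1}[G_0]=G$, to the $\Gamma$-correctness of $j$ in $V$; the commutativity $j\circ i_0=i_1$ is what guarantees $i_1[G]\subseteq j[G_0]$ and hence that the iterated quotient $(\bool{C}_1/_{i_1[G]})/_{(j/_G)[H]}$ is canonically isomorphic to $\bool{C}_1/_{j[G_0]}$ — you should make that use of commutativity explicit. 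Two small corrections. First, the cokernel bookkeeping is vacuous here: the hypotheses make all three maps injective, so every cokernel is $1$. Second, your appeal to clause~\ref{def:wellbeh6} is out of place: the lemma assumes only that $\Gamma$ is definable, not well-behaved, and no absoluteness of $\phi_\Gamma$ is needed, because $V[G][H]$ and $V[G_0]$ are literally the same model, so $\Gamma^{V[G][H]}=\Gamma^{V[G_0]}$ by definition rather than by any reflection argument. The real work, which you correctly identify, is the name-level verification that the boolean value $\Qp{\bool{C}_1/_{j[\dot{G}_{\bool{C}_0}]}\in\dot{\Gamma}}_{\bool{C}_0}$ decomposes correctly through $\bool{B}\ast(\bool{C}_0/_{i_0[\dot{G}_{\bool{B}}]})$; this is exactly the ``careful reformulation of Cohen's forcing theorem'' the authors allude to in their discussion of the Factor Lemma, and is routine once set up.
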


	\begin{proposition}\label{lem:PI1pers-2}
	Assume $\Gamma$ is a definable class of forcings. 	Let
	$G$ be $V$-generic for some complete boolean algebra $\bool{B}$.
	Assume $k:\bool{B}\to\bool{R}$ is a $\Gamma$-correct homomorphism in $V$ and
	$h:\bool{R}/_{k[G]}\to \bool{Q}$ is a $\Gamma$-correct homomorphism in $V[G]$.
	\[
			\begin{tikzpicture}
						\node (B) at (0, 0) {$\bool{B}$};
						\node (Q0) at (2, 0) {$\bool{R}$};
						\node (V) at (-1, 0) {$V:$};
						\path (B) edge [->]node [auto] {$\scriptstyle{k}$} (Q0);
		
						\node (Q0G) at (8, 0) {$\bool{R}/_{k[G]}$};
						\node (QG) at (8, -2) {$\bool{Q}$};
						\node (V[G]) at (5, 0) {$V[G]:$};
						\path (Q0G) edge [->]node [auto] {$\scriptstyle{h}$} (QG);
			\end{tikzpicture}
	\]
	
	Then there are in $V$:
	\begin{itemize} 
	\item
	$\bool{C}\in \Gamma$, 
	\item
	a $\Gamma$-correct homomorphism 
	$l:\bool{B}\to\bool{C}$,
	\item
	a $\Gamma$-correct homomorphism
	$\bar{h}:\bool{R}\to\bool{C}$,
	\end{itemize}
	such that:
	\begin{itemize}
	\item
	$\bool{Q}$ is isomorphic to $\bool{C}/_{l[G]}$  in $V[G]$,
	\item
	$\bar{h}/_G\cong h$ (modulo the isomorphism of $\bool{Q}$ with $\bool{C}/_{l[G]}$) holds in $V[G]$,
	\item
	$\bar{h}\circ k=l$ holds in $V$,
	\item
	$0_{\bool{C}}\not\in  l[G]$.
	\end{itemize}
	\[
				\begin{tikzpicture}
							\node (B) at (0, 0) {$\bool{B}$};
							\node (Q0) at (3, 0) {$\bool{R}$};
							\node (C) at (3,-3) {$\bool{C}$};
							\node (V) at (-1, 0) {$V:$};
							\path (B) edge [->]node [auto] {$\scriptstyle{k}$} (Q0);
							\path (Q0) edge [->, dashed]node [auto] {$\scriptstyle{\bar{h}}$} (C);
							\path (B) edge [->, dashed ]node [auto,swap] {$\scriptstyle{l}$} (C);
				
							\node (BG) at (7, 0) {$\2\cong \bool{B}/G$};
							\node (Q0G) at (10, 0) {$\bool{R}/_{k[G]}$};
							\node (CG) at (10, -3) {$\bool{C}/l[G]\cong \bool{Q}$};
							\node (V[G]) at (5, 0) {$V[G]:$};
							\path (Q0G) edge [->]node [auto] {$\scriptstyle{\bar{h}/_G\cong h}$} (CG);
							\path (BG) edge [->, dashed]node [auto,swap] {$\scriptstyle{k/_G}\cong \id$} (Q0G);
							\path (BG) edge [->, dashed]node [auto,swap] {$\scriptstyle{l/_G}\cong \id$} (CG);
							
				\end{tikzpicture}
	 \]
	\end{proposition}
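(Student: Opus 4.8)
The plan is to realize $\bool{C}$ as the two-step iteration $\bool{B}\ast\dot{\bool{Q}}$, where $\dot{\bool{Q}}\in V^{\bool{B}}$ is a name for $\bool{Q}$, and to obtain $\bar h$ as the name-level pushforward of $h$ supplied by Proposition~\ref{prop:embfromembnames}. First I would fix witnessing names: since $h\in V[G]$, choose $\dot{\bool{Q}}$, $\dot h\in V^{\bool{B}}$ with $\dot{\bool{Q}}_G=\bool{Q}$ and $\dot h_G=h$, arranged so that $\Qp{\dot h:\bool{R}/_{k[\dot{G}_{\bool{B}}]}\to\dot{\bool{Q}}\text{ is }\Gamma\text{-correct}}_{\bool{B}}=1_{\bool{B}}$, where $\bool{R}/_{k[\dot{G}_{\bool{B}}]}$ is read through its canonical name $\dot{\bool{R}}_0$. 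We may assume $\coker(k)\in G$ (otherwise $\bool{R}/_{k[G]}$ is trivial, the degenerate case), and after replacing $\bool{B}$ by $\bool{B}\restriction\coker(k)$ and $G$ by the induced generic, we may assume $k$ is injective; then Theorem~\ref{qIso} identifies $\bool{R}$ with $\bool{B}\ast\dot{\bool{R}}_0$ so that $k$ becomes the canonical embedding $i_{\bool{B}\ast\dot{\bool{R}}_0}$.

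Next I would set $\bool{C}=\bool{B}\ast\dot{\bool{Q}}$ and let $l=i_{\bool{B}\ast\dot{\bool{Q}}}:\bool{B}\to\bool{C}$ be the canonical embedding. Applying Proposition~\ref{prop:embfromembnames} to the names $\dot{\bool{C}}_0=\dot{\bool{R}}_0$, $\dot{\bool{C}}_1=\dot{\bool{Q}}$ and $\dot k=\dot h$ produces a complete homomorphism $\bar h:\bool{B}\ast\dot{\bool{R}}_0\to\bool{C}$ with $\Qp{\dot h=\bar h/_{\dot{G}_{\bool{B}}}}_{\bool{B}}=1_{\bool{B}}$ and $\bar h\circ i_{\bool{B}\ast\dot{\bool{R}}_0}=l$; under the identification above this reads $\bar h:\bool{R}\to\bool{C}$ with $\bar h\circ k=l$. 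The four equational/isomorphism conclusions are then immediate: $\bool{C}/_{l[G]}\cong\dot{\bool{Q}}_G=\bool{Q}$ by the converse half of Theorem~\ref{qIso}; $\bar h/_G\cong\dot h_G=h$ by the commutativity in Proposition~\ref{prop:embfromembnames}; $\bar h\circ k=l$ as just noted; and $0_{\bool{C}}\notin l[G]$ since $l$ is injective and $0_{\bool{B}}\notin G$.

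It remains to verify the three $\Gamma$-theoretic clauses, and this is where the work lies. For $l$ we have $\bool{C}/_{l[\dot{G}_{\bool{B}}]}\cong\dot{\bool{Q}}$, and $\bool{B}$ forces $\dot{\bool{Q}}\in\dot\Gamma$: indeed $k$ being $\Gamma$-correct gives $\bool{R}/_{k[G]}\in\Gamma^{V[G]}$, and then $h$ being $\Gamma$-correct gives $\bool{Q}\in\Gamma^{V[G]}$ by closure of $\Gamma^{V[G]}$ under two-step iterations. Hence $l$ is $\Gamma$-correct, and $\bool{C}=\bool{B}\ast\dot{\bool{Q}}\in\Gamma$ by closure under two-step iterations (using $\bool{B}\in\Gamma$, as holds in all the applications of this proposition).

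The genuinely delicate point, which I expect to be the main obstacle, is the $\Gamma$-correctness of $\bar h$, and I would establish it by pulling back $\Gamma$-correctness from $V[G]$. Given any $V$-generic $K$ for $\bool{R}$ with $\coker(\bar h)\in K$, put $G=k^{-1}[K]$ and let $K_0\in V[K]$ be the induced $V[G]$-generic filter for $\bool{R}/_{k[G]}$, so that $V[K]=V[G][K_0]$. Using associativity of generic quotients — Theorem~\ref{qIso} applied along $\bool{B}\to\bool{R}\to\bool{C}$ together with $\bar h\circ k=l$ — one identifies $\bool{C}/_{\bar h[K]}$ with $(\bool{C}/_{l[G]})/_{(\bar h/_G)[K_0]}\cong\bool{Q}/_{h[K_0]}$. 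Since $\bar h/_G=h$ is $\Gamma^{V[G]}$-correct, this quotient lies in $\Gamma^{V[G][K_0]}=\Gamma^{V[K]}$; as $K$ was arbitrary, $\bar h$ is $\Gamma$-correct. Throughout this last step I would lean on the definability and forcing-absoluteness of the predicate "$\in\dot\Gamma$" already exploited in Lemma~\ref{EmbTwoStepPI1pers} and in the Factor Lemma, in order to guarantee that the class $\Gamma$ is computed consistently across $V[G]$, $V[G][K_0]$ and $V[K]$.
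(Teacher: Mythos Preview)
The paper does not prove this proposition: it sits in the Appendix, which explicitly ``collect[s] here a few results (without proofs)'' and refers to \cite{VIAAUDSTEBOOK} and \cite{VIAAUDSTE13} for details. So there is no in-paper argument to compare against.

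Your approach---set $\bool{C}=\bool{B}\ast\dot{\bool{Q}}$, take $l$ to be the canonical embedding, and obtain $\bar h$ from Proposition~\ref{prop:embfromembnames} applied to $\dot{\bool{R}}_0$, $\dot{\bool{Q}}$, $\dot h$---is the natural construction and is essentially correct. Two small points. First, you rightly observe that the conclusions $\bool{C}\in\Gamma$ and ``$l$ is $\Gamma$-correct'' tacitly require $\bool{B}\in\Gamma$ together with closure of $\Gamma$ under two-step iterations; these are not stated as hypotheses but hold in every use of the proposition in the paper (where $\Gamma$ is absolutely well-behaved). Second, to get the boolean value $1_{\bool{B}}$ for ``$\dot h$ is $\Gamma$-correct'' you must first pick $b\in G$ forcing it and then redefine $\dot{\bool{Q}}$, $\dot h$ trivially on $\neg b$ (e.g.\ as $\bool{R}/_{k[\dot G_{\bool{B}}]}$ and the identity); the resulting $l$ then has $\coker(l)\leq b\in G$, which is harmless. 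Your associativity argument for the $\Gamma$-correctness of $\bar h$ is the right idea; just make explicit that $\coker(\bar h)\in K$ pushes down to $\coker(h)\in K_0$ (since $\bar h/_G=h$), so that the $\Gamma$-correctness of $h$ in $V[G]$ genuinely applies to $K_0$.
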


\subsection{Limit length iterations}

\begin{theorem}  \label{eRetrProp}
	Assume $i: \bool{B} \to \bool{C}$ is a complete injective homomorphism of complete boolean algebra.
	Then the following holds:
	\begin{enumerate}
	\item
	$i$ has an adjoint $\pi_i:\bool{C} \to \bool{B}$ defined by $\pi_i(c)=\bigwedge\bp{b\in\bool{B}:i(b)\geq c}$
	\item
	For any $b \in \bool{B}$ and $c,d \in \bool{C}$, we have that:
	\begin{enumerate}[(a)]
		\item $\pi_i$ is order preserving; 
		\item $(\pi_i\circ i)(b)= b$, hence $\pi_i$ is surjective;
		\item \label{eRPComp} 
		$(i\circ\pi_i)(c)\geq c$; in particular, 
		$\pi_i$ maps $\bool{C} ^+$ to $\bool{B}^+$;
		\item \label{eRPJoins} $\pi_i$ preserves joins, i.e., $\pi_i (\bigvee X)= \bigvee \pi_i[X]$ for all 
		$X \subseteq \bool{C}$ for which the supremum $\bigvee X$ exists in $\bool{C}$;
		\item $i(b)= \bigvee \{e : \pi_i (e) \leq b \}$;
		\item \label{eRPHomo} $\pi_i (c \wedge i(b)) = \pi_i(c)\wedge b = \bigvee \{ \pi_i(e): e \leq c, \pi_i(e) \leq b\}$;
		\item \label{eRPMeets} $\pi_i$ preserves neither meets nor complements whenever 
		$i$ is not surjective, but $\pi_i(d \wedge c) \leq \pi_i(d) \wedge \pi_i(c)$ and 
		$\pi_i(\neg c) \geq \neg \pi_i(c)$.\qedhere
	\end{enumerate}
	\end{enumerate}
\end{theorem}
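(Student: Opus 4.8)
The plan is to reduce the entire statement to the order-theoretic adjunction
\[
\pi_i(c)\leq b\quad\Longleftrightarrow\quad c\leq i(b),\qquad b\in\bool{B},\ c\in\bool{C},
\]
and then read off (a)--(g) from it, using the boolean structure only at one point. First I would check that $\pi_i$ is well-defined: the set $\bp{b\in\bool{B}:i(b)\geq c}$ contains $1_{\bool{B}}$ and $\bool{B}$ is complete, so the defining meet exists; and since $i$ is a \emph{complete} homomorphism it commutes with that meet, so $i(\pi_i(c))=\bigwedge\bp{i(b):i(b)\geq c}\geq c$. This is already item (c), and it shows $\pi_i(c)$ is in fact the \emph{least} $b$ with $i(b)\geq c$, from which the adjunction is immediate (if $\pi_i(c)\leq b$ then $c\leq i(\pi_i(c))\leq i(b)$; if $c\leq i(b)$ then $b$ lies in the set defining $\pi_i(c)$). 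I would also record for later use that an injective homomorphism reflects the order: from $i(x)\geq i(y)$ one gets $i(x\vee y)=i(x)$, hence $x\vee y=x$, i.e.\ $x\geq y$.

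From the adjunction the elementary items are routine. Item (a) is the standard monotonicity of a left adjoint. For (b), the adjunction applied to $c=i(b)$ gives $\pi_i(i(b))\leq b$, while $i(\pi_i(i(b)))\geq i(b)$ together with order-reflection gives $\pi_i(i(b))\geq b$; hence $\pi_i\circ i=\id$, so $\pi_i$ is onto. The ``$\bool{C}^+\to\bool{B}^+$'' clause of (c) is then clear, since $c>0$ forces $i(\pi_i(c))\geq c>0$ and hence $\pi_i(c)\neq 0$. For (d), monotonicity gives $\bigvee\pi_i[X]\leq\pi_i(\bigvee X)$, and the reverse inequality follows from the adjunction once one notes $i(\bigvee\pi_i[X])=\bigvee i(\pi_i(x))\geq\bigvee X$, using that $i$ preserves joins together with (c). For (e), with $S=\bp{e:\pi_i(e)\leq b}$ the adjunction gives $e\leq i(b)$ for every $e\in S$, so $\bigvee S\leq i(b)$, while $i(b)\in S$ by (b), so $\bigvee S=i(b)$.

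Item (f) is where I would use that $\bool{B}$, $\bool{C}$ are boolean: the key is the decomposition $c=(c\wedge i(b))\vee(c\wedge i(\neg b))$, legitimate because $i$ is a homomorphism so $\neg i(b)=i(\neg b)$. Applying (d) gives $\pi_i(c)=p\vee q$ with $p:=\pi_i(c\wedge i(b))\leq\pi_i(i(b))=b$ and $q:=\pi_i(c\wedge i(\neg b))\leq\neg b$, whence $\pi_i(c)\wedge b=p=\pi_i(c\wedge i(b))$, the first equality in (f). The second equality is then immediate, as $c\wedge i(b)$ is among the $e$ with $e\leq c$ and $\pi_i(e)\leq b$, while conversely every such $e$ satisfies $\pi_i(e)\leq\pi_i(c)\wedge b$.

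Finally (g): the inequality $\pi_i(d\wedge c)\leq\pi_i(d)\wedge\pi_i(c)$ is just (a), and $\pi_i(\neg c)\geq\neg\pi_i(c)$ follows from $\pi_i(c)\vee\pi_i(\neg c)=\pi_i(1)=1$ via (d). To see that equality fails in both whenever $i$ is not surjective, I would argue contrapositively: preservation of binary meets is equivalent, via De Morgan and (d), to preservation of complements, and if either held then $\pi_i$ would be a complete boolean homomorphism with $\pi_i\circ i=\id$; then $e:=i\circ\pi_i$ is an idempotent endomorphism of $\bool{C}$ with $e\geq\id$, and since homomorphisms commute with $\neg$ we get $e(\neg c)=\neg e(c)\geq\neg c$, i.e.\ $e(c)\leq c$, so $e=\id$ and $i$ is surjective. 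The only mildly delicate step is the ``$\geq$'' half of (f), and the boolean decomposition above disposes of it; everything else is bookkeeping with the adjunction.
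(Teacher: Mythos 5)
The paper states this theorem in the appendix explicitly \emph{without proof}, deferring to external references, so there is no in-paper argument to compare yours against; I can only assess your proposal on its own terms, and it is correct and complete. Your organization around the adjunction $\pi_i(c)\leq b\iff c\leq i(b)$ is the standard and natural one: completeness of $i$ gives $i(\pi_i(c))\geq c$ at the outset, which simultaneously yields (c), identifies $\pi_i(c)$ as the least preimage-bound, and makes (a), (b), (d), (e) formal consequences. The two places where something beyond the adjunction is needed are handled properly: the decomposition $c=(c\wedge i(b))\vee(c\wedge i(\neg b))$ together with (d) gives the nontrivial inequality in (f), and the contrapositive argument for (g) --- that a meet- or complement-preserving $\pi_i$ would make $i\circ\pi_i$ an idempotent homomorphism dominating the identity, hence equal to it --- correctly establishes that failure of surjectivity forces failure of both preservation properties. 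One small point worth making explicit if you write this up: order-reflection of $i$ (used to get $\pi_i(i(b))\geq b$ in (b)) is exactly where injectivity enters, and without it (b), (e), and the second halves of (c) and (g) all fail; your proof does isolate this correctly.
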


\begin{definition} 
 $\FFF=\{i_{\alpha \beta}:\bool{B}_\alpha\to \bool{B}_\beta\,\mid\,\alpha \leq \beta < \lambda\}$ is a \emph{complete iteration system} of complete boolean algebras iff for all $\alpha \leq \beta \leq \gamma < \lambda$:
	\begin{enumerate}
		\item $\bool{B}_\alpha$ is a complete boolean algebra and $i_{\alpha \alpha}$ is the identity on it;
		\item $i_{\alpha \beta}$ is a complete injective homomorphism and 
		$\pi_{\alpha\beta}:\bool{B}_\beta\to\bool{B}_\alpha$, given by 
		$c\mapsto\bigwedge\bp{b\in\bool{B}:i_{\alpha\beta}(b)\geq c}$, is its associated 
		adjoint;
		\item $i_{\beta \gamma}\circ i_{\alpha \beta}=i_{\alpha \gamma}$.
 \end{enumerate}

Let $\FFF$ be a complete iteration system of length $\lambda$. Then:

\begin{itemize}
\item
 The \emph{inverse limit} of the iteration is
	\[
	\varprojlim(\FFF) = \bp{ f \in \prod_{\alpha < \lambda} \bool{B}_\alpha : ~ \forall \alpha \forall \beta > \alpha\, ~ (\pi_{\alpha \beta}\circ f)(\beta) = f(\alpha) }
	\]
	and its elements are called \emph{threads}. 
\item
	The \emph{direct limit} is
	\[
	\varinjlim(\FFF) = \bp{ f \in \varprojlim(\FFF) : ~ \exists \alpha \forall \beta > \alpha ~ f(\beta) = i_{\alpha\beta}(f(\alpha)) }
	\]
	and its elements are called \emph{constant threads}. 
	The support of a constant thread, $\supp(f)$, is 
	the least $\alpha$ such that $(i_{\alpha\beta}\circ f)(\alpha)=f(\beta)$ for all $\beta\geq\alpha$. 
\item
	The \emph{revised countable support limit} 
	is\footnote{This definition of revised contable support limit is due to Donder and Fuchs; it is not clear
	whether it is in any sense equivalent to Miyamoto or Shelah's definition of rcs-limit. It is nonetheless effective
	to prove the main results about semiproper iterations: specifically, the 
	forthcoming~\cite{VIAAUDSTEBOOK} or \cite{VIAAUDSTE13} (which is available on ArXiv) give complete
	proofs of the preservation of semiproperness through RCS--iterations. However, in this paper
	we also want to preserve properties stronger than semiproperness. The proofs of 
	such preservation results are not given in
	in~\cite{VIAAUDSTEBOOK} or in~\cite{VIAAUDSTE13}; hence we refer the reader to Miyamoto's or Shelah's
	iteration theorems for the preservation through RCS--limits (according to their definitions) of these stronger properties.}
	\[
	\rcslim(\FFF) = \bp{ f \in \varprojlim(\FFF) : ~ f \in \varinjlim(\FFF) \vee \exists \alpha ~ f(\alpha) \Vdash_{\bool{B}_\alpha} \cof(\check{\lambda}) = \check{\omega} }.
	\]
\end{itemize}	
\end{definition}

\begin{theorem}
Assume $\bp{P_\alpha,\dot{Q}_\beta:\alpha\leq\lambda,\beta<\lambda}$ is an iteration of posets.
Let $i_{\alpha\beta}:\RO(P_\alpha)\to\RO(P_\beta)$ be the complete homomorphism induced
by the natural inclusion of $P_\alpha$ into $P_\beta$. Then:
\begin{itemize}
\item 
$\FFF=\bp{i_{\alpha,\beta}:\alpha\leq\beta<\lambda}$ is an iteration system of  complete injective homomorphisms
of complete boolean algebras.
\item
If $\lambda=\omega$,  $\invlim(\FFF)$ is isomorphic to the boolean completion of the full limit of
$\bp{P_\alpha,\dot{Q}_\beta:\alpha\leq\lambda,\beta<\lambda}$.
\item For any regular $\lambda$,
$\dirlim(\FFF)$ is isomorphic to the boolean completion of the direct limit of
$\bp{P_\alpha,\dot{Q}_\beta:\alpha\leq\lambda,\beta<\lambda}$.
\end{itemize}
\end{theorem}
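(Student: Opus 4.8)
The plan is to treat the three bullets in turn, reducing the last two to the successor-stage identification of $\RO(P\ast\dot{Q})$ with a two-step iteration of complete Boolean algebras, together with a hands-on analysis of the relevant limit. For the first bullet, the starting point is the standard fact from the general theory of iterated forcing that for $\alpha\le\beta$ the inclusion $P_\alpha\hookrightarrow P_\beta$ is a complete embedding of posets, the restriction map $q\mapsto q\restriction\alpha$ being a reduction of $P_\beta$ to $P_\alpha$. By the theorem opening this appendix, $i_{P}:P\to\RO(P)^{+}$ is order- and incompatibility-preserving with dense image; applying this to $P_\alpha\hookrightarrow P_\beta$ yields a complete \emph{injective} homomorphism $i_{\alpha\beta}:\RO(P_\alpha)\to\RO(P_\beta)$, whose adjoint $\pi_{\alpha\beta}$, furnished by Theorem~\ref{eRetrProp}, is exactly the map induced by restriction $q\mapsto q\restriction\alpha$ under $i_{P_\alpha}$ and $i_{P_\beta}$. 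The cocycle identity $i_{\beta\gamma}\circ i_{\alpha\beta}=i_{\alpha\gamma}$ holds because both sides are complete homomorphisms $\RO(P_\alpha)\to\RO(P_\gamma)$ agreeing on the dense subalgebra $i_{P_\alpha}[P_\alpha]$ — there both are induced by the inclusion $P_\alpha\hookrightarrow P_\gamma$ — and a complete homomorphism is determined by its values on a dense set; likewise $i_{\alpha\alpha}=\id$. Hence $\FFF$ is a complete iteration system.

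For the remaining two bullets it is convenient to argue by induction on the length, so that the real work is at a limit stage $\lambda$. Let $P_\lambda$ denote the relevant poset-theoretic limit — the full (inverse) limit when $\lambda=\omega$, the direct (bounded-support) limit for $\lambda$ regular — and define
\[
e:P_\lambda\to\prod_{\alpha<\lambda}\RO(P_\alpha),\qquad e(p)=\langle i_{P_\alpha}(p\restriction\alpha):\alpha<\lambda\rangle .
\]
Using that restriction in the poset corresponds to the adjoints $\pi_{\alpha\beta}$, one checks that $e(p)$ is a thread, so $e$ maps into $\invlim(\FFF)$; and in the direct-limit case $\supp(e(p))\le\supp(p)<\lambda$, so there $e$ maps into $\dirlim(\FFF)$. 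The map $e$ is order-preserving by construction, and it is incompatibility-preserving since $p\perp p'$ in $P_\lambda$ iff some finite restriction $p\restriction n\perp p'\restriction n$ in $P_n$, iff $i_{P_n}(p\restriction n)\wedge i_{P_n}(p'\restriction n)=0$ in $\RO(P_n)$, iff $e(p)\wedge e(p')=0$ in the limit — meets in $\invlim(\FFF)$, and in $\dirlim(\FFF)$, being computed coordinatewise. Finally, $\invlim(\FFF)$ is complete, since the coordinatewise join of a family of threads is again a thread by join-preservation of the adjoints (Theorem~\ref{eRetrProp}(\ref{eRPJoins})), and likewise $\dirlim(\FFF)$ is complete when $\lambda$ is regular; consequently, once we know that $e$ has dense image, the standard fact that an order- and incompatibility-preserving map with dense image from a poset into a complete Boolean algebra exhibits that algebra as the completion of the poset gives the desired isomorphisms.

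To see that $e$ has dense image when $\lambda=\omega$, given a nonzero $f\in\invlim(\FFF)$ one builds $p=\langle p(n):n<\omega\rangle$ by recursion on $n$, maintaining $i_{P_n}(p\restriction n)\le f(n)$. Granted $p\restriction n$, use the successor identification $\RO(P_{n+1})\cong\RO(P_n)\ast\dot{\RO}(\dot{Q}_n)$ (a form of Theorem~\ref{qIso}, cf.\ \cite[Lemma~16.3]{JECH}): since $i_{P_n}(p\restriction n)\le f(n)=\pi_{n,n+1}(f(n+1))$, a mixing argument below $i_{P_n}(p\restriction n)$ produces a $P_n$-name $\dot r$ for a positive element of $\RO(\dot{Q}_n)$ with $\langle i_{P_n}(p\restriction n),\dot r\rangle\le f(n+1)$, and then density of $\dot{Q}_n$ among $P_n$-names for elements of $\RO(\dot{Q}_n)$ yields a $P_n$-name $p(n)$ for an actual $\dot{Q}_n$-condition with $\langle i_{P_n}(p\restriction n),i_{P_n}(p(n))\rangle\le f(n+1)$. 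The resulting $p$ is automatically a condition of the full-support limit, and $e(p)\le f$ coordinatewise. For $\lambda$ regular the argument is the same but easier: a nonzero constant thread $f$ has bounded support $\alpha_0<\lambda$, and a bounded induction below $\alpha_0$ using the same successor identification refines $f(\alpha_0)$ to some $i_{P_{\alpha_0}}(p\restriction\alpha_0)$, after which one extends $p$ trivially above $\alpha_0$.

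The delicate part is the density argument in the inverse-limit case: one must make the canonical identifications $\RO(P_{n+1})\cong\RO(P_n)\ast\dot{\RO}(\dot{Q}_n)$ fully precise and carry the recursion through so that the name-conditions $p(n)$ cohere and $e(p)\le f$ holds in \emph{every} coordinate at once — the book-keeping with names for elements of $\RO(\dot{Q}_n)$ versus names for genuine $\dot{Q}_n$-conditions being where all the friction lies. A secondary point to be pinned down is the completeness of $\dirlim(\FFF)$ for regular $\lambda$ (so that ``the Boolean completion of the direct limit'' is literally that algebra), which uses regularity of $\lambda$ to control supports; for $\lambda=\omega$ this degenerates to the completeness of $\invlim(\FFF)$ already noted. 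These verifications are carried out in detail in \cite{VIAAUDSTEBOOK} (see also \cite{VIAAUDSTE13}).
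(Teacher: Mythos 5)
The paper itself contains no proof of this statement: it sits in the appendix, which is explicitly a list of results ``without proofs'' deferring to \cite{VIAAUDSTEBOOK,VIAAUDSTE13}. So your proposal can only be assessed on its own terms. Your first bullet is fine, and so is the direct-limit bullet: for bounded-support conditions, incompatibility really is detected at any single stage beyond both supports, so the coordinatewise argument is legitimate there (and no induction is needed for density --- $i_{P_{\alpha_0}}[P_{\alpha_0}]$ is already dense in $\RO(P_{\alpha_0})$). The inverse-limit bullet, however, has a genuine gap: your proof that $e$ preserves incompatibility rests on the equivalence ``$p\perp p'$ in $P_\omega$ iff $p\restr n\perp p'\restr n$ for some $n$'' and on the claim that meets in $\invlim(\FFF)$ are computed coordinatewise. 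Both are false. For the first, let the first iterand be Cohen forcing adding $c\in 2^\omega$ and let each later iterand be the poset $\bp{1,a,b}$ with $a\perp b$; let $p(n)$ name $a$ if $c(n)=0$ and $b$ otherwise, and let $q(n)$ name $a$. Every finite restriction of $p$ is compatible with the corresponding restriction of $q$ (extend the Cohen coordinate by a string of zeros), but a common extension in the full limit would require a single finite Cohen condition forcing $c(n)=0$ for all $n$. For the second, the adjoints do not preserve meets --- this is clause (\ref{eRPMeets}) of your own Theorem~\ref{eRetrProp} --- so the coordinatewise meet of two threads is in general not a thread, and two threads can be coordinatewise compatible everywhere while having no nonzero thread below both (exactly what happens for $e(p)$ and $e(q)$ above).

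The repair is that incompatibility preservation is not a separate easy check but another run of your mixing recursion: given a nonzero thread $h\leq e(p),e(q)$, one builds $r\in P_\omega$ with $i_{P_n}(r\restr n)\leq h(n)$ by recursion, using at each successor step that $h(n+1)\leq i_{P_{n+1}}(p\restr (n+1))\wedge i_{P_{n+1}}(q\restr (n+1))$ to choose $r(n)$ a name forced below both $p(n)$ and $q(n)$; the resulting $r$ witnesses $p\parallel q$ in $P_\omega$. A secondary but real issue is your treatment of completeness: closure under coordinatewise suprema makes $\invlim(\FFF)$ a complete lattice, not a complete Boolean algebra (complementation and distributivity do not follow, precisely because meets are not coordinatewise), and $\dirlim(\FFF)$ is certainly not complete for uncountable regular $\lambda$, since a supremum of constant threads with unbounded supports is not constant. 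The theorem has to be read as asserting an isomorphism of regular open completions, i.e.\ that $e$ is a dense, order- and incompatibility-preserving embedding, which is what your argument should be organized to deliver.
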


\begin{theorem}[Baumgartner] \label{iBaumgartner}
	Let $\lambda$ be a regular cardinal and
	$\FFF = \{i_{\alpha \beta} : \alpha \leq \beta < \lambda\}$ be an iteration system such that 
	$\bool{B}_\alpha$ is ${<}\lambda$-cc for all 
	$\alpha$ and $S = \bp{\alpha: ~ \bool{B}_\alpha \cong \RO(\varinjlim(\FFF \rest{\alpha}))}$ 
	is stationary. Then $\varinjlim(\FFF)$ is ${<}\lambda$-cc.
\end{theorem}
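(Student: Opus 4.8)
The plan is to argue by contradiction. Work throughout with complete Boolean algebras, so that for $\bool{B}:=\varinjlim(\FFF)$ each canonical map $i_{\alpha,\bool{B}}:\bool{B}_\alpha\to\bool{B}$ (sending $b$ to its constant thread) is an injective complete homomorphism, with an adjoint $\pi_{\alpha,\bool{B}}:\bool{B}\to\bool{B}_\alpha$ as in Theorem~\ref{eRetrProp}. I will use three facts, all immediate from the appendix: (i) every element of $\bool{B}$ has a support $<\lambda$, so $b\in i_{\supp(b),\bool{B}}[\bool{B}_{\supp(b)}]$; (ii) for $\delta\in S$, since $\bool{B}_\delta\cong\RO(\varinjlim(\FFF\rest{\delta}))$ and that direct limit is exactly $\bigcup_{\beta<\delta}i_{\beta\delta}[\bool{B}_\beta]$, this union is dense in $\bool{B}_\delta$; (iii) the adjoint identities $i(\pi(c))\geq c$ (so $\pi$ sends positive elements to positive elements) and $\pi(c\wedge i(b))=\pi(c)\wedge b$. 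Suppose $A\subseteq\bool{B}$ is an antichain with $|A|\geq\lambda$; extending it, assume $A$ is a maximal antichain of $\bool{B}$, with $|A|\geq\lambda$ still. For $\delta<\lambda$ write $A_\delta:=\{a\in A:\supp(a)<\delta\}$, an antichain all of whose elements lie in the complete subalgebra $i_{\delta,\bool{B}}[\bool{B}_\delta]$, so that $A_\delta$ pulls back along the injective complete homomorphism $i_{\delta,\bool{B}}$ to an antichain $A_\delta'$ of $\bool{B}_\delta$ of the same cardinality. Since $\bool{B}_\delta$ is ${<}\lambda$-cc, $|A_\delta|=|A_\delta'|<\lambda$. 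Hence it suffices to find $\delta<\lambda$ for which $A_\delta$ is maximal in $\bool{B}$: then $A_\delta=A$ (any $a\in A\setminus A_\delta$ would be incompatible with every member of the maximal antichain $A_\delta$), giving $|A|<\lambda$ -- a contradiction.

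To find such a $\delta$, first do some bookkeeping below $\lambda$. For each $\beta<\lambda$ the set $\{\pi_{\beta,\bool{B}}(a):a\in A\}$ is predense in $\bool{B}_\beta$: if $e\in\bool{B}_\beta^+$, then $i_{\beta,\bool{B}}(e)$ is compatible (in $\bool{B}$) with some $a\in A$, and applying $\pi_{\beta,\bool{B}}$ together with fact (iii) yields $\pi_{\beta,\bool{B}}(a)\wedge e>0$. Refine this predense set to a maximal antichain $M_\beta\subseteq\bool{B}_\beta$ with each $m\in M_\beta$ below some $\pi_{\beta,\bool{B}}(a_m)$, $a_m\in A$; by ${<}\lambda$-cc of $\bool{B}_\beta$ we have $|M_\beta|<\lambda$, so $h(\beta):=\sup\{\supp(a_m):m\in M_\beta\}+1<\lambda$ by regularity of $\lambda$. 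The set $C:=\{\delta<\lambda:h[\delta]\subseteq\delta\}$ is a club of $\lambda$, so I may pick $\delta\in C\cap S$.

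The crux is to check that this $\delta$ works. Suppose $A_\delta$ is not maximal in $\bool{B}$. Then $A_\delta'$ is not predense in $\bool{B}_\delta$: otherwise, given any $b\in\bool{B}^+$, $\pi_{\delta,\bool{B}}(b)$ would be compatible in $\bool{B}_\delta$ with some $c\in A_\delta'$, and fact (iii) would give $\pi_{\delta,\bool{B}}(b\wedge i_{\delta,\bool{B}}(c))=\pi_{\delta,\bool{B}}(b)\wedge c>0$, hence $b$ compatible with $i_{\delta,\bool{B}}(c)\in A_\delta$, making $A_\delta$ maximal. So fix $u'\in\bool{B}_\delta^+$ incompatible in $\bool{B}_\delta$ with every element of $A_\delta'$. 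By fact (ii) there are $\beta<\delta$ and $e\in\bool{B}_\beta^+$ with $i_{\beta\delta}(e)\leq u'$. Since $M_\beta$ is a maximal antichain of $\bool{B}_\beta$, pick $m\in M_\beta$ with $e':=e\wedge m>0$; then $e'\leq m\leq\pi_{\beta,\bool{B}}(a_m)$, so fact (iii) gives $\pi_{\beta,\bool{B}}(a_m\wedge i_{\beta,\bool{B}}(e'))=\pi_{\beta,\bool{B}}(a_m)\wedge e'=e'>0$, whence $a_m\wedge i_{\beta,\bool{B}}(e')>0_{\bool{B}}$. On the other hand, $\beta<\delta\in C$ forces $\supp(a_m)\leq h(\beta)<\delta$, so $a_m\in A_\delta$, say $a_m=i_{\delta,\bool{B}}(c_m)$ with $c_m\in A_\delta'$; and $i_{\beta,\bool{B}}(e')\leq i_{\beta,\bool{B}}(e)=i_{\delta,\bool{B}}(i_{\beta\delta}(e))\leq i_{\delta,\bool{B}}(u')$, which is disjoint from $i_{\delta,\bool{B}}(c_m)=a_m$ because $u'\wedge c_m=0$ in $\bool{B}_\delta$. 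This contradicts $a_m\wedge i_{\beta,\bool{B}}(e')>0$, so $A_\delta$ is maximal, and the proof is complete.

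The main obstacle, I expect, is precisely this last verification: engineering the bookkeeping function $h$ so that one and the same $\delta\in S$ both witnesses density of $\bigcup_{\beta<\delta}i_{\beta\delta}[\bool{B}_\beta]$ in $\bool{B}_\delta$ and catches, below $\delta$, a full set of witnesses for every maximal antichain $M_\beta$ with $\beta<\delta$; and then transferring compatibility/incompatibility faithfully between $\bool{B}$ and the complete algebra $\bool{B}_\delta$ via the adjoint $\pi_{\delta,\bool{B}}$ and the injectivity of $i_{\delta,\bool{B}}$. By contrast, the case in which the supports of the members of $A$ are bounded below $\lambda$ is a trivial instance of the above (any $\delta\in S$ above the bound has $A_\delta=A$), and could also be settled in one line by pulling $A$ back along the relevant $i_{\gamma,\bool{B}}$; it presents no difficulty.
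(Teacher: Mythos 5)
Your proof is correct: the closing-off argument via the functions $\beta\mapsto h(\beta)$, the projected predense sets $\{\pi_{\beta,\bool{B}}(a):a\in A\}$, and the use of a $\delta\in C\cap S$ at which $\bigcup_{\beta<\delta}i_{\beta\delta}[\bool{B}_\beta]$ is dense in $\bool{B}_\delta$ is exactly the classical Baumgartner argument, and every step (in particular the transfers of compatibility via the adjoint identities $\pi(c\wedge i(b))=\pi(c)\wedge b$ and $i\circ\pi\geq\mathrm{id}$ from Theorem~\ref{eRetrProp}) checks out. The paper itself states this theorem in the appendix without proof, deferring to the cited references on iteration systems, and your argument coincides with the standard proof given there.
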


These results suffice to prove all the needed equivalences (used in this paper)
between results on forcing and iterations proved in the language of 
partial orders and their corresponding formulation in terms of complete boolean algebras.

	\bibliographystyle{amsplain}
	\bibliography{Biblio}

\providecommand{\bysame}{\leavevmode\hbox to3em{\hrulefill}\thinspace}
\providecommand{\MR}{\relax\ifhmode\unskip\space\fi MR }
\providecommand{\MRhref}[2]{%
  \href{http://www.ams.org/mathscinet-getitem?mr=#1}{#2}
}
\providecommand{\href}[2]{#2}
\begin{thebibliography}{10}

\bibitem{AFMS}
D.~Asper\'{o}, S.D. Friedman, Mota M.A., and M.~Sabok, \emph{Bounded forcing
  axioms and baumgartner's conjecture}, Annals of Pure and Applied Logic
  \textbf{164} (2013), no.~12, 1178--1186.

\bibitem{ASPSCH(*)}
D.~Asper\'o and R.~Schindler, \emph{$\mathsf{MM}^{++}$ implies $(*)$},
  https://arxiv.org/abs/1906.10213, 2019.

\bibitem{Aspero-Welch}
D.~Asper\'{o} and P.D. Welch, \emph{Bounded martin's maximum, weak erd\"{o}s
  cardinals, and {$\psi_{AC}$}}, The Journal of Symbolic Logic \textbf{67}
  (2002), no.~3, 1141--1152.

\bibitem{VIAAUDSTEBOOK}
Giorgio Audrito, Raph\"ael Carroy, Silvia Steila, and Matteo Viale,
  \emph{Iterated forcing, category forcings, generic ultrapowers, generic
  absoluteness}, Book in preparation, 2017.

\bibitem{AUDVIA}
Giorgio Audrito and Matteo Viale, \emph{Absoluteness via resurrection}, J.
  Math. Log. \textbf{17} (2017), no.~2, 1750005, 36. \MR{3730561}

\bibitem{VIAAUD}
\bysame, \emph{Absoluteness via resurrection}, J. Math. Log. \textbf{17}
  (2017), no.~2, 1750005, 36. \MR{3730561}

\bibitem{CAIVEL06}
Andr\'es~Eduardo Caicedo and Boban Veli\v{c}kovi\'c, \emph{The bounded proper
  forcing axiom and well orderings of the reals}, Math. Res. Lett. \textbf{13}
  (2006), no.~2-3, 393--408. \MR{2231126}

\bibitem{Deiser-Donder}
O.~Deiser and Donder D., \emph{Canonical functions, non-regular ultrafilters
  and ulam’s problem on {$\omega_1$}}, The Journal of Symbolic Logic
  \textbf{68} (2003), no.~3, 713--739.

\bibitem{FOREMAGISHELAH}
M.~Foreman, M.~Magidor, and S.~Shelah, \emph{Martin's maximum, saturated
  ideals, and nonregular ultrafilters. {I}}, Ann. of Math. (2) \textbf{127}
  (1988), no.~1, 1--47. \MR{924672 (89f:03043)}

\bibitem{Goldstern}
M.~Goldstern, \emph{Tools for your forcing constructions}, Israel Mathematical
  Conference Proceedings, vol. 6, 1992, 307--362, 1992.

\bibitem{HAMJOH}
Joel~David Hamkins and Thomas~A. Johnstone, \emph{Resurrection axioms and
  uplifting cardinals}, Arch. Math. Logic \textbf{53} (2014), no.~3-4,
  463--485. \MR{3194674}

\bibitem{JECH}
T.~Jech, \emph{Set theory}, Springer Monographs in Mathematics, Springer,
  Ber\-lin, 2003, The third millennium edition, revised and expanded.
  \MR{1940513}

\bibitem{KUNEN}
K.~Kunen, \emph{Set theory}, Studies in Logic and the Foundations of
  Mathematics, vol. 102, North-Holland, Amsterdam, 1980, An introduction to
  independence proofs. \MR{597342}

\bibitem{LARSONBOOK}
Paul~B. Larson, \emph{The stationary tower}, University Lecture Series,
  vol.~32, American Mathematical Society, Providence, RI, 2004, Notes on a
  course by W. Hugh Woodin. \MR{2069032}

\bibitem{Miyamoto-Yorioka}
T.~Miyamoto and T.~Yorioka, \emph{Some results in the extension with a coherent
  suslin tree, part ii}, Kyoto University research information repository,
  1851, pp. 49--61, 2013.

\bibitem{Miyamoto}
Tadatoshi Miyamoto, \emph{Iterated forcing with {$^\omega\omega$}-bounding and
  semiproper preorders}, S\=urikaisekikenky\=usho K\=oky\=uroku (2001),
  no.~1202, 83--99, Axiomatic set theory (Japanese) (Kyoto, 2000). \MR{1855553}

\bibitem{miyamoto-suslin-sp}
\bysame, \emph{On iterating semiproper preorders}, J. Symbolic Logic
  \textbf{67} (2002), no.~4, 1431--1468. \MR{1955246}

\bibitem{MOO06}
J.~T. Moore, \emph{Set mapping reflection}, J. Math. Log. \textbf{5} (2005),
  no.~1, 87--97. \MR{2151584 (2006c:03076)}

\bibitem{Neeman-Zapletal}
Itay Neeman and Jind\v{r}ich Zapletal, \emph{Proper forcing and {$L(\Bbb R)$}},
  J. Symbolic Logic \textbf{66} (2001), no.~2, 801--810. \MR{1833479}

\bibitem{SPFA=MM}
S.~Shelah, \emph{Semiproper forcing axiom implies martin maximum but not
  $\mathsf{PFA}^+$}, The Journal of Symbolic Logic \textbf{52} (1987), no.~2,
  360--367.

\bibitem{SHEPRO}
Saharon Shelah, \emph{Proper and improper forcing}, second ed., Perspectives in
  Mathematical Logic, Springer-Verlag, Berlin, 1998. \MR{1623206 (98m:03002)}

\bibitem{Todor}
S.~Todor\v{c}evi\'{c}, \emph{Remarks on chain conditions in products},
  Compositio Mathematica \textbf{55} (1985), no.~3, 295--302.

\bibitem{Velickovic}
B.~Veli\v{c}kovi\'{c}, \emph{Forcing axioms and stationary sets}, Advances in
  Mathematics \textbf{94} (1992), no.~2, 256--284.

\bibitem{VIAVENMODCOMP}
G.~Venturi and M.~Viale, \emph{The model companions of set theory},
  https://arxiv.org/abs/1909.13372, 2019.

\bibitem{VIATAMSTII}
M.~Viale, \emph{Model companionship versus generic absoluteness $ii$},  (2020),
  arXiv:2003.07120v1.

\bibitem{VIAMM+++}
Matteo Viale, \emph{Category forcings, {$MM^{+++}$}, and generic absoluteness
  for the theory of strong forcing axioms}, J. Amer. Math. Soc. \textbf{29}
  (2016), no.~3, 675--728. \MR{3486170}

\bibitem{VIAMMREV}
\bysame, \emph{Martin's maximum revisited}, Arch. Math. Logic \textbf{55}
  (2016), no.~1-2, 295--317. \MR{3453587}

\bibitem{VIAUSAX}
\bysame, \emph{Useful axioms}, To appear in a special issue of ``IfCoLog
  Journal of Logics and their Applications'' dedicated to the winners of Kurt
  G\"odel fellowships, 2016.

\bibitem{VIAAUDSTE13}
Matteo Viale, Giorgio Audrito, and Silvia Steila, \emph{A boolean algebraic
  approach to semiproper iterations}, arXiv:1402.1714, 2014.

\bibitem{WoodinBOOK}
W.~Hugh Woodin, \emph{The axiom of determinacy, forcing axioms, and the
  nonstationary ideal}, revised ed., De Gruyter Series in Logic and its
  Applications, vol.~1, Walter de Gruyter GmbH \& Co. KG, Berlin, 2010.
  \MR{2723878}

\bibitem{Zapletal}
Jind\v{r}ich Zapletal, \emph{Transfinite open--point games}, Topology and its
  applications \textbf{111} (2001), no.~3, 289--297.

\end{thebibliography}
\end{document}